\theoremstyle{plain}
\newtheorem{proposition}{Proposition}
\newtheorem{theorem}[proposition]{Theorem}
\newtheorem{conjecture}[proposition]{Conjecture}
\newtheorem*{conjecture*}{Conjecture}
\newtheorem{definition}[proposition]{Definition}
\newtheorem{corollary}[proposition]{Corollary}
\newtheorem{lemma}[proposition]{Lemma}
\newtheorem{remark}[proposition]{Remark}
\newtheorem{example}[proposition]{Example}
\newtheorem{proposition-definition}[proposition]{Proposition/Definition}
\newtheorem*{proposition*}{Proposition}
\newtheorem*{theorem*}{Theorem}
\newtheorem*{maintheorem*}{Main Theorem}
\newtheorem*{maincorollary*}{Main Corollary}
\newtheorem*{corollary*}{Corollary}
\newtheorem*{lemma*}{Lemma}
\newtheorem*{remark*}{Remark}
\newtheorem*{definition*}{Definition}
\newtheorem*{example*}{Example}
\newtheorem*{examples*}{Examples}
\newtheorem*{criterion*}{Generation Criterion}
\newtheorem*{explanation*}{Explanation}
\numberwithin{proposition}{section}
\numberwithin{equation}{section}
\def\co{\colon\thinspace}
\newcommand{\N}{\mathbb{N}}
\newcommand{\Z}{\mathbb{Z}}
\newcommand{\R}{\mathbb{R}}
\newcommand{\C}{\mathbb{C}}
\newcommand{\K}{\mathbb{K}}
\newcommand{\D}{\mathbb{D}}
\newcommand{\CP}{\C\mathbb{P}}
\newcommand{\coker}{\mathrm{coker}\,}
\renewcommand{\P}{\mathbb{P}}
\newcommand{\Log}{\mathrm{Log}\,}
\begin{document}

\title[The Fukaya category of Fano toric varieties]
{Circle-actions, quantum cohomology, and the Fukaya category of Fano toric varieties}
%{Quantum cohomology via circle-actions, and generation results for the Fukaya category of Fano toric varieties}
%{Presentations of quantum cohomology via circle-actions, and the generation of the Fukaya category for possibly non-compact Fano toric varieties}
%{Generating the Fukaya category of Fano toric varieties and of line bundles over them}
%{Generation results for the Fukaya category of Fano toric varieties, and of negative line bundles over them}

\author{Alexander F. Ritter}
\address{A. F. Ritter, Mathematical Institute, University of Oxford, England.}
\email{ritter@maths.ox.ac.uk}

\date{version: \today}

%%%%%%%%%%%%%%%%%%%%%%%%%%%%%%%%%%%%%%%%%%%%%%%%%%%%%%%%%%%
%%%%%%%%%%%%%%%%%%%%%%%%%%%%%%%%%%%%%%%%%%%%%%%%%%%%%%%%%%%
%%%%%%%%%%%%%%%%%%%%%%%%%%%%%%%%%%%
%%%%%%%%%%%%%%%%%%%%%%%%%%%%%%%%%%%
\begin{abstract}
We define a class of non-compact Fano toric manifolds, called \emph{admissible toric manifolds}, for which Floer theory and quantum cohomology are defined. The class includes Fano toric negative line bundles, and it allows blow-ups along fixed point sets.

We prove closed-string mirror symmetry for this class of manifolds: the Jacobian ring of the superpotential is the symplectic cohomology (not the quantum cohomology). Moreover, $SH^*(M)$ is obtained from $QH^*(M)$ by localizing at the toric divisors. We give explicit presentations of $SH^*(M)$ and $QH^*(M)$, using ideas of Batyrev, McDuff and Tolman. 
% Our approach combines the McDuff-Tolman proof of Batyrev's presentation, together with the construction of a representation, analogous to the Seidel representation, on $SH^*(M)$ obtained in an earlier paper by the author. 

Assuming that the superpotential is Morse (or a milder semisimplicity assumption), we prove that the wrapped Fukaya category for this class of manifolds satisfies the toric generation criterion, i.e. is split-generated by the natural Lagrangian torus fibres of the moment map taken with suitable holonomies. In particular, the wrapped category is compactly generated and cohomologically finite.

We prove a generic generation theorem: a generic deformation of the monotone toric symplectic form defines a local system for which the twisted wrapped Fukaya category satisfies the toric generation criterion. This theorem, together with a limiting argument about continuity of eigenspaces, are used to prove the untwisted generation results.

We prove that for any closed Fano toric manifold, and a generic local system, the twisted Fukaya category satisfies the toric generation criterion. If the superpotential is Morse (or assuming semisimplicity), also the untwisted Fukaya category satisfies the criterion.

The key ingredients are non-vanishing results for the open-closed string map, using tools from the paper by Ritter-Smith (we also prove a conjecture from that paper that any monotone toric negative line bundle contains a non-displaceable monotone Lagrangian torus). The above presentation results require foundational work: we extend the class of Hamiltonians for which the maximum principle holds for symplectic manifolds conical at infinity, thus extending the class of Hamiltonian circle actions for which invertible elements can be constructed in $SH^*(M)$. Computing $SH^*(M)$ is notoriously hard and there are very few known examples beyond the case of cotangent bundles and subcritical Stein manifolds. So this computation is significant in itself, as well as being the key ingredient in proving the above results in homological mirror symmetry.
\end{abstract}
\maketitle
\setcounter{tocdepth}{1}
\tableofcontents
%
%%%%%%%%%%%%%%%%%%%%%%%%%%%%%%%%%%%
%%%%%%%%%%%%%%%%%%%%%%%%%%%%%%%%%%%
\section{Introduction}
%%%%%%%%%%%%%%%%%%%%%%%%%%%%%%%%%%%
%%%%%%%%%%%%%%%%%%%%%%%%%%%%%%%%%%%
%
\subsection{Circle actions, generators and relations in Floer cohomology}
The goal of this paper is to develop techniques to produce generators and relations in the quantum cohomology $QH^*(M)$ and in the symplectic cohomology $SH^*(M)$ of monotone symplectic manifolds conical at infinity, by exploiting Hamiltonian circle-actions, and to deduce generation results for the Fukaya category $\mathcal{F}(M)$ and the wrapped Fukaya category $\mathcal{W}(M)$. To clarify the big picture, we recall that by foundational work of Ritter-Smith \cite{RitterSmith}, there is a commutative diagram, called the \emph{acceleration diagram}, relating all these invariants via the open-closed string map $\mathcal{OC}$ on Hochschild homology:
\begin{equation}\label{Equation Intro Comm Diagram Ritter Smith}
\xymatrix{ 
\mathrm{HH}_*(\mathcal{F}(M)) \ar@{->}[r]^-{} \ar@{->}[d]_-{\mathcal{OC}} & \mathrm{HH}_*(\mathcal{W}(M)) 
 \ar@{->}^{\mathcal{OC}}[d] \\
QH^*(M)
\ar@{->}[r]^-{c^*}  & SH^*(M)}
\end{equation}
Recall that in the compact Fukaya category $\mathcal{F}(M)$ one only allows \emph{closed} monotone orientable Lagrangian submanifolds, and one works with pseudo-holomorphic maps, whereas for the wrapped Fukaya category $\mathcal{W}(M)$ one allows also \emph{non-compact} Lagrangians which are Legendrian at infinity, and the definition of the morphism spaces involve wrapping one Lagrangian around the other by using a Hamiltonian flow (more precisely, a direct limit construction is needed just like for $SH^*(M)$, where one increases the growth of the Hamiltonian at infinity).

The tool for producing generators and relations in $QH^*(M)$ and $SH^*(M)$ comes from our foundational paper \cite{Ritter4}, which constructs a commutative diagram of ring homomorphisms
\begin{equation}\label{Equation Intro Comm Diagram Ham SH and QH}
\xymatrix{ 
 & SH^*(M)
 \ar@{<-}^{c^*}[d] \\
\widetilde{\pi_1}(\mathrm{Ham}_{\ell\geq 0}(M,\omega))
\ar@{->}[r]^-{r}  \ar@{->}^{\mathcal{R}}[ur] & QH^*(M)}
\end{equation}
Namely, from loops of Hamiltonian diffeomorphisms on $M$ we construct elements in $QH^*(M)$ and $SH^*(M)$, by generalizing the ideas of the Seidel representation \cite{SeidelGAFA} to the non-compact setting. The non-compact setting brings with it technical difficulties, such as the necessity of a \emph{maximum principle} which prevents holomorphic curves and Floer solutions from escaping to infinity. These seemingly technical constraints are responsible for interesting algebro-homological consequences: for example, the elements above are invertible in $SH^*(M)$ but not in $QH^*(M)$ (whereas in the compact setting, Seidel elements were always invertible in quantum cohomology). Further down the line, this phenomenon will also imply that for many non-compact monotone toric manifolds, 
\begin{equation}\label{Equation Introduction SH=Jac}
SH^*(M)\cong \mathrm{Jac}(W_M)
\end{equation}
recovers the Jacobian ring of the Landau-Ginzburg superpotential $W_M$ (a purely combinatorial invariant associated to the moment polytope of $M$) whereas one may have first imagined that the isomorphism $QH^*(C)\cong \mathrm{Jac}(W_C)$ for closed monotone toric manifolds $C$ (Batyrev \cite{Batyrev}, Givental \cite{Givental,Givental2}) should persist also in the non-compact setting (see Example \ref{Example failure of QH=Jac}). 
% Indeed, the elements in $QH^*(M)$ mentioned above fail to be invertible in $QH^*(M)$ due to a failure of the maximum principle, whereas they are tautologically invertible in $\mathrm{Jac}(W_M)$ by definition.

The result \eqref{Equation Introduction SH=Jac} is also noteworthy because it is a proof of closed-string mirror symmetry.
\\[1mm]
\emph{{\bf Convention.} In this paper, we always restrict $SH^*(M)\equiv SH^*_0(M)$ to the component of contractible loops. For simply connected $M$, e.g. any toric manifold, this is all of $SH^*(M)$. Recall also that $SH^*_0(M)$ contains the unit $c^*(1)$, so  if $SH^*_0(M)=0$ then all of $SH^*(M)=0$.}
\\[1mm]
\indent As an illustration of diagram \eqref{Equation Intro Comm Diagram Ham SH and QH}, we recall the following application from \cite{Ritter4}, which exploited the circle action given by rotation in the fibres of a line bundle. 

\begin{theorem}\label{Introduction theorem SH of E is QH modulo ker}
Let $\pi:E \to B$ be a complex line bundle over a closed symplectic manifold $(B,\omega_B)$. Assume $E$ is \emph{negative}, meaning $c_1(E)=-k[\omega_B]$ for some $k>0$. Then
\begin{equation}\label{Equation Introduction localization}
SH^*(E)\cong QH^*(E)_{\mathrm{PD}[B]}
\end{equation}
are isomorphic algebras, where $QH^*(E)$ has been localized at the zero section.
\end{theorem}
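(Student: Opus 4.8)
The plan is to exploit the Hamiltonian circle action on $E$ given by fibrewise rotation and to run it through the commutative diagram \eqref{Equation Intro Comm Diagram Ham SH and QH}. Because $E$ is negative, this rotation is generated by a proper Hamiltonian $H$ which, on the conical end, is a positive multiple of the cone coordinate (essentially the fibre radius--squared function); the extended maximum principle then guarantees that the relevant Floer and holomorphic solutions stay in a compact set, so the rotation loop lies in $\widetilde{\pi_1}(\mathrm{Ham}_{\ell\geq 0}(E,\omega))$ and produces elements $s := r([\mathrm{rot}]) \in QH^*(E)$ and $c^*(s) = \mathcal{R}([\mathrm{rot}]) \in SH^*(E)$. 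The general invertibility theorem for Seidel-type elements attached to loops of nonnegative slope shows that $c^*(s)$ is a unit in $SH^*(E)$.

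Next I would compute $s$ inside $QH^*(E)$. The fixed locus of fibrewise rotation is exactly the zero section $B\subset E$, with (single) normal weight $1$, so a fixed-point localization computation of McDuff--Tolman type identifies $s$ with $\mathrm{PD}[B]$ up to an invertible Novikov element of $QH^*(E)$. Consequently $QH^*(E)_s = QH^*(E)_{\mathrm{PD}[B]}$, and since $c^*(s)$ is invertible the universal property of localization factors the continuation map $c^*$ as $QH^*(E)\to QH^*(E)_{\mathrm{PD}[B]}\xrightarrow{\,\bar c\,}SH^*(E)$.

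The substance of the argument is to prove that $\bar c$ is an isomorphism, and for this I would realise $SH^*(E)$ as a mapping telescope. One chooses the cofinal family of admissible Hamiltonians to be (approximately) the integer multiples $kH$ of the rotation Hamiltonian; below the first Reeb period one has $HF^*(H)\cong QH^*(E)$ (as $E$ retracts onto $B$), and the key geometric input is that the continuation map raising the slope by one unit, corresponding to one period of the circle action, equals multiplication by $s$ --- this is the ``Seidel element equals continuation'' principle, established by a TQFT gluing argument identifying the relevant continuation moduli with the moduli of sections defining $s$. It then follows that
\[
SH^*(E)\;=\;\varinjlim\Bigl(QH^*(E)\xrightarrow{\,\cdot s\,}QH^*(E)\xrightarrow{\,\cdot s\,}\cdots\Bigr)\;=\;QH^*(E)[s^{-1}]\;=\;QH^*(E)_s\;\cong\;QH^*(E)_{\mathrm{PD}[B]},
\]
and one checks this composite isomorphism is precisely $\bar c$; it is a ring isomorphism because $\mathcal{R}$ and $c^*$ are ring maps and localization is monoidal.

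The main obstacle is twofold, and it is exactly the foundational work underlying \cite{Ritter4}: first, establishing the maximum principle for the class of Hamiltonians $kH$ with unbounded (linear) growth at infinity, which is what makes $\mathcal{R}$ defined on this loop and what makes the telescope computation of $SH^*(E)$ legitimate; second, the TQFT identification showing the telescope maps are genuinely multiplication by $s$, rather than merely by an element with the same invertibility properties. Once these are in place, the remaining points reduce to bookkeeping with the Novikov ring and with the universal property of localization.
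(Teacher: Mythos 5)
Your proposal is correct and follows essentially the same route as the paper (and as \cite{Ritter4}, where the theorem is proved): use the fibrewise rotation to produce the elements $r_{\widetilde{g}}(1)$ and $\mathcal{R}_{\widetilde{g}}(1)$, identify $r_{\widetilde{g}}(1)$ with $\mathrm{PD}[B]$ up to a unit, run the mapping-telescope/naturality argument showing that the continuation maps in the cofinal system $HF^*((g^{-k})^*H_0)$ are conjugate to quantum multiplication by $r_{\widetilde{g}}(1)$, and then recognise the resulting quotient $QH^*(E)/(\textrm{generalized }0\textrm{-eigenspace})$ as the localization $QH^*(E)_{\mathrm{PD}[B]}$. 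The two points you flag as the main obstacles --- the maximum principle for linear Hamiltonians, and the identification of the telescope maps with the Seidel-type element --- are exactly the content of the foundational work in \cite{Ritter4} that the paper cites (Theorems \ref{Theorem Representation of Ham on SH}--\ref{Theorem Circle action gives SH} and Corollary \ref{Corollary SH is a localization of QH}).
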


The negativity assumption is needed to ensure that the total space is a symplectic manifold conical at infinity, so Floer theory is defined. The theorem was not phrased in terms of localization in \cite{Ritter4} so we comment on this in Section \ref{Subsection Some remarks about localizations of rings}. More generally, by \cite{Ritter4}, 

\begin{theorem} $SH^*(M)$ is a localization of $QH^*(M)$ whenever there is a Hamiltonian $S^1$-action on $M$ which corresponds to the Reeb flow at infinity.
\end{theorem}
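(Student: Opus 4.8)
The plan is to compute $SH^*(M)$ from its defining direct limit and recognise it as a localization of $QH^*(M)$ at the Seidel-type element attached to the circle action. Write $\sigma \in \widetilde{\pi_1}(\mathrm{Ham}_{\ell\geq 0}(M,\omega))$ for the class of the given $S^1$-action and set $s := r(\sigma) \in QH^*(M)$, so that by diagram \eqref{Equation Intro Comm Diagram Ham SH and QH} we have $c^*(s) = \mathcal{R}(\sigma) \in SH^*(M)$. The first observation is that $c^*(s)$ is a unit: the concatenation of the loop with its reverse is the constant loop, and although the reverse loop has negative slope at infinity (so it gives only an element of $SH^*(M)$, not of $QH^*(M)$), the relation $\mathcal{R}(\sigma)\,\mathcal{R}(\sigma^{-1}) = 1$ in $SH^*(M)$ already exhibits the inverse of $c^*(s)$. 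This is the non-compact manifestation of the invertibility of Seidel elements, and it uses the maximum principle of \cite{Ritter4} to keep the relevant Floer and continuation solutions in a compact region.

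Next I would set up the cofinal family of Hamiltonians adapted to the action. Since the $S^1$-action coincides with the Reeb flow at infinity, after normalisation its generating Hamiltonian $h$ has slope $1$ at infinity, the period spectrum is $\mathbb{Z}_{>0}$, and for a small slope $0 < \epsilon < 1$ one has $HF^*(\epsilon h) \cong QH^*(M)$. The iterated Seidel isomorphism identifies $HF^*((k+\epsilon)h) \cong QH^*(M)$ for every $k \geq 0$, and the structural input I would take from the Seidel-representation machinery of \cite{Ritter4} is that, under these identifications, the continuation maps
\[
HF^*((k+\epsilon)h) \longrightarrow HF^*((k+1+\epsilon)h)
\]
become quantum multiplication by $s$. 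Granting this, the direct system computing symplectic cohomology is the telescope $QH^*(M) \xrightarrow{\,\cdot s\,} QH^*(M) \xrightarrow{\,\cdot s\,} QH^*(M) \to \cdots$, whose colimit is the localization $QH^*(M)[s^{-1}] = QH^*(M)_s$ at the multiplicative set $\{1, s, s^2, \dots\}$. One checks this is a ring isomorphism and not merely a module one, because the continuation maps respect the pair-of-pants product and the localization is the universal ring map inverting $s$; hence $SH^*(M) \cong QH^*(M)_s$. (For $M = E$ a negative line bundle and $\sigma$ the fibrewise rotation one has $s = \mathrm{PD}[B]$, recovering Theorem \ref{Introduction theorem SH of E is QH modulo ker}.)

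The main obstacle is the identification of the continuation maps with quantum multiplication by $s$, which amounts to comparing the continuation map with the Seidel isomorphism (their ``composite difference'' is $\cdot\, r(\sigma)$) and tracking the resulting action- and grading-shifts through the iteration. Two supporting points need care: first, the maximum principle must hold for all the Hamiltonians interpolating between consecutive slope windows --- this is precisely the extension of the admissible class of Hamiltonians for manifolds conical at infinity, and it is where the hypothesis ``the $S^1$-action corresponds to the Reeb flow at infinity'' is used, since it forces the slope to land on the period spectrum so that no Floer breaking escapes to infinity; second, one must verify that $\{(k+\epsilon)h\}_{k\geq 0}$ is cofinal among admissible Hamiltonians, so that the colimit over it genuinely computes $SH^*(M)$. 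Both follow the template already established for Theorem \ref{Introduction theorem SH of E is QH modulo ker}; the only new content is that a Reeb-type $S^1$-action, in place of the special geometry of a line bundle, is enough to run the argument.
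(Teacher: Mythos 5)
Your proposal follows the same strategy as the paper's proof of the underlying result (Theorem \ref{Theorem Circle action gives SH}): build the cofinal family by pulling back a small-slope Hamiltonian under iterates of the loop, identify each $HF^*$ with $QH^*(M)$ via the iterated Seidel-type isomorphism $\mathcal{S}_{\widetilde{g}}^k$, recognise the continuation maps as quantum multiplication by $s=r_{\widetilde{g}}(1)$, and take the colimit of the telescope $QH^*(M)\xrightarrow{\cdot s}QH^*(M)\xrightarrow{\cdot s}\cdots$, which is the localization $QH^*(M)_s$ (the paper packages this as the quotient by the generalized $0$-eigenspace of $s$, and Section \ref{Subsection Some remarks about localizations of rings} shows the two are the same). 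One small correction to your last paragraph: the hypothesis that the $S^1$-action coincides with the Reeb flow at infinity is used to ensure the generating Hamiltonian is $R$-linear there, so the \emph{ordinary} maximum principle (not the extended one of Lemma \ref{Introduction Lemma Extended Max principle}) already controls the cofinal family and the continuation solutions; and the slope $k+\epsilon$ is chosen to \emph{avoid} the Reeb period spectrum rather than to land on it.
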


As a vector space, $QH^*(M)$ is just the ordinary cohomology, however it is a non-trivial task to determine its ring structure, especially in the non-compact setting. For example, the non-triviality of $SH^*(E)$ above is equivalent to $\pi^* c_1(E)=\mathrm{PD}[B]$ being non-nilpotent in $QH^*(E)$, and it is not known whether this holds in general (we will prove it for toric $E$).

Even in the deceptively simple case of negative line bundles, $QH^*(E)\cong QH^*(B)$ is false. The moduli spaces of holomorphic
spheres are trapped inside $B$ by the maximum principle, but the dimensions are all wrong as they don't take into account the new fibre direction of $E$. Even for $\mathcal{O}(-k)\to \C P^m$, although we succeeded in \cite{Ritter4} to 
compute $QH^*(E)$ for $1\leq k\leq m/2$ by difficult virtual localization techniques, these calculations became intractable for $k>m/2$.

This reality-check, that determining $QH^*(M)$ is a difficult problem, and that it is an important first step towards understanding the Fukaya category $\mathcal{F}(M)$, motivates our work. Although the high-profile route would be to express the Gromov-Witten invariants in terms of obstruction bundle invariants, packaged up inside a generating function, such approaches would still leave us with the task of actually calculating the obstruction bundle invariants. So in practice, strictly speaking, one hasn't computed the invarians and, for example, this route would not help in understanding how $QH^*(M)$ decomposes into eigenspaces of the $c_1(TM)$-action, which is a key step towards determining generators of $\mathcal{F}(M)$. 

A key issue in constructing the representations $r,\mathcal{R}$ from diagram \eqref{Equation Intro Comm Diagram Ham SH and QH} is to ensure that the Hamiltonian $S^1$-actions are generated by Hamiltonians $H$ which satisfy a certain maximum principle. In \cite{Ritter4}, we proved this holds provided $H=\textrm{constant}\cdot R$ at infinity, where $R$ is the radial coordinate on the conical end. This is the same strong constraint on the class of Hamiltonians that arises in the direct limit construction of $SH^*(M)$. The existence of such an $S^1$-action implies that the Reeb flow at infinity needs to arise from a circle action on $M$. Unfortunately, already for toric negative line bundles, we prove in Section \ref{Subsection The Hamiltonians generating the rotations around the toric divisors} that the natural rotations around the toric divisors are generated by Hamiltonians
\begin{equation}\label{Equation H=fR Hamiltonians}
H(y,R) = f(y)R
\end{equation}
which depend on the coordinate $y$ of the contact hypersurface $\Sigma$ which defines the conical end  (e.g. for line bundles, $\Sigma$ is a sphere subbundle).
In Section \ref{Subsection Invertibles in the symplectic cohomology for a larger class of Hamiltonians} via Appendix C we prove:
\begin{lemma}[Extended Maximum Principle]\label{Introduction Lemma Extended Max principle}
The maximum principle applies to Hamiltonians of the form \eqref{Equation H=fR Hamiltonians} for which $f:\Sigma\to \R$ is invariant under the Reeb flow.
\end{lemma}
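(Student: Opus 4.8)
The plan is to reduce the maximum principle for Hamiltonians of the form $H(y,R)=f(y)R$ to the standard case $H=cR$ treated in \cite{Ritter4}, by exploiting the Reeb-invariance of $f$. First I would set up the relevant Floer-type equation (either a continuation equation or the parametrized Floer equation entering the construction of $\mathcal{R}$) on the conical end $\Sigma\times[1,\infty)$, with coordinates $(y,R)$ and symplectic form $d(R\alpha)$, where $\alpha$ is the contact form on $\Sigma$. For a solution $u=(\sigma,\rho)$ I would compute $\Delta(R\circ u)=\Delta\rho$ using the Floer equation $\partial_s u + J(\partial_t u - X_H)=0$, aiming to show that $\rho$ cannot have an interior maximum above the level where $H$ becomes conical. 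The decomposition $X_H = f(y)X_R + R\,X_f^{\mathrm{hor}}$, where $X_R$ is the Reeb vector field and $X_f^{\mathrm{hor}}$ involves the differential of $f$ along $\Sigma$, is the heart of the matter: the first term is exactly the vector field appearing for $H=cR$ (with $c$ replaced by the locally constant-along-Reeb value $f$), and the second term must be controlled.

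The key step is the observation that when $f$ is invariant under the Reeb flow, the extra term $R\,X_f^{\mathrm{hor}}$ is tangent to the contact distribution $\xi=\ker\alpha$ and, crucially, is ``$\omega$-orthogonal'' in the right way to the $R$-direction: one has $dR(X_H)=R\,df(X_R)=0$ by Reeb-invariance, and $\alpha(X_H)=f(y)$ so that $X_H$ has no component that would spoil the subharmonicity computation for $\rho$. Concretely I would verify, as in Appendix C, that $\Delta\rho \geq 0$ (or the correct sign) wherever $f>0$ is attained, so that the maximum principle of Abouzaid–Seidel type applies: the function $\rho$ achieves its maximum on the boundary of the domain or on the lower boundary $R=1$ of the conical region. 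I would also need to check the companion ``no escape'' lemma for the energy, and verify that the same argument works for the sphere equation (holomorphic curves with $s$-independent, $t$-independent data) so that Gromov-type compactness holds for the moduli spaces defining $\mathcal{R}$ and $r$.

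I expect the main obstacle to be handling the term involving $df$ along the contact distribution: unlike the case $f=\mathrm{const}$, this term does not vanish pointwise, and one must show that its contribution to $\Delta\rho$ has a favorable sign or can be absorbed. The Reeb-invariance hypothesis is precisely what is needed to kill the ``bad'' component $dR(X_H)$, but one still has to rule out that the horizontal part creates a negative contribution to $\Delta\rho$ at a putative interior maximum of $\rho$; here I would use that at such a maximum $d\rho=0$, so the offending cross-terms in the Laplacian (those linear in $d\rho$) drop out, leaving a manifestly non-negative expression coming from $|du|^2$-type terms. A secondary technical point is ensuring the whole argument is compatible with the almost complex structures of contact type used on the conical end, and that the construction is independent (up to the usual continuation arguments) of the choice of Reeb-invariant $f$; I would relegate these verifications, together with the detailed curvature computations, to Appendix C and merely cite the outcome here.
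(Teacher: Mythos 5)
Your proposal is essentially correct, and it corresponds to the second half of the remark the paper itself makes inside the proof of the ``no escape lemma'' (Lemma \ref{Lemma No escape lemma}): when $J$ is of contact type throughout $V$, the energy/Stokes argument is just the integrated form, via Green's formula, of the pointwise inequality $\Delta(R\circ u)\geq 0$. The two approaches rest on exactly the same algebraic facts, which the paper isolates in Lemma \ref{Lemma Class of Hams satisfy dR(X) is 0 and H is theta(X)}: Reeb-invariance of $f$ gives $dR(X_H)=0$, the homogeneity $H=f(y)R$ gives $\theta(X_H)=H$, and together with the contact-type condition $\theta\circ J=dR$ these make the relevant quantity non-negative. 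In your route, for $\beta=dt$ one applies $dR$ and $\theta$ to the Floer equation to get $\partial_s\rho-\theta(\partial_t u)=-H\circ u$ and $\partial_t\rho+\theta(\partial_s u)=0$; then
\begin{equation*}
\Delta\rho \;=\; (u^*d\theta)(\partial_s,\partial_t) - dH(\partial_s u) \;=\; \omega(\partial_s u,\,\partial_t u - X_H)\;=\;|\partial_t u-X_H|^2 \;\geq\; 0,
\end{equation*}
with no cross-terms in $d\rho$ to worry about (so the part of your proposal about dropping such terms at an interior maximum is unnecessary, though harmless). Where you part ways with the paper is in robustness: the paper's choice of the Stokes/``no escape'' formulation only requires the conditions on $X_H$ and the contact-type property of $J$ along the level set $\partial V$, works uniformly for general punctured or bordered Riemann surfaces with $d\beta\leq 0$ (with the extra hypothesis $H\geq 0$ which you also need if $d\beta\neq 0$), and is the form that feeds most directly into the TQFT operations of \cite{Ritter3} and the parametrized version needed for continuation maps (Theorem \ref{Theorem Continuation maps max}). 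Your pointwise $\Delta\rho$ computation is the classical Viterbo-style version, which is cleaner for Floer cylinders but would have to be redone separately for each type of worldsheet; for that reason the paper does not take it as the primary route. One terminological nit: the ``Abouzaid--Seidel type'' maximum principle you invoke \emph{is} the energy/Stokes form, not the subharmonicity form, so your proposal in fact sketches both routes without clearly committing to one.
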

This also enlarges the class of Hamiltonians for which $SH^*(M)$ can be defined. Although it is a mild generalization, this result makes the crucial difference between whether or not the Floer-machinery can jump-start.

\subsection{Floer theory for non-compact toric varieties}
We now describe the non-compact Fano toric varities $M$ for which we can define and compute $SH^*(M)$ and $QH^*(M)$.
We omit from this Introduction the discussion of NEF toric varieties (Section \ref{Subsection The Calabi-Yau case}) and the discussion of how our machinery applies to blow-ups of non-compact K\"{a}hler manifolds
(Sections \ref{Subsection Blow-up at a point}-\ref{Subsection Blow-up at a subvariety}). The applicability is limited only by our knowledge of whether a maximum principle applies.

We will describe a family of non-compact monotone toric manifolds $M$ for which \eqref{Equation Introduction SH=Jac} will hold, and for which the analogue of \eqref{Equation Introduction localization} is
$$
SH^*(M)\cong QH^*(M)_{\mathrm{PD}[D_1],\, \ldots\, ,\, \mathrm{PD}[D_r]}
$$
where we localize at the generators corresponding to the toric divisors $D_i\subset M$. Moreover we obtain a combinatorial description of $QH^*(M)$ whose localization is the Jacobian ring.

\noindent
{\bf Conjecture.} More speculatively, our work suggests that in situations where a toric divisor is removed from a toric variety $M$, and one allows the Hamiltonians which define Floer cohomology to grow to infinity near the divisor, then this should correspond to localizing the ring at the $r$-element corresponding to rotation about the divisor.

\begin{definition}\label{Definition admissible toric manifolds}
 A non-compact toric manifold $(X,\omega_X)$ is \emph{\bf{admissible}} if
\begin{enumerate}
 \item\label{Item Admissible conical} $X$ is conical at infinity (see Section \ref{Subsection Symplectic manifolds conical at infinity});
 \item\label{Item Admissible monotone} $X$ is monotone, so $c_1(TX)=\lambda_X\, [\omega_X]$ for some $\lambda_X>0$;
 \item\label{Item Admissible max principle} the rotations $g_i$ about the toric divisors $D_i$ (Section \ref{Subsection Review of the McDuff-Tolman proof of the presentation of QH}) are generated by Hamiltonians $H_i$ with non-negative slope satisfying the maximum principle (Theorem \ref{Theorem maximum principle}, $f\geq 0$);
 \item \label{Item Admissible positive slope} at least one of the $H_i$ has strictly positive slope at infinity ($f>0$ in Theorem \ref{Theorem maximum principle}).
\end{enumerate}
\end{definition}

The conditions required are, essentially, what is necessary to make sense of Floer theory with current technology. The prototypical examples of admissible manifolds are Fano toric negative line bundles, which we discuss in detail later. In Sections \ref{Subsection Blow-up at a point} and \ref{Subsection Blow-up at a subvariety} we prove that the above class of manifolds allows for blow-ups along fixed point sets.

\begin{theorem}\label{Theorem admissible toric manifolds}
Let $(X,\omega_X)$ be any admissible toric manifold, and let $\mathcal{J}$ denote the ideal generated by the linear relations \eqref{EqnLinearRelations} and the quantum Stanley-Reisner relations \eqref{EqnQSRrelation}. Then:
\begin{enumerate}
 \item There is an algebra isomorphism
 $$QH^*(X,\omega_X) \cong \Lambda [x_1,\ldots,x_r] / \mathcal{J}, \quad \mathrm{PD}[D_i] \mapsto x_i.$$
 \item The symplectic cohomology is the localization of the above ring at all $x_i=\mathrm{PD}[D_i]$,
$$
\quad\! SH^*(X,\omega_X) \cong \Lambda [x_1^{\pm 1},\ldots,x_r^{\pm 1}] / \mathcal{J}, \quad c^*(\mathrm{PD}[D_i]) \mapsto x_i,
$$
and $c^*: QH^*(X,\omega_X) \to SH^*(X,\omega_X)$ equals the canonical localization map sending $x_i \mapsto x_i$;

\item The moment polytope of $X$, $\Delta = \{ y\in \R^n: \langle y,e_i \rangle \geq \lambda_i \}$, determines an isomorphism
$$
\psi: QH^*(X,\omega) \to 
R_X/(\partial_{z_1} W,\ldots,\partial_{z_n} W), \quad x_i \mapsto t^{-\lambda_i} z^{e_i},
$$
which sends $c_1(TX)=\sum x_i \mapsto W= \sum t^{-\lambda_i} z^{e_i}$, where $W$ is the superpotential (Section \ref{Subsection Batyrev's argument: from the presentation of QH to JacW}) and the ring $R_X$ is described in Definition \ref{Definition RX algebra};

\item and it determines the following algebra isomorphism $($see Definition \ref{Definition Jacobian ring} for $\mathrm{Jac}(W))$,
$$
SH^*(X,\omega_X) \cong \mathrm{Jac}(W),\quad x_i \mapsto t^{-\lambda_i} z^{e_i},\quad c^*(c_1(TX)) \mapsto W.
$$

\item The canonical map $c^*:QH^*(X)\to SH^*(X)$ corresponds to the localization map 
$$
R_X/(\partial_{z_1} W,\ldots,\partial_{z_n} W) \to \mathrm{Jac}(W),\quad z_i \mapsto z_i,
$$
which is the quotient homomorphism by the ideal generated by the generalized $0$-eigenvectors of multiplication by all $z^{e_i}$ (equivalently of all $z_i$, since $X$ is smooth).
\end{enumerate}
\end{theorem}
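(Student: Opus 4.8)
The plan is to assemble the five parts from the machinery already in place, treating parts (1)–(2) as the foundational core and parts (3)–(5) as transport of structure along Batyrev's change of variables, together with the localization identification. First I would establish part (1): the isomorphism $QH^*(X,\omega_X)\cong\Lambda[x_1,\ldots,x_r]/\mathcal{J}$ with $\mathrm{PD}[D_i]\mapsto x_i$. For this I would follow the McDuff–Tolman approach referenced in Section \ref{Subsection Review of the McDuff-Tolman proof of the presentation of QH}: the linear relations \eqref{EqnLinearRelations} come from the fact that the classes $\mathrm{PD}[D_i]$ satisfy the same linear dependencies as the primitive ray generators $e_i$ (these hold already in ordinary cohomology, hence a fortiori after passing to the quantum product as a deformation), while the quantum Stanley–Reisner relations \eqref{EqnQSRrelation} are obtained by computing the quantum product of the $\mathrm{PD}[D_i]$ corresponding to rays that do not span a cone, picking up quantum correction terms weighted by the appropriate Novikov/$t$-powers. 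The key input that makes this work in the non-compact admissible setting — rather than just the classical closed case — is the Seidel-type representation $\mathcal{R}$ of diagram \eqref{Equation Intro Comm Diagram Ham SH and QH}, applied to the rotations $g_i$ about the toric divisors: admissibility condition \eqref{Item Admissible max principle} guarantees these are generated by Hamiltonians for which the Extended Maximum Principle (Lemma \ref{Introduction Lemma Extended Max principle}) holds, so the Seidel elements $\mathcal{R}(g_i)$ are defined, and condition \eqref{Item Admissible positive slope} ensures at least one of them becomes invertible in $SH^*(X)$. One then checks the two families of relations generate the whole ideal of relations by a dimension/Hilbert-series count, exactly as in the compact toric case.

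Part (2) is then essentially formal given part (1) and the structure of diagram \eqref{Equation Intro Comm Diagram Ham SH and QH}. The map $c^*:QH^*(X)\to SH^*(X)$ is a ring homomorphism sending $\mathrm{PD}[D_i]\mapsto c^*(\mathrm{PD}[D_i])$, and by the circle-action machinery of \cite{Ritter4} (Theorem \ref{Introduction theorem SH of E is QH modulo ker}, suitably iterated over the divisors $D_1,\ldots,D_r$) the elements $c^*(\mathrm{PD}[D_i])$ are invertible in $SH^*(X)$; moreover $SH^*(X)$ is the localization of $QH^*(X)$ inverting precisely these elements. Concretely: the Seidel element $\mathcal{R}(g_i)$ equals (up to a unit in $\Lambda$) $c^*(\mathrm{PD}[D_i])$, it is invertible in $SH^*$, and $SH^*(X)$ satisfies the universal property of $QH^*(X)[\,(\mathrm{PD}[D_i])^{-1}\,]$. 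Hence $SH^*(X)\cong\Lambda[x_1^{\pm1},\ldots,x_r^{\pm1}]/\mathcal{J}$ and $c^*$ is the canonical localization map. The one point requiring care is that localizing at all $r$ divisor classes is the same as localizing at just the one with positive slope — but this follows because in the presented ring the product $\prod x_i$ (or an appropriate monomial) is, modulo $\mathcal{J}$, a unit times $\mathcal{R}(g)$ for the distinguished $g$, so inverting one forces the others to be inverted too; this is where I expect a small amount of combinatorial bookkeeping with the polytope data $\lambda_i$.

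For parts (3) and (4), I would invoke Batyrev's argument as recalled in Section \ref{Subsection Batyrev's argument: from the presentation of QH to JacW}: the substitution $x_i\mapsto t^{-\lambda_i}z^{e_i}$ turns the linear relations \eqref{EqnLinearRelations} into the vanishing of the partial derivatives $\partial_{z_j}W$ (each linear relation $\sum_i (e_i)_j\,x_i = 0$ becomes $\partial_{z_j}W=0$ after multiplying through by $z_j$), and turns the quantum Stanley–Reisner relations into relations that already hold identically among Laurent monomials in the $z_k$; conversely these account for all relations, giving the isomorphism $\psi:QH^*(X)\to R_X/(\partial_{z_1}W,\ldots,\partial_{z_n}W)$. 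Under $\psi$ the Euler class $c_1(TX)=\sum_i\mathrm{PD}[D_i]=\sum_i x_i$ maps to $\sum_i t^{-\lambda_i}z^{e_i}=W$, as asserted. Then part (4) follows by combining $\psi$ with part (2): localizing at the $x_i$ on the left corresponds to localizing at the monomials $z^{e_i}$ on the right, i.e. passing from $R_X/(\partial W)$ to the Laurent-polynomial Jacobian ring $\mathrm{Jac}(W)$, and $c_1(TX)\mapsto W$ survives. Part (5) identifies $c^*$ with this localization map $R_X/(\partial_{z_j}W)\to\mathrm{Jac}(W)$, $z_i\mapsto z_i$; the final clause — that the kernel is the ideal generated by the generalized $0$-eigenvectors of multiplication by the $z^{e_i}$ (equivalently by the $z_i$, using smoothness of $X$ so that the $e_i$ generate the lattice and hence the $z_i$ are monomials in the $z^{e_i}$ and vice versa) — is a standard fact about localizing an Artinian (or finite-dimensional, after base change) algebra: inverting an element kills exactly its generalized $0$-eigenspace, and one localizes at all of them simultaneously.

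The main obstacle I anticipate is part (1) in the genuinely non-compact admissible setting: verifying that the quantum Stanley–Reisner relations take the stated form \eqref{EqnQSRrelation} requires controlling the quantum corrections, and the usual closed-toric arguments (McDuff–Tolman, Batyrev) must be re-run with the maximum principle ensuring no curves escape to infinity — this is precisely where the Extended Maximum Principle (Lemma \ref{Introduction Lemma Extended Max principle}) and the invertibility of the Seidel element from condition \eqref{Item Admissible positive slope} do the heavy lifting, and where one must be careful that the Hilbert-series / dimension count still pins down the ideal (the relevant graded pieces are finite-dimensional even though $QH^*(X)$ as a whole is a module over the Novikov ring $\Lambda$). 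Once part (1) is secured, parts (2)–(5) are a combination of the localization formalism from \cite{Ritter4} and Batyrev's purely algebraic change of variables, and should go through with only routine checks.
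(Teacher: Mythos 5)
Your overall skeleton matches the paper's proof: the Seidel-type representations $r,\mathcal{R}$ of diagram \eqref{Equation Intro Comm Diagram Ham SH and QH}, enabled by the Extended Maximum Principle, computed on the rotations $g_i$ via Lemma \ref{Lemma rg1} to give $\mathrm{PD}[D_i]$; the $\mathcal{S}$-representation supplying inverses in $SH^*$ via negative-slope Hamiltonians; Theorem \ref{Theorem When SH is quotient of QH} for the localization statement; and Batyrev's change of variables for parts (3)--(5). That much is the same route.

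There is, however, a genuine gap in your part (1): you propose to verify that the linear and quantum SR-relations generate the \emph{whole} ideal of relations ``by a dimension/Hilbert-series count, exactly as in the compact toric case,'' and you flag this yourself as the delicate step. But the compact-case argument does not proceed by a Hilbert-series count either: the mechanism (which the paper invokes as Lemma \ref{Lemma McDuffTolman Algebra trick}) is McDuff--Tolman's algebraic trick, a Nakayama/filtration argument using the $t$-adic valuation. Concretely: the classical relations $p_j$ generate $\ker\varphi$ for $H^*(X)$, the quantum relations $P_j = p_j + q_j$ lie in $\ker\psi$ with $\mathrm{val}_t(q_j)>0$, and the filtration argument forces the $P_j$ to generate all of $\ker\psi$. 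A naive dimension count is circular here --- you would need to know the $\Lambda$-rank of $\Lambda[x_1,\ldots,x_r]/\mathcal{J}$ independently of $QH^*$, which is precisely what the presentation is supposed to tell you --- whereas the McDuff--Tolman lemma only requires knowing the classical presentation of $H^*(X)$ and the positive $t$-valuation of the quantum corrections (the latter guaranteed by Batyrev's $\omega(\beta_I)>0$). You should replace the Hilbert-series step with Lemma \ref{Lemma McDuffTolman Algebra trick}.

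A secondary omission: you write the relations among the $g_i$ directly as relations among the $\mathcal{R}(g_i)$, but the representations are defined on the \emph{extension} $\widetilde{\pi_1}\mathrm{Ham}$, not on $\pi_1\mathrm{Ham}$, so one must choose lifts $g_i^{\wedge}$ and account for the possibility that a product of lifts differs from the lift of a product by an element of $\Gamma=\pi_2/\pi_2(M)_0$, i.e.\ a power of $t$. In the monotone setting this discrepancy is resolved by degree-counting (Lemma \ref{Lemma relations in SH}): each $x_i=\mathrm{PD}[D_i]$ sits in degree $2$, and the $\Z$-grading on $QH^*,SH^*$ pins down the Novikov power in each quantum SR-relation uniquely. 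Without this observation the relations are only determined up to an unknown unit $t^d$, which would invalidate the precise form of $\mathcal{J}$ and hence the identification with $\mathrm{Jac}(W)$ in parts (3)--(5). Once these two points are repaired, the rest of your proposal (invertibility via $\mathcal{S}((g_i^{-1})^{\wedge})(1)$, localization formalism, and the Batyrev substitution $x_i\mapsto t^{-\lambda_i}z^{e_i}$) goes through as you describe.
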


Part of the proof involves adapting to the non-compact setting the McDuff-Tolman proof \cite[Sec.5]{McDuffTolman} of the Batyrev presentation of $QH^*(C)$ for closed monotone toric manifolds $C$ (reviewed in Sections \ref{Subsection Review of the Batyrev-Givental presentation of QH}-\ref{Subsection Review of the McDuff-Tolman proof of the presentation of QH}). Recall that the toric divisors $D_i$ are the $\textrm{codim}_{\C}=1$ complex torus orbits and the linear relations are the classical relations they satisfy in $H_{2(\dim_{\C}C-1)}(C,\Z)$. The key observation of McDuff and Tolman is that the remaining necessary relations for the algebra $QH^*(C)$ come from applying the Seidel representation $\pi_1 \textrm{Ham}(C)\to QH^*(C)^{\times}$ to the relations satisfied by the natural circle rotation actions $g_i$ about the toric divisors.
In our non-compact setting, the Seidel representation is replaced by the representations $r,\mathcal{R}$ from diagram \eqref{Equation Intro Comm Diagram Ham SH and QH}, and some care needs to be taken in picking ``lifts'' $g_i^{\wedge}$ of the rotations $g_i$ because one now works with an extension $\widetilde{\pi_1} \textrm{Ham}$ of $\pi_1 \textrm{Ham}$ (Sections \ref{Subsection Invertibles in the symplectic cohomology}, \ref{Subsection The problem with relating the lifted rotations}, \ref{Subsection Rotations and the lifting problem} discuss this technical issue). A key step (Lemma \ref{Lemma R of gi standard lift}) is to compute the $r$-element for the rotations $g_i$, 
$$
r(g_i^{\wedge})=x_i=\mathrm{PD}[D_i]\in QH^2(X,\omega_X)
\quad \textrm{ and } \quad 
\mathcal{R}(g_i^{\wedge}) = c^*\mathrm{PD}[D_i]\in SH^2(X,\omega_X)^{\times}.
$$
This is a consequence of a more general computation which we prove in Section \ref{Subsection Invertibles in the symplectic cohomology for a larger class of Hamiltonians}, as follows.

\begin{lemma}\label{Lemma rg1}\!\!\!\footnote{The Lemma as written assumes that the lift $g^{\wedge}$ exists (preserving constant discs): this holds for example if $D$ is connected. In general, $\mathrm{Fix}(g)$ will have several connected components (see the footnote to Section \ref{Subsection Review of the McDuff-Tolman proof of the presentation of QH}). More generally, one should consider the connected component $D\subset \mathrm{Fix}(g)$ arising as the minimum of the Hamiltonian $H$ generating $g$ (when $H$ is proper, $D:=\min H$ is connected by a classical argument due to Atiyah).
By convention, we choose $H$ to take value zero on $D$, which ensures (subject to the assumptions on $dg_t$ as written in the Lemma) that $$r(g^{\wedge})=\mathrm{PD}[D]+\textrm{(higher order } t \textrm{ terms})$$ with leading coefficient $t^0$.
We now explain why any other components $D'$ of $\mathrm{Fix}(g)$ can only contribute with a strictly positive power of $t$. A $\nabla H$-trajectory from $D$ to $D'$ gives rise, via the $S^1$-action, to a pseudo-holomorphic sphere $S$, in particular $\omega[S]>0$. The lift $g^{\wedge}$ will then send the constant disc $(c_x,x)$ at $x\in D'$ to a constant disc with $S$ attached. This causes the contributions coming from $D'$ (if they exist, as regularity is not guaranteed) to be counted with weight $t^{\omega[S]}$. We refer to McDuff-Tolman \cite[Theorem 1.10]{McDuffTolman} for a detailed discussion of this, keeping in mind they use slightly different conventions (see the footnote to Sec.\ref{Subsection Review of the McDuff-Tolman proof of the presentation of QH}).}
Let $(M,\omega)$ be a monotone K\"{a}hler manifold of dimension $\dim_{\C}M=m$. Suppose that $g\in \pi_1\mathrm{Ham}_{\ell\geq 0}(M)$ acts holomorphically on $M$, and that the fixed point set $$D=\mathrm{Fix}(g)\subset M$$ is a complex submanifold of dimension $d=\dim_{\C}\mathrm{Fix}(g)$. Hence $dg_t$ acts as a unitary map on a unitary splitting $T_xM = T_x D \oplus \C^{m-d}$, for $x\in D$. Suppose further that the eigenvalues of $dg_t: \C^{m-d} \to \C^{m-d}$ define degree $1$ paths $S^1 \to S^1$ (more generally, the clutching construction using $dg_t$ defines a rank $m-d$ holomorphic vector bundle over $\P^1$, which by a theorem of Grothendieck splits as $\oplus \mathcal{O}(n_i)$, and for our Lemma it is enough to assume that all $n_i=-1$).

Let $g^{\wedge}$ be the lift of $g$ which maps the constant disc $(c_x,x)$ to itself, for $x\in D$. Then the Maslov index $I(g^{\wedge})=m-d$, and
\begin{equation}\label{Equation rg1}
r(g^{\wedge}) = \mathrm{PD}[\mathrm{Fix}(g)] + (\textrm{higher order }t\textrm{ terms}) \in QH^{2(m-d)}(M).
\end{equation}
If the fixed point set has codimension $m-d=1$, then
\begin{equation}\label{Equation rg1 if Ig is 1}
r(g^{\wedge}) = \mathrm{PD}[\mathrm{Fix}(g)] \in QH^2(M).
\end{equation}
\end{lemma}

\subsection{Toric negative line bundles}

As an illustration of the general theory of Theorem \ref{Theorem admissible toric manifolds}, we will now determine $QH^*(E)$, $SH^*(E)$ and the wrapped Fukaya category $\mathcal{W}(E)$ for monotone (i.e. Fano) toric negative line bundles $\pi:E\to B$,
in terms of the analogous invariants for the base $B$ (in Section \ref{Subsection The Calabi-Yau case} we also comment on the case where the line bundle is Calabi-Yau). Heuristically this is the symplectic analogue of the \emph{quantum Lefschetz hyperplane theorem} \cite{Lee}: the invariants of the hyperplane section $B\subset E$ are recovered from invariants of the ambient $E$ and a quantum multiplication operation by an Euler class.

To clarify, $E$ is any complex line bundle over a monotone toric manifold $(B,\omega_B)$ with 
$$c_1(E)=-k[\omega_B] \quad \textrm{ where }\quad 1\leq k < \lambda_B.$$ Recall $B$ is monotone if $c_1(TB)=\lambda_B [\omega_B]$ for $\lambda_B>0$, and we may assume $[\omega_B]\in H^2(B,\Z)$ is primitive. The case $k=\lambda_B$ would make the total space Calabi-Yau, but in that case (and more generally for large $k$) it turns out that $SH^*(E)=0$ by \cite{Ritter4}, also $\mathcal{W}(E)$ is not so interesting since it is homologically trivial being a module over $SH^*(E)$.

We appreciate that some may view such $E$ as a rather basic geometric setup. We hope those readers will nevertheless appreciate that these are the first steps in the study of non-compact symplectic manifolds which are not exact, so we are trying to move beyond the case of cotangent bundles and Stein manifolds which are well-studied in the symplectic literature.

\begin{theorem}\label{Theorem Introduction SH is Jac}
$QH^*(E)$ is generated by the toric divisors $x_i=\mathrm{PD}[\pi^{-1}(D_i^B)]$, they satisfy the same linear relations and quantum Stanley-Reisner relations that the generators $x_i=\mathrm{PD}[D_i^B]$ of $QH^*(B)$ satisfy, except for a change in the Novikov variable (defined in Section \ref{Subsection Novikov ring}):
$$\textstyle
\qquad \qquad \qquad \qquad \qquad \qquad t \mapsto t\, (\pi^*c_1(E))^k \qquad (\textrm{where }\pi^*c_1(E)= -(k/\lambda_B) \sum x_i). 
$$
Moreover, $SH^*(E)\cong \mathrm{Jac}(W_E)$ and it is obtained by localizing $QH^*(E)$ at $\pi^*c_1(E)$.
\end{theorem}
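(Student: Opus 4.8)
The strategy is to realize $E$ as an admissible toric manifold and then invoke Theorem \ref{Theorem admissible toric manifolds}, so that most of the content is a bookkeeping translation between the combinatorics of the fan of $E$ and that of $B$. First I would set up the toric data: if $B$ has primitive ray generators $e_1^B,\dots,e_r^B\in\R^n$ with moment polytope facets $\langle y,e_i^B\rangle\geq\lambda_i^B$, then $E=\mathrm{Tot}(\mathcal{O}(-k)\to\cdots)$, the total space of a negative line bundle with $c_1(E)=-k[\omega_B]$, has fan in $\R^{n+1}$ obtained by placing the rays of $B$ at height determined by the equivariant structure of the line bundle, together with one extra ray $e_0$ corresponding to the zero-section divisor $B\subset E$. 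The monotonicity hypothesis $1\le k<\lambda_B$ is exactly what makes $E$ Fano (so condition \eqref{Item Admissible monotone} holds), and $E$ is conical at infinity since it is a negative line bundle (condition \eqref{Item Admissible conical}); conditions \eqref{Item Admissible max principle}--\eqref{Item Admissible positive slope} on the rotations $g_i$ about the toric divisors hold by the analysis of the Hamiltonians \eqref{Equation H=fR Hamiltonians} together with the Extended Maximum Principle, Lemma \ref{Introduction Lemma Extended Max principle} — in particular the rotation about the fibre-sphere-bundle direction has strictly positive slope.

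Next I would identify the toric divisors of $E$: they are $D_i^E=\pi^{-1}(D_i^B)$ for $i=1,\dots,r$ (pullbacks of the toric divisors of $B$), plus the zero-section $B$ itself. The key point is that, with the primitive class $[\omega_B]\in H^2(B,\Z)$, one has $\pi^*c_1(E)=-k[\omega_B]=-(k/\lambda_B)\sum x_i$ in $QH^*(E)$ (using the classical linear relations among the $x_i=\mathrm{PD}[D_i^E]$). Then I would feed this into part (1) of Theorem \ref{Theorem admissible toric manifolds}: the linear relations \eqref{EqnLinearRelations} for $E$ restrict to those for $B$ on the pulled-back generators, and the quantum Stanley-Reisner relations \eqref{EqnQSRrelation} for $E$ become those of $B$ after expressing the contribution of the extra zero-section ray. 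The exponent $k$ in the substitution $t\mapsto t\,(\pi^*c_1(E))^k$ arises precisely because the sphere class in the fibre of $E$ over a point, pushed into $E$, pairs with $c_1(TE)$ with a shift of $k$ relative to the base (the fibre $\P^1$-direction contributes $+1$ to the Chern number while the negative twisting subtracts $k$), so the Novikov weight of a curve class $\beta$ in $B$, when viewed in $E$, is multiplied by $(\pi^*c_1(E))^{k\langle\beta,[\omega_B]\rangle}$ — this is the symplectic quantum-Lefschetz phenomenon alluded to in the text. Carefully matching the two presentations gives the claimed description of $QH^*(E)$.

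Finally, for the symplectic cohomology statement I would apply parts (2) and (5) of Theorem \ref{Theorem admissible toric manifolds}: $SH^*(E)$ is the localization of $QH^*(E)$ at all the toric-divisor classes $x_i=\mathrm{PD}[D_i^E]$, including the zero-section class. But in the presence of the linear relations, localizing at the zero-section class $\mathrm{PD}[B]$ is equivalent to localizing at $\pi^*c_1(E)$ (they differ by a unit and the classical relation expresses one in terms of the $x_i$), and once $\pi^*c_1(E)$ is inverted all the $x_i=\mathrm{PD}[D_i^E]$ become invertible as well — concretely, because the linear relations then let one solve for each $x_i$ in terms of the others and the now-invertible $\pi^*c_1(E)$, or more robustly because inverting $\pi^*c_1(E)$ is exactly Theorem \ref{Introduction theorem SH of E is QH modulo ker}'s localization $QH^*(E)_{\mathrm{PD}[B]}$. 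Hence $SH^*(E)\cong QH^*(E)_{\pi^*c_1(E)}$, and combining with parts (3)--(4) identifies this with $\mathrm{Jac}(W_E)$, where the superpotential $W_E$ is read off from the moment polytope of $E$ — it has one extra monomial (from the zero-section facet) relative to $W_B$, with the base monomials carrying the shifted Novikov variable.

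\textbf{Main obstacle.} The technical heart is verifying admissibility of $E$ — specifically checking condition \eqref{Item Admissible max principle}, that the rotations $g_i$ about all toric divisors (not just the fibre direction) are generated by Hamiltonians of the form \eqref{Equation H=fR Hamiltonians} with $f\geq 0$ Reeb-invariant so that the Extended Maximum Principle applies — and then bookkeeping the precise exponent $k$ and Novikov substitution in the quantum Stanley-Reisner relations when passing from the fan of $B$ to that of $E$. The eigenspace/localization identity "localize at $\pi^*c_1(E)$ $\Leftrightarrow$ localize at all $D_i^E$" also requires the linear relations to be invoked with the primitivity of $[\omega_B]$, which is where the hypothesis $[\omega_B]\in H^2(B,\Z)$ primitive is genuinely used.
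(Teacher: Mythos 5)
Your proposal follows essentially the same route as the paper: verify admissibility by computing the Hamiltonians for the rotations $g_i$ explicitly (Theorem \ref{Theorem Hamiltonians for rotations about divisors} and Corollary \ref{Corollary Hams in toric case are OK}, so the Extended Maximum Principle applies); compute the fan and moment polytope of $E$ from that of $B$ (Lemma \ref{Lemma line bundle from fan of base}, Lemma \ref{Lemma moment polytope for neg l bdle}); compare the linear and quantum Stanley--Reisner relations and extract the Novikov change of variables (Corollary \ref{Corollary SR relns of E in terms of B}, Theorem \ref{Theorem change of Novikov param}, Lemma \ref{Lemma Quantum SR-relations involve positive slope Hamiltonians}); and realize $SH^*(E)$ as the localization of $QH^*(E)$ at $\pi^*c_1(E)$ and identify it with $\mathrm{Jac}(W_E)$ via Batyrev's combinatorial argument (Corollary \ref{Corollary SH is a localization of QH}, Theorem \ref{Theorem Jacobian ring is SH}).

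One slip to fix in the last step: the claim that ``the linear relations then let one solve for each $x_i$'' once $\pi^*c_1(E)=\sum n_ix_i$ is inverted does not work --- inverting a single linear combination of the $x_i$ does not formally invert the individual $x_i$. For instance for $\mathcal{O}(-1,-1)\to\P^1\times\P^1$ the linear relations give $\pi^*c_1(E)=-(x_1+x_2)$, and inverting $x_1+x_2$ does not invert $x_1$ without also using the quantum SR-relation $x_1^2=-t(x_1+x_2)$ and the invertibility of $t$. What actually makes each $x_i$ invertible in $SH^*(E)$ is the representation $\mathcal{S}$: $x_i=\mathcal{S}_{g_i^\wedge}(1)$ has inverse $\mathcal{S}_{(g_i^{-1})^\wedge}(1)$, which exists because negative-slope Hamiltonians are allowed when mapping to $SH^*$ (this is also how the proof of Theorem \ref{Theorem admissible toric manifolds} obtains the localization at all $x_i$). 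Your fallback, citing Theorem \ref{Introduction theorem SH of E is QH modulo ker} directly, is correct and is in fact the route the paper takes via Corollary \ref{Corollary SH is a localization of QH}.
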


Although the above follows from Theorem \ref{Theorem admissible toric manifolds}, some substantial work is involved:
\begin{itemize}
\item We explicitly compute the Hamiltonians generating the rotations about the toric divisors (Theorem \ref{Theorem Hamiltonians for rotations about divisors}) to verify that the maximum principle in Lemma \ref{Introduction Lemma Extended Max principle} applies.

\item We verify that the Hamiltonians involved in the quantum Stanley-Reisner relations have positive slope at infinity (a condition required for $r$-elements in $QH^*$ to be defined, whereas $\mathcal{R}$-elements in $SH^*$ do not require this), see Lemma \ref{Lemma Quantum SR-relations involve positive slope Hamiltonians}.

\item We compute the moment polytope of $E$ in terms of the moment polytope of $B$ (see Appendix A) to be able to compare the presentations of $QH^*(B)$ and $QH^*(E)$. We compare the linear and quantum SR-relations for $B$ and $E$ in Corollary \ref{Corollary SR relns of E in terms of B}.

\item The fact that the quantum SR-relations of $B$ and $E$ differ by the above change of Novikov variable is proved in Theorem \ref{Theorem change of Novikov param}.
\end{itemize}

Given that the presentations of $QH^*(B)$ and $QH^*(E)$ are the same up to a change in Novikov parameter, one might hope there is a natural homomorphism $QH^*(B)\to QH^*(E)$. However this is neither natural nor entirely correct. In the monotone (Fano) regime, using a formal Novikov variable $t$ is not strictly necessary as there are no convergence issues in the definition of the quantum product. However, in the absence of $t$, the Novikov change of variables becomes meaningless. Secondly, over the usual Novikov ring $\Lambda$ from Section \ref{Subsection Novikov ring}, $t$ is invertible, but under the change of variables the image $t\, (\pi^*c_1(E))^k \in QH^*(E)$ is never invertible \cite{Ritter4} (this is an instance of the general fact that $\mathcal{R}$-elements are invertible in $SH^*$ but $r$-elements may not be invertible in $QH^*$). Provided we restrict the Novikov ring, we show in Theorem \ref{Theorem Minimal poly and char poly from B to E} that there are ring homomorphisms 
\begin{equation}\label{Equation Introduction varphi}
\begin{array}{l}
\varphi: QH^*(B;\K[t]) \to QH^*(E;\K[t]) \to SH^*(E;\K[t])\\
\varphi: QH^*(B;\K[t,t^{-1}]) \to SH^*(E;\K[t,t^{-1}]),
\end{array}
\end{equation}
defined by $x_i\mapsto x_i$ and $t\mapsto t (\pi^*c_1(E))^k$. These $\varphi$ are useful in deducing relations in $E$ from relations in $B$. For example, the characteristic polynomial of $[\omega_B]$ maps via $\varphi$ to the characteristic polynomial of $[\omega_E]=\pi^*[\omega_B]$.  

In Section \ref{Subsection Compuation of QH and SH of O(-1,-1)} we compute $QH^*$ and $SH^*$ for $\mathcal{O}(-k)\to \P^m$ and $\mathcal{O}(-1,-1)\to \P^1 \times \P^1$. Recall $QH^*(\P^m) = \Lambda[x]/(x^{1+m}-t)$, where $x^{1+m}-t$ is the characteristic polynomial of $x=[\omega_{\P^m}]$; $\Lambda$ is the Novikov ring (Section \ref{Subsection Novikov ring}). Applying $\varphi$ to $x^{1+m}-t$ we effortlessly deduce:
\begin{equation}\label{Equation Introduction SH of O-k over CPm}
\begin{array}{rcl}
QH^*(\mathcal{O}_{\P^m}(-k)) = \Lambda[x]/(x^{1+m} -t (-kx)^k)\\
SH^*(\mathcal{O}_{\P^m}(-k)) = \Lambda[x]/(x^{1+m-k} -t (-k)^k)
\end{array}
\end{equation}
in the monotone regime, so $1\leq k \leq m$. A vast improvement over the difficulty of the virtual localization methods of \cite{Ritter4} which were only feasible for $k\leq m/2$.

The $\varphi$ are also crucial in Section \ref{Subsection The eigenvalues of c_1(TX)}  to determine the splitting of $QH^*(E)$ into eigensummands of $c_1(TE)=\lambda_E [\omega_E]$ in terms of the splitting of $QH^*(B)$ into eigensummands of $c_1(TB)=\lambda_B [\omega_B]$ (assuming now that the underlying field we work over is algebraically closed). This decomposition plays a key role later in decomposing the Fukaya category of $E$.

\begin{theorem}[Assuming $\mathrm{char}(\K)$ does not divide $k$]\label{Introduction Theorem Eigenvalues of c1 from B to E} The eigensummand decomposition for $QH^*(B)$, as a $\Lambda[x]$-module with $x$ acting by multiplication by $[\omega_B]$, has the form
\begin{equation}\label{Equation primary decomposition of QH(B)}
QH^*(B) \cong \frac{\Lambda[x]}{(x^{\lambda_B}-\mu_1^{\lambda_B}t)^{d_1}} \oplus \cdots \oplus\frac{\Lambda[x]}{(x^{\lambda_B}-\mu_p^{\lambda_B}t)^{d_p}} \oplus \frac{\Lambda[x]}{x^{d_{p+1}}} \oplus \cdots \oplus \frac{\Lambda[x]}{x^{d_q}}.
\end{equation}
It determines the following $\Lambda[x]$-module isomorphisms where $x$ acts as $[\omega_E]=\pi^*[\omega_B]$,
\begin{equation}\label{Equation primary decomposition of QH(E)}
\begin{array}{rcl}
SH^*(E) &\cong & \displaystyle \frac{\Lambda[x]}{[x^{\lambda_B-k}-(-k)^k\mu_1^{\lambda_B}t]^{d_1}} \oplus \cdots \oplus\frac{\Lambda[x]}{[x^{\lambda_B-k}-(-k)^k\mu_p^{\lambda_B}t]^{d_p}}\\[4mm]
QH^*(E) &\cong & SH^*(E)\oplus \ker(x^{\mathrm{large}}).
\end{array}
\end{equation}
\end{theorem}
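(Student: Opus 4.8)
\emph{Proof strategy.}
The plan is to convert everything into module theory over the Novikov ring, feeding the decomposition \eqref{Equation primary decomposition of QH(B)} through the two structural results already available: that, as a ring, $QH^*(E)$ is presented by the same linear and quantum Stanley--Reisner relations as $QH^*(B)$ after the substitution $t\mapsto t\,(\pi^*c_1(E))^k$ (Theorem~\ref{Theorem admissible toric manifolds}(1),(3) and Theorem~\ref{Theorem Introduction SH is Jac}), and that $SH^*(E)$ is the localization of $QH^*(E)$ at $\pi^*c_1(E)$. First I would rephrase these data in terms of the class to be tracked. As the $x_i$ of $E$ are pulled back from $B$, one has $\sum_i x_i=\pi^*c_1(TB)=\lambda_B[\omega_E]$, hence $\pi^*c_1(E)=-(k/\lambda_B)\sum_i x_i=-k[\omega_E]$; writing $x$ for the operator ``quantum multiplication by $[\omega_E]$'' (resp.\ by $[\omega_B]$), the change of Novikov variable becomes $t\mapsto (-k)^k\,t\,x^k$, and since $\mathrm{char}(\K)\nmid k$ makes $-k$ a unit, localization at $\pi^*c_1(E)$ is inversion of $x=[\omega_E]$.

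Next I would transport the characteristic and minimal polynomials of $x$ from $B$ to $E$. By Theorem~\ref{Theorem Minimal poly and char poly from B to E}, writing the characteristic polynomial $\chi_B(T)$ of $[\omega_B]$ on $QH^*(B)$ (which has coefficients in $\K[t]$) as a polynomial in $t$ and $T$, the characteristic polynomial $\chi_E(T)$ of $[\omega_E]$ on $QH^*(E)$ is its image under $t\mapsto(-k)^k\,t\,T^k$, and likewise for minimal polynomials; the degrees agree because $\dim_\Lambda QH^*(E)=\dim_\Lambda QH^*(B)=\dim_\K H^*(B)$. Reading off from \eqref{Equation primary decomposition of QH(B)} --- each summand there being cyclic --- we get $\chi_B(T)=\prod_{j=1}^p(T^{\lambda_B}-\mu_j^{\lambda_B}t)^{d_j}\cdot\prod_{j=p+1}^q T^{d_j}$, while the minimal polynomial of $[\omega_B]$ is $\prod_{j=1}^p(T^{\lambda_B}-\mu_j^{\lambda_B}t)^{d_j}\cdot T^{\max_{j>p}d_j}$ (with the $\mu_j^{\lambda_B}$ chosen distinct). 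Using $1\le k<\lambda_B$ to factor $T^{\lambda_B}-(-k)^k\mu_j^{\lambda_B}tT^k=T^k\bigl(T^{\lambda_B-k}-(-k)^k\mu_j^{\lambda_B}t\bigr)$, the substitution yields $\chi_E(T)=T^N\cdot g(T)$ and a minimal polynomial $T^{N'}\cdot g(T)$ for $[\omega_E]$, where $g(T)=\prod_{j=1}^p\bigl(T^{\lambda_B-k}-(-k)^k\mu_j^{\lambda_B}t\bigr)^{d_j}$ and $N=k\sum_{j\le p}d_j+\sum_{j>p}d_j$.

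From here the decomposition is formal. The factors $T^{\lambda_B-k}-(-k)^k\mu_j^{\lambda_B}t$ are pairwise coprime in $\Lambda[T]$ (their differences are units $(-k)^k(\mu_{j'}^{\lambda_B}-\mu_j^{\lambda_B})t\in\Lambda^\times$) and each is coprime to $T$; splitting off the generalized $0$-eigenspace $\ker(x^{\mathrm{large}})$ of $x=[\omega_E]$, the complementary $\Lambda[x]$-submodule of $QH^*(E)$ has characteristic polynomial $g(x)$ equal to its minimal polynomial, hence is cyclic and, by the structure theorem for the PID $\Lambda[x]$ together with the Chinese Remainder Theorem, isomorphic to $\bigoplus_{j=1}^p\Lambda[x]/\bigl(x^{\lambda_B-k}-(-k)^k\mu_j^{\lambda_B}t\bigr)^{d_j}$. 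Localizing at $x$ (i.e.\ at $\pi^*c_1(E)$) annihilates $\ker(x^{\mathrm{large}})$, on which $x$ is nilpotent, and leaves the remaining summands unchanged, since in each of them $x^{\lambda_B-k}=(-k)^k\mu_j^{\lambda_B}t$ is already invertible ($\mu_j\neq0$). This gives the stated formula for $SH^*(E)$ and, with $\ker(x^{\mathrm{large}})$ being exactly the generalized $0$-eigenspace of $[\omega_E]$, the splitting $QH^*(E)\cong SH^*(E)\oplus\ker(x^{\mathrm{large}})$.

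The hard part is the middle step: one needs that multiplication by $[\omega_E]$ on $QH^*(E)$ genuinely \emph{is} the substituted version of multiplication by $[\omega_B]$ on $QH^*(B)$, at the level of the full $\Lambda[x]$-module structure (the characteristic and minimal polynomials being only the visible shadow of this). The delicacy is that the substitution $t\mapsto t\,(\pi^*c_1(E))^k$ is not invertible --- $\pi^*c_1(E)$ is a non-unit in $QH^*(E)$ --- so it cannot be realized over the Novikov field and must be set up over $\K[t]$ (or $\K[t,t^{-1}]$), exactly as in the construction of the homomorphisms $\varphi$ of \eqref{Equation Introduction varphi}; this is the content of Theorem~\ref{Theorem Minimal poly and char poly from B to E}, which I would invoke. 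Granting it, the only remaining care is the eigenvalue bookkeeping used above: keeping the $\mu_j^{\lambda_B}$ distinct, checking coprimality of the resulting factors (which are linear in $t$), and tracking that the exponents $d_j$ and the nilpotent block $\ker(x^{\mathrm{large}})$ transfer as claimed.
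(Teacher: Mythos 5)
Your outline correctly identifies the right algebraic framework (transfer polynomials via the Novikov substitution, then apply the structure theorem over the PID $\Lambda[x]$) and the degree argument for the characteristic polynomial does work: $\varphi(\chi_B)$ is a monic annihilator of $[\omega_E]$ of degree $\dim_\Lambda QH^*(E)$, so it must be $\chi_E$. The gap is in the phrase ``and likewise for minimal polynomials''. Theorem~\ref{Theorem Minimal poly and char poly from B to E}, which you invoke for this, \emph{explicitly disclaims} it: the theorem says the image of a polynomial relation is a polynomial relation, ``but we don't claim the image is characteristic/minimal''. A priori $m_E(x)$ only \emph{divides} $\varphi(m_B)(x) = x^{N'}g(x)$, and there is no degree argument forcing equality. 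If some factor $(x^{\lambda_B-k}-(-k)^k\mu_j^{\lambda_B}t)$ occurred in $m_E$ with exponent $e_j<d_j$, your ``char poly $=$ min poly $\Rightarrow$ cyclic'' step fails and the $d_j$ in the conclusion would be wrong. The statement that the $d_j$ ``transfer as claimed'' is precisely the content of the theorem, not a bookkeeping afterthought, and it cannot be extracted from the forward ring map $\varphi$ alone.

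The paper's proof supplies the missing converse (its Step~1): if $f(x)\varphi(v)=0$ in $QH^*(E;\K[t])$ then $x^{\mathrm{large}}f(x)v=0$ in $QH^*(B;\K[t])$ after the Novikov substitution. This is proved by showing that any element of the ideal of $E$-relations, after multiplication by $(-kx)^{\mathrm{large}}$, lies in the ideal of $B$-relations. Applied to a putative low-degree annihilator $f$ of $\varphi(g_j)$ in $E$, it forces $x^{\mathrm{larger}}f$ to annihilate $g_j$ in $B$, hence to be divisible by $(x^{\lambda_B}-\mu_j^{\lambda_B}t_B)^{d_j}$ — which gives the lower bound on $m_E$ that your argument is missing. (The paper also carries out Steps~2,~4,~6 over $\K[t,t^{-1}][x]$, which is not a PID, in order to keep everything compatible with $\varphi$; your working over $\Lambda[x]$ is harmless for the final statement but does not by itself bypass the need for the converse.) To repair your proof you would need to establish, or cite, that converse implication; once you have it, your route via the structure theorem and CRT is essentially the same as the paper's Steps~3,~5,~6.

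One smaller point: in \eqref{Equation primary decomposition of QH(B)} the $\mu_j^{\lambda_B}$ need not be distinct (the same $\mu_j$ can label several Jordan blocks), so the minimal polynomial of $[\omega_B]$ is $\prod_\mu (x^{\lambda_B}-\mu^{\lambda_B}t)^{\max_j d_j}\cdot x^{\max_{j>p}d_j}$ with the inner max over $j$ sharing the value $\mu^{\lambda_B}$; your formula implicitly assumed distinctness. This does not create new difficulties but should be tracked if you pursue this route.
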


In Ritter-Smith \cite{RitterSmithVersion1} (Version 1) we conjectured that $SH^*(E)$ was non-zero for all monotone toric negative line bundles $E$, and we proved this for $\mathcal{O}_{\P^m}(-k)$, $1\leq k\leq m/2$. For more general $E$, this is not an immediate consequence of the explicit presentation in Theorem \ref{Theorem Introduction SH is Jac} (presenting a ring does not mean it is easy to determine whether an element vanishes or not). By Theorem \ref{Introduction theorem SH of E is QH modulo ker}, $SH^*(E)\neq 0$ is equivalent to the condition that $\pi^*c_1(E)$ is not nilpotent in $QH^*(E)$. By Theorem \ref{Introduction Theorem Eigenvalues of c1 from B to E}, $\pi^*c_1(E)=-k\pi^*[\omega_B]$ is non-nilpotent in $QH^*(E)$ precisely if $c_1(B)$ is non-nilpotent in $QH^*(B)$. In \cite{RitterSmithVersion1} (Version 1) we conjectured that for closed Fano toric varieties, $c_1(TB)\in QH^*(B)$ is always non-nilpotent, but despite the explicit Batyrev presentation of $QH^*(B)$ this was not known. In Section \ref{Subsection Galkin's result} we explain how a recent result of Galkin \cite{Galkin} implies this Conjecture. Thus:

\begin{corollary}\label{Corollary SH(E) nonzero for mon toric nlb}
Let $E$ be any monotone toric negative line bundle. Then $c_1(TE)\in QH^*(E)$ is non-nilpotent and $SH^*(E)\neq 0$. 
\end{corollary}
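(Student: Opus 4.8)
The plan is to derive Corollary \ref{Corollary SH(E) nonzero for mon toric nlb} from the chain of equivalences already assembled in the excerpt, reducing everything to a single statement about the base $B$. First I would recall that by Theorem \ref{Introduction theorem SH of E is QH modulo ker} (and the comment following it), $SH^*(E)\cong QH^*(E)_{\mathrm{PD}[B]}$, so $SH^*(E)\neq 0$ if and only if the class $\mathrm{PD}[B]=\pi^*c_1(E)$ localizes to something nonzero, which happens precisely when $\pi^*c_1(E)$ is not nilpotent in $QH^*(E)$. Next I would invoke Theorem \ref{Introduction Theorem Eigenvalues of c1 from B to E}: the eigensummand decompositions \eqref{Equation primary decomposition of QH(B)}--\eqref{Equation primary decomposition of QH(E)} show that $\pi^*[\omega_B]$ acts on $SH^*(E)$ with exactly the nonzero eigenvalues coming from the nonzero-eigenvalue summands $\Lambda[x]/(x^{\lambda_B}-\mu_j^{\lambda_B}t)^{d_j}$ of $QH^*(B)$. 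Hence $\pi^*c_1(E)=-k\,\pi^*[\omega_B]$ is non-nilpotent in $QH^*(E)$ if and only if $[\omega_B]$ (equivalently $c_1(TB)=\lambda_B[\omega_B]$) is non-nilpotent in $QH^*(B)$, i.e. if and only if at least one of the summands with $\mu_j\neq 0$ is present. (Here I use the hypothesis that $\mathrm{char}(\K)$ does not divide $k$, so that multiplication by $-k$ does not artificially kill eigenvalues; if one worries about characteristic, one may simply work over $\K=\C$ or a field of characteristic zero, which is the standard setting.)

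It therefore remains to show: for every closed Fano toric manifold $B$, the class $c_1(TB)$ is non-nilpotent in $QH^*(B)$. This is exactly the Conjecture from \cite{RitterSmithVersion1} referred to in the paragraph preceding the corollary, and the excerpt states that it follows from a recent theorem of Galkin \cite{Galkin}. Concretely, Galkin's result provides, for any Fano toric $B$, a ring homomorphism (evaluation at a critical point of the superpotential, or equivalently the existence of a nonzero eigenvalue of the quantum multiplication by $c_1(TB)$) witnessing that $c_1(TB)$ acts with a nonzero eigenvalue on $QH^*(B)$. Since an operator with a nonzero eigenvalue on a finite-dimensional space cannot be nilpotent, $c_1(TB)$ is non-nilpotent. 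In the language of \eqref{Equation primary decomposition of QH(B)} this says that at least one summand of the first type ($\mu_j\neq 0$) occurs. I would state this as a lemma, citing Galkin, and spend a sentence or two translating his statement into the precise form ``$c_1(TB)$ non-nilpotent in $QH^*(B)$'' that we need, being careful that his hypotheses (Fano, toric, possibly an algebraically closed or characteristic-zero ground field) match ours.

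Assembling the pieces: given a monotone toric negative line bundle $\pi:E\to B$, the base $B$ is a closed Fano toric manifold (the Fano condition on $E$ forces $1\le k<\lambda_B$ and in particular $B$ Fano), so by the lemma $c_1(TB)$ is non-nilpotent in $QH^*(B)$; by Theorem \ref{Introduction Theorem Eigenvalues of c1 from B to E} this makes $\pi^*c_1(E)=-k\,\pi^*[\omega_B]$ non-nilpotent in $QH^*(E)$, hence $c_1(TE)=\lambda_E[\omega_E]=\lambda_E\pi^*[\omega_B]$ is non-nilpotent as well (it differs from $\pi^*c_1(E)$ by a nonzero scalar); and then Theorem \ref{Introduction theorem SH of E is QH modulo ker} gives $SH^*(E)=QH^*(E)_{\mathrm{PD}[B]}\neq 0$.

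I expect the only real obstacle to be the input from Galkin's paper: everything else in the excerpt is a formal chain of ``iff''s, but the non-nilpotence of $c_1(TB)$ for all Fano toric $B$ is a genuinely nontrivial fact, and the work lies in checking that \cite{Galkin}'s result is stated in (or readily implies) the form we need — in particular that it applies to every smooth Fano toric manifold and yields a nonzero eigenvalue of $\ast c_1(TB)$ rather than merely some weaker statement about the superpotential. A secondary, minor point is bookkeeping about the ground field and the divisibility hypothesis $\mathrm{char}(\K)\nmid k$ in Theorem \ref{Introduction Theorem Eigenvalues of c1 from B to E}; I would handle this simply by recording that the corollary is asserted over a field for which that theorem applies (e.g. any algebraically closed field of characteristic not dividing $k$, and in particular any field of characteristic zero).
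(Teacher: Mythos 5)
Your proposal is correct and follows essentially the same route as the paper: reduce via Theorems \ref{Introduction theorem SH of E is QH modulo ker} and \ref{Introduction Theorem Eigenvalues of c1 from B to E} to the non-nilpotence of $c_1(TB)\in QH^*(B)$ for the closed Fano toric base, and then invoke Galkin's theorem \cite{Galkin}. The one step you flag as needing checking is handled in Section \ref{Subsection Galkin's result} exactly as you anticipated: Galkin produces a non-degenerate critical point $p$ of $W=\sum z^{e_i}$ with strictly positive real coordinates, so $W(p)>0$ is a nonzero critical value, which via $QH^*(B)\cong\mathrm{Jac}(W_B)$, $c_1(TB)\mapsto W$, is a nonzero eigenvalue of $c_1(TB)$.
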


This result has significant geometric implications: we will see below that it implies $E$ always contains a non-displaceable monotone Lagrangian torus (Section \ref{Subsection Galkin's result}).

%%%%%%%%%%%%%%%%%%%%%%%%%%%%%%%%%%%%%%%%%
\subsection{Generation results for the Fukaya category and the wrapped category}
%%%%%%%%%%%%%%%%%%%%%%%%%%%%%%%%%%%%%%%%%
We now come back to the discussion of diagram \eqref{Equation Intro Comm Diagram Ritter Smith}. Recall that the direct limit construction which defines the morphism spaces of the wrapped category $\mathcal{W}(M)$ is typically responsible for making the Hochschild homology $\mathrm{HH}_*(\mathcal{W}(M))$ infinite dimensional, just like $SH^*(M)$ is typically expected to be infinite dimensional. This is in contrast to the surprising outcome in Theorem \ref{Theorem admissible toric manifolds} that
$SH^*(M)$ is finite-dimensional for admissible toric manifolds. The goal of the following section is to prove the open-string analogue of this (at least under a genericity assumption on the superpotential), namely cohomological-finiteness for the wrapped category $\mathcal{W}(M)$. We will show that $\mathcal{W}(M)$ is split-generated by the unique compact monotone Lagrangian toric fibre $L$ of the moment map (taken together with finitely many choices of holonomy data, determined by the superpotential), and that $L$ is non-displaceable.\\[1mm]
{\bf Remark.} \emph{Before proceeding, it may be good to clarify what was legitimately to be expected and what was not. Applications to the existence of non-displaceable Lagrangian toric fibres were to be expected, given the substantial work in the case of closed toric manifolds by Fukaya-Oh-Ohta-Ono \cite{FOOOtoric}. In fact for negative line bundles $E\to B$, although messy, one can even explicitly compute the critical points of the superpotential of $E$ in terms of those of $B$ (carried out in Ritter-Smith \cite{RitterSmith}), and then general machinery due to Cho-Oh \cite{Cho-Oh} implies the non-displaceability of the Lagrangian fibres corresponding to the critical points. The novelty of this paper is another: we prove that these toric fibres actually split-generate the whole wrapped category (and not just for negative line bundles). This is surprising in the non-compact setting because the wrapped category admits non-compact Lagrangians and in general it is expected to be either cohomologically infinite or trivial, just like $SH^*(M)$. Secondly, our approach bypasses the messy computations involving the superpotential thanks to our closed-string mirror symmetry result $SH^*(M)\cong \mathrm{Jac}(W_M)$ (Theorem \ref{Theorem admissible toric manifolds}).}
\\[1mm]
\indent We now discuss the generation theorems. Diagram \eqref{Equation Intro Comm Diagram Ritter Smith}, in the monotone setting, is not entirely precise. The Lagrangians are typically \emph{obstructed}: for example, in the toric case, there are always holomorphic Maslov 2 discs bounding toric Lagrangians, and these cause $\partial^2\neq 0$ at the Floer chain level, thus Floer theory becomes problematic \cite{FOOO}. In fact, with current technology, one must restrict the categories $\mathcal{F}_{\lambda}(M),\mathcal{W}_{\lambda}(M)$: they are labeled by the eigenvalues $\lambda$ of the action of $c_1(TM)$ on $QH^*(M)$ by quantum multiplication, and one only allows objects $L$ with a fixed $m_0$-value $\lambda=m_0(L)$. Following \cite{FOOO} (see also \cite{Auroux,RitterSmith}), $m_0(L)$ is the count of holomorphic Maslov 2 discs bounding $L$ passing through a generic point of $L$:
$$\mathfrak{m}_0(L) = m_0(L)\, [L] = \lambda\, [L] \quad \textrm{ for }L\in \mathrm{Ob}(\mathcal{F}_{\lambda}(M))\textrm{ or }\mathrm{Ob}(\mathcal{W}_{\lambda}(M)).$$ 
{\bf Remark.} \emph{We emphasize that we will not undertake the onerous task of generalizing these constructions to a curved $A_{\infty}$-category $\mathcal{F}(M),\mathcal{W}(M)$ which mixes different $m_0$-values, although this task will perhaps be within reach after forthcoming work of \cite{AFOOO}.}\\[1mm]
\indent The generation criterion of Abouzaid \cite{Abouzaid}, adapted to the monotone setup by Ritter-Smith \cite{RitterSmith}, loosely speaking states that Lagrangians $L_1,\ldots,L_m$ split-generate the category if $\mathcal{OC}:\mathrm{HH}_*(\mathcal{W}_{\lambda}(M))\to SH^*(M)$ hits the unit when restricted to the subcategory generated by those $L_j$ (the analogous statement holds for $\mathcal{OC}:\mathrm{HH}_*(\mathcal{F}_{\lambda}(M))\to QH^*(M)$). 

In Ritter-Smith \cite{RitterSmith}, we showed that $\mathcal{OC}$ is an $SH^*(M)$-module map (a property independently observed in the exact case by Ganatra \cite{Ganatra}). Analogously, $\mathcal{OC}:\mathrm{HH}_*(\mathcal{F}_{\lambda}(M))\to QH^*(M)$ is a $QH^*(M)$-module map. In \cite{RitterSmith} we also proved that $\mathcal{OC}$ preserves eigensummands, so the acceleration diagram becomes:
\begin{equation}
\xymatrix{ 
\mathrm{HH}_*(\mathcal{F}_{\lambda}(M)) \ar@{->}[r]^-{} \ar@{->}[d]_-{\mathcal{OC}} & \mathrm{HH}_*(\mathcal{W}_{\lambda}(M)) 
 \ar@{->}^{\mathcal{OC}}[d] \\
QH^*(M)_{\lambda}
\ar@{->}[r]^-{c^*}  & SH^*(M)_{\lambda}}
\end{equation}
where $QH^*(M)_{\lambda}$, $SH^*(M)_{\lambda}$ are the generalized $\lambda$-eigensummands for multiplication by, respectively, $c_1(TM)$ and $c^*(c_1(TM))$. We proved \cite{RitterSmith} that the generation criterion for the $\lambda$-piece of the category holds provided that $\mathcal{OC}$ hits an invertible element in the $\lambda$-eigensummand. 

In the toric setup, since toric Lagrangians are compact and since $c^*(1)=1$ in the acceleration diagram, $\mathcal{W}_{\lambda}(M)$ is split-generated by toric Lagrangians if one can show that 
$$\mathcal{OC}:\mathrm{HH}_*(\mathcal{F}_{\lambda}(M)) \to QH^*(M)_{\lambda}$$
hits an invertible, so we reduce to studying holomorphic discs bounding Lagrangians. We will say that the \emph{toric generation criterion holds for }$\lambda$ if the composite 
$$\mathcal{CO}\circ \mathcal{OC}: \mathrm{HH}_*(\mathcal{F}_{\lambda}^{\mathrm{toric}}(M)) \to QH^*(M) \to HF^*(K,K)$$
hits the unit $[K]\in HF^*(K,K)$ for any Lagrangian $K\in \mathrm{Ob}(\mathcal{F}_{\lambda}(M))$. Here $\mathcal{F}_{\lambda}^{\mathrm{toric}}(M)\subset \mathcal{F}_{\lambda}(M)$ is the subcategory generated by the toric Lagrangians (with holonomy data), and $\mathcal{CO}$ is the closed-open string map. This is in fact enough to prove split-generation \cite{Abouzaid,RitterSmith}.

The practical reasons (see Appendix A) for wanting to work in the Fano toric setup are:
\begin{itemize}
\item There is a machinery due to Cho-Oh \cite{Cho-Oh} for producing Lagrangians: there is a non-displaceable toric Lagrangian $L_p$ with holonomy data for each critical point $p\in \mathrm{Crit}(W)$ of the superpotential $W$, and it satisfies $\lambda=m_0(L)=W(p)$.

\item $p\in \mathrm{Crit}(W)$ if and only if the point class $[\mathrm{pt}]\in HF^*(L_p,L_p)$ is a cycle \cite{Cho-Oh,Auroux}. In fact it then follows by a spectral sequence argument that there is an isomorphism of vector spaces \cite[Theorem 10.1]{Cho-Oh}
\begin{equation}\label{EqnChoOh}
HF^*(L_p,L_p) \cong H^*(L_p;\Lambda).
\end{equation}
% some care needed, since HF cycles are a cycle in H_*(L) plus correction terms
% needed to kill of Floer boundary.
% Spectral sequence argument only requires looking at E2 page: the delta_2 differential
% is relevant only on the point class (unstable solns for higher cycles)
%
%
This can also be described explicitly and one can even recover the ring structure \cite{Cho-Products}, a Clifford algebra determined by $W$.

\item The non-constant holomorphic discs bounding toric Lagrangians have Maslov index at least $2$, and those of index $2$ are combinatorially determined by the moment polytope  \cite{Cho-Oh} (in fact, the superpotential $W$ can be interpreted as a count of those discs \cite{Auroux}).

\item By Ostrover-Tyomkin \cite{Ostrover-Tyomkin}, the Jacobian ring $\mathrm{Jac}(W)$ of a closed Fano toric variety has a field summand for each non-degenerate critical point of $W$ (Section \ref{Subsection Generation for 1-dimensional eigensummands}).

\end{itemize}

Our original goal in \cite{RitterSmith} was to prove that toric Lagrangians split-generate $\mathcal{W}(E)$ but we only succeeded to do this when $E$ is $\mathcal{O}(-k)\to \P^m$ for $1\leq k\leq m/2$, as this was the only regime where $QH^*(E)$, $SH^*(E)$ were explicitly known \cite{Ritter4}. The steps in \cite{RitterSmith} were:
\begin{itemize}

\item The ring $SH^*(\mathcal{O}_{\P^m}(-k))$ in \eqref{Equation Introduction SH of O-k over CPm} decomposes into $1$-dimensional eigensummands.

\item A combinatorial exercise with the superpotential $W$ of $\mathcal{O}_{\P^m}(-k)$ shows that all the eigenvalues $\lambda=W(p)$ required by that decomposition do in fact arise.

\item One can check that $L_p$ is the monotone Lagrangian torus in the sphere bundle of  $\mathcal{O}_{\P^m}(-k)$ lying over the Clifford torus in $\P^m$.

\item Observe that the simplest component of $\mathcal{OC}$, namely
\begin{equation}\label{Equation Introduction OC on HFLL}
\mathcal{OC}^{0|0}: HF^*(L,L) \to QH^*(M)
\end{equation}
is non-zero on the point class $[\mathrm{pt}]\in HF^*(L_p,L_p)$, since there is an obvious Maslov 2 disc bounding $L_p$ and living in the fibre determined by the given generic point in $L_p$, and this disc determines the non-zero leading term of $\mathcal{OC}^{0|0}([\mathrm{pt}])$.

\item Since $\mathcal{OC}$ respects eigensummands, $\mathcal{OC}$ hits a non-zero, and thus invertible, element in the $1$-dimensional eigensummand $SH^*(\mathcal{O}_{\P^m}(-k))_{\lambda}$, thus split-generation holds.

\end{itemize}

{\bf Remark.} \emph{Given the importance of \eqref{Equation Introduction OC on HFLL}, we mention that this map was first studied by Albers \cite{Albers}; that this map is a $QH^*(M)$-module map for closed monotone manifolds $M$ first appeared in Biran-Cornea \cite{Biran-Cornea}; more generally string maps first appeared in Seidel \cite{SeidelICM}.}\\[2mm]
\indent In \cite{RitterSmith}, we could not run the same argument above for more general monotone toric negative line bundles $E\to B$, due to the following issues:

\begin{enumerate}

\item\label{Item 1 Intro} We needed an explicit decomposition for $SH^*(E)$, Theorem \ref{Introduction theorem SH of E is QH modulo ker} was not enough.

\item\label{Item 2 Intro} It is computationally unfeasible to check that the critical points of the superpotential $W_E$ recover \emph{all} eigenvalues arising in the decomposition for $SH^*(E)$.

\item\label{Item 3 Intro} Ideally, one wants a condition on the superpotential $W_B$ of base $B$ to ensure generation for $E$, rather than a condition on $E$.

\item\label{Item 4 Intro} Even under the conjectural assumption that $SH^*(E)\cong \mathrm{Jac}(W_E)$, and assuming $W_E$ is Morse (so by Ostrover-Tyomkin \cite{Ostrover-Tyomkin}, $SH^*(E)$ is then a direct sum of fields), it may still happen that an eigensummand $SH^*(E)_{\lambda}$ consists of several field summands. So the above trick ``non-zero element implies invertible'' would fail. This argument would only work if we also assume that all eigenvalues are distinct i.e. simple (Section \ref{Subsection Generation for 1-dimensional eigensummands}).

\end{enumerate}

By Theorem \ref{Theorem Introduction SH is Jac}, we solve issues \eqref{Item 1 Intro} and \eqref{Item 2 Intro}, since $SH^*(E)\cong \mathrm{Jac}(W_E)$, $c^*(c_1(TE))\mapsto W$ ensures that the eigensummand decompositions agree. Issue \eqref{Item 3 Intro} is solved by Theorem \ref{Introduction Theorem Eigenvalues of c1 from B to E} (in particular, if the superpotential $W_B$ is Morse then $W_E$ is Morse). Issue \eqref{Item 4 Intro} requires a general perturbation argument (Appendix B), which we also use in the general argument for admissible toric manifolds, and we explain this later. We first clarify why, in the absence of the condition that $W$ is Morse, the generation problem is out of reach by current technology.

%%%%%%%%%%%%%%%%%%%%%%%%%%%%%%%
\subsection{The motivation for requiring the superpotential to be Morse}
\label{Subsection The motivation for requiring the superpotential to be Morse} 
%%%%%%%%%%%%%%%%%%%%%%%%%%%%%%%

In Sections \ref{Subsection What it means for the toric generation criterion to hold} and \ref{Subsection Failure of the generation criterion for some degenerate critical values}, we inspect the generation condition more closely. We show that $\mathcal{CO}:QH^*(M)\to HF^*(K,K)$, with $\lambda=m_0(K)$, will vanish on the eigensummands for eigenvalues different from $\lambda$. Moreover, $\mathcal{CO}$ vanishes on multiples of $c_1(TM)-\lambda$, thus it will vanish on multiples of $x-\lambda$ in a generalized eigensummand $\Lambda[x]/(x-\lambda)^d$ of $QH^*(M)_{\lambda}$. This shows that when the eigensummands of $QH^*(M)_{\lambda}$ are not $1$-dimensional (so $d\geq 2$), then the toric generation criterion will fail if the image of $\mathcal{OC}$ consists of $\lambda$-eigenvectors. Finally, in Theorem \ref{Theorem OC on HF(L,L) lands in eigenspace} by a deformation argument which we explain later, we show that the map in \eqref{Equation Introduction OC on HFLL} always consists of $\lambda$-eigenvectors, thus:

\begin{corollary}\label{Corollary need more than just OC00}
For closed Fano toric manifolds $M$ and for (non-compact) admissible toric manifolds $M$, the map $\mathcal{OC}^{0|0}: HF^*(L,L) \to QH^*(M)$ lands in the $m_0(L)$-eigenspace (not just the generalized eigenspace). So if the superpotential is not Morse, then proving the toric generation criterion requires knowing higher-order components of $\mathcal{OC}$ rather than just $\mathcal{OC}^{0|0}$.
\end{corollary}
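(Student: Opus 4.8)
The plan is to prove Corollary~\ref{Corollary need more than just OC00} in two stages, combining the structural properties of $\mathcal{CO}$ and $\mathcal{OC}$ already available with one genuinely new geometric input: the statement that $\mathcal{OC}^{0|0}$ lands in the honest eigenspace. First I would isolate the purely algebraic content. Recall that $\mathcal{OC}$ is a $QH^*(M)$-module map which, by Ritter--Smith~\cite{RitterSmith}, preserves generalized eigensummands, so $\mathcal{OC}^{0|0}:HF^*(L,L)\to QH^*(M)$ automatically lands in the generalized $\lambda$-eigensummand $QH^*(M)_\lambda$ with $\lambda=m_0(L)$. The refinement I need is that the image consists of \emph{genuine} eigenvectors, i.e.\ lies in $\ker(c_1(TM)-\lambda)$ rather than merely $\ker(c_1(TM)-\lambda)^d$. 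The cleanest route is to quote the deformation argument referenced as Theorem~\ref{Theorem OC on HF(L,L) lands in eigenspace}: under a generic deformation of the symplectic form the superpotential becomes Morse, $QH^*(M)_\lambda$ becomes one-dimensional, and $\mathcal{OC}^{0|0}$ tautologically lands in the eigenspace there; a continuity/limiting argument for the eigenspace decomposition (the same one advertised in the abstract) then transports this to the undeformed situation. This gives exactly the first sentence of the corollary for both closed Fano toric $M$ and admissible toric $M$.

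Second, I would extract the consequence for the toric generation criterion. Here the key inputs are the two vanishing properties of $\mathcal{CO}:QH^*(M)\to HF^*(K,K)$ established in Sections~\ref{Subsection What it means for the toric generation criterion to hold}--\ref{Subsection Failure of the generation criterion for some degenerate critical values}: namely $\mathcal{CO}$ kills every eigensummand $QH^*(M)_\mu$ for $\mu\neq\lambda=m_0(K)$, and $\mathcal{CO}$ kills the principal ideal generated by $c_1(TM)-\lambda$ inside $QH^*(M)_\lambda$. Writing a generalized eigensummand as $\Lambda[x]/(x-\lambda)^d$ with $x$ acting as $c_1(TM)$, the second property says $\mathcal{CO}$ annihilates $(x-\lambda)\cdot\Lambda[x]/(x-\lambda)^d$, i.e.\ it factors through the quotient $\Lambda[x]/(x-\lambda)\cong\Lambda$. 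Now combine: if $d\geq 2$ and $\mathcal{OC}^{0|0}$ lands in the honest eigenspace $\ker(x-\lambda)$, then in the presentation $\Lambda[x]/(x-\lambda)^d$ the honest eigenspace is precisely $(x-\lambda)^{d-1}\cdot\Lambda[x]/(x-\lambda)^d$, which lies in $(x-\lambda)\cdot\Lambda[x]/(x-\lambda)^d$ once $d-1\geq 1$; hence $\mathcal{CO}\circ\mathcal{OC}^{0|0}=0$. Thus the $\mathcal{OC}^{0|0}$-part of the open-closed map cannot hit the unit $[K]\in HF^*(K,K)$, and verifying the toric generation criterion is impossible using $\mathcal{OC}^{0|0}$ alone — one must access higher components $\mathcal{OC}^{0|1},\mathcal{OC}^{0|2},\dots$, which detect holomorphic discs with more boundary marked points and are not controlled merely by the combinatorics of Maslov~$2$ discs.

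I would present this as: (i) a short lemma restating $\mathcal{OC}^{0|0}(HF^*(L,L))\subseteq\ker(c_1(TM)-m_0(L))$, with proof deferred to (or cited from) Theorem~\ref{Theorem OC on HF(L,L) lands in eigenspace}; (ii) the two-line module-theoretic observation above about $(x-\lambda)^{d-1}\in(x-\lambda)\Lambda[x]/(x-\lambda)^d$ for $d\geq2$; (iii) the conclusion by combining with the vanishing of $\mathcal{CO}$ on $(c_1(TM)-\lambda)$-multiples and on off-$\lambda$ eigensummands. Since steps (ii) and (iii) are essentially formal given the earlier sections, the entire weight of the corollary rests on (i).

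The main obstacle is precisely step~(i), the claim that $\mathcal{OC}^{0|0}$ lands in the \emph{honest} eigenspace. Module-theoretic naturality of $\mathcal{OC}$ only yields membership in the \emph{generalized} eigensummand, and in principle a disc count could contribute a nilpotent-direction component $(x-\lambda)^j[L]$ with $j<d-1$. Ruling this out genuinely requires the deformation-and-limit technology: deform $\omega$ so that $W$ is Morse and $QH^*(M)$ splits into fields (Ostrover--Tyomkin~\cite{Ostrover-Tyomkin}), observe the statement is vacuous there since $d=1$, and then argue that the image of $\mathcal{OC}^{0|0}$ varies continuously in the deformation parameter while the honest eigenspace projection also varies continuously, so the limiting image still satisfies $(c_1(TM)-\lambda)\cdot(\,\cdot\,)=0$. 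Making this limiting argument rigorous — controlling the Floer moduli spaces uniformly in the deformation parameter and the semicontinuity of eigenspace dimensions — is the delicate part, and it is the content I would want to have pinned down in the body of the paper before invoking it here.
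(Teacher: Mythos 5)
Your proposal matches the paper's route exactly: the honest-eigenspace claim is Theorem~\ref{Theorem OC on HF(L,L) lands in eigenspace}, proved by the deformation-to-Morse plus limiting argument, and the remainder is the module-theoretic observation that $\mathcal{CO}$ kills multiples of $c_1(TM)-\lambda$ (Lemma~\ref{Lemma CO map vanishes on things}) combined with the fact that the eigenvectors in $\Lambda[x]/(x-\lambda)^d$, $d\geq 2$, are multiples of $(x-\lambda)^{d-1}$. One caution on your sketch of step~(i): the paper's Appendix~B (Example~\ref{Example bad matrix}) shows the eigenprojections $P_j(x)$ can actually blow up at the exceptional parameter, so the precise continuity statement you need is not continuity of the ``eigenspace projection'' but rather continuity of the eigenspaces as points of projective space (Lemma~\ref{Lemma Evecs converge}); this is exactly how Theorem~\ref{Theorem OC on HF(L,L) lands in eigenspace} traps the image of the continuously-varying $\mathcal{OC}^{0|0}$ in the limiting eigenspace.
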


It is for this reason that our paper will assume that the superpotential $W$ is Morse (or, more mildly, \emph{$\lambda$-semisimplicity}, meaning the non-degeneracy assumption at critical points $p$ of $W$ with eigenvalue $\lambda=W(p)$). Although this is a generic condition, it does imply the strong consequence that $QH^*(M)$ is semisimple, that is a direct sum of fields \cite{Ostrover-Tyomkin}. For some time, it was conjectured that $QH^*(M)$ was always semisimple for closed toric Fano manifolds, but this was shown to be false by Ostrover-Tyomkin \cite{Ostrover-Tyomkin}. In the non-compact setting, one can see from 
\eqref{Equation Introduction SH of O-k over CPm} that $QH^*(\mathcal{O}_{\P^m}(-k))$ for $k\geq 2$ is not semisimple due to the generalized $0$-eigensummand. In fact more generally, for monotone toric negative line bundles $E$, the zero-eigensummand $QH^*(E)_0$ is still rather mysterious. The toric generation criterion cannot hold for $\mathcal{F}_0(E)$ since there are no toric Lagrangians for $\lambda=0$. This is because by \eqref{Equation Introduction SH=Jac}, the superpotential does not have critical value zero ($W$ corresponds to a non-zero multiple of $c^*(\pi^*c_1(E))\in SH^*(E)$, which is invertible by Theorem \ref{Introduction theorem SH of E is QH modulo ker}, and $SH^*(E)\neq 0$ by Corollary \ref{Corollary SH(E) nonzero for mon toric nlb}).

\begin{corollary}
The toric generation criterion fails for $\mathcal{F}_0(E)$, i.e. $\lambda=0$, for any monotone toric negative line bundle.
\end{corollary}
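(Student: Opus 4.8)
The plan is to combine the two facts already established in the excerpt: that the toric generation criterion for $\mathcal{F}_\lambda(E)$ can only hold if there is a non-displaceable toric Lagrangian $L$ with $m_0(L)=\lambda$ (indeed such an $L$ is the candidate generator, and if no object of $\mathrm{Ob}(\mathcal{F}_\lambda(E))$ exists then the criterion is vacuously unfulfillable, since one needs $\mathcal{CO}\circ\mathcal{OC}$ to hit the unit $[K]\in HF^*(K,K)$ for a non-trivial $K$), and that $\lambda=0$ simply does not occur as a value $W(p)$ at a critical point $p\in\mathrm{Crit}(W_E)$ of the superpotential. First I would record, via Cho--Oh \cite{Cho-Oh}, that the monotone toric Lagrangian fibres $L_p$ are in bijection with $\mathrm{Crit}(W_E)$, with $m_0(L_p)=W_E(p)$; hence if $0\notin W_E(\mathrm{Crit}(W_E))$ there is no toric Lagrangian fibre sitting in $\mathcal{F}_0(E)$, so $\mathcal{F}_0^{\mathrm{toric}}(E)$ is empty and $\mathcal{CO}\circ\mathcal{OC}$ has empty source, thus cannot hit any unit. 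That disposes of the statement once the numerical claim $0\notin\mathrm{Crit}$-values is in hand.

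The heart of the argument is therefore to show $W_E$ has no critical value equal to $0$. I would argue through the closed-string mirror symmetry isomorphism $SH^*(E)\cong\mathrm{Jac}(W_E)$ of Theorem \ref{Theorem admissible toric manifolds}, under which the class $c^*(c_1(TE))$ maps to $W_E$ (equivalently, to a non-zero multiple of $c^*(\pi^*c_1(E))=\mathcal{R}(g_i^\wedge)$-type element). The key structural input is that $\mathcal{R}$-elements are invertible in $SH^*(E)$: by Theorem \ref{Introduction theorem SH of E is QH modulo ker} (and the general principle that $\mathcal{R}(g^\wedge)\in SH^*(M)^\times$), the image of $\pi^*c_1(E)$ in $SH^*(E)$ is a unit. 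Since $c_1(TE)=\lambda_E[\omega_E]$ is a non-zero scalar multiple of $\pi^*[\omega_B]$, and $\pi^*c_1(E)=-k\pi^*[\omega_B]$ is likewise a non-zero scalar multiple of it, the class $c^*(c_1(TE))\in SH^*(E)$ is invertible. Under the isomorphism with $\mathrm{Jac}(W_E)$, an invertible element is one whose value is non-zero at every point of $\mathrm{Spec}\,\mathrm{Jac}(W_E)$; but (under the Morse/semisimple hypotheses, or in general after noting that critical points contribute the support) those points are exactly $\mathrm{Crit}(W_E)$, and the function $c^*(c_1(TE))$ evaluates at $p$ to $W_E(p)$. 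Invertibility therefore forces $W_E(p)\neq 0$ for all $p\in\mathrm{Crit}(W_E)$. To be safe I would also invoke Corollary \ref{Corollary SH(E) nonzero for mon toric nlb}, which guarantees $SH^*(E)\neq 0$, so that the invertibility statement is not vacuous.

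The main obstacle is the precise bookkeeping connecting ``$c^*(c_1(TE))$ is invertible in $SH^*(E)$'' with ``$W_E$ has no zero critical value'': one must be careful that $c_1(TE)$ and $\pi^*c_1(E)$ really differ by a non-zero scalar in the monotone regime $1\le k<\lambda_B$ (so that invertibility of one gives invertibility of the other), and one must be careful about whether $\mathrm{Jac}(W_E)$ could have support away from $\mathrm{Crit}(W_E)$ or embedded components — issues that evaporate under $\lambda$-semisimplicity but should be addressed if one wants the statement in full generality. A clean way to sidestep this, which I would adopt, is to argue purely at the level of the explicit presentation: by Theorem \ref{Theorem Introduction SH is Jac} we have $SH^*(E)\cong\Lambda[x_1^{\pm1},\ldots,x_r^{\pm1}]/\mathcal{J}$, in which each $x_i$ is \emph{by construction invertible}, and $c^*(c_1(TE))=\sum x_i$ with a relation expressing $\sum x_i$ (up to units and the change of Novikov variable) as a unit times a power of the invertible generator $\pi^*c_1(E)$; chasing this through $\psi:SH^*(E)\xrightarrow{\sim}\mathrm{Jac}(W_E)$, $x_i\mapsto t^{-\lambda_i}z^{e_i}$, which are manifestly non-vanishing Laurent monomials on the critical locus, shows $W=\sum t^{-\lambda_i}z^{e_i}$ is a non-vanishing function there, i.e. $0\notin W(\mathrm{Crit}(W))$. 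Combined with the first paragraph, the toric generation criterion for $\mathcal{F}_0(E)$ fails, completing the proof.
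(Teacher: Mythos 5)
Your proposal is correct and follows essentially the same route as the paper: the paper's argument is precisely that $W_E$ has no critical value zero, because $W_E$ corresponds under $SH^*(E)\cong\mathrm{Jac}(W_E)$ to a non-zero multiple of $c^*(\pi^*c_1(E))$, which is invertible in the non-zero ring $SH^*(E)$, and hence there are no toric Lagrangians (critical points of $W_E$) with $m_0=0$ to serve as generators. Your extra caveats about $\lambda$-semisimplicity and scheme support are unnecessary — the decomposition of $\mathrm{Jac}(W_E)$ into local rings at critical points (Ostrover--Tyomkin) holds without Morse-ness, and an invertible element of a non-zero ring cannot have $0$ as a generalized eigenvalue — but they do no harm.
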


The perturbation theory in Appendix B (which we explain below) in fact suggests that to generate a generalized eigensummand of type $\Lambda[x]/(x-\lambda)^d$ of $QH^*(M)_{\lambda}$ will require a limit of $d$ eigenvectors of a deformation of $W$ (which makes $W_{\mathrm{deformed}}$ Morse, and $d$ Lagrangians with different $m_0$ values are involved in the limit). Thus knowledge of $\mathcal{OC}$ up to degree $d$ would be required. This is out of reach by current technology, because we cannot make sense of a Fukaya category which mixes different $m_0$-values (the $\partial^2\neq 0$ problem). The semisimplicity assumption on $\lambda$ is not problematic at the linear algebra level: Section \ref{Theorem Gevecs converge}, Theorem \ref{Subsection non-semisimple eigenvalue}, shows that the deformation theory still predicts (conjecturally!) surjectivity of the full map $\mathcal{OC}:\mathrm{HH}_*(\mathcal{F}^{\mathrm{toric}}(M))\to QH^*(M)\to SH^*(M)$, provided one could make sense of a Fukaya category in which Lagrangians with different $m_0$-values can co-exist. 

An explicit example for closed Fano toric manifolds, where $\mathcal{OC}^{0|0}$ is not enough to prove the toric generation crierion, is the smooth Fano $4$-fold called $U_8$, number 116, in Batyrev's classification \cite{BatyrevClassification}. 
This is the example of Ostrover-Tyomkin \cite{Ostrover-Tyomkin} having a non-semisimple quantum cohomology. The eigensummand for eigenvalue $W(p)=-6$ has the form $\Lambda[x]/(x+6)^d$ for $d\geq 2$, but the only torus $L_p$ available for split-generation for that summand has $\mathcal{OC}^{0|0}$ landing in the eigenspace by Corollary \ref{Corollary need more than just OC00}. Finally, as remarked above, $\mathcal{CO}$ vanishes on that eigenspace as these vectors are multiples of $x+6$.

%%%%%%%%%%%%%%%%%%%%%%%%%%%%%%%%%%%%%%%%%
\subsection{Twisting the Fukaya category, generation via a deformation theory argument}
\label{Subsection Twisting the Fukaya category, generation via a deformation theory argument}
%%%%%%%%%%%%%%%%%%%%%%%%%%%%%%%%%%%%%%%%%

We recall (Section \ref{Subsection Toric symplectic forms}) that for any Fano toric variety $X$, near the preferred \emph{monotone} toric symplectic form $\omega_X$, there is a family of non-monotone toric symplectic forms $\omega_F$ depending on the parameters ($\lambda_i=F(e_i)$) which define the moment polytope $\Delta_X=\{ y\in \R^n: \langle y,e_i \rangle \geq \lambda_i \}$ (having fixed the edges $e_i$ of the fan of $X$). We show in Sections \ref{Subsection Using a non-monotone toric form is the same as twisting} and \ref{Subsection Fukaya category for non-monotone toric} that using $\omega_F$ in Floer theory is the same as using $\omega_X$ together with a local system of coefficients determined by $\omega_F$ (a key observation is that $\omega_X,\omega_F$ are both K\"{a}hler for the integrable complex structure $J$, so we can still use $\omega_X$ to control energies in the Floer theory for $\omega_F$).

It is well-established that for generic $\omega_F$ (i.e. for generic perturbation parameters $\lambda_i=F(e_i)$), the twisted superpotential $W_F$ is Morse (\cite{Ostrover-Tyomkin}, \cite[Cor.5.12]{Iritani}, \cite[Prop.8.8]{FOOOtoric}). We need to strengthen this to ensure also simplicity (i.e. distinct eigenvalues), in Section \ref{Subsection Generation for generic toric forms}:

\begin{lemma}\label{Lemma separate critical values}
 For a generic choice of toric symplectic form $\omega_F$ (meaning, after a generic small perturbation of the values $F(e_i)\in \R$), $W_F$ is Morse and the critical values are all distinct and non-zero.
\end{lemma}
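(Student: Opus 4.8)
The plan is to combine the known genericity of the Morse condition for $W_F$ with an explicit description of the critical values as algebraic functions of the parameters $F(e_i)$, and then apply a standard dimension-count / Sard-type argument to rule out coincidences among critical values. First I would recall the setup: by Section~\ref{Subsection Toric symplectic forms}, the toric symplectic forms $\omega_F$ near $\omega_X$ are parametrized by the vector $(\lambda_1,\ldots,\lambda_r) = (F(e_1),\ldots,F(e_r)) \in \R^r$ (with the edges $e_i$ of the fan fixed), and the twisted superpotential is $W_F(z) = \sum_i t^{-\lambda_i} z^{e_i}$, a Laurent polynomial in $z = (z_1,\ldots,z_n) \in (\Lambda^*)^n$ with coefficients depending on the $\lambda_i$. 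I would first invoke the cited results (\cite{Ostrover-Tyomkin}, \cite[Cor.5.12]{Iritani}, \cite[Prop.8.8]{FOOOtoric}) that for $(\lambda_i)$ in a dense open (in fact complement-of-proper-subvariety) subset $U_1 \subset \R^r$, the function $W_F$ is Morse, i.e. has finitely many nondegenerate critical points $p_1(\lambda),\ldots,p_N(\lambda)$, where $N = \dim QH^*(X)$ is independent of $\lambda$ on $U_1$. By the implicit function theorem, on (a neighborhood inside) $U_1$ the critical points $p_\alpha(\lambda)$ and hence the critical values $c_\alpha(\lambda) := W_F(p_\alpha(\lambda))$ vary real-analytically (indeed algebraically, after clearing the formal variable $t$) with $\lambda$.

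The second step is to show that the "bad sets"
\[
Z_{\alpha\beta} = \{\lambda \in U_1 : c_\alpha(\lambda) = c_\beta(\lambda)\} \quad (\alpha \neq \beta), \qquad Z_\alpha = \{\lambda \in U_1 : c_\alpha(\lambda) = 0\}
\]
are nowhere dense in $U_1$ (proper analytic subsets). Granting this, the desired generic set is $U_1 \setminus \big(\bigcup_{\alpha\neq\beta} Z_{\alpha\beta} \cup \bigcup_\alpha Z_\alpha\big)$, which is the complement of a finite union of proper analytic subsets, hence dense, and for $\lambda$ there $W_F$ is Morse with all critical values distinct and nonzero. Since each $Z_{\alpha\beta}$, $Z_\alpha$ is an analytic subset of the connected (or locally connected) set $U_1$, it is either all of $U_1$ or nowhere dense; so it suffices to exhibit, for each pair, a single parameter value $\lambda$ at which the relevant critical values differ (resp. are nonzero). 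For nonzero-ness this is immediate: at a critical point $p$ one has $\sum_i (\partial_{z_j} \log) \cdot (t^{-\lambda_i} z^{e_i}) = 0$ for each $j$, and combined with the fact that the $e_i$ positively span (the polytope is bounded / $X$ is built from a complete-enough fan) one gets $W_F(p) = \sum_i t^{-\lambda_i} z^{e_i} \neq 0$ — more robustly, $W_F(p)$ is an eigenvalue of quantum multiplication by $c_1(TX)$ on the corresponding field summand, and by the Fano condition $c_1(TX)$ is nonzero and, by Galkin's result cited in Section~\ref{Subsection Galkin's result}, non-nilpotent, so not all critical values vanish; then vary $\lambda$ to separate the zero value from the others. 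For the distinctness: fix $\alpha \neq \beta$; if $c_\alpha \equiv c_\beta$ on all of $U_1$, differentiate the identity $W_F(p_\alpha(\lambda)) = W_F(p_\beta(\lambda))$ in $\lambda_i$; using $dW_F|_{p_\alpha} = 0$ this gives $\partial_{\lambda_i} W_F(z)|_{z=p_\alpha(\lambda)} = \partial_{\lambda_i} W_F(z)|_{z=p_\beta(\lambda)}$, i.e. (up to the $\log t$ factor) $t^{-\lambda_i} p_\alpha^{e_i} = t^{-\lambda_i} p_\beta^{e_i}$, hence $p_\alpha^{e_i} = p_\beta^{e_i}$ for every $i$; since the $e_i$ span $\Z^n$, this forces $p_\alpha(\lambda) = p_\beta(\lambda)$, contradicting that they are distinct critical points. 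Therefore $Z_{\alpha\beta} \subsetneq U_1$, so it is nowhere dense.

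The main obstacle I expect is the bookkeeping around the formal Novikov variable $t$: the "critical values are distinct" statement must be interpreted correctly over $\Lambda$ (they are elements $c_\alpha \in \Lambda$, or equivalently the distinct eigenvalues of $c_1(TX)\star(-)$ on $QH^*$), and the implicit-function / analyticity arguments should be run after fixing a value of $t$ in the algebraically closed base field $\K$ (legitimate in the Fano regime, where there are no convergence issues, as noted after Theorem~\ref{Theorem admissible toric manifolds}), or else carried out entirely inside the valued field $\Lambda$ with the valuation topology. A careful reader will also want the claim that $N$ — the number of critical points — is locally constant on the Morse locus; this is standard (Morse critical points are stable and, for a proper family, their count is constant), but it is the point where one uses that the $\lambda$-perturbation keeps $W_F$ a genuinely toric superpotential so that the total count stays $\dim QH^*(X)$. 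Once those foundational points are pinned down, the argument is the routine "complement of finitely many proper analytic subsets is generic" principle.
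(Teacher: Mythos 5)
Your proof of the distinctness claim matches the paper's argument essentially verbatim: the key step — differentiating $c_\alpha(\lambda) = W_F(p_\alpha(\lambda))$ in $\lambda_i$, using the chain rule together with $dW_F|_{p_\alpha}=0$ to reduce the derivative to the single monomial $t^{-\lambda_i}p_\alpha(\lambda)^{e_i}$, and observing that equality of all these monomials at two critical points forces the points to coincide since the $e_i$ span $\Z^n$ — is exactly the computation in the paper's proof (which makes a further $SL(n,\Z)$ normalisation so that $n$ of the $e_i$ become the standard basis, but that is cosmetic). The "complement of finitely many proper analytic subsets" framing you wrap around it is a harmless repackaging of the same perturbation argument.

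The one genuine weak spot is the nonzero-ness. Your ``immediate'' first argument does not work: the critical point equations say that the vector $(t^{-\lambda_i}z^{e_i})_i$ lies in the kernel of the $n\times r$ matrix with rows $e_i^j$, which gives no control on the dot product with $(1,\ldots,1)$, and ``positively span'' fails for the non-compact admissible $M$ to which the lemma is meant to apply. Your second argument (via Galkin and nonvanishing of $c_1$) shows that \emph{some} critical value is nonzero, but does not by itself show that a given critical value can be perturbed away from $0$. The correct and cleanest fix is to reuse the computation you already made: $\partial_{\lambda_i}c_\alpha(\lambda)$ is a nonzero multiple of $t^{-\lambda_i}p_\alpha(\lambda)^{e_i}\neq 0$ (since $p_\alpha\in(\Lambda^*)^n$), so $c_\alpha$ is never locally constant in $\lambda$; in particular the set $Z_\alpha=\{c_\alpha=0\}$ is a proper analytic subset, with no need for Galkin.
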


\begin{example*} The twisting can have non-trivial consequences in Floer cohomology.
An interesting example is $E=\mathcal{O}(-1,-1)$ over $B=\P^1\times \P^1$ (Section \ref{Subsection Compuation of QH and SH of O(-1,-1)}). The $0$-eigenvectors in $QH^*(B)_0$ yield $0$-eigenvectors in $QH^*(E)_0$ which get quotiented out in $SH^*(E)$. But, for generic twistings, $B$ will not have a $0$-eigensummand, so the twisted $SH^*(E)$ will have larger rank than in the untwisted case (contrary to expectations from classical Novikov homology, where twisting can only make the rank drop). See Remark \ref{Remark rank SH can jump}.
\end{example*}

In Sections \ref{Subsection Presentation of QHB, QHE, SHE}-\ref{Subsection F-twisted superpotential and Jac WF} we prove the twisted analogue of Theorem \ref{Theorem Introduction SH is Jac}. Running the generation argument outlined above, implies (see Section \ref{Subsection Generation for 1-dimensional eigensummands}):

\begin{theorem}[Generic Generation Theorem]\label{Theorem generic generation}
Let $E\to B$ be a monotone toric negative line bundle. Perturb $\omega_B$ to a nearby generic toric symplectic form, this induces a (non-monotone) toric symplectic form $\omega_F$ on $E$ having a simple Morse twisted-superpotential. Then the $\omega_F$-twisted wrapped Fukaya category $\mathcal{W}_{\lambda}(E)_{\omega_F}$ is split-generated by the unique monotone Lagrangian torus taken with suitable holonomies. The same holds for $\mathcal{F}_{\lambda}(E)_{\omega_F}$ provided $\lambda\neq 0$.
\end{theorem}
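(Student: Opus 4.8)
The plan is to combine the twisted analogue of Theorem \ref{Theorem Introduction SH is Jac} (proved in Sections \ref{Subsection Presentation of QHB, QHE, SHE}--\ref{Subsection F-twisted superpotential and Jac WF}) with the generation argument sketched in the Introduction, using Lemma \ref{Lemma separate critical values} to secure the simplicity needed for the ``non-zero implies invertible'' trick. First I would record that the perturbed form $\omega_F$ on $E$ is the one induced by a nearby generic toric symplectic form on $B$ (Appendix A computes the moment polytope of $E$ from that of $B$, so this makes sense), and that by Lemma \ref{Lemma separate critical values} the twisted superpotential $W_F$ on $E$ is Morse with pairwise distinct non-zero critical values. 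By the twisted version of $SH^*(E)\cong\mathrm{Jac}(W_F)$, with $c^*(c_1(TE))\mapsto W_F$, the ring $SH^*(E)_{\omega_F}$ is a direct sum of fields (Ostrover--Tyomkin \cite{Ostrover-Tyomkin}), one for each critical point, and each eigensummand $SH^*(E)_{\omega_F,\lambda}$ for $\lambda=W_F(p)$ is exactly one such field, i.e. one-dimensional, because the critical values are distinct.

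Next I would, for each such $\lambda\neq 0$, produce the split-generating Lagrangian. By the Cho--Oh machinery \cite{Cho-Oh} applied to the twisted toric geometry, each critical point $p\in\mathrm{Crit}(W_F)$ gives a toric Lagrangian torus $L_p$ (the unique monotone fibre of the moment map, equipped with the holonomy datum determined by $p$) with $m_0(L_p)=W_F(p)=\lambda$, non-displaceable, and satisfying $HF^*(L_p,L_p)\cong H^*(L_p;\Lambda)$ via \eqref{EqnChoOh}; in particular the point class $[\mathrm{pt}]$ is a cycle. One then examines $\mathcal{OC}^{0|0}:HF^*(L_p,L_p)\to QH^*(E)_{\omega_F}$: its leading term on $[\mathrm{pt}]$ is the count of the evident Maslov $2$ disc through the chosen generic point, hence nonzero (this is the same computation as in \cite{RitterSmith}, unchanged by the twisting since $\omega_F$ is still K\"ahler for $J$ and controls energies). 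Since $\mathcal{OC}$ preserves eigensummands (Ritter--Smith \cite{RitterSmith}), the image of $\mathcal{OC}$ in $QH^*(E)_{\omega_F,\lambda}$ is nonzero; pushing through $c^*$ into the one-dimensional field $SH^*(E)_{\omega_F,\lambda}$ gives a nonzero, hence invertible, element. By the generation criterion of \cite{Abouzaid,RitterSmith}, the $\lambda$-piece $\mathcal{W}_\lambda(E)_{\omega_F}$ is split-generated by $L_p$ with its holonomies; and since toric Lagrangians are compact and $c^*(1)=1$ in the acceleration diagram, the toric generation criterion for $\mathcal{F}_\lambda$ (via $\mathcal{CO}\circ\mathcal{OC}$ hitting units in $HF^*(K,K)$) likewise holds for every $\lambda\neq 0$.

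The main obstacle I expect is establishing cleanly that the eigenvalues $\lambda$ arising in the eigensummand decomposition of $QH^*(E)_{\omega_F}$ (equivalently of $SH^*(E)_{\omega_F}$, equivalently the critical values of $W_F$) are \emph{all} realized by critical points $p$ with $L_p$ as above --- that is, issue \eqref{Item 2 Intro} of the Introduction in the twisted setting. This is exactly what the isomorphism $SH^*(E)_{\omega_F}\cong\mathrm{Jac}(W_F)$ with $c^*(c_1(TE))\mapsto W_F$ buys us: the generalized eigenvalues of multiplication by $c^*(c_1(TE))$ on $SH^*$ coincide with the values of $W_F$ on $\mathrm{Crit}(W_F)$, so no eigenvalue is missed, and simplicity (Lemma \ref{Lemma separate critical values}) guarantees each is covered by exactly one $L_p$. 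The $\lambda=0$ case is genuinely excluded for $\mathcal{F}_0$: by the twisted version of Theorem \ref{Introduction theorem SH of E is QH modulo ker}, $W_F$ corresponds to an invertible element of $SH^*(E)_{\omega_F}$, so $0$ is not a critical value, there is no toric Lagrangian with $m_0=0$, and the criterion for $\mathcal{F}_0(E)_{\omega_F}$ cannot be formulated via toric fibres --- which is why the statement restricts to $\lambda\neq 0$ there, while $\mathcal{W}_0(E)_{\omega_F}$ is automatically (split-)generated trivially since it is a module over the $0$-eigensummand $SH^*(E)_{\omega_F,0}=0$.
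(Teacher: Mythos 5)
Your proposal follows the same route as the paper: use the twisted analogue of $SH^*(E)\cong\mathrm{Jac}(W_F^E)$ (Theorem \ref{Theorem twisted Jacobian ring is SH}) so that all eigenvalues are realized by critical points, get one-dimensional field summands from simple Morse $W_F^E$, show $\mathcal{OC}^{0|0}([\mathrm{pt}])$ is nonzero, invoke eigensummand-preservation of $\mathcal{OC}$, and apply the generation criterion; the $\lambda=0$ exclusion for $\mathcal{F}$ and the triviality of $\mathcal{W}_0$ are handled exactly as in the paper. The paper packages this as Lemma \ref{Lemma separate critical values} plus Theorem \ref{Theorem toric generation for isolated critical value}, which you have correctly reconstructed.

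The one step where you differ from the paper, and where you should be more careful, is in asserting that \emph{Lemma \ref{Lemma separate critical values} applied to $E$} gives the simple Morse property for $W_F^E$. The hypothesis of the theorem only allows perturbing $\omega_B$, which (by Lemma \ref{Lemma moment polytope for neg l bdle}) keeps $\lambda_{r+1}^E = F^E(e_{r+1})=0$ fixed; you are not free to perturb the fibre-direction parameter. But the proof of Lemma \ref{Lemma separate critical values} separates repeated critical values by varying \emph{all} the values $F(e_i)$, and after normalizing a top-dimensional cone to the standard basis (which for $E$ must include $e_{r+1}$, since every top-dimensional cone of $E$ contains $e_{r+1}$), the missing perturbation direction $a_{r+1}$ is exactly one of those needed in the argument. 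So applying the lemma directly to $E$ with only base perturbations is not immediate. The paper instead applies Lemma \ref{Lemma separate critical values} to $B$, and then transfers simplicity and Morseness to $E$ via Theorem \ref{Theorem Eigenvalues of c1 from B to E} (and its twisted analogue in Corollary \ref{Corollary Presentation of QH and SH in twisted case}), which gives the explicit eigenvalue relation $(\mu^E)^{\lambda_B-k}=(-k)^k\mu^{\lambda_B}$ from which distinctness and non-degeneracy for $E$ follow directly from those for $B$. If you route the argument through that eigenvalue transfer, your proposal is complete.
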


The aim now, is to use this generic generation result to obtain, in the limit as we undo the deformation, a generation result in the undeformed case. We only sketch the argument here (see Section \ref{Subsection Generation for semisimple critical values} for details). For a small deformation of the parameters $F$, the critical points of $W$ move continuously. Each critical point $p$ of the undeformed $W$ gives rise to the unique monotone Lagrangian torus fibre $L=L_p$ of the moment map together with a choice of holonomy determined by $p$. When we deform to $W_F$, the new $L_p'$ is a Lagrangian torus fibre of the moment map close to the original monotone $L$, and the holonomy is also deformed. The key observation, is that the holomorphic torus action of the toric variety identifies all the moduli spaces involved in the Floer theory of $L_p'$ with those involved for $L_p$. So analytically, we are counting the same PDE solutions, except we count them with different holonomy weights. Since $p\in \mathrm{Crit}(W)$, the vector subspaces $\Lambda\cdot [\mathrm{pt}]\subset HF^*(L_p',L_p')_{\omega_F}$ can be identified even as we vary $F$, so in this sense the vector space map $\mathcal{OC}:\Lambda\cdot [\mathrm{pt}]\subset HF^*(L_p',L_p')_{\omega_F} \to QH^*(E)\cong H^*(E)\otimes \Lambda$ can be viewed as a linear map between fixed vector spaces, which depends holomorphically on some parameter data $F$. Appendix B develops the necessary matrix perturbation theory to tackle this problem. The upshot is:

\begin{theorem}\label{Theorem Intro perturbation argument}
Let $W_E$ be Morse, or more generally assume that $\lambda$ is a \emph{semisimple} eigenvalue of $W$ (non-degeneracy of critical points of $W_E$ with critical value $\lambda$). Then the sum of the images of the deformed $\mathcal{OC}^{0|0}$-maps, summing over eigenvalues $\lambda'=W_F(p')$ which converge to $\lambda$, will converge to the image of the undeformed $\mathcal{OC}^{0|0}$-map.
By Theorem \ref{Theorem generic generation}, the sum of the deformed images is $\oplus QH^*(E)_{\lambda'}$ and (by Appendix B) the vector space $\oplus QH^*(E)_{\lambda'}$ converges to $QH^*(E)_{\lambda}$ in the Grassmannian. Thus the undeformed $\mathcal{OC}^{0|0}$ surjects onto $QH^*(E)_{\lambda}$, so it hits an invertible element, so the toric generation criterion holds.
\end{theorem}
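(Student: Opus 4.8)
The plan is to establish the convergence statement for the family of linear maps $\mathcal{OC}^{0|0}$ as a consequence of the matrix perturbation theory developed in Appendix B, and then to read off the generation conclusion from the chain of results already in place. First I would set up the correct framework in which to take the limit: as explained in Section \ref{Subsection Generation for semisimple critical values}, the holomorphic torus action of the toric variety identifies all moduli spaces of Maslov $2$ discs bounding the deformed Lagrangian fibre $L_{p'}'$ with those bounding the monotone fibre $L_p$, so only the holonomy weights vary. This lets me regard $HF^*(L_{p'}',L_{p'}')_{\omega_F}$, restricted to the line $\Lambda\cdot[\mathrm{pt}]$ (which is a genuine subspace precisely because $p'\in\mathrm{Crit}(W_F)$, by \cite{Cho-Oh,Auroux}), as a fixed one-dimensional vector space, and $\mathcal{OC}^{0|0}$ as a linear map into the fixed space $QH^*(E)\cong H^*(E)\otimes\Lambda$ depending holomorphically (in fact algebraically) on the perturbation parameters $F=(\lambda_i)$.

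Next I would invoke the semisimplicity hypothesis on $\lambda$: the critical points $p$ of $W_E$ with $W_E(p)=\lambda$ are non-degenerate, so under a small generic deformation $F$ of the parameters they split into simple critical points $p'$ with distinct critical values $\lambda'=W_F(p')$, all converging to $\lambda$ as $F\to F_0$ (the monotone parameters). Appendix B supplies two facts about this situation. First, on the closed-string side, the generalized eigensummand $QH^*(E)_\lambda$ (which is the limit of $\oplus_{\lambda'\to\lambda}QH^*(E)_{\lambda'}$) is approached in the Grassmannian by the sum of the deformed eigensummands; this is the standard statement that total eigenspaces vary continuously even when individual eigenvalues collide, applied to multiplication by $c_1(TE)$ on the finite-dimensional $\Lambda$-module $QH^*(E)$. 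Second, on the open-string side, since each $\mathcal{OC}^{0|0}$ lands in the corresponding eigenspace $QH^*(E)_{\lambda'}$ (Corollary \ref{Corollary need more than just OC00}) and depends holomorphically on $F$, the sum of its images converges, in the Grassmannian of subspaces of the fixed total space, to the image of the undeformed $\mathcal{OC}^{0|0}$; one must check that the limit of the images contains the image of the limit map, which is where the holomorphic (rather than merely continuous) dependence is essential — the leading-order term of $\mathcal{OC}^{0|0}([\mathrm{pt}])$ coming from the obvious fibre disc is nonzero for each $F$ and varies algebraically, so no rank can be lost in the limit.

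Combining the two: by Theorem \ref{Theorem generic generation}, for generic $F$ the deformed category is split-generated, equivalently $\mathcal{OC}^{0|0}$ surjects onto each $QH^*(E)_{\lambda'}$; summing over $\lambda'\to\lambda$ and passing to the limit, the undeformed $\mathcal{OC}^{0|0}$ surjects onto $QH^*(E)_\lambda$. Since $QH^*(E)_\lambda$ contains an invertible element of the ring $QH^*(E)_\lambda$ (it is unital as a summand of $QH^*(E)$ when $\lambda\neq 0$; for the purposes of the generation criterion one only needs that $\mathcal{OC}$ hits \emph{some} invertible element of the eigensummand, which any surjection onto a unital finite-dimensional algebra does), the generation criterion of \cite{Abouzaid,RitterSmith} in the form recalled in Section \ref{Subsection Twisting the Fukaya category, generation via a deformation theory argument} applies, and the toric generation criterion holds for $\lambda$.

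The main obstacle I anticipate is the open-string continuity statement, i.e. proving that the image of the limiting $\mathcal{OC}^{0|0}$ is contained in (hence equals) the limit of the images — ordinary continuity only gives the reverse inclusion or nothing at all, and the subtlety is that as $F\to F_0$ several lines $\Lambda\cdot[\mathrm{pt}]\subset HF^*(L_{p'}',L_{p'}')$ attached to colliding critical points must together span a $d$-dimensional limit. Handling this requires the matrix perturbation arguments of Appendix B: one must choose a holomorphic frame for the family of source lines, express $\mathcal{OC}^{0|0}$ in that frame together with a holomorphic frame adapted to the eigenspace filtration of multiplication by $c_1(TE)$, and show the resulting matrix of holomorphic functions does not drop rank at $F_0$ after suitable rescaling (clearing the orders of vanishing dictated by the splitting $\lambda'\to\lambda$). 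This is exactly the linear-algebra content isolated in Theorem \ref{Theorem Gevecs converge} and Theorem \ref{Subsection non-semisimple eigenvalue}, and the semisimplicity of $\lambda$ is what guarantees the bookkeeping closes without needing higher components of $\mathcal{OC}$.
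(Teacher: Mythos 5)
The overall strategy matches the paper's proof of Theorem \ref{Theorem toric generation for semisimple critical values}: use the torus action to identify moduli spaces and regard $\mathcal{OC}^{0|0}([\mathrm{pt}])$ as a holomorphically-varying element of a fixed vector space, combine the non-vanishing of the leading fibre-disc term with the eigenspace-convergence machinery of Appendix~B, and conclude surjectivity onto $QH^*(E)_\lambda$ in the limit. However there is a genuine gap in the key linear-algebra step, which you gloss over.

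Your argument reduces to showing that the $d$ limiting vectors $\mathcal{OC}_j(0)$, one for each of the $d$ critical points $p_j$ with $W(p_j)=\lambda$, are collectively \emph{linearly independent}; non-vanishing of each leading term alone only gives rank $\geq 1$ for each, not rank $d$ for the sum. The paper gets linear independence by showing that $\mathcal{OC}_j(0)$ lands in the $1$-dimensional subspace $E_j(0)=\lim_{x\to 0} E_j(x)$, and that these $d$ limits are distinct and span $QH^*(E)_\lambda$. That second claim is exactly Corollary~\ref{Corollary Cgce of espaces for semisimple evalue}, whose hypotheses are \emph{not} just semisimplicity of $\lambda$: one must also arrange that the derivatives $\lambda_j'(0)$ are pairwise distinct. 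Without that condition, two eigenspaces $E_j(x)$, $E_k(x)$ for colliding eigenvalues with the same first-order behaviour can converge to the \emph{same} line in $\P(\C^n)$ (cf. a family like $\left(\begin{smallmatrix} x^2 & x \\ 0 & 0\end{smallmatrix}\right)$, where the two eigenspaces both limit onto $\C e_1$), and the argument collapses. The paper closes this gap by an explicit computation $\partial_x|_{x=0}\lambda_j(a,x)=\sum a_i[p_j(a,0)]^{e_i}$ and the observation that a generic choice of the perturbation direction $a$ makes these values distinct — a genericity condition \emph{on the direction of deformation} that is strictly stronger than your assumption that the critical values $W_F(p_j')$ are distinct for $F\neq F_0$. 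Your proposed workaround (``choose a holomorphic frame ... show the matrix does not drop rank after suitable rescaling'') does not replace this: no rescaling is in fact needed once the distinct-derivatives condition holds, and without it the claim is false. Relatedly, your citations to Appendix~B are to the wrong results: Theorem~\ref{Theorem Gevecs converge} (and the subsection label you cite as a theorem) concern the non-semisimple/higher-Jordan-block case, whereas the relevant statement for the argument under the semisimplicity hypothesis is Corollary~\ref{Corollary Cgce of espaces for semisimple evalue}.
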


\begin{remark}\label{Remark Intro}
One cannot easily apply the above to $\mathcal{OC}: \mathrm{HH}_*(\mathcal{F}_{\lambda}(E))\to QH^*(E)$. The issue in this deformation theory, is that chain-level expressions which are cycles in the undeformed case are typically not cycles in the deformed case, due to the different holonomy weights. In other words, we have no analogue of \eqref{EqnChoOh} for $\mathrm{HH}_*(\textrm{A}_{\infty}\textrm{-algebra for }L_p)$.
\end{remark}

Combining Theorem \ref{Theorem Intro perturbation argument} with Theorem \ref{Introduction Theorem Eigenvalues of c1 from B to E} we deduce (see Theorem \ref{Theorem toric generation for semisimple critical values}):

\begin{corollary}
Let $E\to B$ be a monotone toric negative line bundle. 
If the superpotential $W_B$ of $B$ is Morse then the toric generation criterion holds for $\mathcal{W}(E)$. The split-generator is a non-displaceable monotone Lagrangian torus in the sphere bundle taken with finitely many choices of holonomy data. Thus $\mathcal{W}(E)$ is compactly generated and cohomologically finite.

More generally, if $W_B$ is not Morse, but it has a semisimple eigenvalue $\lambda\neq 0$, then the toric generation criterion holds for $\mathcal{W}_{\lambda}(E)$ and $\mathcal{F}_{\lambda}(E)$.
\end{corollary}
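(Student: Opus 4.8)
The plan is to reduce everything to Theorem \ref{Theorem Intro perturbation argument}, whose hypothesis is phrased in terms of the superpotential $W_E$, and to translate the given hypothesis on $W_B$ accordingly. First I would use Theorem \ref{Introduction Theorem Eigenvalues of c1 from B to E}: the generalized eigensummand decomposition of $SH^*(E)$ for multiplication by $c^*(c_1(TE))$ is obtained from the decomposition \eqref{Equation primary decomposition of QH(B)} of $QH^*(B)$ for $c_1(TB)$ by replacing each factor $\Lambda[x]/(x^{\lambda_B}-\mu^{\lambda_B}t)^{d}$ with $\Lambda[x]/[x^{\lambda_B-k}-(-k)^k\mu^{\lambda_B}t]^{d}$ and discarding the nilpotent factors $\Lambda[x]/x^{d}$. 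Under the isomorphism $SH^*(E)\cong\mathrm{Jac}(W_E)$, $c^*(c_1(TE))\mapsto W$ of Theorem \ref{Theorem admissible toric manifolds}, a factor with $d=1$ contributes a product of $\lambda_B-k$ distinct fields (distinct because $\lambda_B-k\geq 1$ and $\mathrm{char}(\K)\nmid k$), i.e.\ the corresponding critical values of $W_E$ are non-degenerate. Hence: if $W_B$ is Morse (all $d_i=1$) then $W_E$ is Morse; and if $\lambda=W_B(p)\neq 0$ is merely a semisimple critical value of $W_B$, then each of its preimages under the substitution is a semisimple critical value of $W_E$. In either case the $\lambda$-semisimplicity hypothesis of Theorem \ref{Theorem Intro perturbation argument} is met for the relevant eigenvalues.

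Second, I would apply Theorem \ref{Theorem Intro perturbation argument} eigenvalue by eigenvalue. It asserts that for each semisimple eigenvalue $\lambda\neq 0$ of $W_E$ (equivalently of quantum multiplication by $c_1(TE)$ on $QH^*(E)$), after perturbing $\omega_B$ to a generic nearby toric form --- which by Theorem \ref{Theorem generic generation} and Lemma \ref{Lemma separate critical values} induces a generic $\omega_F$ on $E$ with simple Morse twisted superpotential, so that the twisted category is split-generated --- and then letting the perturbation tend to zero, the undeformed $\mathcal{OC}^{0|0}: HF^*(L_p,L_p)\to QH^*(E)_\lambda$ surjects; here $L_p$ is the monotone toric Lagrangian torus fibre of the moment map of $E$, lying in the sphere bundle, with the holonomy data attached to $p\in\mathrm{Crit}(W_E)$ by the Cho-Oh machinery (which also furnishes its non-displaceability). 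Surjecting onto $QH^*(E)_\lambda$ in particular hits the invertible unit of that ring summand; since $\mathcal{OC}$ is a $QH^*(E)$-module map respecting eigensummands and $c^*(1)=1$ in the acceleration diagram \eqref{Equation Intro Comm Diagram Ritter Smith}, and the $L_p$ are compact, the generation criterion of Abouzaid in the monotone form of Ritter-Smith \cite{Abouzaid,RitterSmith} gives that $\mathcal{F}_\lambda(E)$ --- hence $\mathcal{W}_\lambda(E)$ --- is split-generated by the $L_p$ with their holonomies. This already proves the ``more generally'' clause. Only finitely many $L_p$ occur overall, because $\mathrm{Jac}(W_E)\cong SH^*(E)$ is finite-dimensional by Theorem \ref{Theorem admissible toric manifolds}.

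Third, for the Morse case I would assemble these eigenvalue-wise statements. When $W_B$ is Morse, so is $W_E$, so every eigenvalue of $c_1(TE)$ other than $0$ is semisimple; and $0$ is never a critical value of $W_E$ --- indeed $W$ is a nonzero multiple of $c^*(\pi^*c_1(E))$, invertible in $SH^*(E)$ by Theorem \ref{Introduction theorem SH of E is QH modulo ker}, so $SH^*(E)_0=0$ while $QH^*(E)_0=\ker(x^{\mathrm{large}})$ is exactly the part that dies under $c^*$. Consequently $SH^*(E)=\bigoplus_{\lambda\neq 0}SH^*(E)_\lambda$, and $\mathcal{W}(E)=\bigoplus_{\lambda\neq0}\mathcal{W}_\lambda(E)$ is split-generated by the finitely many $L_p$ with holonomy data. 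Finally $\mathcal{W}(E)$ is a module over $SH^*(E)$, which is finite-dimensional, and it is generated by finitely many compact objects, hence it is compactly generated and cohomologically finite. (Note that $\mathcal{F}(E)$ itself is \emph{not} split-generated by the $L_p$: the $\lambda=0$ piece $\mathcal{F}_0(E)$ escapes, which is why only the wrapped category appears in this part of the statement.)

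\textbf{Main obstacle.} Modulo the cited results the corollary is largely bookkeeping, and the genuine analytic and algebraic difficulty has already been absorbed into Theorem \ref{Theorem Intro perturbation argument} and Appendix B: one must know that the images of the deformed $\mathcal{OC}^{0|0}$-maps --- which a priori live in varying subspaces and are built from Floer moduli spaces weighted by holonomies that degenerate as $\omega_F\to\omega_E$ --- converge in the Grassmannian to the undeformed image, and that $\bigoplus_{\lambda'\to\lambda}QH^*(E)_{\lambda'}$ likewise converges to $QH^*(E)_\lambda$. This is a matrix-perturbation statement for holomorphic families of finite-dimensional operators, and it is precisely $\lambda$-semisimplicity that makes the generalized-eigenspace dimensions add up correctly across the limit; without it, generating a $d\geq 2$ dimensional generalized eigensummand would require higher components of $\mathcal{OC}$ and hence a Fukaya category mixing $m_0$-values, which is beyond current technology. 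The one compatibility I would still want to check by hand is that the holomorphic torus action identifies all Floer moduli spaces for the deformed $L_p'$ with those for $L_p$, so that only the holonomy weights vary and the resulting maps depend holomorphically on the deformation parameter.
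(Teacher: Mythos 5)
Your proof is correct and takes essentially the same route as the paper, which derives this corollary by combining Theorem \ref{Introduction Theorem Eigenvalues of c1 from B to E} (to translate the hypothesis on $W_B$ into a semisimplicity statement about $W_E$, since the multiplicities $d_i$ in \eqref{Equation primary decomposition of QH(B)} persist in \eqref{Equation primary decomposition of QH(E)}) with Theorem \ref{Theorem Intro perturbation argument} (equivalently Theorem \ref{Theorem toric generation for semisimple critical values}) applied eigenvalue by eigenvalue to the resulting nonzero semisimple critical values of $W_E$; your treatment of the Morse case via $SH^*(E)_0=0$ and $\mathcal{W}(E)=\oplus_{\lambda\neq 0}\mathcal{W}_\lambda(E)$ matches the paper's logic. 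One linguistic slip: the critical values of $W_E$ associated to $\lambda=W_B(p)$ by the substitution $(\mu^E)^{\lambda_B-k}=(-k)^k\mu^{\lambda_B}$ are images of $\lambda$, not ``preimages''; the mathematics is unaffected.
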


We finally discuss how these generation arguments apply more generally. First for closed Fano toric varieties and then, in the non-compact case, for admissible toric manifolds.

\subsection{Generation theorems for closed Fano toric manifolds}

For closed Fano toric varieties $C$, in equation \eqref{Equation Introduction OC on HFLL} on the point class $[\mathrm{pt}]\in HF^*(L_p,L_p)$ for $p\in \mathrm{Crit}(W)$, the constant disc contributes the leading term in:
\begin{equation}\label{Equation Introduction OC(pt) is non-zero}
\mathcal{OC}^{0|0}([\mathrm{pt}]) = \mathrm{PD}([\mathrm{pt}]) + (\textrm{higher order }t \textrm{ terms})\neq 0 \in QH^*(C)
\end{equation}
(see Lemma \ref{Lemma calculation of OC of point}). This does not help in the non-compact case as $\mathrm{PD}([\mathrm{pt}])=[\mathrm{vol}_M]=0 \in H^{\dim_{\R}M}(M)=0$.
The same arguments outlined in Section \ref{Subsection Twisting the Fukaya category, generation via a deformation theory argument} therefore imply the following.

\begin{theorem} Let $(C,\omega_C)$ be a closed monotone toric manifold.
If we twist quantum cohomology $QH^*(C)_{\eta}$ by a generic $\eta\in H^2(C;\R)$ close to $[\omega_C]$, then $QH^*(C)_{\eta}$ is a direct sum of $1$-dimensional eigensummands and the twisted superpotential is Morse. So the toric generation criterion applies to the twisted Fukaya category $\mathcal{F}_{\lambda}(C)_{\eta}$.
\end{theorem}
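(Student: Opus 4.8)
The plan is to assemble the statement from results already in hand: the genericity input on toric superpotentials (Ostrover--Tyomkin \cite{Ostrover-Tyomkin}, \cite[Cor.5.12]{Iritani}, \cite[Prop.8.8]{FOOOtoric}, strengthened by Lemma \ref{Lemma separate critical values}), the dictionary ``twisting by $\eta$ $=$ using a nearby toric symplectic form'', the closed case of the Batyrev--Givental presentation (Theorem \ref{Theorem admissible toric manifolds}) together with Ostrover--Tyomkin's field decomposition of $\mathrm{Jac}(W)$, and finally the Abouzaid/Ritter--Smith generation criterion \cite{Abouzaid,RitterSmith} fed by the Cho--Oh \cite{Cho-Oh} toric Lagrangians and the leading-term computation of $\mathcal{OC}^{0|0}$. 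The point to stress is that the closed case is strictly easier than the negative-line-bundle case of Theorem \ref{Theorem generic generation}: here $\mathrm{PD}[\mathrm{pt}]\neq 0$, so the delicate eigenvalue/limiting analysis of Appendix B is not needed and one argues directly in each eigensummand.

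\emph{First assertion (the semisimple decomposition).} By the discussion of non-monotone toric forms, twisting $QH^*(C)$ by $\eta\in H^2(C;\R)$ near $[\omega_C]$ is the same as computing $QH^*(C,\omega_\eta)$ for the toric symplectic form $\omega_\eta$ whose moment polytope has the perturbed supporting constants; for $\eta$ near $[\omega_C]$ this $\omega_\eta$ is genuinely a toric symplectic form and is K\"ahler for the same integrable $J$, so $\omega_C$ still controls energies and the Floer constructions go through. Under the isomorphism $\psi$ of Theorem \ref{Theorem admissible toric manifolds} (closed case), $QH^*(C,\omega_\eta)\cong\mathrm{Jac}(W_\eta)$ with $c_1(TC)\mapsto W_\eta$, the twisted superpotential. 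By Ostrover--Tyomkin \cite{Ostrover-Tyomkin}, \cite[Cor.5.12]{Iritani}, \cite[Prop.8.8]{FOOOtoric} the function $W_\eta$ is Morse for generic $\eta$, and Lemma \ref{Lemma separate critical values} upgrades this to: for generic $\eta$ near $[\omega_C]$, $W_\eta$ is Morse with pairwise distinct, nonzero critical values. By Ostrover--Tyomkin, $\mathrm{Jac}(W_\eta)$ is then a direct sum of one field per nondegenerate critical point; over an algebraically closed field each such field is $\Lambda$, hence $1$-dimensional. Multiplication by $c_1(TC)=W_\eta$ acts on the summand at $p$ by the scalar $W_\eta(p)$, and since the critical values are distinct these summands are exactly the generalized eigensummands of $QH^*(C)_\eta$, each $1$-dimensional, with distinct eigenvalues, and with the $0$-eigensummand vanishing.

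\emph{Second assertion (generation).} Fix an eigenvalue $\lambda$; if $\lambda$ is not a critical value of $W_\eta$ the $\lambda$-eigensummand vanishes and $\mathcal{F}_\lambda(C)_\eta$ is (cohomologically) trivial. Otherwise there is a unique $p\in\mathrm{Crit}(W_\eta)$ with $W_\eta(p)=\lambda$; the Cho--Oh machinery (in its $\omega_\eta$-twisted form) produces a non-displaceable toric Lagrangian torus $L_p$ with holonomy data, an object of $\mathcal{F}_\lambda(C)_\eta$ with $m_0(L_p)=\lambda$, for which $[\mathrm{pt}]\in HF^*(L_p,L_p)$ is a cycle. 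By Lemma \ref{Lemma calculation of OC of point}, $\mathcal{OC}^{0|0}([\mathrm{pt}])=\mathrm{PD}[\mathrm{pt}]+(\text{higher order }t\text{ terms})$, and in the closed case $\mathrm{PD}[\mathrm{pt}]=[\mathrm{pt}]\in H^{\dim_{\R}C}(C)\neq 0$, so $\mathcal{OC}^{0|0}([\mathrm{pt}])\neq 0$. Since $\mathcal{OC}$ preserves eigensummands \cite{RitterSmith}, this is a nonzero element of the $1$-dimensional field $\lambda$-eigensummand, hence invertible; by the generation criterion \cite{Abouzaid,RitterSmith}, $\mathcal{OC}$ hitting an invertible element there forces $L_p$ (with its holonomy) to split-generate $\mathcal{F}_\lambda(C)_\eta$. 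Ranging over $\lambda$, this is the toric generation criterion for $\mathcal{F}_\lambda(C)_\eta$.

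\emph{Main obstacle.} The nontrivial content is not the linear algebra but the Floer-theoretic validity in the \emph{twisted} (non-monotone $\omega_\eta$) setting: one must verify that the Cho--Oh disc-counting description of the toric Lagrangians, and the leading-term computation $\mathcal{OC}^{0|0}([\mathrm{pt}])=\mathrm{PD}[\mathrm{pt}]+\cdots$, survive the twist --- i.e. that energies remain controlled by $\omega_C$, that the Maslov-$2$ disc moduli are still the combinatorial ones, and that the holonomy enters only through weights so that $[\mathrm{pt}]$ being a cycle is equivalent to $p\in\mathrm{Crit}(W_\eta)$. This is exactly what is set up in the sections on the Fukaya category for non-monotone toric forms; once it is granted, the remaining steps are bookkeeping with the cited results.
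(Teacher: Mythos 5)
Your proposal is correct and follows essentially the same route as the paper: identify the $\eta$-twist with the non-monotone toric form $\omega_F$, invoke Lemma \ref{Lemma separate critical values} to get a Morse superpotential with distinct nonzero critical values, use Ostrover--Tyomkin's field-summand theorem so that each $\lambda$-eigensummand is $1$-dimensional, feed the nonvanishing $\mathcal{OC}^{0|0}([\mathrm{pt}]) = \mathrm{PD}[\mathrm{pt}] + \cdots$ from Lemma \ref{Lemma calculation of OC of point} into the generation criterion, and observe that in the closed case the leading term is already nonzero so Appendix B is not needed. This is exactly the chain Theorem \ref{Theorem toric generation for generic forms} $\leftarrow$ Theorem \ref{Theorem toric generation for isolated critical value} in the paper. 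The only small slip: the isomorphism $QH^*(C)\cong\mathrm{Jac}(W_C)$ for \emph{closed} toric $C$ is the Batyrev/Givental presentation recalled in Corollary \ref{Corollary QH of noncompact toric} (and its twisted form Corollary \ref{Corollary Presentation of QH and SH in twisted case}), not Theorem \ref{Theorem admissible toric manifolds}, which is stated for the non-compact admissible case; similarly the relevant ground field in the twisted setting is $\mathfrak{R}$ rather than $\Lambda$. Neither affects the argument.
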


\begin{corollary}
For closed monotone toric manifolds $(C,\omega_C)$, if the superpotential is Morse then the toric generation criterion holds. More generally, the criterion holds for $\mathcal{F}(C)_{\lambda}$ for any semisimple critical value $\lambda$ of the superpotential.
\end{corollary}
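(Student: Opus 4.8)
The plan is to deduce the corollary from the preceding theorem together with the closed-toric analogue of the perturbation machinery in Theorem~\ref{Theorem Intro perturbation argument}. First I would recall that for a closed monotone toric manifold $C$ the Jacobian ring $\mathrm{Jac}(W_C)\cong QH^*(C)$ splits, by Ostrover--Tyomkin \cite{Ostrover-Tyomkin}, into generalized eigensummands $QH^*(C)_\lambda\cong \bigoplus \Lambda[x]/(x-\lambda)^{d}$ for multiplication by $c_1(TC)$, and that semisimplicity at $\lambda$ (in particular, $W_C$ Morse at all critical points with $W_C(p)=\lambda$) means every such $d$ equals $1$, so $QH^*(C)_\lambda$ is a sum of one-dimensional eigensummands (field summands). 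For each critical point $p$ with $W_C(p)=\lambda$ the Cho--Oh machinery \cite{Cho-Oh,Auroux} produces a non-displaceable monotone toric fibre $L_p$ with holonomy data, object of $\mathcal{F}_\lambda(C)$, with $[\mathrm{pt}]\in HF^*(L_p,L_p)$ a cycle and $HF^*(L_p,L_p)\cong H^*(L_p;\Lambda)$.

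Next I would run the generation argument exactly as outlined in Sections~\ref{Subsection Twisting the Fukaya category, generation via a deformation theory argument} and~\ref{Subsection Generation for 1-dimensional eigensummands}: perturb $\omega_C$ to a nearby generic toric form $\omega_F$, equivalently twist $QH^*(C)$ by a generic $\eta$ close to $[\omega_C]$; by the theorem immediately preceding this corollary, $QH^*(C)_\eta$ is then a sum of one-dimensional eigensummands and $W_F$ is Morse with distinct critical values (Lemma~\ref{Lemma separate critical values}), so the toric generation criterion holds for each $\mathcal{F}_{\lambda'}(C)_\eta$, using that $\mathcal{OC}^{0|0}([\mathrm{pt}])\neq 0$ by \eqref{Equation Introduction OC(pt) is non-zero} (here, unlike the non-compact case, the leading term $\mathrm{PD}([\mathrm{pt}])$ does not vanish) and that a non-zero element of a one-dimensional field summand is automatically invertible. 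Then I would invoke the matrix perturbation theory of Appendix~B in the form of Theorem~\ref{Theorem Intro perturbation argument}: as we undo the deformation, the critical points $p'$ with $W_F(p')=\lambda'\to\lambda$ have associated tori $L_{p'}$ whose Floer moduli spaces are identified by the holomorphic torus action with those of the $L_p$'s, differing only by holonomy weights, so the subspaces $\Lambda\cdot[\mathrm{pt}]\subset HF^*(L_{p'},L_{p'})_{\omega_F}$ form a holomorphic family and the images of the deformed $\mathcal{OC}^{0|0}$ converge, in the Grassmannian, to the image of the undeformed $\mathcal{OC}^{0|0}$. Since the sum of the deformed images is $\bigoplus_{\lambda'\to\lambda} QH^*(C)_{\lambda'}$, which converges to $QH^*(C)_\lambda$, the undeformed $\mathcal{OC}^{0|0}$ surjects onto $QH^*(C)_\lambda$; combined with Corollary~\ref{Corollary need more than just OC00} (which under semisimplicity is no obstruction, since then $d=1$ and $\mathcal{OC}^{0|0}$ landing in the eigenspace \emph{is} landing in the whole summand) this hits an invertible element, so by \cite{Abouzaid,RitterSmith} the toric generation criterion holds for $\mathcal{F}_\lambda(C)$.

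For the ``Morse'' case one simply applies the above to every critical value $\lambda$, noting that when $W_C$ is globally Morse every eigenvalue is semisimple and the eigensummands exhaust $QH^*(C)$ (and for closed $C$ there is no problematic $\lambda=0$ summand to worry about, in contrast with the negative-line-bundle case), so $\mathcal{F}(C)$ itself is split-generated by the finite collection of toric fibres $L_p$, $p\in\mathrm{Crit}(W_C)$, with their holonomy data. The main obstacle is the same one that Theorem~\ref{Theorem Intro perturbation argument} already isolates: establishing that the deformed $\mathcal{OC}^{0|0}$-images genuinely converge to the undeformed image, i.e. that the holomorphic torus action really identifies the relevant moduli spaces across the deformation and that $\Lambda\cdot[\mathrm{pt}]$ is a well-defined holomorphically-varying line (which needs $p\in\mathrm{Crit}(W)$, hence the semisimplicity hypothesis is exactly what keeps this in reach — without it one would need higher components $\mathcal{OC}^{d|*}$ and a Fukaya category mixing $m_0$-values, which is out of current reach as noted in Remark~\ref{Remark Intro}). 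Everything else is a formal consequence of the closed-toric results already cited — the Cho--Oh disc count, the Ostrover--Tyomkin semisimplicity criterion, and the Abouzaid--Ritter--Smith generation criterion — so the proof will mostly consist of assembling these inputs and pointing to the relevant sections.
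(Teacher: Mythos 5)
Your proposal follows essentially the same route as the paper's proof of Theorem~\ref{Theorem toric generation for semisimple critical values}: perturb to a generic toric form where the twisted $W_F$ is Morse with distinct critical values so that one-dimensional field summands make $\mathcal{OC}^{0|0}([\mathrm{pt}])\neq 0$ automatically invertible, then apply the Appendix~B perturbation theory to show the deformed eigenspaces and $\mathcal{OC}$-images converge to a decomposition of the undeformed $QH^*(C)_\lambda$. The one step you treat as a black box inside ``invoke Appendix~B'' is the verification, needed for the second half of Corollary~\ref{Corollary Cgce of espaces for semisimple evalue}, that the derivatives $\lambda_j'(0)$ of the eigenvalue branches are pairwise distinct for a generic choice of perturbation parameters $F(e_i)$ (this is not implied by Lemma~\ref{Lemma separate critical values}, which only gives distinctness of the values $\lambda_j(x)$ for $x\neq 0$); the paper establishes it via the auxiliary function $G(z)=\sum a_i z^{e_i}$ and the non-coincidence of the critical points, and without it the individual eigenprojections $P_j(x)$ need not extend over $x=0$, only the total projection does.
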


\noindent {\bf Remark.} Forthcoming work of Abouzaid-Fukaya-Oh-Ohta-Ono \cite{AFOOO} aims to prove mirror symmetry for closed toric manifolds (without the Fano assumption). In particular, they will show that the toric generation criterion always applies for closed toric Fano manifolds (i.e. even if $W$ is not Morse).
We emphasize that our generation results in the closed Fano case are a much more modest project by comparison. Since we assume monotonicity, the technical difficulties in defining the Fukaya category are rather mild, and we have tools, such as eigensummand splittings, which would not be available in the general setup of \cite{AFOOO}.

%%%%%%%%%%%%%%%%%%%%%%%%%%%%%%%%%%%%%%%%%
\subsection{Generation results for (non-compact) admissible toric manifolds}
%%%%%%%%%%%%%%%%%%%%%%%%%%%%%%%%%%%%%%%%%

The arguments outlined above hold more generally for admissible toric manifolds $M$ (Definition \ref{Definition admissible toric manifolds}), since $SH^*(M)\cong \mathrm{Jac}(W_M)$ by Theorem \ref{Theorem admissible toric manifolds}. The only issue is the calculation of $\mathcal{OC}^{0|0}([\mathrm{pt}])$ in \eqref{Equation Introduction OC on HFLL}. The key result from Section \ref{Subsection Calculation of OC on point class} is:

\begin{lemma}
For an admissible toric manifold $M$, $c_1(TM)$ admits a \emph{compact} cycle representative $C=\mathrm{PD}(c_1(TM))$. After a choice of basis of cycles (which affects the other terms in the expansion below), $C$ determines an lf-cycle $C^{\vee}$ which is intersection-dual to $C$. Then
$$\mathcal{OC}^{0|0}([\mathrm{pt}]) = m_0(L)\, \mathrm{PD}(C^{\vee}) + (\textrm{linearly independent terms}) \in QH^*(M).$$
In particular, for $\lambda=m_0(L)\neq 0$, we deduce $\mathcal{OC}^{0|0}([\mathrm{pt}])\neq 0$.
\end{lemma}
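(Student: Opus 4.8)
The plan is to compute the leading term of $\mathcal{OC}^{0|0}([\mathrm{pt}])$ by identifying the constant disc as the dominant contribution, exactly as in the closed case \eqref{Equation Introduction OC(pt) is non-zero}, but being careful that the relevant cohomology class now survives only because we pass to a \emph{compactly supported} representative of $c_1(TM)$ rather than the volume class. First I would recall that, by definition of the open-closed map, $\mathcal{OC}^{0|0}([\mathrm{pt}])$ is a count of discs with one boundary puncture on $L$ (with incoming input $[\mathrm{pt}]$) and one interior output meeting a chosen cycle; the $0$-energy contribution is the constant disc at the generic point of $L$, which contributes the Poincar\'e dual (in the appropriate intersection-theoretic, locally-finite sense) of the $m_0$-disc class through that point. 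Since $M$ is a monotone \emph{admissible} toric manifold, the Maslov-$2$ discs through a generic point of $L$ are combinatorially governed by the moment polytope (Cho--Oh), and $p\in\mathrm{Crit}(W)$ means these discs assemble so that $\mathfrak{m}_0(L)=m_0(L)[L]$ with $m_0(L)=\lambda=W(p)\neq 0$; this is where the semisimplicity/Morse hypothesis on $W$ enters, ensuring $\lambda\neq 0$ and that $L=L_p$ exists.

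The key new input, relative to the closed case, is the choice of a \textbf{compact} cycle $C$ representing $\mathrm{PD}(c_1(TM))$. Here I would invoke the admissibility hypotheses: the toric divisors $D_i$ are compact (they sit over the compact faces of the polytope), $c_1(TM)=\sum_i \mathrm{PD}[D_i]$ by the standard toric formula, and at least one $H_i$ has strictly positive slope by Definition \ref{Definition admissible toric manifolds}\eqref{Item Admissible positive slope}, so the relation $c_1(TM)=\sum x_i$ of Theorem \ref{Theorem admissible toric manifolds} holds with each $x_i$ represented by the compact divisor $D_i$. Thus $C=\bigcup D_i$ (or a chosen sum of such) is a compact cycle with $[C]=\mathrm{PD}(c_1(TM))$. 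Fixing a basis of (ordinary, compact) cycles extending $[C]$, I would then take $C^{\vee}$ to be the dual \emph{locally finite} (Borel--Moore) cycle paired with $C$ under the intersection pairing $H_*(M)\times H_*^{\mathrm{lf}}(M)\to\Lambda$; this is the algebraic device replacing the now-vanishing role of $\mathrm{PD}([\mathrm{pt}])=[\mathrm{vol}_M]$.

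With these choices in place, the computation of $\mathcal{OC}^{0|0}([\mathrm{pt}])$ proceeds by an energy/index filtration: the constant disc at the generic point of $L$, composed with the $m_0$-discs, contributes exactly $m_0(L)\,\mathrm{PD}(C^{\vee})$ as the term of minimal energy (the Maslov-$2$ discs pair with $C$ because $C=\mathrm{PD}(c_1(TM))$ and Maslov index equals twice the intersection with the first Chern class divisor); all further contributions involve strictly positive symplectic area, hence strictly positive powers of the Novikov variable $t$, and I would argue that these are linearly independent from the leading term in $QH^*(M)\cong H^*(M)\otimes\Lambda$ for a suitable choice of basis (this is the meaning of the parenthetical ``linearly independent terms''). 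Therefore $\mathcal{OC}^{0|0}([\mathrm{pt}])\neq 0$ whenever $\lambda=m_0(L)\neq 0$. The main obstacle I anticipate is bookkeeping, not analysis: one must set up the locally-finite/compact Poincar\'e--Lefschetz duality carefully so that the pairing $\langle C, C^{\vee}\rangle$ is genuinely nondegenerate and so that the ``higher $t$-order'' terms land in complementary basis directions; the transversality and compactness of the relevant moduli spaces are already controlled by the maximum principle of Lemma \ref{Introduction Lemma Extended Max principle} and the monotonicity, so no new Floer-theoretic input is needed beyond what Theorem \ref{Theorem admissible toric manifolds} already supplies.
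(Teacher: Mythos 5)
Your proposal has a genuine gap in the construction of the compact cycle $C$. You assert that the toric divisors $D_i$ are compact and take $C=\bigcup D_i$, but for non-compact admissible toric manifolds this fails. For example, for a toric negative line bundle $\pi\colon E\to B$, the toric divisors are $D_i=\pi^{-1}(D_i^B)$ for $i\le r$ together with the zero section $D_{r+1}=[B]$; only the zero section is compact, while each $\pi^{-1}(D_i^B)$ is a line bundle over $D_i^B$ and hence non-compact. So the sum $\sum D_i$ is a non-compact lf-cycle, and your claim that $C$ is compact collapses. The paper obtains compactness by a different route that you would need: since $M$ is monotone, $c_1(TM)=\lambda_M[\omega_M]$, and since $M$ is conical at infinity, $\omega_M$ is exact outside a compact set. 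Hence $c_1(TM)$ has a compactly supported closed de Rham representative, which is Poincar\'e dual to a \emph{compact} cycle $C$. Without this observation you have no compact representative of $c_1(TM)$, and the entire argument for why $\mathrm{PD}(C^{\vee})$ is picked up by the Maslov-$2$ discs (rather than vanishing for the same reason $\mathrm{PD}([\mathrm{pt}])=0$ does) has no foundation.

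Two smaller inaccuracies: you describe the constant disc as the dominant contribution, as in the closed case, but in the non-compact case the constant disc contributes $\mathrm{PD}([\mathrm{pt}])=0$; the coefficient of $\mathrm{PD}(C^{\vee})$ is produced by the Maslov-$2$ disc count, which is precisely the count defining $m_0(L)$ (those discs hit the anticanonical divisor once in the interior). Also, the Morse/semisimplicity hypothesis on $W$ plays no role in this lemma: $\lambda=m_0(L)\neq 0$ is simply a hypothesis for the final non-vanishing conclusion, and the identity for $\mathcal{OC}^{0|0}([\mathrm{pt}])$ holds without any genericity assumption on the superpotential.
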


At the heart of this, is a fact due to Kontsevich, Seidel and Auroux \cite{Auroux}: for monotone toric manifolds $M$, and a toric Lagrangian $L$ which does not intersect a representative of $c_1(TM)$, the holomorphic Maslov $2$ discs bounding $L$ passing through a generic point of $L$ will hit $c_1(TM)$ once at an interior point. This fact is responsible for the equation $\mathcal{CO}(c_1(TM))=c_1(TM)*[L]=m_0(L)\,[L]$. The count of discs in this equation is in fact the same as the count involved in the coefficient of $\mathrm{PD}(C^{\vee})$ above. It is crucial that $c_1(TM)$ has a compact cycle representative, and not just a locally finite cycle (in the non-compact setting, one struggles to obtain a duality relationship between $\mathcal{OC}$ and $\mathcal{CO}$ precisely because cycles and lf-cycles are generally unrelated). We prove in Lemma \ref{Lemma L does not intersect c1} that for orientable Lagrangian submanifolds $L$, one can always find a representative of $c_1(TM)$ disjoint from $L$ (this fails for non-orientable Lagrangians, such as $\R P^2\subset \CP^2$, but our Fukaya categories only allow orientable Lagrangians, following \cite{RitterSmith}).

The deformation argument, which ensures that the twisted superpotential becomes Morse with simple eigenvalues, combined with the above Lemma implies:

\begin{theorem}
Let $M$ be any (non-compact) admissible toric manifold $M$ (Definition \ref{Definition admissible toric manifolds}), and let $\omega_F$ be a generic toric (non-monotone) symplectic form close to the monotone form $\omega_M$. For any non-zero eigenvalue $\lambda\neq 0$ of $c_1(TM)\in QH^*(M,\omega_M)$, the $\omega_F$-twisted Fukaya categories $\mathcal{W}_{\lambda}(M)_{\omega_F}$ and $\mathcal{F}_{\lambda}(M)_{\omega_F}$ are split-generated by the unique monotone Lagrangian torus taken with suitable holonomies determined by the superpotential $W_F$.
\end{theorem}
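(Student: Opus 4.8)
The plan is to run the generation argument of Section~\ref{Subsection Twisting the Fukaya category, generation via a deformation theory argument} \emph{directly}, with no limiting/deformation step: for generic $\omega_F$ the twisted superpotential already has simple eigenvalues, so each eigensummand is one-dimensional and ``non-zero implies invertible'' applies on the nose. The first ingredient is the $\omega_F$-twisted analogue of Theorem~\ref{Theorem admissible toric manifolds} (the twisted version of Theorem~\ref{Theorem Introduction SH is Jac} developed in Sections~\ref{Subsection Presentation of QHB, QHE, SHE}--\ref{Subsection F-twisted superpotential and Jac WF}): $SH^*(M)_{\omega_F}\cong\mathrm{Jac}(W_F)$ with $c^*(c_1(TM))\mapsto W_F$, and $c^*:QH^*(M)_{\omega_F}\to SH^*(M)_{\omega_F}$ the localization map. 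By Sections~\ref{Subsection Using a non-monotone toric form is the same as twisting}--\ref{Subsection Fukaya category for non-monotone toric}, working with $\omega_F$ is the same as working with $\omega_M$ together with the local system determined by $\omega_F$; since $\omega_M$ and $\omega_F$ are both K\"ahler for the fixed integrable $J$, the energy estimates and maximum-principle arguments underlying the McDuff--Tolman/Batyrev presentation (admissibility conditions \eqref{Item Admissible max principle}--\eqref{Item Admissible positive slope} and the Extended Maximum Principle, Lemma~\ref{Introduction Lemma Extended Max principle}) survive with the Novikov parameters replaced by their $\omega_F$-twisted values.

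Next I would exploit genericity. By Lemma~\ref{Lemma separate critical values}, $W_F$ is Morse with pairwise distinct, non-zero critical values, so by Ostrover--Tyomkin \cite{Ostrover-Tyomkin} the ring $\mathrm{Jac}(W_F)$ is a direct sum of field summands, one per critical point; combined with simplicity, each generalized eigensummand of multiplication by $c^*(c_1(TM))=W_F$ on $SH^*(M)_{\omega_F}$ is exactly one such $1$-dimensional field, indexed by a critical value $\lambda'=W_F(p')$, and the corresponding eigensummand of $c_1(TM)$ on $QH^*(M)_{\omega_F}$ maps isomorphically onto it under $c^*$ (no piece is killed, since $W_F$ has no zero critical value). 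For each $p'\in\mathrm{Crit}(W_F)$ the Cho--Oh machinery \cite{Cho-Oh} produces the $\omega_F$-toric Lagrangian $L=L_{p'}$ — which under the twisting dictionary is the unique $\omega_M$-monotone torus fibre of the moment map equipped with holonomy data determined by $p'$ — with $L_{p'}$ non-displaceable, $m_0(L_{p'})=\lambda'$, and $[\mathrm{pt}]\in HF^*(L_{p'},L_{p'})$ a cycle with $HF^*(L_{p'},L_{p'})\cong H^*(L_{p'};\Lambda)$ by \eqref{EqnChoOh}.

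The heart of the argument is then the computation of $\mathcal{OC}^{0|0}([\mathrm{pt}])$ from the Lemma of Section~\ref{Subsection Calculation of OC on point class}: since $M$ is admissible, $c_1(TM)$ has a \emph{compact} cycle representative $C$, and since $L_{p'}$ is orientable it can be taken disjoint from $C$ by Lemma~\ref{Lemma L does not intersect c1}; hence $\mathcal{OC}^{0|0}([\mathrm{pt}])=m_0(L_{p'})\,\mathrm{PD}(C^{\vee})+(\text{linearly independent terms})$, which is non-zero in $QH^*(M)_{\omega_F}$ because $\lambda'=m_0(L_{p'})\neq 0$. As $\mathcal{OC}$ preserves eigensummands \cite{RitterSmith}, this element lies in the $1$-dimensional field $QH^*(M)_{\omega_F,\lambda'}$, hence is invertible there; using $c^*(1)=1$ in the acceleration diagram it maps to an invertible element of $SH^*(M)_{\omega_F,\lambda'}$. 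The monotone generation criterion of Abouzaid--Ritter--Smith \cite{Abouzaid,RitterSmith} — applied via the composite $\mathcal{CO}\circ\mathcal{OC}$ on the toric subcategory — then shows that $L_{p'}$ with its holonomy split-generates $\mathcal{W}_{\lambda'}(M)_{\omega_F}$ and, for $\lambda'\neq 0$, $\mathcal{F}_{\lambda'}(M)_{\omega_F}$; since eigenvalues are simple, for each prescribed $\lambda\neq 0$ there is a single such $p'$, giving the statement.

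The main obstacle is the non-vanishing of $\mathcal{OC}^{0|0}([\mathrm{pt}])$ in the non-compact setting: unlike the closed case \eqref{Equation Introduction OC(pt) is non-zero}, here $\mathrm{PD}([\mathrm{pt}])=[\mathrm{vol}_M]=0$, so the leading term disappears and one must instead extract a non-zero coefficient along the intersection-dual $C^{\vee}$ of a \emph{compact} representative $C$ of $c_1(TM)$. This relies on the Kontsevich--Seidel--Auroux fact \cite{Auroux} that the Maslov-$2$ discs through a generic point of $L_{p'}$ meet $C$ exactly once at an interior point (which is what gives $\mathcal{CO}(c_1(TM))=m_0(L)[L]$), on Lemma~\ref{Lemma L does not intersect c1} to make $C$ disjoint from the orientable $L_{p'}$, and on handling the fact that cycles and lf-cycles are a priori unrelated in the non-compact setting so there is no clean $\mathcal{OC}$--$\mathcal{CO}$ duality to fall back on. A secondary point requiring care is verifying in the first step that all maximum-principle, positivity-of-slope, and energy hypotheses used in the untwisted presentations are robust under the small perturbation $\omega_M\rightsquigarrow\omega_F$, so that the twisted closed-string mirror symmetry statement holds for the full admissible class and not merely for negative line bundles.
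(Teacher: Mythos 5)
Your proposal is correct and follows essentially the same route as the paper: it combines Lemma \ref{Lemma separate critical values} (generic $W_F$ is Morse with pairwise distinct nonzero critical values), the twisted admissible presentation from Sections \ref{Subsection Rotations and the lifting problem}--\ref{Subsection F-twisted superpotential and Jac WF}, the non-vanishing of $\mathcal{OC}^{0|0}([\mathrm{pt}])$ via the compact cycle representative of $c_1(TM)$ and its intersection dual (Lemma \ref{Lemma calculation of OC of point}(3), using Lemma \ref{Lemma L does not intersect c1}), and the one-dimensional-field eigensummand trick (Theorems \ref{Theorem Ostrover-Tyomkin field summand} and \ref{Theorem OC respects eigensummands}) — exactly the combination the paper uses in Theorems \ref{Theorem toric generation for isolated critical value} and \ref{Theorem toric generation for generic forms}. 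You also correctly note that no deformation/limiting argument is needed here (that only enters later, in the untwisted case).
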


The matrix perturbation argument from Section \ref{Subsection Generation for semisimple critical values} then implies:

\begin{corollary}
Let $M$ be any (non-compact) admissible toric manifold. 
If $W_M$ is Morse and $\lambda\neq 0$, then the toric generation criterion holds for $\mathcal{W}_{\lambda}(M)$ and $\mathcal{F}_{\lambda}(M)$. In particular, $\mathcal{W}_{\lambda}(M)$ is compactly generated and cohomologically finite.
This also holds for non-Morse $W_M$ provided $\lambda$ is a semisimple eigenvalue.
\end{corollary}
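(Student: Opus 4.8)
The plan is to deduce the toric generation criterion from the Abouzaid--Ritter-Smith criterion: I would show that, for a semisimple eigenvalue $\lambda\neq 0$ of $c_1(TM)$ acting on $QH^*(M,\omega_M)$, the map $\mathcal{OC}\colon\mathrm{HH}_*(\mathcal{F}^{\mathrm{toric}}_{\lambda}(M))\to QH^*(M)$ hits an invertible element of $QH^*(M)_{\lambda}$. Since $\mathcal{OC}$ is a $QH^*(M)$-module map preserving eigensummands, its image is an ideal of $QH^*(M)_{\lambda}$, so it is enough to prove that the first component $\mathcal{OC}^{0|0}\colon HF^*(L,L)\to QH^*(M)_{\lambda}$ is surjective, where $L$ is the unique monotone toric fibre equipped with the holonomy datum making $m_0(L)=\lambda$. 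Granting this, $\mathcal{F}_{\lambda}(M)$ is split-generated by $L$ with its finitely many holonomies; and since $c^*\colon QH^*(M)\to SH^*(M)$ is the localization of Theorem \ref{Theorem admissible toric manifolds}, which on the non-zero eigensummands is an isomorphism $QH^*(M)_{\lambda}\cong SH^*(M)_{\lambda}$ (cf.\ Theorem \ref{Introduction Theorem Eigenvalues of c1 from B to E} for line bundles), the image of $\mathcal{OC}\colon\mathrm{HH}_*(\mathcal{W}^{\mathrm{toric}}_{\lambda}(M))\to SH^*(M)_{\lambda}$ contains the unit; hence $\mathcal{W}_{\lambda}(M)$ is split-generated, so compactly generated (the generators are compact Lagrangians) and cohomologically finite (each $HF^*(L,L)$ is finite-dimensional by \eqref{EqnChoOh}). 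When $W_M$ is Morse every eigenvalue $\lambda\neq 0$ is semisimple, which gives the Morse statement.

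To prove the surjectivity I would first treat the simple case and then pass to a limit. I would deform $\omega_M$ to a generic toric symplectic form $\omega_F$ as in Lemma \ref{Lemma separate critical values} — equivalently, keep $\omega_M$ and twist the coefficients by the associated local system — so that $W_F$ is Morse with distinct non-zero critical values. For each such value $\lambda'=W_F(p')$ the summand $QH^*(M)_{\omega_F,\lambda'}$ is one-dimensional, and the twisted generation theorem for admissible toric manifolds stated above applies: concretely, $\mathcal{OC}^{0|0}([\mathrm{pt}])$ for the torus attached to $p'$ is non-zero, hence invertible in that one-dimensional summand, because $\mathcal{OC}^{0|0}([\mathrm{pt}]) = m_0(L)\,\mathrm{PD}(C^{\vee}) + (\text{linearly independent terms})$ with $m_0(L)=\lambda'\neq 0$. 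This last point is the genuinely new non-compact input, since here $\mathrm{PD}[\mathrm{pt}]=0$ and one must instead use the compact cycle representative $C=\mathrm{PD}(c_1(TM))$ of Lemma \ref{Lemma L does not intersect c1}.

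Next I would run the matrix-perturbation argument of Appendix B (the admissible-toric analogue of Theorem \ref{Theorem Intro perturbation argument}). Fix a semisimple critical value $\lambda$ of $W_M$. As $F$ runs back to the monotone parameters, the finitely many critical points of $W_F$ with value near $\lambda$ move to the non-degenerate critical points of $W_M$ with value $\lambda$, and the associated tori are all the single monotone torus $L$, carrying holonomies that vary holomorphically with $F$; the holomorphic torus action of $M$ identifies all the relevant disc moduli spaces with those of $L$, changing only the holonomy weights, so the maps $\mathcal{OC}^{0|0}$ restricted to $\Lambda\cdot[\mathrm{pt}]$ form a holomorphic family of linear maps into the fixed space $QH^*(M)\cong H^*(M;\Lambda)$. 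Appendix B then gives that $\bigoplus_{\lambda'\to\lambda}QH^*(M)_{\omega_F,\lambda'}$ converges in the Grassmannian to $QH^*(M)_{\lambda}$ and that the sum of the deformed images converges to the undeformed image; combined with the simple-case surjectivity this yields $\mathrm{Im}(\mathcal{OC}^{0|0})\supseteq QH^*(M)_{\lambda}$, as required.

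The main obstacle is exactly this limiting step. One must make the moduli-theoretic identification rigid enough that $\mathcal{OC}^{0|0}$ is literally a holomorphic family of maps between \emph{fixed} vector spaces — which rests on $p\in\mathrm{Crit}(W)$, so that $\Lambda\cdot[\mathrm{pt}]$ sits canonically and uniformly in $F$ inside Floer cohomology via \eqref{EqnChoOh} — and then control the Grassmannian convergence of the eigenspaces. Semisimplicity of $\lambda$ is what makes the conclusion the whole story: by Corollary \ref{Corollary need more than just OC00} the image of $\mathcal{OC}^{0|0}$ always lies in the honest $\lambda$-eigenspace, and for semisimple $\lambda$ this coincides with the generalized eigensummand $QH^*(M)_{\lambda}$, so surjecting onto it finishes the argument; for a non-semisimple $\lambda$ the perturbation theory instead predicts that higher components $\mathcal{OC}^{k|l}$, hence a curved $A_\infty$-category mixing different $m_0$-values, would be needed (cf.\ Remark \ref{Remark Intro}), which is out of reach at present.
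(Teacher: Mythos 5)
Your proposal is correct and follows essentially the same route as the paper's own proof (Theorem \ref{Theorem toric generation for semisimple critical values}): perturb the monotone toric form to a generic $\omega_F$ so the twisted superpotential becomes Morse with distinct critical values, invoke the non-vanishing of $\mathcal{OC}^{0|0}([\mathrm{pt}])$ via the compact cycle representative $C=\mathrm{PD}(c_1(TM))$ from Lemma \ref{Lemma calculation of OC of point}(3), and then run the Appendix B matrix-perturbation argument (passing to a holomorphic family over $\C$ via $T=1$, $s=e^x$, checking the derivatives $\lambda_j'(0)$ separate, and using the Grassmannian convergence of eigenspaces to conclude the undeformed $\mathcal{OC}^{0|0}$ is surjective onto the $\lambda$-eigenspace, which equals $QH^*(M)_\lambda$ by semisimplicity). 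One cosmetic remark: the step "$c^*$ restricts to an isomorphism $QH^*(M)_\lambda\cong SH^*(M)_\lambda$" is stated by analogy with the line-bundle case and is not proved for general admissible $M$, but it is also not needed — surjectivity of $c^*$ onto $SH^*(M)_\lambda$ (which is automatic, $c^*$ being a localization and respecting the eigensummand decomposition) already lets you push the invertibility across to $SH^*$ for the wrapped statement.
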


%%%%%%%%%%%%%%%%%%%%%%%%%%%%%%%%%%%%%%%%%
\subsection{Mirror symmetry}
%%%%%%%%%%%%%%%%%%%%%%%%%%%%%%%%%%%%%%%%%

Finally we comment on \eqref{Equation Introduction SH=Jac} in the light of mirror symmetry. Ganatra \cite{Ganatra} showed that for exact symplectic manifolds $X$ conical at infinity, if $\mathcal{OC}:\mathrm{HH}_*(\mathcal{W}(X))\to SH^*(X)$ hits a unit, then the closed-open string map $\mathcal{CO}:SH^*(X)\to \mathrm{HH}^*(\mathcal{W}(X))$ is an isomorphism of rings. Significant work would be involved in showing that this also holds for monotone $X$,  but let us suppose it does for the sake of argument. Then, for admissible toric manifolds $M$ with Morse superpotential $W_M$, it would follow that 
$$SH^*(M)\cong \mathrm{HH}^*(\mathcal{W}(M)) \cong \mathrm{HH}^*(\mathcal{F}^{\mathrm{toric}}(M)),$$
 where the latter is the subcategory of the (compact) Fukaya category generated by the toric Lagrangians. The assumption that $W_M$ is Morse, allows one to identify (non-canonically) the semisimple categories $\mathcal{F}^{\mathrm{toric}}(M)$ and $\mathrm{Mat}(W_M)$, the category of matrix factorizations of $W_M$ (in fact, more generally, \cite{AFOOO} will construct a canonical $A_{\infty}$-functor $\mathcal{F}(C) \to \mathrm{Mat}(W_C)$ for closed toric varieties $C$). In particular, 
$$\mathrm{HH}^*(\mathcal{F}^{\mathrm{toric}}(M))\cong \mathrm{HH}^*(\mathrm{Mat}(W_M)).$$
For Morse $W_M$, it follows by Dycherhoff \cite{Dycherhoff} that $\mathrm{HH}^*(\mathrm{Mat}(W_M))\cong \mathrm{Jac}(W_M)$. Thus, glossing over these speculative identifications, mirror symmetry confirms that $SH^*(M)\cong \mathrm{Jac}(W_M)$.

\vspace{3mm}
\noindent {\bf Acknowledgements.} \emph{I thank Mohammed Abouzaid for spotting the subtle issue about cycles explained in Remark \ref{Remark Intro}, which fixed a mistake in the original version of this paper.}
%
%%%%%%%%%%%%%%%%%%%%%%%%%%%%%%%%%%%
\section{Construction of invertible elements in the symplectic cohomology}
\label{Section Construction of invertible elements in the symplectic cohomology}
%%%%%%%%%%%%%%%%%%%%%%%%%%%%%%%%%%%
%%%%%%%%%%%%%%%%%%%%%%%%%%%%%%%%%%%
%
%%%%%%%%%%%%%%%%%%%%%%%%%%%%%%%%%%%%%%%%%%%%%%%%%%%%%%%%%%%
\subsection{The Novikov ring}
\label{Subsection Novikov ring}
%%%%%%%%%%%%%%%%%%%%%%%%%%%%%%%%%%%%%%%%%%%%%%%%%%%%%%%%%%%
We will always work over the field
$$
\Lambda = \left\{ \sum_{i=0}^{\infty} a_i t^{n_i}: a_i\in \K, n_i\in \R, \lim n_i = \infty \right\},
$$
called the \emph{Novikov ring}, where $\K$ is some chosen ground field, and $t$ is a formal variable.
% 
% The choice of $\K$ will not matter, except in Section \ref{Subsection Superpotential} where we will specialize to $\K=\C$, and in Sections \ref{Subsection Using a non-monotone toric form is the same as twisting} and \ref{Subsection The eigenvalues of c_1(TX)}.
We will mostly be concerned with \emph{monotone} symplectic manifolds $(M,\omega)$, meaning
$$
c_1(TM)[u]=\lambda_M \omega[u],
$$
for spheres $u\in \pi_2(M)$. For toric manifolds, $\pi_1(M)=1$, so the condition is equivalent to requiring $c_1(TM)=\lambda_M [\omega]$. For monotone manifolds, the Novikov ring is graded by placing $t$ in (real) degree
$
|t|=2\lambda_M.
$
We often also use a Novikov variable $T$ in degree $|T|=2$, so
$$
T = t^{1/\lambda_M}.
$$
In all Floer constructions, solutions are counted with Novikov weights. For example for the Floer differential, the Floer solutions $u$ are counted with weight
$$t^{\omega[u]} = T^{c_1(TM)[u]}$$
lying in degree $2\lambda_M \omega[u] = 2c_1(TM)[u]$.
%
%%%%%%%%%%%%%%%%%%%%%%%%%%%%%%%%%%%
\subsection{Review of the construction of invertibles in the symplectic cohomology}
\label{Subsection Invertibles in the symplectic cohomology}
%%%%%%%%%%%%%%%%%%%%%%%%%%%%%%%%%%%
%%%%%%%%%%%%%%%%%%%%%%%%%%%%%%%%%%%
Let $M$ be a symplectic manifold conical at infinity (see Section \ref{Subsection Symplectic manifolds conical at infinity}), satisfying \emph{weak+ monotonicity}: that is at least one of the following conditions holds:
\begin{enumerate}
 \item $\omega(\pi_2(M))=0$ or $c_1(TM)(\pi_2(M))=0$;
 \item $M$ is monontone: $\exists \lambda>0$ with $\omega(A)=\lambda c_1(TM)(A)$, $\forall A\in \pi_2(M)$;
 \item the minimal Chern number $|N|\geq \mathrm{dim}_{\C} M - 1$.
\end{enumerate}
That one of these conditions holds is equivalent to the condition:
$$
\textrm{ If }\omega(A)> 0 \textrm{ then } 2-\dim_{\C} M \leq c_1(TM)(A) < 0 \textrm{ is false.}
$$
One can further weaken this assumption by only requiring this for \emph{effective} classes $A$, that is those which are represented by a $J$-holomorphic sphere (see \cite{Ritter4}).
Weak+ monotonicity ensures Floer theory is well-behaved by ``classical'' arguments (Hofer-Salamon \cite{Hofer-Salamon}).

In what follows, $SH^*(M)$ will denote the symplectic cohomology of $M$ (see Section \ref{Subsection Remark about the definition of symplectic cohomology}), restricting to only contractible loops (which is everything when $\pi_1(M)=1$, e.g. for toric $M$).

When working in the monotone setting, we work over the Novikov ring from Section \ref{Subsection Novikov ring}. Otherwise, we work over the Novikov ring $\mathfrak{R}$ from Section \ref{Subsection New Novikov ring R}, or over the Novikov ring defined in \cite{Ritter4} (see the Technical Remark in Section \ref{Subsection Using a non-monotone toric form is the same as twisting}).

\begin{theorem}[Ritter \cite{Ritter4}]\label{Theorem Representation of Ham on SH} We can construct a homomorphism
$$
\mathcal{S}:\widetilde{\pi_1} \mathrm{Ham}_{\ell} (M,\omega) \to SH^*(M)^{\times},\, \widetilde{g}\mapsto \mathcal{S}_{\widetilde{g}}(1)
$$
into invertible elements of the symplectic cohomology, where $\mathrm{Ham}_{\ell}$ refers to Hamiltonian diffeomorphisms generated by Hamiltonians $K=K_t$ which are linear in $R$ for large $R$: $$\qquad\qquad\qquad\qquad\qquad\qquad K_t = \kappa_t R \qquad\textrm{ (negative slopes }\kappa_t<0\textrm{ are also allowed).}$$
\end{theorem}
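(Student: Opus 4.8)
\medskip
\noindent\emph{Proof proposal (the statement is Ritter \cite{Ritter4}; here is how one proves it).} The plan is to mimic the Seidel representation \cite{SeidelGAFA}, realizing $\mathcal{S}_{\widetilde g}$ as an endomorphism of $SH^*(M)$ which is $SH^*(M)$-linear for the pair-of-pants module structure, hence is multiplication by the element $\mathcal{S}_{\widetilde g}(1)$ (the image of the unit $1=c^*(1)$). Fix a representative loop $g_t$ of $\widetilde g$ generated by a Hamiltonian $K_t$ with $K_t=\kappa_t R$ for large $R$. For an admissible Hamiltonian $H$ of linear slope $\lambda$ (with $\lambda$ off the period spectrum of the Reeb flow), conjugation by $g_t$ carries the $1$-periodic orbits of $H$ bijectively onto those of the conjugated Hamiltonian ${}^gH_t:=H_t\circ g_t^{-1}-K_t\circ g_t^{-1}$, and carries Floer cylinders for $(H,J)$ to Floer cylinders for $({}^gH,{}^gJ)$; this ``naturality'' yields a chain isomorphism $CF^*(H)\to CF^*({}^gH)$. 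At infinity ${}^gH$ again has linear slope (shifted by the total Reeb rotation $\int_0^1\kappa_t\,dt$ of $g$), so after a slope-monotone continuation it is identified with an honestly admissible $H'$; composing and passing to the direct limit $SH^*(M)=\varinjlim_\lambda HF^*(H_\lambda)$ defines $\mathcal{S}_{\widetilde g}\colon SH^*(M)\to SH^*(M)$. The point of working with a lift $\widetilde g\in\widetilde{\pi_1}\mathrm{Ham}_\ell(M,\omega)$ rather than with $g\in\pi_1$ is to pin down the compatible system of cappings of the orbits, i.e.\ the normalization of the Novikov weights, without which $\mathcal{S}_{\widetilde g}$ is only defined up to a power of $t$.

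\noindent\emph{The main obstacle: compactness at infinity.} For $\mathcal{S}_{\widetilde g}$ to make sense one needs the moduli spaces of the defining equation to be compact modulo breaking, and in the conical-at-infinity setting this reduces to an a priori $C^0$-bound: a solution with asymptotics in a fixed compact set cannot escape to infinity. On the conical end the Hamiltonian term has the form $\kappa_t R$ (and, for the rotations about toric divisors that will matter in this paper, the more general form $f(y)R$ with $f$ invariant under the Reeb flow), so the required no-escape statement is precisely the maximum principle of \cite{Ritter4} together with its strengthening, the Extended Maximum Principle (Lemma~\ref{Introduction Lemma Extended Max principle}). The delicate feature is that negative slopes $\kappa_t<0$ are allowed, so one cannot invoke monotonicity of the slope in the $s$-direction as one does for continuation maps; but the naturality map is essentially a change of coordinates on the solution space, so the bound transfers from the bound for $H$, and only the auxiliary continuation between ${}^gH$ and $H'$ needs to be slope-monotone, which can always be arranged. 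Throughout, weak+ monotonicity guarantees that the Floer moduli spaces are regular for generic data and carry the expected dimension by the ``classical'' arguments of Hofer--Salamon \cite{Hofer-Salamon}, so no virtual perturbations are needed.

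\noindent\emph{Algebraic properties.} It remains to check formal properties, each by a standard gluing/homotopy argument on moduli spaces. (i) \emph{Homomorphism:} concatenating the two naturality cobordisms for loops $g,h$ and gluing gives $\mathcal{S}_{\widetilde{gh}}=\mathcal{S}_{\widetilde g}\circ\mathcal{S}_{\widetilde h}$ on $SH^*(M)$, and a homotopy argument shows the construction depends only on $\widetilde g$, not on the chosen representative loop or auxiliary data. (ii) \emph{Module map:} comparing the surface defining ``product then $\mathcal{S}_{\widetilde g}$'' with the one defining ``$\mathcal{S}_{\widetilde g}$ then product'' shows $\mathcal{S}_{\widetilde g}$ commutes with multiplication by any class of $SH^*(M)$; hence $\mathcal{S}_{\widetilde g}$ is recovered from $\mathcal{S}_{\widetilde g}(1)$, and combined with (i) this gives $\mathcal{S}_{\widetilde{gh}}(1)=\mathcal{S}_{\widetilde g}(1)\cdot\mathcal{S}_{\widetilde h}(1)$. (iii) \emph{Invertibility:} the constant loop induces the identity, so $\mathcal{S}_{\widetilde g}\circ\mathcal{S}_{\widetilde{g}^{-1}}=\mathcal{S}_{\widetilde g\cdot\widetilde g^{-1}}=\mathrm{id}$, whence $\mathcal{S}_{\widetilde g}(1)$ is a unit in $SH^*(M)$ with inverse $\mathcal{S}_{\widetilde g^{-1}}(1)$. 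I expect step~(2), the maximum principle for Hamiltonians that are linear (and possibly $y$-dependent, possibly of negative slope) at infinity, to be the genuine difficulty — it is exactly the foundational input isolated in the appendices — whereas steps~(1) and~(3) run formally parallel to the compact Seidel representation.
\medskip
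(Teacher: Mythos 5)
Your proposal is correct and follows essentially the same approach the paper outlines (and that \cite{Ritter4} carries out in detail): the naturality isomorphism $CF^*(H,J)\to CF^*(g^*H,g^*J)$ for the pulled-back Floer data, commuting with continuation maps and passing to the direct limit over slopes, with the lift $\widetilde g$ keeping track of cappings, and the maximum principle supplying the $C^0$-bound needed to make sense of $HF^*(g^*H,g^*J)$ and the intervening continuations. The only minor point worth noting is that for the theorem as stated ($K_t=\kappa_t R$) the standard maximum principle of \cite{Ritter4} already suffices; the extended maximum principle of Lemma~\ref{Introduction Lemma Extended Max principle} is what this paper adds in order to allow $K_t=f_t(y)R$, which is Theorem~\ref{Theorem Representation for lager class of Hams} rather than this one.
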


The symbol $\widetilde{\pi_1}$ refers to an extension of the usual $\pi_1$-group by the group $\Gamma = \pi_2(M)/\pi_2(M)_0$, where $\pi_2(M)_0$ is the subgroup of spheres on which both $\omega$ and $c_1(TM)$ vanish. As this will be important in the application in Section \ref{Subsection The Hamiltonians generating the rotations around the toric divisors}, we now explain this (see \cite{SeidelGAFA} for details). A loop $g:S^1 \to \mathrm{Ham}_{\ell}(M,\omega)$ acts on the space $\mathcal{L}_0M$ of contractible free loops by $(g\cdot x)(t)=g_t\cdot x(t)$.\\ [1mm]
{\bf Technical Remark.} \emph{(For $M$ simply connected, e.g. toric $M$, this issue doesn't arise.)\\ The proof that $g\cdot x$ is contractible in Lemma 2.2 of \cite{SeidelGAFA} used the Arnol'd conjecture. In our case, if this failed for a (time-dependent!) Hamiltonian $K$ linear at infinity, then $SH^*(M)=0$ as the unit $c^*(1)$ would vanish ($c^*: QH^*(M)\to HF^*(K)=0\to SH^*(M)$ factorizes). One can check this is consistent with the construction of the $r,\mathcal{R}$-element for $g$ in diagram \ref{Equation Intro Comm Diagram Ritter Smith}, which turns out to be zero (this is because the construction would factorize through a map $CF^*(g^*H_0)\to CF^*(H_0)$ where one restricts to loops in the class of non-contractible loops $g^{-1}\cdot \mathcal{L}_0(M)$, but here $H_0$ is a $C^2$-small Hamiltonian so its $1$-orbits are contractible). In most applications this does not arise: the $S^1$-action $g$ will typically have fixed points, therefore $g$ must preserve the connected component $\mathcal{L}_0M\subset \mathcal{L}M$ of contractible loops.}\\[1mm]
Let $\widetilde{\mathcal{L}}_0 M$ denote the (connected) cover of $\mathcal{L}_0M$ given by pairs $(v,x)$ where $v:\mathbb{D}^2 \to M$ is a smooth disc with boundary $x\in \mathcal{L}_0M$, where we identify two pairs if both $\omega$ and $c_1(TM)$ vanish on the sphere obtained by gluing together the discs. Then the action of $g$ on $\mathcal{L}_0 M$ lifts to a continuous map $$\widetilde{g}:\widetilde{\mathcal{L}}_0 M\to \widetilde{\mathcal{L}}_0 M$$ which is uniquely determined by the image of a constant disc $(c_x,x)$ at a constant 
loop $x\in M$. Any two such lifts differ by an element in the deck group $\Gamma$.
The above $\widetilde{\pi_1}\mathrm{Ham}$ group is $\pi_0\widetilde{G}$ of the group $\widetilde{G}$ of these choices of lifts $\widetilde{g}$.

The $\Lambda$-module automorphism $\mathcal{S}_{\widetilde{g}}\in \mathrm{Aut}(SH^*(M))$, which turns out to be pair-of-pants product by $\mathcal{S}_{\widetilde{g}}(1)\in SH^*(M)^{\times}$, arises from the isomorphism at the chain level
$$
\mathcal{S}_{\widetilde{g}}:HF^*(H,J,\omega) \to HF^{*+2I(\widetilde{g})}(g^*H,g^*J,\omega), \quad x\mapsto \widetilde{g}^{-1} \cdot x
$$
given by pulling-back all the Floer data by $\widetilde{g}$, where $I(\widetilde{g})$ is a Maslov index (see \cite{Ritter4}) and
$$
g^*H(m,t) = H(g_t\cdot m,t) - K(g_t\cdot m,t) \quad \textrm{ and }\quad g^*J = dg_t^{-1} \circ J_t \circ dg_t.
$$
The direct limit of the maps $\mathcal{S}_{\widetilde{g}}$ as the slope of $H$ at infinity is increased yields a $\Lambda$-module automorphism $\mathcal{S}_{\widetilde{g}}:SH^*(M)\to SH^*(M)$ with inverse $\mathcal{S}_{\widetilde{g}^{-1}}$.

The analogous construction for closed symplectic manifolds $(C,\omega)$ is a well-known argument due to Seidel \cite{SeidelGAFA}. For closed $C$, the Floer cohomologies are all isomorphic to $QH^*(C)$, and so the above chain isomorphism turns into quantum product by a quantum invertible. The above homomorphism would then be the well-known \emph{Seidel representation} $$\widetilde{\pi_1} \mathrm{Ham}(C,\omega) \to QH^*(C)^{\times}.$$

In the non-compact setup, however, the situation is quite different. The groups $HF^*(H)$ depend dramatically on the slope of $H$ at infinity. Only when the slope of $K$ at infinity is positive, it is possible by \cite{Ritter4} to obtain a typically \emph{non-invertible} element in $QH^*(M)$ as follows.
Write $\mathrm{Ham}_{\ell\geq 0}$ to mean that we impose additionally that the slope $\kappa_t\geq 0$ above. Any $g\co S^1 \to \mathrm{Ham}_{\ell \geq 0}(M,\omega)$ gives rise to endomorphisms
\begin{equation}\label{Eqn varphiHS}
\mathcal{R}_{{\widetilde{g}}}=\varphi_H\circ \mathcal{S}_{{\widetilde{g}}}\co HF^*(H,J) \to HF^{*+2I(\widetilde{g})}(H,J),
\end{equation}
where $\varphi_H$ is the continuation map
$$
\varphi_H\co HF^{*}(g^*H,g^*J,\omega) \to HF^{*}(H,J,\omega).
$$
In the direct limit, as the slope of $H$ grows to infinity, this yields the $\Lambda$-module automorphism $$\mathcal{R}_{{\widetilde{g}}}=\mathcal{S}_{{\widetilde{g}}}:SH^*(M)\to SH^{*+2I(\widetilde{g})}(M).$$

The reason for the positive slope condition on $K$ is that $g^*H$ will have smaller slope than $H$, which ensures that the continuation map $\varphi_H$ exists, in particular the continuation map for $H=H_0$ of very small slope at infinity will exist.

Recall that for such small-slope $H_0$, $QH^*(M)\cong HF^*(H_0)$ and there is a canonical $\Lambda$-algebra homomorphism
$$
c^*\co QH^*(M)\cong HF^*(H_0) \to \varinjlim HF^*(H) = SH^*(M).
$$
%
%
%Observe that negative slopes of $K$ would make the construction fail, since there is no continuation map $HF^{*}(g^*H_0,g^*J,\omega) \to HF^{*}(H_0,J,\omega)$ in general when the slope of $g^*H_0$ is greater than the slope of $H_0$.
Taking $H=H_0$ in \eqref{Eqn varphiHS} determines a $\Lambda$-module homomorphism 
$$r_{\widetilde{g}}: QH^*(M) \to QH^{*+2I(\widetilde{g})}(M)$$ 
which turns out to be quantum cup product by $r_{\widetilde{g}}(1)$. 
\begin{theorem}[Ritter \cite{Ritter4}]\label{Theorem Representation of Ham on QH} 
The homomorphism
$
r\co \widetilde{\pi_1}(\mathrm{Ham}_{\ell\geq 0}(M,\omega)) \to QH^*(M)$, $\widetilde{g} \mapsto r_{\widetilde{g}}(1)$
fits into the commutative diagram \eqref{Equation Intro Comm Diagram Ham SH and QH}.
%
%
%
%
%In particular, $r_{\widetilde{g}} = \psi^{+}\circ \mathcal{R}_{\widetilde{g}} \circ \psi^{-}$,
%where $\mathcal{R}_{\widetilde{g}}=\varphi_{H_0}\circ \mathcal{S}_{\widetilde{g}}: HF^*(H_0)\to HF^{*+2I(\widetilde{g})}(H_0)$, and $\psi^{\pm}$ are the $\Lambda$-algebra identifications $QH^*(M)\cong HF^*(H_0)$. The endomorphism $r_{\widetilde{g}}$ of $QH^*(M)$ is given by quantum cup product by $r_{\widetilde{g}}(1)$.
\end{theorem}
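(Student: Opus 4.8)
The plan is to establish the two components of the statement separately: that $r$ is a monoid homomorphism from $\widetilde{\pi_1}(\mathrm{Ham}_{\ell\geq 0}(M,\omega))$, under composition of loops, to the multiplicative monoid $(QH^*(M),*)$, and that the triangle in \eqref{Equation Intro Comm Diagram Ham SH and QH} commutes, i.e. $\mathcal{R}=c^*\circ r$. Both reduce, via the two structural facts already recorded --- $r_{\widetilde{g}}$ is quantum multiplication by $r_{\widetilde{g}}(1)$, and the $SH^*(M)$-automorphism $\mathcal{S}_{\widetilde{g}}$ is pair-of-pants product by $\mathcal{S}_{\widetilde{g}}(1)=\mathcal{R}(\widetilde{g})$ --- to statements about the unit. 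Since $c^*$ is a ring homomorphism, $\mathcal{R}=c^*\circ r$ will follow from the single identity $c^*(r_{\widetilde{g}}(1))=\mathcal{S}_{\widetilde{g}}(1)$, and multiplicativity of $r$ will follow from the operator identity $r_{\widetilde{g}\widetilde{h}}=r_{\widetilde{g}}\circ r_{\widetilde{h}}$ on $HF^*(H_0)\cong QH^*(M)$, evaluated at $1$.

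For the triangle, I would show that the chain-level operators $\mathcal{R}_{\widetilde{g}}=\varphi_H\circ\mathcal{S}_{\widetilde{g}}\co HF^*(H,J)\to HF^{*+2I(\widetilde{g})}(H,J)$ of \eqref{Eqn varphiHS} assemble, as the slope of $H$ at infinity grows, into a morphism of the directed system $\{HF^*(H),\ \mathrm{continuation}\}$ whose colimit is $SH^*(M)$. This uses naturality of the continuation maps in the slope, together with the fact that $\mathcal{S}_{\widetilde{g}}$ is merely relabeling of Floer data by the diffeomorphism $\widetilde{g}$ and so commutes with the continuation equation after the matching relabeling. The induced colimit endomorphism $\Phi$ of $SH^*(M)$ then satisfies $\Phi\circ c^*_H=c^*_H\circ\mathcal{R}_{\widetilde{g}}$ for each structure map $c^*_H\co HF^*(H)\to SH^*(M)$; specializing to $H=H_0$ of small slope gives $\Phi\circ c^*=c^*\circ r_{\widetilde{g}}$. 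Finally $\Phi=\mathcal{S}_{\widetilde{g}}$, because in the colimit the continuation maps $\varphi_H$ become identifications, so $\varinjlim(\varphi_H\circ\mathcal{S}_{\widetilde{g}})=\varinjlim\mathcal{S}_{\widetilde{g}}$, which is the $SH^*(M)$-automorphism $\mathcal{S}_{\widetilde{g}}$; evaluating $\Phi\circ c^*=c^*\circ r_{\widetilde{g}}$ on $1$ then yields $c^*(r_{\widetilde{g}}(1))=\mathcal{S}_{\widetilde{g}}(1)=\mathcal{R}(\widetilde{g})$.

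For multiplicativity, I would deduce $r_{\widetilde{g}\widetilde{h}}=r_{\widetilde{g}}\circ r_{\widetilde{h}}$ from the functoriality of the Seidel-type construction: pulling back Floer data by $\widetilde{g}\widetilde{h}$ agrees with pulling back first by $\widetilde{h}$ and then by $\widetilde{g}$, so after identifying the twisted Hamiltonians $(gh)^*H$ with $h^*(g^*H)$ and composing the intervening continuation maps (which compose up to chain homotopy) one obtains $\mathcal{R}_{\widetilde{g}\widetilde{h}}\simeq\mathcal{R}_{\widetilde{g}}\circ\mathcal{R}_{\widetilde{h}}$; specializing to $H=H_0$ gives the operator identity, and evaluating at $1$ gives $r_{\widetilde{g}\widetilde{h}}(1)=r_{\widetilde{g}}(1)*r_{\widetilde{h}}(1)$. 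The normalization $r_{\mathrm{id}}(1)=1$ is immediate, since for the trivial lift $\mathcal{S}$ and $\varphi$ are identities. All of this has to be carried out at the level of the extension $\widetilde{\pi_1}$ rather than $\pi_1$: the deck group $\Gamma=\pi_2(M)/\pi_2(M)_0$ acts by multiplication by Novikov monomials $t^{\omega(A)}$, and these weights must be tracked through every gluing, exactly as in Seidel \cite{SeidelGAFA} and McDuff--Tolman \cite{McDuffTolman}.

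The genuine obstacle, and what distinguishes this from the compact (Seidel) case, is analytic rather than algebraic: one must prevent any of the relevant moduli spaces --- Floer cylinders, continuation cylinders, the $\mathcal{S}$-twisted equations, and the quantum-cap configurations --- from escaping to infinity, since this compactness underlies both the module-map property and every gluing and chain-homotopy argument used above. This is exactly why one restricts to $\mathrm{Ham}_{\ell\geq 0}$, i.e. to loops generated by Hamiltonians that are linear with non-negative slope at infinity: then a maximum principle applies to all of these PDEs, making $\mathcal{S}_{\widetilde{g}}$, $\varphi_H$, and hence $\mathcal{R}_{\widetilde{g}}$ and $r_{\widetilde{g}}$, well-defined and the functoriality and colimit arguments valid. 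Establishing these compactness statements is the crux, and it is precisely what \cite{Ritter4} carries out; the enlargement of the admissible class of Hamiltonians carried out later in this paper rests on the Extended Maximum Principle, Lemma \ref{Introduction Lemma Extended Max principle}.
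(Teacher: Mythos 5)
This theorem is quoted from \cite{Ritter4}; the present paper gives no proof of it, only the surrounding review in Section 2.2, so there is nothing to compare beyond that construction. Your sketch reconstructs it faithfully --- the chain-level maps $\mathcal{S}_{\widetilde{g}}$, the continuation maps $\varphi_H$ (whose existence is precisely where the non-negative slope hypothesis enters, since $g^*H$ has smaller slope than $H$), compatibility with the direct limit defining $SH^*(M)$ so that $c^*\circ r_{\widetilde{g}}=\mathcal{S}_{\widetilde{g}}\circ c^*$, and the deck-group/Novikov-weight bookkeeping over $\widetilde{\pi_1}$ --- and it correctly identifies the maximum-principle/compactness statements as the crux, deferring them to \cite{Ritter4}, which is exactly where the paper's proof lives.
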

%
% The automorphism $\mathcal{R}_{\widetilde{g}}: SH^*(M)\to SH^{*+2I(\widetilde{g})}(M)$ commutes via $c^*$ with an endomorphism $r_{\widetilde{g}}: QH^*(M)\to QH^{*+2I(\widetilde{g})}(M)$ given by quantum cup product by $r_{\widetilde{g}}(1)$. 
%
The key observation is that when the slope $\kappa_t>0$, the map $r_{\widetilde{g}}$ in fact determines $SH^*(M)$.
\begin{theorem}[Ritter \cite{Ritter4}]\label{Theorem Circle action gives SH} 
Given $g: S^1 \to \mathrm{Ham}_{\ell>0}(M,\omega)$, the canonical map $c^*:QH^*(M)\to SH^*(M)$ induces an isomorphism of $\Lambda$-algebras
$$
SH^*(M) \cong QH^*(M)/(\textrm{generalized }0\textrm{-eigenspace of }r_{\widetilde{g}})
$$
$($recall the generalized $0$-eigenspace is $\ker r_{\widetilde{g}}^d$ for any $d\geq \mathrm{rank}\, H^*(M))$.
\end{theorem}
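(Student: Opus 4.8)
The plan is to exhibit $SH^*(M)$ as an algebraic mapping telescope of quantum multiplication by $\beta:=r_{\widetilde{g}}(1)$ on $QH^*(M)$, with $c^*$ realized as the canonical map into the colimit, and then to recognize that colimit as the localization $QH^*(M)_{\beta}$, which for finite-dimensional $QH^*(M)$ is exactly the quotient by the generalized $0$-eigenspace of $\beta$. Concretely, the target is the identification
\[
SH^*(M)\;\cong\;\varinjlim\bigl(QH^*(M)\xrightarrow{\ \beta\ }QH^*(M)\xrightarrow{\ \beta\ }QH^*(M)\to\cdots\bigr),
\]
where the structure maps are quantum cup product by $\beta$.

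First I would fix a cofinal family of slopes. Choose Hamiltonians $H_n$ of slope $\varepsilon+n\kappa$ at infinity, where $\kappa>0$ is the slope of the Hamiltonian $K$ generating $g$ and $\varepsilon>0$ is small, taking the slopes generic (away from the discrete set of Reeb periods) so that the $HF^*(H_n)$ are the standard Floer groups and $HF^*(H_0)\cong QH^*(M)$. Since $g^{-1}$ is generated by $-K$, the Hamiltonian $(g^{-1})^*H_n$ has slope $\varepsilon+(n+1)\kappa$, so the pull-back maps of Theorem~\ref{Theorem Representation of Ham on SH} give isomorphisms $\mathcal{S}_{\widetilde{g}^{-1}}\colon HF^*(H_n)\xrightarrow{\ \sim\ }HF^*(H_{n+1})$ (followed, if necessary, by the continuation isomorphism between non-degenerate Hamiltonians of equal slope). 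Hence $\{\varepsilon+n\kappa\}_{n\ge 0}$ is cofinal, $SH^*(M)=\varinjlim_n HF^*(H_n)$, and every term is identified with $QH^*(M)$.

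The step I expect to be the main obstacle is to check that, under these identifications, each structure map $HF^*(H_n)\to HF^*(H_{n+1})$ of the direct system becomes quantum cup product by $\beta$. The two ingredients are the defining relation $\mathcal{R}_{\widetilde{g}}=\varphi_H\circ\mathcal{S}_{\widetilde{g}}$ of \eqref{Eqn varphiHS} and the compatibility of continuation maps with the pull-back maps $\mathcal{S}_{\widetilde{g}}$ (pulling back continuation data along $\widetilde{g}$ yields continuation data again). A diagram chase combining these shows that the transported continuation map is independent of $n$ and equals the operator that, taking $H=H_0$, is by construction $r_{\widetilde{g}}$; equivalently, $c^*$ intertwines $r_{\widetilde{g}}$ on $QH^*(M)$ with the automorphism $\mathcal{R}_{\widetilde{g}}=\mathcal{S}_{\widetilde{g}}$ of $SH^*(M)$, and invertibility of the latter forces the telescope structure. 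The delicate points to be handled with care are the chain-level commutativity (up to homotopy) of the relevant squares, the genericity of the chosen slopes, and the fact — flagged in the Technical Remark above — that $g$ preserves the component $\mathcal{L}_0M$ of contractible loops, which is automatic in the intended applications since the relevant $S^1$-actions have fixed points.

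Finally, the colimit of $A\xrightarrow{\beta}A\xrightarrow{\beta}\cdots$ is the localization $A_{\beta}=A[\beta^{-1}]$, with $A\to A_{\beta}$ the canonical map. Since $A=QH^*(M)$ is finite-dimensional over $\Lambda$ (it is $H^*(M;\Lambda)$ as a vector space, as the statement already presupposes through $\operatorname{rank}H^*(M)$), Fitting's lemma gives $A=\ker(\beta^N)\oplus\operatorname{im}(\beta^N)$ for any $N\ge\operatorname{rank}H^*(M)$, with $\beta$ nilpotent on the first summand and invertible on the second; thus $A_{\beta}\cong A/\ker(\beta^N)$, the quotient by the generalized $0$-eigenspace of $r_{\widetilde{g}}$, and the localization map is precisely this quotient projection. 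As $c^*$ is a $\Lambda$-algebra homomorphism that agrees with this localization map, it descends to the asserted $\Lambda$-algebra isomorphism $SH^*(M)\cong QH^*(M)/(\text{generalized }0\text{-eigenspace of }r_{\widetilde{g}})$.
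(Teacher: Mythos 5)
Your proposal is correct and follows essentially the same strategy as the paper's proof: both build the direct system computing $SH^*(M)$ from Hamiltonians whose slopes increase in increments of $\kappa=\mathrm{slope}(K)$, use the isomorphisms $\mathcal{S}_{\widetilde{g}^{\pm 1}}$ to identify every term with $QH^*(M)\cong HF^*(H_0)$, observe by naturality that each continuation map is transported to $r_{\widetilde{g}}$, and then apply Fitting's lemma (the paper's ``by linear algebra'') to identify the colimit of the constant system with quantum product by $\beta=r_{\widetilde{g}}(1)$ with the quotient by the generalized $0$-eigenspace. The only cosmetic difference is that the paper takes $H_k=(g^{-k})^*H_0$ outright so the $\mathcal{S}$-maps are on-the-nose identifications, whereas you take any Hamiltonians of the right slopes and interpolate with extra continuation isomorphisms — both routes lean on the same independence of $HF^*(H,J)$ on the choices within a fixed slope class.
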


This follows from the previous Theorem as follows. Given a Hamiltonian $H_0$ with very small slope at infinity, one defines 
$$
H_k = (g^{-k})^*H_0,
$$
which has slope proportional to $k$ at infinity: $\mathrm{slope}(H_0) + k\cdot \mathrm{slope}(K)$ (here we use that we work in $\mathrm{Ham}_{\ell>0}$ so that $\mathrm{slope}(K)>0$). Notice that $g^*H_k=H_{k-1}$.

The maps $\mathcal{S}_{\widetilde{g}}^k$ identify $HF^*(H_k)\cong HF^{*+2kI(\widetilde{g})}(H_0)$, more precisely
$$S_{\widetilde{g}}^k=S_{\widetilde{g}^k}:HF^*((g^{-k})^*H_0, (g^{-k})^*J) \cong HF^{*+2kI(\widetilde{g})}(H_0,J)$$
and recall that $HF^*(H,J)$ only depends on the slope of $H$ and it does not depend on the choice of $\omega$-compatible almost complex structure $J$ of contact type at infinity.

By a naturality argument, this implies that the continuation map $HF^*(H_{k-1}) \to HF^*(H_k)$ can be identified with $\mathcal{S}_{\widetilde{g}}^{-k} \circ \varphi_{H_0} \circ \mathcal{S}_{\widetilde{g}}^k$. So, after identifying $HF^{*-2kI(\widetilde{g})}(H_k)\cong HF^*(H_0)\cong QH^*(M)$, all continuation maps $HF^*(H_k)\to HF^*(H_{k+1})$ are identified with the map $r_{\widetilde{g}}:QH^*(M)\to QH^{*+2I(\widetilde{g})}(M)$. The Theorem then follows by linear algebra.
%
%%%%%%%%%%%%%%%%%%%%%%%%%%%%%%%%%%%
%%%%%%%%%%%%%%%%%%%%%%%%%%%%%%%%%%%
\subsection{Invertibles in the symplectic cohomology for a larger class of Hamiltonians}
\label{Subsection Invertibles in the symplectic cohomology for a larger class of Hamiltonians}
%%%%%%%%%%%%%%%%%%%%%%%%%%%%%%%%%%%
%%%%%%%%%%%%%%%%%%%%%%%%%%%%%%%%%%%
In Appendix C, Section \ref{Section The Maximum principle revisited}, we will strengthen the maximum principle:

\begin{theorem}[Extended Maximum Principle]
 Let $H: M \to \R$ have the form
$$
 H(y,R) = f(y) R
$$
for large $R$, where $(y,R)\in \Sigma\times (1,\infty)$ are the collar coordinates, and $f:\Sigma\to \R$ satisfies
\begin{itemize}
 \item $f$ is invariant under the Reeb flow (that is $df(Y)=0$ for the Reeb vector field $Y$);
 \item $f \geq 0$.\;\;  \emph{[this condition can be omitted if $d\beta=0$]}
\end{itemize}

Let $J$ be an $\omega$-compatible almost complex structure of contact type at infinity. Then the maximum principle holds for the $R$-coordinate of any Floer solution.
\end{theorem}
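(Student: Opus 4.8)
The plan is to reduce the statement to a single differential inequality for the function $\rho := R\circ u$ on the part of the domain that a Floer solution $u$ maps into the conical end, and then invoke the strong maximum principle. The first step is an elementary but crucial computation of the Hamiltonian vector field of $H(y,R)=f(y)R$ on the collar $\Sigma\times(1,\infty)$. Writing $\lambda = R\alpha$ for the Liouville form, so $\omega=d\lambda = dR\wedge\alpha + R\,d\alpha$, and $Y$ for the Reeb field of $\alpha$, solving $\iota_{X_H}\omega = -dH$ gives
\[
X_H = -R\,df(Y)\,\partial_R \;+\; f\,Y \;+\; V,
\]
where $V$ is the section of $\xi=\ker\alpha$ determined by $\iota_V d\alpha = -df|_\xi$. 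The hypothesis $df(Y)=0$ does two things simultaneously: it is exactly the integrability condition making $V$ well defined (evaluating $\iota_V d\alpha=-df$ on $Y$ forces $df(Y)=0$), and it kills the radial term, so that $dR(X_H)=0$, i.e.\ the Hamiltonian flow is tangent to the hypersurfaces $\{R=\text{const}\}$. Independently of any hypothesis, $\lambda(X_H)=R\,\alpha(X_H)=fR=H$, and this is $\geq 0$ precisely when $f\geq 0$. These two structural facts — $X_H$ has no radial component and $\lambda(X_H)=H$ — are all that will be used about $X_H$.

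Next I would compute $\Delta\rho$ (the domain Laplacian, in a conformal coordinate $s+it$) for a solution of $(du - X_H\otimes\beta)^{0,1}_J=0$ with $\beta=p\,ds+q\,dt$ a one-form on the domain. Using that $J$ is of contact type, $dR\circ J=-\lambda$, one rewrites $dR(\partial_s u)$ and $dR(\partial_t u)$ in terms of $\lambda(\partial_t u)$, $\lambda(\partial_s u)$, $dR(X_H)=0$ and $\lambda(X_H)=H$; differentiating, using $d(u^*\lambda)=u^*\omega$ and $\omega$-compatibility of $J$, the radial and $\xi$-parts of $X_H$ drop out and the metric cross terms reorganize into a perfect square, leaving the identity
\[
\Delta\rho \;=\; |\partial_s u - p\,X_H|^2_{g_J} \;-\; H(u)\,w, \qquad d\beta = w\,ds\wedge dt .
\]
If $d\beta=0$ this already gives $\Delta\rho\geq 0$ with no further assumption, which is exactly why the condition $f\geq 0$ may then be dropped; in general one uses $H(u)=fR\geq 0$ on the end together with the sign constraint on $w$ built into the admissible class of weights $\beta$ to again obtain $\Delta\rho\geq 0$ (in the $t$-dependent profile one reinstates bounded lower-order terms and gets a subsolution inequality $\Delta\rho\geq\langle a,\nabla\rho\rangle$).

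The endgame is then standard. On the region $\Omega=\{R\circ u > R_0\}$, with $R_0$ chosen larger than the $R$-levels of the asymptotic Hamiltonian orbits of $H$ and of any Lagrangian boundary data (these sit at bounded $R$ because $f$ is Reeb-invariant and $\Sigma$ is compact), $\rho$ is a bounded subsolution equal to $R_0$ on the relative boundary $\partial\Omega$. The strong maximum principle forbids an interior maximum unless $\rho$ is locally constant, and a Hopf boundary-point argument rules out a maximum on $\overline\Omega$ as well; hence $\Omega=\emptyset$ and $R\circ u\leq R_0$ everywhere.

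The step I expect to be the real obstacle is the clean derivation of the identity $\Delta\rho = |\partial_s u - p\,X_H|^2_{g_J} - H(u)\,w$ in the presence of the new component $V\in\xi$: one must check that every occurrence of $X_H$ enters only through $dR(X_H)$ and $\lambda(X_H)$, and in particular that the terms $g_J(\partial_s u, X_H)$ and $g_J(\partial_t u, X_H)$ assemble into the square rather than leaving a residue involving $V$ or $df(Y)\,R$. This book-keeping, not the maximum-principle endgame, is where the Reeb-invariance of $f$ (and the contact-type condition on $J$) does its work, and it is the only point where the argument genuinely differs from the classical case $H=cR$.
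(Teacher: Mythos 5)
Your proposal is correct, and the identity you predict,
\begin{equation*}
\Delta(R\circ u) \;=\; \|\partial_s u - p\,X_H\|^2_{g_J} \;-\; H(u)\,(\partial_s q - \partial_t p),
\end{equation*}
does hold, with all terms involving the $\ker\alpha$-component $V$ of $X_H$ cancelling exactly as you anticipate; the two structural facts $dR(X_H)=0$ and $\theta(X_H)=H$ are the only inputs, as you say. However, your route is genuinely different from the paper's. The paper does not compute $\Delta(R\circ u)$ at all: it proves a ``no escape lemma'' by estimating the total geometric energy $E(u)=\int_S u^*d\theta - u^*(dH)\wedge\beta$ on the sublevel region $V=\{R\geq R_0\}$, expanding $d(u^*H\beta)$, discarding $\int u^*H\,d\beta \leq 0$ via $H\geq 0$ and $d\beta\leq 0$, and then applying Stokes on $S$ to push everything to $\partial S\subset\partial V$, where the contact-type condition and $dR(X)=0$, $H=\theta(X)$ convert the boundary integrand into $-dR(du)\circ j$, which is $\leq 0$ because $\partial V$ minimizes $R$ on $V$. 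Forcing $E(u)=0$ then gives $du=X\otimes\beta$ and traps $u$ in $\partial V$. The two approaches differ in one material respect, which the paper itself flags: the Stokes argument needs the hypotheses ($J$ of contact type, $dR(X)=0$, $H=\theta(X)$) only along the single hypersurface $\partial V=\{R=R_0\}$, whereas the pointwise Laplacian argument requires them on all of $V$; under the stated hypothesis ``contact type at infinity'' both apply, and the paper explicitly remarks in its proof that, under the stronger hypothesis, its chain of equalities is just Green's formula for $\Delta(R\circ u)$, i.e.\ precisely your identity integrated. The paper also isolates as a separate lemma the observation that Hamiltonians of the form $f(y)R$ with $f$ Reeb-invariant are \emph{precisely} those with $dR(X)=0$ and $H=\theta(X)$, which makes the hypotheses of the no-escape lemma manifestly natural; you obtain the same two facts but do not note the equivalence. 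One small wobble in your write-up: the $\ker\alpha$-component $V$ of $X_H$ (defined by $\iota_V d\alpha = -df|_{\ker\alpha}$) is well defined for any $f$ by nondegeneracy of $d\alpha$ on $\ker\alpha$; the hypothesis $df(Y)=0$ is not an ``integrability condition'' for $V$ but is exactly what kills the radial component $-R\,df(Y)\,\partial_R$ of $X_H$. This does not affect your conclusion, which is correct.
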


The above also holds for the parametrized maximum principle (Theorem \ref{Theorem Continuation maps max}) so it allows us to compute $SH^*(M)$ for a larger class of Hamiltonians (Corollary \ref{Corollary SH for pairs gH gJ}):

\begin{theorem}
Let $H_j:M\to \R$ be Hamiltonians of the form $H_j=m_j R$ for large $R$, whose slopes $m_j\to \infty$, and let $J$ be of contact type at infinity. Suppose $g_t$ is the flow for a Hamiltonian of the form $K=f(y)R$ for large $R$, with $f:\Sigma \to \R$ invariant under the Reeb flow. Then the pairs $(g^*H_j,g^*J)$ compute symplectic cohomology: $SH^*(M)={\displaystyle \lim_{j\to \infty}} HF^*(g^*H_j,g^*J)$.
\end{theorem}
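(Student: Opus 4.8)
# Proof Proposal

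The plan is to reduce the statement to the already-established machinery for symplectic cohomology, by showing that the pairs $(g^*H_j, g^*J)$ satisfy all the hypotheses needed for the direct-limit computation of $SH^*(M)$. First I would unpack what $g^*H_j$ and $g^*J$ look like at infinity: by the formulas $g^*H(m,t) = H(g_t\cdot m,t) - K(g_t\cdot m,t)$ and $g^*J = dg_t^{-1}\circ J_t\circ dg_t$ recalled in Section \ref{Subsection Invertibles in the symplectic cohomology}. Since $g_t$ is the Hamiltonian flow of $K = f(y)R$, and the flow of such a Hamiltonian preserves the collar structure $\Sigma\times(1,\infty)$ (it moves along $\Sigma$ via a reparametrized Reeb-type flow and rescales $R$), one checks that $g^*H_j$ again has the form $\tilde f_j(y)R$ for large $R$, with slope function $\tilde f_j$ still Reeb-invariant (this uses that $f$ is Reeb-invariant, so the conjugating flow $g_t$ commutes suitably with the Reeb flow at infinity) and with $\tilde f_j\to\infty$ as $j\to\infty$. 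Likewise $g^*J$ remains of contact type at infinity because conjugating a contact-type $J$ by the differential of a contactomorphism-at-infinity again yields a contact-type almost complex structure. Here the condition $f\geq 0$ enters to guarantee $\tilde f_j\geq 0$ so that the Extended Maximum Principle (Theorem \ref{Introduction Lemma Extended Max principle}) applies to Floer solutions for $g^*H_j$ and to continuation solutions between consecutive pairs.

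Second, with the Extended Maximum Principle in hand, the Floer complexes $HF^*(g^*H_j, g^*J)$ are well-defined (no escape to infinity), and the continuation maps $HF^*(g^*H_j,g^*J)\to HF^*(g^*H_{j+1},g^*J)$ are well-defined by the parametrized version (Theorem \ref{Theorem Continuation maps max}), since the slopes are increasing and the interpolating Hamiltonians can be taken of the same form $f_s(y)R$ with $f_s\geq 0$ Reeb-invariant. Thus $\lim_j HF^*(g^*H_j,g^*J)$ is defined. Third, I would identify this direct limit with $SH^*(M)$. The cleanest route is the conjugation argument already used in the proof of Theorem \ref{Theorem Circle action gives SH}: the chain-level isomorphisms $\mathcal{S}_{\widetilde g}$ (or rather a suitable iterate/variant adapted to the present $K$) intertwine the complex for $(g^*H_j,g^*J)$ with the complex for a standard linear Hamiltonian $(H_j', J')$ of comparable slope, compatibly with continuation maps. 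Since $SH^*(M)$ is independent of the choice of cofinal family of slopes and of the choice of contact-type $J$, and since $g^*H_j$ has slope $\to\infty$, the two direct limits agree. Alternatively, one can argue directly: the pairs $(g^*H_j,g^*J)$ form a cofinal family among admissible Floer data for which the maximum principle holds, and $SH^*(M)$ was defined precisely as the direct limit over such a cofinal family, so the colimit is $SH^*(M)$ by the usual cofinality lemma for filtered colimits.

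The main obstacle I anticipate is the bookkeeping at infinity in the first step: verifying carefully that $g^*H_j = \tilde f_j(y)R + (\text{lower order})$ with $\tilde f_j$ genuinely Reeb-invariant and non-negative, and that $g^*J$ is genuinely of contact type, requires understanding the behavior of the flow of $K = f(y)R$ on the conical end. The flow of $f(y)R$ is not simply the Reeb flow rescaled unless $f$ is constant; when $f$ varies one gets cross terms, and one must use $df(Y)=0$ to control them and see that the collar structure and the relevant geometric conditions survive conjugation. This is exactly the computation carried out in Appendix C / Section \ref{Section The Maximum principle revisited} in the course of proving the Extended Maximum Principle, so I would invoke it rather than redo it. Once the form of $(g^*H_j,g^*J)$ at infinity is pinned down, the rest is a standard cofinality-and-invariance argument for symplectic cohomology and presents no real difficulty.
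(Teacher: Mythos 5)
Your approach is the same as the paper's: compute $g^*H_j$ at infinity via Lemma~\ref{Lemma gt preserves alpha,R,f}, verify that $g^*J$ stays of contact type via Corollary~\ref{Corollary contact type at infinity under flow}, and then invoke the cofinality statement Theorem~\ref{Theorem SH is limit of f(y)R Hams}. The paper's proof is exactly the short computation
$g^*H_j = H_j\circ g_t - K\circ g_t = m_j R - f(g_t\cdot y)R = (m_j - f(y))R$
for large $R$, which produces a Reeb-invariant slope function with $\min(m_j-f)\to\infty$.

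Two small corrections to your write-up. First, you say the flow of $K=f(y)R$ ``rescales $R$''; in fact Lemma~\ref{Lemma gt preserves alpha,R,f} shows it \emph{preserves} $R$ exactly (and preserves $\alpha$ and $f$), which is precisely what makes the computation of $g^*H_j$ collapse to $(m_j-f(y))R$ with no cross terms to control. Second, the role you assign to $f\geq 0$ is not right: the corollary's hypotheses do not require $f\geq 0$, and it wouldn't yield $\tilde f_j\geq 0$ anyway since $\tilde f_j = m_j - f$. What makes the slopes admissible and cofinal is simply $m_j\to\infty$ with $f$ bounded; and the sign condition in the Extended Maximum Principle is immaterial here because Floer trajectories and continuation cylinders have $d\beta=0$, for which that condition is dropped. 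Your ``cleanest route'' via the $\mathcal{S}_{\widetilde g}$ conjugation is a detour relative to the paper (and would need the naturality of $\mathcal{S}_{\widetilde g}$ against continuation maps, which is itself argued via the same admissibility facts), but your ``alternative'' cofinality argument is exactly the intended proof.
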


\begin{theorem}\label{Theorem Representation for lager class of Hams}
\label{Theorem Representation into SH for new Hamiltonians}
Theorems \ref{Theorem Representation of Ham on SH}, \ref{Theorem Representation of Ham on QH} and \ref{Theorem Circle action gives SH} also hold when we enlarge $\mathrm{Ham}_{\ell}(M)$, $\mathrm{Ham}_{\ell\geq 0}(M)$, $\mathrm{Ham}_{\ell>0}(M)$ to allow for generating Hamiltonians $K$ of the form
$$
K_t(y,R) = f_t(y)R,
$$
for large $R$, where $(y,R)\in \Sigma\times [1,\infty)$ are the collar coordinates, and where
$f_t: \Sigma \to \R$ is invariant under the Reeb flow. In the cases $\ell\geq 0$, $\ell>0$ we require $f_t\geq 0$, $f_t>0$ respectively.
\end{theorem}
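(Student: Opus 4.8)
The plan is to show that every step in the proofs of Theorems \ref{Theorem Representation of Ham on SH}, \ref{Theorem Representation of Ham on QH} and \ref{Theorem Circle action gives SH} only used two analytic inputs about the generating Hamiltonian $K$: (i) that a maximum principle holds for the Floer and continuation equations, so that no solution escapes to infinity, and (ii) that $g^*H$ again lies in the admissible class with slope controlled (specifically, lower) than $H$, so that the relevant continuation maps $\varphi_H$ are defined. Once these two facts are available for $K_t(y,R)=f_t(y)R$ with $f_t$ Reeb-invariant, the algebra — the pull-back isomorphisms $\mathcal{S}_{\widetilde g}$, the factorization $\mathcal{R}_{\widetilde g}=\varphi_H\circ\mathcal{S}_{\widetilde g}$, the identification of continuation maps $HF^*(H_{k-1})\to HF^*(H_k)$ with $r_{\widetilde g}$, and the final linear-algebra argument giving $SH^*(M)\cong QH^*(M)/(\text{gen.\ }0\text{-eigenspace})$ — is formally identical and carries over verbatim.

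First I would record that the Extended Maximum Principle (the first displayed theorem of Section \ref{Subsection Invertibles in the symplectic cohomology for a larger class of Hamiltonians}, proved in Appendix C) and its parametrized version apply precisely to $H(y,R)=f(y)R$ with $f$ Reeb-invariant and $f\ge 0$; and that for the pairs $(g^*H_j,g^*J)$ this is exactly the content of the second displayed theorem there, so that $SH^*(M)=\varinjlim HF^*(g^*H_j,g^*J)$. This handles input (i), and also shows the telescope of the $H_k=(g^{-k})^*H_0$ still computes $SH^*(M)$. Next I would verify input (ii): given $H=mR$ at infinity, the pulled-back Hamiltonian is $g^*H(m,t)=H(g_t\cdot m,t)-K(g_t\cdot m,t)$; since $g_t$ preserves the contact hypersurface level sets (because $K$ is linear in $R$ with Reeb-invariant $f_t$, its flow moves along $\Sigma\times\{R\}$-type hypersurfaces up to reparametrization), the radial slope of $g^*H$ at infinity is $m$ minus the slope contribution of $K$, hence $\le m$ when $f_t\ge 0$, and strictly $<m$ on the positive-slope part when $f_t>0$. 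This is exactly the slope inequality that made $\varphi_H$ exist in the original argument, so the continuation maps, the direct limit, and the commutative diagram \eqref{Equation Intro Comm Diagram Ham SH and QH} all go through. The cases $\ell\ge 0$ and $\ell>0$ then follow as before: $f_t\ge 0$ gives the non-negative-slope condition needed for $\varphi_{H_0}$ and hence for the $QH^*$-valued element $r_{\widetilde g}(1)$, while $f_t>0$ gives strictly increasing slopes, which is what Theorem \ref{Theorem Circle action gives SH}'s telescoping argument requires.

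The main obstacle is input (i) at the level of the \emph{parametrized} (continuation) equation and, more subtly, at the level of the pulled-back data $(g^*H_j,g^*J)$: one must check that $g^*J$ is still of contact type at infinity (or close enough that the maximum-principle computation survives) and that the homotopy of Hamiltonians interpolating between $g^*H_{j}$ and $g^*H_{j+1}$ — which is monotone in slope but now $y$-dependent — still satisfies the hypotheses of the Extended Maximum Principle uniformly in the homotopy parameter. This is precisely where the Reeb-invariance of $f_t$ is essential: it is what makes $df_t(Y)=0$ survive under the flow and guarantees the term that would otherwise obstruct the maximum principle (the ``$d\beta$'' term in the statement) drops out, or is controlled by $f\ge 0$. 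I expect this verification to be routine given Appendix C but it is the one place where the generalization is not purely formal; everything downstream is bookkeeping that mirrors \cite{Ritter4} line for line.
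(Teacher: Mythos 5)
Your proposal is correct and takes essentially the same route as the paper: identify the two analytic ingredients (the extended maximum principle in its plain and parametrized forms, and the slope inequality $g^*H_m=(m-f(y))R$ with $f\geq 0$ ensuring the continuation maps exist), then observe that the algebraic telescope and identifications carry over verbatim. The one place you flag as "routine but not purely formal" — that $g^*J$ remains of contact type and that the monotone-in-slope, $y$-dependent homotopies still satisfy the maximum principle — is precisely what the paper delegates to the two theorems of Section \ref{Subsection Invertibles in the symplectic cohomology for a larger class of Hamiltonians} (via Lemma \ref{Lemma gt preserves alpha,R,f}, Corollary \ref{Corollary contact type at infinity under flow}, Theorem \ref{Theorem Continuation maps max} and Corollary \ref{Corollary SH for pairs gH gJ} in Appendix C), so your assessment of where the real content lies is accurate.
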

\begin{proof}
%The reason we have conditions on the Hamiltonians in the non-compact case, is that we must prevent families of Floer solutions from escaping to infinity. This can be ensured by the Maximum principle (compare Section \ref{Section The Maximum principle revisited}) provided that the almost complex structure $J$ is of contact type at infinity and the Hamiltonians are of the form $H=h(R)$ with $h$ linear in $R$. In Section \ref{Section The Maximum principle revisited} we prove that this also holds for Hamiltonians of the form $H=f(y)R$, with $f:\Sigma \to \R$ invariant under the Reeb flow (and $f$ can also depend on time in regions where the auxiliary form $\beta$ of the Floer equation is a multiple of $dt$).
%
The construction of $\mathcal{S}_{\widetilde{g}}:HF^*(H_k,J,\omega) \to HF^{*+2I(\widetilde{g})}(g^*H_k,g^*J,\omega)$ is never an issue, since it is an identification at the chain level. However, to obtain a map $\mathcal{S}_{\widetilde{g}}:SH^*(M)\to SH^{*+2I(\widetilde{g})}(M)$ from this, by a direct limit argument, it is necessary that the pairs $(g^*H_k,g^*J)$ can be used to compute $SH^*(M)$ when we let the slopes $k\to \infty$. The previous two Theorems enable us to do this. The proof of Theorem \ref{Theorem Representation of Ham on SH} then follows by the same arguments as in \cite{Ritter4}.

Similarly, Theorem \ref{Theorem Representation of Ham on QH} follows: for $f\geq 0$ the slope $m-f(y)$ of $g^*H_m=(m-f(y))R$ is smaller than $m$, so the maps $\varphi_H$ still exist (see Section \ref{Subsection Remark about the definition of symplectic cohomology}). 
% Similarly, Theorem \ref{Theorem Circle action gives SH} generalizes since for $f>0$ the slope $m-f(y)<m$, and iterating $N$ times gives slope $m-Nf(y)\ll m$. 
Finally, the argument described under Theorem \ref{Theorem Circle action gives SH} can still be carried out, since the Hamiltonians $H_k = (g^{-k})^*H_0$ have slope proportional to $k$ at infinity: $\mathrm{slope}(H_0) + k\cdot \mathrm{slope}(K)$, where $\mathrm{slope}(K)=f(y)>0$.
\end{proof}

\begin{proof}[{\bf Proof of Lemma \ref{Lemma rg1}}]
We mimick the argument in \cite[Section 10]{Ritter4}. By \cite{Ritter4}, $r_{g^{\wedge}}(1)\in QH^*(M)$ is a count of $(j,\hat{J})$-holomorphic sections of the bundle $E_g \to \P^1$ whose fibre is $M$ and whose transition map over the equator of $\P^1$ is $g$, where $\hat{J}$ is an almost complex structure on $E_g$ related to $J$. The moduli spaces $\mathcal{S}(j,\hat{J},\gamma+S_{g^{\wedge}})$ of such sections break up according to certain equivalence classes $\gamma + S_{g^{\wedge}}$ indexed by $\gamma \in \pi_2(M)/\pi_2(M)_0$.\\
{\bf Technical remark.} \emph{The class $\mathcal{S}_{g^{\wedge}}$ is represented by the constant section at $x\in \mathrm{Fix}(g)$, as it is obtained by gluing the following two sections on the upper/lower hemispheres \cite{Ritter4}: $s_{g^{\wedge}}^+(z)=c_x(z)=x$, $s_{g^{\wedge}}^-(z)=(g^{\wedge}\cdot c_x)(z)=c_x(z)=x$, using the hypothesis $g^{\wedge}\cdot c_x=c_x$.
}\\
The virtual dimension of the moduli space is
\begin{equation}\label{Equation Dim of moduli space of sections}
 \dim_{\C} \mathcal{S}(j,\hat{J},\gamma+S_{g^{\wedge}}) = \dim_{\C} M - I(g^{\wedge}) + c_1(TM)(\gamma).
\end{equation}
As $M$ is monotone, $c_1(TM)(\gamma)\geq 0$, so the first term in Equation \eqref{Equation rg1} arises as the locally-finite cycle swept by the constant sections (so $\gamma=0$). Indeed, the constant sections must lie in $\mathrm{Fix}(g)$ otherwise the transition map $g$ would make them non-constant. If we can show that the constant sections are regular, then Equation \eqref{Equation rg1} follows.\\

To prove regularity of the constant sections we mimick \cite[Section 10]{Ritter4}. For a constant section $u: \P^1 \to E_g$ the vertical tangent space 
$$(u^*T^{v}E_g)_z = TM_{u(z)} \cong T_{u(z)}D \oplus \C^{m-d}.
$$
Here we use that $dg_t$ is complex linear and $g_t$ is symplectic to deduce that $dg_t$ is unitary, so it preserves the unitary complement $\C^{m-d}$ of the fixed subspace $TD$. 
%
% <d,c> = 0
% <d,(dg_t).c> = omega(d,J(dg_t)c) = omega(d,(dg_t)Jc) = (g_t^*omega)((dg_t)^{-1}d,Jc) 
% = omega(d,Jc) = <d,c> = 0
%
% dg_t is unitary:
% <dg_t c, dg_t c'> = omega(dg_t c, J dg_t c')  = switch J and dg_t
% = pull out g_t^* = use that g_t sympl = <c,c'>
%
As we vary $z\in \P^1$, the transition $g$ along the equator acts identically on $TD$ and it acts by $dg_t$ on $\C^{m-d}$. By the assumption on the eigenvalues of $dg_t$, the $\C^{m-d}$ summand gives rise to the bundle $\mathcal{O}(-1)^{\oplus d}\to \P^1$. Essentially by definition, $I(g^{\wedge})=\mathrm{deg}(t\mapsto \det (dg_t: T_x M \to T_x M))$ (using that $g^{\wedge}$ fixes $(c_x,x)$) which is the sum $m-d$ of the degrees of the eigenvalues. 
Thus $u^*T^{v}E_g \cong \mathcal{O}^d \oplus \mathcal{O}(-1)^{\oplus m-d}$. So the obstruction bundle is, using Dolbeaut's theorem and Serre duality,
$$
\coker \overline{\partial} = H^1(\P^1,\mathcal{O}^{d} \oplus \mathcal{O}(-1)^{\oplus m-d}) \cong 
H^0(\P^1,\mathcal{O}(-2)^{\oplus d} \oplus \mathcal{O}(-1)^{\oplus m-d})^{\vee} =0.$$

For $I(g^{\wedge})=m-d=1$, $\dim_{\C} \mathcal{S}(j,\hat{J},\gamma+S_{g^{\wedge}}) \geq \dim_{\C} M-1$, but by the maximum principle the locally finite pseudo-cycle $\mathcal{S}(j,\hat{J},\gamma+S_{g^{\wedge}})$ cannot sweep the unbounded manifold $M$. This forces $c_1(TM)(\gamma)=0$ and so, by monotonicity, that $\gamma=0$. So Equation \eqref{Equation rg1 if Ig is 1} follows.

We remark that in \cite{Ritter4} a larger Novikov ring was used than here, which kept track of the class $\gamma$ in the count of sections. But in the monotone case, it suffices to keep track of $\omega(\gamma)$ and we count sections with weight $t^{\omega(\gamma)}$. For constant sections, this weight is $1$.
\end{proof}

%%%%%%%%%%%%%%%%%%%%%%%%%%%%%%%%%%%
%%%%%%%%%%%%%%%%%%%%%%%%%%%%%%%%%%%
%%%%%%%%%%%%%%%%%%%%%%%%%%%%%%%%%%%
%%%%%%%%%%%%%%%%%%%%%%%%%%%%%%%%%%%
\section{The quantum cohomology of non-compact monotone toric manifolds}
\label{Section The quantum cohomology of non-compact toric varieties}
%%%%%%%%%%%%%%%%%%%%%%%%%%%%%%%%%%%
%%%%%%%%%%%%%%%%%%%%%%%%%%%%%%%%%%%
%%%%%%%%%%%%%%%%%%%%%%%%%%%%%%%%%%%
\subsection{Review of the Batyrev-Givental presentation of $\mathbf{QH^*(M)}$ for closed toric monotone manifolds $\mathbf{M}$}
\label{Subsection Review of the Batyrev-Givental presentation of QH}
%%%%%%%%%%%%%%%%%%%%%%%%%%%%%%%%%%%
%%%%%%%%%%%%%%%%%%%%%%%%%%%%%%%%%%%
In this section, $M$ will be a \emph{closed} monotone toric manifold.
Let $\Delta$ be a moment polytope for $M$, and let $F$ be the corresponding fan (we recall some of this terminology in Appendix A, Section \ref{Section The moment polytope of a toric negative line bundle}). Then there are correspondences:
$$
\{ \textrm{edges } e_i \textrm{ of } F \} 
=
 \left\{\begin{smallmatrix}\textrm{inward normals }e_i\\ \textrm{to facets of } \Delta\end{smallmatrix}\right\}
\leftrightarrow
 \left\{\begin{smallmatrix}\textrm{facets } F_i\\ \textrm{of } \Delta\end{smallmatrix}\right\}
 \leftrightarrow
 \left\{\begin{smallmatrix}\textrm{toric}\\ \textrm{divisors } D_i\end{smallmatrix}\right\}
\leftrightarrow
 \left\{\begin{smallmatrix}\textrm{homogeneous}\\ \textrm{coordinates } x_i\end{smallmatrix}\right\}
$$
in particular $D_i = \{ x_i=0\}$. Recall \emph{facet} means $\mathrm{codim}_{\R}=1$ face.

\emph{Example. For $\CP^2$: $\Delta = \{y\in \R^2: y_1\geq 0, y_2 \geq 0, -y_1-y_2\geq -1\}$, $F$ has edges $e_1=(1,0)$, $e_2=(0,1)$, $e_3=(-1,-1)$, and the cones are the $\R_{\geq 0}$-span of proper subsets of the edges.
}

We work over a field $\K$ of characteristic zero. Then the classical cohomology $H^*(M)$ is
\begin{equation}\label{EqnHtoric}
H^*(M) = \K[x_1,\ldots,x_r]/(\textrm{linear relations}, \textrm{classical Stanley-Reisner relations}).
\end{equation}
The linear relations are 
\begin{equation}\label{EqnLinearRelations}
\sum \langle \xi,e_i\rangle \, x_i = 0,
\end{equation}
 where $\xi$
%  runs over a basis of the dual lattice for $M$. Since we only work with smooth toric varieties, this just means letting $\xi$ 
runs over the standard basis of $\R^n$ and $\langle \cdot,\cdot\rangle$ is the standard inner product on $\R^n$.

\emph{Example: For $\CP^2$: $x_1 - x_3=0$, $x_2-x_3=0$, so this identifies all $x_i$ with a variable $x$.}

\begin{definition}[Primitive subset]\label{Definition Primitive}
 A subset of indices $I=\{ i_1,\ldots,i_a\}$ for the edges is called \emph{primitive} if the subset $\{e_{i_1},\ldots,e_{i_a}\}$ does not define a cone of $F$ but any proper subset does.\\
Equivalently, if $F_{i_1}\cap \ldots \cap F_{i_a} = \emptyset$ but for any proper subset of $I$ the intersection is non-empty.\\
Equivalently, if $D_{i_1}\cap \ldots \cap D_{i_a}=\emptyset$ but for any proper subset of $I$ the intersection is non-empty.
\end{definition}

The classical SR-relations are $x_{i_1}\cdot x_{i_2} \cdot\cdots\cdot x_{i_a}=0$ for primitive $I$.

\emph{Example: For $\CP^2$: $e_1,e_2,e_3$ is primitive, so $x_1x_2x_3=0$. So $H^*(\CP^2)=\K[x]/(x^3)$.}

The quantum cohomology is 
\begin{equation}\label{EqnQHtoric}
QH^*(M) = \K[x_1,\ldots,x_r]/(\textrm{linear relations}, \textrm{quantum Stanley-Reisner relations}).
\end{equation}
Batyrev \cite{Batyrev} showed that to each primitive subset $I=\{i_1,\ldots,i_a\}$ there corresponds a unique disjoint subset $j_1,\ldots,j_b$ of indices, yielding a linear dependence relation
$$
e_{i_1} + \ldots + e_{i_a} = c_{1} e_{j_1} + \cdots + c_{b} e_{j_b}
$$
for (non-zero) positive $c_{1},\ldots,c_{b}\in \Z$.\\
\emph{Remark. This uses the compactness assumption, that the cones cover $\R^n$, so $\sum e_{i_k}$ lies in a cone and so is uniquely a non-negative linear combination of the edges $e_{j_{\ell}}$ defining that cone.\\
}
The quantum version of the SR-relation is then
\begin{equation}\label{EqnQSRrelation}
x_{i_1} x_{i_2}  \cdots  x_{i_a} = t^{\omega(\beta_I)} x_{j_1}^{c_{1}} \cdots x_{j_b}^{c_{b}}
\end{equation}
where the $\beta_I\in H_2(M)$ arising in the exponent of the Novikov variable $t$ is the class obtained from $e_{i_1}+\cdots + e_{i_a} - c_{1} e_{j_1} - \cdots -c_{b} e_{j_b}=0$ under the identification
$$
 (\Z\textrm{-linear relations among the }e_i) \longleftrightarrow H_2(M,\Z),
$$
where explicitly: $\sum n_i e_i = 0$ corresponds to the $\beta\in H_2(M,\Z)$ whose intersection products with the toric divisors are $\beta\cdot D_i = n_i$. In particular, Batyrev showed that $\omega(\beta_I)>0$.

\emph{Example: For $\CP^2$: $e_1+e_2+e_3 = 0$, $\beta_{1,2,3}=[\CP^1]$, so $x_1x_2x_3=t$. So $QH^*(\CP^2)=\K[x]/(x^3-t)$.}

Recall the polytope is defined by the edges $e_i$ of the fan and by real parameters $\lambda_i$,
\begin{equation}\label{Eqn moment polytope}
\Delta = \{ y\in \R^n: \langle y,e_i \rangle \geq \lambda_i \}.
\end{equation}
Since $c_1(TM) = \sum \mathrm{PD}[D_i]$ and $[\omega] = - \sum \lambda_i \mathrm{PD}[D_i]$, via \eqref{EqnQHtoric} we have 
$$
c_1(TM) = \sum x_i \quad \textrm{and} \quad  [\omega] = -\sum \lambda_i x_i,
$$
from which it follows that
\begin{equation}\label{EqnC1andOmega}
 c_1(TM)(\beta_I)= a- c_{1}-\cdots-c_{b} \quad \textrm{and} \quad [\omega](\beta_I) = - \lambda_{i_1}-\cdots -\lambda_{i_a}+c_{1}\lambda_{j_1} + \cdots + c_{b} \lambda_{j_b}.
\end{equation}
\emph{Example: For $\CP^2$: $(\lambda_1,\lambda_2,\lambda_3)=(0,0,-1)$, $c_1(TM)=3x$, $[\omega]=x$ in $H^*(\CP^2)=\K[x]/(x^3)$.}
%%%%%%%%%%%%%%%%%%%%%%%%%%%%%%%%%%%
%%%%%%%%%%%%%%%%%%%%%%%%%%%%%%%%%%%
\subsection{Review of the McDuff-Tolman proof of the presentation of $\mathbf{QH^*(M)}$}
\label{Subsection Review of the McDuff-Tolman proof of the presentation of QH}
%%%%%%%%%%%%%%%%%%%%%%%%%%%%%%%%%%%
%%%%%%%%%%%%%%%%%%%%%%%%%%%%%%%%%%%
For each tuple $(n_1,\ldots,n_r)\in \Z^r$ there is an associated Hamiltonian $S^1$-action given in homogeneous coordinates by $x_i \mapsto e^{2\pi \sqrt{-1} n_i t} x_i$. The generators, corresponding to the standard basis of $\Z^r$, are the natural rotations $g_i$ about the toric divisors $D_i$, yielding correspondences:
$$
\{ \textrm{edges } e_i\} 
 \leftrightarrow
 \left\{\begin{smallmatrix}\textrm{toric}\\ \textrm{divisors } D_i\end{smallmatrix}\right\}
\leftrightarrow
 \left\{\begin{smallmatrix}\textrm{homogeneous}\\ \textrm{coordinates } x_i\end{smallmatrix}\right\}
\leftrightarrow
\{ \textrm{rotations } g_i\}
$$
in particular\footnote{This statement is meant before making identifications via the group action. Globally in $M$, the fixed locus of $g_i$ will typically have other components beyond $D_i$. However, for monotone toric manifolds, only $D_i$ will contribute to the Seidel element \cite[Theorem 1.10(iii)]{McDuffTolman}. For example, for $\C \mathbb{P}^1$ and $g_1([x_0:x_1])=[x_0: e^{2\pi i t}x_1]$, both $[0:1]$ and $D_1=[1:0]$ are fixed points, but only $D_1$ will contribute (see \cite[Example 2.6]{McDuffTolman}).
Our conventions differ from \cite{McDuffTolman}: we choose $e_i$ to be inward normals to $\Delta$, so $D_i$ arises as the minimum of $H_i$, whereas in their conventions it is the maximum \cite[Footnote (9) p.65]{McDuffTolman}.} $D_i = (x_i=0) = \textrm{Fix}(g_i)$ are the fixed points of $g_i$.

The Hamiltonians $H_i$ for the $g_i$ are determined in terms of the moment map $\mu: M \to \R^n$ (tacitly identifying $\R^n=\mathfrak{u}(1)^n$) and the data which defines $\Delta$:
\begin{equation}\label{EqHamToric}
H_i(x) =\langle \mu(x), e_i \rangle - \lambda_i = \tfrac{1}{2}|x_i|^2
\end{equation}
where the constant $\lambda_i$ ensures that $H_i\geq 0$ and $D_i = H_i^{-1}(0)$. We emphasize that the final equality in \eqref{EqHamToric} holds only for $x\in f^{-1}(0)$, using the notation of Section \ref{Subsection The moment map}.

\emph{Example: For $\CP^2$, $[x_1:x_2:x_3]$ are the usual coordinates, $g_i$ is the standard rotation in the $i$-th entry, the moment map is $\mu = \tfrac{1}{\sum |x_j|^2}(|x_1|^2,|x_2|^2)$, and $H_i=\tfrac{|x_i|^2}{\sum |x_j|^2}$. In the notation of Section \ref{Subsection The moment map}, $x\in f^{-1}(0)$ imposes $f(x)=\tfrac{1}{2}\sum |x_j|^2 -1=0$, so then $H_i=\tfrac{1}{2}|x_i|^2$.
}

For a closed monotone toric manifold $(M,\omega)$, the Seidel representation \cite{SeidelGAFA}
$$
\mathcal{S}:\pi_1 \textrm{Ham}(M,\omega) \to QH^*(M)^{\times}
$$
is a homomorphism into the even degree invertible elements of $QH^*(M)$. 

\begin{theorem}[McDuff-Tolman \cite{McDuffTolman}]
 For a closed monotone toric manifold $M$, the rotations $g_i$ yield Seidel elements
$$
\mathcal{S}(g_i)=t^{\lambda_i} x_i.
$$
\end{theorem}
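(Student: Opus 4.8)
The plan is to compute the Seidel element $\mathcal{S}(g_i)$ directly from the definition as a count of holomorphic sections of the bundle $E_{g_i}\to\P^1$ with fibre $M$ and clutching map $g_i$, exploiting the explicit structure of the rotation $g_i$ as a toric $S^1$-action. First I would recall that $g_i$ acts holomorphically on $M$ with $\mathrm{Fix}(g_i)=D_i$, a complex hypersurface, and that on the normal bundle $N_{D_i}M$ the action is standard rotation; this is precisely the setup of Lemma \ref{Lemma rg1} with $m-d=1$. The normal direction has weight $1$, so $dg_t$ on the normal line defines a degree $1$ path $S^1\to S^1$, hence the clutching construction gives $\mathcal{O}(-1)$ on the normal summand and $\mathcal{O}$ on the $TD_i$-directions. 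Thus the hypotheses of Lemma \ref{Lemma rg1} are satisfied.

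Next I would identify the correct lift. The Seidel representation for closed $M$ takes values in $QH^*(M)^\times$, but to pin down $\mathcal{S}(g_i)$ one must choose a lift $g_i^\wedge\in\widetilde{\pi_1}\mathrm{Ham}(M)$, and the natural choice is the one fixing the constant disc $(c_x,x)$ at a point $x\in D_i=\mathrm{Fix}(g_i)$, for which Lemma \ref{Lemma rg1} gives Maslov index $I(g_i^\wedge)=1$ and $r_{g_i^\wedge}(1)=\mathrm{PD}[D_i]=x_i\in QH^2(M)$. However, McDuff-Tolman normalize differently: the Seidel element is computed relative to the \emph{section class} rather than the constant-disc lift, and the discrepancy is a shift by a power of the Novikov variable coming from the symplectic area swept. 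Concretely, the two natural lifts differ by the element of $\Gamma=\pi_2(M)/\pi_2(M)_0$ with $\omega$-area equal to the value of the moment map Hamiltonian $H_i$ at the maximum (or equivalently, determined by $\lambda_i$ in the polytope description \eqref{Eqn moment polytope}–\eqref{EqHamToric}). Tracking this area carefully yields the Novikov weight $t^{\lambda_i}$, giving $\mathcal{S}(g_i)=t^{\lambda_i}x_i$.

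So the key steps, in order, are: (1) verify the eigenvalue/clutching hypothesis of Lemma \ref{Lemma rg1} for the toric rotation $g_i$, using that $D_i=\mathrm{Fix}(g_i)$ has codimension one and the normal weight is $1$; (2) apply Lemma \ref{Lemma rg1} in the codimension-one case to get that the constant-disc lift $g_i^\wedge$ has $r_{g_i^\wedge}(1)=\mathrm{PD}[D_i]$; (3) translate between the $r$-element/constant-disc normalization used here and the McDuff-Tolman section-class normalization of the Seidel element, computing the Novikov-variable shift from the symplectic area, which from \eqref{EqHamToric} and the polytope data is governed by $\lambda_i$; (4) conclude $\mathcal{S}(g_i)=t^{\lambda_i}x_i$, noting that in the closed monotone case the continuation maps $\varphi_H$ are all isomorphisms so $r$ and $\mathcal{S}$ carry the same information up to this normalization.

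The main obstacle is step (3): the bookkeeping of which lift and which Novikov weight. The sign and magnitude of the exponent depend on orientation conventions for $E_{g_i}\to\P^1$, on whether one uses $t$ or $T=t^{1/\lambda_M}$, and on the precise relationship between the Hamiltonian normalization $H_i\geq 0$ with $D_i=H_i^{-1}(0)$ (forced by the constant $\lambda_i$) and the area swept by the section representing $\mathcal{S}_{g_i^\wedge}$. Getting $t^{\lambda_i}$ rather than $t^{-\lambda_i}$ or $t^{\pm(\text{something})/\lambda_M}$ requires matching conventions exactly with \cite{McDuffTolman} and with the moment polytope normalization in \eqref{Eqn moment polytope}; everything else is a direct consequence of Lemma \ref{Lemma rg1} and the regularity of the constant sections established there. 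I would handle this by fixing one point $x\in D_i$, writing down the two hemisphere sections explicitly as in the technical remark following \eqref{Equation Dim of moduli space of sections}, and computing the area of their difference against $[\omega]=-\sum\lambda_j x_j$.
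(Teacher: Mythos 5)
The paper does not prove this theorem; it is cited from \cite{McDuffTolman}, and what the paper actually establishes and uses is the closely related $r_{g_i^\wedge}(1)=\mathrm{PD}[D_i]=x_i$ (Lemma \ref{Lemma rg1}, Lemma \ref{Lemma R of gi standard lift}) with no Novikov factor, recovering the Novikov exponents in the quantum SR-relations separately from the $\Z$-grading via Lemma \ref{Lemma relations in SH}. Your steps (1), (2) and (4) are consistent with that framework.

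The gap is in step (3), and it is concrete. You propose to recover the factor $t^{\lambda_i}$ as the $\omega$-area of the sphere glued from "the two hemisphere sections" in the technical remark of Lemma \ref{Lemma rg1}. But that remark defines $s^+(z)=c_x(z)=x$ and $s^-(z)=(g_i^\wedge\cdot c_x)(z)=c_x(z)=x$ — both constant at the \emph{same} fixed $x\in D_i$, precisely because of the defining choice $g_i^\wedge\cdot c_x=c_x$ — so they glue to the constant sphere, whose $\omega$-area is $0$. The computation you describe therefore returns $t^0=1$, reproducing $r_{g_i^\wedge}(1)=x_i$ rather than $\mathcal{S}(g_i)=t^{\lambda_i}x_i$. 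Your secondary suggestion, that the exponent is "the value of $H_i$ at the maximum", is also off: by \eqref{EqHamToric} we have $H_i\geq 0$ with $H_i|_{D_i}=0$, so $D_i$ is the \emph{minimum} of $H_i$ and $H_i$ vanishes there; the quantity equal to $\lambda_i$ is $\langle\mu,e_i\rangle|_{D_i}$, the unshifted moment map component at the minimum. The factor $t^{\lambda_i}$ does not arise as any area of these constant sections: it reflects that McDuff-Tolman define the Seidel representation on $\pi_1\mathrm{Ham}(M)$ itself (not its extension $\widetilde{\pi_1}\mathrm{Ham}$) via a normalization convention — the work-around the paper refers to in Section \ref{Subsection The problem with relating the lifted rotations} and \cite[page 433]{McDuff-Salamon2} — under which the constant section at $D_i$ carries Novikov weight governed by $\langle\mu,e_i\rangle|_{D_i}=\lambda_i$ rather than by the fibre class $\gamma=0$. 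To make step (3) rigorous you must unpack that normalization, not compute the area of a constant sphere; alternatively, adopt the paper's route, which sidesteps the translation altogether by working with the $\widetilde{\pi_1}\mathrm{Ham}$-formulation and Lemma \ref{Lemma relations in SH}.
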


Via the correspondences
$$
\begin{array}{rcccl}
(\textrm{\small primitive } I) &\leftrightarrow &
 \left\{\begin{smallmatrix}\textrm{relations among edges}\\ 
e_{i_1}+\cdots+e_{i_a}=c_{1}e_{j_1}+\cdots+c_{b} e_{j_b}
\end{smallmatrix}\right\}
 &\leftrightarrow &
 \left\{\begin{smallmatrix}\textrm{relations among Hamiltonians}\\ 
H_{i_1}+\cdots+H_{i_a}=c_{1}H_{j_1}+\cdots+c_{b}H_{j_b}+\textrm{const.}
\end{smallmatrix}\right\} \\[2mm]
&\leftrightarrow &
 \left\{\begin{smallmatrix}\textrm{relations among rotations}\\ 
g_{i_1}\cdots g_{i_a}=g_{j_1}^{c_{1}}\cdots g_{j_b}^{c_{b}}
\end{smallmatrix}\right\}
 &\leftrightarrow &
 \left\{\begin{smallmatrix}\textrm{quantum SR-relations}\\ 
x_{i_1} x_{i_2}  \cdots  x_{i_a} = t^{\omega(\beta_I)} x_{j_1}^{c_{1}} \cdots x_{j_b}^{c_{b}}
\end{smallmatrix}\right\}
\end{array}
$$
the Seidel homomorphism $\mathcal{S}$ therefore recovers the quantum SR-relations
\eqref{EqnQSRrelation} using \eqref{EqnC1andOmega}.

The final ingredient of the argument of McDuff-Tolman, is the following useful algebraic trick. The Lemma in fact does not involve toric geometry and is stated in greater generality in \cite{McDuffTolman}. It holds for any closed monotone symplectic manifold $(M,\omega)$ taking $\varphi(x_i)$ to be a choice of algebraic generators for $H^*(M)$, and it also holds in the non-compact monotone setup if one can construct quantum cohomology. The argument only relies on the $t$-adic valuation for the Novikov ring and the fact that quantum corrections occur with strictly positive $t$-power.

\begin{lemma}[McDuff-Tolman \cite{McDuffTolman}] \label{Lemma McDuffTolman Algebra trick}
Consider the algebra homomorphisms
$$\varphi: \K[x_1,\ldots,x_r] \to H^*(M) \quad \textrm{ and } \quad \psi: \K[x_1,\ldots,x_r] \otimes \Lambda \to QH^*(M)$$
determined by $\varphi(x_i)=\mathrm{PD}[D_i]$ and $\psi(x_i)=\mathrm{PD}[D_i]\otimes 1$ (using quantum multiplication to define $\psi$). By construction $\varphi$ is surjective.

Then $\psi$ is surjective. Moreover, suppose $p_1,\ldots,p_r$ generate $\ker \varphi$ and suppose there exist
$$P_j = p_j + q_j \in \ker \psi \subset \K[x_1,\ldots,x_r] \otimes \Lambda
$$
with $t$-valuations $\mathrm{val}_t(q_j)>0$. Then the $P_j$ generate $\ker \psi$, thus
$$
H^*(M) = \K[x_1,\ldots,x_r]/(p_1,\ldots,p_r) \quad \textrm{ and } \quad QH^*(M) = \K[x_1,\ldots,x_r]/(P_1,\ldots,P_r).
$$
\end{lemma}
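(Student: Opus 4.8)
The plan is the standard $t$-adic deformation argument: treat $QH^*(M)$ as a deformation of $H^*(M)$ over the complete valued field $\Lambda$, and run a successive-approximation scheme to transfer generators and relations from the classical ring to the quantum one.

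\textbf{Setup and surjectivity of $\psi$.} First I would recall that, additively, $QH^*(M)\cong H^*(M)\otimes_{\K}\Lambda$ and that the quantum product deforms the cup product, $a\ast b = a\cup b + \sum_{A}(\cdots)\,t^{\omega(A)}$ with $A$ ranging over nonzero effective spherical classes, for which $\omega(A)>0$ by monotonicity; this positivity of $\omega$ on effective classes is precisely what the hypothesis $\mathrm{val}_t(q_j)>0$ abstracts. Write $R=\K[x_1,\dots,x_r]\otimes_{\K}\Lambda$, put $N=\dim_{\K}H^*(M)$ (finite in all cases of interest), and choose monomials $m_1,\dots,m_N$ in the $x_i$ whose images $\varphi(m_k)$ form a $\K$-basis of $H^*(M)=\K[x]/(p_1,\dots,p_r)$. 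Since every quantum correction carries a strictly positive power of $t$, $\psi(m_k) = \varphi(m_k)\otimes 1 + (\text{terms of positive }t\text{-valuation})$, so the matrix of $(\psi(m_k))_k$ against the $\Lambda$-basis $(\varphi(m_j)\otimes 1)_j$ of $QH^*(M)$ has the form $\mathrm{Id}+C$ with $\mathrm{val}_t(C_{ij})>0$, hence is invertible over $\Lambda$ by a $t$-adically convergent Neumann series. Thus $(\psi(m_k))_k$ is itself a $\Lambda$-basis of $QH^*(M)$ and $\psi$ is surjective.

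\textbf{The $P_j$ generate $\ker\psi$.} Let $J=(P_1,\dots,P_r)\subseteq R$. Since $P_j\in\ker\psi$ by hypothesis, $\psi$ descends to a surjection $R/J\twoheadrightarrow QH^*(M)$, and it suffices to show $R/J$ is spanned over $\Lambda$ by $\bar m_1,\dots,\bar m_N$. Given a monomial $w$, reduce it modulo the classical relations: there are $c_k\in\K$ and $g_j\in\K[x]$ with $w-\sum_k c_k m_k=\sum_j g_j p_j$ in $\K[x]$; substituting $p_j=P_j-q_j$ shows that, modulo $J$, $\bar w=\sum_k c_k\bar m_k-\overline{e}$ where $e:=\sum_j g_j q_j\in R$ involves only finitely many monomials and has $\mathrm{val}_t(e)\ge\delta:=\min_j\mathrm{val}_t(q_j)>0$ (the $g_j$ have valuation $0$). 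Applying the same reduction monomial-by-monomial to $e$ and iterating gives, after $n$ steps, $\bar w = v_1+\dots+v_n+\overline{e_{n+1}}$ with each $v_\ell$ in the $\Lambda$-span of $\{\bar m_k\}$, the coefficients of $v_\ell$ of valuation $\ge(\ell-1)\delta$, and $\mathrm{val}_t(e_{n+1})\ge (n+1)\delta\to\infty$. Because a finite-dimensional $\Lambda$-vector space is complete, $\sum_\ell v_\ell$ converges inside the $\Lambda$-span of $\{\bar m_k\}$ while $\overline{e_{n+1}}\to0$, so $\bar w$ lies in that span; as the monomials $\Lambda$-span $R$, the claim follows. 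Hence $\dim_{\Lambda}R/J\le N$, and combined with the surjection onto $QH^*(M)\cong\Lambda^N$ this surjection is forced to be an isomorphism, so $J=\ker\psi$. Feeding this back yields $QH^*(M)\cong \K[x_1,\dots,x_r]\otimes\Lambda/(P_1,\dots,P_r)$, while $H^*(M)=\K[x_1,\dots,x_r]/(p_1,\dots,p_r)$ is just the hypothesis $\ker\varphi=(p_1,\dots,p_r)$ together with surjectivity of $\varphi$.

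\textbf{Expected main obstacle.} The only delicate point is the convergence bookkeeping in the iteration: one needs the error valuations to tend to $\infty$, which works because there are only finitely many $q_j$, forcing a uniform gain $\delta>0$ at each step, and one needs the partial sums $\sum_\ell v_\ell$ to converge, which uses completeness of $\Lambda$ together with finite-dimensionality of the target span (so that it is a complete normed $\Lambda$-space). No control on the degrees of the intermediate polynomials $g_j$ is needed, which is what lets the scheme close up cleanly.
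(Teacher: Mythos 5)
The paper itself records this Lemma as a citation to McDuff--Tolman without reproducing the proof, so I am judging your argument on its own merits. Your surjectivity argument is fine: the matrix $\mathrm{Id}+C$ trick with the Neumann series over the complete valued field $\Lambda$ is exactly right. Your overall strategy for the second half (a $t$-adic successive-approximation scheme that strips off quantum corrections one gain of valuation $\delta$ at a time) is also the standard one, but as written it has a genuine gap, and your closing remark that ``no control on the degrees of the intermediate polynomials $g_j$ is needed'' is precisely where it breaks.

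The gap is the assertion $\overline{e_{n+1}}\to 0$ in $R/J$. What you actually prove is that $\mathrm{val}_t(e_{n+1})\to\infty$ in $R=\K[x_1,\ldots,x_r]\otimes\Lambda$, i.e.\ $e_{n+1}\to 0$ for the Gauss norm on $R$. To deduce $\overline{e_{n+1}}\to 0$ in $R/J$ you would need the quotient topology on $R/J$ to be Hausdorff, equivalently that $J$ is closed for the Gauss norm; but $R$ is not complete for this norm (its completion is a Tate algebra, not $R$), and closedness of the finitely generated ideal $J$ inside $R$ is not automatic. Concretely, rewriting your telescoping sum at the level of lifts gives $w-\sum_{\ell\le n}v_\ell^{\circ}-e_n=\sum_j\bigl(\sum_{\ell\le n}g_j^{(\ell)}\bigr)P_j$; to conclude $w-s\in J$ for $s=\lim\sum v_\ell^{\circ}$ one needs the partial sums $\sum_{\ell\le n}g_j^{(\ell)}$ to converge \emph{inside $R$}, i.e.\ to a genuine polynomial, and without a degree bound on the $g_j^{(\ell)}$ the limit may only live in the Tate completion. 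Your ``expected main obstacle'' paragraph identifies completeness of the finite-dimensional target span, which handles the $v_\ell$, but not the real obstacle, which is the error term and the ideal.

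The fix is to use the $\Z$-grading, which is available because $QH^*(M)$ is graded with $|x_i|=2$ and $|t|=2\lambda_M>0$ and the relations $P_j$ can be chosen homogeneous. Then $p_j$ is homogeneous of $x$-degree $d_j$ and every term of $q_j$ has strictly smaller $x$-degree (the deficit being absorbed by a positive power of $t$). If one reduces a monomial of $x$-degree $D$ choosing the $g_j$ homogeneous of $x$-degree $D-d_j$, the resulting error has strictly smaller $x$-degree; since $x$-degrees are nonnegative integers, the iteration terminates after finitely many steps. Equivalently, for homogeneous input of total degree $d$, every intermediate $e_n$ lies in the fixed finite-dimensional graded piece $R_d$, on which $\mathrm{val}_t$ is bounded above by $d/(2\lambda_M)$; once $\mathrm{val}_t(e_n)$ would exceed this bound, $e_n=0$. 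With the iteration finite there is no convergence or closedness issue, $G_j=\sum_\ell g_j^{(\ell)}$ is an honest element of $R$, and $w-\sum_\ell v_\ell^{\circ}=\sum_j G_jP_j\in J$. The rest of your argument (dimension count forcing the surjection $R/J\to QH^*(M)$ to be an isomorphism) then goes through.

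Minor point: your valuation indexing is off by one ($e_n$ after $n$ steps should have valuation $\ge n\delta$, not $e_{n+1}$ with $\ge(n+1)\delta$), but that is cosmetic.
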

%
%%%%%%%%%%%%%%%%%%%%%%%%%%%%%%%%%%%%%%%%%%%%%%%%%%%%%%%%%%%
\subsection{Batyrev's argument: from the presentation of $\mathbf{QH^*}$ to $\mathbf{Jac(W)}$}
\label{Subsection The quantum cohomology of non-compact toric varieties}
\label{Subsection Batyrev's argument: from the presentation of QH to JacW}
%%%%%%%%%%%%%%%%%%%%%%%%%%%%%%%%%%%%%%%%%%%%%%%%%%%%%%%%%%%

Let $X$ be a monotone toric manifold of dimension $\dim_{\C}X=n$.
We defined the superpotential $W=\sum t^{-\lambda_i} z^{e_i}$ of $X$ in Definition \ref{Definition Superpotential}, with $e_i,\lambda_i$ as in \eqref{Eqn moment polytope}.

\begin{definition}\label{Definition Jacobian ring}
 The Jacobian ring of $W$ is 
$$
\mathrm{Jac}(W) = \Lambda[z_1^{\pm 1},\ldots,z_n^{\pm 1}]/(\partial_{z_1}W,\ldots,\partial_{z_n}W).
$$
\end{definition}

In the definition above, $\Lambda[z_1^{\pm 1},\ldots,z_n^{\pm 1}]$ should be thought of as the coordinate ring of the complex torus $(\C^*)^n\subset X$ whose compactification is $X$.

Following Batyrev \cite[Theorem 8.4]{Batyrev}, consider the homomorphism
$$
\psi: \Lambda[x_1,\ldots,x_r] \to \mathrm{Jac}(W), \quad x_i \mapsto t^{-\lambda_i} z^{e_i},
$$
which sends the $i$-th homogeneous coordinate to the $i$-th summand of the superpotential $W$. In $\mathrm{Jac}(W)$ the quotient by the derivatives of $W$ corresponds to the linear relations \eqref{EqnLinearRelations}. Indeed, imposing $z_j \tfrac{\partial W}{\partial z_j} =0$ is equivalent to 
$$
\sum_i e_i^j t^{-\lambda_i} z^{e_i} =0
$$
where $e_i^j$ is the $j$-th entry of $e_i$, which corresponds to the linear relation
$
\sum e_i^j x_i = \sum \langle \xi,e_i \rangle x_i = 0
$
when $\xi$ is the $j$-th standard basis vector of $\R^{n}$. 

When $X$ is compact, the union of the cones of the fan for $X$ cover all of $\R^n$ \cite[Definition 2.3(iii)]{Batyrev}, which immediately implies that the above map $\psi$ is surjective. This fails in the non-compact case, for example:
\begin{example}\label{Example failure of QH=Jac} 
In Section \ref{Section The moment polytope of a toric negative line bundle} we show that for a monotone toric negative line bundle $E$ the fan does not cover $-e_f$, where $e_f$ is the edge corresponding to the fibre direction. Multiplication by $x_f=\pi^*c_1(E)=\mathrm{PD}[B] \in QH^*(E)$ is never invertible by \cite{Ritter4}, even though its image $\psi(x_f)= z_f$ is tautologically invertible since $z_f^{-1}$ is a generator of $\mathrm{Jac}(W)$. Thus $QH^*(E)\cong \mathrm{Jac}(W)$ fails, in fact we will prove $SH^*(E)\cong \mathrm{Jac}(W)$.

From a Floer-theoretic perspective, the reason why $x_f^{-1}$ exists
in $SH^*(E)$ but not in $QH^*(E)$, is that the representation $\mathcal{S}_{\widetilde{g}_f^{-1}}(1)=x_f^{-1}$ is defined in $SH^*(E)$ (see Section \ref{Subsection Invertibles in the symplectic cohomology}), since negative slope Hamiltonians are not problematic for $SH^*(E)$. Whereas the representation $r_{\widetilde{g}_f^{-1}}(1)=x_f^{-1}\in QH^*(E)$ cannot be defined due to a failure of the maximum principle.
\end{example}

Now consider in general the kernel of $\psi$. By construction, the kernel is generated by the relations $\prod_p x_p^{a_p} = T^{\sum a_p-\sum c_q}\prod_q x_q^{c_q}$ for each relation amongst edges of the form
$$
\sum a_p e_p = \sum c_q e_q
$$
where $a_p,c_q\geq 0$ are integers (the power of $T$ is explained in Lemma \ref{Lemma relations in SH}). Batyrev proved \cite[Theorem 5.3]{Batyrev} that the ideal generated by these relations has a much smaller set of generators, namely those arising from relations $\sum e_{i_p} = \sum c_q e_{j_q}$ for primitive subsets $I=\{i_1,i_2,\ldots\}$ (Definition \ref{Definition Primitive}). These give rise to the quantum SR-relations.

\begin{remark}We remark that the notation $\mathrm{exp}(\varphi(v_i))$ of \cite{Batyrev} corresponds to our $t^{-\lambda_i}$, so
\begin{equation}\label{EqnPowerOfT}
T^{c_1(TX)[\beta_I]}=T^{|I| - \sum c_q} = t^{-\sum \lambda_i + \sum c_q \lambda_{j_q}} = t^{[\omega_X](\beta_I)}
\end{equation}
since $c_1(TX)=\sum \mathrm{PD}[D_i]=\lambda_X[\omega_X]$ by monotonicity;  $[\omega_X]=\sum -\lambda_i \mathrm{PD}[D_i]$ corresponds to the $\varphi$ of \cite[Section 5]{Batyrev} (see \cite[Definition 5.8]{Batyrev}); and $T=t^{1/\lambda_X}$ (see Section \ref{Subsection Novikov ring}).
\end{remark}

From now on, let $(X^{2n},\omega)$ be a (possibly non-compact) toric manifold with moment polytope $\Delta=\{ y\in \R^n: \langle y, e_i \rangle \geq \lambda_i, i=1,\ldots,r\}$ and superpotential $W=\sum t^{-\lambda_i} z^{e_i}$. 

\begin{definition}\label{Definition RX algebra}
The above data for $X$ determines an algebra over $\Lambda$, 
$$
R_X = \left\{ \sum {\lambda}_e z^e: \textrm{finite sum}, \lambda_e\in \Lambda, e\in \mathrm{span}_{\Z_{\geq 0}}(e_i) \right\} \subset \Lambda[z_1^{\pm 1},\ldots,z_n^{\pm 1}].
$$
When $X$ is compact, the fan of $X$ is complete, so $R_X = \Lambda[z_1^{\pm 1},\ldots,z_n^{\pm 1}]$.
\end{definition}

\begin{corollary}\label{Corollary QH of noncompact toric}
Suppose $QH^*(X,\omega)\cong \Lambda[x_1,\ldots,x_r]/\mathcal{J}$, where $\mathcal{J}$ is the ideal generated by the linear relations and the quantum SR-relations determined by $\Delta$. Then there is an isomorphism
$$
\psi: QH^*(X,\omega) \to 
R_X/(\partial_{z_1} W,\ldots,\partial_{z_n} W), \quad x_i \mapsto t^{-\lambda_i} z^{e_i},
$$
which sends $c_1(TX)=\sum x_i \mapsto W= \sum t^{-\lambda_i} z^{e_i}$. When $X$ is closed, this is the well-known isomorphism $QH^*(X,\omega)\cong \mathrm{Jac}(W) = \Lambda[z_1^{\pm 1},\ldots,z_n^{\pm 1}]/(\partial_{z_1} W,\ldots,\partial_{z_n} W)$.
\end{corollary}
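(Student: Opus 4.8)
The plan is to realise $\psi$ as the descent of an explicit surjection of $\Lambda$-algebras, after which the statement becomes a matching of ideals combined with the Batyrev reduction already recalled in Section \ref{Subsection Batyrev's argument: from the presentation of QH to JacW}. First I would introduce the homomorphism
$$\psi_0\colon\ \Lambda[x_1,\ldots,x_r]\longrightarrow R_X,\qquad x_i\longmapsto t^{-\lambda_i}z^{e_i},$$
which is well defined because each $e_i$ lies (trivially) in $\mathrm{span}_{\Z_{\geq 0}}(e_1,\ldots,e_r)$, and surjective because a generator $z^e$ of $R_X$ with $e=\sum a_ie_i$, $a_i\in\Z_{\geq 0}$, equals $\psi_0\big(t^{\sum a_i\lambda_i}\prod_i x_i^{a_i}\big)$. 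The goal is then to prove (i) $\ker\psi_0=(\text{quantum SR-relations})$ and (ii) the linear relations \eqref{EqnLinearRelations} are carried by $\psi_0$ onto a generating set of the Jacobian ideal inside $R_X$. Granting these, the elementary identity $B/\psi_0(I)\cong A/(\ker\psi_0+I)$ for a surjection $\psi_0\colon A\to B$ and ideal $I\subseteq A$ yields $R_X/(\partial_{z_1}W,\ldots,\partial_{z_n}W)\cong\Lambda[x]/\big((\text{quantum SR})+(\text{linear})\big)=\Lambda[x]/\mathcal{J}$; composing with the standing hypothesis $QH^*(X,\omega)\cong\Lambda[x]/\mathcal{J}$, $\mathrm{PD}[D_i]\mapsto x_i$, gives $\psi$ with $\psi(\mathrm{PD}[D_i])=t^{-\lambda_i}z^{e_i}$, and then $\psi(c_1(TX))=\psi(\sum\mathrm{PD}[D_i])=\sum t^{-\lambda_i}z^{e_i}=W$ is automatic.

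For (i): grouping the monomials of a polynomial according to the value of $\sum_i\alpha_ie_i$ shows $\ker\psi_0$ is generated by the binomial relations $\prod_p x_p^{a_p}-T^{\sum a_p-\sum c_q}\prod_q x_q^{c_q}$ attached to lattice relations $\sum a_pe_p=\sum c_qe_q$ with $a_p,c_q\in\Z_{\geq 0}$, exactly as recalled before Definition \ref{Definition RX algebra}; by Batyrev \cite[Theorem~5.3]{Batyrev} this ideal is already generated by those coming from primitive subsets, i.e.\ the quantum SR-relations \eqref{EqnQSRrelation}. Conversely each such relation lies in $\ker\psi_0$: applying $\psi_0$ to $x_{i_1}\cdots x_{i_a}-t^{\omega(\beta_I)}x_{j_1}^{c_1}\cdots x_{j_b}^{c_b}$ produces $t^{-\sum\lambda_{i_p}}z^{\sum e_{i_p}}-t^{\,\omega(\beta_I)-\sum c_q\lambda_{j_q}}z^{\sum c_qe_{j_q}}$, in which the two monomials coincide since $\sum e_{i_p}=\sum c_qe_{j_q}$ and the two $t$-exponents agree by \eqref{EqnC1andOmega} (equivalently by \eqref{EqnPowerOfT}). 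Hence $\ker\psi_0=(\text{quantum SR})$.

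For (ii): a direct computation gives $z_j\partial_{z_j}W=\sum_i\langle\xi_j,e_i\rangle\,t^{-\lambda_i}z^{e_i}=\psi_0\big(\sum_i\langle\xi_j,e_i\rangle x_i\big)$, so $z_j\partial_{z_j}W$ is precisely the $\psi_0$-image of the $j$-th linear relation. Since $z_j$ is a unit, the Jacobian ideal $(\partial_{z_1}W,\ldots,\partial_{z_n}W)$ — read, as it must be inside the subring $R_X$, as the ideal generated by the elements $z_j\partial_{z_j}W\in R_X$ — coincides with $\psi_0(\text{linear relations})$; this is also the only reading consistent with the closed case, where the fan is complete, $R_X=\Lambda[z_1^{\pm1},\ldots,z_n^{\pm1}]$, the target of $\psi$ is literally $\mathrm{Jac}(W)$, and we recover Batyrev's theorem.

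The main point requiring care is not analytic but combinatorial: Batyrev's reduction of the binomial ideal to primitive collections (\cite[Theorem~5.3]{Batyrev}) is here invoked for a possibly \emph{non-complete} fan. One should check that this reduction uses only the existence of the lattice relation underlying each $\beta_I$ — the very data packaged into \eqref{EqnQSRrelation} — together with the structure of the affine semigroup $\mathrm{span}_{\Z_{\geq 0}}(e_i)$, both of which survive the passage from $\Lambda[z^{\pm1}]$ to $R_X$; completeness of the fan entered Batyrev's original discussion only to force surjectivity onto all of $\Lambda[z^{\pm1}]$ (now bypassed by using $R_X$) and to put $\sum_{i\in I}e_i$ in a cone (now subsumed in the hypothesis that $\mathcal{J}$ presents $QH^*(X)$). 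The only other care needed is the minor bookkeeping, flagged in (ii), of interpreting the Jacobian ideal inside the subring $R_X$ rather than inside $\Lambda[z^{\pm1}]$.
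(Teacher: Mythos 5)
Your proposal follows the same route as the paper's proof: factor the map $x_i\mapsto t^{-\lambda_i}z^{e_i}$ through $R_X$, use Batyrev's Theorem 5.3 to identify its kernel with the quantum Stanley--Reisner ideal, and recognize the image of the linear relations as the Jacobian ideal (via $z_j\partial_{z_j}W=\psi_0(\sum_i\langle\xi_j,e_i\rangle x_i)$). You spell out two subtleties the paper passes over silently --- the validity of Batyrev's reduction for a non-complete fan, and the fact that the Jacobian ideal inside the subring $R_X$ must be read as generated by the $z_j\partial_{z_j}W$ rather than the $\partial_{z_j}W$ themselves --- and your treatment of both is sound.
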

\begin{proof}
Observe that the image of $\psi$ is precisely $R_X$, by construction. Moreover, $\psi$ descends to the following quotient:
$$\overline{\psi}: \Lambda[x_1,\ldots,x_r]/(\textrm{quantum SR-relations}) \to R_X.$$
By Batyrev's argument, $\overline{\psi}$ is injective (the quotient on the left is the same as quotienting by all positive integral relations amongst edges, as mentioned above, and then it is clear that the map is injective). The image under $\overline{\psi}$ of the linear relations is precisely the ideal $(\partial_{z_1}W,\ldots,\partial_{z_r}W)$. The claim follows by quotienting both sides by that ideal.
\end{proof}

\begin{corollary}\label{Corollary QH of noncompact toric 2}
Continuing with the assumption of Corollary \ref{Corollary QH of noncompact toric}, there is a quotient map
$$
QH^*(X,\omega)
 \to \mathrm{Jac}(W), \quad x_i \mapsto t^{-\lambda_i} z^{e_i},
$$
sending $c_1(TX) \mapsto W$, and corresponding to the natural map
$$
c^*:R_X/(\partial_{z_1} W,\ldots,\partial_{z_n} W) \to R_X[w_1,\ldots,w_r]/(\partial_i W,w_i z^{e_i}-1: i=1,\ldots,r) \cong \mathrm{Jac}(W).
$$
which is the canonical localization at the variables $z_1,\ldots,z_r$.
\end{corollary}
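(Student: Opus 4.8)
\emph{Plan.} This is a short formal consequence of Corollary~\ref{Corollary QH of noncompact toric} together with the definition of $\mathrm{Jac}(W)$; the idea is simply to localize the presentation $R_X/(\partial_{z_1}W,\ldots,\partial_{z_n}W)$ at the monomials $z^{e_i}$ and observe that one lands in the full Laurent ring.

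First I would invoke Corollary~\ref{Corollary QH of noncompact toric} to identify $QH^*(X,\omega)\cong R_X/(\partial_{z_1}W,\ldots,\partial_{z_n}W)$ via $\psi$, $x_i\mapsto t^{-\lambda_i}z^{e_i}$, with $c_1(TX)=\sum x_i\mapsto W$. (Here the generators of the Jacobian ideal inside $R_X$ are, strictly, the honest elements $z_j\partial_{z_j}W=\sum_i e_i^j t^{-\lambda_i}z^{e_i}\in R_X$ coming from the linear relations.) Then I would form the localization $R_X[w_1,\ldots,w_r]/(w_iz^{e_i}-1:i=1,\ldots,r)$ obtained by inverting the monomials $z^{e_1},\ldots,z^{e_r}$. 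The key observation is that this localized ring equals the whole Laurent ring $\Lambda[z_1^{\pm1},\ldots,z_n^{\pm1}]$: by Definition~\ref{Definition RX algebra}, $R_X$ is the span of the $z^e$ with $e\in\mathrm{span}_{\Z_{\geq0}}(e_i)$, so after inverting the $z^{e_i}$ one obtains exactly the $z^e$ with $e$ in the subgroup of $\Z^n$ generated by $e_1,\ldots,e_r$; since $X$ is a smooth $n$-dimensional toric manifold, the rays of any maximal cone of its fan form a $\Z$-basis of $\Z^n$, so this subgroup is all of $\Z^n$ and the localization is $\Lambda[z_1^{\pm1},\ldots,z_n^{\pm1}]$. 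Because localization is exact it commutes with passing to a quotient by an ideal, and in the localized ring $z_1,\ldots,z_n$ are units, so $(z_j\partial_{z_j}W)$ and $(\partial_{z_j}W)$ generate the same ideal there. Hence localizing $R_X/(\partial_{z_j}W)$ at the $z^{e_i}$ yields $\Lambda[z_1^{\pm1},\ldots,z_n^{\pm1}]/(\partial_{z_1}W,\ldots,\partial_{z_n}W)=\mathrm{Jac}(W)$ by Definition~\ref{Definition Jacobian ring}. This identifies the natural localization map $c^*\colon R_X/(\partial_{z_j}W)\to R_X[w_i]/(\partial_{z_j}W,\,w_iz^{e_i}-1)$ with a ring map to $\mathrm{Jac}(W)$ sending $z^e\mapsto z^e$; precomposing with $\psi$ gives the asserted homomorphism $QH^*(X,\omega)\to\mathrm{Jac}(W)$, which by construction sends $x_i\mapsto t^{-\lambda_i}z^{e_i}$ and $c_1(TX)=\sum x_i\mapsto\sum t^{-\lambda_i}z^{e_i}=W$.

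Finally, to justify describing this as a \emph{quotient} map (as in part (5) of Theorem~\ref{Theorem admissible toric manifolds}), I would use that $QH^*(X,\omega)$ is finite-dimensional over $\Lambda$ in the cases of interest (e.g.\ negative line bundles, where $H^*(E)\cong H^*(B)$): in an Artinian ring $R$, localizing at an element $f$ coincides with the quotient $R\to R/\ker(\mu_f^N)$ for $N\gg0$, since by Fitting's lemma the chain $R\supseteq fR\supseteq f^2R\supseteq\cdots$ stabilizes at $f^NR$, with $R=f^NR\oplus\ker(\mu_f^N)$ and $\mu_f$ invertible on $f^NR\cong R_f$. Localizing simultaneously at $z^{e_1},\ldots,z^{e_r}$ is then the quotient by the ideal generated by the generalized $0$-eigenspaces of the operators $\mu_{z^{e_i}}$, and by smoothness of $X$ (each $z_j$ being a Laurent monomial in the $z^{e_i}$ and conversely) this is the same as the quotient by the generalized $0$-eigenspaces of multiplication by $z_1,\ldots,z_n$. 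The only genuinely geometric input is the identification of the localized $R_X$ with the full Laurent ring, which rests on unimodularity of the fan; everything else is bookkeeping.
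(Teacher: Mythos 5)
Your argument is correct and follows essentially the same route as the paper: the paper also reduces to showing the localized ring is the full Laurent ring (via smoothness implying the $e_i$ span $\Z^n$) and identifies the localization with the quotient by generalized $0$-eigenspaces using the remarks on localization in Section \ref{Subsection Some remarks about localizations of rings}. Your write-up is somewhat more careful about two minor technical points that the paper leaves implicit — that the Jacobian ideal inside $R_X$ is really generated by $z_j\partial_{z_j}W\in R_X$ rather than $\partial_{z_j}W$, and the explicit Fitting-lemma argument showing localization of an Artinian ring is a quotient — but these are exactly the ingredients the paper is invoking.
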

\begin{proof}
 The only thing left prove, is the very last isomorphism. This follows by Section \ref{Subsection Some remarks about localizations of rings}: 
the $w_i$ formally invert the $z^{e_i}$, which corresponds to quotienting $R_X/(\partial_{z_1} W,\ldots,\partial_{z_n} W)$ by the generalized $0$-eigenspace of $z^{e_i}$ acting on $R_X/(\partial_{z_1} W,\ldots,\partial_{z_n} W)$, and after quotienting there is a multiplicative inverse for $z^{e_i}$. The $w_i$ correspond to $z^{-e_i}$ in $\mathrm{Jac}(W)$, and the smoothness of $X$ ensures that the $\Z$-span of the $e_i$ is $\Z^n$ (see the comment in Section \ref{Subsection The fan of a line bundle over a toric variety} about smoothness). This implies the surjectivity of the map into $\mathrm{Jac}(W)$. 
\end{proof}

% \begin{remark}
%  The cokernel of $\psi$ for non-compact $X$ is generated by those monomials $\prod W_j^{m_j}$, where $W_j= t^{-\lambda_i} z^{e_i}$, for which $\sum m_j e_j$ does not lie in any cone of the fan for $X$. [NOT QUITE, NEED TO TAKE SPAN]
%  
% For example, for monotone toric negative line bundles, the cokernel is generated by $z_{n+1}^{-1}$, since $-e_{r+1}$ does not lie in any cone but, up to addition of a multiple of $-e_{r+1}$, any other vector of $\R^{n+1}$ does lie in a cone.
%  The reason we do not obtain a generator in $QH^*(E)$ for $-e_{r+1}$ is that the rotation $(g_{r+1}^{\wedge})^{-1}$ about the zero section requires a negative slope Hamiltonian, and only in $SH^*(E)$ we can make sense of the corresponding element $\mathcal{R}_{(g_{r+1}^{\wedge})^{-1}} = \mathcal{R}_{g_{r+1}^{\wedge}}^{-1}\in SH^*(E)$.
% \end{remark}

\begin{conjecture}\label{Conjecture SH for monotone toric noncompact X} 
For non-compact monotone toric manifolds $X$ for which $SH^*(X)$ can be defined (e.g. $X$ conical at infinity), we expect that
$$
c^*:QH^*(X)\cong R_X/(\partial_{z_1}W,\ldots,\partial_{z_n}W) \to \mathrm{Jac}(W)\cong SH^*(X)
$$
is the canonical map $c^*:QH^*(X)\to SH^*(X)$. In particular, $c^*$ is the canonical localization map, localizing at the variables $x_i=\mathrm{PD}[D_i]$ corresponding to the toric divisors $D_i$.
\end{conjecture}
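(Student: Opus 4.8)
The full conjecture for an arbitrary non-compact monotone toric $X$ with $SH^*(X)$ defined is genuinely open; what is within reach is the case of an \emph{admissible} toric manifold $X$ (Definition \ref{Definition admissible toric manifolds}), namely Theorem \ref{Theorem admissible toric manifolds}, whose proof I sketch. The plan has three stages. (a) Establish a Batyrev-type presentation $QH^*(X)\cong\Lambda[x_1,\ldots,x_r]/\mathcal{J}$, with $\mathcal{J}$ generated by the linear relations \eqref{EqnLinearRelations} and the quantum Stanley--Reisner relations \eqref{EqnQSRrelation}, by running the McDuff--Tolman argument in the non-compact setting; Corollary \ref{Corollary QH of noncompact toric} then rewrites this as $R_X/(\partial_{z_1}W,\ldots,\partial_{z_n}W)$ via $x_i\mapsto t^{-\lambda_i}z^{e_i}$. (b) Use a positive-slope Hamiltonian circle action and Theorem \ref{Theorem Circle action gives SH} to identify $SH^*(X)$ with the localization of $QH^*(X)$ at one toric divisor class. (c) Observe that in fact \emph{all} the classes $x_i=\mathrm{PD}[D_i]$ become invertible in $SH^*(X)$, so that $SH^*(X)$ is the universal localization of $R_X/(\partial W)$ inverting every $z^{e_i}$; since $X$ is smooth this is exactly $\mathrm{Jac}(W)$, and $c^*$ is visibly the canonical localization map of Corollary \ref{Corollary QH of noncompact toric 2}.

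For (a) I would replace the Seidel representation by the representation $r$ of diagram \eqref{Equation Intro Comm Diagram Ham SH and QH}. First compute $r$ on the rotations $g_i$ about the toric divisors: since $\mathrm{Fix}(g_i)=D_i$ has complex codimension $1$ and the normal weight of $dg_{i,t}$ is a degree-$1$ loop, Lemma \ref{Lemma rg1} (equation \eqref{Equation rg1 if Ig is 1}) gives $r_{g_i^{\wedge}}(1)=x_i=\mathrm{PD}[D_i]\in QH^2(X)$, which is Lemma \ref{Lemma R of gi standard lift}. For this one must first check the $g_i$ lie in the group on which $r$ is defined: their Hamiltonians have the form $f_i(y)R$ at infinity with $f_i$ invariant under the Reeb flow and $f_i\geq 0$, so the extended maximum principle (Lemma \ref{Introduction Lemma Extended Max principle}, applied through Theorem \ref{Theorem Representation into SH for new Hamiltonians}) makes $r$ well defined; this needs the explicit computation of the rotation Hamiltonians (Theorem \ref{Theorem Hamiltonians for rotations about divisors}) and the positive-slope condition for the Hamiltonians entering the Stanley--Reisner relations (Lemma \ref{Lemma Quantum SR-relations involve positive slope Hamiltonians}). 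Each linear dependence $\sum e_{i_k}=\sum c_q e_{j_q}$ coming from a primitive collection $I$ gives a relation $\prod g_{i_k}=\prod g_{j_q}^{c_q}$ among loops in $\mathrm{Ham}$ (after a careful choice of lifts $g_i^{\wedge}$ in $\widetilde{\pi_1}\mathrm{Ham}$), and applying the homomorphism $r$ produces the quantum relation $\prod x_{i_k}=t^{\omega(\beta_I)}\prod x_{j_q}^{c_q}$ with $\omega(\beta_I)>0$. The algebra trick of Lemma \ref{Lemma McDuffTolman Algebra trick}, whose only input is the $t$-adic valuation together with the positivity of these $t$-powers, then upgrades the classical presentation of $H^*(X)$ to the desired quantum presentation.

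For (b) and (c): admissibility provides a rotation $g_{i_0}$ whose Hamiltonian has strictly positive slope, so $g_{i_0}\in\mathrm{Ham}_{\ell>0}$ in the enlarged sense of Theorem \ref{Theorem Representation into SH for new Hamiltonians}, and Theorem \ref{Theorem Circle action gives SH} gives $SH^*(X)\cong QH^*(X)/(\ker r_{g_{i_0}^{\wedge}}^{\mathrm{large}})=QH^*(X)/(\ker x_{i_0}^{\mathrm{large}})$, which by Section \ref{Subsection Some remarks about localizations of rings} is the localization $QH^*(X)[x_{i_0}^{-1}]$. On the other hand every $g_i$ has $f_i\geq 0$, hence lies in $\widetilde{\pi_1}\mathrm{Ham}_{\ell}$, so by Theorem \ref{Theorem Representation of Ham on SH} (in the enlarged form of Theorem \ref{Theorem Representation into SH for new Hamiltonians}) its image $\mathcal{S}_{g_i^{\wedge}}(1)=\mathcal{R}_{g_i^{\wedge}}(1)$ is a unit in $SH^*(X)$; by commutativity of the triangle \eqref{Equation Intro Comm Diagram Ham SH and QH} and step (a) this unit is $c^*(x_i)$. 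Hence all $c^*(x_i)$ are invertible, so inverting $x_{i_0}$ already inverts every $x_i$ and $SH^*(X)\cong QH^*(X)[x_1^{-1},\ldots,x_r^{-1}]$. Since $X$ is smooth the $\Z$-span of the $e_i$ is $\Z^n$, so localizing $R_X$ at all the monomials $z^{e_i}$ produces the full Laurent ring $\Lambda[z_1^{\pm1},\ldots,z_n^{\pm1}]$; therefore $SH^*(X)\cong\Lambda[z_1^{\pm1},\ldots,z_n^{\pm1}]/(\partial_{z_1}W,\ldots,\partial_{z_n}W)=\mathrm{Jac}(W)$, and the composite $QH^*(X)\to SH^*(X)$ is exactly the canonical localization map $x_i\mapsto x_i$ of Corollary \ref{Corollary QH of noncompact toric 2}, which establishes the conjecture for admissible $X$.

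I expect stage (a) to be the main obstacle, for two reasons. Batyrev's reduction of the quantum Stanley--Reisner ideal to the relations of primitive collections used compactness essentially---that the cones cover $\R^n$, so $\sum e_{i_k}$ lies in some cone---and for non-compact $X$ one must instead argue with all positive integral relations among the edges and show that the primitive ones still generate, which forces one to exploit the precise shape of the moment polytope of an admissible $X$. Moreover $r$ lives on the extension $\widetilde{\pi_1}\mathrm{Ham}_{\ell\geq 0}$ rather than on $\pi_1\mathrm{Ham}$, and only on its nonnegative-slope part, so the lifts $g_i^{\wedge}$ must be chosen compatibly enough that the edge-relations lift to honest relations inside that sub-monoid before $r$ can be applied, with the resulting Novikov powers tracked. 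A secondary obstacle, which is precisely why Definition \ref{Definition admissible toric manifolds} imposes its hypotheses and why the statement stays a conjecture in general, is that an arbitrary non-compact monotone toric $X$ conical at infinity need not admit \emph{any} Hamiltonian circle action whose generating Hamiltonian satisfies the maximum principle with positive slope; without such an action stage (b) has no starting point, since Theorem \ref{Theorem Circle action gives SH} cannot be invoked.
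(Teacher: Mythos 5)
Your proposal is correct and follows essentially the same route as the paper: you correctly note that the statement is genuinely conjectural in general, restrict to admissible $X$ (Definition \ref{Definition admissible toric manifolds}), and reproduce the paper's proof of Theorem \ref{Theorem admissible toric manifolds} — the Batyrev presentation of $QH^*(X)$ via the representation $r$ and Lemma \ref{Lemma rg1} together with the McDuff–Tolman algebra trick, then $SH^*(X)$ as a localization via Theorem \ref{Theorem Circle action gives SH} / Theorem \ref{Theorem When SH is quotient of QH}, then the identification with $\mathrm{Jac}(W)$ via Corollaries \ref{Corollary QH of noncompact toric} and \ref{Corollary QH of noncompact toric 2}. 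The one place you go slightly beyond the paper's exposition is in step (c), where you make explicit the small argument that inverting a single $x_{i_0}$ already inverts all $x_i$ (since every $c^*(x_i)$ is a unit in $SH^*$ by the $\mathcal{S}$-representation) — the paper asserts ``localization at all $x_i$'' without spelling out this universal-property step; your version fills that in and is correct.
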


We now prove the conjecture for admissible toric manifolds (Definition \ref{Definition admissible toric manifolds}), and in particular in Section \ref{Subsection The Jacobian ring for nlb} we prove that this applies to monotone toric negative line bundles. 
%The key idea of the proof, in that setup, is that the representation $$\mathcal{R}:\widetilde{\pi_1} \mathrm{Ham}_{\ell} (M,\omega) \to SH^*(M)^{\times}$$ from \cite{Ritter4} allows for a larger class of Hamiltonians than the representation $$r: \widetilde{\pi_1}\mathrm{Ham}_{\ell \geq 0}(E,\omega) \to QH^*(E)$$ from \cite{Ritter4}. Namely, for $SH^*$ one can use $S^1$-rotations whose Hamiltonian have negative slope at infinity (up to an error depending on a function on $B$) whereas for $QH^*$ only positive slopes are allowed. This is why $\mathcal{R}((g_{r+1}^{\wedge})^{-1})=x_{r+1}^{-1}$ exists in $SH^*(E)$ but not in $QH^*(E)$, which corresponds up to a constant factor to the $z_{r+1}^{-1}$ in $\mathrm{Jac}(W)$. The main difficulty was to prove that the representation $\mathcal{R}$ is indeed defined for the rotation actions around the toric divisors, which entails proving a maximum principle (
%Section \ref{Section The Maximum principle revisited}). It is not at all clear for what class of non-compact toric varieties such maximum principles can be obtained -- this will be investigated in a future paper -- but the proof for negative line bundles shows that the conjecture holds whenever such a maximum principle holds for those Hamiltonians which generate the rotations about the toric divisors.
%
%%%%%%%%%%%%%%%%%%%%%%%%%%%%%%%%%%%%%%%%%%%%%%%%%%%%%%%%%%%
\subsection{Admissible non-compact toric manifolds}
\label{Subsection Admissible non-compact toric manifolds}
%%%%%%%%%%%%%%%%%%%%%%%%%%%%%%%%%%%%%%%%%%%%%%%%%%%%%%%%%%%

\begin{proof}[{\bf Proof of Theorem \ref{Theorem admissible toric manifolds}}]
 Definition \ref{Definition admissible toric manifolds} part \eqref{Item Admissible conical} and part \eqref{Item Admissible monotone} ensure that $QH^*(M)$ and $SH^*(M)$ are well-defined (see \cite{Ritter4}).  Part \eqref{Item Admissible monotone} means we can work over the Novikov ring $\Lambda$ (in particular, Lemma \ref{Lemma relations in SH} will hold). Part \eqref{Item Admissible max principle} is the analogue of Corollary \ref{Corollary Hams in toric case are OK} for negative line bundles: it ensures by Theorem \ref{Theorem Representation into SH for new Hamiltonians} that the representation from \cite{Ritter4},
$$
r: \widetilde{\pi_1}(\mathrm{Ham}_{\ell\geq 0}(X,\omega_X)) \to QH^*(X,\omega_X),\;\; \widetilde{g} \mapsto r_{\widetilde{g}}(1),
$$
is defined on the $S^1$-actions $g_i$ given by rotation around the toric divisors, lifting to $g_i^{\wedge}$ as in Lemma \ref{Lemma rg1} so that $r_{g_i^{\wedge}}(1)=x_i=\mathrm{PD}[D_i]$ (using Lemma \ref{Lemma rg1}, and $\mathrm{Fix}(g_i)=[D_i]$). The algebraic trick in Lemma \ref{Lemma McDuffTolman Algebra trick} then ensures that the generators $x_i$ of $QH^*(X,\omega_X)$ satisfy precisely the relations in $\mathcal{J}$ -- in particular the quantum SR-relations arise from the relations amongst the rotations $g_i$, the fact that the representation $r$ is a homomorphism, and the fact that these relations only involve positive slope Hamiltonians (see the proof of Lemma
\ref{Lemma Quantum SR-relations involve positive slope Hamiltonians}).

By Theorem \ref{Theorem Representation into SH for new Hamiltonians}, the representation from \cite{Ritter4},
$$
\mathcal{S}:\widetilde{\pi_1}(\mathrm{Ham}_{\ell}(X,\omega_X)) \to SH^*(X,\omega_X)^{\times},\;\; \widetilde{g} \mapsto \mathcal{S}_{\widetilde{g}}(1)
$$
yields invertible elements $\mathcal{S}(g_i^{\wedge})(1)=x_i \in SH^*(E)^{\times}$ (the inverse $x_i^{-1}=\mathcal{S}((g_i^{-1})^{\wedge})(1)$ exists since the Hamiltonian is allowed to have negative slope for the representation $\mathcal{S}$).

The fact that $SH^*$ is the quotient of $QH^*$, and hence is the localization at all $x_i$, follows by Theorem \ref{Theorem When SH is quotient of QH} and Section \ref{Subsection Some remarks about localizations of rings}, using the assumption in Definition \ref{Definition admissible toric manifolds} part \eqref{Item Admissible positive slope}. The fact that $c^*(r_{g_i^{\wedge}}(1))=\mathcal{S}_{g_i^{\wedge}}(1)$ follows by Theorem \ref{Theorem Representation of Ham on QH}.
The rest follows by Section \ref{Subsection Batyrev's argument: from the presentation of QH to JacW}.
\end{proof}

%%%%%%%%%%%%%%%%%%%%%%%%%%%%%%%%%%%%%%%%%%%%%%%%%%%%%%%%%%%
\subsection{Blow-up at a point}
\label{Subsection Blow-up at a point}
%%%%%%%%%%%%%%%%%%%%%%%%%%%%%%%%%%%%%%%%%%%%%%%%%%%%%%%%%%%
%%%%%%%%%%%%%%%%%%%%%%%%%%%%%%%%%%%%%%%%%%%%%%%%%%%%%%%%
We briefly recall some generalities about blow-ups, following Guillemin \cite[Chapter 1]{Guillemin}. For a complex manifold $X$ of dimension $\dim_{\C}X=n$, the blow-up $\pi:\widetilde{X}\to X$ at a point $p\in X$ is a holomorphic map, which is a biholomorphism outside of the exceptional divisor
$
E= \pi^{-1}(p),
$
and $E$ can naturally be identified with $\CP^{n-1}$ viewed as the projectivization of the normal bundle of $p\in X$.
Suppose, in addition, that $(X,\omega_X)$ is symplectic. Then $\widetilde{X}$ carries a symplectic form $\omega_{\widetilde{X}}$ such that $\omega_{\widetilde{X}}-\pi^*\omega_{X}$ is compactly supported near $E$ and restricts to $\varepsilon \omega_{FS}$ on $E\cong \CP^{n-1}$. Here $\omega_{FS}$ is the normalized Fubini-Study form, and $\varepsilon>0$ is called the blow-up parameter (namely, the symplectic area of degree one spheres in $E$).
When there is a Hamiltonian $G$-action on $X$, for a compact group $G$ (such as a torus in the case of toric $X$), and $p$ is a fixed point of $G$, then $\omega_{\widetilde{X}}$ can be chosen to be $G$-equivariant and the action will lift to a $G$-Hamiltonian action on $\widetilde{X}$.

By Griffiths-Harris \cite[Chp.4 Sec.7]{Griffiths-Harris} the cohomology of the blow-up is 
\begin{equation}\label{Equation cohomology of blow-up}
H^*(\widetilde{X}) = \pi^*H^*(X) \oplus \overline{H}^*(E)
\end{equation}
where the pull-back $\pi^*$ is injective, and $\overline{H}^*(E)=\overline{H}^*(\CP^{n-1})$ is the reduced cohomology (i.e. quotiented by $H^*(\mathrm{point})$). The generator $-\omega_{FS}\in H^*(E)$ corresponds to $\mathrm{PD}[E]$ since the normal bundle to $E\subset \widetilde{X}$ has Chern class $-\omega_{FS}$. Moreover,
$$
c_1(T\widetilde{X}) = \pi^*c_1(TX) - (n-1)\mathrm{PD}[E].
$$
Thus, if $X$ is monotone, so $c_1(TX)=\lambda_X [\omega_X]$ with $\lambda_X>0$, and we want $(\widetilde{X},\omega_{\widetilde{X}})$ to be monotone, then we require:
$$
\varepsilon = (n-1)/\lambda_X.
$$
This ensures that $c_1(T\widetilde{X}) = \lambda_X [\omega_{\widetilde{X}}]$. The same condition is required if by monotonicity we just require $c_1(TX)=\lambda_X [\omega_X]$ to hold when integrating over spheres, by observing that $c_1(T\widetilde{X})$, $\omega_{\widetilde{X}}$ integrate to $n-1,\varepsilon\lambda_X$ respectively on degree one spheres in $E$.

The above condition on $\varepsilon$ cannot always be achieved, as there may not be a sufficiently large Darboux neighbourhood around $p$ to apply the local model of a blow-up of $\C^n$ at $0$ with area parameter $\varepsilon$ (see \cite[Thm 1.10]{Guillemin}).

More generally, if $X$ satisfies weak+ monotonicity (Section \ref{Subsection Invertibles in the symplectic cohomology}) then so does $\widetilde{X}$ since the holomorphic spheres $A\subset E$ satisfy $c_1(T\widetilde{X})(A)=(n-1)\omega_{FS}(A)\geq 0$.

Let $(X,\omega_X)$ be a non-compact K\"ahler manifold (a complex manifold with a compatible symplectic form $\omega_X$) with a Hamiltonian $S^1$-action. Suppose also that $X$ is conical at infinity, satisfies weak+ monotonicity, and the Hamiltonian generating the $S^1$-action on $X$ has the form prescribed by the extended maximum principle in Theorem \ref{Theorem maximum principle} with $f>0$. 

\begin{theorem}\label{Theorem Kahler manifold with S1 action}
 Under the above assumptions, the blow-up $(\widetilde{X},\omega_{\widetilde{X}})$ of $X$ at a fixed point of the action is a K\"ahler manifold conical at infinity, satisfying weak+ monotonicity, whose lifted Hamiltonian $S^1$-action satisfies Theorem \ref{Theorem maximum principle} with $f>0$. In particular, $SH^*(\widetilde{X})$ is determined as a quotient of $QH^*(\widetilde{X})$ by Theorem \ref{Theorem When SH is quotient of QH}.
\end{theorem}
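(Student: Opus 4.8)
The plan is to exploit that a point blow-up is a \emph{compactly supported} modification which leaves the conical end of $X$ untouched, so that every ingredient needed for the Floer machinery is inherited from $X$, and then to apply Section \ref{Subsection Invertibles in the symplectic cohomology for a larger class of Hamiltonians}. First I would build $\widetilde X$ and its lifted action following Guillemin \cite[Chapter 1]{Guillemin}: since $p$ is a fixed point, the equivariant Darboux theorem gives an $S^1$-invariant Darboux chart around $p$ in which the action is linear, and the standard symplectic blow-up of $\C^n$ at $0$ inside this chart, with a blow-up parameter $\varepsilon>0$ small enough for the local model to embed, produces an $S^1$-equivariant K\"{a}hler form $\omega_{\widetilde X}$ with $\omega_{\widetilde X}-\pi^*\omega_X$ supported near $E=\pi^{-1}(p)\cong\CP^{n-1}$, together with the lifted Hamiltonian $S^1$-action. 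The integrable complex structure of $\widetilde X$ agrees with $\pi^*$ of that of $X$ away from $E$, so $(\widetilde X,\omega_{\widetilde X})$ is K\"{a}hler.

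Next I would verify that $\widetilde X$ is conical at infinity and weak+ monotone. A Hamiltonian of the form $f_t(y)R$ with $f_t>0$ generates a flow without fixed points at infinity (the differential $f_t\,dR+R\,d_\Sigma f_t$ of $f_t(y)R$ is nowhere zero when $f_t>0$), so $p$, being a fixed point, lies in the compact part of $X$; therefore $E$ and the support of $\omega_{\widetilde X}-\pi^*\omega_X$ lie in a compact set disjoint from the collar $\Sigma\times(1,\infty)$, on which $\pi$ restricts to a biholomorphic symplectomorphism. Thus $\widetilde X$ is conical at infinity with the same hypersurface $\Sigma$, collar coordinates $(y,R)$, contact form and Reeb field as $X$, and the almost complex structure of contact type at infinity may be taken to be the one inherited from $X$. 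Weak+ monotonicity is the remark preceding the statement: by \eqref{Equation cohomology of blow-up} and $c_1(T\widetilde X)=\pi^*c_1(TX)-(n-1)\mathrm{PD}[E]$, every effective spherical class is built from effective classes of $X$, on which the weak+ inequality is inherited since $\mathrm{PD}[E]$ pairs trivially with classes disjoint from $E$, and from classes inside $E$, on which $c_1(T\widetilde X)(A)=(n-1)\omega_{FS}(A)\ge 0$.

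The only step that genuinely uses the slope hypothesis is the verification that the lifted Hamiltonian still has the form demanded by Theorem \ref{Theorem maximum principle} with $f>0$; this is immediate, because the lifted action is the pullback of the $X$-action under $\pi$ and $\pi$ is the identity on the collar, so the generating Hamiltonian $\widetilde K_t$ of the lifted action satisfies $\widetilde K_t(y,R)=K_t(y,R)=f_t(y)R$ near infinity, with the very same Reeb-invariant function $f_t>0$. Hence Theorem \ref{Theorem maximum principle} applies to $\widetilde X$, Theorem \ref{Theorem Representation into SH for new Hamiltonians} supplies the representations $r$ and $\mathcal S$ on a lift of the circle action, and, the slope being strictly positive, Theorem \ref{Theorem When SH is quotient of QH} identifies $SH^*(\widetilde X)$ with the quotient of $QH^*(\widetilde X)$ by the generalized $0$-eigenspace of the $r$-element of the lifted circle action.

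I expect the main obstacle to be organizational rather than analytic: one must check that the equivariant blow-up can be arranged so that the Darboux neighbourhood of $p$ and the support of the symplectic correction stay inside the compact part of $X$, leaving the conical end literally unchanged, and that taking $\varepsilon$ small is harmless -- which it is, since here we only require weak+ monotonicity, so no lower bound on the radius of a Darboux ball around $p$ is needed. If one additionally wanted $\widetilde X$ monotone, one would have to impose $\varepsilon=(n-1)/\lambda_X$ and separately check that a Darboux ball of that size exists around $p$, which can fail.
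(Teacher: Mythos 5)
Your proposal is correct and takes essentially the same approach as the paper's one-line proof ("Away from the exceptional divisor, $X,\widetilde X$ are biholomorphic so the Hamiltonians generating the $S^1$-actions agree"). You have merely made explicit what the paper leaves implicit: in particular, the observation that $f>0$ forces the fixed point $p$ to lie in the compact part, so that $E$ and the support of $\omega_{\widetilde X}-\pi^*\omega_X$ are disjoint from the conical end, is a genuine step the paper omits and is worth having on record.
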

\begin{proof}
 Away from the exceptional divisor, $X,\widetilde{X}$ are biholomorphic so the Hamiltonians generating the $S^1$-actions agree. The claim follows.
\end{proof}
% 
% By Lemma \ref{Lemma rg1}, $r_{\widetilde{g}}(1) = [\mathrm{Fix}(g)] + \textrm{higher order }t\textrm{ terms} \in QH^*(\widetilde{X})$ (with no higher terms if $I(\widetilde{g})=1$), where $[\mathrm{Fix}(g)]$ is typically just $E$.
In applications, via Lemma \ref{Lemma rg1}, $r_{g^{\wedge}}(1)\in QH^*(\widetilde{X})$ will typically be $\mathrm{PD}[E]$.

For a toric variety $X$, with a choice of toric symplectic form $\omega_X$, a fixed point $p\in X$ of the torus action corresponds to a vertex $p\in \Delta$ of the moment polytope \eqref{Eqn moment polytope}. The moment polytope $\widetilde{\Delta}$ of the one-point blow-up is then obtained by chopping off a standard simplex from $\Delta$ at $p$ corresponding to $\varepsilon \Delta_{\CP^n}$ \cite[Thm 1.12]{Guillemin}. Thus $p$ is replaced by the opposite facet in this simplex, which is a copy of $\varepsilon \Delta_{\CP^{n-1}}$. Explicitly,
$$
\widetilde{\Delta} = \Delta \cap \{ y\in \R^n: \langle y,e_0  \rangle \geq \lambda_0 \}
$$
where $e_0 = e_{i_1} + \cdots + e_{i_n}$ is the sum of those edges of the fan for $X$ which are the inward normals to the facets of $\Delta$ which meet at $p$, and $\lambda_0= \varepsilon+ \lambda_{i_1} + \cdots + \lambda_{i_n}$.

When $(X,\omega_X)$ is monotone, we can normalize so that $[\omega_X]=c_1(TX)$ (so the monotonicity constant $\lambda_X=1$), and by Section \ref{Subsection The polytope of a Fano variety} one can pick all $\lambda_i = -1$. So if we require $(\widetilde{X},\omega_{\widetilde{X}})$ to be monotone, we need $\lambda_0 = -1$ which agrees with the above monotonicity constraint for $\widetilde{X}$:
$$
\varepsilon = n-1.
$$
Since the Euclidean length of the edges in $\Delta$ through the vertex $p$ corresponds to the symplectic areas of the spheres corresponding to those edges \cite[Chp. 2]{Guillemin}, that condition on $\varepsilon$ can be satisfied if the edges have length strictly greater than $n-1$.
For example, for $X=\CP^2$ with $\omega_X=c_1(TX)=3\omega_{FS}$ the edges have length $3$ and the condition is $\varepsilon=1$. So $\CP^2$ can be blown-up in a monotone way at a vertex (indeed even at all three vertices) of $\Delta_{\CP^2}$.

\begin{theorem}\label{Theorem Blow-up at a point is admissible}
Let $(X,\omega_X)$ be an admissible toric manifold (Definition \ref{Definition admissible toric manifolds}). Suppose $X$ admits a monotone blow-up $(\widetilde{X},\omega_{\widetilde{X}})$ at a toric fixed point (or several such points). Then $(\widetilde{X},\omega_{\widetilde{X}})$ is admissible, in particular Theorem \ref{Theorem admissible toric manifolds} holds.
\end{theorem}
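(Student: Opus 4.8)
The plan is to verify the four conditions of Definition \ref{Definition admissible toric manifolds} for $(\widetilde{X},\omega_{\widetilde{X}})$ and then invoke Theorem \ref{Theorem admissible toric manifolds} verbatim. The guiding principle is that a blow-up at a toric fixed point $p$ is a modification supported in a relatively compact Darboux neighbourhood of $p$: the form $\omega_{\widetilde{X}}-\pi^*\omega_X$ is compactly supported near $E=\pi^{-1}(p)$, and $p$, being a vertex of the moment polytope $\Delta$, lies in the compact part of $X$. Hence $\pi$ restricts to a $T^n$-equivariant symplectomorphism between the conical ends of $X$ and $\widetilde{X}$. This immediately gives condition \eqref{Item Admissible conical}: $\widetilde{X}$ is conical at infinity with the same contact hypersurface $\Sigma$ and the same Reeb flow as $X$. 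Condition \eqref{Item Admissible monotone} is precisely the standing hypothesis that the blow-up is monotone, i.e. the blow-up parameter has been chosen so that $c_1(T\widetilde{X})=\lambda_X[\omega_{\widetilde{X}}]$ (for $\dim_{\C}X=n$ one needs $\varepsilon=(n-1)/\lambda_X$, possible exactly when there is a large enough Darboux ball, as recalled before the statement).

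For conditions \eqref{Item Admissible max principle} and \eqref{Item Admissible positive slope} I would first describe the toric divisors of $\widetilde{X}$. The fan of $\widetilde{X}$ has the old edges $e_1,\ldots,e_r$, whose divisors are the proper transforms $\widetilde{D}_i=\mathrm{Fix}(g_i)$, together with one new edge $e_0=e_{i_1}+\cdots+e_{i_n}$ (the sum of the edges whose facets meet at $p$), whose divisor is $E=\mathrm{Fix}(g_0)$. For $i=1,\ldots,r$, the rotation $g_i$ on $\widetilde{X}$ agrees with the rotation $g_i$ on $X$ away from the blown-up region, hence near infinity, so it is generated by a Hamiltonian equal near infinity to the original $H_i=f_i(y)R$ of $X$ (up to an irrelevant additive constant); by admissibility of $X$ each $f_i\geq 0$ is Reeb-invariant, so the Extended Maximum Principle (Theorem \ref{Theorem maximum principle}) applies. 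For the new rotation $g_0$: since $e_0=\sum_k e_{i_k}$, the one-parameter subgroup $g_0$ equals the product $g_{i_1}\cdots g_{i_n}$ and is generated, via $\langle\mu,e_0\rangle-\lambda_0$, by $H_{i_1}+\cdots+H_{i_n}$ up to a constant, which near infinity equals $\big(f_{i_1}+\cdots+f_{i_n}\big)(y)\,R$; this is $\geq 0$ and Reeb-invariant (a finite sum of such functions), so the Extended Maximum Principle applies to $g_0$ as well. This gives \eqref{Item Admissible max principle}. Condition \eqref{Item Admissible positive slope} is inherited: whichever $H_i$ has strictly positive slope $f_i>0$ at infinity on $X$ also has $f_i>0$ at infinity on $\widetilde{X}$.

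Having verified all four conditions, $\widetilde{X}$ is admissible, and Theorem \ref{Theorem admissible toric manifolds} applies to it; in particular Lemma \ref{Lemma rg1}, with $\mathrm{Fix}(g_0)=E$ of codimension one, gives $r_{g_0^{\wedge}}(1)=\mathrm{PD}[E]$, so the presentation of $QH^*(\widetilde{X})$ acquires the extra generator $x_0=\mathrm{PD}[E]$ and the new quantum Stanley--Reisner relation coming from $e_0=\sum_k e_{i_k}$. For several blow-up points the argument is identical: distinct toric fixed points admit disjoint Darboux neighbourhoods, so the modifications do not interact, and one may blow up iteratively, each step adjoining one new edge and one new admissible rotation exactly as above.

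I do not expect a genuine obstacle here: there is no hard analysis, only bookkeeping. The single point that requires honest (though easy) attention is the Hamiltonian generating the rotation $g_0$ about the exceptional divisor — one must check it retains the admissible form $f(y)R$ at infinity — and this follows for free from the additive relation $e_0=\sum_k e_{i_k}$ together with the admissibility of $X$. Everything else is a direct consequence of the locality of the blow-up construction.
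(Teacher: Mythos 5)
Your proof is correct and follows essentially the same route as the paper's: conditions (1),(2),(4) are immediate from the locality of the blow-up and the monotonicity hypothesis, and the key point — that the new rotation $g_0$ about the exceptional divisor satisfies $g_0 = g_{i_1}\cdots g_{i_n}$ and is therefore generated by the sum $H_{i_1}+\cdots+H_{i_n}$, which has non-negative slope and retains the admissible form $f(y)R$ at infinity — is exactly the observation the paper makes. The extra remarks about $r_{g_0^{\wedge}}(1)=\mathrm{PD}[E]$ and the new SR-relation belong to the discussion that follows the theorem in the paper rather than to its proof, but they are accurate.
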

\begin{proof}
Part \eqref{Item Admissible conical} of Definition \ref{Definition admissible toric manifolds} follows since $X$ is conical and since $\omega_{\widetilde{X}}$ agrees with $\omega_X$ away from $E$ (where $X$ and $\widetilde{X}$ can be identified).
Part \eqref{Item Admissible max principle} of Definition \ref{Definition admissible toric manifolds} follows as in the proof of Theorem \ref{Theorem Kahler manifold with S1 action} for the edges $e_i$, and for the new edge $e_0 = e_{i_1} + \cdots + e_{i_n}$ the rotation $g_0=g_{i_1} g_{i_2} \cdots g_{i_n}$ is generated by the sum of the Hamiltonians for the $g_i$, so again it has non-negative slope.
\end{proof}

The passage from $QH^*(X,\omega_X)$ to $QH^*(\widetilde{X},\omega_{\widetilde{X}})$ at the level of vector spaces is already known by Equation \eqref{Equation cohomology of blow-up}, although in terms of the linear relations some care is needed: $\sum \langle \xi, e_i\rangle x_i =0$ now contains the additional term $\langle \xi, e_0 \rangle x_0 \equiv \langle \xi, e_{i_1} + \cdots + e_{i_n} \rangle x_0$. The new edge $e_0$ gives rise to the new generator 
$$r_{g_0^{\wedge}}(1)=\mathrm{PD}[E]$$
 of $\overline{H}^*(E)$ in \eqref{Equation cohomology of blow-up}. The only new quantum SR-relation required is
$$
x_{i_1} x_{i_2} \cdots x_{i_n} = T^{n-1} x_0,
$$
corresponding to the relation $e_{i_1} + \cdots + e_{i_n} = e_0$ among edges. Indeed, suppose there was another ``new'' relation among edges involving $e_0$. Substituting $e_0=e_{i_1} + \cdots + e_{i_n}$ shows that this relation was detected and thus generated by the original relations among edges $e_i$, $i\neq 0$.

\textbf{Example.} We conclude with a basic example: $X=\C^{n+1}$ blown up at the origin. Then $\widetilde{X}$ is the total space of $\mathcal{O}(-1)\to \CP^n$, where $E=\CP^n$ is the base. The edges for $X$ are $e_1=(1,0,\ldots,0)$, $e_2=(0,1,0,\ldots,0)$, $\ldots$, $e_{n+1}=(0,\ldots,0,1)$. 
The new edge for $\widetilde{X}$ is $e_0=(1,1,\ldots,1)$.
The linear relations $x_i=0$ for $X$ become $x_i = -x_0$.
The new (and only) quantum SR-relation is $x_1\cdots x_{n+1} = T^n x_0$.
Letting $x=-x_0$, we deduce
$$
QH^*(\widetilde{X}) \cong \Lambda[x]/(x^{n+1} + T^n x),
$$
which agrees with the computation of $QH^*(\mathcal{O}_{\CP^n}(-1))$, indeed $x$ represents $-\mathrm{PD}[E]$ which is the pull-back of $\omega_{\CP^n}$ via $\mathcal{O}(-1)\to \CP^n$. 

%%%%%%%%%%%%%%%%%%%%%%%%%%%%%%%%%%%%%%%%%%%%%%%%%%%%%%%%%%%
\subsection{Blow-up along a closed complex submanifold}
\label{Subsection Blow-up at a subvariety}
%%%%%%%%%%%%%%%%%%%%%%%%%%%%%%%%%%%%%%%%%%%%%%%%%%%%%%%%%%%
Similarly, one can blow-up a complex manifold $X$ along a closed complex submanifold $S\subset X$. The exceptional divisor of the blow-up $\pi:\widetilde{X}\to X$ is $E=\pi^{-1}(S)$, which can be identified with the projectivization of the normal bundle $\nu_{S\subset X}$. A neighbourhood of $E\subset\widetilde{X}$ is modeled on the tautological bundle over $\mathbb{P}(\nu_{S\subset X})$. The cohomology is \cite[Chp.4 Sec.7]{Griffiths-Harris}
\begin{equation}\label{Equation cohomology of blow-up 2}
H^*(\widetilde{X}) = \pi^*H^*(X) \oplus \overline{H}^*(E)
\end{equation}
where $\pi^*$ is injective, and now $\overline{H}^*(E)$ denotes $H^*(E)$ quotiented by the pull-back of $H^*(S)$ via $E\equiv \mathbb{P}(\nu_{S\subset X}) \to S$. One can explicitly describe $\overline{H}^*(E)$ \cite[Chp.4 Sec.7]{Griffiths-Harris}, in particular it is generated as an $H^*(X)$-algebra by $\mathrm{PD}[E]$. Moreover,
$$
c_1(T\widetilde{X}) = \pi^*c_1(TX) - (\mathrm{codim}_{\C} X - 1)\, \mathrm{PD}[E] 
$$

If, in addition, $(X,\omega_X)$ is symplectic, then one can construct $\omega_{\widetilde{X}}$ with analogous properties as in Section \ref{Subsection Blow-up at a point}. In particular $\omega_{\widetilde{X}}-\pi^*\omega_X$ is compactly supported near $E$, and restricts to $\varepsilon\omega_E$ on $E$ where $\omega_E$ is a symplectic form such that $[\omega_E] = -\mathrm{PD}[E]$. If $X$ is monotone, then monotonicity of $\widetilde{X}$ holds if 
$$\varepsilon = (\mathrm{codim}_{\C}X -1)/\lambda_X.$$
Just as for one-point blow-ups, it is not always possible to pick such an $\varepsilon$, but if we only require weak+ monotonicity then that will hold for $\widetilde{X}$ if it holds for $X$. Theorem \ref{Theorem Kahler manifold with S1 action} becomes:

\begin{theorem}\label{Theorem Kahler manifold with S1 action 2}
 Under the assumptions above Theorem \ref{Theorem Kahler manifold with S1 action}, the blow-up $(\widetilde{X},\omega_{\widetilde{X}})$ of $X$ along a closed complex submanifold $S\subset X$ fixed by the action is a K\"ahler manifold conical at infinity, satisfying weak+ monotonicity, whose lifted Hamiltonian $S^1$-action satisfies Theorem \ref{Theorem maximum principle} with $f>0$. Also, $SH^*(\widetilde{X})$ is determined as a quotient of $QH^*(\widetilde{X})$ by Theorem \ref{Theorem When SH is quotient of QH}.
\end{theorem}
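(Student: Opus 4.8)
The plan is to follow the proof of Theorem \ref{Theorem Kahler manifold with S1 action} almost verbatim, the only new input being that the construction of the blow-up, the induced symplectic form and the lifted Hamiltonian action are all localized in a compact neighbourhood of $S$ that is disjoint from the conical end. First I would record why $S$ sits in the compact part of $X$: the hypothesis is that the Hamiltonian $H$ generating the $S^1$-action has the form $H=f(y)R$ with $f>0$ on the collar $\Sigma\times(1,\infty)$, so there the Hamiltonian vector field of $H$ is a nowhere-vanishing multiple of the Reeb vector field and the action is fixed-point free near infinity. Since $S\subseteq\mathrm{Fix}(g)$ is a closed subset of $X$ contained in the compact core $X\setminus(\Sigma\times(1,\infty))$, it is compact, and (following Guillemin \cite[Chapter 1]{Guillemin}) we may carry out the $G$-equivariant symplectic blow-up inside a relatively compact $G$-invariant tubular neighbourhood $U$ of $S$ with $\overline U$ disjoint from the conical end, where $G=S^1$ (or the torus, in the toric case). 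Then $\omega_{\widetilde X}-\pi^*\omega_X$ is supported in $U$, and the action lifts to a Hamiltonian $S^1$-action on $\widetilde X$.

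Granting this localization, the four assertions are essentially formal. Kählerness of $\widetilde X$ is the classical fact that the blow-up of a Kähler manifold along a complex submanifold carries a compatible Kähler form. Conicality at infinity follows because $\pi$ restricts to a biholomorphism $\widetilde X\setminus E\xrightarrow{\sim}X\setminus S$ under which $\omega_{\widetilde X}$ coincides with $\omega_X$ outside $U$, in particular on $\pi^{-1}(\Sigma\times(1,\infty))$, so $\widetilde X$ inherits the same contact hypersurface $\Sigma$ and collar. Weak+ monotonicity of $\widetilde X$ was already observed in the discussion preceding the statement: a $J$-holomorphic sphere in $\widetilde X$ (for a $\pi$-compatible $J$) decomposes into its $\pi$-image, an effective class in $X$, plus vertical components in $E=\mathbb{P}(\nu_{S\subset X})$, and on the fibre lines of $E\to S$ both $c_1(T\widetilde X)$ and $\omega_{\widetilde X}$ are strictly positive by $c_1(T\widetilde X)=\pi^*c_1(TX)-(\mathrm{codim}_{\C}S-1)\,\mathrm{PD}[E]$ together with $[\omega_E]=-\mathrm{PD}[E]$. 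Finally, since $X\setminus S$ is connected, the lifted Hamiltonian $\widetilde H$ differs from $H\circ\pi$ by an additive constant on $\widetilde X\setminus E$; hence on the collar $\widetilde H=f(y)R$ with the same Reeb-invariant $f>0$, so Theorem \ref{Theorem maximum principle} applies to $\widetilde H$ exactly as for $X$. The last sentence of the theorem then follows by applying Theorem \ref{Theorem When SH is quotient of QH} to this $S^1$-action on $\widetilde X$.

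I expect the only point requiring genuine care, hence the main obstacle, to be the localization step itself: checking that the $G$-equivariant symplectic blow-up neighbourhood $U$ can be taken disjoint from the conical end (this is where $f>0$ enters, via fixed-point-freeness at infinity), and that $\omega_{\widetilde X}$, the blown-up complex structure and the lifted Hamiltonian all restrict to the original data outside $\pi^{-1}(U)$. Once this is in place, the remaining verifications reduce to the same bookkeeping already carried out for one-point blow-ups; in particular the effective-class check underlying weak+ monotonicity is identical to that case.
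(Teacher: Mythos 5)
Your proposal is correct and follows exactly the route the paper takes: in fact the paper gives no separate proof for this theorem, relying on the one‑line proof of the point‑blow‑up case (Theorem~\ref{Theorem Kahler manifold with S1 action}, ``Away from the exceptional divisor, $X,\widetilde X$ are biholomorphic so the Hamiltonians generating the $S^1$-actions agree''). You have simply spelled out the details that the paper leaves implicit — the observation that $f>0$ forces $\mathrm{Fix}(g)$, hence $S$, into the compact core so that the equivariant blow-up is supported away from the collar, the identification of $\widetilde H$ with $H\circ\pi$ (up to a constant) outside $E$, and the weak+ monotonicity check on vertical classes — and these are the same checks the paper invokes tacitly in Sections~\ref{Subsection Blow-up at a point}--\ref{Subsection Blow-up at a subvariety}.
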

% 
% By Lemma \ref{Lemma rg1}, $r_{\widetilde{g}}(1) = [\mathrm{Fix}(g)] + \textrm{higher order }t\textrm{ terms} \in QH^*(\widetilde{X})$ (with no higher terms if $I(\widetilde{g})=1$), where $[\mathrm{Fix}(g)]$ is typically just $E$.
In applications, via Lemma \ref{Lemma rg1}, $r_{g^{\wedge}}(1)\in QH^*(\widetilde{X})$ will typically be $\mathrm{PD}[E]$.

For $X$ toric, McDuff-Tolman \cite{McDuffTolmanMassLin} describe the blow-up of $X$ along the complex submanifold corresponding to a face $F_I$ of the moment polytope $\Delta$ of $X$, of complex codimension at least 2. The face $F_I=F_{i_1}\cap\cdots \cap F_{i_a}$ is an intersection of the facets $F_j=\{y\in \Delta: \langle y,e_j \rangle=\lambda_j\}$ for a collection $I=\{i_1,\ldots,i_a\}$, and the codimension is $|I|$. The new polytope is
$$
\widetilde{\Delta} = \Delta \cap \{ y\in \R^n: \langle y,e_0  \rangle \geq \lambda_0 \}
$$
where $e_0 = \sum_{i\in I} e_i$ and $\lambda_0= \varepsilon+ \sum_{i\in I}\lambda_i$. This holds for all small parameters $\varepsilon>0$ as long as the new facet $F_0=\{ y\in \Delta: \langle y, e_0\rangle = \lambda_0 \}$ stays bounded away from the vertices of $\Delta \setminus F_I$. 
For monotone $X$, taking $\lambda_i=-1$, $\widetilde{X}$ is monotone provided that $\lambda_0=-1$, which agrees with the above condition:
$
\varepsilon = |I| - 1.
$

\begin{theorem}
Let $(X,\omega_X)$ be an admissible toric manifold (Definition \ref{Definition admissible toric manifolds}). Suppose $X$ admits a monotone blow-up $(\widetilde{X},\omega_{\widetilde{X}})$ along a face $F_I$ of complex codimension $|I|\geq 2$ (or several such). Then $(\widetilde{X},\omega_{\widetilde{X}})$ is admissible, in particular Theorem \ref{Theorem admissible toric manifolds} holds.
\end{theorem}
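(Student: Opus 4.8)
The plan is to check that $(\widetilde X,\omega_{\widetilde X})$ satisfies the four conditions of Definition \ref{Definition admissible toric manifolds}, following verbatim the template of Theorem \ref{Theorem Blow-up at a point is admissible} (the special case where $F_I$ is a vertex, $|I|=n$). Condition \eqref{Item Admissible monotone} is exactly the hypothesis that the blow-up is monotone. For condition \eqref{Item Admissible conical}, one uses that $\omega_{\widetilde X}-\pi^*\omega_X$ is compactly supported near $E=\mathbb{P}(\nu_{S\subset X})$, so outside a compact neighbourhood of $E$ the projection $\pi$ is a biholomorphism intertwining $\omega_{\widetilde X}$ and $\omega_X$; hence $\widetilde X$ inherits the conical end of $X$. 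This requires that the blow-up locus $S$ (the face $F_I$) be disjoint from the conical end, equivalently that the cutting constraint $\langle y,e_0\rangle\ge\lambda_0$ remove only a bounded portion of $\Delta$; I would record this as a standing assumption on the admissible blow-ups under consideration, exactly as the vertex case tacitly assumes $p$ lies in the bounded part of $\Delta$.

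For condition \eqref{Item Admissible positive slope} I would observe that every original edge $e_i$ of the fan of $X$ persists as an edge of the fan of $\widetilde X$, and outside the compact modification region the rotations $g_i$ of $\widetilde X$ and their generating Hamiltonians $H_i$ coincide with those of $X$; hence any $H_i$ of strictly positive slope at infinity for $X$ still has strictly positive slope for $\widetilde X$. The only condition needing a short argument is \eqref{Item Admissible max principle}. The old $g_i$ are handled as for $X$. The unique new edge is $e_0=\sum_{i\in I}e_i$, whose associated rotation is the product $g_0=\prod_{i\in I}g_i$ (recall $\pi_1\mathrm{Ham}$ is abelian), generated by $H_0=\langle\mu,e_0\rangle-\lambda_0=\sum_{i\in I}H_i-\varepsilon$, a constant shift of $\sum_{i\in I}H_i$ (using that each $H_i$ is torus-invariant). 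Near infinity $X$ and $\widetilde X$ agree, so there $H_i=f_i(y)R$ with each $f_i\ge 0$ invariant under the Reeb flow, whence $H_0=\big(\sum_{i\in I}f_i(y)\big)R-\varepsilon$; the additive constant affects neither Floer theory nor the maximum principle, and $\sum_{i\in I}f_i$ is again Reeb-invariant and $\ge 0$. Thus $g_0$ is generated by a Hamiltonian of the form required by Theorem \ref{Theorem maximum principle} with $f\ge 0$, so \eqref{Item Admissible max principle} holds. When several faces are blown up, I would blow them up one at a time, each intermediate variety being admissible by the same argument, so the induction goes through.

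Once $\widetilde X$ is known to be admissible, Theorem \ref{Theorem admissible toric manifolds} applies verbatim and supplies the presentations of $QH^*(\widetilde X)$, $SH^*(\widetilde X)$ and the identifications with $R_{\widetilde X}/(\partial_{z_1}W,\ldots,\partial_{z_n}W)$ and $\mathrm{Jac}(W_{\widetilde X})$. Concretely, the new generator is $x_0=r_{g_0^{\wedge}}(1)=\mathrm{PD}[E]$ by Lemma \ref{Lemma rg1} applied to $g_0$ (whose fixed locus is the codimension-one submanifold $E$, so $I(g_0^{\wedge})=1$), the linear relations acquire the extra term $\langle\xi,e_0\rangle x_0$, and the only new quantum Stanley--Reisner relation is $\prod_{i\in I}x_i=T^{|I|-1}x_0$, coming from $\sum_{i\in I}e_i=e_0$ together with \eqref{EqnPowerOfT}; any further relation among edges involving $e_0$ reduces, upon substituting $e_0=\sum_{i\in I}e_i$, to relations among the original edges, exactly as in the one-point case.

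The main obstacle is conceptual rather than computational: one must be sure that the equivariant blow-up can be arranged so that the lifted torus action, and in particular $g_0$ with its Hamiltonian $H_0$, agrees with the original product data outside a compact set disjoint from the conical end --- that is, that the blow-up is ``conical at infinity'' in the strong sense needed for Floer theory, and that the extended maximum principle (Theorem \ref{Theorem maximum principle}) genuinely governs $H_0$ there. Once the blow-up is set up so that the modification is compactly supported away from the end --- which is what ``admits a monotone blow-up along a face $F_I$'' should be taken to mean in this non-compact setting --- everything else is a routine transcription of the McDuff--Tolman toric argument and of the one-point case already treated.
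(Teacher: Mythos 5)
Your proof is correct and follows essentially the same route as the paper: the paper's proof is a one-sentence reduction to the vertex case (Theorem \ref{Theorem Blow-up at a point is admissible}), noting only that for the new edge $e_0=\sum_{i\in I}e_i$ the rotation $g_0=\prod_{i\in I}g_i$ is generated by the sum of the Hamiltonians for the $g_i$, hence has non-negative slope. You spell out the same four checks in more detail. One small thing you surface that the paper leaves implicit: for condition \eqref{Item Admissible conical} to go through, the blown-up face $F_I$ must lie away from the conical end (i.e.\ the cut $\langle y,e_0\rangle\geq\lambda_0$ removes only a bounded portion of $\Delta$); the paper's phrasing ``admits a monotone blow-up along a face'' tacitly folds this in, exactly as you suggest treating it as a standing assumption. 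Your explicit check that $\sum_{i\in I}f_i$ is again Reeb-invariant and non-negative, and that the additive constant from $\lambda_0$ is immaterial for the maximum principle, fills out what the paper takes as understood.
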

\begin{proof}
The same proof as in Theorem \ref{Theorem Blow-up at a point is admissible} applies.
In particular, for the new edge $e_0 = \sum_{i\in I} e_i$ the rotation $g_0=\prod_{i\in I} g_i$ is generated by the sum of the Hamiltonians for the $g_i$, so again it has non-negative slope.
\end{proof}

The passage from $QH^*(X,\omega_X)$ to $QH^*(\widetilde{X},\omega_{\widetilde{X}})$ at the level of vector spaces is already known by Equation \eqref{Equation cohomology of blow-up 2}. Just as at the end of Section \ref{Subsection Blow-up at a point}, the linear relations $\sum \langle \xi, e_i\rangle x_i =0$ contain the new term $\langle \xi, e_0 \rangle x_0 \equiv \langle \xi,\sum_{i\in I} e_i \rangle x_0$. The new edge $e_0$ gives rise to the new generator 
$r_{g_0^{\wedge}}(1)=\mathrm{PD}[E]$
of $\overline{H}^*(E)$ in \eqref{Equation cohomology of blow-up 2}. The only new quantum SR-relation required is
$$
\prod_{i\in I} x_i = T^{|I|-1} x_0.
$$
%
%
%%%%%%%%%%%%%%%%%%%%%%%%%%%%%%%%%%%
\section{Presentation of $QH^*$ and $SH^*$ for toric negative line bundles}
\label{Section Presentation of QH and SH for toric nlb}
%%%%%%%%%%%%%%%%%%%%%%%%%%%%%%%%%%%
%%%%%%%%%%%%%%%%%%%%%%%%%%%%%%%%%%%
\subsection{Definition}
\label{Subsection Negative line bundles}
%%%%%%%%%%%%%%%%%%%%%%%%%%%%%%%%%%%
%%%%%%%%%%%%%%%%%%%%%%%%%%%%%%%%%%%
A complex line bundle $\pi: E \to B$ over a closed symplectic manifold $(B,\omega_B)$ is called \emph{negative} if $c_1(E) = -k[\omega_B]$ for $k>0$. As shown for example in \cite[Section 7]{Ritter4}, there is a Hermitian metric on $E$ which determines a norm $r:E \to \R_{\geq 0}$ and which determines a symplectic form
$$
\omega = \pi^*\omega_B + \boldsymbol{\pi}\Omega,
% in Ritter4 used:
%\omega = k p^*\omega_B + \varepsilon\Omega,
% here rescale by 1/k and then choose \varepsilon/k = \pi so that get std form in fibre
$$
such that $[\omega]=\pi^*[\omega_B]\in H^2(E)$, where we temporarily use the bold symbol $\boldsymbol{\pi}$ for the mathematical constant to avoid confusion with the pullback map $\pi^*$. There is a Hermitian connection whose curvature $\mathcal{F}$ satisfies $\tfrac{1}{2\boldsymbol{\pi} i} \pi^*\mathcal{F} = k\pi^*\omega_B$. Outside of the zero section of $E$, there is an angular $1$-form $\theta$ on $E$ (which vanishes on horizontal vectors and satisfies $\theta_w(w)=0$ and $\theta_w(iw)=1/2\boldsymbol{\pi}$ in the fibre directions), such that 
$$d\theta = k\pi^*\omega_B \quad \textrm{and} \quad d(r^2\theta) = \Omega.$$
So outside of the zero section, $\omega$ is exact: $\omega = d(\tfrac{1}{k}\theta + \boldsymbol{\pi} r^2 \theta)$. Moreover $\omega$ has the block form
$$
\left(\begin{matrix}
       (1+ k\boldsymbol{\pi} r^2)\pi^*\omega_B & 0 \\
       0 & \omega_{\textrm{standard}}
% In Ritter4:
%       (1+\varepsilon r^2)k\pi^*\omega_B & 0 \\
%       0 & \tfrac{\varepsilon}{\pi} \omega_{\textrm{standard}}
      \end{matrix}
\right)
$$
in the decomposition $TE \cong T^{\textrm{horiz}}E \oplus E$; it is the standard form in the vertical $\C$-fibres.

\emph{Technical Remark. In \cite[Section 7]{Ritter4} we considered $\omega=d\theta + \varepsilon \Omega = k \pi^*\omega_B + \varepsilon\Omega$, so $[\omega]=k\pi^*[\omega_B]$. It turns out that for toric $E$ a more natural choice is to rescale that form by $1/k$ and to choose $\varepsilon=k\boldsymbol{\pi}$, as we did above. The Floer theory is not affected by this rescaling.}

In this decomposition, we define an $\omega$-compatible almost complex structure $J = \left(\begin{smallmatrix}
       J_B & 0 \\
       0 & i
      \end{smallmatrix}
\right)
$
in terms of an $\omega_B$-compatible almost complex structure $J_B$ for $(B,\omega_B)$, in particular fiberwise it is just multiplication by $i=\sqrt{-1}$.

It is shown for example in \cite{Ritter4}, that the radial coordinate is 
$$R=\tfrac{1+k\boldsymbol{\pi} r^2}{1+k\boldsymbol{\pi}}.$$
%
% $R=\tfrac{1+\varepsilon r^2}{1+\varepsilon}$
%
% The rescaling of omega compare to Ritter4 is not an issue
% since both the primitive and omega got rescaled you get same Z, so same R.
% By theta(Reeb)=R since rescaled theta by 1/k, need to rescale Reeb by k.
% Confirmed by fact that want X_{R} = Reeb, but omega has been rescaled by 1/k. 
%
%
The Liouville vector field is 
$Z=\tfrac{1+k\boldsymbol{\pi} r^2}{k\boldsymbol{\pi} r^2} \tfrac{w}{2}$,
%$\tfrac{1+\varepsilon r^2}{\varepsilon r^2} \tfrac{w}{2}$,
 and the Reeb vector field is 
  $Y=X_R=\tfrac{2k\boldsymbol{\pi}}{1+k\boldsymbol{\pi}} iw$ 
% $\tfrac{2\pi}{1+\varepsilon} iw$ 
 where $w\in \C$ is the local fibre coordinate in a unitary frame (the extra $k$ in $Y$ compared to \cite{Ritter4} is due to the rescaling mentioned in the above Technical Remark). Since $iw$ is the angular vector field whose flow wraps around the fibre circle in time $2\boldsymbol{\pi}$, the $S^1$-action which rotates the fibre circle in time $1$ is generated by the vector field $\tfrac{1+k\boldsymbol{\pi}}{k}Y=X_{(1+k\boldsymbol{\pi})R/k}$.
 
The contact type condition has the form $JZ = c(R) Y$, as in Remark \ref{Remark tweaking contact condition Max}.
%
%\begin{lemma}\label{Lemma lifting Ham vf toric}
% For a smooth function $f: B \to \R$, let $\widetilde{X_f}\in C^{\infty}(T^{\textrm{horiz}}(E))$ denote the horizontal lift of the Hamiltonian vector field $X_f$ on $(B,\omega_B)$. Then the Hamiltonian vector field of $\pi^*f:E\to \R$ is horizontal:
%
%
%$$
%X_{\pi^*f} = \tfrac{1}{(1+\varepsilon r^2)k}  \widetilde{X_f}.
%$$
%
%
%\end{lemma}
%\begin{proof}
%This follows from $\pi^*\omega_B(\cdot,\widetilde{X_f}) = \omega_B(d\pi \cdot,X_f)=df\circ d\pi = \pi^*df$.
%\end{proof}
%
%For now on, we will choose 
%%
%%
%$$\varepsilon = \pi,$$
%%
%%
%so that the symplectic form restricts to the standard form on the fibre $\C$, in a unitary frame.

For negative line bundles $\pi:E\to B$, assuming weak+ monotonicity (see \ref{Subsection Invertibles in the symplectic cohomology}), Theorem \ref{Theorem Circle action gives SH}  applies to the circle action $g_t=e^{2\boldsymbol{\pi} i t}$ acting by rotation on the fibres of $E$. In this case $r_{\widetilde{g}}$ is quantum multiplication by $\pi^*c_1(E)=-k\pi^*[\omega_B]$.

\begin{theorem}[Ritter \cite{Ritter4}]\label{Theorem SH of negative line bundle} 
The canonical map $QH^*(E)\to SH^*(E)$ induces an isomorphism of $\Lambda$-algebras
$
SH^*(E) = QH^*(E)/(\textrm{generalized }0\textrm{-eigenspace of }\pi^*c_1(E)).
$

For $E=\mathrm{Tot}(\mathcal{O}(-k)\to \P^m)$ and $1\leq k \leq m/2$, this becomes explicitly via $x=\pi^*[\omega_B]$:
$$
\begin{array}{rcl}
QH^*(\mathcal{O}_{\P^m}(-k)) &=& \Lambda[x]/(x^{1+m}-(-k)^k\,T^{1+m-k}x^k)\\
SH^*(\mathcal{O}_{\P^m}(-k)) &=& \Lambda[x]/(x^{1+m-k}-(-k)^k\,T^{1+m-k}). 
\end{array}
$$
\end{theorem}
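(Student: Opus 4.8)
The proof has two ingredients: the general localization isomorphism, which comes from Theorem~\ref{Theorem Circle action gives SH}; and, in the case $E=\mathcal{O}_{\P^m}(-k)$, a direct computation of $QH^*(E)$ followed by an elementary quotient.

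\emph{The general statement.} The plan is to apply Theorem~\ref{Theorem Circle action gives SH} to the circle action $g_t=e^{2\boldsymbol{\pi} i t}$ rotating the fibres of $E$. As recalled in the setup, this action is generated at infinity by the Hamiltonian $(1+k\boldsymbol{\pi})R/k$, a strictly positive multiple of the radial coordinate, so $g$ lies in $\pi_1\mathrm{Ham}_{\ell>0}(E,\omega)$ in the classical (strictly linear) sense and no appeal to the extended maximum principle is needed; for $1\le k\le m$ the total space $E$ is conical at infinity, Kähler, and monotone with $c_1(TE)=(m+1-k)[\omega]$, so $QH^*(E)$ and $SH^*(E)$ are defined, and $E$ is simply connected so $SH^*(E)=SH^*_0(E)$. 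It then remains to identify $r_{\widetilde g}(1)$: the fixed locus $\mathrm{Fix}(g)$ is the zero section $B\subset E$, of complex codimension $1$, and $dg_t$ acts on the normal ($\C$-fibre) direction by the degree-one loop $e^{2\boldsymbol{\pi} i t}$ in $U(1)$, so Lemma~\ref{Lemma rg1} in its codimension-one form~\eqref{Equation rg1 if Ig is 1} gives $r_{g^{\wedge}}(1)=\mathrm{PD}[B]\in QH^2(E)$ for the lift $g^{\wedge}$ fixing the constant discs over $B$. Since the normal bundle of $B$ in $E$ is $E|_B$, restriction to $B$ sends $\mathrm{PD}[B]$ to the Euler class $c_1(E)$, so under the isomorphism $\pi^*\colon H^2(B)\xrightarrow{\sim}H^2(E)$ (a homotopy equivalence) we obtain $\mathrm{PD}[B]=\pi^*c_1(E)=-k\pi^*[\omega_B]$. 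Theorem~\ref{Theorem Circle action gives SH} now yields that $c^*$ induces the $\Lambda$-algebra isomorphism $SH^*(E)\cong QH^*(E)/(\text{generalized }0\text{-eigenspace of }\pi^*c_1(E))$.

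\emph{The ring $QH^*(\mathcal{O}_{\P^m}(-k))$, and the main obstacle.} As a $\Lambda$-module $QH^*(E)\cong H^*(\P^m;\Lambda)=\Lambda[x]/(x^{m+1})$ with $x=\pi^*[\omega_{\P^m}]$, so what must be determined is the expression for $x^{m+1}$ in the quantum product. By the maximum principle every holomorphic sphere is trapped in the zero section $\P^m$, but the genus-zero Gromov--Witten contributions of spheres in $\P^m$, computed inside $E$, must be weighted by the Euler class of the obstruction bundle arising from the normal direction $\mathcal{O}(-k)$ (a degree-$d$ map pulls $\mathcal{O}(-k)$ back to $\mathcal{O}(-kd)$, contributing $H^1$ of rank $kd-1$). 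Evaluating these obstruction-bundle integrals by virtual (Atiyah--Bott) localization on the moduli space of stable maps to $\P^m$ --- the argument of \cite{Ritter4}, feasible for $1\le k\le m/2$ --- produces
\[
QH^*(\mathcal{O}_{\P^m}(-k))=\Lambda[x]/\bigl(x^{m+1}-(-k)^k\,T^{m+1-k}x^k\bigr),
\]
with $T=t^{1/(m+1-k)}$ of degree $2$ (the relation is homogeneous of degree $2(m+1)$). This obstruction-bundle localization is the one genuinely hard step, and its combinatorics becomes intractable once $k>m/2$, which is the source of the hypothesis $1\le k\le m/2$.

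\emph{The quotient.} Set $c=(-k)^k T^{m+1-k}\in\Lambda^{\times}$, so $QH^*(E)=\Lambda[x]/\bigl(x^k(x^{m+1-k}-c)\bigr)$. Since $c\ne 0$, the factors $x^k$ and $x^{m+1-k}-c$ are coprime in the PID $\Lambda[x]$, so the Chinese Remainder Theorem gives
\[
QH^*(E)\cong\Lambda[x]/(x^k)\ \oplus\ \Lambda[x]/(x^{m+1-k}-c).
\]
Multiplication by $x$ --- equivalently by $\pi^*c_1(E)=-kx$, as $\mathrm{char}\,\K=0$ --- is nilpotent on the first summand and invertible, with inverse $c^{-1}x^{m-k}$, on the second, so the generalized $0$-eigenspace is exactly $\Lambda[x]/(x^k)$; quotienting it out as in the general statement yields $SH^*(\mathcal{O}_{\P^m}(-k))\cong\Lambda[x]/(x^{1+m-k}-(-k)^k\,T^{1+m-k})$, as claimed. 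The remaining verifications --- weak+ monotonicity of $E$ in the stated range, and consistency of the Novikov bookkeeping ($T=t^{1/(m+1-k)}$, $|T|=2$) on the two sides --- are routine.
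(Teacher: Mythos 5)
Your argument reproduces the proof in \cite{Ritter4}, which is precisely what the paper cites for this theorem: the localization isomorphism via the fibre-rotation circle action and Theorem~\ref{Theorem Circle action gives SH} (your use of the codimension-one case of Lemma~\ref{Lemma rg1} to identify $r_{g^{\wedge}}(1)=\mathrm{PD}[B]=\pi^*c_1(E)$ is the same constant-section count carried out in \cite[Section~10]{Ritter4}), the explicit quantum ring by obstruction-bundle/virtual localization, and the final Chinese Remainder Theorem quotient, all of which are correct. It is worth noting that the paper rederives exactly these formulas in Section~\ref{Subsection Compuation of QH and SH of O(-k)}, for the full range $1\le k\le m$, as a corollary of Theorem~\ref{Theorem presentation of QH and SH for Fano toric NLB} together with the Novikov change of variable of Theorem~\ref{Theorem change of Novikov param}; that route bypasses virtual localization entirely, and the hypothesis $1\le k\le m/2$ that you correctly identify as the computational bottleneck then becomes unnecessary.
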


%%%%%%%%%%%%%%%%%%%%%%%%%%%%%%%%%%%
%%%%%%%%%%%%%%%%%%%%%%%%%%%%%%%%%%%
\subsection{Some remarks about localizations of rings}
\label{Subsection Some remarks about localizations of rings}
%%%%%%%%%%%%%%%%%%%%%%%%%%%%%%%%%%%
%%%%%%%%%%%%%%%%%%%%%%%%%%%%%%%%%%%
Recall that for a ring $R$ and an element $f\in R$, the \emph{localization $R_f$ of $R$ at $f$} consists of equivalence classes $(r,f^n)$ for $n\in \N$ (informally thought of as fractions $\tfrac{r}{f^n}$) under the equivalence relation $(r,f^n)\simeq (r',f^m)$ whenever $f^k(rf^m-r'f^n)=0$ for some $k\in \N$. In particular, if $f$ is nilpotent then $R_f=0$.

\begin{lemma}
Localizing at $f$ is the same as formally introducing an inverse $z$ of $f$:
$$R_f \cong R[z]/(1-fz).\qquad\qed$$
\end{lemma}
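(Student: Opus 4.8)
The plan is to show that both $R_f$ and $R[z]/(1-fz)$ solve the same universal problem, namely being initial among $R$-algebras in which the image of $f$ is invertible, and then to record an explicit pair of mutually inverse maps for concreteness. First I would note the universal property of $R[z]/(1-fz)$: the relation $1-fz=0$ forces the image of $f$ to have $z$ as inverse, and conversely, given any $R$-algebra $A$ with structure map $\varphi\colon R\to A$ in which $\varphi(f)$ is invertible, the assignment $z\mapsto \varphi(f)^{-1}$ extends uniquely to an $R$-algebra homomorphism $R[z]/(1-fz)\to A$. Since localization $R\to R_f$ is by definition initial with the same property, uniqueness of initial objects gives a canonical isomorphism $R_f\cong R[z]/(1-fz)$, compatible with the maps from $R$.

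For an explicit description I would first observe that every element of $R[z]/(1-fz)$ has the form $r z^n$ with $r\in R$: since $fz=1$ in the quotient, a polynomial representative $\sum_{i=0}^{n} a_i z^i$ satisfies $f^{n}\sum_i a_i z^i = \sum_i a_i f^{\,n-i}\in R$, so the element equals $z^{n}\bigl(\sum_i a_i f^{\,n-i}\bigr)$. Then define $\phi\colon R[z]/(1-fz)\to R_f$ by $r z^{n}\mapsto (r,f^{n})$ and $\psi\colon R_f\to R[z]/(1-fz)$ by $(r,f^{n})\mapsto r z^{n}$. Both are visibly $R$-algebra maps, and $\phi\circ\psi$, $\psi\circ\phi$ are the identity once well-definedness is checked.

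Well-definedness of $\psi$ is the one point requiring care, and it is the step I would flag as the crux (though it is entirely routine). Suppose $(r,f^{n})\simeq(r',f^{m})$ in $R_f$, i.e.\ $f^{k}(rf^{m}-r'f^{n})=0$ for some $k\in\N$. Using $z^{j}f^{j}=1$ in the quotient, one computes $z^{\,n+m+k}f^{k}(rf^{m}-r'f^{n}) = rz^{n}-r'z^{m}$, and the left-hand side is $z^{\,n+m+k}\cdot 0 = 0$, so $rz^{n}=r'z^{m}$; a symmetric computation handles $\phi$. Finally the degenerate case is consistent: if $f^{N}=0$ then in $R[z]/(1-fz)$ one gets $1=f^{N}z^{N}=0$, so the ring is zero, matching $R_f=0$. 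This completes the proof.

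\begin{proof}
Apply the universal property of localization at a single element: $R\to R_f$ is initial among $R$-algebras in which $f$ becomes invertible. The $R$-algebra $R[z]/(1-fz)$ has this property, since $1-fz=0$ makes $z$ an inverse of $f$, and any $R$-algebra map out of it is determined by the image of $z$, which must be $f^{-1}$; conversely such a map exists for any $R$-algebra inverting $f$. Hence $R_f\cong R[z]/(1-fz)$ canonically.

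Explicitly, in $R[z]/(1-fz)$ we have $fz=1$, so $f^{n}\bigl(\sum_{i=0}^{n}a_i z^i\bigr)=\sum_{i=0}^{n}a_i f^{\,n-i}\in R$, showing every class equals $r z^{n}$ for some $r\in R$, $n\in\N$. Define $\phi(r z^{n})=(r,f^{n})\in R_f$ and $\psi(r,f^{n})=r z^{n}$. If $(r,f^{n})\simeq(r',f^{m})$, say $f^{k}(rf^{m}-r'f^{n})=0$, then $rz^{n}-r'z^{m}=z^{\,n+m+k}f^{k}(rf^{m}-r'f^{n})=0$, so $\psi$ is well defined; $\phi$ is well defined since it sends $r z^{n}$ to $(r,f^{n})$ and $rf z^{n+1}$ to $(rf,f^{n+1})\simeq(r,f^{n})$. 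Both maps are $R$-algebra homomorphisms and are mutually inverse, so $R_f\cong R[z]/(1-fz)$.
\end{proof}
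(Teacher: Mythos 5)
The paper states this lemma with a bare $\qed$, treating it as a standard algebraic fact with no proof supplied, so there is no "paper approach" to compare against. Your proof is correct and complete: the universal-property argument is the clean conceptual route, and the explicit mutually inverse maps (with the well-definedness check for $\psi$ via $rz^n - r'z^m = z^{n+m+k}f^k(rf^m - r'f^n) = 0$) verify it concretely. One small stylistic remark: well-definedness of $\phi$ is most painlessly seen by observing that the $R$-algebra map $R[z]\to R_f$, $z\mapsto (1,f)$, annihilates $1-fz$ and hence factors through the quotient — this sidesteps having to reason about which pairs $(r,n)$ represent the same class.
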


There is a \emph{canonical localization map} $c: R\to R_f$, $r\mapsto (r,1)$. Given an ideal $I\subset R$, the \emph{saturation} $(I:f^{\infty}) \subset R$ is the preimage under $c$ of the localized ideal $R_f I\subset R_f$:
$$c^{-1}(I)=(I:f^{\infty})=\{ r\in R: f^k r\in I \textrm{ for some }k\in\N \}.$$
\begin{lemma}
 In general, $(I:f^{\infty}) = I' \cap R$ where $I'\subset R[z]$ is the ideal generated by $I$ and $1-fz$. Recall $(I:f^{\infty})\mapsto R_f I$ via the surjection $c: R\to R_f$. Thus the canonical localization map $c$ determines the natural isomorphism of rings
$$
R/(I:f^{\infty}) \to R_f/R_fI. \qedhere
% Remark
% (R+I')/I' \cong R/(I:f^{\infty}) by second iso thm, but don't need that
$$ 
In particular, for $I=0$, $R/(0:f^{\infty})\cong R_f$. $\qquad\qquad\qquad\qed$
\end{lemma}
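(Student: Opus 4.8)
The plan is to reduce everything to the preceding Lemma, which identifies $R_f$ with $R[z]/(1-fz)$ and the canonical map $c\colon R\to R_f$ with the composite $R\hookrightarrow R[z]\twoheadrightarrow R[z]/(1-fz)$. Under this identification the extended ideal $R_fI$ is the image of the ideal $I\cdot R[z]$ of $R[z]$, so
$$
R_f/R_fI \;\cong\; R[z]\big/\bigl(I\cdot R[z] + (1-fz)R[z]\bigr) \;=\; R[z]/I',
$$
where $I'$ is exactly the ideal of $R[z]$ generated by $I$ together with $1-fz$. This settles the shape of the target ring.

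Next I would compute the contraction $I'\cap R$. Since $I'\supseteq(1-fz)$, the preimage of the zero coset of $R[z]/I'$ under $R[z]\twoheadrightarrow R[z]/(1-fz)=R_f$ is $I'$ itself; intersecting with the subring $R\subseteq R[z]$ gives $c^{-1}(R_fI)=I'\cap R$, i.e.\ $(I:f^\infty)=I'\cap R$, the first assertion. I would then cross-check against the elementary description $(I:f^\infty)=\{r\in R: f^kr\in I\text{ for some }k\}$: if $f^kr\in I$ then $r=z^k(f^kr)+(1-z^kf^k)r$ and $1-z^kf^k=(1-fz)(1+fz+\cdots+(fz)^{k-1})\in(1-fz)$, so $r\in I'$; conversely, applying the $\Lambda$-algebra homomorphism $R[z]\to R_f$ sending $z\mapsto(1,f)$ to a representation $r=\sum c_j(z)i_j+q(z)(1-fz)$ (with $i_j\in I$) shows $c(r)\in R_fI$, and after writing that element over a common denominator $f^m$ and clearing it one gets $f^{m+k}r\in I$ for suitable $k$.

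Finally, $c$ carries $(I:f^\infty)=c^{-1}(R_fI)$ into $R_fI$, hence descends to a ring homomorphism $\bar c\colon R/(I:f^\infty)\to R_f/R_fI$, which is injective exactly because its kernel is the preimage $(I:f^\infty)$. The only non-formal point, and the one I expect to be the main obstacle, is surjectivity of $\bar c$: I would deduce it from the finiteness present in all our applications, namely that $R$ (e.g.\ $R=QH^*(X)$, which as a vector space is the finite-dimensional ordinary cohomology) is finite-dimensional over $\Lambda$. Then multiplication by $f$ on $R/I$ decomposes $R/I$ via Fitting's lemma as an ideal direct sum of its generalized $0$-eigenspace — which is precisely the image of $(I:f^\infty)$ — and a complementary ideal on which $f$ already acts invertibly; on the latter the map $c$ restricts to an isomorphism onto $R_f/R_fI$, so $\bar c$ is onto. (Absent such a local-finiteness hypothesis $\bar c$ is still the natural injection, with $R_f/R_fI$ its localization at $f$, as the example $\mathbb Z\hookrightarrow\mathbb Z[1/2]$ already shows.) Specializing to $I=0$ yields $R/(0:f^\infty)\cong R_f$, which is exactly the algebraic reformulation, used throughout the paper, of ``$SH^*$ is $QH^*$ modulo the generalized $0$-eigenspace'' being the same as ``$QH^*$ localized at $f$''.
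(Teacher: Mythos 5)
Your argument is correct and fills in a proof that the paper simply omits: the lemma is stated with only an end-of-proof symbol and no argument, so there is no ``paper's proof'' to compare against. Your verification of $(I:f^\infty)=I'\cap R$, combining the identity $1-(fz)^k=(1-fz)(1+fz+\cdots+(fz)^{k-1})$ in one direction with the evaluation $z\mapsto 1/f$ in the other, is the natural one and is valid over an arbitrary commutative ring.

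You are also right to flag that the displayed isomorphism $R/(I:f^\infty)\cong R_f/R_fI$ is false for a general commutative ring: the induced map $\bar c$ is always injective (its kernel is the image of $c^{-1}(R_fI)=(I:f^\infty)$), but as your $\mathbb{Z}\hookrightarrow\mathbb{Z}[1/2]$ example shows it need not be onto. The paper's phrase ``via the \emph{surjection} $c\colon R\to R_f$'' is where the missing hypothesis hides: $c$ is onto precisely when $f$ acts surjectively on $R/(0:f^\infty)$, on which it always acts injectively, and under that hypothesis the claimed isomorphism is immediate from the first isomorphism theorem applied to $R\to R_f\to R_f/R_fI$. In the paper's applications $R=QH^*(E)$ is a finite-dimensional $\Lambda$-vector space, so your Fitting-decomposition argument supplies exactly this surjectivity; your write-up is therefore not only a correct proof but is sharper than the statement in the paper about where the finiteness is actually used.
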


\begin{corollary}\label{Corollary SH is a localization of QH}
 For negative line bundles $E\to B$ (satisfying weak+ monotonicity), $SH^*(E)\cong QH^*(E)_{\pi^*c_1(E)}$ is the localization of $QH^*(E)$ at $f=\pi^*c_1(E)$ and the canonical map $c^*:QH^*(E) \to SH^*(E)$ corresponds to the canonical localization map.

 In particular, given a presentation $QH^*(E)\cong \K[x_1,\ldots,x_r]/\mathcal{J}$ for an ideal of relations $\mathcal{J}$, the corresponding presentation for $SH^*(E)$ is:
$$
SH^*(E)\cong QH^*(E)_f \cong \K[x_1,\ldots,x_r,z]/\langle \mathcal{J}, 1-fz \rangle.
$$
\end{corollary}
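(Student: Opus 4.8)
The plan is to read off the statement from Theorem~\ref{Theorem SH of negative line bundle} by reinterpreting the quotient appearing there through the two elementary localization lemmas stated just above. Write $R=QH^*(E)$ and $f=\pi^*c_1(E)\in R$. By Theorem~\ref{Theorem SH of negative line bundle} we have $SH^*(E)\cong R/N$, where $N$ is the generalized $0$-eigenspace of the operator ``quantum multiplication by $f$'' on $R$, and the map $c^*\colon QH^*(E)\to SH^*(E)$ is precisely the quotient map $R\to R/N$. So everything reduces to identifying $R/N$ with the localization $R_f$ in a way that carries this quotient map to the canonical localization map $c\colon R\to R_f$.

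The key point is that $N$ coincides with the saturation ideal $(0:f^{\infty})=\{r\in R: f^k r=0 \text{ for some }k\in\N\}$. Since $E$ deformation retracts onto the closed manifold $B$, the module $QH^*(E)\cong H^*(E)\otimes\Lambda$ has finite rank over the field $\Lambda$, so the ascending chain $\ker f\subseteq\ker f^2\subseteq\cdots$ of subspaces of $R$ stabilizes at $\ker f^{d}$ for any $d\geq\mathrm{rank}\,H^*(E)$; by definition $N=\ker f^d$ for such $d$, and clearly $(0:f^{\infty})=\bigcup_k\ker f^k=\ker f^d$ as well, so $N=(0:f^{\infty})$. This common subspace is an ideal because $QH^*$ is commutative: $f^d r=0$ implies $f^d(sr)=s(f^d r)=0$ for all $s\in R$. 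Now the second localization lemma, applied with $I=0$, gives a natural ring isomorphism $R/(0:f^{\infty})\cong R_f$ induced by $c$. Composing, $SH^*(E)\cong R/N=R/(0:f^{\infty})\cong R_f=QH^*(E)_f$, and under this chain of isomorphisms $c^*$ is carried to the canonical localization map $c$, which is the first assertion. The presentation statement is then immediate: if $R\cong\K[x_1,\ldots,x_r]/\mathcal{J}$, the first localization lemma gives $R_f\cong R[z]/(1-fz)\cong\K[x_1,\ldots,x_r,z]/\langle\mathcal{J},1-fz\rangle$, where $f$ is taken to be any polynomial representative of $\pi^*c_1(E)=-(k/\lambda_B)\sum x_i$ (the choice of representative is irrelevant since any two differ by an element of $\mathcal{J}$).

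The only substantive input beyond formal ring theory is the finiteness of $\mathrm{rank}\,H^*(E)$, which is what makes the a priori definition of $N$ as $\ker f^d$ for large $d$ genuinely agree with the saturation $(0:f^{\infty})$; this is precisely the place where the hypothesis that the base $B$ is closed (and $E$ retracts to it) enters, and it is the one step one should state carefully. Everything else is a direct application of Theorem~\ref{Theorem SH of negative line bundle} together with the two lemmas, so no genuine obstacle arises.
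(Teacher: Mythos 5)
Your proposal is correct and follows essentially the same route as the paper: invoke Theorem \ref{Theorem SH of negative line bundle} to write $SH^*(E)$ as the quotient of $QH^*(E)$ by the generalized $0$-eigenspace of $f=\pi^*c_1(E)$, identify that subspace with the saturation $(0:f^{\infty})$, and then apply the two localization lemmas. The only difference is that you spell out why the generalized $0$-eigenspace equals $(0:f^{\infty})$ (finite rank of $H^*(E)$ over $\Lambda$ stabilizes the chain $\ker f^k$) and why it is an ideal, which the paper takes as immediate.
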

\begin{proof}
 In Theorem \ref{Theorem SH of negative line bundle}, the generalized $0$-eigenspace of $f=\pi^*c_1(E)$ is precisely the saturation $(0:f^{\infty})$ of the trivial ideal $0\subset QH^*(E)$. Thus, by the previous Lemma, $SH^*(E)\cong QH^*(E)/(0:f^{\infty})\cong QH^*(E)_f$.
\end{proof}

%%%%%%%%%%%%%%%%%%%%%%%%%%%%%%%%%%%
%%%%%%%%%%%%%%%%%%%%%%%%%%%%%%%%%%%
\subsection{The Hamiltonians generating the rotations around the toric divisors}
\label{Subsection The Hamiltonians generating the rotations around the toric divisors}
%%%%%%%%%%%%%%%%%%%%%%%%%%%%%%%%%%%
%%%%%%%%%%%%%%%%%%%%%%%%%%%%%%%%%%%
Let $\pi:E \to B$ be a monotone toric negative line bundle, with $c_1(E)=-k[\omega_B]$. We describe these in detail in Section \ref{Section The moment polytope of a toric negative line bundle}. Since toric manifolds are simply connected, monotonicity implies:
$$
c_1(TE)=\lambda_E[\omega_E] = \lambda_E\pi^*[\omega_B], \qquad c_1(TB)=\lambda_B[\omega_B], \qquad \lambda_E=\lambda_B -k>0.
$$
(We used that $c_1(TE)=\pi^*c_1(TB)+\pi^*c_1(E)$ by splitting $TE$). The toric divisors are:
$$
D_i=\pi^{-1}(D_i^B) \textrm{ for }i=1,\ldots,r,\quad \textrm{ and } D_{r+1}=[B],
$$
where $D_i^B$ are the toric divisors in $B$ and $[B]\subset E$ is the zero section.
The Hamiltonians $H_i$ which generate the standard rotations $g_i$ about $D_i$ (see \ref{Subsection Review of the McDuff-Tolman proof of the presentation of QH}) actually depend on the radial coordinate $R$, despite what \eqref{EqHamToric} might suggest. This is inevitable since the Hamiltonians $H_i$ satisfy relations $H_{i_1}+\cdots + H_{i_a}=c_{1} H_{j_1}+\cdots + c_{b}H_{j_b}+\textrm{constant}$, and $H_{r+1}$ obviously depends on $R$. In fact, if $H_i$ were $R$-independent, then both $g_i,g_i^{-1}$ are in $\pi_1\mathrm{Ham}_{\ell\geq 0}(M)$, so $r_{\widetilde{g}_i}(1)$ would be invertible with inverse $r_{\widetilde{g}_i^{-1}}(1)$. But this is not the case for example for $E=\mathcal{O}(-1)\to \P^1$ where $r_{\widetilde{g}_1}(1)=x=\pi^*[\omega_{\P^1}]$ satisfies $x^2+Tx=0$ in $QH^*(E)$.
%
%The preimage divisors give rise to a rotation $g_i$ whose Hamiltonian has the form 
%%
%%
%$H_i=\tfrac{1}{2}|x_i|^2$
%%
%%
%by \eqref{EqnHtoric}. \\[2mm]
%%
%\textbf{A word of caution.} \emph{It is tempting to believe that $H_i$ are simply the pull-back of the Hamiltonians $H_i^B=\tfrac{1}{2}|x_i|^2$ defined on $B$ which define rotation about $D_i^B$, or at least it is tempting to believe that $H_i$ does not depend on the radial coordinate $R$ since $x_f$ does not appear in $H_i$. If this were the case, then both $g_i^{\wedge}$ and $(g_i^{\wedge})^{-1}$ would have the ``positive'' slope Hamiltonians required for the existence of $r(g_i^{\wedge})$ and $r((g_i^{\wedge})^{-1})$ in $QH^*(E)$, and it would follow that $r(g_i^{\wedge})$ is invertible. This is however false for example for $\mathcal{O}_{\P^1}(-1)$ since $QH^*(E)=\Lambda[x]/(x^2+tx)$ and $r(g_i^{\wedge}) = c_1(E) = -x$ is not invertible. Moreover, for $\mathcal{O}_{\P^1}(-1)$ the edges of the fan are $e_1=(1,0), e_2=(-1,1), e_3=(0,1)$, so the $H_i=\langle \mu,e_i\rangle - \lambda_i$ satisfy $H_1+H_2=H_3+\textrm{constant}$, since $e_1+e_2=e_3$. So $H_1,H_2$ can't be independent of the radial coordinate 
%$R$ since $H_3$ is dependent on $R$.}
%
\begin{theorem}\label{Theorem Hamiltonians for rotations about divisors}
The Hamiltonians $H_i$ for the rotation $g_i$ about $D_i=\pi^{-1}(D_i^B)$ are
 $$H_i(x) = (1+k\pi) R \cdot \pi^*H_i^B(x)$$
(where by convention $H_i=0$ on $D_i$, and $H_i^B=0$ on $D_i^B$),
%
%The Hamiltonian vector field of $H_i$ is
%%
%%
%$$
%X_i = \widetilde{X}_{H_i^B} + (k\pi^*H_i^B) (1+\pi) (\textrm{\emph{Reeb vector field}})
%$$
%%
%%
%where $\widetilde{X}_{H_i^B}=(1+\pi)kRX_{\pi^*H_i^B}$ is the horizontal lift of $X_{H_i^B}$ to $E$, and recall $(1+\pi)\cdot\mathrm{Reeb}$ is the vertical vector field whose flow wraps around the fibre circle in time $1$.
%
and the rotation $g_{r+1}$ about the zero section $D_{r+1}=[B]$ has Hamiltonian
$$
H_{r+1}=\tfrac{1+k\pi}{k}R-\tfrac{1}{k}
% if didn't rescale omega, then H_{r+1}=(1+\pi)R-1.
$$
\end{theorem}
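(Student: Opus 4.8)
The plan is to treat the two cases separately, in each case working on $E\setminus[B]$ where $\omega$ admits the primitive $\lambda=(\tfrac1k+\boldsymbol{\pi}r^2)\theta$ (recall $d\theta=k\pi^*\omega_B$), and then extending the resulting formulas across the zero section by continuity. The only structural inputs I would use are: (a) the Hermitian norm $r$ and the connection/contact form $\theta$ are invariant under the full torus action on $E$; and (b) from the description of the fan and moment polytope of $E$ in Appendix~A (Section~\ref{Section The moment polytope of a toric negative line bundle}), for $i\le r$ the rotation $g_i$ is $\pi$-related to the rotation $g_i^B$ about $D_i^B$, with $\mathrm{Fix}(g_i)=\pi^{-1}(D_i^B)=D_i$, and $g_{r+1}$ is the fibrewise rotation. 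This also explains why \eqref{EqHamToric} is misleading in the non-compact setting: the naive toric formula has no $R$-dependence, whereas the computation below produces precisely the $R$-dependent slope.

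\emph{The fibre rotation $g_{r+1}$.} This is essentially immediate from data already recorded: the time-one fibrewise rotation is generated by $\tfrac{1+k\boldsymbol{\pi}}{k}Y=X_{(1+k\boldsymbol{\pi})R/k}$, so $H_{r+1}=\tfrac{1+k\boldsymbol{\pi}}{k}R+\mathrm{const}$. The constant is fixed by the normalization $H_{r+1}|_{[B]}=0$: on the zero section $r=0$, hence $R=1/(1+k\boldsymbol{\pi})$ and $\tfrac{1+k\boldsymbol{\pi}}{k}R=\tfrac1k$, giving $H_{r+1}=\tfrac{1+k\boldsymbol{\pi}}{k}R-\tfrac1k$ (equivalently $\boldsymbol{\pi}r^2$, which manifestly extends across $[B]$).

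\emph{The rotations $g_i$, $i\le r$.} Let $\xi_i$ be the generating vector field of $g_i$ on $E$ and $\xi_i^B$ that of $g_i^B$ on $B$, with $\iota_{\xi_i^B}\omega_B=dH_i^B$. Since $r$ and $\theta$ are torus-invariant, $\mathcal{L}_{\xi_i}\lambda=0$, so Cartan's formula gives $dH_i=\iota_{\xi_i}\omega=\iota_{\xi_i}d\lambda=-d\,\iota_{\xi_i}\lambda$, i.e. $H_i=-(\tfrac1k+\boldsymbol{\pi}r^2)\,\theta(\xi_i)+\mathrm{const}$. It remains to identify $\theta(\xi_i)$. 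Using $\mathcal{L}_{\xi_i}\theta=0$, Cartan again yields $d(\theta(\xi_i))=-\iota_{\xi_i}d\theta=-k\,\iota_{\xi_i}\pi^*\omega_B=-k\,\pi^*(\iota_{\xi_i^B}\omega_B)=-k\,d(\pi^*H_i^B)$, the middle equality using that $\xi_i$ projects to $\xi_i^B$ (input (b)). Hence $\theta(\xi_i)=-k\,\pi^*H_i^B+c_i$, and $c_i=0$ because over $D_i^B$ one has $\xi_i=0$ (as $D_i^B\subset\mathrm{Fix}(g_i^B)$ and the lifted action is fibrewise linear, so its vertical part vanishes too) while $H_i^B=0$ there. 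Substituting back, $H_i=k(\tfrac1k+\boldsymbol{\pi}r^2)\,\pi^*H_i^B+\mathrm{const}=(1+k\boldsymbol{\pi}r^2)\,\pi^*H_i^B+\mathrm{const}$, and the residual constant is killed by $H_i|_{D_i}=0$. Finally $1+k\boldsymbol{\pi}r^2=(1+k\boldsymbol{\pi})R$ by the formula for $R$, giving the claimed expression; it extends smoothly over $[B]$.

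\emph{Main obstacle.} The analytic content is entirely elementary once inputs (a) and (b) are in hand, so the real work is point (b): extracting from the fan/moment polytope of $E$ in Appendix~A that the edges $e_1,\dots,e_r$ of the fan of $E$ are the horizontal lifts of the edges of the fan of $B$ (so that $g_i$ genuinely covers $g_i^B$ and $\mathrm{Fix}(g_i)=\pi^{-1}(D_i^B)$), with $e_{r+1}=e_f$ the fibre edge. The remaining delicacy is purely bookkeeping of sign/orientation conventions ($\iota_X\omega=\pm dH$, and the orientation of the fibre circle used by the $S^1$-action); these are pinned down unambiguously by the two normalizations $H_i|_{D_i}=0$ together with the sign constraint $H_i\ge0$. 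Finally it is worth observing, for the application, that the slope at infinity of $H_i$ is $f_i=(1+k\boldsymbol{\pi})\,\pi^*H_i^B\ge 0$, which is a pullback from $B$ and hence constant along Reeb orbits — exactly the hypothesis required to invoke the Extended Maximum Principle (Lemma~\ref{Introduction Lemma Extended Max principle}), and $f_{r+1}=(1+k\boldsymbol{\pi})/k>0$, which will supply the strictly-positive-slope condition of Definition~\ref{Definition admissible toric manifolds}\eqref{Item Admissible positive slope}.
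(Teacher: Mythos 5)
Your proof is correct, modulo the internal sign convention you adopted ($\iota_{X_H}\omega = dH$ versus the paper's $\omega(\cdot,X_H)=dH$), which you rightly observe is pinned down by the normalizations $H_i|_{D_i}=0$, $H_i\ge 0$. The route is genuinely different from the paper's. The paper plugs $\xi_i=\partial/\partial\theta_i$ into the explicit horizontal/vertical decomposition $\omega=(1+k\boldsymbol{\pi}r^2)\pi^*\omega_B+\boldsymbol{\pi}\,d(r^2)\wedge\theta$, uses $d(r^2)(\xi_i)=0$ (which it \emph{proves} from the toric moment map, rather than assuming torus-invariance of the Hermitian structure), rearranges to $d\bigl(H_i-(1+k\boldsymbol{\pi})R\,\pi^*H_i^B\bigr)=G\,dR$, and then applies a local-coordinates observation — if $dH=G\,dR$ then $H=H(R)$ — to conclude $H_i-(1+k\boldsymbol{\pi})R\,\pi^*H_i^B=h_i(R)$, killed by evaluating on $D_i$. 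You instead use the primitive $\lambda=(\tfrac1k+\boldsymbol{\pi}r^2)\theta$ on $E\setminus[B]$ and two applications of Cartan's formula ($\mathcal{L}_{\xi_i}\lambda=0$ giving $H_i=\iota_{\xi_i}\lambda+\mathrm{const}$, then $\mathcal{L}_{\xi_i}\theta=0$ giving $\theta(\xi_i)=\pm k\,\pi^*H_i^B$). This is cleaner and avoids the somewhat ad hoc $dH=G\,dR$ step, and it outputs the nice intermediate identity $\theta(\xi_i)=k\,\pi^*H_i^B$ (which the paper only has implicitly, via its unused relation $\partial_R H_i=\tfrac{1+k\boldsymbol{\pi}}{k}\theta(\xi_i)$). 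The price is that your argument uses one more piece of structure: you need $\mathcal{L}_{\xi_i}\theta=0$ — i.e.\ invariance of the connection form under the full torus — whereas the paper gets by with only $\mathcal{L}_{\xi_i}r=0$, which it derives directly from the moment map rather than postulating as part of the toric construction. In the toric setting this invariance is legitimate (one averages the Hermitian metric and connection), but it is a genuine additional hypothesis that is worth flagging explicitly rather than folding silently into ``input (a).'' Your shortcut for $g_{r+1}$ — reading off $H_{r+1}$ from the recorded fact $X_{(1+k\boldsymbol{\pi})R/k}=\tfrac{1+k\boldsymbol{\pi}}{k}Y$ and normalizing on $[B]$ — is also simpler than the paper's route through $H_{r+1}=\tfrac12|x_{r+1}|^2$ and Lemma~\ref{Lemma forms agree for line bundle}.
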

\begin{proof}
We first clarify what the coordinate $R$ is. By Section \ref{Subsection The moment map}, we know a formula for the moment map $\mu_E(x)$ when $x$ satisfies a certain quadratic equation $f_E(x)=0$, in particular we know the last entry: $\mu_E(x) = (\ldots,\tfrac{1}{2}|x_{r+1}|^2)$. By Lemma \ref{Lemma forms agree for line bundle}, the norm in the fibre is then $|w|=\tfrac{1}{\sqrt{2\pi}}|x_{r+1}|$, so by
Section \ref{Subsection Negative line bundles} the radial coordinate $R$ is 
$$
R=\tfrac{1+k \pi |w|^2}{1+k\pi} = \tfrac{1}{1+k\pi} (1 + k\tfrac{1}{2}|x_{r+1}|^2).
$$

Lifting $X_i=X_{H_i}$ from $E=(\C^{r+1}-Z_E)/G_E$ to $\C^{r+1}$ yields the vector field $X_i = \tfrac{\partial}{\partial \theta_i}$, where $x_i=|x_i|e^{2\pi \sqrt{-1} \theta_i}$ is the $i$-th factor of $\C^{r+1}$, indeed its flow is the rotation $g_i(t):x_i \mapsto e^{2\pi \sqrt{-1}t} x_i$ (fixing all other variables $x_j$). The projection $\pi:E\to B$ is the forgetful map $(x_1,\ldots,x_{r+1})\mapsto (x_1,\ldots,x_r)$ on homogeneous coordinates (this can be easily verified in a local trivialization). Therefore $d\pi\cdot \tfrac{\partial}{\partial \theta_i} = X_{H_i^B}$ in the zero section $B\subset E$.

By Section \ref{Subsection Negative line bundles}, $\omega=(1+k \pi r^2)\pi^*\omega_B + \pi d(r^2)\wedge\theta$ outside of the zero section, where $r: E\to \R$ is the Hermitian norm in the fibre. Therefore,
$$
\begin{array}{lll}
dH_i &=& \omega(\cdot, \tfrac{\partial}{\partial \theta_i}) \\
% if had not rescaled omega &=& (1+\pi r^2)d\theta(\cdot, \tfrac{\partial}{\partial \theta_i}) + \pi d(r^2\theta)(\cdot, \tfrac{\partial}{\partial \theta_i})\\
&=&
 (1+k\pi r^2) \pi^*\omega_B(\cdot, \tfrac{\partial}{\partial \theta_i}) + \pi (d(r^2)\wedge \theta)(\cdot, \tfrac{\partial}{\partial \theta_i})  \\
&=&
(1+k\pi r^2) d(\pi^*H_i^B)(\cdot) + \pi d(r^2)(\cdot) \theta(\tfrac{\partial}{\partial \theta_i}),
\end{array}
$$
where in the last line we use the fact that $r$ does not vary when we rotate $x_i$ (indeed, 
rotating $x_i$ does not change $|x_i|^2$, so it preserves the equation $f_E(x)=0$, so the last entry $\tfrac{1}{2}|x_{r+1}|^2$ of $\mu_E(x)$ will be preserved, and this determines $r$).

The last term above equals $\tfrac{1}{k}d(1+k\pi r^2)(\cdot) \theta(\tfrac{\partial}{\partial \theta_i})$, and recall $1+k\pi r^2 = (1+k\pi)R$, thus:
$$
d(H_i - (1+k\pi)R \pi^*H_i^B) = (1+k\pi) [\tfrac{1}{k}\theta(\tfrac{\partial}{\partial \theta_i})-\pi^*H_i^B] dR.
$$
Now observe that, in general, if $dH=GdR$ for functions $H,G,R$, such that $R$ is a local coordinate, then completing $R=R_1$ to a system of local coordinates $R_1,R_2,\ldots,R_d$, implies that $G=\tfrac{\partial H}{\partial R}$ and $\tfrac{\partial H}{\partial R_j}=0$ for $j\neq 1$. So $H=H(R)$ and $G=G(R)$ only depend on $R$.

In our situation above, this implies that $H_i - (1+k\pi)R \pi^*H_i^B=h_i(R)$ for some function $h_i$ and that $\tfrac{\partial H_i}{\partial R} = \tfrac{1+k\pi}{k}\theta(\tfrac{\partial}{\partial \theta_i})$.

Now evaluate $H_i = (1+k\pi)R \pi^*H_i^B +h_i(R)$ at $D_i = \pi^{-1}(D_i^B)$, using that $H_i^B=0$ on $D_i^B$ and that $H_i=0$ on $D_i$, to deduce that $h_i(R)\equiv 0$. The first claim follows. The second claim follows from $H_{r+1}=\tfrac{1}{2}|x_{r+1}|^2 = \tfrac{1+k\pi}{k}R-\tfrac{1}{k}$ (the first equality holds since $f_E(x)=0$).
%
%The final claim follows from Lemma \ref{Lemma lifting Ham vf toric}, and the fact that $X_R$ is the Reeb vector field.
\end{proof}
%
%\begin{remark}
%For $\P^1$ the moment map is the height function $S^2 \to [0,1]$. Thus for $\mathcal{O}_{\P^1}(-k)$, we have $\pi^*H_1^B=\rho$ over the circle $\gamma_{\rho}$ lying at height $\rho$. So it first appears that the orbit of the vertical part $k\rho (1+\pi) (\textrm{\emph{Reeb vector field}})$ of $X_1$ is not closed after time $1$, whereas the flow of $\tfrac{\partial}{\partial \theta_1}$ for time $1$ is the identity. However, the moving frame $w,iw$ undergoes a holonomy transformation by $e^{-\int_D \mathcal{F}}$ where $D$ is the southern region of $S^2$ bounded by $\gamma_{\rho}$, and $\mathcal{F}=2\pi i k \omega_{\P^1}$. So the holonomy $e^{-2\pi i k \rho}$ undoes the rotation of the vertical part of $X_1$. Indeed in general the flow of $X_i$ in the above Theorem preserves the natural fiber coordinate $x_{r+1}$ that one would use in a local (non-unitary) trivialization of $E$.
%\end{remark}
%
%
\begin{corollary}\label{Corollary Hams in toric case are OK}
The Hamiltonians $H_i$ which define the $S^1$-rotations $g_i$ about the toric divisors $D_i$ satisfy Theorem \ref{Theorem Representation for lager class of Hams} for $\mathrm{Ham}_{\ell\geq 0}(E)$. So, picking lifts $\widetilde{g}_i$ (Section \ref{Subsection Invertibles in the symplectic cohomology}), they give rise to
$$
r_{\widetilde{g}_i}(1)\in QH^*(E)
 \quad \textrm{ and } \quad 
\mathcal{R}_{\widetilde{g}_i}(1) \in SH^*(E)^{\times} 
  \qquad (i=1,\ldots,r+1).
$$
\end{corollary}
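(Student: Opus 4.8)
The statement should follow immediately from two results already established: Theorem \ref{Theorem Hamiltonians for rotations about divisors}, which computes the generating Hamiltonians $H_i$ explicitly, and Theorem \ref{Theorem Representation for lager class of Hams}, which extends the $r$- and $\mathcal{S}$-representations to generating Hamiltonians of the form $K_t(y,R)=f_t(y)R$ for large $R$ with $f_t\colon\Sigma\to\R$ invariant under the Reeb flow (and $f_t\geq 0$ in the $\ell\geq 0$ case). So the plan is purely to check that each $H_i$ belongs to this class; there is no new Floer-theoretic content to supply.

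First I would treat the pullback divisors $D_i=\pi^{-1}(D_i^B)$ for $i=1,\dots,r$. By Theorem \ref{Theorem Hamiltonians for rotations about divisors} we have $H_i=(1+k\pi)R\cdot\pi^*H_i^B$ outside the zero section, hence on the whole conical end. In the collar coordinates $(y,R)\in\Sigma\times(1,\infty)$ the contact hypersurface $\Sigma$ is the circle subbundle of $E$ and $R$ is the renormalized fibre norm, so $\pi\colon E\to B$ factors through $\Sigma\to B$ and $\pi^*H_i^B(y,R)=H_i^B(\pi(y))$ depends on $y$ only. Thus $H_i=f_i(y)R$ for large $R$, with $f_i:=(1+k\pi)\,\pi^*H_i^B\colon\Sigma\to\R$ time-independent, and I would then verify the two required properties: $f_i\geq 0$ because $H_i^B\geq 0$ by the normalization in \eqref{EqHamToric} (the constants $\lambda_i$ are chosen exactly so that $H_i^B\geq 0$, with $H_i^B=0$ on $D_i^B$); and $f_i$ is Reeb-invariant because, by Section \ref{Subsection Negative line bundles}, the Reeb field $Y=X_R$ is a multiple of the angular field $iw$ rotating the fibre circle, so $d\pi(Y)=0$ and therefore $df_i(Y)=(1+k\pi)(dH_i^B)(d\pi\cdot Y)=0$. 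For the zero section $D_{r+1}=[B]$, Theorem \ref{Theorem Hamiltonians for rotations about divisors} gives $H_{r+1}=\tfrac{1+k\pi}{k}R-\tfrac1k$; up to the additive constant $-\tfrac1k$, which changes neither the flow $g_{r+1}$ nor the induced Floer element, this is $f_{r+1}(y)R$ with $f_{r+1}\equiv\tfrac{1+k\pi}{k}>0$ constant, hence trivially Reeb-invariant and positive (this strictly positive slope also verifies part \eqref{Item Admissible positive slope} of Definition \ref{Definition admissible toric manifolds} for these examples).

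Once the hypotheses are in hand, I would simply invoke Theorem \ref{Theorem Representation for lager class of Hams}: the $g_i$ lie in $\mathrm{Ham}_{\ell\geq 0}(E)$ in the enlarged sense, so picking any lift $\widetilde g_i\in\widetilde\pi_1\mathrm{Ham}_{\ell\geq 0}(E)$ — possible by Section \ref{Subsection Invertibles in the symplectic cohomology} since $E$ is toric, hence simply connected, and $g_i$ has fixed points so preserves the component $\mathcal L_0 E$ — Theorem \ref{Theorem Representation of Ham on QH} yields $r_{\widetilde g_i}(1)\in QH^*(E)$. Since $\mathrm{Ham}_{\ell\geq 0}(E)\subset\mathrm{Ham}_{\ell}(E)$, Theorem \ref{Theorem Representation of Ham on SH} gives $\mathcal S_{\widetilde g_i}(1)\in SH^*(E)^\times$, and since $\mathcal R_{\widetilde g_i}=\mathcal S_{\widetilde g_i}$ on $SH^*(E)$ we conclude $\mathcal R_{\widetilde g_i}(1)\in SH^*(E)^\times$. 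The only step that is not a bare quotation is the geometric input that $\pi^*H_i^B$ is $R$-independent and Reeb-invariant on the conical end; this is where the explicit description of the line-bundle structure of $E$ from Section \ref{Subsection Negative line bundles} is used, and it is the (mild) main obstacle --- everything else is formal.
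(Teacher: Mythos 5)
Your proof is correct and follows essentially the same route as the paper's: read off the form $H_i=f_i(y)R$ from Theorem \ref{Theorem Hamiltonians for rotations about divisors}, check $R$-independence, Reeb-invariance, and non-negativity of $f_i$, and then invoke Theorem \ref{Theorem Representation for lager class of Hams}. You are slightly more thorough than the paper in that you spell out the Reeb-invariance via $d\pi(Y)=0$ and also treat the case $i=r+1$ explicitly (which the paper leaves implicit), but this is detail rather than a different method.
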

\begin{proof}
For $i=1,\ldots,r$:
 $H_i=f_i(y)R$ for $f_i(y) = (1+k\pi)\pi^*H_i^B(1, y)$. Notice $\pi^*H_i^B$ does not depend on $R$, it only depends on the point $(1,y)$ in the sphere bundle $\Sigma=SE = \{R=1\}$ (or rather, on the projection of $(1,y)$ via $\pi: SE \to B$). Notice the $f_i(y)$ are invariant under the Reeb flow (which is rotation in the fibre). Finally $f_i(y)\geq 0$ since $H_i^B\geq 0$, by \eqref{EqHamToric}.
\end{proof}

\begin{lemma}\label{Lemma R of gi standard lift}
Let $g_i^{\wedge}$ be the lift of $g_i$ (in the sense of Section \ref{Subsection Invertibles in the symplectic cohomology}) which maps the constant disc $(c_x,x)$ to itself, for $x\in D_i$.
Then
$$
r_{g^{\wedge}_i}(1)=\mathrm{PD}[D_i]\in QH^2(E)
 \quad \textrm{ and } \quad 
\mathcal{R}_{g^{\wedge}_i}(1) = c^*\mathrm{PD}[D_i]\in SH^2(E)^{\times}.
$$
\end{lemma}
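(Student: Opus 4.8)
The plan is to read the statement as an application of Lemma~\ref{Lemma rg1} to the holomorphic Hamiltonian circle action $g=g_i$ on $M=E$, combined with the commutative diagram~\eqref{Equation Intro Comm Diagram Ham SH and QH} to transport the answer from $QH^*(E)$ to $SH^*(E)$.

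First I would check the hypotheses of Lemma~\ref{Lemma rg1}. The total space $E$ is monotone (Section~\ref{Subsection The Hamiltonians generating the rotations around the toric divisors}) and K\"ahler for the integrable complex structure $J$ of Section~\ref{Subsection Negative line bundles}. By Corollary~\ref{Corollary Hams in toric case are OK}, the rotations $g_i$ are generated by Hamiltonians in the enlarged class for which Theorem~\ref{Theorem Representation for lager class of Hams} applies, so that $r_{g_i^\wedge}(1)$ and $\mathcal{R}_{g_i^\wedge}(1)$ are defined and Lemma~\ref{Lemma rg1} is available. The action $g_i$ is holomorphic, being a toric rotation $x_i\mapsto e^{2\pi\sqrt{-1}t}x_i$ (Section~\ref{Subsection Review of the McDuff-Tolman proof of the presentation of QH}), and its fixed locus is exactly the toric divisor, $\mathrm{Fix}(g_i)=D_i$, a complex submanifold of complex codimension one: for $i\leq r$ it is $\pi^{-1}(D_i^B)$, and for $i=r+1$ the zero section $[B]$. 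On the rank-one normal bundle, $dg_{i,t}$ acts by the path $t\mapsto e^{2\pi\sqrt{-1}t}$; this sign is the one forced by the normalization of $H_i$ in Theorem~\ref{Theorem Hamiltonians for rotations about divisors} (each $H_i\geq 0$ vanishes to second order along $D_i$ with positive-definite normal Hessian), and with it the clutching construction produces $\mathcal{O}(-1)$ over $\P^1$, which is precisely the codimension-one case $m-d=1$ of Lemma~\ref{Lemma rg1}. Finally, the lift $g_i^\wedge$ in the present Lemma, the one carrying each constant disc $(c_x,x)$ with $x\in D_i$ to itself, is exactly the lift for which Lemma~\ref{Lemma rg1} is phrased.

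Granting this, equation~\eqref{Equation rg1 if Ig is 1} gives $I(g_i^\wedge)=1$ and $r_{g_i^\wedge}(1)=\mathrm{PD}[\mathrm{Fix}(g_i)]=\mathrm{PD}[D_i]\in QH^2(E)$, with no higher-order $t$-terms, these being ruled out by the maximum principle precisely because the codimension is one (the last paragraph of the proof of Lemma~\ref{Lemma rg1}). For the symplectic cohomology statement, the diagram~\eqref{Equation Intro Comm Diagram Ham SH and QH} (Theorem~\ref{Theorem Representation of Ham on QH}, valid in the present class of Hamiltonians by Theorem~\ref{Theorem Representation for lager class of Hams}) yields $\mathcal{R}_{g_i^\wedge}(1)=c^*\!\big(r_{g_i^\wedge}(1)\big)=c^*\mathrm{PD}[D_i]$, and $\mathcal{R}_{g_i^\wedge}(1)=\mathcal{S}_{g_i^\wedge}(1)$ lies in $SH^2(E)^\times$ since it is in the image of the representation $\mathcal{S}$ into invertible elements (Theorem~\ref{Theorem Representation of Ham on SH}, enlarged by Theorem~\ref{Theorem Representation for lager class of Hams}).

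The substance is essentially all in Lemma~\ref{Lemma rg1}; the one point requiring care is orientation bookkeeping, namely confirming that it is $g_i$ (and not $g_i^{-1}$) whose normal clutching bundle is $\mathcal{O}(-1)$, and that $\mathrm{Fix}(g_i)$ is not strictly larger than $D_i$. Both are immediate from the explicit toric model together with the sign conventions of Theorem~\ref{Theorem Hamiltonians for rotations about divisors}, so I do not anticipate any analytic obstacle beyond what is already handled in Lemma~\ref{Lemma rg1}.
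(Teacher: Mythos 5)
Your proposal is correct and follows essentially the same route as the paper, which reduces the lemma in a single sentence to Lemma~\ref{Lemma rg1} using $\mathrm{Fix}(g_i)=D_i$ and $I(g_i^\wedge)=1$. Your write-up simply fills in the hypothesis-checking that the paper leaves implicit (holomorphicity of $g_i$, complex codimension one of $\mathrm{Fix}(g_i)$, the sign of the normal eigenvalue giving $\mathcal{O}(-1)$, and the passage to $SH^*$ via the commutativity $\mathcal{R}=c^*\circ r$ and invertibility from Theorem~\ref{Theorem Representation of Ham on SH}), all of which is consistent with the paper's intent.
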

\begin{proof}
This follows by Lemma \ref{Lemma rg1} since $\mathrm{Fix}(g_i)=D_i$, using the fact that $I(g_{i}^{\wedge})=1$ (one can explicitly compute $I(g_i^{\wedge})$ as in \cite[Section 7.8]{Ritter4}).
\end{proof}
%%%%%%%%%%%%%%%%%%%%%%%%%%%%%%%%%%%
%%%%%%%%%%%%%%%%%%%%%%%%%%%%%%%%%%%
\subsection{The problem with relating the lifted rotations}
\label{Subsection The problem with relating the lifted rotations}
%%%%%%%%%%%%%%%%%%%%%%%%%%%%%%%%%%%
%%%%%%%%%%%%%%%%%%%%%%%%%%%%%%%%%%%
%\textbf{A word of caution:} 
Although these lifts $g_i^{\wedge}$ appear to be canonical, a relation $\prod g_i^{a_i}=\prod g_j^{b_j}$ only implies the lifted relation $\prod (g_{i}^{\wedge})^{a_i}=\prod (g_{j}^{\wedge})^{b_j}$ up to factor $t^d$ corresponding to a deck transformation in $\pi_2(M)/\pi_2(M)_0$ (see Section \ref{Subsection Invertibles in the symplectic cohomology}).
%
%\begin{example}\label{Example O(-k) caution about lifts} Consider $\mathcal{O}(-k) \to \P^2$. Compare Example 2 in \ref{Subsection line bundles over toric varieties}. There are four homogeneous coordinates $x_1,x_2,x_3,x_4$, of which $[x_1:x_2:x_3]$ describe the projection to $\P^2$. There is a relation $e_1+e_2+e_3=ke_4$ among the edges. The action of $t\in S^1$ corresponding to $g_1g_2g_3$ is $(x_1,x_2,x_3,x_4) \mapsto (t x_1,t x_2,t x_3,x_4)$. The latter is identified, via the $G$-action on homogeneous coordinates, with $(x_1,x_2,x_3,t^kx_4)$, which is the $k$-th order fibre rotation $g_4^k$. Thus $g_1g_2g_3 = g_4^k$, as expected. However, the equation $g_1^{\wedge}g_2^{\wedge}g_3^{\wedge} = (g_4^{\wedge})^k$ cannot hold, otherwise the generator $x=\mathcal{R}(g_1^{\wedge})$ of $QH^*(E)$ would satisfy $x^3 = (-kx)^k$ rather than $x^3 = T^{3-k} (-kx)^k$. The factor of $t$ illustrates the discrepancy between the choices of lifts on the two sides of the equation.
%\end{example}

For closed symplectic manifolds $C$, this issue did not arise -- in fact, for closed $C$ one can define the Seidel representation directly on $\pi_1 \mathrm{Ham}(C)$ (rather than on an extension thereof) by a work-around which involves normalization arguments for the Hamiltonians, as explained for example in \cite[page 433]{McDuff-Salamon2}. For non-compact $M$, it is unclear to us whether a work-around exists (of course those normalization arguments involving integration of $\omega^{\mathrm{top}}$ over $M$ will fail).

In any case, for monotone $M$ this is never a problem since one can measure the discrepancy between $\prod \mathcal{R}_{g_i^{\wedge}}(1)^{a_i}$ and $\prod \mathcal{R}_{g_j^{\wedge}}(1)^{b_j}$ simply by comparing gradings: 

\begin{lemma}\label{Lemma relations in SH}
A relation $\prod g_i^{a_i}=\prod g_j^{b_j}$ corresponds to the following relation in $SH^*(M)$:
$$
\textstyle \prod x_i^{a_i} = T^{\sum a_i-\sum b_j} \prod x_j^{b_j} \qquad (\textrm{where }x_i=\mathrm{PD}[D_i]).
$$
\end{lemma}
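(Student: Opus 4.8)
The plan is to apply the group homomorphism $\mathcal{S}:\widetilde{\pi_1}\mathrm{Ham}_{\ell}(M,\omega)\to SH^*(M)^{\times}$ of Theorem \ref{Theorem Representation of Ham on SH} (in the enlarged form of Theorem \ref{Theorem Representation for lager class of Hams}, which covers the Hamiltonians $f_t(y)R$ generating the $g_i$ and their products, negative slopes being harmless for $\mathcal{S}$) to a \emph{lift} of the given relation, and then to pin down the resulting Novikov factor purely by a grading count. First I would lift the relation $\prod g_i^{a_i}=\prod g_j^{b_j}$ from $\pi_1\mathrm{Ham}_{\ell}(M)$ to the extension $\widetilde{\pi_1}\mathrm{Ham}_{\ell}(M)$ using the canonical lifts $g_i^{\wedge}$ of Lemma \ref{Lemma R of gi standard lift} (those fixing a constant disc at a point of $\mathrm{Fix}(g_i)$). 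As recalled in Section \ref{Subsection Invertibles in the symplectic cohomology} (and \cite{SeidelGAFA}), any two lifts of a loop of Hamiltonian diffeomorphisms differ by a deck transformation in $\Gamma=\pi_2(M)/\pi_2(M)_0$, so the group-theoretic relation survives in $\widetilde{\pi_1}\mathrm{Ham}_{\ell}(M)$ only up to such an element: $\prod (g_i^{\wedge})^{a_i}=\gamma\cdot\prod (g_j^{\wedge})^{b_j}$ for some $\gamma\in\Gamma$.

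Next I would apply $\mathcal{S}$ to this identity. Since $\mathcal{S}$ is a homomorphism, $\mathcal{S}_{g_i^{\wedge}}(1)=\mathcal{R}_{g_i^{\wedge}}(1)=x_i=\mathrm{PD}[D_i]\in SH^2(M)^{\times}$ by Lemma \ref{Lemma R of gi standard lift}, and $\mathcal{S}$ restricted to the subgroup $\Gamma$ is multiplication by the corresponding Novikov monomial, $\mathcal{S}_{\gamma}(1)=t^{\omega(\gamma)}$, I obtain in $SH^*(M)$ the relation $\prod x_i^{a_i}=t^{\omega(\gamma)}\prod x_j^{b_j}$. It then remains only to identify $t^{\omega(\gamma)}$.

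Finally, I would determine $\omega(\gamma)$ by comparing degrees. In the monotone setting $t^{\omega(\gamma)}=T^{\lambda_M\omega(\gamma)}=T^{c_1(TM)(\gamma)}$ with $|T|=2$, so this monomial lies in degree $2c_1(TM)(\gamma)$; equivalently the Maslov index of $\gamma$ is $I(\gamma)=c_1(TM)(\gamma)$. Since each $x_i$ has degree $2$, i.e. $I(g_i^{\wedge})=1$, and $\mathcal{S}$ shifts degree by twice the Maslov index, additivity of $I$ along the lifted relation gives $\sum a_i = c_1(TM)(\gamma)+\sum b_j$, hence $c_1(TM)(\gamma)=\sum a_i-\sum b_j$ and $t^{\omega(\gamma)}=T^{\sum a_i-\sum b_j}$, which is exactly the claimed relation.

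I do not anticipate a serious obstacle: the only point needing care is the bookkeeping of the lifting ambiguity, namely verifying that $\Gamma$ is precisely the indeterminacy and that $\mathcal{S}|_{\Gamma}$ is scalar multiplication by Novikov monomials — both of which are built into the construction of $\widetilde{\pi_1}\mathrm{Ham}$ and $\mathcal{S}$ recalled in Section \ref{Subsection Invertibles in the symplectic cohomology}. One should also confirm that all Hamiltonians occurring (sums of the $H_i$ and their negatives) fall under Theorem \ref{Theorem Representation for lager class of Hams}, i.e. are of the form $f_t(y)R$ at infinity with $f_t$ Reeb-invariant; this holds because each $H_i$ already has that shape by Theorem \ref{Theorem Hamiltonians for rotations about divisors} (resp. Corollary \ref{Corollary Hams in toric case are OK}), and Reeb-invariance is preserved under sums.
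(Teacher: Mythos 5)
Your proposal is correct and is essentially the argument the paper uses: the paper notes in Section \ref{Subsection The problem with relating the lifted rotations} that the lifted relation holds only up to a factor $t^d$ coming from a deck transformation in $\Gamma=\pi_2(M)/\pi_2(M)_0$, and then the proof of the lemma pins down that factor purely by the $\Z$-grading of $SH^*(M)$ over the graded Novikov ring together with $|x_i|=2$ from Lemma \ref{Lemma R of gi standard lift}. You have simply unpacked the same two steps (lift, then grade-count) more explicitly, including the observation that $t^{\omega(\gamma)}=T^{c_1(TM)(\gamma)}$.
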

\begin{proof}
 By Lemma \ref{Lemma R of gi standard lift}, $\mathcal{R}_{g_i}^{\wedge}(1)=x_i$ has grading $2=2\, \mathrm{codim}_{\C} D_i$. The claim follows since, for $M$ monotone, $SH^*(M)$ is $\Z$-graded over the (graded) Novikov ring (Section \ref{Subsection Novikov ring}).
% 
%  EXPLICIT PROOF:
%  Consider a general point $p=(1,\ldots,1)\in (\C^*)^{n+1} = E\setminus (\textrm{toric divisors})$. Then $g_i^{\wedge}$ maps the constant disc $(c_p,p)$ at $p$ to the standard disc in the $i$-th $\C^*$ factor (this follows from thinking about the cover $\widetilde{\mathcal{L}_0 E}$ introduced in Section \ref{Subsection Invertibles in the symplectic cohomology}, and one takes the constant disc $(c_x,x)$ at $x\in D_i$, on which $g_i^{\wedge}$ acts identically, and one moves $x$ away from $D_i$ until $x=p$). The above discrepancy is therefore measured by $t$ to the power of the discrepancy in the areas of the corresponding copies of the standard discs. By monotonicity, this corresponds to $T$ to the power of the discrepancy in the half-indices of those discs. But the half-index of the $i$-th standard disc is $1$, namely the number of times it intersects the divisor $D_i$. The formula in the claim follows.
\end{proof}
%
%\begin{example} In example \ref{Example O(-k) caution about lifts}, the correct relation is
%$x^3 = T^{3-k}(-kx)^k$. 
%\end{example}
%
%%%%%%%%%%%%%%%%%%%%%%%%%%%%%%%%%%%
%%%%%%%%%%%%%%%%%%%%%%%%%%%%%%%%%%%
\subsection{The quantum SR-relations for monotone toric negative line bundles}
\label{Subsection The SR-relations for monotone toric negative line bundles}
%%%%%%%%%%%%%%%%%%%%%%%%%%%%%%%%%%%
%%%%%%%%%%%%%%%%%%%%%%%%%%%%%%%%%%%
By Lemma \ref{Lemma line bundle from fan of base}, $E$ arises as $\mathcal{O}(\sum n_i D_i^B)\stackrel{\pi}{\longrightarrow} B$, for $n_i\in \Z$, so $c_1(E)=\sum n_i \mathrm{PD}[D_i^B]$. The edges of the fan for $E$ are
$$e_1=(b_1,-n_1),\; \ldots,\; e_r =(b_r,-n_r),\; e_{f}=(0,\ldots,0,1) \in \Z^{n+1}.$$
where $b_j$ are the edges of the fan for $B$. We often use the index `$f$' instead of $r+1$, so $e_f=e_{r+1}$, to emphasize that this index corresponds to the fibre coordinate $x_f=x_{r+1}$.
The cones of the fan for $E$ are $\mathrm{span}_{\R_{\geq 0}} \{e_{j_1},\ldots,e_{j_k}\}$ and $\mathrm{span}_{\R_{\geq 0}} \{e_{j_1},\ldots,e_{j_k},e_{f}\}$ whenever $\mathrm{span}_{\R_{\geq 0}}\{b_{j_1},\ldots,b_{j_k}\}$ is a cone for $B$.

\begin{corollary}
 The primitive collections for $E$ are those of $B$: $I=I^B$.
\end{corollary}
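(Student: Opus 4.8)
The plan is to read off everything from the explicit fan of $E$ recalled just above. The point I would isolate first is the following cone dictionary: for a subset $J\subseteq\{1,\ldots,r\}$ of non-fibre indices, $\{e_j:j\in J\}$ defines a cone of $E$ if and only if $\{b_j:j\in J\}$ defines a cone of $B$, and moreover $\{e_j:j\in J\}\cup\{e_f\}$ defines a cone of $E$ under exactly the same condition. Both equivalences are immediate from the description of the cones of $E$ as the $\mathrm{span}_{\R_{\geq 0}}\{e_{j_1},\ldots,e_{j_k}\}$ and $\mathrm{span}_{\R_{\geq 0}}\{e_{j_1},\ldots,e_{j_k},e_f\}$ with $\mathrm{span}_{\R_{\geq 0}}\{b_{j_1},\ldots,b_{j_k}\}$ a cone of $B$; in particular, taking $J=\emptyset$ (the zero cone is a cone of $B$), the ray $\{e_f\}$ is always a cone of $E$.

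Granting the dictionary, one inclusion is immediate. If $I\subseteq\{1,\ldots,r\}$ is a primitive collection for $B$, then $\{b_i:i\in I\}$ does not define a cone of $B$ while every proper subcollection does; applying the equivalence to $I$ and to each of its proper subcollections shows that $\{e_i:i\in I\}$ does not define a cone of $E$ while every proper subcollection does, so $I$ is primitive for $E$.

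For the converse I would take a primitive collection $I$ for $E$ and distinguish whether $f\in I$. If $f\notin I$, the dictionary runs backwards verbatim and exhibits $I$ as primitive for $B$. The only step that needs an argument is excluding the possibility $f\in I$: if $I=\{i_1,\ldots,i_{a-1},f\}$ were primitive, the proper subcollection $\{e_{i_1},\ldots,e_{i_{a-1}}\}$ defines a cone of $E$, hence $\{b_{i_1},\ldots,b_{i_{a-1}}\}$ defines a cone of $B$, and then by the fan description $\{e_{i_1},\ldots,e_{i_{a-1}},e_f\}$ already defines a cone of $E$ — contradicting primitivity of $I$ (the case $a=1$ being just the remark that $\{e_f\}$ is a cone). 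Hence no primitive collection of $E$ contains $f$, and $I^E=I^B$.

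The only obstacle worth flagging is phrasing the cone dictionary at precisely the generality needed — namely that attaching or detaching $e_f$ never affects whether a set of non-fibre edges defines a cone — but this is exactly what the displayed fan structure of $E$ encodes, so once it is recorded the remainder is the two-line case split above.
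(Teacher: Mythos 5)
Your proof is correct and takes essentially the same approach as the paper; the paper's one-line argument is exactly your case $f\in I$ (if $I'\cup\{f\}$ were primitive, then $I'$ spans a cone of $E$, hence of $B$, hence $I'\cup\{f\}$ spans a cone of $E$, contradiction), and the remaining bookkeeping you carry out explicitly via the cone dictionary is left implicit in the paper as an immediate consequence of the fan description.
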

\begin{proof}
 If $I^B \cup \{e_{f}\}$ was primitive, then $I^B$ would determine a cone in $E$ and hence in $B$, but then $I^B \cup \{e_{f}\}$ would be a cone for $E$ and so would not be primitive.
\end{proof}
%
% A purely combinatorial consequence of the presentation of the fan for $E$ is the following corollary, but we do not yet claim that these relations hold in $QH^*(E)$ (done later).
%
\begin{corollary}\label{Corollary SR relns of E in terms of B}
Recall the relations for $B$ are:
\begin{enumerate}
 \item $\sum \langle \xi, b_i \rangle x_i = 0$ as $\xi$ ranges over the standard basis of $\R^r$; \hspace{6ex} \emph{(linear relations)}
  
 \item $\displaystyle \prod_{p\in I^B} x_{i_p} = T^{|I^B|-\sum c_q} \prod_q x_{j_q}^{c_q}$ for primitive collections $I^B$. \hspace{3ex} \emph{(quantum SR-relations)}\\
 \emph{(Corresponding to the relation $\sum b_{i_p} = \sum c_q b_{j_q}$ among edges)}
\end{enumerate}
Then the relations for $E$ are:
\begin{enumerate}
 \item the linear relations for $B$;
 \item the new linear relation $x_{f} = \sum n_i x_i$;
 \item $\displaystyle \prod_{p\in I^B} x_{i_p} = T^{|I^B|-\sum c_q - c_f} \cdot x_f^{c_f} \cdot \prod_{q} x_{j_q}^{c_q}$ where $c_f = -\sum_{p\in I^B} n_{i_p} + \sum_{q} c_q \cdot n_{j_q}$.\\
\emph{(Corresponding to the relation $\sum_{p\in I^B} e_{i_p} = \sum_q c_q e_{j_q} + c_f e_f$ among edges)}
\end{enumerate}
\end{corollary}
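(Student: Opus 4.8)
The plan is to deduce everything from the explicit description of the fan of $E$ recorded just before the statement (edges $e_i=(b_i,-n_i)$ for $i=1,\dots,r$ and $e_f=(0,\dots,0,1)$, with cones of $B$ lifted in two ways), together with the general dictionary between $\Z$-linear relations among the edges and generators/relations in $QH^*$ (the linear relations \eqref{EqnLinearRelations} and the quantum Stanley--Reisner relations \eqref{EqnQSRrelation}, via the correspondence of Section \ref{Subsection Batyrev's argument: from the presentation of QH to JacW} and Corollary \ref{Corollary QH of noncompact toric}), and the preceding Corollary identifying the primitive collections of $E$ with those of $B$. So the corollary is essentially a bookkeeping exercise once one has the edges of $E$; no new Floer theory is needed.

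First I would treat the linear relations. For $\xi$ in the standard basis of $\R^{n}=\R^{r}$ (the first $r$ coordinates of $\Z^{n+1}$), the linear relation $\sum_i \langle \xi, e_i\rangle x_i + \langle \xi, e_f\rangle x_f = 0$ becomes $\sum_i \langle \xi, b_i\rangle x_i = 0$ since the first $r$ coordinates of $e_i$ are $b_i$ and those of $e_f$ vanish; this is exactly the linear relation for $B$, item (1). For $\xi = \xi_{n+1}$, the last standard basis vector, we get $\sum_i (-n_i) x_i + 1\cdot x_f = 0$, i.e. $x_f = \sum_i n_i x_i$, which is item (2). This uses only that the first $r$ coordinates of $e_i$ are $b_i$ and the last coordinate is $-n_i$.

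Next I would handle the quantum SR-relations. By the preceding corollary, a primitive collection of $E$ is a primitive collection $I^B=\{i_1,\dots,i_a\}$ of $B$, so by Batyrev's argument we must expand $\sum_{p\in I^B} e_{i_p}$ as a non-negative integral combination of the edges of the (unique) cone of $E$ containing it. Write the corresponding relation in $B$ as $\sum_{p} b_{i_p} = \sum_q c_q b_{j_q}$ with the $b_{j_q}$ spanning a cone $\sigma^B$ of $B$ and $c_q>0$. The cone of $E$ containing $\sum_p e_{i_p}$ is the lift of $\sigma^B$ that includes $e_f$, namely $\mathrm{span}_{\R_{\ge0}}\{e_{j_1},\dots,e_{j_b},e_f\}$ (it must include $e_f$ to absorb the last coordinate, as I check below). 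Indeed, projecting to the first $n$ coordinates forces the coefficients of the $e_{j_q}$ to be exactly the $c_q$ (since $b_{j_1},\dots,b_{j_b}$ are linearly independent over the cone $\sigma^B$), and then matching the last coordinate gives the coefficient of $e_f$: the last coordinate of $\sum_p e_{i_p}$ is $-\sum_p n_{i_p}$, that of $\sum_q c_q e_{j_q}$ is $-\sum_q c_q n_{j_q}$, so the coefficient of $e_f$ is $c_f = -\sum_{p\in I^B} n_{i_p} + \sum_q c_q n_{j_q}$. One must check $c_f\ge 0$: this is forced because $\sum_p e_{i_p}$ lies in that cone of $E$ (the fan of $E$ is a fan, so the expansion coefficients are automatically non-negative), which is the only place a genuinely geometric input enters. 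The relation $\sum_p e_{i_p} = \sum_q c_q e_{j_q} + c_f e_f$ then translates, via \eqref{EqnQSRrelation} with the power of $T$ equal to $|I^B| - \sum_q c_q - c_f$ (by the recipe $T^{c_1(TE)[\beta_I]}$ of \eqref{EqnPowerOfT}, noting $c_1(TE)(\beta_I)$ equals the number of edges on the left minus the sum of the coefficients on the right), into item (3).

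The main obstacle — really the only point requiring care rather than unwinding definitions — is verifying that $\sum_{p\in I^B} e_{i_p}$ lies in the cone $\mathrm{span}_{\R_{\ge0}}\{e_{j_1},\dots,e_{j_b},e_f\}$ of $E$ and not in the other lift $\mathrm{span}_{\R_{\ge0}}\{e_{j_1},\dots,e_{j_b}\}$, i.e. that $c_f\ge 0$ with equality precisely when the expansion already lives in the cone of $B$ lifted without $e_f$. This follows from the fact (used implicitly in Batyrev's argument and valid here because $E$'s fan is still a genuine fan even though $E$ is non-compact, as the edges of a cone are linearly independent) that $\sum_p e_{i_p}$ has a unique expression as a non-negative combination of edges of a single cone; one then just reads off that the cone must be the $e_f$-lift unless $c_f=0$. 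Everything else — the grading/Novikov power, the match of primitive collections, the linear relations — is immediate from the data already assembled. I would close by remarking that one could alternatively phrase this via the identification $H_2(E,\Z)$ with $\Z$-linear relations among the edges and the intersection numbers $\beta_I\cdot D_i$, but the fan-theoretic computation above is the most transparent.
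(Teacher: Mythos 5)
Your treatment of the linear relations (items (1) and (2)) is correct and matches what the paper's combinatorics imply: pairing the edges $e_i=(b_i,-n_i)$, $e_f=(0,\ldots,0,1)$ against the first $n$ standard basis vectors of $\R^{n+1}$ reproduces the linear relations of $B$, and pairing against the last one gives $x_f=\sum n_i x_i$. Your identification of the candidate cone of $E$, and the determination of the coefficients $c_q$ and $c_f$ from the first $n$ and the last coordinate respectively, is also right.

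The gap is in your justification of $c_f \geq 0$. You assert that $\sum_{p\in I^B} e_{i_p}$ has a unique non-negative expansion in the edges of a single cone of $E$ ``because $E$'s fan is still a genuine fan''. This is circular: unlike a complete fan, the fan of $E$ does \emph{not} cover $\R^{n+1}$ (this is exactly the non-compactness point emphasized in Section \ref{Subsection Batyrev's argument: from the presentation of QH to JacW} and Example \ref{Example failure of QH=Jac}), so $\sum_p e_{i_p}$ need not lie in any cone a priori. Indeed, the condition for $\sum_p e_{i_p}$ to lie in the lifted cone $\mathrm{span}_{\R_{\geq 0}}\{e_{j_1},\ldots,e_{j_b},e_f\}$ is precisely $c_f \geq 0$ — the thing you are trying to prove. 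The paper's actual argument (in the proof of Lemma \ref{Lemma Quantum SR-relations involve positive slope Hamiltonians}) supplies the missing geometric input: it computes $c_f = -c_1(E)(\beta_I) = k\,\omega_B(\beta_I)$, which is strictly positive by the negativity hypothesis $c_1(E)=-k[\omega_B]$, $k>0$, combined with Batyrev's positivity result $\omega_B(\beta_I)>0$ for primitive $I$. One could rephrase this in fan language as a convexity property of the piecewise-linear function on the fan of $B$ with values $n_i$, but that is equivalent to negativity of the line bundle; mere fan-hood of $E$'s fan does not give it.
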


\begin{theorem}\label{Theorem relations in QH for Fano toric NLB}
The above linear relations and quantum SR-relations hold in $QH^*(E)$.
\end{theorem}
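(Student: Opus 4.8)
The plan is to verify the three families of relations from Corollary~\ref{Corollary SR relns of E in terms of B} separately: the linear relations (items (1) and (2)) by classical topology, and the quantum Stanley--Reisner relations (item (3)) by pushing relations among the toric rotations through the representation $r$. The linear relations are immediate: they are $\mathbb{C}$-linear relations among the degree-two classes $x_i=\mathrm{PD}[D_i]\in H^2(E)$, and since $QH^*(E)$ is $H^*(E)$ as a $\Lambda$-module such relations are unchanged. Concretely, as $\xi$ runs over the standard basis of $\mathbb{R}^{n+1}$ the identities $\sum_i\langle\xi,e_i\rangle x_i=0$ hold in $H^2(E)$ because the $e_i$ are the inward facet normals and the $D_i$ the corresponding toric divisors; the first $n$ basis vectors return the linear relations of $B$ (using $e_i=(b_i,-n_i)$ and $e_f=(0,\dots,0,1)$), and the last one gives $x_f=\sum_i n_i x_i$, which is also the classical identity $\mathrm{PD}[D_{r+1}]=\pi^*c_1(E)=\sum_i n_i\,\pi^*\mathrm{PD}[D_i^B]$ for the zero section.

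For item (3), fix a primitive collection $I=I^B$ and its edge relation $\sum_{p\in I}e_{i_p}=\sum_q c_q e_{j_q}+c_f e_f$, and rewrite it so that every edge occurs with a non-negative coefficient on whichever side it appears (if $c_f<0$ move $|c_f|e_f$ to the left; the two sign cases are symmetric). As in the McDuff--Tolman dictionary recalled in Section~\ref{Subsection Review of the McDuff-Tolman proof of the presentation of QH}, this edge relation is the shadow of a relation among the Hamiltonians $H_i$ up to an additive constant, hence of a relation among the rotations $g_i$ in $\pi_1\mathrm{Ham}_{\ell\geq 0}(E)$, of the shape $\prod g^{\,(\text{nonneg powers})}=\prod g^{\,(\text{nonneg powers})}$. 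Here I would invoke Corollary~\ref{Corollary Hams in toric case are OK}, that each $H_i$ has the non-negative Reeb-invariant slope form $f_i(y)R$ to which the extended maximum principle (Theorem~\ref{Theorem maximum principle}) applies, so that all the rotations entering the relation genuinely lie in $\mathrm{Ham}_{\ell\geq 0}(E)$ and the representation $r$ of Theorem~\ref{Theorem Representation for lager class of Hams} is defined on them; this is exactly the content of Lemma~\ref{Lemma Quantum SR-relations involve positive slope Hamiltonians}.

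Next, choose the lifts $g_i^{\wedge}$ of Lemma~\ref{Lemma R of gi standard lift}. The relation among the $g_i$ lifts to the corresponding relation among the $g_i^{\wedge}$ in $\widetilde{\pi_1}\mathrm{Ham}(E)$ only up to a single deck transformation in $\Gamma=\pi_2(E)/\pi_2(E)_0$, which on $QH^*(E)$ acts by multiplication by a power of the Novikov variable (Section~\ref{Subsection The problem with relating the lifted rotations}). Applying $r$, which is multiplicative with $r_{\widetilde g}$ equal to quantum product by $r_{\widetilde g}(1)$ (Theorem~\ref{Theorem Representation of Ham on QH}), and using $r_{g_i^{\wedge}}(1)=x_i$ from Lemma~\ref{Lemma R of gi standard lift}, converts the rotation relation into a monomial identity $\prod x^{\,(\text{nonneg powers})}=T^{d}\prod x^{\,(\text{nonneg powers})}$ for some $d\in\mathbb{Z}$. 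Since $E$ is monotone, $QH^*(E)$ is $\mathbb{Z}$-graded over the graded Novikov ring with $|x_i|=2$ and $|T|=2$, so $d$ is forced by degree balance; a short arithmetic check using $c_f=-\sum_{p\in I}n_{i_p}+\sum_q c_q n_{j_q}$ identifies $d$ with $|I^B|-\sum c_q-c_f$ (indeed $d=c_1(TE)(\beta_I)$, cf.\ \eqref{EqnPowerOfT}), and moving the $x_f$-power back to the right-hand side produces exactly the relation claimed in item (3).

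The only step with real content — everything else is bookkeeping — is ensuring that every Hamiltonian entering these relations is of the admissible form $f(y)R$ with $f\geq 0$ Reeb-invariant, so that the extended maximum principle and with it the representation $r$ of Theorem~\ref{Theorem Representation for lager class of Hams} are available; this is where negativity of the line bundle is genuinely used, via the explicit formulas of Theorem~\ref{Theorem Hamiltonians for rotations about divisors} and Corollary~\ref{Corollary Hams in toric case are OK}, together with the rearrangement making all edge-coefficients non-negative. Once that is in place the Novikov exponent is free, being pinned down by the grading exactly as in Lemma~\ref{Lemma relations in SH}. (Alternatively, one may observe that by the results of this section $E$ satisfies every clause of Definition~\ref{Definition admissible toric manifolds}, so Theorem~\ref{Theorem admissible toric manifolds} applies verbatim and already contains the present statement as its ``relations hold'' half.)
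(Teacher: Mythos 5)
Your handling of the linear relations, the choice of lifts $g_i^{\wedge}$, the lifting ambiguity, and the determination of the Novikov exponent $d$ by degree balance (exactly the mechanism of Lemma~\ref{Lemma relations in SH}) is all correct and matches the paper's approach.

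The gap is in the quantum SR-relations, and it is real. You write: ``rewrite it so that every edge occurs with a non-negative coefficient on whichever side it appears (if $c_f<0$ move $|c_f|e_f$ to the left; the two sign cases are symmetric)'', and later ``moving the $x_f$-power back to the right-hand side produces exactly the relation claimed in item~(3).'' The two cases are \emph{not} symmetric. If $c_f<0$, your argument produces the identity $\prod_p x_{i_p}\cdot x_f^{|c_f|}=T^d\prod_q x_{j_q}^{c_q}$ in $QH^*(E)$, and you cannot ``move $x_f^{|c_f|}$ back'' because $x_f=\pi^*c_1(E)$ is \emph{not} invertible in $QH^*(E)$ (it only becomes invertible after localizing, i.e.\ in $SH^*(E)$; see Example~\ref{Example failure of QH=Jac} and Theorem~\ref{Introduction theorem SH of E is QH modulo ker}). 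So in the $c_f<0$ branch you would prove a relation, but not the one stated in Corollary~\ref{Corollary SR relns of E in terms of B}~(3), and the theorem would not follow. In fact, for the formula $x_f^{c_f}$ in Corollary~\ref{Corollary SR relns of E in terms of B}~(3) to even make sense as a polynomial relation, one needs $c_f\geq 0$, and this is a nontrivial assertion.

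You also misattribute the content of Lemma~\ref{Lemma Quantum SR-relations involve positive slope Hamiltonians}: you describe it as saying ``each $H_i$ has the non-negative Reeb-invariant slope form,'' but that is Corollary~\ref{Corollary Hams in toric case are OK}. The actual content of Lemma~\ref{Lemma Quantum SR-relations involve positive slope Hamiltonians} is precisely the missing positivity: the computation $c_f=-c_1(E)(\beta_I)=k\,\omega_B(\beta_I)>0$, using Batyrev's result $\omega_B(\beta_I)>0$ for primitive $\beta_I$. This is the genuinely geometric input (ultimately from negativity of the line bundle and Batyrev's combinatorics) that your proposal is missing; once it is in hand, the $c_f<0$ case never arises and the rest of your argument goes through exactly as you describe, recovering the paper's proof.

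On your final aside: observing that $E$ is admissible and citing Theorem~\ref{Theorem admissible toric manifolds} is not an independent route, since that theorem's proof itself invokes Lemma~\ref{Lemma Quantum SR-relations involve positive slope Hamiltonians} (``these relations only involve positive slope Hamiltonians''); without the positivity of $c_f$ you would just be going in a circle.
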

\begin{proof}
 The linear relations from $B$ clearly hold also in $H^*(E)$ since $H^*(E)\cong H^*(B)$. The new linear relation $x_f=\sum n_i x_i$ holds because the toric divisor $D_{r+1}=[B]$ as an lf-cycle is Poincar\'e dual to $\pi^*c_1(E)$ which in turn is Poincar\'e dual to $\pi^*(\sum n_i [D_i^B])$.

 The quantum SR-relations hold in $SH^*(E)$ by Lemma \ref{Lemma relations in SH}. The fact that they hold in $QH^*(E)$ follows from using the representation 
$
r: \widetilde{\pi_1}\mathrm{Ham}_{\ell \geq 0}(E,\omega) \to QH^*(E)
$
and Theorem \ref{Theorem Representation into SH for new Hamiltonians}, and using the following Lemma which ensures that only Hamiltonians of positive slope are involved in the quantum SR-relations.
\end{proof}

\begin{lemma}\label{Lemma Quantum SR-relations involve positive slope Hamiltonians}
 The quantum SR-relations for $E$ involve only positive slope Hamiltonians.
\end{lemma}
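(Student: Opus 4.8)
The plan is to show that each quantum SR-relation of $E$ is a relation --- in the enlarged group $\mathrm{Ham}_{\ell>0}(E)$ --- between two toric rotations, each of which is generated by a Hamiltonian of the form $f_t(y)R$ with $f_t>0$ and $f_t$ invariant under the Reeb flow. Theorem \ref{Theorem Representation into SH for new Hamiltonians} then applies to both rotations, which is exactly what is needed in the proof of Theorem \ref{Theorem relations in QH for Fano toric NLB} in order to push the group relation through the representation $r$ of \cite{Ritter4} into $QH^*(E)$. By the Corollary above, a quantum SR-relation of $E$ comes from a primitive collection $I^B=\{i_1,\dots,i_a\}$ of $B$ together with the edge relation $\sum_{p}e_{i_p}=\sum_q c_q e_{j_q}+c_f e_f$ (where the $c_q$ are positive integers and $c_f=-\sum_p n_{i_p}+\sum_q c_q n_{j_q}$); correspondingly it comes from the relation of toric rotations $g_{i_1}\cdots g_{i_a}=g_f^{c_f}g_{j_1}^{c_1}\cdots g_{j_b}^{c_b}$ and, at the level of generating Hamiltonians (Theorem \ref{Theorem Hamiltonians for rotations about divisors}), from $H_{i_1}+\cdots+H_{i_a}=c_f H_f+c_1 H_{j_1}+\cdots+c_b H_{j_b}+\mathrm{const}$, with $H_f=H_{r+1}$.

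The crucial step is to prove $c_f>0$. Pairing the edge relation against the Batyrev curve class $\beta_I^B\in H_2(B;\Z)$ of $B$ associated with $I^B$ --- characterised by $\beta_I^B\cdot D_{i_p}^B=1$, $\beta_I^B\cdot D_{j_q}^B=-c_q$ and $\beta_I^B\cdot D_\ell^B=0$ otherwise --- and using $c_1(E)=\sum_i n_i\,\mathrm{PD}[D_i^B]$, the formula for $c_f$ in the Corollary turns into
$$c_f=-c_1(E)(\beta_I^B)=k\,[\omega_B](\beta_I^B)=k\,\omega_B(\beta_I^B),$$
since $c_1(E)=-k[\omega_B]$. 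By Batyrev \cite{Batyrev}, $\omega_B(\beta_I^B)>0$ --- indeed $\ge 1$, as $[\omega_B]$ is integral and primitive --- so $c_f\ge k\ge 1$. (Equivalently, $-c_f$ is the intersection number of the lifted class $\beta_I^E\in H_2(E;\Z)$ with the zero section $D_{r+1}=[B]$, and $\mathrm{PD}[B]=\pi^*c_1(E)=-k\pi^*[\omega_B]$ gives the same conclusion; or: comparing the slopes of the two sides of the Hamiltonian relation forces $c_f=k\,[\omega_B](\beta_I^B)$.)

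It then remains to read off the slopes at infinity from Theorem \ref{Theorem Hamiltonians for rotations about divisors}. For $i\le r$, $H_i=(1+k\pi)R\cdot\pi^*H_i^B$ has slope $f_i(y)=(1+k\pi)\,\pi^*H_i^B(1,y)\ge 0$, vanishing exactly over $D_i^B=(H_i^B)^{-1}(0)$, and these $f_i$ are Reeb-invariant, depending only on the base point (as already noted in Corollary \ref{Corollary Hams in toric case are OK}). Hence the left-hand Hamiltonian $H_{i_1}+\cdots+H_{i_a}$ has slope $(1+k\pi)\sum_p\pi^*H_{i_p}^B(1,y)$, which is strictly positive everywhere: by primitivity of $I^B$ (Definition \ref{Definition Primitive}) the intersection $D_{i_1}^B\cap\cdots\cap D_{i_a}^B$ is empty, so the non-negative functions $H_{i_p}^B$ never vanish simultaneously. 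For the right-hand side, $H_f=H_{r+1}=\tfrac{1+k\pi}{k}R-\tfrac1k$ has constant slope $\tfrac{1+k\pi}{k}>0$, so $c_f H_f+\sum_q c_q H_{j_q}$ has slope $c_f\,\tfrac{1+k\pi}{k}+(1+k\pi)\sum_q c_q\,\pi^*H_{j_q}^B(1,y)>0$, because $c_f>0$ and the remaining terms are $\ge 0$ (this equals the left-hand slope, the two Hamiltonians differing by the slope-$0$ constant). Thus both rotations are generated by admissible Hamiltonians $f_t(y)R$ with $f_t>0$ Reeb-invariant, Theorem \ref{Theorem Representation into SH for new Hamiltonians} applies, and the Lemma follows.

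I expect the only genuinely substantive point to be the identity $c_f=k\,\omega_B(\beta_I^B)$ and hence $c_f>0$; everything else is bookkeeping with the explicit Hamiltonians of Theorem \ref{Theorem Hamiltonians for rotations about divisors} and the definition of a primitive collection. The one thing to be careful about is the sign convention in the correspondence (relations among edges) $\leftrightarrow H_2$ --- the coefficient of $e_\ell$ in the edge relation, put on one side, equals $\beta\cdot D_\ell$ --- which is what pins down $c_f=-c_1(E)(\beta_I^B)$ rather than $+c_1(E)(\beta_I^B)$.
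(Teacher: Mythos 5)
Your proposal is correct and runs on the same engine as the paper's own proof: the substantive step in both is the identity $c_f=-c_1(E)(\beta_I)=k\,\omega_B(\beta_I)>0$, with positivity coming from Batyrev's $\omega_B(\beta_I)>0$. The only difference is cosmetic bookkeeping: the paper derives $c_f=-c_1(E)(\beta_I)$ by evaluating $c_1(TE)(\beta_I)$ on both sides and splitting $TE=\pi^*TB\oplus\pi^*E$, whereas you pair the coefficient formula $c_f=-\sum_p n_{i_p}+\sum_q c_q n_{j_q}$ directly against $\beta_I^B$. A small point in your favour is that you make explicit the verification that the left-hand Hamiltonian $\sum_p H_{i_p}$ has \emph{strictly} positive slope at infinity (using that primitivity of $I^B$ forces $D_{i_1}^B\cap\cdots\cap D_{i_a}^B=\emptyset$), which the paper glosses over with the somewhat loose one-line claim that ``any monomial involving non-negative powers of the rotations $\ldots$ will be generated by a Hamiltonian of positive slope''; strictly speaking only non-negativity of the slope is needed for $r$ on $\widetilde{\pi_1}\mathrm{Ham}_{\ell\geq 0}$ to apply, but your sharper statement matches the lemma's wording better.
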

\begin{proof}
 By Corollary \ref{Corollary Hams in toric case are OK}, any monomial involving non-negative powers of the rotations $g_1^{\wedge}$, $\ldots$, $g_r^{\wedge}$, $g_f^{\wedge}$ will be generated by a Hamiltonian of positive slope. The quantum SR-relations in $QH^*(E)$ arise from comparing the values of the homomorphism $r: \widetilde{\pi_1}\mathrm{Ham}_{\ell \geq 0}(E,\omega) \to QH^*(E)$ (Theorem \ref{Theorem Representation into SH for new Hamiltonians}) on the rotations $\prod_{p\in I^B} g_{i_p}^{\wedge}$ and $(g_f^{\wedge})^{c_f} \cdot \prod_{q} (g_{j_q}^{\wedge})^{c_q}$. The values are well-defined because all the powers in those monomials are non-negative: $c_{q}$ are positive integers by definition, and we now prove $c_f\geq 0$.

Applying the formula for $c_1(TM)(\beta_I)$ explained at the end of Section \ref{Subsection Review of the Batyrev-Givental presentation of QH}:
$$
c_1(TE)(\beta_I) = |I^B|-\sum c_q -c_f = c_1(TB)(\beta_I) - c_f.
$$
But $c_1(TE) = \pi^*c_1(TB) + \pi^*c_1(E)$, by splitting $TE$ into horizontal and vertical spaces, so 
$$c_f = -c_1(E)(\beta_I) = k \omega_B(\beta_I)>0,$$ 
using Batyrev's result: $\omega_B(\beta_I)>0$. \emph{(Remark: we identify $\beta_I\in H_2(E)\cong H_2(B)$ calculated in $E$ and $B$ as it is prescribed by the same intersection products: $\beta_I \cdot D_{i_p}=1$, $\beta_I\cdot D_{j_q} = -c_{q}$.)} 
\end{proof}

%%%%%%%%%%%%%%%%%%%%%%%%%%%%%%%%%%%
%%%%%%%%%%%%%%%%%%%%%%%%%%%%%%%%%%%
\subsection{Presentation of $\mathbf{QH^*(E)}$ and $\mathbf{SH^*(E)}$}
\label{Subsection Presentation of QHE SHE for monotone toric nlb}
%%%%%%%%%%%%%%%%%%%%%%%%%%%%%%%%%%%
%%%%%%%%%%%%%%%%%%%%%%%%%%%%%%%%%%%
%
\begin{theorem}\label{Theorem presentation of QH and SH for Fano toric NLB}
Let $E\to B$ be a monotone toric negative line bundle. Then the presentation of the quantum cohomology of $E$ can be recovered from the presentation for $B$:
$$
\boxed{
\begin{array}{l}
QH^*(B) \cong \Lambda [x_1,\ldots,x_r] / (\textrm{linear rel'ns in }B, \textrm{quantum SR-rel'ns }\prod x_{i_p} = T^{|I^B|-\sum c_q} \cdot \prod x_{j_q}^{c_q})
\\
QH^*(E) \cong \Lambda [x_1,\ldots,x_r] / (\textrm{linear rel'ns in }B,\, \textrm{and}\,\prod x_{i_p} = T^{|I^B|-\sum c_q - c_f} \cdot (\sum n_i x_i)^{c_f} \cdot \prod x_{j_q}^{c_q})
\end{array}
}
$$
running over primitive relations $\sum b_{i_p} = \sum c_{q} b_{j_q}$ in $B$, $i_p\in I^B$, and $c_f = -\sum n_{i_p} + \sum c_q \cdot n_{j_q}.$

Moreover, the symplectic cohomology is the localization of the above ring at $x_f=\sum n_i x_i$,
$$
\boxed{
SH^*(E) \cong \Lambda [x_1,\ldots,x_r,z] / (\textrm{linear relations in }B, \textrm{quantum SR-relations}, z\cdot {\textstyle\sum} n_i x_i - 1)
}
$$
and the canonical map $c^*: QH^*(E) \to SH^*(E)$ is the canonical localization map sending $x_i \mapsto x_i$ (recall this induces the isomorphism in Theorem \ref{Theorem SH of negative line bundle}).
\end{theorem}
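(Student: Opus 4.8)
The plan is to assemble the statement from results already established. The presentation of $QH^*(B)$ is the classical Batyrev--Givental presentation in the McDuff--Tolman form reviewed in Sections \ref{Subsection Review of the Batyrev-Givental presentation of QH}--\ref{Subsection Review of the McDuff-Tolman proof of the presentation of QH}, so the content is the presentation of $QH^*(E)$ and the localization statement for $SH^*(E)$. For $QH^*(E)$ I would combine two ingredients. First, Theorem \ref{Theorem relations in QH for Fano toric NLB} already shows that the $B$-linear relations, the new linear relation $x_f=\sum n_i x_i$, and the quantum SR-relations of Corollary \ref{Corollary SR relns of E in terms of B} all hold in $QH^*(E)$ (this uses Corollary \ref{Corollary Hams in toric case are OK} and Lemma \ref{Lemma Quantum SR-relations involve positive slope Hamiltonians} to guarantee that the representation $r$ is defined on every monomial in the rotations $g_i^{\wedge}$ that occurs). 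Second, I would invoke the McDuff--Tolman algebra trick, Lemma \ref{Lemma McDuffTolman Algebra trick}, to conclude there are no further relations; this trick is purely $t$-adic and, as noted where it is stated, applies unchanged in the non-compact monotone setting once $QH^*(E)$ is constructed.

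Concretely, I would run Lemma \ref{Lemma McDuffTolman Algebra trick} with the classical map $\varphi\colon\Lambda[x_1,\ldots,x_{r+1}]\to H^*(E)$, $x_i\mapsto\mathrm{PD}[D_i]$. Since $E$ deformation retracts onto its zero section, $H^*(E)\cong\pi^*H^*(B)$; the extra class satisfies $x_f=\mathrm{PD}[B]=\pi^*c_1(E)=\sum n_i x_i$, and (by the fan computation preceding Corollary \ref{Corollary SR relns of E in terms of B}) the primitive collections of $E$ are exactly those of $B$. Hence $\varphi$ is surjective and $\ker\varphi$ is generated by the $B$-linear relations, the relation $x_f-\sum n_i x_i$, and the classical SR-relations $\prod_{p\in I^B}x_{i_p}$. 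Their lifts $P_j=p_j+q_j\in\ker\psi$ (with $\psi$ the quantum-multiplication map of Lemma \ref{Lemma McDuffTolman Algebra trick}) are the linear relations themselves ($q_j=0$) together with the quantum SR-relations of Corollary \ref{Corollary SR relns of E in terms of B}, for which $q_j=-T^{|I^B|-\sum c_q-c_f}x_f^{c_f}\prod x_{j_q}^{c_q}$; here $c_f>0$ by Lemma \ref{Lemma Quantum SR-relations involve positive slope Hamiltonians}, and $\mathrm{val}_t(q_j)>0$ because the $T$-exponent equals $c_1(TE)(\beta_I)=\lambda_E[\omega_E](\beta_I)=\lambda_E[\omega_B](\beta_I)>0$ by monotonicity of $E$ and Batyrev's inequality $[\omega_B](\beta_I)>0$. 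Lemma \ref{Lemma McDuffTolman Algebra trick} then gives $QH^*(E)\cong\Lambda[x_1,\ldots,x_{r+1}]/\langle P_j\rangle$, and eliminating $x_f$ through $x_f=\sum n_i x_i$ yields the first boxed presentation over $x_1,\ldots,x_r$.

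For the symplectic cohomology and the map $c^*$, I would start from Theorem \ref{Theorem SH of negative line bundle}: $SH^*(E)\cong QH^*(E)/(\text{generalized }0\text{-eigenspace of }\pi^*c_1(E))$. By Corollary \ref{Corollary SH is a localization of QH} this quotient is the localization $QH^*(E)_{\pi^*c_1(E)}$, and $c^*$ is the canonical localization map. Since $\pi^*c_1(E)=\sum n_i x_i$, the localization lemma $R_f\cong R[z]/(1-fz)$ applied with $R=QH^*(E)$ and $f=\sum n_i x_i$ produces exactly the second boxed presentation $\Lambda[x_1,\ldots,x_r,z]/\langle\text{linear relations in }B,\ \text{quantum SR-relations},\ z\sum n_i x_i-1\rangle$, with $c^*$ the evident map $x_i\mapsto x_i$; this also recovers the isomorphism asserted in Theorem \ref{Theorem SH of negative line bundle}.

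The main obstacle I anticipate is the careful verification of the hypotheses of Lemma \ref{Lemma McDuffTolman Algebra trick} in the non-compact case: identifying $\ker\varphi$, i.e. deriving the classical presentation $H^*(E)\cong\Lambda[x_1,\ldots,x_{r+1}]/(\text{linear},\text{classical SR})$ from $H^*(E)\cong H^*(B)$ and the fan combinatorics of Corollary \ref{Corollary SR relns of E in terms of B}, and checking the strict positivity of the $t$-valuation of every quantum correction --- precisely the point where monotonicity of $E$ and Batyrev's positivity are used. Everything else is an invocation of \cite{Ritter4} (Theorem \ref{Theorem SH of negative line bundle}), of the localization formalism of Section \ref{Subsection Some remarks about localizations of rings}, or of the combinatorics already recorded in Corollary \ref{Corollary SR relns of E in terms of B}.
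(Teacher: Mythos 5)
Your proposal is correct and follows the paper's own route precisely: the presentation of $QH^*(E)$ is obtained by combining Theorem \ref{Theorem relations in QH for Fano toric NLB} (which supplies the relations) with the McDuff--Tolman algebra trick, Lemma \ref{Lemma McDuffTolman Algebra trick} (which shows there are no others), and the statement for $SH^*(E)$ then follows from Corollary \ref{Corollary SH is a localization of QH} together with the localization lemma $R_f\cong R[z]/(1-fz)$. You have simply filled in details that the paper's terse proof leaves implicit, namely identifying $\ker\varphi$ via the deformation retraction $H^*(E)\cong H^*(B)$ and the coincidence of primitive collections, and verifying $\mathrm{val}_t(q_j)>0$ via $c_1(TE)(\beta_I)=\lambda_E[\omega_B](\beta_I)>0$.
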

\begin{proof}
 The computation of $QH^*(E)$ follows from Theorem \ref{Theorem relations in QH for Fano toric NLB} and Lemma \ref{Lemma McDuffTolman Algebra trick}. The compuation of $SH^*(E)$ then follows, using Corollary \ref{Corollary SH is a localization of QH}.
% OLD: Explicit proof that SH is localization at c1
% The second claim part of the second claim is the Theorem from \cite{Ritter4}. The final part follows by linear algebra: the role of the new variable $z$ is to invert the variable $x_f = \sum n_i x_i$ corresponding to the fibre edge $e_{r+1}$ which represents $\pi^*c_1(E)$. Indeed, abbreviating $\mathcal{R}=\pi^*c_1(E)=\sum n_i x_i$, if $\mathcal{R}^N\cdot \alpha$ lies in the ideal $\mathcal{J}$ generated by the linear relations and quantum SR-relations, then $z^N\cdot\mathcal{R}^N\cdot \alpha=\alpha$ is in the ideal $\K[x_1,\ldots,x_r,z]\cdot \mathcal{J}$. This shows that the map $x_i \mapsto x_i$ between the presentations of $QH^*,SH^*$ is well-defined, and to prove surjectivity we now show that $z$ is in the image. In the generalized eigensummand of $\pi^*c_1(E)$ acting on $QH^*(E)$ for eigenvalues $\lambda\neq 0$, the corresponding summand $\mathcal{R}_{\lambda}$ of $\mathcal{R}$ is invertible, so $c$ is the image of $\sum_{\lambda\neq 0} \mathcal{R}_{\lambda}^{-1} \in QH^*(E)$. From this the final claim easily follows.
\end{proof}
%
%%%%%%%%%%%%%%%%%%%%%%%%%%%%%%%%%%%
%%%%%%%%%%%%%%%%%%%%%%%%%%%%%%%%%%%
\subsection{Examples: computation of $\mathbf{QH^*}$ and $\mathbf{SH^*}$ for $\mathbf{\mathcal{O}_{\P^m}(-k)}$ and $\mathbf{\mathcal{O}_{\P^1\times \P^1}(-1,-1)}$}
\label{Subsection Compuation of QH and SH of O(-k)}
\label{Subsection Compuation of QH and SH of O(-1,-1)}
%%%%%%%%%%%%%%%%%%%%%%%%%%%%%%%%%%%
%%%%%%%%%%%%%%%%%%%%%%%%%%%%%%%%%%%

\begin{corollary}
Let $E=\mathcal{O}_{\P^m}(-k)$ be monotone (meaning: $1\leq k \leq m$). Then
$$
\begin{array}{rcl}
 QH^*(E) &\cong & \Lambda[x]/(x^{1+m}-T^{1+m-k}(-kx)^k)\\
 SH^*(E) &\cong & \Lambda[x]/(x^{1+m-k}-T^{1+m-k}(-k)^k).
\end{array}
$$
\end{corollary}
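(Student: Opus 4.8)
The plan is to specialize Theorem~\ref{Theorem presentation of QH and SH for Fano toric NLB} to the base $B=\P^m$. First I would recall the toric data of $\P^m$: its fan has edges $b_1=(1,0,\ldots,0),\ldots,b_m=(0,\ldots,0,1)$ together with $b_{m+1}=(-1,\ldots,-1)$; the linear relations identify $x_1=\cdots=x_{m+1}=x$; and there is a single primitive collection $I^B=\{1,\ldots,m+1\}$, whose relation among edges is $b_1+\cdots+b_{m+1}=0$, so the right-hand side is empty ($\sum c_q=0$, no indices $j_q$). Hence $|I^B|-\sum c_q=m+1$ and the Batyrev--Givental presentation is $QH^*(\P^m)\cong\Lambda[x]/(x^{m+1}-T^{m+1})$, consistent with $QH^*(\P^m)=\Lambda[x]/(x^{m+1}-t)$ under $t=T^{m+1}$.

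Next I would pin down the line-bundle data needed to apply the theorem. Writing $E=\mathcal{O}(\sum n_iD_i^B)$ with $\sum n_i\,\mathrm{PD}[D_i^B]=c_1(E)=-k\,H$, and using that each $\mathrm{PD}[D_i^B]$ is the hyperplane class $H$, forces $\sum_{i=1}^{m+1}n_i=-k$. Since the unique primitive collection is $I^B=\{1,\ldots,m+1\}$ with no $j_q$-terms, the exponent $c_f$ of Theorem~\ref{Theorem presentation of QH and SH for Fano toric NLB} collapses to $c_f=-\sum_{p\in I^B}n_{i_p}=-(n_1+\cdots+n_{m+1})=k$, and $\sum n_ix_i=(\sum n_i)\,x=-kx$ once the linear relations of $B$ are imposed. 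Substituting into the boxed presentation of $QH^*(E)$ gives the single relation
$$x^{m+1}=T^{\,m+1-0-k}\,(-kx)^{k}=T^{m+1-k}(-kx)^{k},$$
that is, $QH^*(E)\cong\Lambda[x]/(x^{m+1}-T^{m+1-k}(-kx)^k)$, which is the first claim.

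For the second claim I would carry out the localization prescribed by Theorem~\ref{Theorem presentation of QH and SH for Fano toric NLB}: $SH^*(E)$ is $QH^*(E)$ localized at $x_f=\sum n_ix_i=-kx$. Since $\mathrm{char}\,\K=0$ (or merely $k\neq0$ in $\K$), the scalar $-k$ is a unit in $\Lambda$, so localizing at $-kx$ agrees with localizing at $x$. In the localized ring $x$ is invertible, so rewriting the relation as $x^k\big(x^{m+1-k}-T^{m+1-k}(-k)^k\big)=0$ and cancelling $x^k$ yields $x^{m+1-k}=T^{m+1-k}(-k)^k$; conversely this relation implies the original one. As its right-hand side is a unit of $\Lambda$, $x$ is automatically invertible, so the formal inverse variable may be discarded, giving $SH^*(E)\cong\Lambda[x]/(x^{m+1-k}-T^{m+1-k}(-k)^k)$. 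That $c^*$ is $x\mapsto x$ is inherited from Theorem~\ref{Theorem presentation of QH and SH for Fano toric NLB} (equivalently Corollary~\ref{Corollary SH is a localization of QH}).

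I do not anticipate a genuine obstacle here: the result is a direct specialization of the boxed presentations, and it extends Theorem~\ref{Theorem SH of negative line bundle} from the range $1\le k\le m/2$ to the full monotone range $1\le k\le m$. The only points needing a little care are (i) verifying that $\P^m$ has exactly one primitive collection and that its right-hand side among edges is trivial, so $c_f$ reduces cleanly to $k$; and (ii) the final simplification of the localization --- namely that localizing at $-kx$ equals localizing at $x$, and that after cancelling $x^k$ the variable $x$ remains (indeed becomes manifestly) invertible, so no auxiliary variable is needed.
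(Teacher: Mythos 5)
Your proof is correct and follows essentially the same route as the paper: specialize the boxed presentation of Theorem~\ref{Theorem presentation of QH and SH for Fano toric NLB} to $B=\P^m$ with its single primitive relation $b_1+\cdots+b_{m+1}=0$, compute $c_f=k$ and $\sum n_ix_i=-kx$, and then localize at $x$. The paper simply fixes the concrete representative $E=\mathcal{O}(-kD_{m+1})$ (so $n_{m+1}=-k$, other $n_i=0$) rather than keeping an arbitrary $\sum n_i=-k$, but the computation and conclusion are identical.
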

\begin{remark} This recovers, for $1\leq k\leq m/2$, the computation from \cite{Ritter4} (see Theorem \ref{Theorem SH of negative line bundle}) which was a rather difficult virtual localization computation -- that approach was computationally unwieldy for $m/2< k \leq m$. So for $m/2< k\leq m$ the above result is new.
\end{remark}
\begin{proof}
The standard presentation of $QH^*(\P^m)$ involves variables $x_1,\ldots,x_{m+1}$; the linear relations impose that all $x_j$ are equal, call this variable $x$ (which represents $\omega_{\P^m}$); and the quantum SR-relation is $x_1 \cdots x_{m+1}=T^{m+1}$. Thus $QH^*(\P^m)=\Lambda[x]/(x^{1+m}-T^{1+m})$.

Representing $\mathcal{O}(-k)$ as $E=\mathcal{O}(-kD_{m+1})$, apply Theorem \ref{Theorem presentation of QH and SH for Fano toric NLB}. The quantum SR-relation in $E$ becomes $x_1 \cdots x_{m+1}=T^{1+m-k}(-kx_{m+1})^k$ since $c_f=k$. The claim follows. 
%Alternatively, using Theorem \ref{Theorem Minimal poly and char poly from B to E}: 
%the minimal polynomial for $x=\omega_{\P^m}$ is $x^{1+m}-t_B$ in $B$, hence it becomes $x^{1+m}-t_E^k (-kx)^k$ in $E$, thus $QH^*(E)=\Lambda[x]/(x^{1+m}-t^k (-kx)^k)$.
\end{proof}

\begin{corollary}
Let $E=\mathrm{Tot}(\mathcal{O}(-1,1)\to \P^1\times \P^1)$, that is: $B=\P^1\times\P^1$ with $\omega_B=\omega_1+\omega_2$, and $c_1(E)=-(\omega_1 + \omega_2)$ (where $\omega_j=\pi_j^*\omega_{\P^1}$ via the two projections $\pi_j:B\to \P^1$). Then 
$$
\begin{array}{rcl}
 QH^*(B) & \cong & \Lambda[x_1,x_2]/(x_1^2-t, x_2^2-t)\\
 QH^*(E) &\cong & \Lambda[x_1,x_2]/(x_1^2+t(x_1+x_2), x_2^2+t(x_1+x_2))\\
 SH^*(E) &\cong & \Lambda[x_1,x_2]/(x_1^2-4t^2, x_2^2-4t^2,x_1+x_2+4t) \cong \Lambda.
\end{array}
$$
where the $x_j=\omega_j$, and the last isomorphism\footnote{This assumes $\mathrm{char}\,\mathbb{K} = 0$. It follows from $0=(x_1^2-4t^2)-(x_2^2-4t^2)=(x_1-x_2)(x_1+x_2)$ and using $x_1+x_2=-4t$.}
 sends $x_j \mapsto -2t$. In particular, using the convention that $T=t^{1/\lambda_M}$ lies in grading $2$ (Section \ref{Subsection Novikov ring}), $\omega_B=x_1+x_2$ has:
\begin{itemize}

\item minimal polynomial $X^3-4tX=X(X-2T)(X+2T)$ and characteristic polynomial $X^4-4tX^2=X^2(X-2T)(X+2T)$ in $QH^*(B)$,

\item minimal polynomial  $X^3+4tX^2=X^2(X+4T)$ and characteristic polynomial\\ $X^4+4t X^3=X^3(X+4T)$ in $QH^*(E)$,

\item minimal and characteristic polynomials both equal to $X+4T$ in $SH^*(E)$.
\end{itemize}
\end{corollary}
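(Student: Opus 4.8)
The plan is to apply Theorem~\ref{Theorem presentation of QH and SH for Fano toric NLB} to the base $B=\mathbb{P}^1\times\mathbb{P}^1$, which builds $QH^*(E)$ and $SH^*(E)$ out of the Batyrev presentation of $QH^*(B)$, and then to read off the minimal and characteristic polynomials of $[\omega_B]=x_1+x_2$ in each of the three rings by an elementary calculation with the defining relations. Everything reduces to linear algebra over the field $\Lambda$, and the only real care needed is with the Novikov exponents.

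First I would record $QH^*(B)$. Since $B$ is closed Fano, Section~\ref{Subsection Review of the Batyrev-Givental presentation of QH} applies verbatim: the fan has edges $b_1=(1,0)$, $b_2=(-1,0)$, $b_3=(0,1)$, $b_4=(0,-1)$, the two primitive collections are $\{1,2\}$ and $\{3,4\}$, the linear relations identify $y_1=y_2$ and $y_3=y_4$, and since $\lambda_B=2$ the quantum Stanley--Reisner relations are $y_1y_2=T_B^2=t$ and $y_3y_4=t$. Setting $x_1=y_1=y_2$ and $x_2=y_3=y_4$ (these represent $\omega_1$ and $\omega_2$) gives $QH^*(B)\cong\Lambda[x_1,x_2]/(x_1^2-t,\,x_2^2-t)=QH^*(\mathbb{P}^1)\otimes_\Lambda QH^*(\mathbb{P}^1)$, as asserted.

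Next I would treat $E$. Here $c_1(E)=-(\omega_1+\omega_2)=-[\omega_B]$, so $k=1$ and $\lambda_E=\lambda_B-k=1$; in particular the degree-two Novikov variable of $E$ satisfies $T_E=t^{1/\lambda_E}=t$. Writing $c_1(E)=\sum n_i\,\mathrm{PD}[D_i^B]$ with $n_1=n_3=-1$ and $n_2=n_4=0$, Theorem~\ref{Theorem presentation of QH and SH for Fano toric NLB} produces the new linear relation $x_f=\sum n_i x_i=-(x_1+x_2)$, and for each primitive collection $I^B\in\{\{1,2\},\{3,4\}\}$ one computes $c_f=-\sum_{p\in I^B}n_{i_p}=1$, so the deformed Stanley--Reisner relation is $x_j^2=T_E^{|I^B|-\sum c_q-c_f}\,x_f^{c_f}=T_E\,x_f=-t(x_1+x_2)$. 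This yields $QH^*(E)\cong\Lambda[x_1,x_2]/(x_1^2+t(x_1+x_2),\,x_2^2+t(x_1+x_2))$; subtracting the two relations gives $(x_1-x_2)(x_1+x_2)=0$, and the ring has rank $4$ as it must. By the same theorem together with Corollary~\ref{Corollary SH is a localization of QH}, $SH^*(E)$ is the localization of $QH^*(E)$ at $x_f=-(x_1+x_2)$: adjoining $z$ with $z\cdot\sum n_i x_i=1$, i.e. $z(x_1+x_2)=-1$, makes $x_1+x_2$ a unit, so $(x_1-x_2)(x_1+x_2)=0$ forces $x_1=x_2$, and then $x_1^2+2tx_1=0$ with $x_1$ a unit forces $x_1+2t=0$; reorganising the generators gives the displayed presentation $\Lambda[x_1,x_2]/(x_1^2-4t^2,\,x_2^2-4t^2,\,x_1+x_2+4t)$ of $SH^*(E)$.

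Finally I would compute powers of $s:=x_1+x_2=[\omega_B]=[\omega_E]$. In $QH^*(B)$: $s^2=2t+2x_1x_2$ and $s^3=2ts+2(x_1+x_2)x_1x_2=2ts+2(tx_1+tx_2)=4ts$, and since $1,s,s^2$ are $\Lambda$-independent in the rank-$4$ ring, the minimal polynomial of $s$ is $X^3-4tX=X(X-2T)(X+2T)$; the eigenvalue $0$ has a two-dimensional eigenspace (spanned by $x_1-x_2$ and $x_1x_2-t$), so the characteristic polynomial is $X^2(X^2-4t)$. In $QH^*(E)$: from $x_1^2=-ts$ and $x_1x_2=\tfrac12 s(s+2t)$ one gets $sx_1=\tfrac12 s^2$, and computing $sx_1^2$ both as $s(-ts)=-ts^2$ and as $x_1(sx_1)=\tfrac14 s^3$ gives $s^3=-4ts^2$, so the minimal polynomial is $X^2(X+4t)$, and a short computation of the $s$-action on a $\Lambda$-basis of the rank-$4$ ring gives characteristic polynomial $X^3(X+4t)$. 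In $SH^*(E)$, $s$ is the scalar $-4t$, so its minimal and characteristic polynomials are both $X+4t$. I expect the only genuine obstacle to be bookkeeping: getting $c_f$ and the exponent $|I^B|-\sum c_q-c_f$ correct, noting that $\lambda_E=1$ makes $T_E$ coincide with $t$ (which is exactly what turns the uniform exponent $1$ into a single power of $t$ in the $E$-relations and the polynomials), and keeping track of how inverting $x_1+x_2$ in the localization step collapses the rank-$4$ ring $QH^*(E)$ onto $SH^*(E)$.
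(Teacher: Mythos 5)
Your approach to the presentations and the polynomials is essentially the paper's: both apply Theorem~\ref{Theorem presentation of QH and SH for Fano toric NLB} (with $k=1$, $\lambda_E=1$, $c_f=1$) to get $QH^*(E)$, both localize at $-(x_1+x_2)$ for $SH^*(E)$, and both arrive at the same minimal and characteristic polynomials. The difference is only cosmetic: the paper reads off the polynomials from the matrix of quantum multiplication by $\omega_B$ in the basis $1,x_1,x_2,x_1x_2$, whereas you derive the scalar identities $s^3=4ts$ and $s^3=-4ts^2$ by direct manipulation of the defining relations.

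There is, however, a genuine gap at the last step of your $SH^*(E)$ computation. You correctly conclude that after inverting $x_1+x_2$ one is forced to $x_1=x_2=-2t$, so $SH^*(E)\cong\Lambda$ has rank one over $\Lambda$. This is also what the degree-one minimal/characteristic polynomial $X+4T$ asserts, and what the splitting $QH^*(E)\cong SH^*(E)\oplus\ker x^{\mathrm{large}}$ forces given the factor $X^3$ in the characteristic polynomial of $\omega_E$ on $QH^*(E)$. You then claim that ``reorganising the generators gives the displayed presentation'' $\Lambda[x_1,x_2]/(x_1^2-4t^2,\,x_2^2-4t^2,\,x_1+x_2+4t)\cong\Lambda[x_1]/(x_1^2-4t^2)$, but $\Lambda[x_1]/(x_1^2-4t^2)$ has rank two over $\Lambda$, which contradicts what you just derived. (The first ring does collapse to $\Lambda$: substituting $x_2=-x_1-4t$ turns $x_2^2-4t^2$ into $(x_1+2t)(x_1+6t)$, and subtracting $x_1^2-4t^2=(x_1-2t)(x_1+2t)$ gives $8t(x_1+2t)$, so the ideal contains $x_1+2t$. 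The passage to $\Lambda[x_1]/(x_1^2-4t^2)$ drops exactly this relation.) Your own calculation therefore shows the stated isomorphism $SH^*(E)\cong\Lambda[x_1]/(x_1^2-4t^2)$ cannot be right; you should flag the inconsistency rather than gloss over it, since it is incompatible with the rank-one conclusion that both you and the paper's proof (which computes $z^3 X^3(X+4T)=0$, hence $X=-4T$ as a scalar) arrive at.
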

\begin{proof}
The computation of $QH^*(B)$ can be obtained from the moment polytope (a square with inward normals $e_1=(1,0)$, $-e_1$, $e_2=(0,1)$ and $-e_2$) or by explicitly computing the quantum product $\omega_1*\omega_2=\omega_1\wedge \omega_2$. The minimal polynomial of $\omega_B$ follows by computation. The matrix for multiplication by $\omega_B=x_1+x_2$ is $\left(\begin{smallmatrix} 0 & t & t & 0 \\ 1 & 0 & 0 & t\\ 1 & 0 & 0 & t\\ 0 & 1 & 1 & 0 \end{smallmatrix}\right)$ in the basis, $1,x_1,x_2,x_1x_2$, and since this has rank 2 the characteristic polynomial has an extra $X$ factor.

Observe that $k=1$, $\lambda_B=2$, $\lambda_E=1$. So $t_B=T_B^2$, $T_E=t_E$.
By Theorem \ref{Theorem Minimal poly and char poly from B to E} for $E=\mathcal{O}(-D_1-D_2)$, the presentation of $QH^*(E)$ follows by replacing $t_B$ by $t_E (-x_1-x_2)$. One obtains the characteristic polynomial of $\pi^*\omega_B$ by inspecting the new matrix $\left(\begin{smallmatrix} 0 & 0 & 0 & 0 \\ 1 & -t & -t & 2t^2 \\1 & -t & -t & 2t^2 \\ 0 & 1 & 1 & -2t \end{smallmatrix}\right)$.

To compute $SH^*(E)$ localize at $c_1(E)$: introduce $z$ with $1-z(-x_1-x_2)=0$. For  $X=x_1+x_2$, $X^3(X+4T)=0\in QH^*(E)$. Multiplying by $z^3$ we get $X=-4T$, $z=-(4T)^{-1}$.
\end{proof}

\noindent\emph{Remarks. With some effort, the calculation of $QH^*(\mathcal{O}_{\P^1\times \P^1}(-1,-1))$ can in fact be verified by hand (that is by explicitly computing $\omega_i*\omega_j$).
One can also check that $QH^*(\P^1\times \P^1)$ is semisimple (a direct sum of fields as an algebra) since 
% not enough that min poly has distinct linear factors
% that just says you get primary decomposition
% ker(x-2T) + ker(x+2T) + ker(x)
% but it does not split ker(x) up for you into two fields
% as an algebra, only as a Lambda[x]-module, so as a v.s. over Lambda.
% You need to find non-zero e,f in ker(x) with ef=0, e^2 nonzero, f^2 nonzero
% then ker(x) is direct sum (as an algebra) of Lambda e + Lambda f
% 
% two 0-evecs for multn by X are
% e = x_1 - x_2
% X^2 - 4t = 2(x_1x_2 - t)
% can check that
% e^2 = - ( X^2-4t)
% so ker X is spanned by e,e^2 but as an algebra they are not "orthogonal":
% e^3 = 4te
% then
% (2Te+e^2)(2Te-e^2) = 4T^2e^2-e^4=0
% and
% (2Te \pm e^2)^2 is non-zero
%  
the $0$-eigenspace splits as an algebra into two fields spanned by $2Ty \pm y^2$ where $y=x_1-x_2$.
%-- this is because one can check that the superpotential has non-degenerate critical points, and this ensures semi-simplicity by Ostrover-Tyomkin \cite{Ostrover-Tyomkin}. 
Whereas $QH^*(\mathcal{O}_{\P^1\times \P^1}(-1,-1))$ is not semisimple, due to the generalized 0-eigenspace which creates zero divisors in the ring.
%
% if have Lambda[x]/(x-e)^2 (or higher) as a summand in Lambda[x] module decomposition
% then it means there is a v in QH with (x-e)v non-zero but (x-e)^2 v = 0
% hence, as an algebra, ((x-e)v)^2 = (x-e)v(x-e)v = (x-e)^2 vv (using that
% quantum product is associative and that x-e is in even degree)
% and hence (x-e)^2.v.v = 0.v = 0.
% so have a zero divisor.
% 
%
}

%%%%%%%%%%%%%%%%%%%%%%%%%%%%%%%%%%%
%%%%%%%%%%%%%%%%%%%%%%%%%%%%%%%%%%%
\subsection{The change in presentation from $\mathbf{QH^*(B)}$ to $\mathbf{QH^*(E)}$ is a change in the Novikov variable}
\label{Subsection The change in presentation is a change in the Novikov variable}
%%%%%%%%%%%%%%%%%%%%%%%%%%%%%%%%%%%
%%%%%%%%%%%%%%%%%%%%%%%%%%%%%%%%%%%
%
In this Section we will restrict the Novikov field of Section \ref{Subsection Novikov ring} to a smaller subring $R=\K[t,t^{-1}]$ or $\K[t]$, and we write $QH^*(M;R)$ when we work over $R$ (recall the quantum product only involves positive integer powers of $t$). We put subscripts $B,E$ on $t$ (so $t_B=T_B^{\lambda_B}$, $t_E=T_E^{\lambda_E} = T_E^{\lambda_B-k}$) when we want to distinguish the variable $t$ used for $B$ and for $E$.

Recall by Theorem \ref{Theorem presentation of QH and SH for Fano toric NLB} that
$$\sum n_i x_i = c_1(E)=-k[\omega_B] = -\tfrac{k}{\lambda_B}c_1(TB)=-\tfrac{k}{\lambda_B}\sum x_i.$$ 

\begin{theorem}\label{Theorem change of Novikov param}
The presentations of $QH^*(B)$, $QH^*(E)$ in Theorem \ref{Theorem presentation of QH and SH for Fano toric NLB} are identical after the change of Novikov parameter:
$$\boxed{T^{\lambda_B}\mapsto T^{\lambda_B-k}(\sum n_i x_i)^{k}}$$
\end{theorem}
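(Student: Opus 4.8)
First I would reduce the statement to a comparison of quantum Stanley--Reisner relations. The boxed presentations of $QH^*(B)$ and $QH^*(E)$ in Theorem~\ref{Theorem presentation of QH and SH for Fano toric NLB} are quotients of the \emph{same} ring $\Lambda[x_1,\dots,x_r]$ — the fibre coordinate $x_f$ having been eliminated by the linear relation $x_f=\sum n_i x_i$ — and their linear relations are literally identical, being the linear relations of $B$. Moreover the primitive collections of $E$ coincide with those of $B$ (noted just before Corollary~\ref{Corollary SR relns of E in terms of B}). So it is enough to fix a single primitive collection $I^B$, with associated relation among edges $\sum_{p\in I^B}b_{i_p}=\sum_q c_q b_{j_q}$, and to verify that the change of Novikov parameter sends the corresponding $B$-relation to the corresponding $E$-relation.

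Next I would pin down the meaning of the substitution. The notation ``$T^{\lambda_B}\mapsto T^{\lambda_B-k}(\sum n_i x_i)^{k}$'' should be read as replacing the Novikov variable $t_B=T_B^{\lambda_B}$ by $t_E\,(\sum n_i x_i)^{k}=T_E^{\lambda_B-k}(\sum n_i x_i)^{k}$, using $\lambda_E=\lambda_B-k$. This is a well-defined polynomial substitution because only non-negative powers of $t_B$ appear in the SR-relations: by \eqref{EqnC1andOmega} and \eqref{EqnPowerOfT} the exponent of $T_B$ in the $I^B$-relation equals $|I^B|-\sum c_q=c_1(TB)(\beta_I)=\lambda_B\,\omega_B(\beta_I)$, where $\omega_B(\beta_I)\in\Z_{>0}$ by monotonicity of $B$, primitivity of $[\omega_B]$, and Batyrev's positivity result. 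Thus the $B$-relation may be rewritten as
$$\prod_{p\in I^B}x_{i_p}=(T_B^{\lambda_B})^{\omega_B(\beta_I)}\,\prod_q x_{j_q}^{c_q},$$
a genuine power of $t_B$.

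Then I would apply the substitution and identify the outcome. Replacing $T_B^{\lambda_B}$ by $T_E^{\lambda_B-k}(\sum n_i x_i)^{k}$ turns the right-hand side into
$$T_E^{(\lambda_B-k)\omega_B(\beta_I)}\,(\textstyle\sum n_i x_i)^{k\,\omega_B(\beta_I)}\,\prod_q x_{j_q}^{c_q}.$$
Now I invoke the two numerical identities already proved inside Lemma~\ref{Lemma Quantum SR-relations involve positive slope Hamiltonians}, namely $c_f=-c_1(E)(\beta_I)=k\,\omega_B(\beta_I)$ and $c_1(TE)(\beta_I)=|I^B|-\sum c_q-c_f=(\lambda_B-k)\,\omega_B(\beta_I)$ (the last equality using monotonicity of $E$, $[\omega_E]=\pi^*[\omega_B]$, and the identification $\beta_I\in H_2(E)\cong H_2(B)$). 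Substituting these, the displayed expression becomes
$$T_E^{\,|I^B|-\sum c_q-c_f}\,(\textstyle\sum n_i x_i)^{c_f}\,\prod_q x_{j_q}^{c_q},$$
which is exactly the $E$-relation of Corollary~\ref{Corollary SR relns of E in terms of B} (equivalently, the one in the boxed presentation of $QH^*(E)$). Since this holds for every primitive $I^B$, the two ideals of relations correspond under the substitution, which is the claim.

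The argument is essentially bookkeeping once Lemma~\ref{Lemma Quantum SR-relations involve positive slope Hamiltonians} is in hand; the only step I expect to require care is the first paragraph of the substitution discussion. One must substitute the Novikov variable $t=T^{\lambda}$ and not $T$ itself — substituting $T$ is meaningless, since the image of $t_B$ in $QH^*(E)$ is not invertible (cf.\ the Introduction and Example~\ref{Example failure of QH=Jac}) — and for this to make sense one needs every $T_B$-exponent occurring in the $QH^*(B)$-presentation to be divisible by $\lambda_B$, which is again monotonicity of $B$ together with primitivity of $[\omega_B]$. With the Novikov ring restricted to $\K[t]$ as in this section, the substitution is then an honest $\K$-algebra homomorphism; over $\K[t,t^{-1}]$ it instead factors through $SH^*(E)$, where $\sum n_i x_i=\pi^*c_1(E)$ is invertible.
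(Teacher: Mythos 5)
Your proof is correct and takes essentially the same route as the paper's: you reduce to comparing the quantum Stanley--Reisner relations for a fixed primitive collection, invoke the identities $c_f=k\,\omega_B(\beta_I)$ and $|I^B|-\sum c_q-c_f=c_1(TE)(\beta_I)=(\lambda_B-k)\,\omega_B(\beta_I)$ from the proof of Lemma~\ref{Lemma Quantum SR-relations involve positive slope Hamiltonians}, and check that the single substitution $t_B\mapsto t_E(\sum n_i x_i)^k$ reproduces all of them as $I^B$ varies. The only (welcome) addition is your explicit remark that the substitution is legitimate as a $\K[t]$-algebra homomorphism because the $T_B$-exponents are all multiples of $\lambda_B$, a point the paper's proof treats as implicit.
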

\begin{proof}
By Equation \eqref{EqnC1andOmega} in Section \ref{Subsection Review of the Batyrev-Givental presentation of QH}, for primitive $I=I^B$,
$$\textstyle
|I^B|-\sum c_q=c_1(TB)(\beta_I).
$$
By the proof of Lemma \ref{Lemma Quantum SR-relations involve positive slope Hamiltonians},
$-c_f = c_1(TE)(\beta_I)-c_1(TB)(\beta_I) = c_1(E)(\beta_I),$
so the quantum SR-relations for $QH^*(E)$ in Theorem \ref{Theorem presentation of QH and SH for Fano toric NLB} become
$$
\textstyle
\prod x_{i_p} = T^{c_1(TE)(\beta_I)} \cdot (\sum n_i x_i)^{-c_1(E)(\beta_I)} \cdot \prod x_{j_q}^{c_q}.
$$
So the presentations of $QH^*(B)$, $QH^*(E)$ are identical after the change of Novikov parameter:
$$\textstyle
T^{c_1(TB)(\beta_I)} \mapsto T^{c_1(TE)(\beta_I)} \cdot (\sum n_i x_i)^{-c_1(E)(\beta_I)},
$$
and it remains to check that this is consistent as $\beta_I$ varies (varying $I=I^B$). 
But $c_1(TB)(\beta_I)=\lambda_B [\omega_B](\beta_I)$, $c_1(TE)(\beta_I)=(\lambda_B-k)[\omega_B](\beta_I)$, and $-c_1(E)(\beta_I)=k[\omega_B](\beta_I)$.
% If $\beta_I$ has intersection pairing $\beta_I\cdot D_j = \delta_j$, then $c_1(TB)(\beta_I)=\sum \delta_j$ (as $c_1(TB)=\sum_{j=1}^r \mathrm{PD}[D_j]$). Thus
% %
% %
% $$
% \begin{array}{l}
% c_1(E)(\beta_I)=-k[\omega_B](\beta_I) = -\tfrac{k}{\lambda_B}c_1(TB)(\beta_I)=-\tfrac{k}{\lambda_B}\sum \delta_j,\\
% c_1(TE)(\beta_I) = c_1(TB)(\beta_I)+c_1(E)(\beta_I)=\tfrac{\lambda_B-k}{\lambda_B}\sum \delta_j.
% \end{array}
% $$
% %
% %
So the above change of Novikov parameters are all implied by $T^{\lambda_B}\mapsto T^{\lambda_B-k}(\sum n_i x_i)^{k}$, as claimed.
% 
% To rephrase in terms of the $t_B,t_E$ Novikov variables use $t_B=T_B^{\lambda_B}$, $t_E=T_E^{\lambda_E}$, using $\lambda_E=\lambda_B-k$.
% 
% In $SH^*(E)$, $c_1(E)=\sum n_i x_i$ is invertible, so the map $\varphi$ in the claim is well-defined (whereas in $QH^*(E)$ we cannot define $\varphi(t_B^{-1})$). 
% WEAKER EARLIER PROOF IS THAT c_1(E)^N P = 0 IN QH(E)
% Since $SH^*(E)$ is the localization $QH^*(E)[z]/(1-c_1(E)z)$ of $QH^*(E)$ at $c_1(E)$ by Corollary \ref{Corollary SH is a localization of QH}, a relation $P=0$ in $SH^*(E)$ is equivalent to the relation $(c_1(E))^N P=0$ in $QH^*(E)$. More explicitly: $P=0$ in $QH^*(B)$ means that $P$ equals a combination of the linear/quantum relations but the coefficients of the combinations may involve terms with negative powers of $t_B$ -- only after a rescaling by a large power $t_B^N$ do we get $t_B^N P$ as a combination of the relations defining $QH^*(B)$ only involving non-negative powers of $t_B$, and via the substitution $t_B\mapsto \varphi(t_B)$ this yields $\varphi(t_B^N P)$ as a combination of the relations defining $QH^*(E)$.
%
\end{proof}

\begin{theorem}\label{Theorem Minimal poly and char poly from B to E}
There is a ring homomorphism (which is not a $\Lambda$-module homomorphism)
\begin{equation}\label{Equation varphi QHB to SHE}
\begin{array}{l}
\varphi\co QH^*(B;\K[t,t^{-1}]) \to  SH^*(E;\K[t,t^{-1}])\\ \varphi(x_i)=x_i,\quad \varphi(t_B)=t_E c_1(E)^k = t_E (\textstyle\sum n_i x_i)^k
\end{array}
\end{equation}
equivalently $\varphi(T_B^{\lambda_B})=T_E^{\lambda_B-k} (\sum n_i x_i)^{k}$. Over $\K[t]$, $\varphi$ lifts to $QH^*(E;\K[t])$:
\begin{equation}\label{Equation varphi QHB to QHE}
\varphi\co QH^*(B;\K[t])\to QH^*(E;\K[t]), \quad \varphi(x_i)=x_i, \quad \varphi(t_B)=t_E (\textstyle\sum n_i x_i)^k,
\end{equation}
so we obtain a factorization: $\varphi\co QH^*(B;\K[t])\to QH^*(E;\K[t]) \to QH^*(E)\to SH^*(E)$. \emph{Remark. The restriction to $\K[t]$ is necessary, as $c_1(E)$ is never invertible in $QH^*(E)$ by \cite{Ritter4}.}

Any polynomial relation $P(x,t_B,t_B^{-1})=0$ in $QH^*(B)$ yields $P(x,\varphi(t_B),\varphi(t_B)^{-1})=0$ in $SH^*(E)$; and any polynomial relation $P(x,t_B)=0$ in $QH^*(B)$ yields $P(x,\varphi(t_B))=0$ in $QH^*(E)$ (this applies to: linear relations, quantum SR-relations, characteristic/minimal polynomials of $y\in H^{2d}(B)\cong H^{2d}(E)$ but we don't claim the image is characteristic/minimal).
\end{theorem}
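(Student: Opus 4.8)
The plan is to deduce the theorem almost entirely from Theorem \ref{Theorem change of Novikov param}, together with the remark (made at the start of this section) that the quantum products on $QH^*(B)$ and $QH^*(E)$ only involve non-negative powers of the Novikov variable, so that $QH^*(B;\K[t])$, $QH^*(E;\K[t])$ and their $\K[t,t^{-1}]$-counterparts are defined, and are presented by the same generators and relations as in Theorem \ref{Theorem presentation of QH and SH for Fano toric NLB} with $\Lambda$ replaced by the appropriate subring; base-changing along $\K[t]\hookrightarrow \Lambda$ (resp. $\K[t,t^{-1}]\hookrightarrow\Lambda$) recovers the honest $QH^*$. Note that the defining ideals involve only $\K[t]$-coefficients, so all of this makes sense over $\K[t]$.

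First I would construct $\varphi$ over $\K[t]$. Consider the $\K$-algebra homomorphism $\Phi\colon \K[x_1,\ldots,x_r][t_B]\to \K[x_1,\ldots,x_r][t_E]$ with $\Phi(x_i)=x_i$ and $\Phi(t_B)=t_E(\sum n_i x_i)^k$, i.e. the substitution $T_B^{\lambda_B}\mapsto T_E^{\lambda_B-k}(\sum n_i x_i)^{k}$ of Theorem \ref{Theorem change of Novikov param} (using $t_E=T_E^{\lambda_B-k}$). By that theorem, $\Phi$ fixes the linear relations of $B$ and carries each quantum SR-relation of $B$ to the corresponding quantum SR-relation of $E$, hence maps the defining ideal of $QH^*(B;\K[t])$ into that of $QH^*(E;\K[t])$; so $\Phi$ descends to a ring homomorphism $\varphi\colon QH^*(B;\K[t])\to QH^*(E;\K[t])$ with $\varphi(x_i)=x_i$, $\varphi(t_B)=t_E(\sum n_i x_i)^k$, proving \eqref{Equation varphi QHB to QHE}. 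Post-composing with the base change $QH^*(E;\K[t])\to QH^*(E)$ and the canonical localization $c^*\colon QH^*(E)\to SH^*(E)$ (Corollary \ref{Corollary SH is a localization of QH}) gives the asserted factorization $QH^*(B;\K[t])\to QH^*(E;\K[t])\to QH^*(E)\to SH^*(E)$.

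The only delicate point — and the reason the target over $\K[t,t^{-1}]$ must be $SH^*(E)$ rather than $QH^*(E)$ — is invertibility. Over $\K[t,t^{-1}]$ the element $t_B$ is a unit, so any ring map out of $QH^*(B;\K[t,t^{-1}])$ must send $t_B$ to a unit; but $\varphi(t_B)=t_E(\sum n_i x_i)^k$ involves $\sum n_i x_i=\pi^*c_1(E)$, which is \emph{not} invertible in $QH^*(E)$ by \cite{Ritter4}. It \emph{is} invertible in $SH^*(E)$, since $SH^*(E)$ is precisely the localization of $QH^*(E)$ at $\pi^*c_1(E)$ (Corollary \ref{Corollary SH is a localization of QH}, Theorem \ref{Theorem presentation of QH and SH for Fano toric NLB}), and $t_E$ is a unit of the coefficient ring; hence $\varphi(t_B)$ is a unit of $SH^*(E;\K[t,t^{-1}])$. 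Thus $\Phi$ extends to $\K[x_1,\ldots,x_r][t_B,t_B^{-1}]\to SH^*(E;\K[t,t^{-1}])$, still annihilating the defining ideal of $QH^*(B;\K[t,t^{-1}])$ by the same computation, which yields \eqref{Equation varphi QHB to SHE}, compatibly with the $\K[t]$-version under base change. Everything else here is bookkeeping; this invertibility dichotomy is the substantive obstacle.

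Finally, the statements about polynomial relations follow formally from $\varphi$ being a ring homomorphism: if $P(x,t_B,t_B^{-1})=0$ in $QH^*(B;\K[t,t^{-1}])$ then applying $\varphi$ gives $P(x,\varphi(t_B),\varphi(t_B)^{-1})=0$ in $SH^*(E;\K[t,t^{-1}])$, and likewise over $\K[t]$; taking $P$ to be a linear relation, a quantum SR-relation, or the characteristic/minimal polynomial of some $y\in H^{2d}(B)\cong H^{2d}(E)$ (for which $\varphi(y)=y$) yields the listed cases. Since $\varphi$ need not be injective and $\deg P$ is unchanged, the image polynomial need not be the characteristic or minimal polynomial of $\varphi(y)$ in the target, which is why no such claim is made.
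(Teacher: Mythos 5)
Your construction of $\varphi$ matches the paper: use Theorem \ref{Theorem change of Novikov param} to see that the substitution carries the presenting ideal of $QH^*(B;\K[t])$ into that of $QH^*(E;\K[t])$, and note that over $\K[t,t^{-1}]$ the target must be $SH^*(E;\K[t,t^{-1}])$ because $t_B^{-1}$ must map to an inverse of $t_E(\sum n_i x_i)^k$, which exists there but not in $QH^*(E)$. This part is correct and is essentially the paper's own argument.

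The gap is in the last paragraph. The theorem's hypothesis is a relation $P(x,t_B,t_B^{-1})=0$ holding in $QH^*(B)$, which means in $QH^*(B;\Lambda)$ over the full Novikov field; but $\varphi$ is only defined on $QH^*(B;\K[t,t^{-1}])$. You quietly replace the hypothesis by the stronger one ``$P=0$ in $QH^*(B;\K[t,t^{-1}])$'' and never argue that vanishing in the big ring descends to the small one. This does need an argument: the paper observes that $QH^*(B;\K[t])$ and $QH^*(B;\K[t,t^{-1}])$ are free modules over $\K[t]$, $\K[t,t^{-1}]$ respectively (the underlying module is $H^*(B;\K)\otimes\K[t]$, etc.), so the base-change maps to $QH^*(B;\Lambda)$ are injective, and therefore a $\K[t,t^{-1}]$-coefficient polynomial relation that holds over $\Lambda$ already holds over $\K[t,t^{-1}]$ (and over $\K[t]$ by exactness of localization). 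Your opening paragraph gestures at the base-change identifications, but the logical step that makes them useful (injectivity from freeness) is missing, and it is exactly what the paper supplies.

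There is a second, related omission. For the characteristic and minimal polynomials of $y\in H^{2d}(B)$, these are a priori polynomials with $\Lambda$-coefficients, and you need to know they lie in $\K[t][x]$ before $\varphi$ can be applied. The paper proves this by a homogenization argument: work over $\K$ by setting $t=1$, observe that the characteristic/minimal polynomials over $\Lambda$ are the homogenizations (with respect to the $\Z$-grading in which $|t|=2\lambda_B$ and $|x|=2$) of those over $\K$, and homogenization introduces only nonnegative integer powers of $t$. You assert $\varphi(y)=y$ but do not address why the polynomial itself is in the domain of $\varphi$. Both of these are fillable, but neither is filled in your write-up.
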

\begin{proof}
The ring homomorphism $\varphi$ in \eqref{Equation varphi QHB to SHE} and \eqref{Equation varphi QHB to QHE} is well-defined since the linear/quantum relations for $B$ map to those for $E$. In particular in \eqref{Equation varphi QHB to SHE} we use the fact that $\pi^*c_1(E)$ is an invertible in $SH^*(E)$ so $\varphi(t_B^{-1})$ is well-defined. 
% Even though a module basis (classical gens of cohomology) map to a module basis
% you cannot deduce injectivity of $\varphi$ because it is not a module hom, it's only 
% a ring hom. E.g. a priori, c_1(TE) could be nilpotent, so t_B^N --> 0

The final part of the claim follows immediately from the existence of $\varphi$ if the polynomials $P(x,t,t^{-1}), P(x,t)$ were known to vanish in the smaller rings $QH^*(B;\K[t,t^{-1}]),$ $QH^*(B;\K[t])$ rather than in $QH^*(B)$. It remains to justify why the vanishing in $QH^*(B)$ implies the vanishing in those smaller rings. For monotone toric $X$, by exactness of localization at $t$,
\begin{equation}\label{Equation Exactness of localization}
QH^*(X;\K[t,t^{-1}])= QH^*(X;\K[t])\otimes_{\K[t]} \K[t,t^{-1}].
\end{equation}
Since the toric divisors and any of their intersections yield a chain complex computing the cohomology, the quantum cohomologies $QH^*(X;\K[t])$, $QH^*(X;\K[t,t^{-1}])$ are free modules respectively over $\K[t]$, $\K[t,t^{-1}]$.
% In particular, if $y\in QH^*(X;\K[t,t^{-1}])$ satisfies $t^N y=0\in QH^*(X;\K[t,t^{-1}])$ then $y=0\in QH^*(X;\K[t,t^{-1}])$.
%
Similarly, by exactness of completion in $t$, one can replace $\K[t,t^{-1}]$ in \eqref{Equation Exactness of localization} by the ring $\K(\!(t)\!)$ of Laurent series. Since $QH^*$ is $\Z$-graded, \eqref{Equation Exactness of localization} also holds if we replace $\K[t,t^{-1}],\K[t]$ by $\Lambda,\K(\!(t)\!)$ respectively. It follows that 
%
%
%\begin{equation}\label{Equation Exactness of completion}
$$
QH^*(X)=QH^*(X;\Lambda)= QH^*(X;\K[t,t^{-1}])\otimes_{\K[t,t^{-1}]} \Lambda,
$$
%\end{equation}
%
%
and, as before, the two $QH^*$ are free modules over the relevant ring. Thus, if $P(x,t_B,t_B^{-1})=0$ in $QH^*(X)$ then this also holds in $QH^*(X;\K[t,t^{-1}])$. So in particular if $P(x,t_B)=0$ in $QH^*(X)$, then this also holds in $QH^*(X;\K[t,t^{-1}])$ and hence in $QH^*(X;\K[t])$ by \eqref{Equation Exactness of localization}.
%
% If a polynomial $P(x,t_B)=0\in QH^*(B)$, then $P(x,t_B)=\sum f_j(x,t_B,t_B^{-1})r_j$ where $r_j$ are the linear/quantum relations defining $QH^*(B)$ and $f_j$ are any polynomials. Rescaling by a large power of $t_B$ removes negative powers of $t_B$: $t_B^N P = \sum g_j(x,t_B)r_j$. Hence $t_B^N P(x,t_B)=0 \in QH_+^*(B)$. By torsion-freeness, $P(x,t_B)=0\in QH_+^*(B)$. Thus $P(x,t_B)$ is in the ideal of relations working over $\Lambda_+$. This shows that the $f_j$ can be chosen so that no negative powers of $t_B$ appear.
%

We now prove the claim about the characteristic/minimal polynomials of $y\in H^{2d}(B)$. Notice we are assuming that $y\in H^{2d}(B)$ does not involve $t$ and lies entirely in a fixed degree.
It follows that these polynomials are homogeneous with respect to the $\Z$-grading on $QH^*(B)=QH^*(B;\Lambda)$. Now consider the algebra $QH^*(B;\K)$, in other words setting $t=1$. This algebra is no longer $\Z$-graded like $QH^*(B)$, but it is still well-defined (in the monotone setting there are no compactness issues in defining the quantum cohomology). Observe that the minimial and characteristic polynomials for $y\in QH^*(B;\K)$ and $y \in QH^*(B)$ are related by homogenization (i.e. insert powers of $t$ to ensure the monic polynomials are homogeneous in the $\Z$-grading). It follows that these polynomials only involve positive powers of $t$, so we may apply the map $\varphi$, using that $\varphi(y)=y\in QH^{2d}(E)$ since it does not involve $t$.
%  Indeed, since $E$ is monotone, one can define $QH^*$ also over $\K$ (i.e. substituting $T$ by $1$) at the cost of losing the $\Z$-grading. If $P(x)= x^m+a_1x^{m-1}+\cdots+a_0$ is the characteristic polynomial for a class $y\in H^{2d}(B)$ over $\K$, then for grading reasons the corresponding polynomial over $\Lambda$ is obtained by homogenizing $P$ with respect to $T$, so $x^m+a_1x^{m-1}T^d+\cdots+a_0T^{dm}$.
\end{proof}

%%%%%%%%%%%%%%%%%%%%%%%%%%%%%%%%%%%
%%%%%%%%%%%%%%%%%%%%%%%%%%%%%%%%%%%
\subsection{$SH^*(E)$ is the Jacobian ring}
\label{Subsection The Jacobian ring for nlb}
%%%%%%%%%%%%%%%%%%%%%%%%%%%%%%%%%%%
%%%%%%%%%%%%%%%%%%%%%%%%%%%%%%%%%%%
%
\begin{theorem}\label{Theorem Jacobian ring is SH}
The Jacobian ring for $E$ (Definition \ref{Definition Jacobian ring}) is isomorphic to $SH^*(E)$ via
$$
\begin{array}{rcl}
SH^*(E)\cong QH^*(E)[c]/(c\cdot \pi^*c_1(E)-1) &\to & \mathrm{Jac}(W_E),\\
\mathrm{PD}[D_i] &\mapsto & t^{-\lambda_i^E}z^{e_i}\\ \pi^*c_1(E)=\mathrm{PD}[B] &\mapsto & z_{n+1}.
\end{array}
$$
which is the $i$-th summand in the definition of the superpotential $W$ (Definition \ref{Definition Superpotential}). 

In particular, $c_1(TE)$ maps to $W_E$. Moreover, $t^{-\lambda_i^E}z^{e_i}=(tz_{n+1}^k)^{-\lambda_i^B} z^{(b_i,0)}$ in terms of the data $b_i,\lambda_i^B$ which determines the moment polytope of $B$.
\end{theorem}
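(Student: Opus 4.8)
\emph{Proof plan.} The plan is to obtain the theorem as the specialization to $X=E$ of the general result for admissible toric manifolds, supplemented by the negative-line-bundle localization and a short computation with the fan data of $E$. First I would record that $E$ is an \emph{admissible} toric manifold (Definition \ref{Definition admissible toric manifolds}): it is conical at infinity by Section \ref{Subsection Negative line bundles}; it is monotone by hypothesis, with $\lambda_E=\lambda_B-k>0$; condition \eqref{Item Admissible max principle} holds because, by Theorem \ref{Theorem Hamiltonians for rotations about divisors} and Corollary \ref{Corollary Hams in toric case are OK}, the rotations $g_i$ about the toric divisors are generated by Hamiltonians of the form $f_i(y)R+\mathrm{const}$ with $f_i\geq 0$ invariant under the Reeb flow; and condition \eqref{Item Admissible positive slope} holds since the rotation $g_{r+1}$ about the zero section has Hamiltonian $H_{r+1}=\tfrac{1+k\pi}{k}R-\tfrac1k$ of strictly positive slope. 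Hence Theorem \ref{Theorem admissible toric manifolds} applies to $E$.

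Next I would assemble the stated isomorphism from what Theorem \ref{Theorem admissible toric manifolds} already provides. Part (4) gives $SH^*(E)\cong\mathrm{Jac}(W_E)$ with $\mathrm{PD}[D_i]\mapsto t^{-\lambda_i^E}z^{e_i}$ -- the $i$-th summand of $W_E$ in the sense of Definition \ref{Definition Superpotential} -- and $c^*(c_1(TE))\mapsto W_E$; and since $D_{r+1}=[B]$ and $e_{r+1}=e_f=(0,\dots,0,1)$ with the zero-section facet of $\Delta_E$ normalized so that $\lambda_{r+1}^E=0$ (Appendix A, Section \ref{Section The moment polytope of a toric negative line bundle}), this map sends $\pi^*c_1(E)=\mathrm{PD}[B]\mapsto z_{n+1}$. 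To identify the domain with $QH^*(E)[c]/(c\cdot\pi^*c_1(E)-1)$ I would invoke Theorem \ref{Theorem presentation of QH and SH for Fano toric NLB} (equivalently Theorem \ref{Theorem SH of negative line bundle} together with Corollary \ref{Corollary SH is a localization of QH}): for a negative line bundle, $SH^*(E)$ is the localization of $QH^*(E)$ at $f=\pi^*c_1(E)=\mathrm{PD}[B]$ and $c^*$ is the canonical localization map. It is essential to pass through $SH^*(E)$, i.e.\ to invert $\pi^*c_1(E)$, since $QH^*(E)\cong\mathrm{Jac}(W_E)$ is itself false (Example \ref{Example failure of QH=Jac}); there is no ring map $QH^*(B)\to QH^*(E)$ over $\Lambda$ to exploit (cf.\ the Remark after Theorem \ref{Theorem Minimal poly and char poly from B to E}). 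Tracking the generators through the composite then yields exactly the displayed maps.

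For the final assertion I would compute $t^{-\lambda_i^E}z^{e_i}$ directly. By Lemma \ref{Lemma line bundle from fan of base}, $E=\mathcal{O}(\sum n_i D_i^B)$; I would take the specific representative $n_i:=k\lambda_i^B$, which is integral because the moment polytope of $B$ is a lattice polytope and which represents $c_1(E)$ since $\sum n_i\mathrm{PD}[D_i^B]=k\sum\lambda_i^B\mathrm{PD}[D_i^B]=-k[\omega_B]=c_1(E)$ by \eqref{EqnC1andOmega}. With this choice the edges of the fan of $E$ are $e_i=(b_i,-n_i)=(b_i,-k\lambda_i^B)$ for $i\leq r$ and $e_f=(0,\dots,0,1)$ (Section \ref{Subsection The SR-relations for monotone toric negative line bundles}), so $z^{e_i}=z^{(b_i,0)}\,z_{n+1}^{-k\lambda_i^B}$; and the moment-polytope computation of Appendix A shows that the facet of $\Delta_E$ lying over the facet $F_i^B$ of $\Delta_B$ has the same supporting constant $\lambda_i^E=\lambda_i^B$. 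Hence
$$
t^{-\lambda_i^E}z^{e_i}=t^{-\lambda_i^B}\,z^{(b_i,0)}\,z_{n+1}^{-k\lambda_i^B}=(t\,z_{n+1}^{k})^{-\lambda_i^B}\,z^{(b_i,0)},
$$
which is precisely the Jacobian-ring avatar of the Novikov change of variable $t_B\mapsto t_E(\pi^*c_1(E))^k$ of Theorem \ref{Theorem change of Novikov param}, now with $\pi^*c_1(E)$ realized as the invertible element $z_{n+1}\in\mathrm{Jac}(W_E)$.

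I expect no conceptual obstacle here: all the substantive input is upstream -- the extended maximum principle (Lemma \ref{Introduction Lemma Extended Max principle}), which is what makes $E$ admissible, and the McDuff-Tolman/Batyrev machinery packaged in Theorem \ref{Theorem admissible toric manifolds}. The one delicate point is bookkeeping: one must keep the normalizations of $\omega_B$, $\omega_E$, the two moment polytopes, and the superpotential mutually consistent so that all powers of $t$ and of $z_{n+1}$ match on the nose -- in particular, the verifications $\lambda_i^E=\lambda_i^B$ ($i\leq r$) and $\lambda_{r+1}^E=0$, which are precisely the content of Appendix A.
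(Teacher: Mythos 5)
Your proposal is correct and follows essentially the same route as the paper: the paper's proof is a one-line citation of Theorem \ref{Theorem presentation of QH and SH for Fano toric NLB}, Corollary \ref{Corollary QH of noncompact toric}, Lemma \ref{Lemma moment polytope for neg l bdle}, and Example \ref{Example Superpotential of nlb}, which is precisely the chain of results you assemble (your detour via admissibility and Theorem \ref{Theorem admissible toric manifolds} is just a repackaging of the first two of these, backed by the same inputs -- Theorem \ref{Theorem Hamiltonians for rotations about divisors} and Corollary \ref{Corollary Hams in toric case are OK}). Your explicit computation with $n_i = k\lambda_i^B$, $\lambda_i^E = \lambda_i^B$, $\lambda_{r+1}^E = 0$ reproduces Example \ref{Example Superpotential of nlb} verbatim.
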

\begin{proof}
 This follows from Theorem \ref{Theorem presentation of QH and SH for Fano toric NLB}, Corollary \ref{Corollary QH of noncompact toric}, Lemma \ref{Lemma moment polytope for neg l bdle} and Example \ref{Example Superpotential of nlb}.
\end{proof}

In Example \ref{Example Superpotential of nlb} of Appendix A (Section \ref{Section The moment polytope of a toric negative line bundle}) we check that the superpotential of a negative line bundle $E$, with $c_1(E)=-k[\omega_B]$, is
$$
\begin{array}{rcl}
W_E(z_1,\ldots,z_n,z_{n+1}) &= & z_{n+1} +  \left.W_B(z_1,\ldots,z_n)\right|_{\left(t \textrm{ replaced by }tz_{n+1}^k\right)}\\
& = & z_{n+1} + \varphi(W_B)
\end{array}
$$
where $\varphi(z_i)=z_i$ and $\varphi(t_B)=t_E z_{n+1}^k$ (as is consistent with Theorem \ref{Theorem Minimal poly and char poly from B to E}).
%
%In Example \ref{Example Superpotential of nlb} of Appendix A (Section \ref{Section The moment polytope of a toric negative line bundle}) we check that the superpotential of a toric line bundle $E=\mathcal{O}(\sum n_i D_i) \to B$ is
%%
%%
%$$
%W_E(z_1,\ldots,z_{n+1}) = \sum_{i=1}^r t^{-\lambda_i^B} z^{(b_i,-n_i)} + z_{n+1}.
%$$
%%
%%
%where $b_i,F(e_i)=\lambda_i^B$ is the data describing the moment polytope $\Delta_B$ of $B$.
%
%Negative line bundles have $c_1(E)=-k[\omega_B]$, so they are isomorphic to $E=\mathcal{O}(\sum \lambda_i^B [D_i])$ since $[\omega_B]=\sum -\lambda_i^B \mathrm{PD}[D_i]$. So taking $n_i=\lambda_i^B$, and distinguishing the Novikov variables $t_B,t_E$ for $B$ and $E$, then
%$$W_E = \varphi(W_B) + z_{n+1}$$ where $\varphi(z_i)=z_i$ and 
%$\varphi(t_B)=t_E^k z_{n+1}^k$.
%% If don't put n_i = -\lambda_i^B$ then $\varphi(t_B)=t_E^k z_{n+1}^{-n_i/\lambda_i^B}$
%% would depend on i. Really would need to have one variable t_i for each toric divisor / edge e_i / standard disc
%% then that substitution would make sense.
%
Working over $\K[t]$ instead of $\Lambda$, this $\varphi$ defines a ring homomorphism 
$$\varphi: \mathrm{Jac}(W_B;\K[t])\to \mathrm{Jac}(W_E;\K[t])$$
identifiable precisely with the $\varphi: QH^*(B;\K[t])\to SH^*(E;\K[t])$ from Theorem \ref{Theorem Minimal poly and char poly from B to E}.

\begin{remark} At this stage one could, as was done in \cite{RitterSmith}, recover the eigenvalues of $c_1(TE)$ acting on $QH^*(E)$ in terms of those of $c_1(TB)$ acting on $QH^*(B)$ by working with $\mathrm{Jac}(W_E),\mathrm{Jac}(W_B)$ and comparing the critical values of $W_E,W_B$. But this will not recover the multiplicities of the eigenvalues and it will not recover Theorem \ref{Theorem Eigenvalues of c1 from B to E}.
\end{remark}
%
%%%%%%%%%%%%%%%%%%%%%%%%%%%%%%%%%%%
%%%%%%%%%%%%%%%%%%%%%%%%%%%%%%%%%%%
\subsection{The eigenvalues of $c_1(TE)$ and the superpotential $W_E$}
\label{Subsection The eigenvalues of c_1(TX)}
%%%%%%%%%%%%%%%%%%%%%%%%%%%%%%%%%%%
%%%%%%%%%%%%%%%%%%%%%%%%%%%%%%%%%%%
$$\textbf{We now assume }\K\textbf{ is algebraically closed}$$ in the definition of $\Lambda$ in Section \ref{Subsection Novikov ring} (but we make no assumption on the characteristic). This is necessary so that we can freely speak of the eigenvalues of $c_1(TE)$ (the roots of the characteristic polynomial of quantum multiplication by $c_1(TE)$ on $QH^*(E)$).
Indeed, for $X$ monotone, $QH^*(X)$ can be defined over $\K$ at the cost of losing the $\Z$-grading, and a splitting $\prod (x-\mu_i)$ of the characteristic polynomial of $c_1(TX)$, where $\mu_i\in \K$, immediately yields a splitting $\prod (x-\mu_i T)$ when working over $\Lambda$ instead of $\K$. This follows because $QH^*(X)$ is $\Z$-graded and $x,T$ are both in degree $2$. Observe that this factorization is not legitimate over $\K[t]$ in general,  since $T=t^{1/\lambda_X}$ is a fractional power unless $\lambda_X=1$. However, the following Lemma (and later results) will show that the factors $x-\mu_i T$ can be collected in $\lambda_X$-families, so the resulting factorization (in higher order factors) will be legitimate over $\K[t]$.

Since $c_1(TB)\in H^2(B,\Z)$ is integral and $c_1(TB)=\lambda_B [\omega_B]$, by rescaling $\omega_B$ we may assume $[\omega_B]$ is a primitive integral class and $\lambda_B\in \N$ (called the index of the Fano variety $B$).

\begin{lemma}\label{Lemma Critical points of W come in families}\strut
For monotone toric negative line bundles $E\to B$, with $c_1(E)=-k[\omega_B]$,
\begin{enumerate}
 \item\label{Item evalues for B} Non-zero eigenvalues $\lambda=\mu T_B$ of $c_1(TB)$ arise in $\lambda_B$-families $\lambda,\xi \lambda, \ldots, \xi^{\lambda_B-1} \lambda$ where $\xi\neq 1$ is a $\lambda_B$-th root of unity.

 \item\label{Item action of lambdaB - k roots} There is a free action of $(\lambda_E=\lambda_B -k)$-th roots of unity on the critical points of $W_E$.

 \item\label{Item nonzero evals of E} The non-zero eigenvalues of $c_1(TE)$ are precisely the critical values of $W_E$ and they arise in families of size $\lambda_E=\lambda_B-k$.

 \item \label{Item dim gen subespaces of B} The dimensions of the generalized sub-eigenspaces $\ker (c_1(TB)-\lambda\,\mathrm{Id})^d\subset QH^*(B)$ for $d\in \N$ are invariant under the action $\lambda\mapsto \xi\lambda$ by $\lambda_B$-th roots of unity.

 \item \label{Item dim gen subespaces of E} The dimensions $\dim \ker (c_1(TE)-\lambda\,\mathrm{Id})^d\subset QH^*(E)$ are invariant under the action $\lambda\mapsto \xi\lambda$ by $(\lambda_B-k)$-th roots of unity.
\end{enumerate}
\end{lemma}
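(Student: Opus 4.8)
The plan is to establish the eigenvalue statements first over $\K$ (Novikov parameter set to $1$) and then lift to $\Lambda$ using that $QH^*$ is $\Z$-graded with $|x|=2=|T|$, exactly as in the discussion preceding the lemma. Both $QH^*(B)$ and $QH^*(E)$ are defined over $\K[t]$ and are free $\K[t]$-modules (the toric divisors and their intersections give a chain complex computing $H^*$; see the proof of Theorem \ref{Theorem Minimal poly and char poly from B to E}), so the characteristic polynomials of quantum multiplication by $c_1(TB)$ and by $c_1(TE)$ lie in $\K[t][x]$. Since $[\omega_B]$ is primitive integral and $\pi_2(B)$ surjects onto $H_2(B;\Z)$, the minimal Chern number of $B$ is $\lambda_B$, and (using $c_1(TE)=\lambda_E\pi^*[\omega_B]$ and $\pi_2(E)\cong\pi_2(B)$) that of $E$ is $\lambda_E=\lambda_B-k$; writing $t=T^{\lambda_B}$, resp.\ $t=T^{\lambda_E}$, these characteristic polynomials are therefore invariant under the substitution $T\mapsto\xi T$ for any $\lambda_B$-th (resp.\ $\lambda_E$-th) root of unity $\xi$.

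For parts (1), (4) (and the family part of (3), (5)): the $\K$-algebra automorphism $\sigma_\xi$ of $\Lambda$ fixing $t_B$ with $\sigma_\xi(T_B)=\xi T_B$ induces a $\sigma_\xi$-semilinear bijection $\Sigma_\xi$ of $QH^*(B;\Lambda)=QH^*(B;\K[t])\otimes_{\K[t]}\Lambda$ which fixes $c_1(TB)$ and hence commutes with quantum multiplication by it while sending $\mu T_B\mapsto\xi\mu T_B$. Thus $\Sigma_\xi$ carries the generalized $\mu T_B$-eigenspace bijectively onto the generalized $\xi\mu T_B$-eigenspace, preserving $\Lambda$-dimension and level $d$. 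This is (4), and (1) is the case $d\gg0$; the identical argument for $E$ with $\lambda_E$ in place of $\lambda_B$ gives (5) and the family structure in (3). (If $\mathrm{char}\,\K$ divides the index, ``$\lambda$-th root of unity'' must be read through the Frobenius factorization $x^{p^am}-c=(x^{m}-c^{p^{-a}})^{p^a}$, which shrinks the families and enlarges multiplicities; I would dispose of this routine point separately.)

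To identify the nonzero eigenvalues of $c_1(TE)$ on $QH^*(E)$ with the critical values of $W_E$ --- the rest of (3) --- I would combine Theorems \ref{Theorem SH of negative line bundle} and \ref{Theorem Jacobian ring is SH}. Since $\pi^*c_1(E)=-k\pi^*[\omega_B]=-(k/\lambda_E)\,c_1(TE)$ is a nonzero scalar multiple of $c_1(TE)$, its generalized $0$-eigenspace coincides with that of $c_1(TE)$, so $SH^*(E)=QH^*(E)/(\text{generalized }0\text{-eigenspace of }c_1(TE))$ is $QH^*(E)$ with precisely its generalized $0$-eigensummand deleted; hence the nonzero generalized eigenspaces of $c_1(TE)$ on $QH^*(E)$ are carried isomorphically onto those of $c_1(TE)\mapsto W_E$ on $SH^*(E)\cong\mathrm{Jac}(W_E)$. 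As $\mathrm{Jac}(W_E)$ is finite-dimensional (Theorem \ref{Theorem presentation of QH and SH for Fano toric NLB}) it is the product of the local rings at the critical points of $W_E$, on each of which $W_E$ is its critical value plus a nilpotent, so the eigenvalues of multiplication by $W_E$ are exactly the critical values; and $0$ is not among them because $W_E$ corresponds, up to a nonzero scalar, to $c^*(\pi^*c_1(E))\in SH^*(E)^{\times}$, which is invertible with $SH^*(E)\neq0$ by Corollary \ref{Corollary SH(E) nonzero for mon toric nlb}. This proves (3).

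Finally, for part (2) I would produce the free $\mu_{\lambda_E}$-action directly on $\mathrm{Jac}(W_E)$. The key step is to build a weight function $\ell\co\Z^{n+1}\to\Z/\lambda_E$ with $\ell(e_{n+1})=1$ and $\ell((b_i,0))\equiv1+k\lambda_i^B$: this is well defined because $\{(b_i,0)\}\cup\{e_{n+1}\}$ generates $\Z^{n+1}$ ($B$ being smooth and complete, the $b_i$ span $\Z^n$) and every relation $\sum m_ib_i=0$ comes from a class $\beta\in H_2(B;\Z)$ with $m_i=\beta\cdot D_i^B$, for which $\sum m_i(1+k\lambda_i^B)=c_1(TB)(\beta)-k[\omega_B](\beta)=(\lambda_B-k)[\omega_B](\beta)=\lambda_E[\omega_B](\beta)\equiv0\pmod{\lambda_E}$. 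By the description of $W_E$ in Example \ref{Example Superpotential of nlb} / Theorem \ref{Theorem Jacobian ring is SH}, every monomial appearing in $W_E$ has $\ell$-weight $1$, so $\Psi_\zeta(z^a):=\zeta^{\ell(a)}z^a$ defines a $\Lambda$-algebra automorphism of $\Lambda[z_1^{\pm1},\ldots,z_{n+1}^{\pm1}]$ with $\Psi_\zeta(W_E)=\zeta W_E$; it therefore preserves the Jacobian ideal $(\partial_{z_1}W_E,\ldots,\partial_{z_{n+1}}W_E)$ and descends to $\mathrm{Jac}(W_E)$. The $\Psi_\zeta$ ($\zeta\in\mu_{\lambda_E}$) form a group action permuting $\mathrm{Crit}(W_E)$ with $W_E(\zeta\cdot p)=\zeta\,W_E(p)$, and since no critical value is zero no nontrivial $\Psi_\zeta$ fixes a critical point, so the action is free. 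I expect the main obstacle to be exactly this last bookkeeping --- pinning down $\ell$ via the dictionary between $\Z$-linear relations among fan edges and $H_2$, together with the relation $n_i=k\lambda_i^B$ between the presentations of $E$ and $B$ --- along with keeping straight, in part (3), which of the operators $c_1(TE)$, $\pi^*c_1(E)$, $W_E$ acts on which of $QH^*(E)$, $SH^*(E)$, $\mathrm{Jac}(W_E)$; the grading/Novikov symmetry and the local-ring decomposition are standard.
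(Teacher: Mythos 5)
Your proposal is correct, but it takes a genuinely different route from the paper's proof for most of the parts, so a comparison is worth recording.

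For parts (1), (4), (5) you argue via a Novikov-grading symmetry: quantum multiplication by $c_1$ has structure constants in $\K[t]=\K[T^{\lambda}]$ (minimal Chern number $=\lambda$), so the semilinear map over $\sigma_\xi\co T\mapsto\xi T$, $\xi^{\lambda}=1$, conjugates $c_1*-\mu T$ to $c_1*-\xi\mu T$ and carries generalized eigenspaces to generalized eigenspaces. The paper instead works on $\mathrm{Jac}(W)$ and uses the coordinate rescaling $z\mapsto\xi z$, which via Lemma~\ref{Lemma W(xi z) is xi W(z)} satisfies $W(\xi z)=\xi W(z)$; the isomorphism $f(z)\mapsto f(\xi z)$ then transports $\ker(W-\lambda)^d$ to $\ker(W-\xi\lambda)^d$. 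Your route is more elementary in the sense that it proves (1) and (4) directly on $QH^*(B)$ without invoking the Batyrev identification $QH^*(B)\cong\mathrm{Jac}(W_B)$ at that stage; what it pays for this is a slightly delicate point: $\Lambda$ as defined here admits arbitrary real exponents of $t$, so ``the automorphism $\sigma_\xi$ of $\Lambda$'' requires a choice of branch of $\xi^r$ for $r\in\R$ (harmless, but cleaner to work over $\K[T^{\pm 1}]$ or $\K(\!(T)\!)$ first and base-change, as the paper does in the proof of Theorem~\ref{Theorem Minimal poly and char poly from B to E}). For part (2) you build a $\Z/\lambda_E$-weight function $\ell$ on $\Z^{n+1}$ so that every monomial of $W_E$ has weight $1$; this is essentially the coordinate-free form of the paper's Lemma~\ref{Lemma W(xi z) is xi W(z)}\,/\,Corollary~\ref{Corollary crit pts of W arise in families}, which after normalizing the fan makes $\Psi_\zeta$ exactly the rescaling $z\mapsto\zeta^{-1}z$. (Two small remarks: the fact that $\{(b_i,0)\}\cup\{e_{n+1}\}$ generates $\Z^{n+1}$ is what makes $\ell$ unique, not well-defined --- well-definedness is the congruence computation you do; and freeness follows immediately from $\ell(e_{n+1})=1$, since $\zeta^{\ell(e_{n+1})}=1$ forces $\zeta=1$, with no need to invoke nonzero critical values.) For part (3) your argument --- $\mathrm{Jac}(W_E)\cong SH^*(E)$ is the quotient of $QH^*(E)$ by the generalized $0$-eigenspace of $\pi^*c_1(E)$, which agrees with that of $c_1(TE)$ since they are proportional; finite-dimensionality gives the local-ring decomposition; and $0$ is excluded by invertibility of $W_E$ in $SH^*(E)\neq 0$ --- is exactly the paper's, via equation~\eqref{Equation Jac is SH is QH quotiented} and the reference to Ostrover--Tyomkin. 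The characteristic caveat you flag is reasonable and not addressed in the paper's statement either.
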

\begin{proof}
Recall from Section \ref{Subsection Batyrev's argument: from the presentation of QH to JacW} that $QH^*(B)\cong \mathrm{Jac}(W_B)$, $c_1(TB)\mapsto W_B$ by Batyrev. For purely algebraic reasons (Ostrover-Tyomkin \cite[Corollary 2.3 and Section 4.1]{Ostrover-Tyomkin}) it follows that the eigenvalues of $c_1(TB)$ acting on $QH^*(B)$ are precisely the critical values of $W_B$ (this result was originally proved by considering special Lagrangians in $B$, and is due to Kontsevich, Seidel, and Auroux \cite[Section 6]{Auroux}). Then \eqref{Item evalues for B} follows by Corollary \ref{Corollary crit pts of W arise in families}.

Now run the same argument for $E$. Claim \eqref{Item action of lambdaB - k roots} follows by Corollary \ref{Corollary crit pts of W arise in families}. 
By Theorem \ref{Theorem Jacobian ring is SH}, 
\begin{equation}\label{Equation Jac is SH is QH quotiented}
\mathrm{Jac}(W_E)\cong SH^*(E)\cong QH^*(E)/(\textrm{generalized }0\textrm{-espace of }\pi^*c_1(E)),\;  W_E\mapsto c_1(TE).
\end{equation}
 Again, for algebraic reasons, the first part of \eqref{Item nonzero evals of E} follows, and the second part  follows by \eqref{Item action of lambdaB - k roots}. 
% Claim (2) is proved by the same argument: the toric variety is not compact, so the fan is not complete, but in fact in the above argument does not require that (we only used the existence of one top-dimensional cone to obtain a chart). The last claim in (2) follows from Theorem \ref{Theorem Jacobian ring is SH} that $\mathrm{Jac}(W_E)\cong SH^*(E)\cong QH^*(E)/(\textrm{generalized }0\textrm{-eigenspace})$ and that $W_E$ corresponds to $c_1(TE)$.
% As an illustration: for $\mathcal{O}(-k) \to \P^m$, we have the relation $e_1+\cdots+e_{m+1}-ke_{m+2}$, which corresponds to $(1,\ldots,1,-k)\in \mathfrak{g}_{\C}$, which says that the disc $([e^{i\theta}:1:\ldots:1],1)$ turns into $([1:e^{-i\theta}:\ldots:e^{-i\theta}],e^{ik\theta})$ in our favourite chart (so we need $k$ fibre discs to kill the last entry, which corresponds to the $-ke_{m+2}$ in the relation), and so evaluating the half-Maslov index of the appropriate sum of discs $\delta_1 + \ldots + \delta_m+ \delta_{m+1} - k\delta{m+2}$ we get $2+m-k$ which indeed is $1$ modulo $\lambda_B-k = 1+m-k$.

Now we prove \eqref{Item dim gen subespaces of B}. Firstly, $\ker(c_1(TB)-\lambda)^d\cong \ker(W_B-\lambda)^d$ (acting on $\mathrm{Jac}(W_B)$). But 
$$
\ker(W_B(z)-\xi\lambda)^d=\ker \xi^d(W_B(\xi^{-1}z)-\lambda)^d\cong \ker(W_B(z)-\lambda)^d,
$$
where in the first equality we use $W_B(\xi z)=\xi W_B(z)$ (Lemma \ref{Lemma W(xi z) is xi W(z)}), and second equality is the isomorphism $f(z) \mapsto f(\xi z)$. So \eqref{Item dim gen subespaces of B} follows.
%we use that there is a linear isomorphism between the solutions $f(z)\in \mathrm{Jac}(W_B)$ of the polynomial equation $P(\xi^{-1} z)^d f(z)=0\in \mathrm{Jac}(W_B)$ and the solutions of $P(z)^d f(\xi z)=0\in \mathrm{Jac}(W_B)$ via the linear change of variables $z\mapsto \xi z$ (which is a well-defined isomorphism of $\mathrm{Jac}(W)$ because the relations $\partial_{z_i}(W_B(z))=0$ that define $\mathrm{Jac}(W_B)$ are equivalent to the relations $\partial_{z_i}(W_B(\xi z))=0$, again using $W_B(\xi z)=\xi W_B(z)$). Claim (3) then follows.
Similarly \eqref{Item dim gen subespaces of E} follows from \eqref{Equation Jac is SH is QH quotiented} by making $c_1(TE)-\lambda$ act on $SH^*(E)$ for $\lambda\neq 0$ (for $\lambda=0$ there is nothing to prove).
\end{proof}

For generation results, it will be important to know the dimensions of the generalized eigenspaces of the quantum multiplication action of $c_1(TE)=(\lambda_B-k)[\omega_E]\in QH^*(E)$ in terms of that for $c_1(TB)=\lambda_B[\omega_B]\in QH^*(B)$. The next result aims to describe the Jordan normal form (JNF) for $[\omega_E]=\pi^*[\omega_B]\in QH^*(E)$ in terms of the JNF for $[\omega_B]\in QH^*(B)$. Recall the JNF of $[\omega_B]$ is determined by taking the primary decomposition of $QH^*(B)$ viewed as a finitely generated torsion module over the principal ideal domain (PID) $\Lambda[x]$ (using that $\Lambda$ is a field), where $x$ acts by multiplication by $[\omega_B]$. Namely, each summand $\Lambda[x]/(x-\mu T)^d$ corresponds to a $d\times d$ Jordan block for $\lambda = \mu T$. By Lemma \ref{Lemma Critical points of W come in families}\eqref{Item dim gen subespaces of B}, for non-zero $\mu$ the factors $\Lambda[x]/(x-\mu T)^d$
arise in $\lambda_B$-families. By the Chinese Remainder Theorem such a family yields a summand $\Lambda[x]/(x^{\lambda_B}-\mu^{\lambda_B} t)^d$, corresponding to a $\lambda_B$-family of $d\times d$ Jordan blocks for the eigenvalues listed in Lemma \ref{Lemma Critical points of W come in families}\eqref{Item evalues for B}. So the JNF yields the $\Lambda[x]$-module isomorphism \eqref{Equation primary decomposition of QH(B)}.

In the following result, we use the convention that for $f\in \K[t_B][x]$, $\varphi(f)$ means we replace $t_B$ by $t_E(-kx)^k$, as is consistent with the definition of $\varphi$ in Theorem \ref{Theorem Minimal poly and char poly from B to E} and the $x$-actions. We remark that when $\mathrm{char}(\K)$ divides $k$, the SR-relations in $E$ are the classical SR-relations, by Theorem \ref{Theorem presentation of QH and SH for Fano toric NLB}, so $QH^*(E;\Lambda)\cong H^*(E)\otimes \Lambda$ as a ring, which we are not interested in. 

\begin{theorem}[Assuming $\mathrm{char}(\K)$ does not divide $k$]\label{Theorem Eigenvalues of c1 from B to E}
The isomorphism in \eqref{Equation primary decomposition of QH(B)} determines the $\Lambda[x]$-module isomorphisms in \eqref{Equation primary decomposition of QH(E)}, with $x$ acting as $[\omega_E]=\pi^*[\omega_B]$.

The characteristic polynomial of $[\omega_E]$ is the image under $\varphi$ of the characteristic polynomial of $[\omega_B]$. The minimal polynomial of $[\omega_E]$ is, possibly after dropping some $x$-factors, the image of the minimal polynomial of $[\omega_B]$. The $(\lambda_B-k)$-family of non-zero eigenvalues $\mu_j^E T_E$ of $\pi^*[\omega_B]\in QH^*(E)$ of Lemma \ref{Lemma Critical points of W come in families}\eqref{Item evalues for B} arises from a $\lambda_B$-family $\mu_j T_B$ for $B$, via $$(\mu_j^E)^{\lambda_B-k} = (-k)^k \mu_j^{\lambda_B}.$$
\end{theorem}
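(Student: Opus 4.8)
The plan is to transport the computation into the Landau--Ginzburg picture, where the fibre coordinate of $E$ can be eliminated and the whole statement reduces to a base change of the already-known decomposition \eqref{Equation primary decomposition of QH(B)} for the base. As a first reduction: by Theorem \ref{Theorem Jacobian ring is SH}, $SH^*(E)\cong\mathrm{Jac}(W_E)$, with $[\omega_E]=\pi^*[\omega_B]$ corresponding to $t_E\partial_{t_E}W_E$ (since $[\omega_E]=-\sum_i\lambda_i^E\,\mathrm{PD}[D_i]$, the $E$-analogue of \eqref{EqnC1andOmega}); and by Theorems \ref{Theorem presentation of QH and SH for Fano toric NLB} and \ref{Theorem SH of negative line bundle}, $SH^*(E)=QH^*(E)/(\text{generalized }0\text{-eigenspace of }\pi^*c_1(E))$. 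Because $\pi^*c_1(E)=-k[\omega_E]$ and $\mathrm{char}(\K)\nmid k$, that generalized $0$-eigenspace is precisely $\ker(x^{\mathrm{large}})$ for $x=[\omega_E]$; being the $x$-primary summand of the $\Lambda[x]$-module $QH^*(E)$ it splits off, so $QH^*(E)\cong SH^*(E)\oplus\ker(x^{\mathrm{large}})$ as $\Lambda[x]$-modules. Hence the second line of \eqref{Equation primary decomposition of QH(E)} will follow from the first, and it suffices to determine the $\Lambda[x]$-module $SH^*(E)=\mathrm{Jac}(W_E)$.

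Next I would eliminate the fibre variable. By Example \ref{Example Superpotential of nlb}, $W_E=z_{n+1}+W_B|_{t_B\mapsto t_E z_{n+1}^k}$, so $\mathrm{Crit}(W_E)$ is cut out by $(\partial_{z_j}W_B)(z;t_Ez_{n+1}^k)=0$ for $j\le n$ together with $z_{n+1}=-k\,(t\partial_t W_B)(z;t_Ez_{n+1}^k)$. Introducing an invertible dummy variable $s$ with $s=t_Ez_{n+1}^k$ and using the last relation to eliminate $z_{n+1}$ (which is invertible in the quotient, since $(s\partial_sW_B)^k=s/(t_E(-k)^k)$ is a unit) yields a ring isomorphism
\[
SH^*(E)=\mathrm{Jac}(W_E)\;\cong\;\Lambda[z_1^{\pm},\dots,z_n^{\pm},s^{\pm}]\big/\bigl((\partial_{z_j}W_B)(z;s)\ (j\le n),\ s-t_E(-k)^k(s\partial_s W_B)^k\bigr),
\]
under which $[\omega_E]=t_E\partial_{t_E}W_E$ corresponds to $s\partial_s W_B$. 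By Batyrev's presentation of $QH^*(B)$ (Corollary \ref{Corollary QH of noncompact toric}, valid over $\K[s^{\pm}]$ and hence over $\Lambda[s^{\pm}]$), the ring $\Lambda[z^{\pm},s^{\pm}]/((\partial_{z_j}W_B)(z;s))_j$ is $QH^*(B)$ with Novikov variable $s$, with $s\partial_s W_B$ acting as quantum multiplication by $[\omega_B]$. Writing $Q_B$ for this ring (free of rank $N=\dim H^*(B)$ over the PID $\Lambda[s^{\pm}]$, with operator $x=[\omega_B]$), one gets
\[
SH^*(E)\;\cong\; Q_B\big/\bigl(s-t_E(-k)^kx^k\bigr),\qquad [\omega_E]\longleftrightarrow x=[\omega_B].
\]

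Now I would base change along the primary decomposition. Since the non-zero eigenvalues of $[\omega_B]$ come in $\lambda_B$-families (Lemma \ref{Lemma Critical points of W come in families}\eqref{Item evalues for B}), the $\Z$-graded ($|x|=2$, $|s|=2\lambda_B$) free $\Lambda[s^{\pm}]$-module $Q_B$ has elementary-divisor decomposition obtained from \eqref{Equation primary decomposition of QH(B)} by replacing $t$ with $s$; in particular its characteristic polynomial is $x^{a_0}\prod_i(x^{\lambda_B}-\mu_i^{\lambda_B}s)^{d_i}$, whose factors $x$ and $x^{\lambda_B}-\mu_i^{\lambda_B}s$ are pairwise comaximal in $\Lambda[s^{\pm}][x]$ (here one uses that $s$ is a unit), so a Chinese-remainder splitting gives $Q_B\cong\bigoplus_{i=1}^p\Lambda[s^{\pm}][x]/(x^{\lambda_B}-\mu_i^{\lambda_B}s)^{d_i}\oplus\ker(x^{\mathrm{large}})$ as $\Lambda[s^{\pm}][x]$-modules. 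Applying $-\otimes_{\Lambda[s^{\pm}][x]}\Lambda[s^{\pm}][x]/(s-t_E(-k)^kx^k)$, and noting $\Lambda[s^{\pm}][x]/(s-t_E(-k)^kx^k)\cong\Lambda[x^{\pm}]$: the $x$-nilpotent summand is killed, while each surviving summand becomes
\[
\frac{\Lambda[x^{\pm}]}{\bigl(x^{\lambda_B}-\mu_i^{\lambda_B}t_E(-k)^kx^k\bigr)^{d_i}}=\frac{\Lambda[x^{\pm}]}{\bigl(x^k(x^{\lambda_E}-(-k)^k\mu_i^{\lambda_B}t_E)\bigr)^{d_i}}=\frac{\Lambda[x]}{(x^{\lambda_E}-(-k)^k\mu_i^{\lambda_B}t_E)^{d_i}},
\]
using $\lambda_E=\lambda_B-k$ and that $x$ is already invertible modulo $(x^{\lambda_E}-(-k)^k\mu_i^{\lambda_B}t_E)$. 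This is the first line of \eqref{Equation primary decomposition of QH(E)}, hence (with the first paragraph) all of it.

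Finally, the polynomial and eigenvalue statements drop out: from \eqref{Equation primary decomposition of QH(E)} the characteristic polynomial of $[\omega_E]$ is $x^{c_0}\prod_i(x^{\lambda_E}-(-k)^k\mu_i^{\lambda_B}t_E)^{d_i}$, where $c_0=\dim\ker(x^{\mathrm{large}})=N-\lambda_E\sum d_i=a_0+k\sum d_i$ is forced by $\dim QH^*(E)=\dim QH^*(B)=N$; substituting $t_B\mapsto t_E(-kx)^k$ (recall $-kx=\sum n_ix_i=c_1(E)$) into $\chi_B=x^{a_0}\prod_i(x^{\lambda_B}-\mu_i^{\lambda_B}t_B)^{d_i}$ produces exactly this, so this characteristic polynomial is $\varphi(\chi_B)$. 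The same substitution applied to $m_B$ gives a polynomial annihilating $[\omega_E]$ (Theorem \ref{Theorem Minimal poly and char poly from B to E}), whence $m_E\mid\varphi(m_B)$, and as $m_E$ and $\varphi(m_B)$ have the same non-$x$ part they agree up to a power of $x$. The non-zero eigenvalues of $[\omega_E]$ are the roots of the $x^{\lambda_E}-(-k)^k\mu_i^{\lambda_B}t_E$, i.e.\ the $\mu_j^ET_E$ with $(\mu_j^E)^{\lambda_E}=(-k)^k\mu_j^{\lambda_B}$ (using $t_E=T_E^{\lambda_E}$), as claimed. I expect the main obstacle to be the algebraic elimination in the second paragraph --- verifying that $\partial_{z_{n+1}}W_E=0$ genuinely expresses $z_{n+1}$ invertibly in terms of the other data, with no relation gained or lost, and that the quotient really is the $B$-Jacobian ring in the variable $s$; and, relatedly, that \eqref{Equation primary decomposition of QH(B)} is valid over $\Lambda[s^{\pm}]$ and not merely fibrewise over $\Lambda$, which is exactly where the $\lambda_B$-family structure is needed to make the primary ideals comaximal. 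Neither point is conceptually hard, but both demand care with units in the Laurent rings.
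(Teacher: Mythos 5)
Your argument is correct, and it takes a genuinely different route from the paper's proof.  The paper stays on the quantum-cohomology side throughout: its engine is a ``transfer lemma'' (Step~1 of the paper's proof) that deduces from $f(x)\varphi(v)=0$ in $QH^*(E;\K[t])$ the statement $x^{\mathrm{large}}f(x)v=0$ in $QH^*(B;\K[t])$, together with auxiliary splitting results over $\K[t,t^{-1}]$ (Steps~2 and~4); the structure of $SH^*(E)$ is then extracted by applying $\varphi$ to the idempotent decomposition $1=\sum g_j+\sum h_{p+j}$ of $QH^*(B)$ and pinning down the exact annihilator of each $\varphi(g_j)$ (Steps~3, 5, 6).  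You instead push the whole computation through the Landau--Ginzburg mirror: eliminate the fibre coordinate $z_{n+1}$ from $\mathrm{Jac}(W_E)$ by introducing $s=t_Ez_{n+1}^k$, observe that the $\partial_{z_{n+1}}W_E$-relation forces $s\partial_sW_B$ to be a unit (here $\mathrm{char}(\K)\nmid k$ enters), and thereby realize $SH^*(E)\cong Q_B/\bigl(s-t_E(-k)^kx^k\bigr)$ as a one-relation quotient of the base Jacobian ring $Q_B$ with Novikov variable $s$.  The module decomposition \eqref{Equation primary decomposition of QH(E)} and the polynomial/eigenvalue formulas then follow by tensoring the Laurent-ring version of \eqref{Equation primary decomposition of QH(B)} with $\Lambda[s^{\pm}][x]/(s-t_E(-k)^kx^k)\cong\Lambda[x^{\pm}]$, which kills the $x$-nilpotent part and turns each factor $(x^{\lambda_B}-\mu_i^{\lambda_B}s)^{d_i}$ into $(x^{\lambda_E}-(-k)^k\mu_i^{\lambda_B}t_E)^{d_i}$ after cancelling the invertible $x^{kd_i}$.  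This is conceptually tidier: it makes visible at the ring level why $SH^*(E)$ is the base quantum cohomology twisted by a single extra relation, where the paper has to reconstruct this indirectly from annihilators of idempotents.

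The one point where the two proofs genuinely share technical content is the one you flag yourself: both need the primary decomposition of $QH^*(B)$ to hold over a Laurent polynomial ring in the Novikov variable, not merely over the field $\Lambda$.  The paper devotes Step~2 to this (set $t=1$, decompose over $\K$, re-homogenize, clear a power of $t$).  In your version the corresponding claim is that $Q_B$, as a $\Lambda[s^{\pm}][x]$-module, admits the decomposition obtained from \eqref{Equation primary decomposition of QH(B)} by replacing $t$ with $s$.  Be aware that the Chinese-remainder sentence in your second paragraph does not on its own produce this: coprimality of the $(x^{\lambda_B}-\mu_i^{\lambda_B}s)$ only separates summands with \emph{distinct} $\mu_i^{\lambda_B}$, and if several Jordan blocks share an eigenvalue family (which is allowed in \eqref{Equation primary decomposition of QH(B)}), CRT will not split them apart --- you need the same homogenization-of-idempotents argument the paper uses.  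Since you invoke the $\Z$-grading and say ``elementary-divisor decomposition'', you clearly know this, but the CRT remark as written is a red herring and should be replaced by (or supplemented with) the homogenization step.  Everything else --- the elimination of $z_{n+1}$, the identification $[\omega_E]\leftrightarrow s\partial_sW_B$, the dimension count $c_0=a_0+k\sum d_i$ matching $\varphi(\chi_B)$, and the derivation of $(\mu_j^E)^{\lambda_E}=(-k)^k\mu_j^{\lambda_B}$ from $t_E=T_E^{\lambda_E}$ --- checks out.
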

\begin{proof}
\textbf{Step 1}: Given $f(x)\in \K[t_E][x]$ and $v\in QH^*(B;\K[t])$, suppose $f(x)\varphi(v)=0$ in $QH^*(E;\K[t])$, where the polynomial $f(x)$ acts on $QH^*(E;\K[t])$ by making $x$ act by multiplication by $[\omega_E]=\pi^*[\omega_B]$. Then $x^{\textrm{large}}f(x)v=0$ in $QH^*(B;\K[t])$ after the change of variables 
\begin{equation}\label{Equation change of variables tE tB}
t_E(-kx)^k=t_B.
\end{equation}
\textbf{Proof of Step 1}: Since $f(x)\cdot \varphi(v)=0$, $f(x)\varphi(v)$ lies in the ideal generated by the linear relations and SR-relations for $E$. By Theorems \ref{Theorem presentation of QH and SH for Fano toric NLB} and \ref{Theorem Minimal poly and char poly from B to E}, the linear relations in $B,E$ agree and the SR-relations agree up to \eqref{Equation change of variables tE tB}. Thus $(-kx)^{\textrm{large}} f(x)\varphi(v)$ lies in the ideal generated by the linear/SR-relations for $B$, where the factor $(-kx)^{\textrm{large}}$ ensures that all the occurrences of $t_E$ can be turned into $\textrm{constant}\cdot x^{\textrm{positive}}\cdot t_B$ via \eqref{Equation change of variables tE tB}. Thus $(-kx)^{\textrm{large}}f(x)v=0$ in $B$ (making the power of $(-kx)$ larger if necessary to ensure that in the expression of $f(x)$, all occurrences of $t_E$ are again replaced via \eqref{Equation change of variables tE tB}). Step 1 follows since $k$ is invertible in $\K$. $\checkmark$

\textbf{Step 2}: We claim that \eqref{Equation primary decomposition of QH(B)} holds for $QH^*(B;\K[t,t^{-1}])$ after replacing $\Lambda[x]$ by $\K[t,t^{-1}][x]$.

\textbf{Proof of Step 2}: This is not immediate since $\K[t,t^{-1}][x]$ is not a PID. First, \eqref{Equation primary decomposition of QH(B)} holds for $QH^*(B;\K)$, with $\Lambda[x]$ replaced by $\K[x]$ and $t$ replaced by $1$, since $\K$ is a field. This decomposition yields a decomposition of the unit $1=\sum g_j+ \sum h_{p+j} \in QH^*(B;\K)$ in terms of polynomials $g_j,h_{p+j}\in \K[x]$ (corresponding to the units in the various summands) which are annihilated by $(x^{\lambda_B}-\mu_j^{\lambda_B})^{d_j}$ and $x^{d_{p+j}}$ respectively (and are not annihilated by any non-zero polynomials of lower degree). Now reinsert positive powers of $t$ so as to make everything of homogeneous degree (recall $QH^*$ is $\Z$-graded when working with a graded Novikov variable $t$), to obtain  $t^{\textrm{positive}}=\sum g_j+ \sum h_{p+j} \in QH^*(B;\K[t])$ where $g_j,h_{p+j}\in \K[t][x]$ are annihilated by $(x^{\lambda_B}-\mu_j^{\lambda_B}t)^{d_j}$ and $x^{d_{p+j}}$ respectively (but not by lower degree polynomials). Over $\K[t,t^{-1}][x]$, we can rescale by $t^{-\textrm{positive}}$ to obtain a decomposition of $1$, and then Step 2 follows. $\checkmark$

\textbf{Step 3}:
Observe that for $\xi^{\lambda_B}=1$ and $\mu\in \K$, the image of a $\lambda_B$-family of factors
$$ 
(x-\mu T_B)(x-\xi \mu  T_B)\cdots (x-\xi^{\lambda_B-1} \mu T_B)
=
x^{\lambda_B}-\mu ^{\lambda_B}t_B
$$
via the map $\varphi$ of Theorem \ref{Theorem Minimal poly and char poly from B to E} is, using $\sum n_i x_i =c_1(E)= -k[\omega_B] = -kx$,
$$
%x^{\lambda_B}-\mu^{\lambda_B}t_E c_1(E)^k
%=
x^{\lambda_B}-\mu^{\lambda_B}t_E (-kx)^k
=
x^k (x^{\lambda_B-k}-(-k)^k \mu^{\lambda_B}t_E).
$$
\indent
\textbf{Step 4}: We claim that there is an isomorphism of $\K[t,t^{-1}][x]$-modules,
\begin{equation}\label{Equation SH is free}
QH^*(E;\K[t,t^{-1}])\cong SH^*(E;\K[t,t^{-1}])\oplus \ker(x^{\mathrm{large}}),
\end{equation}
in particular $SH^*(E;\K[t,t^{-1}])$ is a free $\K[t,t^{-1}]$-module, since $QH^*(E;\K[t,t^{-1}])$ is free (see the proof of Theorem \ref{Theorem Minimal poly and char poly from B to E}).

Over the PID $\Lambda[x]$, the above would follow by Theorem \ref{Theorem SH of negative line bundle}. To obtain the splitting over $\K[t,t^{-1}][x]$, it suffices to prove that there are polynomials $a(x,t),b(x,t)\in \K[t][x]$ satisfying the equality $a(x,t)x^{\mathrm{large}} +b(x,t)\overline{f}=t^{\textrm{positive}}$ (since rescaling by $t^{-\textrm{positive}}$ then decomposes $1$), where $f=x^{\mathrm{large}}\overline{f}$ is the minimal polynomial of $[\omega_E]$, and where $\overline{f}$ means we remove all $x$-factors from $f$. First work over $\K[x]$, then B\'{e}zout's Lemma would yield that equality for some polynomials $a(x,1),b(x,1)\in \K[x]$ if we put $t=1$ in $\overline{f}$. Now reinsert positive powers of $t$ to make those polynomials of homogeneous degree, to obtain the required $a(x,t),b(x,t)$. Step 4 follows. $\checkmark$

\textbf{Step 5}: Applying $\varphi$ to the decomposition $1=\sum g_j + \sum h_{p+j}$ (over $\K[t,t^{-1}][x]$) we obtain a decomposition of $1=\varphi(1)=\sum \varphi(g_j)$ in $SH^*(E;\K[t,t^{-1}])$. Here, since $x$ acts invertibly on $SH^*$, $\varphi(h_{p+j})=0$ and, using Step 3, $(x^{\lambda_B-k}-(-k)^k \mu_j^{\lambda_B}t_E)^{d_j}$ annihilates $\varphi(g_j)$. Conversely, if a polynomial $f(x)\in \K[t,t^{-1}][x]$ annihilates $\varphi(g_j)$, then $x^{\mathrm{large}}f(x)\in \K[t_E][x]$ annihilates $\varphi(g_j)$. So by Step 1, $x^{\mathrm{larger}}f(x)\in \K[t_B][x]$ annihilates $g_j$ in $QH^*(B;\K[t_B])$, and hence it must be divisible by $(x^{\lambda_B}-\mu_j^{\lambda_B}t_B)^{d_j}$. Applying $\varphi$ shows that $f(x)$ must be divisible by $(x^{\lambda_B-k}-(-k)^k \mu_j^{\lambda_B}t_E)^{d_j}$, so the latter is the minimal polynomial annihilating $\varphi(g_j)$.

\textbf{Step 6}: Each $\varphi(g_j)$ generates a submodule $C_i$ (the span of $\varphi(g_j),x\varphi(g_j),x^2\varphi(g_j),\ldots$) in $SH^*(E;\K[t,t^{-1}])$. We claim $\sum C_i$ is direct (note that the Theorem then follows).

\textbf{Proof of Step 6.} 
Suppose $\sum p_i(x)=0$ in $SH^*(E;\K[t,t^{-1}])$, where $p_i(x)\in C_i$. Then $\sum x^{\mathrm{large}}p_i(x) =0$ in $QH^*(E;\K[t])$. By Step 1, $\sum x^{\mathrm{large}}p_i(x) =0$ in $QH^*(B;\K[t])$. But in $QH^*(B;\K[t,t^{-1}])$ the summands generated by $g_j,xg_j,x^2g_j,\ldots$, as $j$ varies, are direct by construction. So $x^{\mathrm{large}}p_i(x)=0$ in $QH^*(B;\K[t,t^{-1}])$ for each $i$. Thus $x^{\mathrm{large}}p_i(x)=0$ in $SH^*(E;\K[t,t^{-1}])$ and so $p_i(x)=0$ since $x$ acts invertibly in $SH^*$. Step 6 thus follows. $\checkmark$
\end{proof}
%
%%%%%%%%%%%%%%%%%%%%%%%%%%%%%%%%%%%
%%%%%%%%%%%%%%%%%%%%%%%%%%%%%%%%%%%
\subsection{The Calabi-Yau case and the NEF case}
\label{Subsection The Calabi-Yau case}
%%%%%%%%%%%%%%%%%%%%%%%%%%%%%%%%%%%
%%%%%%%%%%%%%%%%%%%%%%%%%%%%%%%%%%%
%%%%%%%%%%%%%%%%%%%%%%%%%%%%%%%%%%%%%%%%%%%%%%%%%%%%%%%%%%%
We call the condition $c_1(TM)(\pi_2(M))=0$ from Section \ref{Subsection Invertibles in the symplectic cohomology} the \emph{Calabi-Yau case}, which ensures that the representation in Section \ref{Subsection Invertibles in the symplectic cohomology} is defined. For toric $M$, $\pi_1(M)=1$, so this condition is equivalent to $c_1(TM)=0$. More generally, one can work with \emph{NEF} toric $M$, meaning $c_1(TM)(A)\geq 0$ for all nontrivial spheres $A\in \pi_2(M)$ which have a $J$-holomorphic representative (the Fano case corresponds to requiring a strict inequality: $c_1(TM)(A)>0$). For closed toric manifolds, the NEF case is studied in McDuff-Tolman \cite[Example 5.4]{McDuffTolman}. The key observation is:
\begin{lemma}\label{Lemma NEF case}
 If we replace the monotonicity assumption by NEF, then Lemma \ref{Lemma R of gi standard lift} becomes:
$$
\begin{array}{rcl}
r_{g_i^{\wedge}}(1) &=& \mathrm{PD}[D_i] + (\textrm{higher order }t)\in QH^2(E)
\\ 
\mathcal{R}_{g_i^{\wedge}}(1) &=& c^*(r_{g_i^{\wedge}}(1))= c^*\mathrm{PD}[D_i]+(\textrm{higher order }t)\in SH^2(E)^{\times}.
\end{array}
$$
\end{lemma}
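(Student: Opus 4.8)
The plan is to re-run the proof of Lemma \ref{Lemma rg1} (hence of Lemma \ref{Lemma R of gi standard lift}) almost verbatim, tracking the two places where monotonicity was actually used and checking that the NEF hypothesis suffices at each. Recall that $r_{g_i^{\wedge}}(1)\in QH^*(E)$ is the count, with Novikov weight $t^{\omega(\gamma)}$, of the $(j,\hat{J})$-holomorphic sections of the bundle $E_{g_i}\to\P^1$ with fibre $E$ and clutching map $g_i$, where the moduli spaces $\mathcal{S}(j,\hat{J},\gamma+S_{g_i^{\wedge}})$ are indexed by $\gamma\in\pi_2(E)/\pi_2(E)_0$ and have virtual dimension $\dim_\C E - I(g_i^{\wedge})+c_1(TE)(\gamma)$ as in \eqref{Equation Dim of moduli space of sections}, with $I(g_i^{\wedge})=1$ (computed as in \cite[Section 7.8]{Ritter4}). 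First I would note that none of the foundations are Fano-specific: $r_{g_i^{\wedge}}$ is still defined because $H_i$ has the form demanded by Theorem \ref{Theorem Representation into SH for new Hamiltonians} (Corollary \ref{Corollary Hams in toric case are OK}), and NEF implies weak$+$ monotonicity, so Floer theory and the section moduli spaces of $E_{g_i}$ remain well behaved; in particular the maximum principle of Section \ref{Section The Maximum principle revisited} confines the sections to a compact region.

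Next I would treat the leading ($\gamma=0$) term, which is unchanged. The constant sections lie over $\mathrm{Fix}(g_i)=D_i$, and the regularity computation of Lemma \ref{Lemma rg1} — the vertical normal bundle splits as $\mathcal{O}^{\oplus d}\oplus\mathcal{O}(-1)$, so by Dolbeault and Serre duality the cokernel $H^1$ vanishes — is a purely local statement near $D_i$ that makes no use of monotonicity. Hence the $t^0$-coefficient of $r_{g_i^{\wedge}}(1)$ is exactly $\mathrm{PD}[\mathrm{Fix}(g_i)]=\mathrm{PD}[D_i]\in H^2(E)$.

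The genuinely different step is the treatment of $\gamma\ne 0$. In Lemma \ref{Lemma rg1} one argued that, since $I(g_i^{\wedge})=1$, the stratum with $c_1(TE)(\gamma)\ge 1$ has virtual dimension $\ge\dim_\C E$, so by the maximum principle its locally finite image is confined to a compact region and cannot sweep the unbounded $E$, forcing $c_1(TE)(\gamma)=0$ — and then monotonicity gave $\omega(\gamma)=0$, hence $\gamma=0$. Under NEF the first half of this argument still applies, so again no $\gamma$ with $c_1(TE)(\gamma)\ge 1$ contributes and $r_{g_i^{\wedge}}(1)$ lies in $H^2(E)\otimes\Lambda$; what fails is the conclusion $\gamma=0$, since NEF only yields $c_1(TE)(\gamma)\ge 0$ for effective $\gamma$, so effective classes with $c_1(TE)(\gamma)=0$ but $\omega(\gamma)>0$ are no longer excluded and will in general contribute. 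Crucially, every effective $\gamma\ne 0$ is represented by a non-constant $J$-holomorphic curve, hence $\omega(\gamma)>0$, so each such contribution carries weight $t^{\omega(\gamma)}$ with strictly positive exponent: a ``higher order $t$'' term. This gives the first displayed formula (the closed case being \cite[Example 5.4]{McDuffTolman}). The $SH^*$ statement is then purely formal: $\mathcal{R}_{g_i^{\wedge}}(1)=c^*(r_{g_i^{\wedge}}(1))$ by commutativity of \eqref{Equation Intro Comm Diagram Ham SH and QH} (Theorem \ref{Theorem Representation of Ham on QH}), so $\mathcal{R}_{g_i^{\wedge}}(1)=c^*\mathrm{PD}[D_i]+(\textrm{higher order }t)$, and it is invertible in $SH^*(E)^{\times}$ because $\mathcal{R}=\mathcal{S}$ is a homomorphism into the units by Theorem \ref{Theorem Representation of Ham on SH} (with inverse $\mathcal{S}_{(g_i^{-1})^{\wedge}}(1)$).

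The main obstacle I anticipate is not conceptual but one of analytic care: one must verify that Gromov compactness for the section moduli spaces of $E_{g_i}$, together with weak$+$ monotonicity, still controls every sphere bubble that the NEF (rather than strictly Fano) setting permits, so that the weighted count genuinely defines an element of $QH^*(E)$ over the relevant Novikov ring; and one must keep the maximum principle doing its double duty — confining solutions to a compact region and thereby killing the $c_1(TE)(\gamma)\ge 1$ strata — without accidentally discarding the legitimate $c_1(TE)(\gamma)=0$, $\omega(\gamma)>0$ corrections. Everything else is a transcription of the proof of Lemma \ref{Lemma rg1}.
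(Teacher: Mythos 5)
Your proposal is correct and is essentially an elaboration of the paper's own proof, which consists of the single line ``This follows from Lemma \ref{Lemma rg1}'' — i.e.\ an appeal to the first conclusion \eqref{Equation rg1} of that Lemma. You go further than the paper in two ways that are both legitimate. First, you explicitly audit the proof of Lemma \ref{Lemma rg1} for its uses of monotonicity and verify that NEF suffices at each point: NEF gives $c_1(TE)(\gamma)\geq 0$ for effective $\gamma$ (which is all that \eqref{Equation Dim of moduli space of sections} and the leading-term argument require), NEF implies the weakened weak$+$ monotonicity needed for the Floer-theoretic foundations, and the obstruction-bundle regularity of the constant sections is a local Dolbeault/Serre computation independent of any positivity hypothesis. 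Second, you observe that since $I(g_i^{\wedge})=1$, the maximum-principle/dimension argument of \eqref{Equation rg1 if Ig is 1} still kills the $c_1(TE)(\gamma)\geq 1$ strata in the NEF case, so the surviving corrections come precisely from effective $\gamma\neq 0$ with $c_1(TE)(\gamma)=0$, $\omega(\gamma)>0$; this is a sharper conclusion than the lemma claims, and it correctly identifies why monotonicity (which would force $\gamma=0$) is exactly what fails. The one point worth tidying is notational: in the NEF setting the quantum cohomology is defined over the modified ring $\mathfrak{R}=\Lambda_s(\!(T)\!)$ with weights $s^{\omega(\gamma)}T^{c_1(TE)(\gamma)}$ (Section \ref{Subsection New Novikov ring R}), so both ``$H^2(E)\otimes\Lambda$'' and ``weight $t^{\omega(\gamma)}$'' should read ``$H^2(E)\otimes\mathfrak{R}$'' and ``weight $s^{\omega(\gamma)}$'' respectively (the $T$-power being zero on the surviving strata, by your own refinement); the statement of the Lemma itself uses ``higher order $t$'' loosely, and the intended meaning is strictly positive valuation in the $s$-adic filtration, which is what feeds into the McDuff--Tolman algebraic trick (Lemma \ref{Lemma McDuffTolman Algebra trick}).
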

\begin{proof}
 This follows from Lemma \ref{Lemma rg1}.
\end{proof}

By Lemmas \ref{Lemma NEF case}, \ref{Lemma Choices of lifts} and \ref{Lemma McDuffTolman Algebra trick}, it follows that in the Batyrev presentation for (possibly non-compact) NEF toric varieties $M$ we must replace the quantum SR-relations \eqref{EqnQSRrelation} with
$$
\mathcal{R}_{i_1}  \cdots  \mathcal{R}_{i_a}=s^{\omega(\beta_I)}T^{c_1(TM)(\beta_I)}\mathcal{R}_{j_1}^{c_{1}} \cdots \mathcal{R}_{j_b}^{c_{b}}\qquad \textrm{ where }\mathcal{R}_{j} = \mathcal{R}_{g_{j}^{\wedge}}(1),
$$
and we work over a modified Novikov ring $\mathfrak{R}$ (see Section \ref{Subsection Invertibles in the symplectic cohomology} and \ref{Subsection New Novikov ring R}). The lowest order $T$ terms on each side of the equation agree with those in \eqref{EqnQSRrelation}, but the higher order $T$ terms are hard to compute in practice.

For negative line bundles $E\to B$ satisfying weak+ monotonicity (Section \ref{Subsection Invertibles in the symplectic cohomology}), $r_{g_f^{\wedge}}(1)=\pi^*c_1(E)$ may not in fact hold, where $g_f$ is the natural rotation in the fibre. In \cite{Ritter4} we proved that $r_{g_f^{\wedge}}(1)=(1+\lambda_+)\pi^*c_1(E)$, where $\lambda_+\in \Lambda$ involves only strictly positive powers of $t$ (and $s^0$-terms). This rescaling does not affect $SH^*(E)\cong QH^*(E)/\ker \pi^*c_1(E)^{\textrm{large}}$ (Theorem \ref{Theorem SH of negative line bundle}). 

The Calabi-Yau case is $k=\lambda_B$. By \cite{Ritter4}, $SH^*(E)$ is $\Z$-graded, finite-dimensional, and has an automorphism $\mathcal{R}_{g_f^{\wedge}}$ of degree $2$. It follows that $SH^*(E)=0$ and $\pi^*c_1(E)\in QH^*(E)$ is nilpotent. So the wrapped Fukaya category $\mathcal{W}(E)$ (assuming it is defined) would not be interesting since it would be homologically trivial (being a module over $SH^*(E)$).
%
%%%%%%%%%%%%%%%%%%%%%%%%%%%%%%%%%%%%%%%%%%%%%%%%%%%%%%%%%%%
\section{Twisted theory: non-monotone toric symplectic forms}
\label{Section Twisted theory: non-monotone toric symplectic forms}
%%%%%%%%%%%%%%%%%%%%%%%%%%%%%%%%%%%%%%%%%%%%%%%%%%%%%%%%%%%
%%%%%%%%%%%%%%%%%%%%%%%%%%%%%%%%%%%%%%%%%%%%%%%%%%%%%%%%%%%
%%%%%%%%%%%%%%%%%%%%%%%%%%%%%%%%%%%%%%%%%%%%%%%%%%%%%%%%%%%
\subsection{Toric symplectic forms}
\label{Subsection Toric symplectic forms}
%%%%%%%%%%%%%%%%%%%%%%%%%%%%%%%%%%%%%%%%%%%%%%%%%%%%%%%%%%%

For this background section, we refer for details to Ostrover-Tyomkin \cite{Ostrover-Tyomkin}, Fulton \cite[Sec.3.4]{Fulton}, Batyrev \cite{Batyrev} or Cox-Katz \cite[Sec.3.3]{Cox-Katz}. Some of the terminology is also illustrated in Appendix A (Section \ref{Section The moment polytope of a toric negative line bundle}).

Recall that from a \emph{fan} $\Sigma\in \Z^n$ one can construct a toric variety $X=X_{\Sigma}$, in particular this determines a complex structure $J$ on $X_{\Sigma}$. We always assume that $X_{\Sigma}$ is smooth.

Recall that a \emph{piecewise linear function} $F$ on $\Sigma$ means a real-valued function which is linear on each cone $\sigma$ of $\Sigma$, thus $F(v)=\langle u_{\sigma}, v\rangle$ for some $u_{\sigma}\in \R^n$, whenever $v\in \sigma$. So $F$ on $\sigma$ is determined by linearity by the values $F(e_i)=\langle u_{\sigma}, e_i\rangle$ for those edges $e_i$ of $\Sigma$ which lie in $\sigma$. $F$ is \emph{strictly convex} if $\langle u_{\sigma}, v\rangle > F(v)$ for $v\notin \sigma$, equivalently $\langle u_{\sigma}, e_i\rangle >F(e_i)$ for $e_i\notin \sigma$.

Choosing a piecewise linear strictly convex function $F$ on $\Sigma$ is equivalent to choosing a K\"ahler form $\omega_F$, satisfying
$$
[\omega_F] = \sum -F(e_i) \, \mathrm{PD}[D_i],
$$
where $D_i\subset X_{\Sigma}$ are the toric divisors. In particular, $(X_{\Sigma},\omega_F)$ is then a toric manifold (i.e. the torus action is Hamiltonian) and the symplectic form $\omega_F$ on $X_{\Sigma}$ is $J$-compatible. We call these the \emph{toric symplectic forms}.
The moment polytope of $(X_{\Sigma},\omega_F)$ is
$$
\Delta_F = \{ y\in \R^n: \langle y,e_i \rangle \geq \lambda_i \}
$$
where $e_i$ are the edges of the fan (which are inward normals to the facets of $\Delta_F$) and where
$$\lambda_i=F(e_i).$$

Piecewise linear strictly convex functions $F$, for which all $F(e_i)$ are integers, correspond to ample divisors: the divisor is $D_F=\sum -F(e_i)D_i$. The canonical divisor is $K=-\sum D_i$.

We always assume that $X_{\Sigma}$ is Fano, that is the anti-canonical bundle $\Lambda_{\C}^{\mathrm{top}}TX_{\Sigma}$ is ample. Thus the ample anti-canonical divisor $-K=\sum D_i$, corresponding to $c_1(TX)= \sum \mathrm{PD}[D_i]$, corresponds to the piecewise linear function $F(e_i)=-1$ for all $e_i$. This corresponds to a symplectic form $\omega_{\Delta}$, satisfying $[\omega_{\Delta}]=c_1(TX)\in H^2(X)$. This is the unique symplectic form for which the corresponding moment polytope $\Delta$ is \emph{reflexive} (see Section \ref{Subsection The polytope of a Fano variety}).

We always denote by $\omega_X$ the monotone integral K\"ahler form obtained from rescaling $\omega_{\Delta}$, so that $\omega_X\in H^2(X,\Z)$ is a primitive class. Thus
$$
c_1(TX)=[\omega_{\Delta}]=\lambda_X [\omega_X],
$$
where the positive integer $\lambda_X$ is called the \emph{index} of the Fano variety, and it is also the monotonicity constant for the monotone symplectic manifold $(X_{\Sigma},\omega_X)$.

%%%
%%%%%%%%%%%%%%%%%%%%%%%%%%%%%%%%%%%%%%%%%%%%%%%%%%%%%%%%
\subsection{The Novikov ring $\mathbf{\mathfrak{R}}$ over $\mathbf{\Lambda_s}$}
\label{Subsection New Novikov ring R}
%%%%%%%%%%%%%%%%%%%%%%%%%%%%%%%%%%%%%%%%%%%%%%%%%%%%%%%%%%%
In the non-monotone case, we need to change the Novikov ring (Section \ref{Subsection Novikov ring}). Since the values of $c_1(TX)$ and $\omega_F$ on spheres may no longer be proportional, we use two formal parameters $s,t$ instead of one. In the definition of the quantum product, defining $QH^*(X,\omega_F)$, we use weights $$s^{\omega_F(A)}T^{c_1(TX)(A)}=s^{\omega_F(A)}t^{\omega_X(A)}$$ when counting spheres $A$ in $X$ (recall $t=T^{\lambda_X}$). The Novikov ring is now defined as 
$$\boxed{\mathfrak{R}=\Lambda_s(\!(T)\!) = \Lambda_s[T^{-1},T]\!]}$$
that is Laurent series in the formal variable $T$ with coefficients in the Novikov field
$\Lambda_s$,
$$
\Lambda_s = \left\{ \sum_{i=0}^{\infty} a_i s^{n_i}: a_i\in \K, n_i\in \R, \lim n_i = \infty\right\},
$$
where $s$ is a new formal parameter lying in grading $|s|=0$, and $|T|=2$ (since $t=T^{\lambda_X}$ has $|t|=2\lambda_X$). When $F$ is integer-valued, one could further restrict $n_i$ to lie in $\Z$.

As usual, $t$ ensures that $QH^*$ is $\Z$-graded. It is not necessary to complete in $t$ (i.e. allowing series in positive powers of $t$). At the cost of losing the $\Z$-grading, one could omit $t$ altogether. We complete in $t$ because we want $\mathfrak{R}$ to be a field (this is important in the generation results of Section \ref{Subsection Generation for 1-dimensional eigensummands}, although Remark \ref{Remark Novikov ring without completing mathfrak R} shows a work-around). If one wanted $\mathfrak{R}$ to be algebraically closed, it suffices to allow arbitrary real powers of $t$ with the growth condition as in Section \ref{Subsection Novikov ring} (alternatively, one can take the field of Puiseaux series in $t$ over $\Lambda_s$). This will not be necessary for us: we only need to factorize the characteristic polynomial of $c_1(TX)$, and to factorize this into linear factors it is enough to 
%enlarge $\mathfrak{R}$ by introducing 
have the root $T=t^{1/\lambda_X}$.

\emph{\textbf{Technical Remark.} As in Section \ref{Subsection The eigenvalues of c_1(TX)}, when discussing eigenvalues of $c_1(TX)$ we want $\Lambda_s$ to be algebraically closed. As in \ref{Subsection The eigenvalues of c_1(TX)}, the presence or absence of $T$ is not an issue: the $\Z$-grading imposes how to insert powers of $T$ into a factorization of the characteristic polynomial of $c_1(TX)$ over $\Lambda_s$. We recall that if $\K$ is an algebraically closed field of characteristic zero, then $\Lambda_s$ is algebraically closed \cite[Lemma A.1]{FOOOtoric}.
% If $F(e_i)$ are integral or rational, one could work with a slightly smaller ring by the Newton–Puiseux theorem -- you would only need to allow Puiseaux series in $s$. But it's not worth mentioning. In that theorem, there is always a constraint on the characteristic of the field -- it must not divide n! where n is the degree of the poly you are trying to split.
}

%%%%%%%%%%%%%%%%%%%%%%%%%%%%%%%%%%%%%%%%%%%%%%%%%%%%%%%%
\subsection{Using a non-monotone toric form is the same as twisting}
\label{Subsection Using a non-monotone toric form is the same as twisting}
%%%%%%%%%%%%%%%%%%%%%%%%%%%%%%%%%%%%%%%%%%%%%%%%%%%%%%%%%%%
We emphasize that the moduli spaces of spheres that we count has not changed, since we are using the same complex structure $J$ (which is compatible with both K\"ahler forms $\omega_F$ and the monotone $\omega_X$). In particular, there are no compactness issues in the definition of the quantum or Floer cohomologies for $\omega_F$ because $J$ and small perturbations of $J$ are also tamed by the monotone form $\omega_X$ (and we can use $c_1(TX)(A)=\lambda_X\omega_X(A)$ to control indices of solutions).
%
% NOTE: can do everything with tame J, as was done in
% McDuff Salamon's QH book. The metric you use in arguments
% is then the symmetrized one. All proofs go through.
%
%
% If $J$ is not regular, then you work with obstruction bundles,  and these
% don't care what $\omega$ you choose. You know that the algebro-geometric approach
% of obstruction bundles coincides with counts you would get if you did perturb J.
%
% Alternatively, if you perturb $J$ to an al.cx.str.: you still get that the perturbed $J'$
% is tamed by both sympl forms (since open condition), which is enough to control
% compactness in a cobordism argument between the moduli spaces. 
% ($J'$ won't typically be compatible with both, since that will depend on the sympl form
% (see Audin's notes in the Green book, local case p.43).
%
Observe that we are only changing the $s$-weights with which we count the solutions: for $\omega_F$ we use weight $s^{\omega_F(A)}T^{c_1(TX)(A)}$ whereas for the monotone form $\omega_X$ we would just use $T^{c_1(TX)(A)}$. By convention we will omit the irrelevant factor $s^{\omega_X(A)}=s^{\lambda_X c_1(TX)(A)}$ in the monotone case, since this can be recovered by formally replacing $T$ with $s^{\lambda_X}T$. This convention ensures, as we will now explain, that the theory for $\omega_F$ is just the $\omega_F$-twisted theory for $\omega_X$.

We briefly recall from \cite{Ritter1, Ritter3} the definition of the \emph{twisted quantum cohomology} and the \emph{twisted Floer cohomology} for the form $\omega_X$ twisted by the $2$-form $\omega_F$. One introduces a system of local coefficients $\underline{\mathfrak{R}}_{\omega_F}$, meaning that one counts Morse/Floer/GW-solutions $u$ with an extra weight factor $s^{[\omega_F](u)}$. Here $[\omega_F](u)$ can be defined either by evaluating the cocycle $[\omega_F]$ on the chain $u$, or by integrating $u^*\omega_F$ over the domain of $u$. Since $\omega_F$ is a $2$-cocycle, this weight is trivial ($s^0$) when $u$ is a Morse trajectory, thus the twisting does not affect the vector space $QH^*(X_{\Sigma},\omega_X; \underline{\mathfrak{R}}_{\omega_F})=H^*(X_{\Sigma})\otimes \mathfrak{R}$ but it does affect the quantum product by inserting the weights $s^{\omega_F(A)}$ when $u$ is a sphere in the class $A\in H_2(X_{\Sigma})$.

\begin{corollary}\label{Corollary QH twisted is same as non-exact QH} The quantum cohomology for $\omega_F$ can be identified with the $\omega_F$-twisted quantum cohomology for $\omega_X$:
$QH^*(X_{\Sigma},\omega_F)\cong QH^*(X_{\Sigma},\omega_X; \underline{\mathfrak{R}}_{\omega_F}).$
\end{corollary}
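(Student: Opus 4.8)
The plan is to show that the two constructions — quantum cohomology for the K\"ahler form $\omega_F$ on one side, and the $\omega_F$-twisted quantum cohomology for the monotone form $\omega_X$ on the other — are computed by literally the same chain-level data, so that the identification is essentially a tautology once the Novikov bookkeeping is set up correctly. First I would fix the complex structure $J$, which (by the discussion in Section \ref{Subsection Using a non-monotone toric form is the same as twisting}) is compatible with both $\omega_F$ and $\omega_X$; since $\omega_X$ is monotone and tames $J$ (and small perturbations of $J$), compactness of the relevant moduli spaces of $J$-holomorphic spheres holds regardless of which of the two symplectic forms we are nominally working with, and indices are controlled via $c_1(TX)(A)=\lambda_X\omega_X(A)$. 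Thus the underlying moduli spaces $\overline{\mathcal M}(A,J)$ appearing in the definition of the quantum product are identical in the two theories: the only difference is the Novikov weight attached to a sphere in class $A$.

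Next I would compare the weights term by term. On the $\omega_F$ side, by the definition of $\mathfrak R$ in Section \ref{Subsection New Novikov ring R}, a sphere $A$ is counted with weight $s^{\omega_F(A)}T^{c_1(TX)(A)}=s^{\omega_F(A)}t^{\omega_X(A)}$. On the twisted side, one counts GW/Floer solutions $u$ in class $A$ for the monotone form $\omega_X$ with the standard monotone weight $T^{c_1(TX)(A)}$ and, additionally, the local-system factor $s^{[\omega_F](u)}=s^{\omega_F(A)}$ coming from $\underline{\mathfrak R}_{\omega_F}$ (using either interpretation of $[\omega_F](u)$ — evaluation of the cocycle on the fundamental chain of $u$, or integration of $u^*\omega_F$, which agree since $u$ represents $A\in H_2(X_\Sigma)$). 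These weights coincide exactly, once we adopt the convention, stated in Section \ref{Subsection Using a non-monotone toric form is the same as twisting}, of suppressing the factor $s^{\omega_X(A)}=s^{\lambda_X c_1(TX)(A)}$ in the purely monotone theory (equivalently, recovering it by $T\mapsto s^{\lambda_X}T$). So the structure constants of the two quantum products agree, giving the asserted algebra isomorphism, which on the underlying vector space is the identity $H^*(X_\Sigma)\otimes\mathfrak R$ since Morse trajectories carry trivial $s$-weight (as $\omega_F$ is a cocycle).

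The main obstacle is not geometric but bookkeeping: making precise that the ``$s$-weight'' attached by the local system $\underline{\mathfrak R}_{\omega_F}$ to a Floer/GW solution in class $A$ is genuinely $s^{\omega_F(A)}$ and is independent of the chosen representative $u$, and that this is compatible with the conventions governing how the formal variable $s$ (degree $0$) and $T$ (degree $2$, with $t=T^{\lambda_X}$) enter the monotone theory versus the non-monotone one. One must check that the identification respects the ring structure (not just the module structure) — i.e. that the associativity/structure constants, and in particular the PSS-type identification $QH^*\cong HF^*$ of a $C^2$-small Hamiltonian, are preserved; but since all of these are defined by counts of the same $J$-holomorphic objects with matching weights, this is routine. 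I would also remark that the same argument applies verbatim at the level of the twisted Floer complex $CF^*(H,J)$ for Hamiltonians $H$ of the relevant form, which is what is actually needed downstream, so the corollary can be upgraded to a statement about the full Floer-theoretic package for $\omega_F$ versus $\omega_X$ with local system $\underline{\mathfrak R}_{\omega_F}$.
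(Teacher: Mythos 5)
Your argument is correct and is essentially the same as the paper's: the Corollary is deduced immediately from the discussion preceding it in Section \ref{Subsection Using a non-monotone toric form is the same as twisting} (fixed $J$ compatible with both forms so the moduli spaces coincide, matching of Novikov weights $s^{\omega_F(A)}T^{c_1(TX)(A)}$ with the local-system weight $s^{[\omega_F](u)}$ under the stated convention of suppressing $s^{\omega_X(A)}$, and the vector-space identification since Morse trajectories carry trivial $s$-weight). No gap.
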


The construction of the twisted Floer cohomology and twisted symplectic cohomology, and the proof that these have a product structure when twisting by closed two-forms $\eta$ is carried out in \cite{Ritter3}. The discussion in Section \ref{Subsection Invertibles in the symplectic cohomology} and \ref{Subsection Invertibles in the symplectic cohomology for a larger class of Hamiltonians} on the construction of invertibles can be carried out in the twisted case, simply by working over $\mathfrak{R}$ and inserting weights $s^{\eta[u]}$.

\emph{\textbf{Technical Remark:} In \cite{Ritter4}, for weak+ monotone manifolds $M$, we worked over a very large Novikov ring generated by $\pi_2(M)$ (modulo those classes on which $\omega_M$ and $c_1(TM)$ both vanished). One can just as well only keep track of the $\omega_M$ and $c_1(TM)$ values on spheres by using weights $s,T$ as described above (using only $T$ in the monotone case, by convention).}

In particular, in the GW-section counting in \cite{Ritter4} for the bundles $E_g \to S^2$ with fibre $M$, we use weights $s^{\eta(A)}$ where $A$ is the relevant $\pi_2(M)$ class associated with the section (explicitly, in the notation of the proof of Lemma \ref{Lemma rg1}, this would be the weight $s^{\eta(\gamma)}$). 

\begin{corollary}\label{Corollary Twisted symplectic cohomology for toric forms} The symplectic cohomology for $\omega_F$ can be identified with the $\omega_F$-twisted symplectic cohomology for $\omega_X$:
$SH^*(X_{\Sigma},\omega_F)\cong SH^*(X_{\Sigma},\omega_X; \underline{\mathfrak{R}}_{\omega_F}).$
\end{corollary}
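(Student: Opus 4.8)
The plan is to prove the $SH^*$ analogue of Corollary~\ref{Corollary QH twisted is same as non-exact QH} by carrying the same identification through the direct-limit construction of Section~\ref{Subsection Invertibles in the symplectic cohomology}. Recall $SH^*(X_\Sigma,\omega_F)=\varinjlim HF^*(H,J)$, the limit taken over Hamiltonians $H$ linear in the radial coordinate at infinity, with $J$ of contact type at infinity. The key point, exactly as in the quantum case, is that one may take $J$ to be (a small perturbation of) the integrable complex structure of $X_\Sigma$, which is simultaneously $\omega_F$- and $\omega_X$-compatible; since the Extended Maximum Principle (Theorem~\ref{Theorem maximum principle}, Lemma~\ref{Introduction Lemma Extended Max principle}) only requires $J$ to be of contact type at infinity, it still applies, and no Floer, continuation, or pair-of-pants solution escapes to infinity. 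For small $F$ the manifold $(X_\Sigma,\omega_F)$ remains conical at infinity and weak+ monotone (the only effective classes are the $\beta_I$ and the fibre class, on which $c_1(TX)$ is positive), and the structure at infinity — which for negative line bundles is a sphere bundle whose Reeb flow is fibre rotation — is governed by the unchanged $J$, so the Reeb-invariant Hamiltonians of the form $f(y)R$, including those of Theorem~\ref{Theorem Hamiltonians for rotations about divisors}, remain available.

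The heart of the argument is then a weight-matching statement. Fix such an $H$ and $J$. The moduli spaces of Floer trajectories (resp.\ of continuation and product solutions) defining the $\omega_F$-theory and those defining the $\omega_F$-twisted $\omega_X$-theory are literally the same: they are cut out by the same Floer equation for the same $(H,J)$, Gromov compactness holds because $\omega_X$ tames $J$ and supplies the a priori energy bound, and the Fredholm indices are read off from $c_1(TX)$, which does not depend on the symplectic form. Only the Novikov bookkeeping differs: in the $\omega_F$-theory a solution $u$ in class $A$ is counted with weight $s^{\omega_F(A)}T^{c_1(TX)(A)}$, whereas in the $\omega_F$-twisted $\omega_X$-theory it is counted with the monotone weight $T^{c_1(TX)(A)}$ (the factor $s^{\omega_X(A)}$ being suppressed by the convention of this section) multiplied by the twisting weight $s^{[\omega_F](u)}=s^{\omega_F(A)}$ coming from the local system $\underline{\mathfrak{R}}_{\omega_F}$. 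These coincide. Hence at each finite stage the two chain complexes are identified $\mathfrak{R}$-linearly, the identification commutes with the continuation maps because twisting by a closed two-form is functorial under continuation (part of the construction of twisted symplectic cohomology in \cite{Ritter3}; the twisted $\mathcal{S}$-, $r$- and $\mathcal{R}$-maps are built the same way, cf.\ Theorem~\ref{Theorem Representation into SH for new Hamiltonians}), and passing to the direct limit yields the $\mathfrak{R}$-algebra isomorphism $SH^*(X_\Sigma,\omega_F)\cong SH^*(X_\Sigma,\omega_X;\underline{\mathfrak{R}}_{\omega_F})$, a ring isomorphism by the same chain-level reasoning for the pair-of-pants product.

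I expect the main obstacle to be the first paragraph rather than the second: one must be sure that the class of Hamiltonians and the maximum-principle infrastructure used to set up $SH^*(X_\Sigma,\omega_F)$ is genuinely in place for the perturbed form — that $(X_\Sigma,\omega_F)$ is still conical at infinity with a Reeb dynamics for which the rotations about the toric divisors are generated by Reeb-invariant Hamiltonians $f(y)R$ with $f\ge 0$, and that the perturbation can be kept small enough that part~\eqref{Item Admissible positive slope} of Definition~\ref{Definition admissible toric manifolds} (some $f>0$) survives, so the direct limit really computes symplectic cohomology. For toric negative line bundles this is the computation of Theorem~\ref{Theorem Hamiltonians for rotations about divisors} redone with the perturbed moment-polytope data $\lambda_i=F(e_i)$; for general admissible $X$ it follows because ``conical at infinity'' is an open condition near $\omega_X$ and the end is controlled by the unchanged complex structure $J$. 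Once this is secured, the weight-matching argument is word for word the $SH^*$ version of the $QH^*$ comparison of Corollary~\ref{Corollary QH twisted is same as non-exact QH}.
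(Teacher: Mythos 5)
Your proposal follows the same route as the paper: both rest on the observation that, with the integrable $J$ compatible simultaneously with $\omega_F$ and $\omega_X$, the moduli spaces of Floer, continuation and pair-of-pants solutions coincide and only the Novikov weights differ, and that the weights $s^{\omega_F(A)}T^{c_1(TX)(A)}$ match (given the convention of suppressing $s^{\omega_X(A)}$), with \cite{Ritter3} supplying the twisted $SH^*$ infrastructure and its product. The paper disposes of this in a single paragraph citing \cite{Ritter3} together with the remark that the invertibles construction carries over ``by working over $\mathfrak{R}$ and inserting weights $s^{\eta[u]}$''; you are more explicit about passing through the direct limit and checking compatibility with continuation maps, and more cautious about the conical/maximum-principle infrastructure. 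One small wording issue: ``the same Floer equation for the same $(H,J)$'' glosses over the fact that the Hamiltonian vector field $X_H$ a priori depends on the symplectic form; the identification is honest because the paper's convention is precisely that the $\omega_F$-theory uses the $\omega_X$-Floer data (same vector field, same $J$) and only alters the counting weights, so the moduli problem is unchanged — but this convention is what makes your assertion true, and it is worth flagging rather than leaving implicit.
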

%
%%%
%%%%%%%%%%%%%%%%%%%%%%%%%%%%%%%%%%%%%%%%%%%%%%%%%%%%%%%%
\subsection{Rotations and the lifting problem}
\label{Subsection Rotations and the lifting problem}
%%%%%%%%%%%%%%%%%%%%%%%%%%%%%%%%%%%%%%%%%%%%%%%%%%%%%%%%%%%
It follows that the Seidel representation for closed Fano toric varieties, and the representation defined in Section \ref{Subsection Invertibles in the symplectic cohomology for a larger class of Hamiltonians} for non-compact Fano toric varieties, are defined for $\omega_F$ and coincide with the $\omega_F$-twisted representations obtained for $\omega_X$. In particular, Lemma \ref{Lemma R of gi standard lift} still holds:
$$
\mathcal{R}(g_i^{\wedge}) = x_i,
$$
since the constant sections lie in the class $\gamma=0$, so no $s$-weight appears.

The lifting problem that already occurred in the monotone case (Section \ref{Subsection The problem with relating the lifted rotations}) is tricky in the non-monotone case, as Lemma \ref{Lemma R of gi standard lift} is no longer available. The following resolves this issue.

\begin{lemma}\label{Lemma Choices of lifts}
Let $(X_{\Sigma},\omega_F)$ be a Fano toric manifold. Any relation $\prod g_i^{a_i} = \mathrm{id}$ for $a_i\in \Z$ (corresponding to a relation $\sum a_i e_i=0$ amongst the edges of $\Sigma$) yields the relation:
$$
\prod (g_i^{\wedge})^{a_i} = s^{-\sum F(e_i) a_i}\, T^{\sum a_i}\mathrm{id}.
$$
\end{lemma}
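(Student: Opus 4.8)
The plan is to reduce the statement to a computation of two characteristic numbers of a sphere class, using only that $\mathcal{R}$ is a homomorphism together with the normalization $\mathcal{R}(g_i^{\wedge})=x_i=\mathrm{PD}[D_i]$ already recorded before the statement (Lemma~\ref{Lemma R of gi standard lift} survives the twisting: the constant sections defining $r_{g_i^{\wedge}}(1)$ lie in the class $\gamma=0$, so they carry no $s$-weight). First I would observe that $\widehat{h}:=\prod(g_i^{\wedge})^{a_i}$ is a lift of $\prod g_i^{a_i}=\mathrm{id}$, hence lies in the deck group $\Gamma=\pi_2(X_{\Sigma})/\pi_2(X_{\Sigma})_0\subset\widetilde{\pi_1}\mathrm{Ham}$; write $\widehat{h}=\gamma$ for the corresponding class. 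By the very construction of the Novikov weights in the twisted theory (Sections~\ref{Subsection New Novikov ring R} and~\ref{Subsection Using a non-monotone toric form is the same as twisting}) the inclusion $\Gamma\hookrightarrow\widetilde{\pi_1}\mathrm{Ham}$ is, by the standard abuse, written multiplicatively as $\gamma\mapsto s^{\omega_F(\gamma)}\,T^{c_1(TX_{\Sigma})(\gamma)}\,\mathrm{id}$, and under $\mathcal{R}$ it acts on $SH^*$ by multiplication by this monomial. So the whole assertion amounts to identifying the class $\gamma$, i.e. to computing its two periods.

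\textbf{The $T$-exponent.} This part is formal. The Maslov index $I\colon\widetilde{\pi_1}\mathrm{Ham}\to\Z$ is additive, and by Lemma~\ref{Lemma rg1} each standard lift has $I(g_i^{\wedge})=\mathrm{codim}_{\C}\mathrm{Fix}(g_i)=1$; hence $I(\gamma)=\sum a_i$. On $\Gamma$ the Maslov index of a sphere class $A$ equals $c_1(TX_{\Sigma})(A)$ (the normalization in which $T^{c_1}$ sits in real degree $2c_1$), so $c_1(TX_{\Sigma})(\gamma)=\sum a_i$. Thus the $T$-power is $T^{\sum a_i}$. Note that in the monotone case this already settles the whole relation, exactly as in Lemma~\ref{Lemma relations in SH}, precisely because there is then no $s$-variable; what follows is the replacement for that grading argument when $s$ is present.

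\textbf{The $s$-exponent (main obstacle).} The genuinely non-formal step is to pin down $\omega_F(\gamma)$, equivalently to identify $\gamma$ with the class $\beta_{(a)}\in H_2(X_{\Sigma};\Z)$ attached to the edge-relation $\sum a_i e_i=0$ by the correspondence of Section~\ref{Subsection Review of the Batyrev-Givental presentation of QH}, namely $\beta_{(a)}\cdot D_i=a_i$. Since $H_2(X_{\Sigma};\Z)$ modulo torsion embeds in $\Z^r$ via $\beta\mapsto(\beta\cdot D_i)_i$, it suffices to prove $\gamma\cdot D_i=a_i$ for all $i$. I would do this by tracking how the standard lifts $g_i^{\wedge}$ move families of constant-disc sections, exactly as in the section counts of \cite[Sections~7--10]{Ritter4} that yield $I(g_i^{\wedge})=1$ and $r_{g_i^{\wedge}}(1)=\mathrm{PD}[D_i]$: the discrepancy sphere of the composite $\widehat{h}$ is forced to be the one whose intersection number with each $D_i$ records the multiplicity $a_i$. (When $X_{\Sigma}$ is closed one can instead short-circuit this via Corollaries~\ref{Corollary QH twisted is same as non-exact QH} and~\ref{Corollary Twisted symplectic cohomology for toric forms}, identifying the $\omega_F$-theory with the $\omega_F$-twisted theory of the monotone form and comparing with the McDuff--Tolman value $\mathcal{S}(g_i)=t^{\lambda_i}x_i$, $\lambda_i=F(e_i)$, which encodes exactly this discrepancy.) Once $\gamma=\beta_{(a)}$ is known, the periods are read off from $c_1(TX_{\Sigma})=\sum\mathrm{PD}[D_i]$ and $[\omega_F]=\sum-F(e_i)\,\mathrm{PD}[D_i]$, giving $c_1(TX_{\Sigma})(\gamma)=\sum a_i$ (consistent with the Maslov computation) and $\omega_F(\gamma)=-\sum F(e_i)a_i$, hence $\widehat{h}=s^{-\sum F(e_i)a_i}\,T^{\sum a_i}\,\mathrm{id}$. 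Applying $\mathcal{R}$ gives the corollary $\prod x_i^{a_i}=s^{-\sum F(e_i)a_i}\,T^{\sum a_i}$ in $SH^*(X_{\Sigma},\omega_F)$, the twisted analogue of Lemma~\ref{Lemma relations in SH}, which is the form used later.
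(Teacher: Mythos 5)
Your strategy matches the paper's: view $\psi=\prod(g_i^{\wedge})^{a_i}$ as a lift of the identity, so it equals an element $\gamma$ of the deck group $\Gamma=\pi_2/\pi_2^0$ identified with monomials $s^nT^m$, then compute the two periods of $\gamma$. Your reading of the $T$-exponent (Maslov additivity, $I(g_i^{\wedge})=1$, equal to $c_1$ on $\Gamma$) is fine and parallels the paper's remark that this works "just as in the monotone case."

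However, the step you flag as the "main obstacle" — proving $\gamma\cdot D_i=a_i$, i.e.\ that $\gamma$ really is the class $\beta_a$ — is precisely the non-trivial content of the lemma, and your proposal does not actually prove it. Knowing $I(g_i^{\wedge})=1$ gives only the $c_1$-period; it does not force the individual intersection numbers $\gamma\cdot D_i$. Saying "the discrepancy sphere of the composite is forced to be the one whose intersection number with each $D_i$ records $a_i$" is an assertion, not an argument. The paper's proof devotes essentially all its effort to establishing exactly this, by an explicit inductive computation in homogeneous coordinates: starting from the constant disc at $x=(1,\ldots,1)\notin\cup D_i$, one shows step by step (using a lemma about how lifts act on discs whose boundary is a loop) that $\prod(g_i^{\wedge})^{a_i}\cdot(c_x,x)$ is represented by the disc $c(s,t)=(se^{2\pi i a_1 t},\ldots,se^{2\pi i a_r t})$, and one then reads off that this (closed) disc meets $D_i=(x_i=0)$ with multiplicity exactly $a_i$. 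Without carrying out such a computation — or an equivalent one — the proof is incomplete.

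I'd also be cautious about your suggested short-circuit via McDuff--Tolman's $\mathcal{S}(g_i)=t^{\lambda_i}x_i$: for closed $C$ that Seidel representation is defined on $\pi_1\mathrm{Ham}(C)$ using a normalization that absorbs the choice of lift, so their formula does not directly encode discrepancies in $\widetilde{\pi_1}\mathrm{Ham}$. Extracting the lift discrepancy from it would require unwinding their normalization, which is not shorter than the direct disc computation.
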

\begin{proof}
By construction, $\psi=\prod (g_i^{\wedge})^{a_i}$ is a lift of the identity map, and so differs from the identity by an element of the deck group $\Gamma=\pi_2(M)/\pi_2(M)_0$ (see Section \ref{Subsection Invertibles in the symplectic cohomology}). This deck group can be identified with the monomials $s^{n}T^m$. Just as in the monotone case, $m=c_1(TX)[\beta_a]=\sum a_i$, where $\beta_a\in H_2(X)$ is the homology class corresponding to the relation $\sum a_i e_i=0$ (see the proof of Theorem \ref{Theorem change of Novikov param}). Recall from Section \ref{Subsection Review of the Batyrev-Givental presentation of QH} that $\beta_a$ satisfies the intersection products $\beta_a\cdot D_i = a_i$. We claim that the power of $s^n$ above is
$$
n = \omega_F(\beta_a) = \sum -F(e_i)\mathrm{PD}(D_i)(\beta_a) = -\sum F(e_i)a_i.
$$
To prove this, it suffices to show that the image of the constant disc $(c_x,x)$ at a point $x\in X_{\Sigma}\setminus \cup D_i$ under the action of $\psi$ is a sphere representing the class $\beta_a$.
Since $\psi$ is a lift of the identity map $\prod g_i^{a_i}$, we know that $\psi$ maps the constant loop $x$ to itself, therefore it maps the constant disc $c_x$ to a new disc bounding the constant $x$, so this new disc is in fact a sphere. To determine that it represents $\beta_a$ it now remains to check that the sphere intersects $a_i$ times the divisor $D_i$.

We may assume $x$ has homogeneous coordinates $x_i=1$. The image under $g_1$ of the path $(r,1,\ldots,1)_{1\geq r \geq 0}$ from $x=(1,\ldots,1)$ to the point $p_1=(0,1,\ldots,1)\in D_1$ is $\gamma_r(t)=(re^{2\pi i t},1,\ldots,1)$. The loop $\gamma_r(t)$ can be filled by the obvious disc $\Gamma_r(s,t)=(\gamma_{s}(t))_{r\geq s \geq 0}$. As $r$ varies, this gives a family of discs, ending at the constant disc $(c_{p_1},p_1)$ at $p_1\in D_1$. Since $g_1^{\wedge}(c_{p_1})=c_{p_1}$ by definition of the lift $g_1^{\wedge}$, it follows that $g_1^{\wedge}(c_x)=\Gamma_1(s,t)$. Similarly, $(g_1^{\wedge})^{a_1}(c_x)$ is represented by the obvious disc $(se^{2\pi i a_1 t},1,\ldots,1)$ where $0\leq s,t\leq 1$.

Inductively, we claim that $(g_k^{\wedge})^{a_k}\cdots (g_1^{\wedge})^{a_1}\cdot (c_x,x)$ is represented by the disc 
\begin{equation}\label{Equation c(s,t) disc}
c(s,t)=(se^{2\pi i a_1 t},\ldots,se^{2\pi i a_k t},1,\ldots,1) \qquad (\textrm{where }0\leq s,t\leq 1).
\end{equation}
For the inductive step, we use the following general trick. Since $\pi_1(X)=1$, any loop $y=y(t)$ gives rise to a disc $\overline{y}=\overline{y}(s,t)$ with $\overline{y}(1,t)=y(t)$ and $\overline{y}(0,t)$ equal to a chosen basepoint. Given a filling disc $(c,y)$ for $y$, we obtain a sphere $c\# -\overline{y}$ representing some class $\lambda = s^n T^m\in \Gamma$. Thus $(c,y)=\lambda\cdot (\overline{y},y)$ so $g_i^{\wedge}(c,y)=\lambda g_i^{\wedge}(\overline{y},y)$. If the basepoint is chosen to lie in $D_i$, then $g_i^{\wedge}(\overline{y})$ is the disc $(g_i)_t \overline{y}(s,t)$ -- this follows by the same argument as in the previous paragraph, by considering the path $\overline{y}(r,t)$ of loops and the path of discs $(\overline{y}(s,t))_{r\geq s\geq 0}$.

Now, the inductive step. Apply the observation to $(c,y)$ for $c(s,t)$ as in \eqref{Equation c(s,t) disc}. The homotopy $\overline{y}$ from $y(t)=(e^{2\pi i a_1 t},\ldots,e^{2\pi i a_k t},1,1,\ldots,1)$ to $(1,\ldots,1,0,1,\ldots,1)\in D_{k+1}$ first homotopes the first coordinate to $1$, then homotopes the second coordinate to $1$, and so forth, and finally it follows the path from $x_{k+1}=1$ to $x_{k+1}=0$ in that coordinate. By construction, the class of the sphere $c\# -\overline{y}$ is trivial in $\Gamma$ since we produced a path of discs from $c$ to $\overline{y}$. Applying $(g_{k+1})^{a_{k+1}}$ to the path of loops $\overline{y}(s,\cdot)$ corresponds to acting by $e^{2\pi i a_{k+1} t}$ on the $x_{k+1}$ coordinate, and it now easily follows that the class of this image disc is the same as the class of 
$(se^{2\pi i a_1 t},\ldots,se^{2\pi i a_{k+1}  t},1,\ldots,1)$, as required for the inductive step.

By induction, it follows that $\prod (g_i^{\wedge})^{a_i}\cdot (c_x,x)$ is represented by the disc $(se^{2\pi i a_1 t},\ldots,se^{2\pi i a_{r} t})$ which intersects $D_i=(x_i=0)$ precisely $a_i$ times, as required.
\end{proof}
%%%%%%%%%%%%%%%%%%%%%%%%%%%%%%%%%%%%%%%%%%%%%%%%%%%%%%%%
\subsection{Presentation of $\mathbf{QH^*(B,\omega_F^B),QH^*(E,\omega_F^E),SH^*(E,\omega_F^E)}$ in the Fano case.}
\label{Subsection Presentation of QHB, QHE, SHE}
%%%%%%%%%%%%%%%%%%%%%%%%%%%%%%%%%%%%%%%%%%%%%%%%%%%%%%%%%%%
By Lemma \ref{Lemma Choices of lifts}, the presentation of $QH^*(B)$ for closed monotone $X_{\Sigma}=B$ (Section \ref{Subsection Review of the McDuff-Tolman proof of the presentation of QH}) and that of $QH^*(E)$, $SH^*(E)$ for monotone toric negative line bundles $X_{\Sigma}=E$ (Theorem \ref{Theorem presentation of QH and SH for Fano toric NLB}) holds for the non-monotone form $\omega_F$ by twisting coefficients. In the closed case, this was proposed by Batyrev \cite[Sec.5]{Batyrev}, and proved by Givental \cite{Givental,Givental2}, Cieliebak-Salamon \cite{Cieliebak-Salamon} and McDuff-Tolman \cite[Sec.5]{McDuffTolman}.

\begin{corollary} \label{Corollary Presentation of QH and SH in twisted case}
In the notation of Theorem \ref{Theorem presentation of QH and SH for Fano toric NLB}, but now using a (non-monotone) toric form $\omega_F^B$ on $B$ and working over $\mathfrak{R}$, abbreviating $\lambda_i=F(e_i)$,
$$
\boxed{
\begin{array}{l}
QH^*(B,\omega_F^B) \cong \mathfrak{R} [x_1,\ldots,x_r] / 
\left(\begin{smallmatrix} \textrm{linear rel'ns in }B, \textrm{ twisted SR-rel'ns:}\\
\prod x_{i_p} = s^{-\sum \lambda_{i_p} + \sum c_q \lambda_{j_q}}T^{|I^B|-\sum c_q} \cdot \prod x_{j_q}^{c_q}\end{smallmatrix}\right)
\\
QH^*(E,\omega_F^E) \cong \mathfrak{R} [x_1,\ldots,x_r] / \left(\begin{smallmatrix} \textrm{linear rel'ns in }B,\, \textrm{ twisted SR-rel'ns after Novikov parameter change:}\\
\prod x_{i_p} = s^{-\sum \lambda_{i_p} + \sum c_q \lambda_{j_q}} T^{|I^B|-\sum c_q - c_f} \cdot (\sum n_i x_i)^{c_f} \cdot \prod x_{j_q}^{c_q}
\end{smallmatrix}\right)
\\
SH^*(E,\omega_F^E) \cong \mathfrak{R} [x_1,\ldots,x_r,z] / (z\cdot {\textstyle\sum} n_i x_i - 1,\textrm{and the same rel'ns as for }QH^*(E,\omega_F^E))
\end{array}
}
$$
where, by Lemma \ref{Lemma moment polytope for neg l bdle},  $F^E(e_i)=\lambda_i^E=\lambda_i$ and $F^E(e_f)=\lambda_{f}^E=0$ (so $s^{c_f \lambda_f}$ does not appear). In particular, the form
$\omega_F^E= \pi^*\omega_F^B + \pi\Omega$ on $E$ arises as $\omega$ in Section \ref{Subsection Negative line bundles} from $\omega_F^B\in H^2(B)$ in place of $\omega_B$, and corresponds to the piecewise linear function $F^E$ on the fan for $E$.

Analogues of Theorems \ref{Theorem Minimal poly and char poly from B to E}-\ref{Theorem Eigenvalues of c1 from B to E} and Lemma \ref{Lemma Critical points of W come in families} hold. So there is a ring homomorphism $$\varphi: QH^*(B,\omega_F^B;\Lambda_s[t,t^{-1}]) \to SH^*(E,\omega_F^E;\Lambda_s[t,t^{-1}])$$ with $\varphi(x_i)= x_i$, $\varphi(t_B)=t_E  (\textstyle\sum n_i x_i)^k$ and $\varphi(s_B)=s_E$. Over $\Lambda_s[t]$, this factorizes as $$\varphi: QH^*(B,\omega_F^B;\Lambda_s[t])\to QH^*(E,\omega_F^E;\Lambda_s[t])\to SH^*(E,\omega_F^E;\Lambda_s[t]).$$ These can be identified with the ring homomorphisms obtained for the monotone form $\omega_B$ but twisting coefficients using $\omega_F^B$, respectively $\omega_F^E$.
\end{corollary}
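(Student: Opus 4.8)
The plan is to derive Corollary \ref{Corollary Presentation of QH and SH in twisted case} by essentially copying the monotone argument (Theorems \ref{Theorem presentation of QH and SH for Fano toric NLB}, \ref{Theorem Minimal poly and char poly from B to E}, \ref{Theorem Eigenvalues of c1 from B to E}, Lemma \ref{Lemma Critical points of W come in families}) and tracking the extra Novikov parameter $s$. The conceptual input that makes this legitimate is already in place: Corollaries \ref{Corollary QH twisted is same as non-exact QH} and \ref{Corollary Twisted symplectic cohomology for toric forms} identify $QH^*(X_\Sigma,\omega_F)$ and $SH^*(X_\Sigma,\omega_F)$ with the $\omega_F$-twisted theories for the monotone form $\omega_X$, and all the Floer-theoretic machinery of Sections \ref{Subsection Invertibles in the symplectic cohomology}--\ref{Subsection Invertibles in the symplectic cohomology for a larger class of Hamiltonians} (the representations $r,\mathcal{R}$, the extended maximum principle) applies verbatim over $\mathfrak{R}$ once one inserts the weights $s^{\eta[u]}$, because the moduli spaces themselves are unchanged (same $J$, still tamed by $\omega_X$).

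First I would re-run the McDuff--Tolman argument in the twisted setting. The key computation $\mathcal{R}(g_i^\wedge)=x_i$ survives (Section \ref{Subsection Rotations and the lifting problem}: the constant sections lie in class $\gamma=0$, so no $s$-weight appears), and the lifting discrepancy is now controlled by Lemma \ref{Lemma Choices of lifts} instead of Lemma \ref{Lemma relations in SH}: a relation $\prod g_i^{a_i}=\mathrm{id}$ among the rotations (equivalently $\sum a_i e_i=0$) produces $\prod(g_i^\wedge)^{a_i}=s^{-\sum F(e_i)a_i}T^{\sum a_i}\,\mathrm{id}$. Feeding this into the homomorphism property of $r$ (for primitive relations, which by Lemma \ref{Lemma Quantum SR-relations involve positive slope Hamiltonians} involve only positive-slope Hamiltonians, so the $r$-elements are defined in $QH^*$) and applying the algebraic trick Lemma \ref{Lemma McDuffTolman Algebra trick} over $\mathfrak{R}$ — whose valuation filtration is the $T$-adic one, exactly as needed — gives the twisted SR-relations with the stated $s$-exponent $-\sum\lambda_{i_p}+\sum c_q\lambda_{j_q}$, i.e. $s^{\omega_F(\beta_I)}$ by \eqref{EqnC1andOmega}. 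This yields the presentation of $QH^*(B,\omega_F^B)$; the presentation of $QH^*(E,\omega_F^E)$ then follows exactly as in the proof of Theorem \ref{Theorem presentation of QH and SH for Fano toric NLB}, using the fan of $E$ from Section \ref{Subsection The SR-relations for monotone toric negative line bundles}, the computation $c_f=k\omega_B(\beta_I)$ from Lemma \ref{Lemma Quantum SR-relations involve positive slope Hamiltonians}, and the moment-polytope data $F^E(e_i)=\lambda_i$, $F^E(e_f)=0$ from Lemma \ref{Lemma moment polytope for neg l bdle} (Appendix A) — which is precisely why $s^{c_f\lambda_f}$ does not appear. The presentation of $SH^*(E,\omega_F^E)$ then comes from localizing at $\sum n_ix_i=\pi^*c_1(E)$ via Corollary \ref{Corollary SH is a localization of QH} (noting the localization statement is a formal consequence of Theorem \ref{Theorem SH of negative line bundle}, whose $\omega_F$-twisted version holds because $\pi^*c_1(E)$ is still realized by the invertible $\mathcal{R}(g_f^\wedge)(1)$ in the twisted $SH^*$).

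Next I would establish the analogues of Theorems \ref{Theorem Minimal poly and char poly from B to E}--\ref{Theorem Eigenvalues of c1 from B to E} and Lemma \ref{Lemma Critical points of W come in families}. Since the twisted presentations of $QH^*(B,\omega_F^B)$ and $QH^*(E,\omega_F^E)$ differ only by the Novikov-parameter change $t_B\mapsto t_E(\sum n_ix_i)^k$ — with the $s$-weights carried along unchanged on both sides (they depend only on $\omega_F(\beta_I)$, which is the same class $\beta_I\in H_2(B)\cong H_2(E)$) — the map $\varphi$ with $\varphi(x_i)=x_i$, $\varphi(t_B)=t_E(\sum n_ix_i)^k$, $\varphi(s_B)=s_E$ is a well-defined ring homomorphism by the identical check as in Theorem \ref{Theorem Minimal poly and char poly from B to E}; the freeness/exactness arguments there (exactness of localization and completion in $t$, $\Z$-gradedness, the toric-divisor chain complex) go through with $\K$ replaced by $\Lambda_s$ throughout, and the factorization over $\Lambda_s[t]$ is formal since $\pi^*c_1(E)$ is non-invertible in $QH^*(E,\omega_F^E)$. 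The eigenvalue/Jordan-form statement (Theorem \ref{Theorem Eigenvalues of c1 from B to E}) and the root-family lemma (Lemma \ref{Lemma Critical points of W come in families}, using $W(\xi z)=\xi W(z)$ and the Ostrover--Tyomkin dictionary between eigenvalues of $c_1(TX)$ on $QH^*$ and critical values of $W$) are purely algebraic and are unaffected by the extra formal variable $s$, which plays no role in the $\Z$-grading — I would just remark that $\Lambda_s$ is algebraically closed when $\K$ is (\cite[Lemma A.1]{FOOOtoric}, cited in Section \ref{Subsection New Novikov ring R}), so ``eigenvalue'' still makes sense.

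The main obstacle, such as it is, is bookkeeping rather than mathematics: verifying that the $s$-exponents land exactly where claimed — in particular that no spurious power of $s$ enters from the lift (this is exactly the content of Lemma \ref{Lemma Choices of lifts}, so it is handled), and that the passage $B\rightsquigarrow E$ does not introduce an $s$-power from the fibre edge $e_f$, which requires the Appendix A fact $F^E(e_f)=0$. One should also double-check that all the ``free module'' and ``exactness of localization/completion'' arguments used in the proof of Theorem \ref{Theorem Minimal poly and char poly from B to E} are insensitive to whether the ground field is $\K$ or $\Lambda_s$ — they are, since those arguments only use that the ground ring is a field over which $QH^*$ is a free graded module with a basis of toric-divisor intersection cycles. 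Beyond these checks, the proof is genuinely ``the same proof with $s$ carried along,'' so I would phrase the writeup as: state the twisted presentations, note they follow from the monotone arguments of Theorem \ref{Theorem presentation of QH and SH for Fano toric NLB} and Section \ref{Subsection Batyrev's argument: from the presentation of QH to JacW} together with Corollaries \ref{Corollary QH twisted is same as non-exact QH}, \ref{Corollary Twisted symplectic cohomology for toric forms} and Lemma \ref{Lemma Choices of lifts}, and that the remaining claims about $\varphi$ and eigenvalues follow from the identical reasoning in Theorems \ref{Theorem Minimal poly and char poly from B to E}--\ref{Theorem Eigenvalues of c1 from B to E}.
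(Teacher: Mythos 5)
Your proposal is correct and follows essentially the same route the paper takes: the paper's own justification is precisely the sentence preceding the Corollary, namely that by Lemma \ref{Lemma Choices of lifts} the monotone presentations carry over to $\omega_F$ ``by twisting coefficients,'' and you have accurately unpacked this, correctly identifying Lemma \ref{Lemma Choices of lifts} as the replacement for Lemma \ref{Lemma relations in SH}, checking that $\mathcal{R}(g_i^\wedge)=x_i$ is unaffected by the twist, and matching the $s$-exponents to $\omega_F(\beta_I)$ via \eqref{EqnC1andOmega}. The additional sanity checks you raise (that the McDuff--Tolman valuation trick and the freeness/exactness arguments in Theorem \ref{Theorem Minimal poly and char poly from B to E} survive the replacement of the ground field $\K$ by $\Lambda_s$) are worth noting but are exactly the kind of ``same proof with $s$ carried along'' observations the paper is implicitly invoking.
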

%%%%%%%%%%%%%%%%%%%%%%%%%%%%%%%%%%%%%%%%%%%%%%%%%%%%%%%%
\subsection{The $F$-twisted superpotential $W_F$ and $\mathbf{\mathrm{Jac}(W_F)}$}
\label{Subsection F-twisted superpotential and Jac WF}
%%%%%%%%%%%%%%%%%%%%%%%%%%%%%%%%%%%%%%%%%%%%%%%%%%%%%%%%%%%
The \emph{$F$-twisted superpotential} $W_F$ of $X_{\Sigma}$ is the superpotential associated to $(X_{\Sigma},\omega_F)$, 
$$
W_F: (\mathfrak{R}\setminus \{0\})^n \to \mathfrak{R}, \quad W_F(z) = \sum s^{-F(e_i)} T z^{e_i}
$$
In the case $F(e_i)=-1$, that is using the monotone $\omega_{\Delta}$, this becomes $\sum s T z^{e_i}$, so we can ignore $s$ and we obtain the untwisted superpotential for $\omega_{\Delta}$.

For closed toric $(X_{\Sigma},\omega_F)$, it follows from the presentation of $QH^*$ and Section \ref{Subsection Batyrev's argument: from the presentation of QH to JacW} that
$$
QH^*(X_{\Sigma},\omega_F)\cong \mathrm{Jac}(W_F),  \quad \mathrm{PD}[D_i] \mapsto s^{-F(e_i)} T z^{e_i}, \quad c_1(TX)\mapsto W_F,
$$
where $\mathrm{Jac}(W_F) \equiv \mathfrak{R}[z_1^{\pm 1},\ldots,z_n^{\pm 1}]/(\partial_{z_1}W_F,\ldots,\partial_{z_n}W_F)$ (where $n=\dim_{\C} X$).
\begin{theorem}[Analogue of Theorem \ref{Theorem Jacobian ring is SH}]\label{Theorem twisted Jacobian ring is SH}
$$
\begin{array}{rcl}
SH^*(E,\omega_F^E)\cong QH^*(E,\omega_F^E)[c]/(c\cdot \pi^*c_1(E)-1) &\to & \mathrm{Jac}(W_F^E)\\
\mathrm{PD}[D_i] &\mapsto & s^{-\lambda_i^E} T z^{e_i}\\
\pi^*c_1(E)=\mathrm{PD}[B] &\mapsto & T z_{n+1}.
\end{array}
$$
In particular, $c_1(TE)$ maps to $W_F^E$. Moreover, $s^{-\lambda_i^E} T z^{e_i}=(sz_{n+1}^k)^{-\lambda_i^B} T z^{(b_i,0)}$.
\end{theorem}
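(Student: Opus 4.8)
Theorem \ref{Theorem twisted Jacobian ring is SH} is the $\omega_F$-twisted analogue of Theorem \ref{Theorem Jacobian ring is SH}, so the plan is to mimic that proof, tracking the extra formal variable $s$ throughout and invoking the twisted versions of the ingredients. Concretely, I would combine four inputs: the presentation of $SH^*(E,\omega_F^E)$ as a localization of $QH^*(E,\omega_F^E)$ at $\pi^*c_1(E)=\sum n_i x_i$ from Corollary \ref{Corollary Presentation of QH and SH in twisted case}; the twisted analogue of Corollary \ref{Corollary QH of noncompact toric} (which identifies $QH^*(X,\omega_F)$ with $R_X/(\partial_{z_j}W_F)$ via $x_i\mapsto s^{-F(e_i)}Tz^{e_i}$, valid because Batyrev's argument is purely algebraic and only uses the $t$-adic valuation, here the $(s,T)$-valuation); the twisted analogue of the moment polytope computation of Lemma \ref{Lemma moment polytope for neg l bdle}, which gives $\lambda_i^E=\lambda_i^B$ for $i\le r$ and $\lambda_f^E=0$ for the fibre edge; and the twisted analogue of Example \ref{Example Superpotential of nlb} expressing $W_F^E$ as $Tz_{n+1}+\varphi(W_F^B)$ with $\varphi(t_B)=t_E z_{n+1}^k$, $\varphi(s)=s$.

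The concrete steps I would carry out, in order. First, write down the map $SH^*(E,\omega_F^E)\cong QH^*(E,\omega_F^E)[c]/(c\cdot\pi^*c_1(E)-1)\to R_E/(\partial W_F^E)$ with $w_i$-localization, exactly paralleling Corollary \ref{Corollary QH of noncompact toric 2}: formally inverting the $z^{e_i}$ corresponds to quotienting by the generalized $0$-eigenspace of multiplication by $z^{e_i}$, which by the twisted version of Theorem \ref{Theorem SH of negative line bundle} / Corollary \ref{Corollary SH is a localization of QH} is precisely the passage to $SH^*$. Since $X$ is smooth, the $\Z$-span of the $e_i$ is all of $\Z^n$ (here one uses the fibre edge $e_f$ together with the $e_i$ to span $\Z^{n+1}$), so inverting all $z^{e_i}$ inverts all $z_j$, giving $\mathrm{Jac}(W_F^E)$. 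Second, verify the images of the distinguished generators: $\mathrm{PD}[D_i]\mapsto s^{-\lambda_i^E}Tz^{e_i}$ is the $i$-th summand of $W_F^E$ by the very definition of the twisted superpotential, and $\pi^*c_1(E)=\mathrm{PD}[B]=\mathrm{PD}[D_{r+1}]\mapsto s^{-\lambda_f^E}Tz^{e_f}=s^0 T z_{n+1}=Tz_{n+1}$ using $\lambda_f^E=0$. Third, $c_1(TE)=\sum_{i=1}^{r+1}\mathrm{PD}[D_i]$ maps to $\sum s^{-\lambda_i^E}Tz^{e_i}=W_F^E$ by construction. Fourth, the final formula $s^{-\lambda_i^E}Tz^{e_i}=(sz_{n+1}^k)^{-\lambda_i^B}Tz^{(b_i,0)}$ follows from the description of the edges of the fan of $E$ in Section \ref{Subsection The SR-relations for monotone toric negative line bundles}, namely $e_i=(b_i,-n_i)$ with $c_1(E)=\sum n_i\mathrm{PD}[D_i^B]=-k[\omega_B]$ forcing $\sum n_i b_i$-data to give $n_i$-weights summing correctly, so that $z^{e_i}=z^{(b_i,0)}z_{n+1}^{-n_i}$, combined with $\lambda_i^E=\lambda_i^B$ and keeping track that the $k$-th power of $z_{n+1}$ enters exactly as the Novikov change of variables $t_B\mapsto t_E z_{n+1}^k$, i.e. $s^{-\lambda_i^B}z_{n+1}^{-n_i}$ reorganizes into $(sz_{n+1}^k)^{-\lambda_i^B}$ once one substitutes $\sum n_i b_i$ relations — this is the same bookkeeping as in Theorem \ref{Theorem Jacobian ring is SH} with $s$ carried along.

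The main obstacle is not any single computation but rather assembling the twisted versions of the auxiliary lemmas and being careful that every occurrence of $t$ (equivalently $T$) in Batyrev's localization argument and in Corollaries \ref{Corollary QH of noncompact toric} and \ref{Corollary QH of noncompact toric 2} is now accompanied by the correct power of $s$, and that these $s$-powers are consistent across the two Novikov changes of variable (the one from $B$ to $E$, $t_B\mapsto t_E z_{n+1}^k$, and the one defining $W_F$, $F(e_i)=-1$ recovers the untwisted case). Corollary \ref{Corollary Presentation of QH and SH in twisted case} already asserts that analogues of Theorems \ref{Theorem Minimal poly and char poly from B to E}--\ref{Theorem Eigenvalues of c1 from B to E} and Lemma \ref{Lemma Critical points of W come in families} hold, and that the twisted $\varphi$ exists with $\varphi(s_B)=s_E$, so I would cite that corollary for the heavy lifting. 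The proof then reduces, as stated in the one-line proof pattern used for Theorem \ref{Theorem Jacobian ring is SH}, to citing Corollary \ref{Corollary Presentation of QH and SH in twisted case}, the twisted Corollary \ref{Corollary QH of noncompact toric}, the twisted Lemma \ref{Lemma moment polytope for neg l bdle}, and the twisted Example \ref{Example Superpotential of nlb}, with the last displayed identity being a direct substitution using $e_i=(b_i,-n_i)$ and $\lambda_i^E=\lambda_i^B$.
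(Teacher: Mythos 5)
Your proposal is correct and matches the approach the paper implicitly takes: the paper states Theorem~\ref{Theorem twisted Jacobian ring is SH} as the ``Analogue of Theorem~\ref{Theorem Jacobian ring is SH}'' without a separate proof, and the proof of Theorem~\ref{Theorem Jacobian ring is SH} is exactly the citation of Theorem~\ref{Theorem presentation of QH and SH for Fano toric NLB}, Corollary~\ref{Corollary QH of noncompact toric}, Lemma~\ref{Lemma moment polytope for neg l bdle}, and Example~\ref{Example Superpotential of nlb}; you correctly replace these by Corollary~\ref{Corollary Presentation of QH and SH in twisted case} and the twisted versions of the latter three. One small imprecision to tidy up: in your final step, the passage from $s^{-\lambda_i^B}z_{n+1}^{-n_i}$ to $(s z_{n+1}^k)^{-\lambda_i^B}$ is not a consequence of ``substituting $\sum n_i b_i$ relations'' but of the specific normalization $n_i = k\lambda_i^B$ used in Example~\ref{Example Superpotential of nlb} (where $E = \mathcal{O}(k\sum \lambda_i^B D_i)$ is chosen, using $[\omega_B] = -\sum \lambda_i^B \,\mathrm{PD}[D_i]$); with that choice $z^{e_i} = z^{(b_i,0)} z_{n+1}^{-k\lambda_i^B}$ and the identity is immediate.
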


As in Section \ref{Subsection The Jacobian ring for nlb}, $\varphi:QH^*(B,\omega_F^B;\Lambda_s[t,t^{-1}])\to SH^*(E,\omega_F^E;\Lambda_s[t,t^{-1}])$ corresponds to $$\varphi:\mathrm{Jac}(W_F^B;\Lambda_s[t,t^{-1}])\to \mathrm{Jac}(W_F^E);\Lambda_s[t,t^{-1}]), \; \varphi(z_i)=z_i,\, \varphi(t_B)=t_Ez_{n+1}^k,\, \varphi(s_B)=s_E.$$
%
%%%%%%%%%%%%%%%%%%%%%%%%%%%%%%%%%%%%%%%%%%%%%%%%%%%%%%%%
\section{The Fukaya category and generation results}
\label{Section Fukaya category, and generation results}
%%%%%%%%%%%%%%%%%%%%%%%%%%%%%%%%%%%%%%%%%%%%%%%%%%%%%%%%%%%
%%%%%%%%%%%%%%%%%%%%%%%%%%%%%%%%%%%%%%%%%%%%%%%%%%%%%%%%
\subsection{The Fukaya category for non-monotone toric forms versus the twisted Fukaya category}
\label{Subsection Fukaya category for non-monotone toric}
%%%%%%%%%%%%%%%%%%%%%%%%%%%%%%%%%%%%%%%%%%%%%%%%%%%%%%%%%%%

Recall by \cite{Ritter3} that one can twist the Lagrangian Floer complexes $CF^*(L,L')$ by a closed two-form $\eta\in H^2(M)$ which vanishes on $L,L'$. Namely, one introduces a system of local coefficients $\underline{\mathfrak{R}}_{\eta}$ and disc counts are weighted by $s^{\eta(u)}$. By Stokes' theorem, these weights are invariant under a homotopy of the disc $u$ as long as the boundary of $u$ moves within $L\cup L'$ (the integral of $\eta$ over the path within $L\cup L'$ will vanish since $\eta$ vanishes on $L\cup L'$). Similarly, one can twist $A_{\infty}$-operations using $\eta$, and thus define an $\eta$-twisted Fukaya category $\mathcal{F}(M;\underline{\mathfrak{R}}_{\eta})$, provided that one restricts to considering only Lagrangians $L$ on which $\eta$ vanishes. In order to drop that restriction on Lagrangians, more work is required: assuming $\K$ has characteristic zero,
%
% CHAR = 0
% because count with weights 1/n! when n markings hit the same bulk divisor
% since under perturbations of divisor you would get 1 solution since
% marking has specific divisor it must hit, but before perturbing
% you can relabel the markings and it would still be a legitimate solution
% so you are overcounting by n!
%
 one can define the bulk deformation $\mathcal{F}(M;D)$ for a divisor $D\subset M$ by work of Fukaya-Oh-Ohta-Ono \cite{FOOO} (see \cite{FOOOtoric} in the toric setting), which corresponds to twisting by $\eta$ when $\eta$ is Poincar\'e dual to $D$. One can also allow $\R$-linear combinations of divisors $\sum -\lambda_i D_i$, in which case each positive intersection of a disc $u$ with $D_i$ gets counted with weight $s^{-\lambda_i}$.
%  a disc with $\ell\geq 0$ marked points will intersect $\ell$ different ordered perturbations of $D_i$ at those marked points thus contributing $\tfrac{1}{\ell!} (-\lambda_i)^{\ell}$ to $e^{-\lambda_i}$
These are the particularly simple bulk deformations in real codimension $2$ which do not involve counting new, higher-dimensional, moduli spaces -- they only involve introducing weights in the original counts.

We are concerned with twisting the monotone toric manifold $(X_{\Sigma},\omega_X)$ by $\omega_F$, and comparing the $\omega_F$-twisted Fukaya category $\mathcal{F}(X_{\Sigma},\omega_X; \underline{\mathfrak{R}}_{\omega_F})$ with the (untwisted) Fukaya category for $(X_{\Sigma},\omega_F)$. If we restrict the Fukaya category to $\mathcal{F}^{\mathrm{toric}}$, meaning we only allow toric Lagrangians $L$ (the $\textrm{codim}_{\C}=1$ complex torus orbits, together with holonomy data), then the objects $L$ are Lagrangian submanifolds both for $\omega_X$ and $\omega_F$, provided all $F(e_i)$ are close to $-1$. Indeed, following \cite[Sec.2.1-2.2]{Guillemin}, when $F(e_i)$ is close to $-1$, the moment polytopes $\Delta_F$ and $\Delta_X$ (the polytopes for $\omega_F,\omega_X$ respectively) will undergo a small variation that does not change the diffeomorphism type of the toric manifolds $X_{\Delta_F}$ and $X_{\Delta_X}$ built as symplectic reductions (via the Delzant construction). One can then show \cite[Thm 2.7]{Guillemin} that $X_{\Delta_F}$ is $T^n$-equivariantly symplectomorphic to $(X_{\Delta_X},\omega_F)$.
%
% Need to be careful here, since if the parameters \lambda_i = F(e_i) change too dramatically
% then the moment polytope can change dramatically. In particular, X can undergo
% blow-up/blow-downs. For example, Guillemin page 31. So the symplectic reductions
% are not symplectomorphic.
%
In particular, $\omega_F$ will then vanish on the toric Lagrangians $L$ in $(X_{\Sigma},\omega_X)$, so the twisted category $\mathcal{F}^{\mathrm{toric}}(X_{\Sigma},\omega_X; \underline{\mathfrak{R}}_{\omega_F})$ is well-defined.

\begin{corollary} \label{Corollary twisted toric category is non-monotone toric category}
When all $F(e_i)$ are close to $-1$, the twisted category
 $\mathcal{F}^{\mathrm{toric}}(X_{\Sigma},\omega_X; \underline{\mathfrak{R}}_{\omega_F})$ is naturally identifiable with $\mathcal{F}^{\mathrm{toric}}(X_{\Sigma},\omega_F)$. $\qedhere$
\end{corollary}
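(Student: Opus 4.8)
The plan is to run the open-string analogue of the argument behind Corollary~\ref{Corollary QH twisted is same as non-exact QH}, upgrading it from the quantum product to the full curved $A_{\infty}$-structure. The starting observation is that $\omega_X$ and $\omega_F$ are both K\"ahler for the \emph{same} fixed integrable complex structure $J$ on $X_{\Sigma}$. Hence the pseudo-holomorphic objects used to define either toric Fukaya category --- $J$-holomorphic discs with boundary on toric Lagrangian fibres, carrying the appropriate boundary and interior marked points --- are literally the same, and Gromov compactness holds for both forms because $J$ (and small perturbations of it, needed for transversality) is tamed by each: one can bound bubbling energy using either $\omega_X$ or $\omega_F$, and $c_1(TX)(A)=\lambda_X\,\omega_X(A)$ controls indices. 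Consequently, for a disc class $A\in H_2(X_{\Sigma},L)$ the moduli space $\mathcal{M}(A)$, and --- after a choice of abstract perturbation (Kuranishi / stable-map transversality) data --- its virtual count, depends only on $(X_{\Sigma},J)$ and on the toric Lagrangians, not on whether we regard $X_{\Sigma}$ as equipped with $\omega_X$ or with $\omega_F$.

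Next I would account for the Novikov weights. In $\mathcal{F}^{\mathrm{toric}}(X_{\Sigma},\omega_F)$ a disc $u$ is counted with the weight $s^{\omega_F(u)}$ times the monotone weight attached to its Maslov index, exactly as in Sections~\ref{Subsection Novikov ring} and \ref{Subsection New Novikov ring R}. In the $\omega_F$-twisted theory for $\omega_X$, by definition (Section~\ref{Subsection Using a non-monotone toric form is the same as twisting}) one counts $u$ with its monotone $\omega_X$-weight multiplied by the twisting factor $s^{\omega_F(u)}$, and with the paper's convention that the irrelevant factor $s^{\omega_X(u)}=s^{\lambda_X c_1(TX)(u)}$ is recovered by formally replacing $T$ with $s^{\lambda_X}T$. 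Over $\mathfrak{R}$ these two weight systems agree term by term. Therefore the generating series defining every $A_{\infty}$-operation $\mu^k$ --- in particular the curvature term $m_0$, and hence the eigenvalue decomposition into $\mathcal{F}_{\lambda}$ --- coincide verbatim for the two categories.

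Finally I would check that one is comparing the same objects. By the paragraph preceding the Corollary, when all $F(e_i)$ are close to $-1$ the moment polytopes $\Delta_F$ and $\Delta_X$ have the same combinatorial type, and Guillemin's theorem \cite[Thm 2.7]{Guillemin} provides a $T^n$-equivariant symplectomorphism between $X_{\Delta_F}$ and $(X_{\Delta_X},\omega_F)$ carrying toric fibres to toric fibres. In particular $\omega_F$ vanishes on the toric Lagrangians $L\subset(X_{\Sigma},\omega_X)$, so the local system $\underline{\mathfrak{R}}_{\omega_F}$ is well defined on each such $L$ (this is what makes the twisted category exist at all, cf.\ \cite{Ritter3, FOOO, FOOOtoric}), and the holonomy datum attached to a toric $L$ --- a rank-one $\mathfrak{R}$-local system on $L$ --- is transported without change. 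The underlying morphism modules are $H^*(L;\mathfrak{R})$, resp.\ $H^*(L,L';\mathfrak{R})$, in both cases. Assembling objects, morphism complexes and the operations from the previous two steps yields the claimed identification of curved $A_{\infty}$-categories, compatible with the $\mathcal{F}_{\lambda}$-splitting; an entirely parallel argument gives the wrapped version used later.

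The main obstacle is making the ``same moduli spaces, same counts'' assertion rigorous at the level of the chosen perturbation data: one must arrange that the abstract perturbations defining $\mathcal{F}^{\mathrm{toric}}(X_{\Sigma},\omega_X;\underline{\mathfrak{R}}_{\omega_F})$ and $\mathcal{F}^{\mathrm{toric}}(X_{\Sigma},\omega_F)$ are \emph{the same} choice, which is legitimate precisely because the construction depends only on $(X_{\Sigma},J)$ and on the Lagrangians, but which requires quoting the relevant part of the foundational constructions carefully. A secondary, purely bookkeeping point is to verify that the substitution $T\mapsto s^{\lambda_X}T$ is applied consistently in all the $\mu^k$ (not merely in the product $\mu^2$); this is routine but should be recorded.
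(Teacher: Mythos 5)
Your proposal is correct and takes essentially the same approach as the paper: the paper attaches a $\qedhere$ to the Corollary statement because it regards it as an immediate consequence of the preceding discussion (same $J$-holomorphic moduli spaces and perturbation data since $\omega_X,\omega_F$ are both $J$-K\"ahler, same toric Lagrangians for $|F(e_i)+1|$ small via Guillemin's theorem, and identical Novikov weights $s^{\omega_F(A)}T^{c_1(TX)(A)}$ once the convention of omitting $s^{\omega_X(A)}$ is applied), which is exactly what you spell out. Your closing caveats about fixing the abstract perturbation data once for both categories and applying the $T\mapsto s^{\lambda_X}T$ substitution uniformly across all $\mu^k$ correspond to points the paper addresses in the Remark following the Corollary, where it emphasizes that restricting to toric Lagrangians is precisely what lets $\omega_X$ control $J$-holomorphic discs in both constructions.
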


\begin{remark} Since $H_2(X)$ is generated by toric divisors, $\omega_F$ is Poincar\'e dual to an $\R$-linear combination $D_F$ of toric divisors, so the bulk-deformed category $\mathcal{F}(X_{\Sigma},\omega_X; D_F)$ is defined and naturally contains the full subcategory $\mathcal{F}^{\mathrm{toric}}(X_{\Sigma},\omega_X; \underline{\mathfrak{R}}_{\omega_F})$. However, it is less clear how to compare the bulk-deformed category with $\mathcal{F}(X_{\Sigma},\omega_F)$, since the (non-toric) Lagrangians have changed when passing from $\omega_X$ to $\omega_F$. However, for the purposes of proving that the generation criterion holds for toric Lagrangians, we anyway restrict to the subcategory of toric Lagrangians and check that the open-closed string map hits an invertible element. So for the purposes of generation, we only deal with the categories in Corollary \ref{Corollary twisted toric category is non-monotone toric category}. In particular, we by-pass the technical issue of defining the Fukaya category $\mathcal{F}(X_
{\Sigma},\omega_F)$ in the non-monotone setting and proving that the structural results of \cite{RitterSmith} (the $\mathcal{OC}$-map, the $QH^*$-module structures, the generation criterion) generalize from the case of monotone K\"ahler forms $\omega_X$ to the case of a (non-monotone) K\"ahler form $\omega_F$ close to $\omega_X$ (so both are compatible with the complex structure $J$). This is a technical issue in the sense that the Kuranishi machinery of \cite{FOOO} is very likely to succeed in this generalization, but we will not undertake this onerous task. We emphasize that, when restricting to toric Lagrangians, this is not an issue since the Lagrangians for $\omega_X,\omega_F$ are then in common and thus $\omega_X$ can be used to control $J$-holomorphic discs in the construction of $\mathcal{F}^{\mathrm{toric}}(X_{\Sigma},\omega_F)$.
\end{remark}

Finally, the constructions of Ritter-Smith \cite{RitterSmith} obtained for monotone symplectic manifolds apply immediately to the $\omega_F$-twisted setting and to the bulk-deformed setting when deforming by divisors. Again, this is because we never change the moduli spaces being counted, but only the weights in these counts. In particular, the open-closed string map into twisted $QH^*$ is defined, it is a module map over twisted $QH^*$, and the generation criterion holds.

%%%
%%%%%%%%%%%%%%%%%%%%%%%%%%%%%%%%%%%%%%%%%%%%%%%%%%%%%%%%
\subsection{What it means for the toric generation criterion to hold}
\label{Subsection What it means for the toric generation criterion to hold}
%%%%%%%%%%%%%%%%%%%%%%%%%%%%%%%%%%%%%%%%%%%%%%%%%%%%%%%%%%%

Recall that the generation criterion of Abouzaid \cite{Abouzaid}, generalized to the monotone setting by Ritter-Smith \cite{RitterSmith}, states that a full subcategory $\mathcal{S}$ will split-generate the wrapped Fukaya category $\mathcal{W}(X)$ (where $X$ is a non-compact symplectic manifold conical at infinity, assuming exactness or monotonicity) if $1\in \mathcal{OC}(\mathrm{HH}_*(\mathcal{S}))$, where $\mathcal{OC}$ is the \emph{open-closed string map}
$$
\mathcal{OC}: \mathrm{HH}_*(\mathcal{W}(X))\to SH^*(X).
$$
By Ritter-Smith \cite{RitterSmith}, this is an $SH^*$-module map so it is in fact enough to hit an invertible element. The discussion, here and below, holds analogously for the compact Fukaya category $\mathcal{F}(X)$, and it also holds for closed monotone symplectic manifolds $X$, in which cases one considers the $QH^*$-module homomorphism
$$
\mathcal{OC}: \mathrm{HH}_*(\mathcal{F}(X))\to QH^*(X).
$$

A closer inspection of the argument, shows that in fact a weaker condition is required, namely that for any object $K$ of $\mathcal{W}(X)$ which one hopes to split-generate, one wants
$$
\xymatrix{ 
\mathrm{HH}_*(\mathcal{S})\subset \mathrm{HH}_*(\mathcal{W}(X)) \ar@{->}^-{\mathcal{OC}}[r] &
SH^*(X) \ar@{->}^-{\mathcal{CO}}[r] &
HW^*(K,K)
}
$$
to hit the unit in the wrapped Lagrangian Floer cohomology $HW^*(K,K)$. Since the \emph{closed-open string map} $\mathcal{CO}$ is a ring homomorphism, this is automatic if $\mathcal{OC}$ hits $1$ since $\mathcal{CO}(1)=1$.

In the monotone case we must split the category into summands $\mathcal{W}_{\lambda}(X)$ (respectively $\mathcal{F}_{\lambda}(X)$) indexed by the $m_0$-value of the objects, so $\mathfrak{m}_0(L) = m_0(L)\, [L]=\lambda\, [L]$. Once we restrict $\mathcal{OC}$ to this summand, it becomes very unlikely that it will hit the unit. We now discuss the implications and the remedy to this issue.

\begin{lemma}\label{Lemma CO map vanishes on things}
 If $K$ does not intersect a representative of $\mathrm{PD}(c_1(TX))$, then
 $\mathcal{CO}:SH^*(X)\to HW^*(K,K)$ $($and respectively $\mathcal{CO}:QH^*(X)\to HF^*(K,K))$ vanishes on the following:
\begin{enumerate}
 \item the generalized eigensummands $SH^*(X)_{\lambda}$ $($respectively $QH^*(X)_{\lambda})$ for $\lambda\neq m_0(K)$;
 
 \item the ideal $(c_1(TX)-m_0(K))*SH^*(X)$ $($respectively $(c_1(TX)-m_0(K))*QH^*(X))$. So $\mathcal{CO}$ vanishes on eigenvectors of $c_1(TX)$ arising in Jordan blocks of size at least $2$.
\end{enumerate}
\end{lemma}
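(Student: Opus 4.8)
The plan is to use the fundamental compatibility of $\mathcal{CO}$ with the module structures, together with the defining property $\mathfrak{m}_0(K)=m_0(K)[K]$. Recall that $\mathcal{CO}:QH^*(X)\to HF^*(K,K)$ (and similarly $\mathcal{CO}:SH^*(X)\to HW^*(K,K)$) is a ring homomorphism, and that the induced $QH^*(X)$-module structure on $HF^*(K,K)$ factors through $\mathcal{CO}$; that is, for $\alpha\in QH^*(X)$ and $\beta\in HF^*(K,K)$ one has $\alpha\cdot\beta = \mathcal{CO}(\alpha)\cup\beta$, where $\cup$ is the Floer product on $HF^*(K,K)$. The key geometric input, which uses the hypothesis that $K$ misses a representative of $\mathrm{PD}(c_1(TX))$, is the identity
$$\mathcal{CO}(c_1(TX)) = c_1(TX)\cdot [K] = \mathfrak{m}_0(K) = m_0(K)\,[K]$$
in $HF^*(K,K)$ — this is the Kontsevich–Seidel–Auroux disc-counting statement already invoked in the excerpt (the Maslov $2$ discs bounding $K$ through a generic point hit $\mathrm{PD}(c_1(TX))$ exactly once when $K$ is disjoint from a chosen representative), and it realises quantum multiplication by $c_1(TX)$ on $[K]$ as $m_0(K)[K]$. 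Equivalently, $\mathcal{CO}(c_1(TX)-m_0(K))\cup[K]=0$; since $[K]$ is the unit of the ring $HF^*(K,K)$, this forces $\mathcal{CO}(c_1(TX)-m_0(K))=0$.

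From this, part (2) is immediate: for any $\gamma\in QH^*(X)$,
$$\mathcal{CO}\big((c_1(TX)-m_0(K))*\gamma\big) = \mathcal{CO}(c_1(TX)-m_0(K))\cup \mathcal{CO}(\gamma) = 0,$$
using that $\mathcal{CO}$ is a ring map. Thus $\mathcal{CO}$ annihilates the ideal $(c_1(TX)-m_0(K))*QH^*(X)$, and in particular every generalized eigenvector $v$ of $c_1(TX)*{-}$ with eigenvalue $m_0(K)$ lying in a Jordan block of size $\geq 2$, since such a $v$ is of the form $(c_1(TX)-m_0(K))*w$ for a suitable $w$. The same argument verbatim gives the wrapped/$SH^*$ statement, replacing $HF^*(K,K)$ by $HW^*(K,K)$ and quantum product by the pair-of-pants product; the only structural facts used — $\mathcal{CO}$ is a unital ring homomorphism, and it intertwines the $QH^*$- (resp. $SH^*$-) module structures — are exactly those established in Ritter–Smith \cite{RitterSmith}.

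For part (1), let $\lambda\neq m_0(K)$ and let $v\in QH^*(X)_\lambda$, so $(c_1(TX)-\lambda)^N*v=0$ for $N$ large. Since $\mathcal{CO}(c_1(TX))=m_0(K)[K]$ acts on $HF^*(K,K)$ as scalar multiplication by $m_0(K)$, and $\mathcal{CO}$ is a ring map,
$$0 = \mathcal{CO}\big((c_1(TX)-\lambda)^N*v\big) = (m_0(K)-\lambda)^N\,\mathcal{CO}(v).$$
As $m_0(K)-\lambda\neq 0$ and the coefficient ring (the Novikov field $\Lambda$, or $\mathfrak{R}$ in the twisted setting) is an integral domain, we conclude $\mathcal{CO}(v)=0$. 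Hence $\mathcal{CO}$ vanishes on $QH^*(X)_\lambda$ for all $\lambda\neq m_0(K)$, and likewise on $SH^*(X)_\lambda$. \emph{I expect the main (already-dispatched) obstacle to be the geometric identity $\mathcal{CO}(c_1(TX))=m_0(K)[K]$}: everything else is formal manipulation with ring and module structures. In the write-up I would cite the relevant disc-counting lemma (the Auroux/Kontsevich–Seidel observation, as in \cite{Auroux}) and the structural results of \cite{RitterSmith} for the module/ring properties of $\mathcal{CO}$, and note that the disjointness hypothesis on $K$ is precisely what makes the count of Maslov $2$ discs meeting $\mathrm{PD}(c_1(TX))$ equal to the point-through count defining $m_0(K)$. $\qed$
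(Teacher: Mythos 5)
Your proof is correct and takes essentially the same approach as the paper: both hinge on the Kontsevich--Seidel--Auroux identity $\mathcal{CO}(c_1(TX))=m_0(K)[K]$ (with the disjointness hypothesis used to kill the index-$0$ constant-disc contribution) together with the fact that $\mathcal{CO}$ is a ring homomorphism. The only cosmetic difference is in part (1): the paper deduces it from part (2) by observing that $c_1(TX)-m_0(K)$ is invertible on $QH^*(X)_\lambda$ (so $QH^*(X)_\lambda=(c_1(TX)-m_0(K))*QH^*(X)_\lambda$), whereas you apply $\mathcal{CO}$ directly to the nilpotency relation $(c_1(TX)-\lambda)^N*v=0$ and divide by $(m_0(K)-\lambda)^N$; the two are equivalent linear-algebra reformulations.
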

\begin{proof}
 Multiplication by $c_1(TX)-m_0(K)$ is invertible on a generalized eigensummand $G$ for eigenvalues $\lambda\neq m_0(K)$. So, $(c_1(TX)-m_0(K))G = G$. But $\mathcal{CO}$ is a ring map, so on any multiple of $c_1(TX)-m_0$ it will vanish:
$$
\mathcal{CO}((c_1(TX)-m_0(K))y) = (\mathcal{CO}(c_1(TX))-m_0(K)) * \mathcal{CO}(y) = (m_0(K)-m_0(K))\mathcal{CO}(y)=0.
$$
Here we used the equation $\mathcal{CO}(c_1(TX))=m_0(K)\, [K]$, due to Kontsevich, Seidel and Auroux \cite{Auroux} (see also the explanation in \cite{RitterSmith}). We point out that this equation is the count of holomorphic discs bounding $L$, through a generic point of $L$, which hit $\mathrm{PD}(c_1(TX))$. For index reasons (using monotonicity and the fact that $K$ is orientable), only index $0$ and $2$ discs contribute. The index $0$ discs are constant by monotonicity, and they would contribute $[K\cap \mathrm{PD}(c_1(TX))]$, but the assumption that $K$ does not intersect a representative of $\mathrm{PD}(c_1(TX))$ ensures that this term does not arise. Finally $m_0(K)$ is by definition the count of the index $2$ discs.

The remark about Jordan blocks of size at least 2 also follows, since such a block arises as a summand $\mathfrak{R}[x]/(x-m_0(K))^d$ for $d\geq 2$ when we decompose $SH^*$ (resp. $QH^*$) as an $\mathfrak{R}[x]$-module, with $x$ acting by multiplication by $c_1(TX)$. The eigenvectors in this summand are multiples of $(x-m_0(K))^{d-1}$, so they vanish under $\mathcal{CO}$.
\end{proof}

That $K$ does not intersect a representative of the anticanonical class $\mathrm{PD}(c_1(TX))$ holds for toric Lagrangians since they do not intersect the toric divisors. However, it holds in fact in general, after homotopying the representative, as follows. We emphasize that we only work with orientable Lagrangian submanifolds \cite{RitterSmith} (so we may use Poincar\'{e} duality arguments). The following would fail in the non-orientable setting of $\R P^2\subset \C P^2$.

\begin{lemma}\label{Lemma L does not intersect c1}
 For any (orientable) Lagrangian submanifold $K\subset X$ in a monotone symplectic manifold $(X,\omega_X)$, $K$ does not intersect some representative of $\mathrm{PD}(c_1(TX))$.
\end{lemma}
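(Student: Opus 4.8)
The plan is to use Poincar\'e duality to reduce the statement to a transversality / general position argument. Since $K$ is a closed orientable Lagrangian submanifold of the (possibly non-compact) monotone symplectic manifold $X$, a tubular neighbourhood $\nu(K)$ of $K$ is symplectomorphic to a neighbourhood of the zero section in $T^*K$, and $K$ has half the dimension of $X$. The class $c_1(TX)$ has a smooth closed $2$-form representative $\alpha$, and $\mathrm{PD}(c_1(TX))$ is represented by a closed (real) codimension-$2$ cycle $Z$; the goal is to push $Z$ off $K$ by a compactly supported isotopy.

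First I would record the local picture: near $K$, using the cotangent model, the restriction $c_1(TX)|_K = c_1(TK \oplus T^*K) = c_1(TK) + c_1(T^*K) = c_1(TK) - c_1(TK) = 0$ in $H^2(K;\Z)$. (Here I use that $T X|_K \cong TK \oplus \nu_K$ and that the Lagrangian condition identifies $\nu_K \cong T^*K$, so $c_1(TX)|_K = c_1(TK) + c_1(T^*K) = 0$; it is essential that $K$ is orientable so that $TK$ is a genuine complex-linearly-meaningful summand and $w_2$-type obstructions do not intervene — this is exactly where $\R P^2 \subset \C P^2$ fails, since there $c_1(T\C P^2)|_{\R P^2} = 3 w_1^2 \neq 0$ mod torsion considerations.) Therefore $\alpha|_K$ is exact on $K$: $\alpha|_K = d\beta$ for some $1$-form $\beta$ on $K$.

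Next I would use this exactness to isotope $Z$ off $K$. Equivalently, in the language of intersection theory: the (geometric) intersection number of $K$ with $Z$, counted with signs and with an orientation on $K$, is the pairing $\langle c_1(TX), [K] \rangle_{\mathrm{rel}}$ which vanishes because $c_1(TX)|_K = 0$; but vanishing of the algebraic intersection number is not by itself enough — I want to cancel the intersection points geometrically. For this I would argue as follows. By transversality I may assume $K \pitchfork Z$, so $K \cap Z$ is a finite set of signed points whose total count is zero. Because $K$ is connected (or treating components separately) I can pair up a positive and a negative intersection point, choose an embedded path in $K$ joining them, and perform a Whitney-type move: a compactly supported ambient isotopy of $X$, supported in a neighbourhood of that path and its image in $Z$, that cancels the pair. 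Iterating removes all intersection points. The only subtlety is that the Whitney trick in its classical form needs a Whitney disc, but here we only need to move the codimension-$2$ cycle $Z$ — which has large codimension relative to the $1$-dimensional paths involved — so the finger/Whitney move goes through without dimensional obstruction once we know the algebraic count is zero and $\pi_1$ issues are handled by working one path at a time within $K$. Since the ambient isotopy can be taken compactly supported, it does not affect the behaviour of $Z$ at infinity, so the resulting cycle is still a representative of $\mathrm{PD}(c_1(TX))$ and is now disjoint from $K$.

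The main obstacle I expect is not the existence of the isotopy but making the ``cancel signed intersection points'' step fully rigorous in the non-compact setting and in arbitrary dimension: one must ensure the canceling moves can be performed by a \emph{compactly supported} ambient (Hamiltonian, or at least symplectic-isotopy-irrelevant) isotopy so that the deformed $Z$ still represents $\mathrm{PD}(c_1(TX))$ — i.e. one should either work with singular cycles and chain-level homologies (replacing $Z$ by $Z' = Z + \partial C$ with $C$ supported near $K$) or appeal to the standard fact that a codimension-$\geq 2$ submanifold can always be isotoped off a codimension-$\geq 2$ submanifold when the total algebraic intersection in the complementary dimensions vanishes and $\pi_1$ of the ambient allows it. Given the generality claimed, I would phrase the final argument homologically: $\alpha|_K$ exact produces a primitive $\beta$, use it to build a cobounding chain $C$ with $\partial C$ equal to (the dual of) $Z \cap \nu(K)$ inside $\nu(K)$, and replace $Z$ by $Z - \partial C$ read in $X$; this is manifestly still Poincar\'e dual to $c_1(TX)$, is supported in $\nu(K)$ near the old intersection, and — by construction — is disjoint from $K$ itself. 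Orientability of $K$ is used exactly once, in the identification $\nu_K \cong T^*K$ and the resulting vanishing $c_1(TX)|_K = 0$; everything downstream is a routine general-position / chain-level argument.
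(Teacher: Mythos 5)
Your overall strategy — show $c_1(TX)|_K = 0$ and then push a dual cycle off $K$ — is the same as the paper's, but both steps as you have written them contain genuine problems, and the paper avoids them by a cleaner route.

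For the vanishing step: $TK$ is a \emph{real} vector bundle of rank $n$ over the $n$-manifold $K$, so the expression $c_1(TK)$ is not defined, and the identity ``$c_1(TK)+c_1(T^*K)=0$'' is meaningless as stated. The Lagrangian condition identifies $TX|_K$ with the complexification $TK\otimes\C$ (via $J: TK\to \nu_K$), and for the complexification of a real bundle $E$ one has the standard formula $c_1(E\otimes\C)=\beta(w_1(E))$ where $\beta$ is the integral Bockstein — this class is $2$-torsion in general and vanishes precisely when $E$ is orientable. That is the correct place where orientability enters. The paper instead derives the \emph{real}-coefficient vanishing $c_1(TX)|_K=\lambda_X[\omega_X]|_K=0\in H^2(K;\R)$ directly from monotonicity (no need to know $TX|_K\cong TK\otimes\C$), and then uses orientability to dispose of the residual torsion; both routes are fine once made precise, but your chain of equalities is not.

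For the pushing-off step, the ``standard fact'' you invoke — that two transverse closed submanifolds of complementary dimension with vanishing algebraic intersection can be isotoped apart whenever both have codimension $\geq 2$ and ``$\pi_1$ allows it'' — is simply false in that generality; the Whitney trick requires framed Whitney discs, a dimension hypothesis ($\geq 5$), and control over $\pi_1$ that you do not establish here. The paper sidesteps all of this by a more structured argument: represent $\mathrm{PD}(c_1(TX))$ as the zero set of a generic smooth section $s$ of a complex line bundle $\mathcal{E}\to X$ with $c_1(\mathcal{E})=c_1(TX)$; since $j^*\mathcal{E}$ is trivial over $K$ (and hence over a tubular neighbourhood of $K$), $j^*s$ can be homotoped to a nowhere-vanishing section, and one can deform $s$ by a bump function supported near $K$ so that $s^{-1}(0)$ misses $K$. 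This construction yields precisely the chain-level correction $Z\mapsto Z+\partial C$ that you gesture at in your last paragraph, without any appeal to the Whitney trick. Your third paragraph shows you see that the homological/cobounding-chain route is the right one; the line-bundle-section formulation is the natural way to carry it out.
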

\begin{proof}
$D=PD[c_1(TX)]$ can be represented by the vanishing of a generic smooth section $s: X\to \mathcal{E}$ of a complex line bundle $\mathcal{E}\to X$ with Chern class $c_1(TX)$. Now for any Lagrangian $j: K\hookrightarrow X$, the pull-back bundle $j^*\mathcal{E}$ has first Chern class $j^*c_1(TX)$. But $c_1(TX)=\lambda_X [\omega_X]$ by monotonicity, so $j^*c_1(TX)=\lambda_X [j^*\omega_X]=0\in H^2(X;\R)$ since $K$ is Lagrangian. So $j^*\mathcal{E}$ is a trivial smooth line bundle. Moreover, the intersection $K \cap D$ (for generic $s$, $D=s^{-1}(0)$ will be transverse to $K$) can be represented by the vanishing of the pulled-back section $j^*s$. Since $j^*\mathcal{E}$ is trivial, we can homotope the smooth section $j^*s$ so that it does not vanish. More precisely, in a tubular neighbourhood of $K$ (which has the same homotopy type of $K$, and so the restriction of $\mathcal{E}$ over that neighbourhood is still trivial), we can deform $s$ using a bump function supported near $K$ so that $s$ no longer vanishes near $K$. This is equivalent to a homotopy of $D$ which 
ensures that $K\cap D$ is empty, as required.
%
% A more brute force approach:
% If D is the anticanonical divisor, and L is our Lagrangian, then near an intersection point we can locally model them by:
% D = \C^{n-1} \times 0
% L = \R^{n-1} \times \R
% so we can deform D by changing 0 to
% (real bump function) . \sqrt{-1}
% then D,L will no longer intersect.
% Although D will no longer be a holo submanifold, it will still be symplectic -- so the local argument should still hold. And we do the perturbations inductively on a locally finite cover by Darboux charts, I suppose.
%
\end{proof}

\begin{theorem}[{Ritter-Smith \cite{RitterSmith}}]\label{Theorem OC respects eigensummands}
$\mathcal{OC}:\mathrm{HH}_*(\mathcal{W}_{\lambda}(X))\to SH^*(X)$ lands in the generalized $\lambda$-eigensummand $SH^*(X)_{\lambda}$ of $SH^*(X)$ (for the $QH^*(X)$-module action of $c_1(TX)$), and respectively $\mathcal{OC}:\mathrm{HH}_*(\mathcal{F}_{\lambda}(X))\to QH^*(X)$ lands in the generalized $\lambda$-eigensummand $QH^*(X)_{\lambda}$. Moreover, if $\mathcal{OC}$ hits an invertible element in that eigensummand, and $m_0(K)=\lambda$, then $\mathcal{CO}\circ \mathcal{OC}$ hits $1 \in HW^*(K,K)$ (respectively $1=[K]\in HF^*(K,K)$).
\end{theorem}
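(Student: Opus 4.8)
The plan is to deduce both assertions from the module-map properties of $\mathcal{OC}$ and $\mathcal{CO}$ together with the computation $\mathcal{CO}(c_1(TX)) = m_0(K)\,[K]$. First I would recall from Ritter-Smith \cite{RitterSmith} that $\mathcal{OC}$ is a $QH^*(X)$-module map (resp. $SH^*(X)$-module map in the wrapped case), where $\mathrm{HH}_*(\mathcal{F}_\lambda(X))$ acquires its $QH^*(X)$-module structure through the action of $QH^*(X)$ on the chain complexes of the Fukaya category. The key point for the first claim is that on $\mathrm{HH}_*(\mathcal{F}_\lambda(X))$, the operator "cap with $c_1(TX)$" agrees with "cap with $\lambda$": this is exactly the constraint $\mathfrak{m}_0(L)=\lambda\,[L]$ that defines which objects live in $\mathcal{F}_\lambda(X)$, propagated to Hochschild chains via the closed-open action. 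Hence for any class $\alpha \in \mathrm{HH}_*(\mathcal{F}_\lambda(X))$,
$$
(c_1(TX)-\lambda)*\mathcal{OC}(\alpha) = \mathcal{OC}\big((c_1(TX)-\lambda)\cdot \alpha\big) = \mathcal{OC}(0) = 0,
$$
so $\mathcal{OC}(\alpha)$ is killed by a power of $(c_1(TX)-\lambda)$ — in fact by the first power — which places it in the generalized $\lambda$-eigensummand $QH^*(X)_\lambda$ (indeed in the genuine $\lambda$-eigenspace). The wrapped case is identical with $QH^*$ replaced by $SH^*$ and $c_1(TX)$ by $c^*(c_1(TX))$, using that $c^*$ is a ring map and that $\mathcal{OC}$ on the wrapped side is an $SH^*(X)$-module map.

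For the second assertion, suppose $\mathcal{OC}$ hits an invertible element $u$ of the eigensummand $QH^*(X)_\lambda$ (resp. $SH^*(X)_\lambda$), with $m_0(K)=\lambda$. Then I would apply $\mathcal{CO}:QH^*(X)\to HF^*(K,K)$ (resp. $\mathcal{CO}:SH^*(X)\to HW^*(K,K)$), which is a unital ring homomorphism, so $\mathcal{CO}(1)=1=[K]$. Write $1 = u * u^{-1}$ inside the subalgebra $QH^*(X)_\lambda$ (which is a unital ring with unit the idempotent projector $e_\lambda$; strictly one works with $u*u^{-1}=e_\lambda$ and notes $\mathcal{CO}(e_\lambda)=[K]$ because $\mathcal{CO}$ kills the complementary eigensummands by Lemma \ref{Lemma CO map vanishes on things}(1), using Lemma \ref{Lemma L does not intersect c1} to arrange $K\cap \mathrm{PD}(c_1(TX))=\emptyset$). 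Then
$$
[K] = \mathcal{CO}(e_\lambda) = \mathcal{CO}(u)*\mathcal{CO}(u^{-1}),
$$
so $\mathcal{CO}(u)$ is a unit in $HF^*(K,K)$; multiplying the image of the Hochschild class mapping to $u$ by $\mathcal{CO}(u)^{-1}$ (which is legitimate since $\mathcal{CO}\circ\mathcal{OC}$ factors through the $HF^*(K,K)$-module structure on $HF^*(K,K)$ itself, i.e. $\mathcal{CO}\circ\mathcal{OC}$ is $HF^*(K,K)$-linear after restricting, by the module-map property of $\mathcal{OC}$ transported along $\mathcal{CO}$) shows $[K]\in \mathrm{image}(\mathcal{CO}\circ\mathcal{OC})$, as desired. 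The split-generation conclusion is then the Abouzaid criterion \cite{Abouzaid} in the monotone form of \cite{RitterSmith}.

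The main obstacle — and the step I would spell out most carefully — is justifying that $\mathcal{OC}$ is a module map over the relevant closed-string ring at the level of generalized eigensummands, and in particular that the operator "$c_1(TX)$ acting on $\mathrm{HH}_*(\mathcal{F}_\lambda(X))$" genuinely coincides with multiplication by the scalar $\lambda$. This rests on the compatibility of the closed-open map $\mathcal{CO}:QH^*(X)\to HH^*(\mathcal{F}(X))$ with the Hochschild pairing and on the identity $\mathcal{CO}(c_1(TX))=m_0$ already used in Lemma \ref{Lemma CO map vanishes on things}; the subtlety is purely at the chain level (the eigenvalue constraint is imposed on objects, and one must verify it propagates to all of Hochschild homology rather than merely to the length-zero part). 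All of this is established in \cite{RitterSmith}; here I would simply cite it and assemble the pieces, since the present statement is a formal corollary of those structural results once Lemmas \ref{Lemma CO map vanishes on things} and \ref{Lemma L does not intersect c1} are in hand.
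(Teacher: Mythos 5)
The paper does not prove Theorem~\ref{Theorem OC respects eigensummands} --- it is imported wholesale from \cite{RitterSmith}; so there is no ``paper's own proof'' here to compare against. That said, your proposal contains a genuine error in the first part which is worth flagging.

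The step
$$
(c_1(TX)-\lambda)*\mathcal{OC}(\alpha) = \mathcal{OC}\bigl((c_1(TX)-\lambda)\cdot \alpha\bigr) = \mathcal{OC}(0) = 0
$$
is wrong: it is not true that $(c_1(TX)-\lambda)\cdot\alpha=0$ for general $\alpha\in\mathrm{HH}_*(\mathcal{F}_\lambda(X))$. The constraint $\mathfrak{m}_0(L)=\lambda[L]$ says that the \emph{length-zero} component of the Hochschild cocycle $\mathcal{CO}(c_1(TX))-\lambda$ vanishes on each object, not that the cocycle itself vanishes. What this does buy you is that cap product by $\mathcal{CO}(c_1(TX))-\lambda$ strictly \emph{decreases} the length filtration on Hochschild chains, hence acts (locally) nilpotently on $\mathrm{HH}_*$; combined with the $QH^*$-module property of $\mathcal{OC}$, this gives $(c_1(TX)-\lambda)^N\mathcal{OC}(\alpha)=0$ for some $N$, i.e.\ the image lies in the \emph{generalized} eigensummand. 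Your version would prove the stronger statement that $\mathcal{OC}$ lands in the honest eigenspace, and that cannot be right: an eigenvector in a Jordan block $\Lambda[x]/(x-\lambda)^d$ with $d\geq 2$ is a multiple of $(x-\lambda)^{d-1}$ and hence is never invertible, so the second clause of the theorem would become vacuous for non-semisimple $\lambda$. It would also make Theorem~\ref{Theorem OC on HF(L,L) lands in eigenspace} and Corollary~\ref{Corollary need more than just OC00} in this paper pointless --- those are exactly the statements that the eigenspace (as opposed to generalized-eigenspace) property holds only for the length-zero component $\mathcal{OC}^{0|0}$, and proving even that costs the matrix-perturbation argument of Appendix~B. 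So the subtlety you gesture at in your last paragraph (``propagates to all of Hochschild homology rather than merely to the length-zero part'') is precisely what defeats the displayed equation; it cannot simply be waved off by citation.

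The second half of your proposal is on the right track but a bit garbled in presentation. The cleanest way to say it: given a Hochschild class $\alpha$ with $\mathcal{OC}(\alpha)=u$ invertible in $QH^*(X)_\lambda$, the element $u^{-1}\in QH^*(X)_\lambda\subset QH^*(X)$ acts on $\mathrm{HH}_*(\mathcal{F}_\lambda)$ via the $QH^*$-module structure, and the module-map property of $\mathcal{OC}$ gives $\mathcal{OC}(u^{-1}\cdot\alpha)=u^{-1}*u=e_\lambda$; then $\mathcal{CO}(e_\lambda)=[K]$ because, using Lemma~\ref{Lemma L does not intersect c1} and Lemma~\ref{Lemma CO map vanishes on things}, $\mathcal{CO}$ kills the complementary idempotents in $1=\sum_\mu e_\mu$. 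Your phrasing in terms of ``$HF^*(K,K)$-linearity of $\mathcal{CO}\circ\mathcal{OC}$'' is not the module structure actually available; it is the $QH^*(X)$-module structure on $\mathrm{HH}_*$ (transported through $\mathcal{CO}$) that one should use, with the inverse taken in $QH^*(X)_\lambda$.
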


\begin{definition}[Generation Criterion]\label{Definition Generation Criterion holds} Let $B$ be a closed Fano toric manifold, with a choice of toric symplectic form $\omega_F$. For an eigenvalue $\lambda$ of the action of $c_1(TB)$ on $QH^*(B)$, we will say that \emph{``the toric generation criterion holds for $\lambda$''} if the composite %
$$\mathcal{CO}\circ \mathcal{OC}: \mathrm{HH}_*(\mathcal{F}_{\lambda}^{\mathrm{toric}}(B)) \to QH^*(B) \to HF^*(K,K)$$
hits the unit $[K]\in HF^*(K,K)$ for any Lagrangian $K\in \mathrm{Ob}(\mathcal{F}_{\lambda}(B))$. Note that we restricted $\mathcal{OC}$ to the subcategory $\mathcal{F}_{\lambda}^{\mathrm{toric}}(B)\subset \mathcal{F}_{\lambda}(B)$ generated by the toric Lagrangians (with holonomy data).
As remarked above, this condition holds if $\mathcal{OC}$ hits an invertible element in the generalized eigensummand $QH^*(B)_{\lambda}$.

We will say that \emph{``the toric generation criterion holds''} if it holds for all eigenvalues $\lambda$.

The same terminology applies for the compact category $\mathcal{F}(M)$ for admissible toric manifolds (Definition \ref{Definition admissible toric manifolds}). For the wrapped category $\mathcal{W}(M)$, we instead work with
$$
\mathcal{CO}\circ \mathcal{OC}: \mathrm{HH}_*(\mathcal{W}_{\lambda}^{\mathrm{toric}}(M)) \to SH^*(M) \to HW^*(K,K).
$$

By the acceleration diagram \eqref{Equation Intro Comm Diagram Ritter Smith}, the wrapped $\mathcal{OC}$ map can be identified with $$\mathcal{OC}: \mathrm{HH}_*(\mathcal{F}_{\lambda}^{\mathrm{toric}}(M)) \to QH^*(M) \stackrel{c^*}{\to} SH^*(M)$$ because the toric Lagrangians are compact objects, and so the subcategories generated by them in $\mathcal{F}(M)_{\lambda}$ and $\mathcal{W}(M)_{\lambda}$ are quasi-isomorphic via the acceleration functor. Thus, for the purposes of toric generation for the wrapped category, we reduce to working with $\mathcal{CO}\circ \mathcal{OC}: \mathrm{HH}_*(\mathcal{F}_{\lambda}^{\mathrm{toric}}(M)) \to QH^*(M) \to HF^*(K,K)$, since the acceleration map $HF^*(K,K)\to HW^*(K,K)$ sends unit to unit.
\end{definition}

By the above discussion of the generation criterion, and the comments about the twisted generalization in Section \ref{Subsection Fukaya category for non-monotone toric}, observe that when \emph{``the toric generation criterion holds for $\lambda$''} it implies that the relevant category $\mathcal{F}_{\lambda}(B),\mathcal{F}_{\lambda}(M), \mathcal{W}_{\lambda}(M)$ is split-generated by the toric Lagrangians. By Cho-Oh \cite{Cho-Oh} (see also \cite[Sec.6]{Auroux}), the toric Lagrangians $L$ (with holonomy) with $HF^*(L,L)\neq 0$ are known: they are in bijection with the critical points of the superpotential, as explained in the Appendix Section \ref{Subsection Superpotential}.
%
%In particular, for toric generation for $E$ we can always assume $\lambda\neq 0$: the map $SH^*(E)\cong \mathrm{Jac}(W_E)$ sends $c_1(TE) \mapsto W_E$, so the superpotential $W_E$ has no critical points with value $W_E(p)=0$ (otherwise there would be an eigenvector of $c_1(TE)$ with eigenvalue $0$, but these have been quotiented out when passing from $QH^*(E)$ to $SH^*(E)$).
%
Note that for monotone toric negative line bundles $E$, the toric generation for $\lambda=0$ for $\mathcal{F}(E)$ can never hold since $\mathcal{F}(E)_0$ does not contain toric Lagrangians with $HF^*(L,L)\neq 0$, since the critical values of $W_E$ are all non-zero by Theorem \ref{Theorem Jacobian ring is SH}.

\begin{remark} \label{Remark Novikov ring without completing mathfrak R}
We emphasize that our Novikov ring $\mathfrak{R}=\Lambda_s(\!(T)\!)$ defined in Section \ref{Subsection Using a non-monotone toric form is the same as twisting} is a field. This is practical when considering the generation criterion, since a non-zero element in a field is always invertible. We mention however that one could avoid completing in $T$: one can work with $\Lambda_s[t,t^{-1}]$. Indeed, one can work with the field $\Lambda_s$, temporarily losing the $\Z$-grading, prove invertibility, and then reinsert powers of $t$ a posteriori to ensure the $\Z$-grading is respected (this may yield an inverse up to $t^{\textrm{positive}}$, but that we can invert). 
%Our arguments involve showing that $\mathcal{OC}_{\lambda}([\mathrm{pt}])$ is invertible, where $[\mathrm{pt}]\in C_*(L)=CF^*(L,L)$.
So invertibility of, say, $\mathcal{OC}([\mathrm{pt}])$, for $[\mathrm{pt}]\in C_*(L)=CF^*(L,L)$, will hold over $\Lambda_s$ if and only if it holds over $\Lambda_s[t,t^{-1}]$. Over $\Lambda_s[t,t^{-1}]$ the later statements about the existence of a ``field summand'' $($a copy of $\mathfrak{R}=\Lambda_s(\!(T)\!)
)$ should then be rephrased as ``a free summand over $\Lambda_s[t,t^{-1}]$''.
\end{remark}

%%%%%%%%%%%%%%%%%%%%%%%%%%%%%%%%%%%%%%%%%%%%%%%%%%%%%%%%
\subsection{Calculation of $\mathcal{OC}=\mathcal{OC}^{0|0}: HF(L,L) \to QH^*(X)$ on the point class}
\label{Subsection Calculation of OC on point class}
%%%%%%%%%%%%%%%%%%%%%%%%%%%%%%%%%%%%%%%%%%%%%%%%%%%%%%%%%%%
\begin{lemma}\label{Lemma calculation of OC of point}
Let $X$ be one of the following:
\begin{enumerate}
\item a closed Fano toric manifold $(B,\omega_F)$;
\item a Fano negative line bundle $E\to B$;
\item a (non-compact) admissible toric manifold $M$ (Definition \ref{Definition admissible toric manifolds}).
\end{enumerate}
Let $L$ be the Lagrangian with holonomy corresponding to a critical point $x$ of the (possibly $F$-twisted) superpotential $W$. Consider the map $\mathcal{OC}: HF^*(L,L) \to QH^*(X)$ on the point class $[\mathrm{pt}]\in HF^*(L,L)$ (which is a cycle since $x$ is critical \cite{Cho-Oh,Auroux}). Then respectively:
\begin{enumerate}
\item $\mathcal{OC}([\mathrm{pt}]) = \mathrm{PD}(\mathrm{pt}) + (\textrm{higher order }t) \neq 0 \in QH^*(B)$, where the leading term arises as the constant disc at the input point;
\item $\mathcal{OC}([\mathrm{pt}]) = (\textrm{fibre holonomy})\cdot T \cdot \mathrm{PD}([B]) + (\textrm{higher order }t) \neq 0 \in QH^*(E)$, where the leading term arises as the standard fibre disc through the input point;
\item $\mathcal{OC}([\mathrm{pt}]) = m_0(L)\, \mathrm{PD}(C^{\vee}) + (\textrm{linearly independent terms}) \in QH^*(M),$ which is non-zero if $\lambda=m_0(L)\neq 0$, where $C=\mathrm{PD}(c_1(TM))$ is a compact cycle representative and the lf-cycle $C^{\vee}$ is its intersection dual (with respect to a choice of basis which affects the other terms).
\end{enumerate}
\end{lemma}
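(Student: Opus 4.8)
The three cases are proved by the same mechanism: compute the leading term of $\mathcal{OC}^{0|0}([\mathrm{pt}])$ by identifying the lowest-energy holomorphic disc contributing to it, and then argue that this leading term is nonzero in $QH^*(X)$. The basic input, for all three cases, is the standard description (Cho--Oh \cite{Cho-Oh}, Auroux \cite{Auroux}) of Maslov-index-$2$ holomorphic discs bounding a toric Lagrangian $L$: there is exactly one family of such discs through each generic point of $L$ meeting each toric divisor $D_i$ once, weighted by the appropriate monomial $T^{?}\cdot(\text{holonomy})\cdot z^{e_i}$, and the superpotential $W$ is precisely the generating count of these. Since $x$ is a critical point of $W$, $[\mathrm{pt}]$ is a $\mu^1$-cycle, and $\mathcal{OC}^{0|0}([\mathrm{pt}])$ is obtained by summing over holomorphic discs with one interior marked point mapped out to a cycle in $X$; the $t$-adic order of each contribution equals the symplectic area of the disc. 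The constant disc (area $0$) contributes the degree-zero term $\mathrm{PD}(\mathrm{pt})$ in the cohomology of $X$, and the non-constant Maslov-$2$ discs contribute the next-order terms.

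\emph{Case (1), closed Fano $(B,\omega_F)$:} Here $\mathrm{PD}(\mathrm{pt})=[\mathrm{vol}_B]\neq 0\in H^{\dim_\R B}(B)$, so the constant disc already gives a nonzero leading term of minimal $t$-order, and $\mathcal{OC}([\mathrm{pt}])\neq 0$ follows immediately (this is the calculation already summarized in \eqref{Equation Introduction OC(pt) is non-zero}; in the twisted case one simply carries the $s$-weights along, which does not affect the leading term). \emph{Case (2), $E\to B$ a Fano negative line bundle:} now $\mathrm{PD}(\mathrm{pt})=0$ because $H^{\dim_\R E}(E)=0$, so the constant disc drops out and the leading term is provided by the lowest-area non-constant Maslov-$2$ disc through the generic point of $L=L_p$. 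By the explicit description of the fan of $E$ (Section \ref{Subsection The SR-relations for monotone toric negative line bundles}) there is a distinguished edge $e_f$ with $\lambda_f^E=0$, corresponding to the zero section $D_{r+1}=[B]$, and the standard disc in the $\C$-fibre direction through the marked point bounds $L$, has Maslov index $2$, hits $D_{r+1}=[B]$ once, and carries weight $(\text{fibre holonomy})\cdot T$ (using $\lambda_f^E=0$, so no extra $t$-power), while $\mathrm{PD}([B])=\pi^*c_1(E)\neq 0\in H^2(E)$. That this is the \emph{lowest}-area non-constant disc, among those contributing to $\mathcal{OC}^{0|0}$, follows because all other Maslov-$2$ discs must meet some $D_i=\pi^{-1}(D_i^B)$ with $i\le r$ and $\lambda_i^E=\lambda_i^B<0$ (after the monotone normalization $\lambda_i^B=-1$), hence carry strictly larger area; so the $T\cdot\mathrm{PD}([B])$ term is not cancelled and $\mathcal{OC}([\mathrm{pt}])\neq 0\in QH^*(E)$.

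\emph{Case (3), general admissible $M$:} here again $\mathrm{PD}(\mathrm{pt})=0$, and one must identify the leading term intrinsically. The key is the dual pair $(C,C^\vee)$: by Lemma \ref{Lemma L does not intersect c1} one can choose $C=\mathrm{PD}(c_1(TM))$ to be a compact cycle disjoint from $L$, and fixing a basis of cycles picks out an lf-cycle $C^\vee$ intersection-dual to $C$. The Kontsevich--Seidel--Auroux identity $\mathcal{CO}(c_1(TM))=m_0(L)\,[L]$ says precisely that the count of Maslov-$2$ discs bounding $L$ through a generic point and meeting (the dual of) $c_1(TM)$ once equals $m_0(L)$; I would run exactly this disc count but now with the \emph{interior} marked point mapped out to the cycle $C$ instead, which by the intersection-duality bookkeeping records the coefficient of $\mathrm{PD}(C^\vee)$ in $\mathcal{OC}^{0|0}([\mathrm{pt}])$ — giving $m_0(L)\cdot\mathrm{PD}(C^\vee)$ plus terms dual to the other basis cycles, which by the basis choice are linearly independent of $\mathrm{PD}(C^\vee)$. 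Hence if $\lambda=m_0(L)\neq 0$ the $\mathrm{PD}(C^\vee)$-component is nonzero and so is $\mathcal{OC}([\mathrm{pt}])$.

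The main obstacle is Case (3): making rigorous the claim that the disc count feeding into the coefficient of $\mathrm{PD}(C^\vee)$ in $\mathcal{OC}^{0|0}$ is \emph{literally the same count} as the one computing $\mathcal{CO}(c_1(TM))=m_0(L)[L]$. This requires a careful comparison of the two moduli problems — the $\mathcal{OC}$-side with one interior marked point constrained to $C$ versus the $\mathcal{CO}$-side with the interior insertion $\mathrm{PD}(C^\vee)$ — together with the observation that $C$ being a genuine compact cycle (not merely locally finite) is exactly what allows the intersection pairing to be finite and the duality argument to go through; this is the subtlety flagged in Remark \ref{Remark Intro}. Once that identification is in place, monotonicity forces all lower-order contributions (index $0$, i.e. constant, discs) to vanish since $L$ is disjoint from $C$, and the index-$2$ term survives with coefficient $m_0(L)$ as claimed. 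I would also remark that the twisted versions of all three statements are obtained verbatim, inserting the local-system weights $s^{\omega_F(\cdot)}$, since these are invariant on the relevant disc classes and do not disturb the leading term.
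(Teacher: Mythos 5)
Your Case (1) matches the paper: the constant disc gives $\mathrm{PD}(\mathrm{pt})=[\mathrm{vol}_B]\neq 0$. Your Case (3) is also essentially the paper's argument: represent $c_1(TM)$ by a compact cycle $C$ disjoint from $L$ (Lemma \ref{Lemma L does not intersect c1}), extend to a basis of lf-cycles, and observe that the count picking out the $\mathrm{PD}(C^{\vee})$-coefficient of $\mathcal{OC}^{0|0}([\mathrm{pt}])$ is the same as the count computing $\mathcal{CO}(c_1(TM))=m_0(L)[L]$; the linear independence of the $\mathrm{PD}(C_j^{\vee})$ then prevents cancellation. The paper treats the identification of the two disc counts as definitional, so you are probably over-worried there, but the structure of the argument is correct.

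However, your Case (2) has a genuine gap in the justification. You argue that the fibre Maslov-$2$ disc has \emph{strictly smallest area} because $\lambda^E_f=0$ while $\lambda^E_i<0$ for $i\le r$, so the $T\cdot\mathrm{PD}([B])$ term cannot be cancelled. This is false. For a monotone Lagrangian torus fibre $L$ of the moment map, all Maslov-$2$ discs bounding $L$ have the \emph{same} symplectic area: by Cho--Oh (used in Lemma \ref{Lemma barycentre trick}) the area of the standard disc through $D_i$ is $\langle y,e_i\rangle-\lambda_i$, and at the barycentre $y$ of the monotone polytope this equals $1/\lambda_E$ for \emph{every} $i$ (this is exactly the barycentre equation \eqref{EqnBarycentreEquation}). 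The $-\lambda_i$ you are quoting is only part of the exponent; you have dropped the $\langle y,e_i\rangle$ term which restores equality. Consequently all Maslov-$2$ contributions enter at the same $T$-order, and your area argument does not preclude cancellation among them. The paper instead handles Case (2) by a dimension/duality argument: pair $\mathcal{OC}^{0|0}([\mathrm{pt}])$ against the zero section $[B]$ (the only compact cycle of the correct complementary dimension, up to rescaling); the fibre disc meets $[B]$ transversely once, while the other Maslov-$2$ discs (hitting $D_i$, $i\le r$) are generically disjoint from $B$, so the pairing is $\pm(\text{fibre holonomy})\neq 0$. Alternatively, as the paper notes, Case (2) is a special case of Case (3) after the identification $C=\tfrac{\lambda_B-k}{-k}[B]$, $\mathrm{PD}(C^{\vee})=\tfrac{-k}{\lambda_B-k}\pi^*\omega_B^{\mathrm{top}}$.
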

\begin{proof}
Case (1) and (2) follow directly upon inspection (in case (2) the constant disc does not contribute because $\mathrm{PD}[\mathrm{pt}]=0\in H^*(E)$ since $H^{\dim_{\R}E}(E)=0$, and the next order term is determined by Maslov $2$ discs, but for dimension reasons $[B]$ is the only test-cycle we can use up to rescaling, and only the fibre Maslov 2 disc hits $B$). One can also prove (2) via (3): the base $[B]$ as an lf-cycle represents $\pi^*c_1(E)=-k[\pi^*\omega_B]$, and $c_1(TE)=\pi^*c_1(TB)+\pi^*c_1(E) = (\lambda_B-k)\pi^*\omega_B$. So we can take $C=\frac{\lambda_B-k}{-k}[B]$. Then $C^{\vee}=-(k/\lambda_E)\textrm{fibre}\in H_*^{lf}(E)$ (so $\mathrm{PD}(C^{\vee})=-(k/\lambda_E)\pi^*\mathrm{vol}_B$, where recall $\lambda_E=\lambda_B-k$). Finally $\mathrm{PD}(C^{\vee}) = \frac{-k}{\lambda_B-k}\pi^*\omega_B^{\mathrm{top}}$ (pull-back is Poincar\'e dual to taking preimages, and $[\mathrm{pt}]$ is PD to $\omega_B^{\mathrm{top}}$ in $H^*(B)$).

For case (3), recall (e.g. see \cite{RitterSmith}) that $\lambda=m_0(L)$ is the count of Maslov 2 discs which bound $L$ and whose boundary passes through the input point in $L$ and that the $c_1(TM)$ action on $HF^*(L,L)$ gives $c_1(TM)*[L]=m_0(L)\, [L]=\lambda\, [L]$ (because those Maslov 2 discs automatically hit the anti-canonical divisor $C=\mathrm{PD}(c_1(TM))$ once in the interior of the disc).
Now $c_1(TM)$ is a multiple of $[\omega_M]$, and $\omega_M$ is exact at infinity ($M$ is conical at infinity). Therefore $c_1(TM)$ can be represented by a closed de Rham form which is compactly supported, which in turn is Poincar\'e dual to a cycle $C\in H_2(M)$. It follows there exists a \emph{compact} lf-cycle $C$ representing $c_1(TM)$ (and since on cohomology the disc count above does not depend on the choice of representative, we may use $C$). Extend $C$ to a basis $C,C_2,C_3,\ldots$ of lf-cycles for $H_*^{lf}(M)$, and take the dual basis $C^{\vee},C_2^{\vee},C_3^{\vee},\ldots$ in $H_*(M)$ with respect to the intersection product (in particular, in this sense the cycle $C^{\vee}$ is then dual to the lf-cycle $C$). By definition, the disc count which yields $c_1(TM)*[L]=m_0(L)[L]$ is the same as the disc count which determines the coefficient of $\mathrm{PD}(C^{\vee})$ in $\mathcal{OC}([\mathrm{pt}])$.
The other terms in the expansion of $\mathcal{OC}([\mathrm{pt}])$ involve $\mathrm{PD}(C_j^{\vee})$ for $j\geq 2$.
These other terms cannot cancel the $\mathrm{PD}(C^{\vee})$ term in the vector space $H^*(M;\Lambda)$  since $C^{\vee},C_2^{\vee},C_3^{\vee},\ldots$ are linearly independent in $H_*(M)$. Thus computation (3) follows (where $m_0(L)$ already contains the relevant power of $t$, namely $t^{1/\lambda_M}$).
\end{proof}
%%%
%%%%%%%%%%%%%%%%%%%%%%%%%%%%%%%%%%%%%%%%%%%%%%%%%%%%%%%%
\subsection{Generation for $1$-dimensional eigensummands}
\label{Subsection Generation for 1-dimensional eigensummands}
%%%%%%%%%%%%%%%%%%%%%%%%%%%%%%%%%%%%%%%%%%%%%%%%%%%%%%%%%%%
 
\begin{theorem}[{Ostrover-Tyomkin \cite[Lemma 3.5]{Ostrover-Tyomkin}}]\label{Theorem Ostrover-Tyomkin field summand}
 The Jacobian ring of a closed Fano toric variety has a field summand for each non-degenerate critical point of the superpotential.
\end{theorem}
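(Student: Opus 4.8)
The plan is to show that a non-degenerate critical point $p$ of the superpotential $W$ of a closed Fano toric variety $B$ yields a field summand (in fact a copy of the ground field $\Lambda$, or $\mathfrak{R}$ in the twisted setting) inside $\mathrm{Jac}(W)$. The underlying algebraic fact is standard commutative algebra: $\mathrm{Jac}(W)$ is a finite-dimensional $\Lambda$-algebra, hence Artinian, so it decomposes uniquely as a product of local Artinian rings, one for each maximal ideal, equivalently one for each critical point of $W$ lying in the torus $(\Lambda^\times)^n$ (by the Nullstellensatz over the algebraically closed field $\Lambda$). The summand attached to a critical point $p$ is local with residue field $\Lambda$, and I want to show that when $p$ is non-degenerate this local ring is exactly $\Lambda$, i.e. its maximal ideal is zero.

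First I would set up the decomposition: write $\mathrm{Jac}(W) = \Lambda[z_1^{\pm 1},\dots,z_n^{\pm 1}]/(\partial_{z_1}W,\dots,\partial_{z_n}W)$ and note that, as a coordinate ring of the scheme-theoretic critical locus $\mathrm{Crit}(W)\subset (\Lambda^\times)^n$, it is the product over the (finitely many) points of $\mathrm{Crit}(W)$ of the local rings $\mathcal{O}_{\mathrm{Crit}(W),p}$. This uses that $\Lambda$ is algebraically closed (Section \ref{Subsection New Novikov ring R}) and that $\mathrm{Jac}(W)$ is finite-dimensional, which for a closed Fano toric variety follows from $\mathrm{Jac}(W)\cong QH^*(B)$ (Corollary \ref{Corollary QH of noncompact toric}) and $\dim_\Lambda QH^*(B) = \dim_\Lambda H^*(B;\Lambda)<\infty$. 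Second, I would localize at $p$: the local ring $\mathcal{O}_{\mathrm{Crit}(W),p}$ is the quotient of the regular local ring $R_p$ (localization of the Laurent polynomial ring at $p$, of dimension $n$) by the ideal generated by the $n$ partials $\partial_{z_i}W$. Third, I would invoke the non-degeneracy hypothesis: $p$ being a non-degenerate critical point means precisely that the Hessian $\left(\partial_{z_i}\partial_{z_j}W(p)\right)$ is invertible, i.e. the $n$ functions $\partial_{z_i}W$ have linearly independent differentials at $p$, hence form a regular system of parameters for $R_p$. Therefore $\mathcal{O}_{\mathrm{Crit}(W),p} = R_p/(\partial_1 W,\dots,\partial_n W) = R_p/\mathfrak{m}_p = \Lambda$, a field. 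This is the desired field summand, and it is cut out by the idempotent $e_p\in\mathrm{Jac}(W)$ supported at $p$.

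The main obstacle — really the only non-routine point — is making sure the finite-dimensionality of $\mathrm{Jac}(W)$ is legitimately available in the generality being claimed, and that the Jacobian ideal is genuinely the ideal of the \emph{scheme-theoretic} critical locus so that the local-global decomposition and the ``regular sequence $\Rightarrow$ reduced point'' step are valid; for closed Fano $B$ this is immediate from $\mathrm{Jac}(W)\cong QH^*(B)$, but one should record it. I expect to carry out these steps in the order: (i) cite $\mathrm{Jac}(W)$ finite-dimensional and $\Lambda$ algebraically closed; (ii) decompose into local Artinian factors indexed by $\mathrm{Crit}(W)$; (iii) use the non-degenerate Hessian to identify the factor at $p$ with $\Lambda$. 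Since the statement is quoted verbatim from Ostrover–Tyomkin \cite[Lemma 3.5]{Ostrover-Tyomkin}, I would in fact keep the proof to this short sketch and refer the reader there for the details, emphasizing only that the hypotheses (Fano, closed, $\Lambda$ algebraically closed) guarantee the Artinian decomposition and that non-degeneracy is exactly the condition that the Jacobian ideal is the full maximal ideal at $p$.
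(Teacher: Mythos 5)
Your proposal is correct, and it matches what the paper itself does: the paper does not reprove this statement but cites Ostrover--Tyomkin directly, and in Remark~\ref{Remark Eigensummands for nondegenerate critical points} and the proof of Theorem~\ref{Theorem toric generation for semisimple critical values} it recalls exactly the decomposition $\mathrm{Jac}(W)\cong\prod_p\mathcal{O}_{Z_W,p}$ into local Artinian factors indexed by the (scheme-theoretic) critical locus on which your sketch rests. Your step (iii) --- non-degenerate Hessian $\Rightarrow$ the partials form a regular system of parameters, so by Nakayama the local factor at $p$ is the residue field $\Lambda$ --- is precisely the content of the cited lemma, and steps (i), (ii) (finite-dimensionality via $\mathrm{Jac}(W)\cong QH^*(B)$ and the Artinian product decomposition over the algebraically closed Novikov field) are the correct supporting facts.
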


\begin{theorem}\label{Theorem toric generation for isolated critical value}
 Let $X$ be a closed Fano toric manifold $(B,\omega_F)$, or a Fano negative line bundle $E\to B$, or an admissible toric manifold $M$  (Definition \ref{Definition admissible toric manifolds}). Let $W$ be the associated (possibly $F$-twisted) superpotential. If $p$ is a non-degenerate critical point of $W$, and it is the only critical point with critical value $\lambda =W(p)$, then the toric generation criterion holds for $\lambda\neq 0$ (for $B$ also $\lambda=0$ works).
\end{theorem}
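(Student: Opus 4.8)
The plan is to combine the calculation of $\mathcal{OC}^{0|0}([\mathrm{pt}])$ from Lemma~\ref{Lemma calculation of OC of point} with the existence of a field summand from Theorem~\ref{Theorem Ostrover-Tyomkin field summand} (together with Theorem~\ref{Theorem admissible toric manifolds}, which identifies $SH^*(M)\cong\mathrm{Jac}(W_M)$ in the non-compact admissible case, and the twisted analogue in Corollary~\ref{Corollary Presentation of QH and SH in twisted case} for the $F$-twisted case). First I would fix the non-degenerate critical point $p$ with $\lambda=W(p)$ and let $L=L_p$ be the associated toric Lagrangian (with holonomy) provided by Cho--Oh \cite{Cho-Oh}, so that $m_0(L)=\lambda$ and $[\mathrm{pt}]\in HF^*(L,L)$ is a cycle. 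The object $L$ lies in $\mathcal{F}^{\mathrm{toric}}_{\lambda}(X)$, hence $\mathcal{OC}^{0|0}([\mathrm{pt}])$ is an element of the image of $\mathcal{OC}$ restricted to $\mathrm{HH}_*(\mathcal{F}^{\mathrm{toric}}_{\lambda}(X))$, and by Theorem~\ref{Theorem OC respects eigensummands} it lands in the generalized $\lambda$-eigensummand $QH^*(X)_{\lambda}$.

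Next I would argue that $QH^*(X)_{\lambda}$ is $1$-dimensional, and that $\mathcal{OC}^{0|0}([\mathrm{pt}])$ is a non-zero element of it, hence spans it and in particular is invertible inside that summand. The one-dimensionality comes from the hypothesis that $p$ is the \emph{unique} critical point with critical value $\lambda$, combined with non-degeneracy: via the isomorphism $QH^*(X)\cong\mathrm{Jac}(W)$ (for closed $B$ and, after quotienting to $SH^*$, for $E$ and admissible $M$) the generalized $\lambda$-eigenspace of multiplication by $c_1(TX)=W$ corresponds to the local ring of $\mathrm{Jac}(W)$ at the point $p$, which for a non-degenerate critical point is a field of dimension $1$ over the Novikov ring by Theorem~\ref{Theorem Ostrover-Tyomkin field summand}. (For $E$ and $M$ one must first pass to $SH^*$ via the localization $c^*$ of Theorem~\ref{Theorem admissible toric manifolds}; since $\lambda\neq0$, the $\lambda$-eigensummand of $QH^*$ maps isomorphically to that of $SH^*$, so the dimension count survives, and for $B$ the $\lambda=0$ summand is handled the same way because for closed toric $B$ there is no generalized-$0$-nonsense once $W$ is non-degenerate at the relevant point.) Non-vanishing of $\mathcal{OC}^{0|0}([\mathrm{pt}])$ is exactly the content of Lemma~\ref{Lemma calculation of OC of point}: in case (1) the leading term is $\mathrm{PD}(\mathrm{pt})\neq0$; in case (2) it is $(\text{fibre holonomy})\cdot T\cdot\mathrm{PD}([B])\neq 0$; in case (3) it is $m_0(L)\,\mathrm{PD}(C^{\vee})$ plus linearly independent terms, which is non-zero precisely because $\lambda=m_0(L)\neq0$.

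Having a non-zero element of the $1$-dimensional summand $QH^*(X)_{\lambda}$, it is automatically invertible there (a non-zero element of a $1$-dimensional algebra over a field is a unit), so $\mathcal{OC}$ hits an invertible element of the generalized $\lambda$-eigensummand. By Theorem~\ref{Theorem OC respects eigensummands} this forces $\mathcal{CO}\circ\mathcal{OC}$ to hit the unit $[K]\in HF^*(K,K)$ for every $K\in\mathrm{Ob}(\mathcal{F}_{\lambda}(X))$, which is the definition (Definition~\ref{Definition Generation Criterion holds}) of the toric generation criterion holding for $\lambda$; in the wrapped case one reduces to this $\mathcal{F}$-statement via the acceleration functor exactly as in Definition~\ref{Definition Generation Criterion holds}. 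The main obstacle I anticipate is the bookkeeping needed to rule out cancellation in case (3): one must be careful that the ``linearly independent terms'' in $\mathcal{OC}^{0|0}([\mathrm{pt}])$ genuinely cannot kill the $\mathrm{PD}(C^{\vee})$ term, which is handled by Lemma~\ref{Lemma L does not intersect c1} (so the constant disc contributes nothing, leaving a clean leading term) and by the choice of a basis $C,C_2,\dots$ of lf-cycles with intersection-dual basis in homology, so that $\mathrm{PD}(C^{\vee}),\mathrm{PD}(C_2^{\vee}),\dots$ are linearly independent in $H^*(M;\Lambda)$; the remaining subtlety is just checking the dimension count $\dim QH^*(X)_\lambda=1$ survives the localization $QH^*\to SH^*$ when $\lambda\neq0$, which is immediate since localizing at $c_1(TX)$ only affects the generalized $0$-eigenspace.
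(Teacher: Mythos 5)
Your proof is correct and follows essentially the same route as the paper: non-vanishing of $\mathcal{OC}^{0|0}([\mathrm{pt}])$ from Lemma \ref{Lemma calculation of OC of point}, the field-summand structure from Theorem \ref{Theorem Ostrover-Tyomkin field summand} together with $SH^*\cong\mathrm{Jac}(W)$, and eigensummand-preservation from Theorem \ref{Theorem OC respects eigensummands}. One small remark: you argue $QH^*(X)_{\lambda}$ (or $SH^*(X)_{\lambda}$) is $1$-dimensional over the Novikov ring, but Theorem \ref{Theorem Ostrover-Tyomkin field summand} only asserts that the summand is a \emph{field}, which is both all that holds in general (over a non-algebraically-closed Novikov ring the residue field at $p$ could be a proper extension) and all that is needed — a non-zero element of a field summand is already invertible, which is how the paper phrases the final step.
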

\begin{proof}
Essentially this now follows by Lemma \ref{Lemma calculation of OC of point}.
Since $p$ is non-degenerate, by Theorem \ref{Theorem Ostrover-Tyomkin field summand} the Jacobian ring has a field summand (their argument also holds in the non-compact toric setting). Since it is the only critical point with that eigenvalue, for $B$ we deduce that $QH^*(X)_{\lambda}\cong \mathrm{Jac}(W_X)_{\lambda}$ is a field summand, and for $X=E$ and $X=M$ we deduce $SH^*(X)_{\lambda}\cong \mathrm{Jac}(W_X)_{\lambda}$ is a field summand. 
By Theorem \ref{Theorem OC respects eigensummands}, $\mathcal{OC}(\mathrm{pt})\in QH^*(X)_{\lambda}$, so  $\mathcal{OC}(\mathrm{pt})$ is a non-zero element in a field, and therefore it is invertible. The claim follows.
\end{proof}

Even when the superpotential $W$ is a Morse function (critical points are non-degenerate) the above argument may not apply. Indeed if there are several critical points with the same critical value $\lambda=W(x)$, then $\mathcal{OC}(\mathrm{pt})$ is non-zero in $QH^*(X)_{\lambda}$, which is a sum of fields, but it could be non-invertible. We will study this problem by first considering the twisted theory, where critical values generically separate, and then reconsider the untwisted case in the limit.

%%%
%%%%%%%%%%%%%%%%%%%%%%%%%%%%%%%%%%%%%%%%%%%%%%%%%%%%%%%%
\subsection{Generation for generic toric forms}
\label{Subsection Generation for generic toric forms}
%%%%%%%%%%%%%%%%%%%%%%%%%%%%%%%%%%%%%%%%%%%%%%%%%%%%%%%%%%%
The following is due to Ostrover-Tyomkin \cite[Theorem 4.1]{Ostrover-Tyomkin}, based on arguments in Iritani {\cite[Cor.5.12]{Iritani}} and {Fukaya-Oh-Ohta-Ono \cite[Prop.8.8]{FOOOtoric}. Recall that an algebra is \emph{semisimple} if it is isomorphic to a direct sum of fields.
\begin{theorem}[\cite{Iritani,FOOOtoric,Ostrover-Tyomkin}]
\label{Theorem Ostrover-Tyomkin generic semisimplicity}
For a closed Fano toric variety $B$, the superpotential $W_F$ is Morse for a generic choice of toric symplectic form $\omega_F$ $($meaning, after a generic small perturbation of the values $F(e_i)\in \R)$. In particular, $QH^*(B)$ is then semisimple.
\end{theorem}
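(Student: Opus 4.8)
The plan is to combine the known genericity statement for Morseness of $W_F$ with the identification $QH^*(B,\omega_F)\cong \mathrm{Jac}(W_F)$ (Section \ref{Subsection Batyrev's argument: from the presentation of QH to JacW}, Corollary \ref{Corollary QH of noncompact toric}, applied in the twisted/non-monotone setting via Corollary \ref{Corollary Presentation of QH and SH in twisted case}). The Morseness half is not ours to prove; it is the content of Iritani \cite[Cor.5.12]{Iritani}, Fukaya-Oh-Ohta-Ono \cite[Prop.8.8]{FOOOtoric} and Ostrover-Tyomkin \cite[Theorem 4.1]{Ostrover-Tyomkin}, so the only thing to actually carry out is the deduction that a Morse superpotential forces $QH^*(B)$ to be semisimple. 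The argument is: first write $W_F: (\mathfrak{R}\setminus\{0\})^n \to \mathfrak{R}$ as a Laurent polynomial in $z_1,\ldots,z_n$ (equivalently, after homogenizing with $T$, a polynomial with coefficients in $\Lambda_s$), and recall that $\mathrm{Jac}(W_F)= \mathfrak{R}[z_1^{\pm 1},\ldots,z_n^{\pm 1}]/(\partial_{z_1}W_F,\ldots,\partial_{z_n}W_F)$ is a finite-dimensional $\mathfrak{R}$-algebra (since $QH^*(B)$ has dimension $\dim_{\mathfrak{R}}H^*(B;\mathfrak{R})<\infty$).

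Next, recall the standard fact from commutative algebra: for a finite-dimensional algebra $A=\mathfrak{R}[z_1^{\pm1},\ldots,z_n^{\pm1}]/(\partial_i W)$ arising as a Jacobian ring of an isolated-singularity Laurent polynomial, there is a canonical decomposition $A\cong \prod_{p\in \mathrm{Crit}(W_F)} A_p$ into local Artinian rings indexed by the critical points (the localizations of $A$ at its maximal ideals, which correspond bijectively to $\mathrm{Crit}(W_F)$ over an algebraically closed field — here we use that $\Lambda_s$, hence $\mathfrak{R}$ after adjoining $T=t^{1/\lambda_B}$, is algebraically closed by \cite[Lemma A.1]{FOOOtoric}, as recalled in the Technical Remark of Section \ref{Subsection New Novikov ring R}). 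The local factor $A_p$ at a critical point $p$ is a field precisely when $p$ is non-degenerate, because non-degeneracy of the Hessian of $W_F$ at $p$ means the Jacobian ideal $(\partial_i W_F)$ is locally the full maximal ideal, so $A_p\cong \mathfrak{R}$ (or rather the residue field, which is $\mathfrak{R}$ since it is algebraically closed). This is exactly the reasoning underlying Theorem \ref{Theorem Ostrover-Tyomkin field summand} of Ostrover-Tyomkin. Hence if $W_F$ is Morse, every $A_p$ is a field and $A=\mathrm{Jac}(W_F)=\bigoplus_p \mathfrak{R}$ is semisimple; transporting this across the ring isomorphism $QH^*(B,\omega_F)\cong \mathrm{Jac}(W_F)$ gives the claim.

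I would present this compactly: cite \cite{Iritani,FOOOtoric,Ostrover-Tyomkin} for the Morseness and \cite[Lemma 3.5]{Ostrover-Tyomkin} (our Theorem \ref{Theorem Ostrover-Tyomkin field summand}) for ``non-degenerate critical point $\Rightarrow$ field summand,'' then observe that Morse means all critical points are non-degenerate, and that the number of field summands (counted with the multiplicities given by the local dimensions, which are all $1$ in the Morse case) matches $\dim_{\mathfrak{R}}\mathrm{Jac}(W_F)=\dim_{\mathfrak{R}}QH^*(B)$, so there is no ``leftover'' nilpotent part and the algebra is a direct sum of fields. The main obstacle — and the reason this really is a short argument rather than a long one — is simply that all the genuinely hard input (the genericity of Morseness, which requires understanding how the critical locus of $W_F$ varies with $F$, and the field-summand statement, which is the local structure of Jacobian rings) is imported wholesale from \cite{Iritani,FOOOtoric,Ostrover-Tyomkin}; the only subtlety on our side is making sure the coefficient ring $\mathfrak{R}$ (resp.\ $\Lambda_s$) over which we are working is algebraically closed so that the critical points biject with maximal ideals, which is precisely why Section \ref{Subsection New Novikov ring R} is careful to record that $\Lambda_s$ is algebraically closed when $\K$ is.
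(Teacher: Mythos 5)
Your proof is correct, and since the paper states this theorem purely by citation (to Ostrover-Tyomkin \cite[Theorem 4.1]{Ostrover-Tyomkin}, Iritani \cite[Cor.5.12]{Iritani}, and FOOO \cite[Prop.8.8]{FOOOtoric}) without supplying its own argument, your reconstruction is what the paper implicitly relies on. The chain — import Morseness genericity, identify $QH^*(B,\omega_F)\cong\mathrm{Jac}(W_F)$, decompose the Jacobian ring into local Artinian factors $A_p$ indexed by critical points, and use ``non-degenerate $\Rightarrow A_p$ is a field'' (the content of Theorem~\ref{Theorem Ostrover-Tyomkin field summand} / \cite[Lemma 3.5]{Ostrover-Tyomkin}) — is exactly the intended deduction, and you correctly flag the need for the coefficient field $\Lambda_s$ (hence $\mathfrak{R}$) to be algebraically closed so that critical points biject with maximal ideals. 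One small remark on presentation: your phrase about ``no leftover nilpotent part'' is slightly misleading, since for a finite-dimensional Jacobian ring the decomposition $\mathrm{Jac}(W_F)\cong\prod_p A_p$ is automatically exhaustive — there is never a factor not associated to a critical point; the nilpotents, when they exist, live \emph{inside} the $A_p$ for degenerate $p$. Morseness kills them by forcing $\dim A_p = 1$ for each $p$; no dimension-count sanity check is logically required, though it is a fine consistency observation.
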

\begin{remark}\label{Remark Eigensummands for nondegenerate critical points}
The Jordan normal form for the endomorphism of $\mathrm{Jac}(W_F)$ given by multiplication by $W_F$ arises from the primary decomposition of the $\mathfrak{R}[x]$-module $\mathrm{Jac}(W_F)$ (where $x$ acts by $W_F$). The summands have the form 
$$\mathfrak{R}[x]/(x-\lambda)^d$$
where $\lambda$ is an eigenvalue of multiplication by $W_F$, and there can be several summands with the same $\lambda$. Theorem \ref{Theorem Ostrover-Tyomkin field summand} \cite[Theorem 4.1 and Corollary 2.3]{Ostrover-Tyomkin} consists in showing that each critical point $p$ of $W_F$ with critical value $W_F(p)=\lambda$ gives rise to such a summand, and that if $p$ is non-degenerate then $d=1$ (so the summand is a field: a copy of $\mathfrak{R}$). One also needs to show that this decomposition respects the algebra structure, not just the module structure.

Each critical point $p$ of $W_F$ determines an idempotent $1_p\in \mathrm{Jac}(W_F)$ (representing $1\in \mathfrak{R}[x]/(x-\lambda)^d$ in the relevant summand) such that multiplication by $1_p$ corresponds to projection to the summand. These elements $1_p$, as $p\in \mathrm{Crit}(W_F)$ vary, determine a decomposition of $1=\sum 1_p \in \mathrm{Jac}(W_F)$ satisfying the ``orthonormality relations'' $1_p\cdot 1_q = \delta_{p,q}\,1_p$, and the primary decomposition of $\mathrm{Jac}(W_F)$ corresponds to mapping $f\in \
\mathrm{Jac}(W_F)$ to $\sum 1_p\cdot f$.
\end{remark}

This theorem is still not sufficient to deduce that toric generation holds generically, due to the issue of repeated critical values mentioned above. So we need to prove Lemma \ref{Lemma separate critical values}.

\begin{proof}[{\bf Proof of {Lemma \ref{Lemma separate critical values}}}]
 Consider the superpotential $W(a) = \sum s^{a_i} T z^{e_i}$ where $F(e_i)=-a_i$.
By Theorem \ref{Theorem Ostrover-Tyomkin generic semisimplicity} for generic $a$, $W(a)$ is Morse. Now let the values $a_i$ vary locally near a point $A$ for which $W(A)$ is Morse, so $W(a)$ is also Morse. Consider the map
$$
F: \{ (a,z)\in \R^r\times \mathfrak{R}^n: dW(a)|_z = 0 \} \to \mathfrak{R}, \quad F(a,z)=W(a)(z)
$$
which assigns the critical value to the critical points. Near a given critical point $(A,Z)$ of $W(A)$, the critical points of $W(a)$ are smoothly parametrized by $a$: $z=z(a)$ (this follows by an implicit function theorem argument, using that the superpotential is an analytic function and that the critical points are all simple since $W(A)$ is Morse). So the domain of $F$ is parametrized as $(a,z(a))$ near $(A,Z)$ for some smooth function $a\mapsto z(a)$. To see how $F$ varies locally as we vary $a$, consider the partial derivatives
$$
\partial_{a_i} (F(a,z(a))) = (\partial_{a_i} F) + \sum_j (\partial_{z_j} F) \cdot \partial_{a_i} (z_j(a)).
$$
But $\partial_{z_j}F = \partial_{z_j}W(a)=0$ at the critical point $z(a)$, therefore 
$\partial_{a_i} (F(a,z(a))) = s^{a_i} T z^{e_i}$ recovers the $i$-th term in $W(a)$ (here we stipulated that $\partial_{a_i}s^{a_i}=s^{a_i}$ -- one could run the argument first by fixing a real value $s=\exp(1)$, since if critical values are distinct when putting $s=\exp(1)$ then they certainly are also for a formal variable $s$).

As explained in the proof of Lemma \ref{Lemma W(xi z) is xi W(z)}, by making an $SL(n,\Z)$ transformation of the fan one can assume that a subset of the edges $e_1,\ldots,e_n$ become the standard basis of $\R^n$, so $W(a)$ would have the form 
$s^{a_1} T z_1 + \cdots + s^{a_n} T z_n + s^{a_{n+1}} T d_{n+1}(z) + \cdots + s^{a_{r}} T d_r(z)$ where $d_j(z)$ are monomials in $z_i^{\pm 1}$. In particular, $\partial_{a_i}(F(a,z(a)))=s^{a_i} T z_i$ for $i=1,\ldots,n$.

Hence if all partial derivatives $\partial_{a_i}F$ at two different critical points points $(a,z)$ and $(a,\widetilde{z})$ are equal, then all $z_i$-coordinates must be equal, and so $z=\widetilde{z}$. Thus, if any of the critical values were repeated, then a generic small variation of $a$ will separate those critical values, because at different critical points $z,\widetilde{z}$ at least one of the partial derivatives $\partial_{a_i}F$ is different (it is generic because fixing one of the $a_i$ coordinates is a codimension $1$ condition).
%
% it's generic, since even if all but one z-coord is same
% the requirement that the corresponding ai coordinate is fixed is codimension 1.
%
\end{proof}

\begin{remark}\label{Remark rank SH can jump}
By Lemma \ref{Lemma separate critical values}, Theorem \ref{Theorem Eigenvalues of c1 from B to E}, Corollary \ref{Corollary Presentation of QH and SH in twisted case}, if $c_1(TB)\in QH^*(B,\omega_B)$ has a zero eigenvalue, then $\mathrm{rank}\,SH^*(E,\omega_E)$ will jump when we generically deform $\omega_E$ to $\omega_F^E$, since $QH^*(B,\omega_F^B)$ will have no $0$-eigenspace.
This applies to $\mathcal{O}_{\P^1\times \P^1}(-1,-1)$ (Section \ref{Subsection Compuation of QH and SH of O(-1,-1)}).
\end{remark}

\begin{theorem}\label{Theorem toric generation for generic forms}\label{Theorem Toric generation for generic twist}
For a closed Fano toric manifold $B$ with a generic choice of toric symplectic form $\omega_F^B$ close to $\omega_B$, the toric generation criterion holds for $\mathcal{F}(B,\omega_F^B)$ and for the twisted category $\mathcal{F}(B,\omega_B; \underline{\mathfrak{R}}_{\omega_F^B})$. For a Fano negative line bundle $E$, constructing $\omega_F^E$ from a generic $\omega_F^B$ as in Corollary \ref{Corollary Presentation of QH and SH in twisted case}, the toric generation criterion holds for $\mathcal{W}(E,\omega_F^E)$ and for the twisted category $\mathcal{W}(E,\omega_E; \underline{\mathfrak{R}}_{\pi^*\omega_F^B})$ (and it holds for  $\mathcal{F}(E,\omega_F^E)$ and for $\mathcal{F}(E,\omega_E; \underline{\mathfrak{R}}_{\pi^*\omega_F^B})$ for all $\lambda\neq 0$).
For an admissible toric manifold $M$ (Definition \ref{Definition admissible toric manifolds}), with a generic choice of toric symplectic form $\omega_F$ close to the monotone $\omega_M$, for all $\lambda\neq 0$ the toric generation criterion holds for $\mathcal{W}(M,\omega_F)$, $\mathcal{F}(M,\omega_F)$, $\mathcal{W}(M,\omega_M; \underline{\mathfrak{R}}_{\omega_F})$ and $\mathcal{F}(M,\omega_M; \underline{\mathfrak{R}}_{\omega_F})$.
In these generic settings, $QH^*(B,\omega_F^B)\cong QH^*(B,\omega_B;\underline{\mathfrak{R}}_{\omega_F^B})$, $SH^*(E,\omega_F^E)\cong SH^*(E,\omega_E;\underline{\mathfrak{R}}_{\omega_F^E}),$ $SH^*(M,\omega_F)\cong SH^*(M,\omega_M;\underline{\mathfrak{R}}_{\omega_F})$ are semisimple, with each field summand corresponding to a different eigensummand.
\end{theorem}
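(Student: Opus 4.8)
The plan is to assemble the results already established. First I would fix the generic perturbation: by Lemma~\ref{Lemma separate critical values}, after a generic small perturbation of the values $F(e_i)$ the twisted superpotential $W_F$ is Morse with pairwise distinct, non-zero critical values. In the line-bundle case one starts from a generic $\omega_F^B$ on $B$ and builds $\omega_F^E$ as in Corollary~\ref{Corollary Presentation of QH and SH in twisted case}; the twisted analogues of Lemma~\ref{Lemma Critical points of W come in families} and Theorems~\ref{Theorem Minimal poly and char poly from B to E}--\ref{Theorem Eigenvalues of c1 from B to E} (asserted there) let us push distinctness and non-vanishing of critical values from $W_F^B$ to $W_F^E$ through the Novikov-parameter change $t_B \mapsto t_E(\sum n_i x_i)^k$. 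For an admissible $M$ one perturbs $\omega_M$ directly and uses the twisted form of Theorem~\ref{Theorem admissible toric manifolds}.

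Next I would identify eigenvalues with critical values. Via $QH^*(B,\omega_F^B) \cong \mathrm{Jac}(W_F^B)$ (Section~\ref{Subsection F-twisted superpotential and Jac WF}), $SH^*(E,\omega_F^E) \cong \mathrm{Jac}(W_F^E)$ (Theorem~\ref{Theorem twisted Jacobian ring is SH}), and $SH^*(M,\omega_F) \cong \mathrm{Jac}(W_F^M)$, all sending $c_1(TX) \mapsto W_F$, the eigenvalues of quantum multiplication by $c_1(TX)$ are precisely the critical values of $W_F$. Since $W_F$ is Morse, Ostrover--Tyomkin's Theorem~\ref{Theorem Ostrover-Tyomkin field summand} (whose idempotent/localisation argument, as noted in the proof of Theorem~\ref{Theorem toric generation for isolated critical value}, only uses that $W_F$ is analytic on the torus with isolated non-degenerate critical points, hence applies in the non-compact case) produces a field summand $\mathfrak{R}$ for each critical point, and distinctness of the critical values forces these summands to be the generalized eigensummands. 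Hence $QH^*(B,\omega_F^B)$, $SH^*(E,\omega_F^E)$ and $SH^*(M,\omega_F)$ are semisimple with one field summand per distinct eigenvalue, while $QH^*(E,\omega_F^E)$ is this semisimple $SH^*$ together with the generalized $0$-eigensummand $\ker(x^{\mathrm{large}})$; this gives the last assertion of the theorem.

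For the generation statements I would apply Theorem~\ref{Theorem toric generation for isolated critical value} one eigenvalue at a time: each eigenvalue $\lambda$ has a unique, non-degenerate critical point $p$ with $W_F(p)=\lambda$, so the toric generation criterion holds for $\lambda$ whenever $\lambda \neq 0$. For $B$ all critical values are non-zero, so the criterion holds for every eigenvalue, i.e. for $\mathcal{F}(B,\omega_F^B)$. For $E$ every eigenvalue of $SH^*(E,\omega_F^E)$ is a critical value of $W_F^E$, hence non-zero, so the criterion holds for all eigensummands of $SH^*(E,\omega_F^E)$, i.e. for $\mathcal{W}(E,\omega_F^E)$; the compact category $\mathcal{F}(E,\omega_F^E)$ additionally carries the $\lambda=0$ generalized eigensummand, which contains no toric Lagrangian, so one only obtains $\mathcal{F}_\lambda(E,\omega_F^E)$ for $\lambda\neq0$. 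The admissible case is identical: $\mathcal{W}_\lambda(M,\omega_F)$ and $\mathcal{F}_\lambda(M,\omega_F)$ for $\lambda\neq0$. Finally, Corollaries~\ref{Corollary QH twisted is same as non-exact QH}, \ref{Corollary Twisted symplectic cohomology for toric forms} and \ref{Corollary twisted toric category is non-monotone toric category} (and Section~\ref{Subsection Fukaya category for non-monotone toric} for the wrapped case) identify the non-monotone objects with the twisted monotone ones, so every statement transfers to $\mathcal{F}(B,\omega_B;\underline{\mathfrak{R}}_{\omega_F^B})$, $\mathcal{W}(E,\omega_E;\underline{\mathfrak{R}}_{\pi^*\omega_F^B})$, $\mathcal{W}(M,\omega_M;\underline{\mathfrak{R}}_{\omega_F})$, and likewise for the (co)homologies.

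The main obstacle is organisational rather than analytic. I expect the delicate points to be: (i) checking that Ostrover--Tyomkin's field-summand argument truly survives in the non-compact toric setting for $\mathrm{Jac}(W_F^E)$ and $\mathrm{Jac}(W_F^M)$, the relevant input being that $W_F$ is a Laurent polynomial (an analytic function on $(\mathfrak{R}^{\times})^n$) with isolated simple critical points; and (ii) keeping precise track of which category carries a $\lambda=0$ piece, so that ``the criterion holds'' for all $\lambda$ is claimed only for $B$ and for $\mathcal{W}(E)$, and ``for $\lambda\neq0$'' elsewhere. A secondary point to verify is that the twisted analogues cited from Corollary~\ref{Corollary Presentation of QH and SH in twisted case} genuinely propagate distinctness and non-vanishing of the critical values from $B$ to $E$.
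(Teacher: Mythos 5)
Your proposal is correct and follows essentially the same route as the paper: the paper's proof cites exactly Lemma~\ref{Lemma separate critical values} together with Theorem~\ref{Theorem toric generation for isolated critical value} (which is where the Ostrover--Tyomkin field-summand point and the $\mathcal{OC}^{0|0}$ non-vanishing are used), invokes Theorem~\ref{Theorem Eigenvalues of c1 from B to E} for the passage from $B$ to $E$, and appeals to Corollaries~\ref{Corollary QH twisted is same as non-exact QH}, \ref{Corollary Twisted symplectic cohomology for toric forms}, \ref{Corollary twisted toric category is non-monotone toric category} for the twisted rephrasing. Your expansion of the semisimplicity statement and your careful exclusion of the $\lambda=0$ piece for $\mathcal{F}(E)$ and the admissible case merely make explicit what the paper leaves to the cited results.
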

\begin{proof}
 This follows by Lemma \ref{Lemma separate critical values} and Theorem \ref{Theorem toric generation for isolated critical value}. For $E$, we use Theorem \ref{Theorem Eigenvalues of c1 from B to E} to obtain the result for the toric form $\omega_F^E$ induced on $E$ by the generic $\omega_F^B$.
The twisted analogues are equivalent rephrasings since the twisted monotone setup and the (untwisted) non-monotone setup are identifiable (Corollary \ref{Corollary QH twisted is same as non-exact QH}, Corollary \ref{Corollary Twisted symplectic cohomology for toric forms}, and Corollary \ref{Corollary twisted toric category is non-monotone toric category}).
\end{proof}
%
%%%
%%%%%%%%%%%%%%%%%%%%%%%%%%%%%%%%%%%%%%%%%%%%%%%%%%%%%%%%
\subsection{Generation for semisimple critical values}
\label{Subsection Generation for semisimple critical values}
%%%%%%%%%%%%%%%%%%%%%%%%%%%%%%%%%%%%%%%%%%%%%%%%%%%%%%%%%%%
\begin{definition}[Semisimple critical value] A critical value $\lambda$ of the superpotential $W$ of a Fano toric manifold is called \emph{semisimple} if all $p\in \mathrm{Crit}(W)$ with $W(p)=\lambda$ are non-degenerate critical points. We sometimes also call $\lambda$ a \emph{semisimple eigenvalue}.
\end{definition}

\begin{lemma}
 For a semisimple critical value $\lambda$, the generalized eigenspace $QH^*(X)_{\lambda}$ is just the eigenspace of $c_1(TX)$ for $\lambda$.
\end{lemma}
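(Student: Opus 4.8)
The statement is really a piece of linear algebra combined with the structural results already established. The plan is to use the identification $QH^*(X)\cong \mathrm{Jac}(W)$ (closed case) or $SH^*(X)\cong \mathrm{Jac}(W)$ with $QH^*(X)_\lambda$ mapping onto the relevant piece (negative line bundle or admissible case), under which multiplication by $c_1(TX)$ corresponds to multiplication by $W$. Thus it suffices to show: if every $p\in\mathrm{Crit}(W)$ with $W(p)=\lambda$ is a non-degenerate critical point, then the generalized $\lambda$-eigenspace of the operator ``multiply by $W$'' on $\mathrm{Jac}(W)$ (or on the relevant quotient ring $SH^*$) coincides with the honest $\lambda$-eigenspace.

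First I would recall from Remark \ref{Remark Eigensummands for nondegenerate critical points} (i.e. Ostrover--Tyomkin \cite[Cor.~2.3, Lemma~3.5]{Ostrover-Tyomkin}) that the Jacobian ring decomposes, as an $\mathfrak{R}[x]$-module with $x$ acting by $W$, into primary summands $\mathfrak{R}[x]/(x-\mu)^{d}$, one for each critical point, and that a \emph{non-degenerate} critical point $p$ with $W(p)=\lambda$ contributes a summand with $d=1$, i.e. a field summand $\mathfrak{R}$ on which $x$ acts by the scalar $\lambda$. Now fix our semisimple $\lambda$. Every primary summand associated to $\lambda$ comes from a critical point $p$ with $W(p)=\lambda$, and by hypothesis all such $p$ are non-degenerate, so every such summand has $d=1$. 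Hence the generalized $\lambda$-eigenspace $QH^*(X)_\lambda$ (resp.\ $SH^*(X)_\lambda$), which is the direct sum of these primary summands, is a direct sum of copies of $\mathfrak{R}$ on each of which $c_1(TX)-\lambda$ acts as zero; therefore $(c_1(TX)-\lambda)$ annihilates $QH^*(X)_\lambda$, which is exactly the assertion that the generalized eigenspace equals the eigenspace.

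For the non-compact cases ($X=E$ a Fano negative line bundle, or $X=M$ admissible) one small extra point is needed: the relevant ring is $SH^*(X)\cong QH^*(X)/(\text{generalized }0\text{-eigenspace of }\pi^*c_1(E)$, resp.\ of the rotation elements$)$, and for $\lambda\neq 0$ the quotient map restricts to an isomorphism on $\lambda$-eigensummands (Theorem \ref{Theorem Jacobian ring is SH}, Theorem \ref{Theorem admissible toric manifolds}, and the algebra in Theorem \ref{Theorem Circle action gives SH}). So the above analysis on $\mathrm{Jac}(W)\cong SH^*(X)$ transfers back to $QH^*(X)_\lambda$ for $\lambda\neq 0$; for $\lambda=0$ in the closed case the same Ostrover--Tyomkin decomposition applies directly to $QH^*(B)\cong\mathrm{Jac}(W)$. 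In all cases the conclusion is the stated one.

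The only real content is invoking the Ostrover--Tyomkin structure theorem correctly: one must know not merely the module-level primary decomposition but that a non-degenerate critical point forces the corresponding primary block to be $1\times 1$ and that distinct blocks are ``orthogonal'' idempotent summands (so that the $\lambda$-part is a clean direct summand rather than just a subquotient). I expect that to be the main — indeed essentially the only — nontrivial step, and it is entirely a citation of \cite{Ostrover-Tyomkin} (adapted to the non-compact toric setting exactly as in Theorem \ref{Theorem toric generation for isolated critical value}); the rest is the elementary observation that a matrix all of whose Jordan blocks for $\lambda$ have size $1$ satisfies $(A-\lambda)v=0$ on its generalized $\lambda$-eigenspace.
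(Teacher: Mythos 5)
Your proposal is correct and takes essentially the same route as the paper: both invoke Remark \ref{Remark Eigensummands for nondegenerate critical points} (Ostrover--Tyomkin) to identify each primary summand over a non-degenerate critical point with a copy of $\mathfrak{R}[x]/(x-\lambda)$, so that $c_1(TX)-\lambda$ annihilates $QH^*(X)_\lambda$. Your write-up just spells out the citation and the $QH^*$/$SH^*$ bookkeeping for the non-compact case more explicitly than the paper's one-sentence phrasing in terms of the absence of nilpotents in a semisimple summand.
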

\begin{proof}
 This is a rephrasing of Remark \ref{Remark Eigensummands for nondegenerate critical points}. A generalized eigensummand $\mathfrak{R}[x]/(x-\lambda)^d$ with $d\geq 2$ would give rise to a non-zero nilpotent element $x-\lambda$, which is not allowed in a semisimple summand of $QH^*(X)$. So generalized eigenvectors for $\lambda$ are eigenvectors.
\end{proof}

\begin{theorem}\label{Theorem toric generation for semisimple critical values} (Working with $\mathfrak{R}$ defined over $\K=\C$)\\
For a closed monotone toric manifold $(B,\omega_B)$, the toric generation criterion holds for any semisimple eigenvalue $\lambda$.
For an admissible toric manifold $M$ (Definition \ref{Definition admissible toric manifolds}, for example a monotone toric negative line bundle) the toric generation criterion holds for $\mathcal{W}(M)$ and $\mathcal{F}(M)$ for any semisimple eigenvalue $\lambda \neq 0$.
\end{theorem}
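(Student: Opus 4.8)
The plan is to deduce the statement from the Generic Generation Theorem (Theorem~\ref{Theorem toric generation for generic forms}) by a limiting argument, undoing the generic deformation of the toric symplectic form and invoking the matrix perturbation theory of Appendix~B. Throughout we work over $\K=\C$, so that critical points depend analytically on the deformation parameters and eigenspaces vary continuously. Fix a semisimple eigenvalue $\lambda$ of quantum multiplication by $c_1(TX)$, where either $X=B$ is closed monotone toric (and $\lambda$ arbitrary) or $X=M$ is admissible (Definition~\ref{Definition admissible toric manifolds}) and $\lambda\neq 0$. By definition the critical points $p_1,\dots,p_k\in\mathrm{Crit}(W)$ with $W(p_i)=\lambda$ are all non-degenerate, so the generalized $\lambda$-eigenspace of $QH^*(X)$ (resp.\ $SH^*(X)$) coincides with the honest $\lambda$-eigenspace, and by Cho--Oh \cite{Cho-Oh} each $p_i$ determines a toric Lagrangian $L_i$ with holonomy, an object of $\mathcal{F}^{\mathrm{toric}}_{\lambda}(X)$, whose point class $[\mathrm{pt}_i]\in HF^*(L_i,L_i)$ is a cycle.

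First I would perturb $\omega_X$ to a generic nearby toric symplectic form $\omega_F$ as in Lemma~\ref{Lemma separate critical values}, so that $W_F$ is Morse with pairwise distinct non-zero critical values. Because each $p_i$ is non-degenerate, an implicit function theorem argument (as in the proof of Lemma~\ref{Lemma separate critical values}) yields, for $\omega_F$ sufficiently close to $\omega_X$, a unique critical point $p_i'=p_i'(F)$ of $W_F$ near $p_i$, depending analytically on $F$, with $\lambda_i'=W_F(p_i')\to\lambda$ as $F\to F_0$ and the $\lambda_i'$ pairwise distinct. By Theorem~\ref{Theorem toric generation for generic forms} each $QH^*(X,\omega_F)_{\lambda_i'}$ (resp.\ $SH^*(X,\omega_F)_{\lambda_i'}$) is then a one-dimensional field summand, and by Lemma~\ref{Lemma calculation of OC of point} and Theorem~\ref{Theorem OC respects eigensummands} the deformed map $\mathcal{OC}^{0|0}_{p_i'}([\mathrm{pt}_i])$ is a non-zero, hence invertible, element of it (for $X=B$ this uses that $\mathrm{PD}(\mathrm{pt})\neq 0$ in the closed case, so no restriction on $\lambda$ is needed); thus $\mathrm{image}(\mathcal{OC}^{0|0}_{p_i'})$ is exactly that summand and $\sum_i\mathrm{image}(\mathcal{OC}^{0|0}_{p_i'})=\bigoplus_i QH^*(X,\omega_F)_{\lambda_i'}$.

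The crux is the passage to the limit $F\to F_0$. The key input, already used in Section~\ref{Subsection Twisting the Fukaya category, generation via a deformation theory argument}, is that the holomorphic torus action identifies all the moduli spaces of discs bounding the deformed Lagrangians $L_i'(F)$ with those bounding $L_i$: analytically one counts the same PDE solutions, only the holonomy weights --- which vary analytically with $F$ --- change. Hence $\Lambda\cdot[\mathrm{pt}_i]\subset HF^*(L_i'(F),L_i'(F))_{\omega_F}$ may be regarded as a fixed one-dimensional $\Lambda$-space, and $\mathcal{OC}^{0|0}_{p_i'}$ becomes a $\Lambda$-linear map into the fixed space $H^*(X)\otimes\Lambda\cong QH^*(X)$ depending analytically on $F$ and specializing at $F=F_0$ to the undeformed $\mathcal{OC}^{0|0}_{p_i}$. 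Appendix~B then gives: (i) collecting the $\lambda_i'$ that converge to $\lambda$, the direct sum $\bigoplus_i QH^*(X,\omega_F)_{\lambda_i'}$ converges in the Grassmannian of $H^*(X)\otimes\Lambda$ to $QH^*(X)_{\lambda}$ --- this is where semisimplicity of $\lambda$ is used, so that no extra generalized directions appear; and (ii) $\sum_i\mathrm{image}(\mathcal{OC}^{0|0}_{p_i'})$ converges to $\sum_i\mathrm{image}(\mathcal{OC}^{0|0}_{p_i})$. Combining with the generic identity of the previous paragraph, $\sum_i\mathrm{image}(\mathcal{OC}^{0|0}_{p_i})=QH^*(X)_{\lambda}$. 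Therefore $\mathcal{OC}\colon\mathrm{HH}_*(\mathcal{F}^{\mathrm{toric}}_{\lambda}(X))\to QH^*(X)_{\lambda}$ is surjective, so it hits the idempotent unit $1_{\lambda}$ of the ring summand $QH^*(X)_{\lambda}$ (Remark~\ref{Remark Eigensummands for nondegenerate critical points}), which is invertible there; by Theorem~\ref{Theorem OC respects eigensummands} the toric generation criterion holds for $\mathcal{F}_{\lambda}(X)$. For $X=M$ and $\lambda\neq 0$ the same conclusion for $\mathcal{W}_{\lambda}(M)$ follows from the acceleration diagram as in Definition~\ref{Definition Generation Criterion holds}: the toric Lagrangians are compact objects, $c^*(1_{\lambda})$ is the unit of $SH^*(M)_{\lambda}\cong\mathrm{Jac}(W_M)_{\lambda}\neq 0$ (Theorem~\ref{Theorem admissible toric manifolds}), hence invertible there, and $HF^*(K,K)\to HW^*(K,K)$ is unital.

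I expect the main obstacle to be a rigorous justification of step (ii): that, once reinterpreted as maps between the fixed $\Lambda$-vector spaces, the restricted maps $\mathcal{OC}^{0|0}_{p_i'}$ genuinely depend analytically, and with uniform control near $F_0$, on $F$, so that their images converge in the Grassmannian. This is precisely the subtlety flagged in Remark~\ref{Remark Intro}: it works for $\mathcal{OC}^{0|0}$ on the point class exactly because $[\mathrm{pt}_i]$ stays a cycle for every $F$ (the critical-point condition being preserved when $p_i$ is followed to $p_i'(F)$), whereas the full Hochschild-level $\mathcal{OC}$ is intractable since generic chain-level cycles cease to be cycles once the holonomy weights change. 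Granting this analyticity --- which follows from the torus-action identification of moduli spaces together with the observation that $\mathcal{OC}^{0|0}([\mathrm{pt}])$ is a weighted disc count whose weights are analytic in the holonomy --- the conclusion is a formal consequence of the matrix perturbation theory of Appendix~B and Theorem~\ref{Theorem toric generation for generic forms}.
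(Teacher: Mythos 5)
Your proposal follows the same route as the paper---perturb to a generic twist where $W_F$ is Morse with simple critical values, apply the Generic Generation Theorem, then pass to the limit using the matrix perturbation theory of Appendix~B together with the torus-action identification of moduli spaces (which makes $\mathcal{OC}^{0|0}$ on the point class analytic in the deformation parameter).

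However, step (ii) of your limiting argument contains a genuine gap. You assert that $\sum_i\mathrm{image}(\mathcal{OC}^{0|0}_{p_i'})$ converges to $\sum_i\mathrm{image}(\mathcal{OC}^{0|0}_{p_i})$ in the Grassmannian, and conclude from (i) that the undeformed images span $QH^*(X)_\lambda$. But convergence of the total eigenprojection (Kato's theorem, which only requires semisimplicity of $\lambda$) does not prevent the \emph{individual} eigenlines $E_i(F)=QH^*(X,\omega_F)_{\lambda_i'}$ from collapsing onto a common line as $F\to F_0$. A toy example is $A(x)=\left(\begin{smallmatrix} x^2 & 0\\ x & -x^2\end{smallmatrix}\right)$: $A(0)=0$ is (trivially) semisimple and the total $0$-eigenspace is everything, yet both one-dimensional eigenlines of $A(x)$ converge to $\C e_2$ as $x\to 0$ because the derivatives $\lambda_j'(0)$ coincide. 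If this happened in our situation, the limits $\mathcal{OC}^{0|0}_{p_i}(0)$ would be linearly dependent, the span would drop in dimension, and surjectivity would fail.

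The paper closes exactly this gap before invoking Appendix~B: it shows that after a \emph{further} generic choice of the deformation direction (the auxiliary function $G(z)=\sum a_i z^{e_i}$ in the proof), the derivatives $\partial_x|_{x=0}\lambda_j(a,x)$ at the distinct critical points $p_j$ are pairwise distinct. Only then does the second half of Corollary \ref{Corollary Cgce of espaces for semisimple evalue} apply, giving that each eigenprojection $P_j(x)$ is single-valued and holomorphic across $x=0$, so each $E_j(x)$ converges to a \emph{distinct} one-dimensional field summand $E_j(0)$ of $QH^*(X)_\lambda$. With this in hand, the argument does not need surjectivity per se: each $\mathcal{OC}_j(0)$ is a nonzero, hence invertible, element of its own field summand $E_j(0)$ (by Lemma \ref{Lemma calculation of OC of point}), and since $\bigoplus_j E_j(0)=QH^*(X)_\lambda$ the element $\sum_j\mathcal{OC}_j(0)$ is invertible in $QH^*(X)_\lambda$, which is exactly what the generation criterion requires. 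You should add the distinct-derivatives computation (as in the paper's proof) before citing Appendix~B; without it, the Grassmannian convergence you need is unjustified.
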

\begin{proof}
By Theorem \ref{Theorem Toric generation for generic twist}, for the $\omega_F$-twisted category the toric generation criterion holds when $F(e_i)$ are generic and close to $-1$. Recall that when $F(e_i)=-1$ then we can identify the twisted with the untwisted theory.

Abbreviate by $QH_{F}^*(B)=QH^*(B,\omega_B;\underline{\mathfrak{R}}_{\omega_F})$ the twisted theory, and similarly $SH_F^*(M)$, $QH_F^*(M)$. Recall by Remark \ref{Remark Eigensummands for nondegenerate critical points} that there is a decomposition $1=\sum 1_p$ in $QH^*(B)$ (respectively $SH^*(M)$, since $\mathrm{Jac}(W_M)\cong SH^*(M)$) where $p$ runs over the critical points of the relevant superpotential $W$, and similarly in the twisted case $1=\sum 1_{p,F}$ in $QH_{F}^*(B)$ (resp. $SH_F^*(M)$) where $p$ runs over the critical points of the twisted superpotential.

The summands $\mathfrak{R}[x]/(x-\lambda)^d$ in the primary decomposition mentioned in Remark \ref{Remark Eigensummands for nondegenerate critical points} arise as the local rings $\mathcal{O}_{Z_W,p}$ of $\mathcal{O}(Z_W)= \mathrm{Jac}(W)$ for a certain scheme $Z_W$ associated to the superpotential $W$ \cite[Corollary 2.3 and Lemma 3.4]{Ostrover-Tyomkin}, and $1_p$ is the relevant unit in that local ring.
As in the proof of Lemma \ref{Lemma separate critical values}, the non-degenerate critical points $p$ and their critical values $\lambda_p$ vary smoothly as we vary the values $F(e_i)$ since the superpotential is an analytic function. A priori, it is not clear whether (or in what sense) the decomposition of $\mathcal{O}(Z_W)$ into summands $\mathcal{O}_{Z_W,p}$ varies continuously, smoothly or analytically with the parameters $F(e_i)$ near parameter values where different eigenvalues $\lambda_p$ collide. For this problem, we developed the necessary matrix perturbation theory in Appendix B (Section \ref{Section Appendix B: Matrix perturbation theory}).

In the terminology of Appendix B, multiplication by $1_p$ is the eigenprojection $P_{\lambda_p}$ onto the generalized eigenspace for the eigenvalue $\lambda_p$ for the family of endomorphisms of the vector space $H^*(B;\mathfrak{R})\cong QH_F^*(B) \cong \mathrm{Jac}(W_F^B)$ given by quantum multiplication by the superpotential $W_F$ (respectively for $M$: working with $H^*(M;\mathfrak{R})\cong QH_F^*(M)$ and quantum multiplication by $c_1(TM)$, or working with a localization thereof: $SH^*_F(M)\cong \mathrm{Jac}(W_F^M)$ using multiplication by $W_F^M$). We will phrase the rest of the argument for $B$, as it is analogous for $M$.

Before applying Appendix B, we need to rephrase the problem in terms of a family $A(x)\in \mathrm{End}(\C^n)$ with $A(x)$ depending holomorphically on a complex parameter $x$ near $0\in \C$. So instead of working over $\mathfrak{R}$, we therefore briefly work over $\C$ by replacing $T=1$, $s=e^x$ (this is legitimate since $B,M$ are Fano: the quantum relations and quantum product only involve finitely many powers of $s,T$). So $W = \sum s^{-F(e_i)} T z^{e_i}$ becomes 
$$A(x) = W(a,x)= \sum \exp(a_i x) z^{e_i}$$
acting by quantum multiplication as explained above, where $a_i=-F(e_i)$, and working over $\C$ instead of $\mathfrak{R}$. In other words, we have \emph{specialized} the quantum cohomology by fixing a value $s=e^x$ for $x\in \C$ close to $0$. Observe that taking $x=0$, so $s=1$, recovers by definition the quantum cohomology for the monotone symplectic form (so $F(e_i)=-1$) except for having dropped $T$ due to the substitution $T=1$ -- but powers of $T$ can be reinserted a posteriori as dictated by the $\Z$-grading.

By assumption, $\lambda$ is a semisimple eigenvalue, so Corollary \ref{Corollary Cgce of espaces for semisimple evalue} applies to the family of matrices $A(x)$. In particular, there is a continuous family of eigenvalues $\lambda_j(x)=\lambda_{p_j}$ converging to $\lambda$, which is analytic on a punctured disc around $x=0$ and which is real-differentiable at $x=0$. By the proof of Lemma \ref{Lemma separate critical values} in Section \ref{Subsection Generation for generic toric forms}, for a generic choice of $a_i=-F(e_i)$ the $\lambda_j(x)$ are pairwise distinct for all small $x\neq 0$, and the $\lambda_j(x)$ are in fact holomorphic even at $x=0$ since $\lambda_j(x) = W(a,x)(z)|_{z=p_j(a,x)}$ where $p_j(a,x)$ are non-degenerate critical points of $W(a,x)$ for small $x$ (since they are non-degenerate at $x=0$). Keeping track also of $a$, the derivatives of the $\lambda_j(a,x)$ at $x=0$ are:
$$
\partial_x|_{x=0} \lambda_j(a,x) = \sum \left.\frac{\partial}{\partial z_i}\right|_{z=p_j(a,x),x=0} \hspace{-9ex}W(a,x)(z)\cdot \left.\frac{\partial}{\partial x}\right|_{x=0} \hspace{-3ex} [p_j(a,x)]_i + \left.\frac{\partial}{\partial x}\right|_{z=p_j(a,x),x=0}\hspace{-9ex} W(a,x)(z) = \sum a_i [p_j(a,0)]^{e_i}
$$
since the first derivative vanishes, as $p_j$ is critical. For $a$ close to $\underline{1}=(1,\ldots,1)$, the leading term is $\sum a_i [p_j(\underline{1},0)]^{e_i}$. We want to show that for generic $a$, these leading terms are different for different $j$, since then the $\partial_x|_{x=0} \lambda_j(a,x)$ are different, and so the second half of Corollary \ref{Corollary Cgce of espaces for semisimple evalue} applies. Consider the auxiliary function $G(z)=\sum a_i z^{e_i}$. Then 
$$
\partial_{a_i}G|_{p_j(\underline{1},0)} = p_j(\underline{1},0)^{e_i}.
$$
Now as in the proof of Lemma \ref{Lemma separate critical values} in Section \ref{Subsection Generation for generic toric forms}, for two different choices of $j$ the $p_j(\underline{1},0)^{e_i}$ must differ for some $i$ otherwise two of the critical points $p_j(\underline{1},0)$ would coincide, contradicting non-degeneracy. Therefore, varying $a_i$ generically ensures that the values $G(p_j(\underline{1},0))=\sum a_i [p_j(\underline{1},0)]^{e_i}$ differ for different $j$, as required.

By Corollary \ref{Corollary Cgce of espaces for semisimple evalue}, it follows that (for generic $a$ close to $\underline{1}$) the eigenspaces $$E_j(x)=QH^*_F(B;\C)_{\lambda_j(x)}$$ for $\lambda_j(x)$ vary holomorphically in $x$ even at $x=0$ (continuously would have sufficed). The final step of the argument is to consider the linear map: 
$$
\mathcal{OC}_j: HF^*(L_{p_j(x)},L_{p_j(x)}) \to H^*(B;\C) \equiv QH_F^*(B;\C),
$$
where $L_{p_j(x)}$ is the toric Lagrangian (together with holonomy data) corresponding to the critical point $p_j(x)$ of $W(a,x)$ (we recall this correspondence is the Appendix Section \ref{Subsection Superpotential}).

By Theorem \ref{Theorem OC respects eigensummands}, $\mathcal{OC}_j$ lands in $E_j(x)$. We care about the image of $[\mathrm{pt}]$:
$$\mathcal{OC}_j(x) = \mathcal{OC}_j[\mathrm{pt}]\in E_j(x).$$
%
% Each critical point $p$ corresponds precisely to a toric Lagrangian $L_p$ together with a choice of holonomy $h_p$, as explained in Section \ref{Subsection Superpotential} (in our current setup, $h_p\in (\Lambda_{s,0}^{\times})^n \cong H^1(X,\Lambda_{s,0}^{\times})$, %
% % 
% % we know val_s (s^{-\lambda_i}tz^{e_i})>0 (owise not inside polytope)
% % val_s(p) is a point in interior of Delta
% % h_p = s^{-val_s(p)}p
% % 
%  where $X=B$ or $E$ respectively, and where $\Lambda_{s,0}^{\times}$ are series in $\Lambda_s$ involving only non-negative powers of $s$ and which have non-zero $s^0$-coefficient). Namely, $p = (p_1,...,p_n) = T\cdot (s^{y_1}h_{p,1}, \ldots, s^{y_n}h_{p,n})$, where $y\in \Delta_F$ is the point in the moment polytope which defines the torus fiber $L_p$ and $T=t^{1/\lambda_X}$ is in degree $2$ for grading reasons. 
%

We will show below that $\mathcal{OC}_j(x)$ depends continuously on $x$. Therefore, at $x=0$, $\mathcal{OC}_j(0)$ lands in $E_j(0)$ which is one of the field summands in the $\lambda$-eigenspace $E_{\lambda}(A(0)) = \oplus E_i(0)$ of $A(0)=W(a,0)$ (the superpotential in the monotone case, working over $\C$). This continues to hold even if we insert the appropriate powers of $T$ to achieve $\Z$-grading, so it holds for the $\mathcal{OC}_j$ map constructed in the monotone case over the usual Novikov ring of Section \ref{Subsection Novikov ring}.

Therefore, in the monotone case, $\mathcal{OC}_j[\mathrm{pt}]$, for $[\mathrm{pt}]\in HF^*(L_{p_j(0)},L_{p_j(0)})$, can only be non-zero in a specific field summand of the $\lambda$-eigenspace of $QH^*(B;\omega_B)$ determined by the convergence of the eigenspaces of $A(x)$ described in Corollary \ref{Corollary Cgce of espaces for semisimple evalue}.

The final ingredient is that, as in the proof of Lemma \ref{Lemma calculation of OC of point}, $\mathcal{OC}_j(0)$ is non-zero and so is invertible in that field summand $E_j(0)$. Therefore $\oplus \mathcal{OC}_j(0)$, summing over all $j$ for which $\lambda_j(0)=\lambda$, is invertible in $\oplus E_j(0)=E_{\lambda}(A(0))$, so the generation criterion holds and the Theorem follows.

Finally, we explain why $\mathcal{OC}(x)$ depends continuously on $x$. The difficulty is that not only the holonomy data but also the toric Lagrangian submanifolds $L_j=L_{p_j(a,x)}$ may vary with $x$. For the monotone form $\omega_B$ (so all $F(e_i)=-1$) the point $y_j\in \Delta$ in the moment polytope corresponding to $L_j$ is always the barycentre (Section \ref{Subsection The barycentre of the moment polytope for monotone toric manifolds}). But for some small deformations of $F$, it may happen that $y_j$ moves away from the barycentre (e.g. this is shown for the one point blow-up of $\CP^2$ in \cite{FOOOtoric}).

Nevertheless, we claim that the moduli spaces of rigid discs bounding $L_p$ counted by $\mathcal{OC}([\mathrm{pt}] )$ for $[\mathrm{pt}]\in C_*(L_p)=CF^*(L_p,L_p)$ (so, with suitable intersection conditions at marked points) vary in a smooth $1$-family with $F$, when all $F(e_i)$ are close to $-1$. 

For low Maslov index discs, that claim can be checked explicitly. Indeed such discs intersect a low number of toric divisors (half of the Maslov index \cite{Cho-Oh,Auroux}), and so the complement of the toric divisors that it does not intersect can be parametrized by a holomorphic chart $\C^n$ in which discs bounding a torus $S^1(r_1)\times \cdots \times S^1(r_n)\subset \C^n$ are known explicitly and they vary smoothly when varying the radii $r_j$. For high Maslov indices, there may not be such a chart, so instead we use the global holomorphic action of the complex torus $T$ on the toric variety as follows (recall that a toric variety is defined in terms of a dense complex torus $T$, see Section \ref{Subsection The fan of a line bundle over a toric variety}).

Toric Lagrangians arise as orbits of the real torus $T_{\R}\subset T$, and we can use elements of $T$ to map one toric Lagrangian $L$ to another $L'$. Since the action of $T$ is holomorphic, this mapping will yield a natural bijection between moduli spaces of holomorphic discs bounding $L$ with those bounding $L'$. If we impose generic intersection conditions at marked points for the discs bounding $L$ then, provided $L'$ is sufficiently close to $L$, the corresponding discs will be close, so the corresponding discs bounding $L'$ will satisfy the same intersection conditions (at some other nearby marked points). 
For sake of clarity, we emphasize that, of course, the weights with which these moduli spaces are counted at the chain level are the culprit of dramatic changes in the homology, rather than an essential change in the moduli spaces (if we generically vary $y$, keeping the holonomy fixed, then $p$ will no longer be a critical point, so $HF^*(L_p,L_p)$ will vanish, but this happens because with the incorrect weights the cancellations of disc counts in the boundary operator will fail).

The upshot, is that $\mathcal{OC}_{F,\lambda_p} : HF_F^*(L_p,L_p) \to QH_F^*(X)_{\lambda_p}$ on the point class in $C_*(L_p)=CF^*(L_p,L_p)$, at the chain level, is the same Laurent polynomial in the generators $x_i=\mathrm{PD}[D_i]\in C^*(X;\mathfrak{R})$ except for the coefficients in $\Lambda_s$ (the power of $t$ in the summands is determined by the grading). These coefficient functions vary smoothly as we vary the values $F(e_i)$ since they come from $s^{\omega_F(u)}$, integrating $\omega_F=\sum - F(e_i) \mathrm{PD}[D_i]$ over the discs $u$.%
% 
% CAREFUL: can't just tensor by R quotiented by the ideal (1-s) unless you use finite series in s. But if you use finite series in s you have a PID not a field, and in particular e.g. x-lamba1(s) and x-lambda2(s) are not coprime since difference of lambdas may involve, say, 1-s, which is invertible only if can do series in s. So you really need to do matrix perturbation theory over \C separately.
% 
% Not clear to me that idempotents 1pF converge to 1p if think of Jac as O(Z) sheaf and stalks etc., but it's not clear in what sense the skyscraper sheaves depend continuously on parameters F(ei). For instance, it is false that the eigenprojections 1p vary analytically at exceptional points, and it was quite some work to show that they did in the semisimple case.
\end{proof}

\begin{remark}
 Theorem \ref{Theorem toric generation for semisimple critical values} also holds for the (non-monotone) Fano toric manifold $(B,\omega_F)$ and for Fano negative line bundles over it. In this case, a perturbation of $\omega_F$ to $\omega_{F'}$ corresponds to keeping $\omega_F$ but twisting by $\omega_{F'}-\omega_F$ $($such as $QH^*(B,\omega_{F'}) \cong QH^*(B,\omega_F; \underline{\mathfrak{R}}_{\omega_{F'}-\omega_{F}}))$.
\end{remark}

%%%%%%%%%%%%%%%%%%%%%%%%%%%%%%%%%%%%%%%%%%%%%%%%%%%%%%%%
\subsection{$\mathbf{\mathcal{OC}^{0|0}:HF^*(L_p,L_p)\to QH^*(B)_{\lambda_p}}$ lands in the $\lambda_p$-eigenspace}
\label{Subsection OC00 lands in espace}
%%%%%%%%%%%%%%%%%%%%%%%%%%%%%%%%%%%%%%%%%%%%%%%%%%%%%%%%

\begin{theorem}\label{Theorem OC on HF(L,L) lands in eigenspace} (Working with $\mathfrak{R}$ defined over $\K=\C$)\\
Let $X$ be a closed monotone toric manifold $B$ or a (non-compact) admissible toric manifold $M$ (Definition \ref{Definition admissible toric manifolds}). 
For $p\in \mathrm{Crit}(W)$, the image of $\mathcal{OC}^{0|0}: HF^*(L_p,L_p)\to QH^*(X)_{\lambda_p}\subset QH^*(X;\omega_X)$ lands in the eigenspace (not just the generalized eigenspace).
\end{theorem}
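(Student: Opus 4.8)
The plan is to deduce the statement from the semisimple generation results proved just above (Theorem \ref{Theorem toric generation for semisimple critical values}) together with the continuity-of-eigenspaces machinery of Appendix B, \emph{turned around}: rather than using the deformation to prove generation, I will use the deformation to constrain where $\mathcal{OC}^{0|0}$ can land. The key point is that in the generic twisted situation (Theorem \ref{Theorem Toric generation for generic twist}) every critical point of the twisted superpotential $W_F$ is \emph{non-degenerate} and has a \emph{distinct} critical value, so each generalized eigensummand $QH^*_F(X)_{\lambda_j(x)}$ (resp. $SH^*_F(M)_{\lambda_j(x)}$) is $1$-dimensional; a $1$-dimensional generalized eigenspace \emph{is} an eigenspace. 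Hence for all small $x\neq 0$ the map $\mathcal{OC}_j(x):HF^*(L_{p_j(x)},L_{p_j(x)})\to QH^*_F(X)$, which by Theorem \ref{Theorem OC respects eigensummands} lands in $QH^*_F(X)_{\lambda_j(x)}$, automatically lands in the genuine $\lambda_j(x)$-eigenspace of multiplication by $c_1(TX)$ (resp. by $W_F$, after passing to the localization).

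The second step is to take the limit $x\to 0$, i.e. $F(e_i)\to -1$, exactly as in the proof of Theorem \ref{Theorem toric generation for semisimple critical values}. Specialize $T=1$, $s=e^x$ so that quantum multiplication by the superpotential becomes a holomorphic family $A(x)=W(a,x)\in\mathrm{End}(\C^n)$ with $x\in\C$ near $0$, and recall from that proof that $\mathcal{OC}_j(x)=\mathcal{OC}_j([\mathrm{pt}])$ depends continuously on $x$ — the moduli spaces of rigid discs bounding the toric Lagrangian $L_{p_j(x)}$ are identified via the holomorphic torus action, only the weights $s^{\omega_F(u)}$ vary, and they vary continuously. For $\lambda=\lambda_p$ a semisimple critical value of the undeformed $W$, Corollary \ref{Corollary Cgce of espaces for semisimple evalue} (Appendix B) gives that the eigenspaces $E_j(x)$ for the colliding eigenvalues $\lambda_j(x)\to\lambda$ converge in the Grassmannian, and (after a generic choice of the deformation parameters $a_i$, using the argument from the proof of Lemma \ref{Lemma separate critical values}) the derivatives $\partial_x\lambda_j(a,x)|_{x=0}$ are pairwise distinct, so $E_j(x)$ is even holomorphic at $x=0$. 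In particular $E_j(0)\subset E_\lambda(A(0))$ is a genuine eigenspace of $A(0)$. Now $\mathcal{OC}_j(x)\in E_j(x)$ for $x\neq 0$ and $\mathcal{OC}_j(x)$ is continuous, so by taking the limit $\mathcal{OC}_j(0)=\lim_{x\to 0}\mathcal{OC}_j(x)\in\lim_{x\to 0}E_j(x)=E_j(0)$, which is contained in the eigenspace for $\lambda$ of multiplication by $c_1(TX)$ — not merely in the generalized eigenspace. Reinserting the appropriate powers of $T$ to restore the $\Z$-grading (legitimate because $X$ is Fano, so only finitely many powers of $s,T$ occur) transfers this conclusion to the $\mathcal{OC}^{0|0}$-map constructed over the usual Novikov ring $\Lambda$ of Section \ref{Subsection Novikov ring}. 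For the admissible case one runs the same argument on the localization $SH^*_F(M)\cong\mathrm{Jac}(W^M_F)$, where $c^*(c_1(TM))$ corresponds to $W^M_F$; since $c^*$ respects eigensummands and $\mathcal{OC}^{0|0}$ factors as $\mathrm{HH}_*\to QH^*\xrightarrow{c^*}SH^*$ on the compact objects, the eigenspace statement in $QH^*(M)$ follows from the one in $SH^*(M)$ together with $c^*$ being the localization quotient (so it is injective on each non-zero eigensummand).

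The main obstacle I expect is purely the continuity/analyticity input of the last paragraph — namely, making rigorous that (i) the chain-level $\mathcal{OC}^{0|0}([\mathrm{pt}])$ really is, as a Laurent polynomial in the $x_i=\mathrm{PD}[D_i]$, a genuinely continuous function of the deformation parameters $F(e_i)$ near $(-1,\dots,-1)$, and (ii) that one may legitimately pass to the limit \emph{inside} the Grassmannian, i.e. that ``$\mathcal{OC}_j(x)\in E_j(x)$ for all $x\neq 0$ small, $\mathcal{OC}_j$ continuous, $E_j$ continuous'' forces $\mathcal{OC}_j(0)\in E_j(0)$. Point (i) is handled by the torus-action identification of moduli spaces already used in Theorem \ref{Theorem toric generation for semisimple critical values}: the only $F$-dependence is through the weights $s^{\omega_F(u)}$ with $\omega_F=\sum -F(e_i)\mathrm{PD}[D_i]$, which vary smoothly in $F$. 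Point (ii) is a soft compactness argument: $E_j(x)$ traces out a continuous path in a compact Grassmannian, $\mathcal{OC}_j(x)$ traces out a continuous path in a fixed finite-dimensional space, and the incidence relation $\{(v,E): v\in E\}$ is closed, so the limit point $(\mathcal{OC}_j(0),E_j(0))$ lies in it. One caveat to address carefully: $E_j(x)$ is only guaranteed analytic/continuous at $x=0$ after a generic choice of $a$, so the argument proves the eigenspace statement for the undeformed $X$ by choosing \emph{some} good approximating family and using that the conclusion ``$\mathcal{OC}^{0|0}([\mathrm{pt}])$ is a $\lambda_p$-eigenvector'' is a statement about the fixed object $X$, independent of the chosen family — this is why the Theorem holds for $X$ itself even though the intermediate analysis lives in a deformed family.
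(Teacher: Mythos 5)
Your overall strategy — "turn the deformation argument around" and use continuity of eigenspaces plus the closedness of the incidence relation $\{(v,E): v\in E\}$ — is indeed the idea behind the paper's proof, and most of the apparatus you invoke (specialization $T=1$, $s=e^x$; continuity of $\mathcal{OC}_j(x)$ via the holomorphic torus action identifying moduli spaces; Theorem~\ref{Theorem OC respects eigensummands}) matches. But there is a genuine gap: you invoke Corollary~\ref{Corollary Cgce of espaces for semisimple evalue}, which requires the limiting eigenvalue $\lambda=\lambda_p$ of $A(0)$ to be \emph{semisimple}. The theorem, however, is stated for \emph{all} $p\in\mathrm{Crit}(W)$, and its raison d'\^etre is exactly the non-semisimple case: it is precisely what drives Corollary~\ref{Corollary need more than just OC00} and the discussion in Section~\ref{Subsection The motivation for requiring the superpotential to be Morse}, where one concludes that $\mathcal{OC}^{0|0}$ cannot hit an invertible in a generalized eigensummand $\Lambda[x]/(x-\lambda)^d$ with $d\geq 2$. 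Your argument, as written, only proves the statement when $\lambda_p$ is semisimple — the case that is least interesting here, and one where the conclusion is anyway nearly tautological.

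The paper's proof instead appeals to Lemma~\ref{Lemma Evecs converge}, which is the non-semisimple analogue and which you do not use. That lemma says that for \emph{any} (possibly non-semisimple) exceptional point, the one-dimensional eigenlines $\C v_j(x)=P_j(x)\cdot\C^n$ of $A(x)$ converge in $\P(\C^n)$ as $x\to 0$ to eigenlines $\C v_i$ of $A(0)$ — and crucially, to genuine eigenvectors, never to proper generalized eigenvectors. It is exactly this "eigenvectors limit to eigenvectors, even when Jordan blocks form" statement that makes the theorem hold in the degenerate case. Replacing your invocation of Corollary~\ref{Corollary Cgce of espaces for semisimple evalue} by Lemma~\ref{Lemma Evecs converge} removes the semisimplicity hypothesis and closes the gap. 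One smaller point: the theorem concerns the image of $\mathcal{OC}^{0|0}$ on all of $HF^*(L_p,L_p)$, not only on $[\mathrm{pt}]$; the paper handles this by feeding an arbitrary cycle through the isomorphism $HF^*(L_p,L_p)\cong H^*(L_p;\Lambda)$ of \eqref{EqnChoOh}, so that the continuity argument applies input-by-input. You should make this explicit rather than arguing for $[\mathrm{pt}]$ alone.
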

\begin{proof}
By Theorem \ref{Theorem OC respects eigensummands}, working over $\C$, the map
$$
\mathcal{OC}(x)=\mathcal{OC}^{0|0}: HF^*(L_{p_j(a,x)},L_{p_j(a,x)}) \to QH_F^*(B;\C)
$$
must land in the eigensummand $QH_F^*(B;\C)_{\lambda_j(x)}$ corresponding to the eigenvalue $\lambda_j(x)$ associated with $L_j(x)=L_{p_j(a,x)}$, where $p_j(a,x)\in \mathrm{Crit}(W_F)$, using the notation from the proof of Theorem \ref{Theorem toric generation for semisimple critical values}. By Theorem \ref{Theorem toric generation for semisimple critical values}, for generic $a_i=-F(e_i)$ near $F(e_i)=-1$, the eigenvalues are distinct so this eigensummand is $1$-dimensional. So the image of $\mathcal{OC}(x)$ consists of eigenvectors (or zero).

The theorem follows if we show that in the limit $x=0$, also the image of $\mathcal{OC}(0)$ consists of eigenvectors (observe that the critical points $p_j(a,x)$ of an analytic function will vary continuously, so they are continuous also at $x=0$). By Lemma \ref{Lemma Evecs converge}, the $1$-dimensional eigenspaces $E_j(x)$ of the $\lambda_j(x)$ vary continuously in $\P(\C^n)$ even at $x=0$ where they converge to a $1$-dimensional subspace $E_j(0)$ of the $\lambda$-eigenspace of $W$ (where $\lambda=\lambda_j(0)$).
We claim that $\mathcal{OC}(x)$ is continuous in $x$, therefore $\mathcal{OC}(0)$ is trapped inside the limit $E_j(0)$ as required.

That $\mathcal{OC}(x)$ is continuous in $x$, is proved just like in the proof of Theorem \ref{Theorem toric generation for semisimple critical values}, replacing the point cycle $[\mathrm{pt}]$ by any cycle in $C_*(L)$, and using the vector space isomorphism from \eqref{EqnChoOh}: $H^*(L_p;\Lambda)\cong HF^*(L_p,L_p)$. More precisely, the latter isomorphism is used in the following sense: a Floer cycle corresponds to an ordinary cycle together with (finitely many) higher order $t$ correction terms which are constructed inductively to kill off the Floer coboundary \cite{Cho-Products}. The coefficients in these correction terms depend smoothly on $x$ since the area/holonomy-weights are smooth in $x$ (the relevant finite moduli spaces of discs vary smoothly for small $x$ as in the proof of Theorem \ref{Theorem toric generation for semisimple critical values}, so the areas/holonomies of those discs vary smoothly in $x$). Therefore composing with the isomorphism $H^*(L_p;\Lambda)\cong HF^*(L_p,L_p)$ we can pretend that $\mathcal{OC}^{0|0}$ is defined as a map of vector spaces $H^*(L;\Lambda) \to H^*(X;\Lambda)$ so the machinery of Appendix B applies, just like it did for the point class in Theorem \ref{Theorem toric generation for semisimple critical values}.
\end{proof}
%
% Note that when have gen espace, say two crit pts that came together. Then you only have one Lagr in that summand (only one crit point). The problem is you would need to combine two different Lags to generate $1$. Would need extra information on the limit. A Lagr approaching another Lag is described by a closed $1$-form (of which one Lagr is graph over other). This is presumably the twisting data. Only when twist, you see two different objects.

\begin{remark}
For the full $\mathcal{OC}$ map defined on $\mathrm{HH}_*(\mathrm{A}_{\infty}\textrm{-algebra of }L_p)$, the above argument does not apply because there is no analogue of \eqref{EqnChoOh}: in fact the rank of this map is expected to jump (see the discussion in Section \ref{Subsection The motivation for requiring the superpotential to be Morse} and Remark \ref{Remark Intro}).
\end{remark}

%%%%%%%%%%%%%%%%%%%%%%%%%%%%%%%%%%%%%%%%%%%%%%%%%%%%%%%%
\subsection{Failure of $\mathcal{OC}^{0|0}$ to detect generation for some degenerate critical values}
\label{Subsection Failure of the generation criterion for some degenerate critical values}
%%%%%%%%%%%%%%%%%%%%%%%%%%%%%%%%%%%%%%%%%%%%%%%%%%%%%%%%

By Ostrover-Tyomkin \cite{Ostrover-Tyomkin} there are closed smooth toric Fano varieties $B$ for which $QH^*(B,\omega_B)$ is not semisimple, due to the presence of a generalized eigenspace (which is not an eigenspace) for multiplication by $W$ on $\mathrm{Jac}(W)\cong QH^*(B)$. The example in \cite[Section 5]{Ostrover-Tyomkin} is the smooth Fano $4$-fold called $U_8$, number 116, in Batyrev's classification \cite{BatyrevClassification}. It has a superpotential $W$ with a critical point $p$ such that $W(p)=-6$ and $\mathrm{Hess}_pW$ is degenerate. A further simple calculation shows that this is the only critical point with value $W(p)=-6$, therefore the eigensummand of $QH^*(U_8)$ for the eigenvalue $-6$ is a generalized eigenspace isomorphic to $\Lambda[x]/(x+6)^d$ for some $d\geq 2$.

\begin{corollary} (Working with $\mathfrak{R}$ defined over $\K=\C$)\\
The map $\mathcal{OC}^{0|0}: HF^*(L_p,L_p)\to QH^*(B)_{\lambda_p}$ does not hit an invertible element for the closed smooth toric $4$-fold $B=U_8$ taken with the monotone toric symplectic form.
\end{corollary}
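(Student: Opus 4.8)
The strategy is to combine the structural results already at hand with the specific numerical facts about $U_8$ recalled just above. The key input is Theorem \ref{Theorem OC on HF(L,L) lands in eigenspace}, which tells us that for $p\in\mathrm{Crit}(W)$ with $\lambda_p=W(p)$, the image of $\mathcal{OC}^{0|0}: HF^*(L_p,L_p)\to QH^*(B)_{\lambda_p}$ lands in the genuine eigenspace $\ker(c_1(TB)-\lambda_p)$, not merely the generalized eigenspace. The second input is Lemma \ref{Lemma CO map vanishes on things}(2), together with Lemma \ref{Lemma L does not intersect c1}: for any orientable Lagrangian $K$ with $m_0(K)=\lambda_p$, the closed-open map $\mathcal{CO}: QH^*(B)\to HF^*(K,K)$ vanishes on the ideal $(c_1(TB)-\lambda_p)*QH^*(B)$, hence on all eigenvectors that arise inside a Jordan block of size $\ge 2$.

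First I would recall the facts about $U_8$ from Ostrover--Tyomkin: the superpotential $W$ of $U_8$ has a critical point $p$ with $W(p)=-6$ whose Hessian is degenerate, and $p$ is the \emph{only} critical point with critical value $-6$. By the algebraic description $QH^*(U_8)\cong\mathrm{Jac}(W)$ (Batyrev, reviewed in Section \ref{Subsection Batyrev's argument: from the presentation of QH to JacW}) and the eigenvalue/critical-value dictionary (Ostrover--Tyomkin \cite[Cor.~2.3]{Ostrover-Tyomkin}, originally Kontsevich--Seidel--Auroux \cite{Auroux}), the eigenvalue $\lambda_p=-6$ contributes a single local ring $\mathcal{O}_{Z_W,p}$, which is exactly the generalized eigensummand $QH^*(U_8)_{-6}\cong\Lambda[x]/(x+6)^d$ for some $d\ge 2$ because the Hessian is degenerate. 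Hence $c_1(TU_8)-\lambda_p=x+6$ is a non-zero nilpotent on $QH^*(U_8)_{-6}$, and the genuine eigenspace $\ker(x+6)\subset QH^*(U_8)_{-6}$ is exactly the line (more precisely the submodule) spanned by $(x+6)^{d-1}$, which is a proper nonzero multiple of $x+6$.

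Next I would assemble the contradiction. Suppose toward a contradiction that $\mathcal{OC}^{0|0}: HF^*(L_p,L_p)\to QH^*(U_8)_{-6}$ hits an invertible element $u$ of the eigensummand. By Theorem \ref{Theorem OC on HF(L,L) lands in eigenspace}, $u$ actually lies in the eigenspace $\ker(x+6)$, so $u$ is a multiple of $(x+6)^{d-1}$ with $d-1\ge 1$; in particular $u$ lies in the maximal ideal of the local ring $\Lambda[x]/(x+6)^d$ and hence is \emph{not} invertible there, contradiction. Therefore $\mathcal{OC}^{0|0}$ cannot hit an invertible element. (Alternatively, one phrases this directly via generation: by Lemma \ref{Lemma CO map vanishes on things}(2) applied to $K=L_p$, the composite $\mathcal{CO}\circ\mathcal{OC}^{0|0}$ vanishes because its image consists of multiples of $x+6$, so it cannot hit the unit in $HF^*(L_p,L_p)$; since $L_p$ is, up to the correspondence of Cho--Oh \cite{Cho-Oh}, essentially the only toric Lagrangian available for the eigenvalue $-6$, the toric generation criterion fails for $\lambda=-6$.) Finally, $\lambda_p=-6\ne 0$, so the hypotheses of Theorem \ref{Theorem OC on HF(L,L) lands in eigenspace} are satisfied and there is no issue with the $\lambda=0$ degenerate case.

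\textbf{Main obstacle.} The only genuine content beyond citing earlier results is verifying the numerical claims about $U_8$: namely that its superpotential has a degenerate critical point $p$ with $W(p)=-6$, that $-6$ is attained at no other critical point (so that the whole eigensummand $QH^*(U_8)_{-6}$ is the single local ring $\mathcal{O}_{Z_W,p}$), and hence $d\ge 2$. These are finite explicit computations with the moment polytope of $U_8$ (number $116$ in Batyrev's classification \cite{BatyrevClassification}) and are already carried out in \cite[Section~5]{Ostrover-Tyomkin}; I would simply import them. The conceptual heart --- that $\mathcal{OC}^{0|0}$ lands in the honest eigenspace and that an eigenvector in a size-$\ge 2$ Jordan block is a non-unit annihilated by $\mathcal{CO}$ --- is already packaged in Theorem \ref{Theorem OC on HF(L,L) lands in eigenspace} and Lemma \ref{Lemma CO map vanishes on things}, so no new analysis is required.
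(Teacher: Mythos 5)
Your proof is correct and follows essentially the same route as the paper: use Theorem \ref{Theorem OC on HF(L,L) lands in eigenspace} to confine the image of $\mathcal{OC}^{0|0}$ to the eigenspace spanned by $(x+6)^{d-1}$, observe that with $d\geq 2$ this lies in the maximal ideal of the local ring $\Lambda[x]/(x+6)^d$ and is therefore non-invertible, and optionally rephrase via Lemma \ref{Lemma CO map vanishes on things}. The added remark that $\lambda_p\ne 0$ is harmless but unnecessary here, since for closed $B$ Theorem \ref{Theorem OC on HF(L,L) lands in eigenspace} carries no restriction on $\lambda$.
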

\begin{proof}
This follows from the above observations and Theorem \ref{Theorem OC on HF(L,L) lands in eigenspace}, since the eigenspace of $x$ in $\Lambda[x]/(x+6)^d$ is spanned by $(x+6)^{d-1}$ and $\mathcal{CO}:QH^*(B)\to HF^*(K,K)$ vanishes on multiples of $c_1(TB)+6$ whenever $m_0(K)=-6$ by Lemma \ref{Lemma CO map vanishes on things}. 
% In particular all eigenvectors are zero divisors, so no eigenvector can be an invertible element in this eigensummand of $QH^*(U_8)$). 
\end{proof}
%
%
%%%%%%%%%%%%%%%%%%%%%%%%%%%%%%%%%%%%%%%%%%%%%%%%%%%%%%%%
\subsection{A remark about generation, in view of Galkin's result}
\label{Subsection Galkin's result}
%%%%%%%%%%%%%%%%%%%%%%%%%%%%%%%%%%%%%%%%%%%%%%%%%%%%%%%%%%%

\begin{theorem}[{Galkin \cite{Galkin}}]\label{Theorem Galkin}
 For closed monotone toric manifolds $(B,\omega_B)$, the complex-valued superpotential $W: (\C^*)^n\to \C$ always has a non-degenerate critical point $p\in (\C^*)^n$ with strictly positive real coordinates.
\end{theorem}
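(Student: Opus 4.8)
The plan is to exhibit the desired critical point via a classical convexity argument applied to the real form of the superpotential restricted to the positive orthant. Consider the function $W$ restricted to the positive real torus $\mathbb{R}_{>0}^n \subset (\mathbb{C}^*)^n$, where I write $z_j = e^{u_j}$ with $u \in \mathbb{R}^n$. On this locus each monomial $t^{-\lambda_i} z^{e_i}$ (or in the normalized form where $\lambda_i = -1$, simply $T z^{e_i}$) is a positive multiple of $e^{\langle e_i, u\rangle}$, so the restricted superpotential becomes $\widetilde{W}(u) = \sum_i c_i e^{\langle e_i, u\rangle}$ with all $c_i > 0$. This is manifestly a convex function of $u$, being a positive combination of exponentials of linear functionals.

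\textbf{Key steps.} First I would verify that $\widetilde{W}$ is \emph{strictly} convex on $\mathbb{R}^n$: its Hessian is $\sum_i c_i e^{\langle e_i, u\rangle} e_i e_i^{T}$, which is positive definite precisely because the edges $e_i$ of the fan span $\mathbb{R}^n$ (true since $B$ is compact, so the cones cover $\mathbb{R}^n$, hence the $e_i$ certainly span). Second, I would show $\widetilde{W}$ is proper and bounded below, i.e. $\widetilde{W}(u) \to +\infty$ as $|u| \to \infty$: since the cones of the fan cover $\mathbb{R}^n$, for any direction $v \neq 0$ there is some edge $e_i$ with $\langle e_i, v\rangle > 0$ (otherwise $-v$ would lie in the interior of the dual of every maximal cone, contradicting completeness), so $\widetilde{W}(u + sv) \geq c_i e^{s\langle e_i, v\rangle} \to \infty$ as $s\to\infty$; a compactness argument on the unit sphere of directions upgrades this to properness. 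Third, a proper strictly convex smooth function on $\mathbb{R}^n$ attains a unique global minimum at some $u_* \in \mathbb{R}^n$, which is a critical point of $\widetilde{W}$ with non-degenerate (positive definite) Hessian. Fourth, I would check that $u_*$ is in fact a critical point of the full complex superpotential $W$ on $(\mathbb{C}^*)^n$, not merely of the restriction: the partial derivatives $\partial_{z_j} W$ evaluated at $z_j = e^{(u_*)_j}$ are real, and vanishing on the real locus is exactly the condition $\partial_{u_j}\widetilde{W}(u_*) = 0$ after the chain rule $\partial_{u_j} = z_j \partial_{z_j}$; since $z_j \neq 0$, this forces $\partial_{z_j}W = 0$ at $p := (e^{(u_*)_1}, \ldots, e^{(u_*)_n})$. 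Finally, the complex Hessian of $W$ at $p$ is, up to the invertible diagonal rescaling by the $z_j$'s, the same matrix as the real Hessian of $\widetilde{W}$ at $u_*$ (both are $\sum_i c_i z^{e_i}\,(\text{rank-one terms in }e_i)$ evaluated at $p$), hence non-degenerate; so $p$ is a non-degenerate critical point with strictly positive real coordinates, as claimed.

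\textbf{Main obstacle.} The analytically delicate point is the properness/coercivity of $\widetilde{W}$ on $\mathbb{R}^n$, which is where the compactness (completeness of the fan) of $B$ is genuinely used, and which fails for the non-compact toric manifolds considered elsewhere in the paper — indeed there $R_X \neq \Lambda[z^{\pm}]$ precisely because the cones do not cover $\mathbb{R}^n$, and one cannot expect a critical point of this type. So the heart of the argument is the elementary but essential lemma that for a complete fan, every nonzero direction $v$ pairs positively with at least one edge; once this is in hand, the minimization argument and the passage from the real restriction back to the complex superpotential are routine. A secondary (purely bookkeeping) subtlety is handling the Novikov variable: one should first specialize $t$ (equivalently $T$) to a positive real number, run the real-convexity argument over $\mathbb{R}$, obtain the honest critical point there, and then observe — exactly as in the Technical Remarks of Sections \ref{Subsection The eigenvalues of c_1(TX)} and \ref{Subsection New Novikov ring R} — that the $\mathbb{Z}$-grading dictates how to reintroduce powers of $T$, so the conclusion transports back to the formal setting without loss.
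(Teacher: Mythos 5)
Your argument is exactly the one the paper sketches (and attributes to Galkin): restrict $W$ to the positive real locus $z=\exp(u)$, observe the Hessian $\sum_i c_i e^{\langle e_i,u\rangle}e_ie_i^T$ is positive definite because the edges span, deduce coercivity from completeness of the fan, and take the unique global minimum. The only stylistic difference is that the paper only gestures at the coercivity lemma, while you articulate it — though a cleaner justification than your parenthetical is: by completeness $v$ lies in some cone, so $v=\sum a_l e_{j_l}$ with $a_l\geq 0$, and then $0<|v|^2=\sum a_l\langle e_{j_l},v\rangle$ forces some $\langle e_{j_l},v\rangle>0$.
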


The idea of the proof is to show that a function of type $\sum a_i z^{e_i}$, with positive real $a_i>0$, restricted to $z=\exp(u)\in (\R^*)^n$, has positive-definite Hessian in $u_j$, and that $W\to \infty$ in any direction $u\in \R^n$ going to infinity. The first property is a computation; the second property follows because the cones of the fan of a closed toric manifold cover $\R^n$ so at least one of the terms of $W$ will grow to infinity (the other terms are positive). Thus there is a global minimum $p$. One then checks it satisfies Theorem \ref{Theorem Galkin}.

In the non-compact setting, for admissible toric manifolds $M$ (Definition \ref{Definition admissible toric manifolds}), the first property still holds, but the second property can fail. For example, for $\mathcal{O}_{\P^1}(-1)$, $W=z_1+t z_1^{-1}z_2 + z_2$, putting $t=1$, will not grow to infinity in the direction $u=(-1,-1)$ and the only critical point of $W$ is $z=t\cdot (-1,1)$ which does not have the positive coordinates predicted by Theorem \ref{Theorem Galkin}.

The condition that $W$ grows to infinity at infinity is equivalent to requiring that the fan of $M$ does not lie entirely in a half-plane (this fails for negative line bundles), since then some inner product $\langle u,e_i \rangle >0$ and thus the term $z^{e_i}=\exp \langle u,e_i \rangle$ will grow to infinity as we positively rescale $u$. Subject to checking that $W$ has this growth-property for a particular $M$ (implying Theorem \ref{Theorem Galkin} for $M$) the arguments below would generalize to $M$.

The passage between the complex-valued and the Novikov-valued superpotentials is done as follows. The monotone toric form $\omega_{\Delta}=\sum \mathrm{PD}[D_i]=c_1(TB)$ yields $\lambda_i=-1$ so 
$W=\sum t^{-\lambda_i}z^{e_i}=t\sum z^{e_i}$. Thus it suffices to study the critical points of the complex-valued Laurent polynomial $W=\sum z^{e_i}$ and then reinsert powers of $T$ as dictated by the $\Z$-grading of $QH^*(B)$ (compare with the sanity check in the proof of Theorem \ref{Theorem barycentre of E is critical}).

\begin{corollary}
 For any closed monotone toric manifold $B$, $c_1(TB)\in QH^*(B)$ is not nilpotent.
\end{corollary}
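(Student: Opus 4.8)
The plan is to deduce non-nilpotence of $c_1(TB)$ from Galkin's Theorem \ref{Theorem Galkin} via the Batyrev isomorphism $QH^*(B)\cong \mathrm{Jac}(W)$ of Corollary \ref{Corollary QH of noncompact toric} (in the closed case this is the classical statement), under which $c_1(TB)\mapsto W$. First I would recall that, working over $\C$ by setting the Novikov variable $t=1$ (legitimate since $B$ is Fano, so there are no convergence issues and the quantum product involves only finitely many powers of $t$), we have an isomorphism of $\C$-algebras $QH^*(B;\C)\cong \mathrm{Jac}(W)=\C[z_1^{\pm 1},\ldots,z_n^{\pm 1}]/(\partial_{z_1}W,\ldots,\partial_{z_n}W)$, sending $c_1(TB)$ to the class of $W$ itself. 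Powers of $T$ can be reinserted afterwards as dictated by the $\Z$-grading, so it suffices to prove that the image of $c_1(TB)$ in $QH^*(B;\C)$ is not nilpotent.

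Next I would use that, since $W$ has a non-degenerate critical point $p\in(\C^*)^n$ by Theorem \ref{Theorem Galkin}, the Jacobian ring $\mathrm{Jac}(W)$ has a field summand corresponding to $p$: this is precisely Theorem \ref{Theorem Ostrover-Tyomkin field summand} (Ostrover-Tyomkin), or more directly the fact that $Z_W=\mathrm{Spec}\,\mathrm{Jac}(W)$ is the critical scheme of $W$ and a non-degenerate critical point contributes a reduced point, so the local ring $\mathcal{O}_{Z_W,p}\cong\C$. Under the resulting decomposition $\mathrm{Jac}(W)\cong\C\oplus(\text{rest})$, multiplication by $W$ acts on the $\C$-summand as multiplication by the scalar $W(p)$. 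Hence if $W$ were nilpotent in $\mathrm{Jac}(W)$, then $W(p)^N=0$ in $\C$ for some $N$, forcing $W(p)=0$.

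So the final step is to rule out $W(p)=0$ for the specific critical point $p$ produced by Galkin — namely the one with strictly positive real coordinates $z_i>0$. For such $p$, every monomial $z^{e_i}$ is a positive real number, so $W(p)=\sum_i z^{e_i}>0$; in particular $W(p)\neq 0$. Therefore $W$ is not nilpotent in $\mathrm{Jac}(W)$, hence $c_1(TB)$ is not nilpotent in $QH^*(B;\C)$, and reinserting powers of $T$ gives the statement over $\Lambda$. I do not expect a genuine obstacle here: the only point requiring a little care is the bookkeeping of passing between the $\C$-valued superpotential and the Novikov-valued one (handled exactly as in the discussion preceding this corollary, where $\omega_\Delta$ gives all $\lambda_i=-1$ and $W=t\sum z^{e_i}$), together with invoking that a non-degenerate critical point yields a genuine field (as opposed to merely a $\Z$-graded local ring summand) — both already established in the excerpt.
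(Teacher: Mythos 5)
Your proof is correct and takes essentially the same route as the paper: Galkin's critical point $p$ has strictly positive real coordinates, so $W(p)=\sum z^{e_i}|_{z=p}>0$, and since critical values of $W$ are eigenvalues of $c_1(TB)$ under the Batyrev isomorphism $QH^*(B)\cong\mathrm{Jac}(W)$, $c_1(TB)$ has a nonzero eigenvalue and cannot be nilpotent. The paper compresses the algebraic bookkeeping into the single assertion that $c_1(TB)\in QH^*(B;\C)$ has the strictly positive real eigenvalue $W(p)$; your spelling-out via the Ostrover--Tyomkin field summand is a correct (if slightly more elaborate) presentation of that same step.
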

\begin{proof}
Observe that $W=\sum z^{e_i}$ will take a strictly positive real value on the $p$ in Theorem \ref{Theorem Galkin}, so $c_1(TB)\in QH^*(B;\C)$ has a strictly positive real eigenvalue $\lambda_p$. The same is true for the superpotential defined over the Novikov ring $\Lambda$ by reinserting powers of $T$ as dictated by the $\Z$-grading of $QH^*(B)$  (the eigenvalue will be $\lambda_p T$, with $T$ in grading 2).
\end{proof}

\begin{lemma}[{\cite{RitterSmith}}] \label{Lemma c1TB non-nilpotent implies c1TE non-nilpotent}
 For any monotone negative line bundle $E\to B$, if $c_1(TB)\in QH^*(B)$ is non-nilpotent then $c_1(TE)\in QH^*(E)$ is non-nilpotent.
\end{lemma}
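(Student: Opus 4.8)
The plan is to deduce the non-nilpotence of $c_1(TE)$ from that of $c_1(TB)$ by transporting a non-trivial eigenvalue through the ring homomorphism $\varphi$ of Theorem \ref{Theorem Minimal poly and char poly from B to E}, and then lifting back from $SH^*(E)$ to $QH^*(E)$. Recall that $c_1(TB)=\lambda_B[\omega_B]$ and $c_1(TE)=\lambda_E\pi^*[\omega_B]$ with $\lambda_E=\lambda_B-k>0$, so it suffices to show that $\pi^*[\omega_B]\in QH^*(E)$ is non-nilpotent, equivalently (since scalars are units) that some power of $\pi^*[\omega_B]$ is non-zero.

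First I would work over the restricted Novikov ring: since $c_1(TB)$ is non-nilpotent in $QH^*(B;\Lambda)$, by the freeness and grading arguments in the proof of Theorem \ref{Theorem Minimal poly and char poly from B to E} (exactness of localization/completion in $t$, $QH^*$ free over the relevant subring) the element $[\omega_B]$ is non-nilpotent already in $QH^*(B;\K[t])$; concretely, the minimal polynomial of $[\omega_B]\in QH^*(B)$ has a non-zero constant-term-free factor, i.e. after stripping $x$-factors it is not just a power of $x$. Equivalently, writing $f_B(x)$ for the minimal polynomial of $[\omega_B]$, we have $f_B(x)=x^a\overline{f_B}(x)$ with $\overline{f_B}(0)\neq 0$ and $\deg\overline{f_B}\geq 1$. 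Then I would apply $\varphi:QH^*(B;\K[t])\to QH^*(E;\K[t])\to SH^*(E;\K[t])$, which sends $x_i\mapsto x_i$, hence $[\omega_B]\mapsto \pi^*[\omega_B]$, and $t_B\mapsto t_E(\sum n_i x_i)^k = t_E(-k\pi^*[\omega_B])^k$. Since $\varphi$ is a ring homomorphism and $\varphi([\omega_B])=\pi^*[\omega_B]$ does not involve $t$, the relation $f_B([\omega_B])=0$ in $QH^*(B)$ yields $\varphi(f_B)(\pi^*[\omega_B])=0$ in $SH^*(E)$, where $\varphi(f_B)$ is $f_B$ with $t_B$ replaced by $t_E(-k\pi^*[\omega_B])^k$.

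The key step is then to observe that in $SH^*(E)$ the element $\pi^*c_1(E)=-k\pi^*[\omega_B]$ is invertible (Corollary \ref{Corollary SH is a localization of QH}, since $SH^*(E)$ is the localization of $QH^*(E)$ at $\pi^*c_1(E)$), so $\pi^*[\omega_B]$ itself is a unit in $SH^*(E)$; in particular $\pi^*[\omega_B]$ is non-nilpotent in $SH^*(E)$. But $SH^*(E)=QH^*(E)/(\textrm{generalized }0\textrm{-eigenspace of }\pi^*c_1(E))$ by Theorem \ref{Theorem SH of negative line bundle}, so the image of $(\pi^*[\omega_B])^N$ is non-zero in $SH^*(E)$ for every $N$, hence $(\pi^*[\omega_B])^N\neq 0$ in $QH^*(E)$ for every $N$. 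Therefore $\pi^*[\omega_B]$, and hence $c_1(TE)=\lambda_E\pi^*[\omega_B]$, is non-nilpotent in $QH^*(E)$. (One can bypass $\varphi$ altogether here: the single fact that $SH^*(E)\neq 0$ — equivalently $\pi^*c_1(E)$ non-nilpotent in $QH^*(E)$, by Theorem \ref{Introduction theorem SH of E is QH modulo ker} — is exactly the statement, and that in turn is what needs proving; so the genuine content is that \emph{$c_1(TB)$ non-nilpotent forces $SH^*(E)\neq 0$}, which is what $\varphi$ and Theorem \ref{Theorem Eigenvalues of c1 from B to E} deliver.)

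The main obstacle I anticipate is making precise, over the non-PID ground ring $\K[t]$ rather than over the field $\Lambda$, that "$c_1(TB)$ non-nilpotent" really does propagate: a priori non-nilpotence over $\Lambda$ could be an artifact of denominators or fractional powers of $t$. This is exactly the technical content of Steps 1–2 in the proof of Theorem \ref{Theorem Eigenvalues of c1 from B to E} (and the freeness/grading discussion in Theorem \ref{Theorem Minimal poly and char poly from B to E}), so I would cite those: the primary decomposition \eqref{Equation primary decomposition of QH(B)} has a non-zero factor of the form $\Lambda[x]/(x^{\lambda_B}-\mu^{\lambda_B}t)^d$ with $\mu\neq 0$ precisely by Galkin's theorem, and Theorem \ref{Theorem Eigenvalues of c1 from B to E} then produces the corresponding non-zero factor $\Lambda[x]/[x^{\lambda_B-k}-(-k)^k\mu^{\lambda_B}t]^d$ in $SH^*(E)$, which is manifestly non-zero and on which $\pi^*[\omega_B]$ acts with the non-zero eigenvalue whose $(\lambda_B-k)$-th power is $(-k)^k\mu^{\lambda_B}$. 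Either route works; the $\varphi$-plus-localization route above is the cleaner one to write down.
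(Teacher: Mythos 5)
Your main argument — the ``$\varphi$-plus-localization route'' — is circular as written. The pivotal step ``$\pi^*[\omega_B]$ is a unit in $SH^*(E)$, in particular non-nilpotent in $SH^*(E)$'' is only valid once one knows $SH^*(E)\neq 0$: in the zero ring every element is simultaneously a unit and nilpotent, so invertibility alone yields nothing. And by Theorem \ref{Introduction theorem SH of E is QH modulo ker}, $SH^*(E)\neq 0$ is exactly equivalent to $\pi^*c_1(E)$ being non-nilpotent in $QH^*(E)$, which is (up to a scalar) the conclusion you are after. You notice this circularity in the parenthetical, but then incorrectly conclude that ``the $\varphi$-plus-localization route above is the cleaner one to write down''; it is the incomplete one. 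Note also that $\varphi$ alone delivers nothing in the degenerate case: if $SH^*(E;\K[t])=0$ then $\varphi$ is the zero map, so pushing $f_B([\omega_B])=0$ through it just gives $0=0$.

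The correct route is the one the paper takes, and which your final paragraph gestures at: cite Theorem \ref{Theorem Eigenvalues of c1 from B to E} directly. Non-nilpotence of $c_1(TB)$ means \eqref{Equation primary decomposition of QH(B)} has a summand $\Lambda[x]/(x^{\lambda_B}-\mu_j^{\lambda_B}t)^{d_j}$ with $\mu_j\neq 0$, and \eqref{Equation primary decomposition of QH(E)} then exhibits a corresponding \emph{manifestly non-zero} summand $\Lambda[x]/[x^{\lambda_B-k}-(-k)^k\mu_j^{\lambda_B}t]^{d_j}$ of $SH^*(E)\subset QH^*(E)$ on which $\pi^*[\omega_B]$ acts with a non-zero eigenvalue; this establishes $SH^*(E)\neq 0$ rather than presupposing it. (Even more economically, one needs only Step 1 of that proof: if $(\pi^*[\omega_B])^N=0$ in $QH^*(E;\K[t])$, applying Step 1 with $f(x)=x^N$, $v=1$ gives $[\omega_B]^{\mathrm{large}+N}=0$ in $QH^*(B;\K[t])$, contradicting the hypothesis — no reference to $SH^*$ is needed at all.) Finally, the appeal to Galkin in your last paragraph is misplaced for \emph{this} lemma: the hypothesis ``$c_1(TB)$ non-nilpotent'' is what supplies the non-zero factor; Galkin's theorem is what later verifies that hypothesis unconditionally in Corollary \ref{Corollary SH(E) nonzero for mon toric nlb}, not what makes this conditional statement go.
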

\begin{proof}
 This follows by Theorem \ref{Theorem Eigenvalues of c1 from B to E}: a non-zero eigenvalue of $c_1(TB)$ gives rise to a non-zero eigenvalue of $c_1(TE)$. Since only the latter claim is needed, rather than the full Theorem \ref{Theorem Eigenvalues of c1 from B to E}, one can also prove the claim by carefully comparing the critical points of the superpotential $W_E$ in terms of those of $W_B$ and applying Theorem \ref{Theorem crit values of W are evalues}, as was done in \cite{RitterSmith}.
\end{proof}

\begin{corollary}[{\cite{RitterSmith}}]
For any monotone negative line bundle $E\to B$, the symplectic cohomology $SH^*(E)\neq 0$ is non-zero and there is a non-displaceable monotone Lagrangian torus $L\subset E$ with $HF^*(L,L)\cong HW^*(L,L)\neq 0$ using suitable holonomy data.
By Corollary \ref{Corollary Lagrangian tori in E in SE}, $L$ is the unique monotone Lagrangian torus orbit in $E$. It lies in the sphere bundle $SE\subset E$ of radius $1/\sqrt{\pi \lambda_E}$, and it projects to the unique monotone Lagrangian torus orbit in $B$.
\end{corollary}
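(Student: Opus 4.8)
The plan is to assemble the statement from three ingredients that have already been established. First, $SH^*(E)\neq 0$: this is exactly Corollary \ref{Corollary SH(E) nonzero for mon toric nlb}, which follows by combining Theorem \ref{Introduction theorem SH of E is QH modulo ker} (equivalently Theorem \ref{Theorem SH of negative line bundle}: $SH^*(E)=QH^*(E)/(\textrm{generalized }0\textrm{-eigenspace of }\pi^*c_1(E))$) with the non-nilpotence of $c_1(TE)\in QH^*(E)$. The latter is Lemma \ref{Lemma c1TB non-nilpotent implies c1TE non-nilpotent} together with Galkin's theorem (Theorem \ref{Theorem Galkin}) applied to $B$, which guarantees $c_1(TB)$ has a strictly positive real eigenvalue and hence is non-nilpotent; by Lemma \ref{Lemma c1TB non-nilpotent implies c1TE non-nilpotent} (or, with more work, Theorem \ref{Theorem Eigenvalues of c1 from B to E}) the non-zero eigenvalue $\mu$ of $c_1(TB)$ produces a non-zero eigenvalue of $c_1(TE)$, namely a root of $x^{\lambda_B-k}=(-k)^k\mu^{\lambda_B}t$. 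So $SH^*(E)_\lambda\neq 0$ for that $\lambda=W_E(p)\neq 0$, where $p$ is the critical point of $W_E$ corresponding to this eigenvalue.

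Second, I would produce the Lagrangian. Since $SH^*(E)\cong\mathrm{Jac}(W_E)$ (Theorem \ref{Theorem Jacobian ring is SH}) with $c_1(TE)\mapsto W_E$, the non-zero eigenvalue $\lambda$ is a critical value $W_E(p)$ of $W_E$ with $p\in(\C^*)^{n+1}$. By the Cho--Oh machinery (see \cite{Cho-Oh,Auroux}), this critical point $p$ determines a toric Lagrangian torus fibre $L=L_p$ of the moment map, equipped with holonomy data, such that $[\mathrm{pt}]\in HF^*(L,L)$ is a cycle and, by \eqref{EqnChoOh}, $HF^*(L,L)\cong H^*(L;\Lambda)\neq 0$; moreover $m_0(L)=W_E(p)=\lambda$. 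Since $L$ is compact (toric Lagrangians are orbits of $T_\R$), the acceleration functor identifies $HF^*(L,L)\cong HW^*(L,L)$, so $HW^*(L,L)\neq 0$ as well. This already gives $HF^*(L,L)\cong HW^*(L,L)\neq 0$.

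Third, I would deduce non-displaceability: if $L$ were displaceable by a Hamiltonian isotopy then $HF^*(L,L)=0$, contradicting the previous paragraph. Hence $L$ is non-displaceable. For the identification of $L$ with ``the unique monotone Lagrangian torus orbit in $E$'' and its location in the sphere bundle $SE$, I would invoke Corollary \ref{Corollary Lagrangian tori in E in SE} (cited in the statement): there is a unique monotone toric Lagrangian orbit, it is the one lying over the unique monotone toric Lagrangian orbit in $B$, and a computation with the radial coordinate $R=(1+k\pi r^2)/(1+k\pi)$ from Section \ref{Subsection Negative line bundles} places it at fibre-radius $1/\sqrt{\pi\lambda_E}$ (the precise constant comes from matching the monotonicity condition $c_1(TE)=\lambda_E\pi^*[\omega_B]$ against the block form of $\omega$). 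The continuity argument of Section \ref{Subsection Generation for semisimple critical values}, or direct inspection via the critical-point formula relating $\mathrm{Crit}(W_E)$ and $\mathrm{Crit}(W_B)$, shows the relevant $p$ corresponds to the \emph{monotone} torus, i.e. the one whose moment-polytope point is the barycentre.

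The main obstacle is the last point: one must make sure that among the (possibly several) critical points of $W_E$ producing non-zero eigenvalues, at least one corresponds to the \emph{monotone} Lagrangian torus orbit, and that this orbit genuinely lives in the sphere bundle $SE$ with the stated radius. This requires the explicit comparison of $\mathrm{Crit}(W_E)$ with $\mathrm{Crit}(W_B)$ (using $W_E(z,z_{n+1})=z_{n+1}+\varphi(W_B)$ from Theorem \ref{Theorem Jacobian ring is SH} and $\varphi(t_B)=t_E z_{n+1}^k$) to locate the fibre coordinate $z_{n+1}$ of the critical point, and then translating $|z_{n+1}|$ back into the fibre-radius $r$ via the moment map computations of Appendix A and Section \ref{Subsection The Hamiltonians generating the rotations around the toric divisors}. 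Everything else is a direct citation of already-proven results.
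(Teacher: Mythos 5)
Your proposal is correct and follows the paper's own route: non-nilpotence of $c_1(TE)$ via Galkin and Lemma \ref{Lemma c1TB non-nilpotent implies c1TE non-nilpotent}, then $SH^*(E)\cong\mathrm{Jac}(W_E)$ (Theorem \ref{Theorem Jacobian ring is SH}) to convert a non-zero eigenvalue into a critical point $p$ of $W_E$, then Cho--Oh to produce $L_p$ with $HF^*(L_p,L_p)\neq 0$, hence non-displaceable. The ``main obstacle'' you flag is not actually an obstacle: Theorem \ref{Theorem barycentre of E is critical} (invoked inside Corollary \ref{Corollary Lagrangian tori in E in SE}) shows that \emph{every} critical point of $W_E$ lies over the barycentre of $\Delta_E$, so each one automatically yields the same unique monotone Lagrangian torus orbit in the sphere bundle of radius $1/\sqrt{\pi\lambda_E}$ (the choices differing only by holonomy data) — there is no selection to be made.
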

\begin{proof}
Using Lemma \ref{Lemma c1TB non-nilpotent implies c1TE non-nilpotent}, this follows from $SH^*(E)\cong \mathrm{Jac}(W_E)$ (Theorem \ref{Theorem Jacobian ring is SH}): each non-zero eigenvalue of $c_1(TE)$ must arise as a critical value of $W_E$, and the corresponding critical point gives rise to such an $L$. In \cite{RitterSmith}, this was proved by computing the critical point of $W_E$ in terms of that for $W_B$ and then applying Theorem \ref{Theorem crit values of W are evalues}.
\end{proof}

Galkin's result implies that there is a monotone toric Lagrangian $L_p\subset B$ (with holonomy data) with $HF^*(L_p,L_p)\neq 0$, corresponding to the non-degenerate critical point $p$ of $W_B$ of Theorem \ref{Theorem Galkin} (in fact, $L_p\subset B$ is the toric fibre over the barycentre of the moment polytope, see Section \ref{Subsection The barycentre of the moment polytope for monotone toric manifolds}). If the real positive number $W_B(p)$ is a semisimple critical value, then by Theorem \ref{Theorem toric generation for semisimple critical values} the toric generation criterion holds for $\lambda=W_B(p)$ (and if $p$ is the only critical point with critical value $\lambda=W_B(p)$, then $L_p$ split-generates $\mathcal{F}(B)_{\lambda}$ by Theorem \ref{Theorem toric generation for isolated critical value}). However, $L_p$ is unlikely to split-generate all of $\mathcal{F}(B)_{\lambda}$ if $\dim QH^*(B)_{\lambda} \geq 2$: one can only expect $L_p$ to split-generate the subcategory of those $K\in \mathrm{Ob}(\mathcal{F}(B)_{\lambda})$ for which
$$
1_p * [K] = [K],
$$
where $1_p$ is the unit in the field summand of $QH^*(B)_{\lambda}$ corresponding to $p$. This is true for the following reason: let $1_q$ denote the units in the summands of the primary decomposition of $QH^*(B)_{\lambda}$ as a $\Lambda[x]$-module, where $x$ acts by $c_1(TB)*$ (that is, $q\in \mathrm{Crit}(W_B)$ with $W_B(q)=\lambda$). Then $1=\sum 1_q$ and the ``orthonormality relations'' $1_q*1_p = \delta_{q,p}\, 1_p$ hold by Remark \ref{Remark Eigensummands for nondegenerate critical points}. Since $\mathcal{CO}:QH^*(B)\to HF^*(K,K)$ is a ring map, $\mathcal{CO}(\sum 1_q)=[K]$. Therefore the condition $1_p * [K] = [K]$ implies that $\mathcal{CO}(1_p)=[K]$. Finally Lemma \ref{Lemma calculation of OC of point} implies that $\mathcal{OC}: HF^*(L_p,L_p) \to QH^*(B)$ hits $1_p$ so the generation criterion applies to Lagrangians $K\in \mathrm{Ob}(\mathcal{F}(B)_{\lambda})$ with $1_p*[K]=[K]$.
However, in general $\mathcal{CO}(1_p)=1_p*[K]$ need not equal $[K]$, so more Lagrangians than just $L_p$ are needed to split-generate $\mathcal{F}(B)_{\lambda}$.

%%%%%%%%%%%%%%%%%%%%%%%%%%%%%%%%%%%%%%%%%%%%%%%%%%%%%%%%%%%%%%%%%%%%%%%%%%%%%%
%%%%%%%%%%%%%%%%%%%%%%%%%%%%%%%%%%%%%%%%%%%%%%%%%%%%%%%%%%%%%%%%%%%%%%%%%%%%%%
%%%%%%%%%%%%%%%%%%%%%%%%%%%%%%%%%%%%%%%%%%%%%%%%%%%%%%%%%%%%%%%%%%%%%%%%%%%%%%
%%%%%%%%%%%%%%%%%%%%%%%%%%%%%%%%%%%%%%%%%%%%%%%%%%%%%%%%%%%%%%%%%%%%%%%%%%%%%%
%%%%%%%%%%%%%%%%%%%%%%%%%%%%%%%%%%%%%%%%%%%%%%%%%%%%%%%%%%%%%%%%%%%%%%%%%%%%%%
%%%%%%%%%%%%%%%%%%%%%%%%%%%%%%%%%%%%%%%%%%%%%%%%%%%%%%%%%%%%%%%%%%%%%%%%%%%%%%
%%%%%%%%%%%%%%%%%%%%%%%%%%%%%%%%%%%%%%%%%%%%%%%%%%%%%%%%%%%%%%%%%%%%%%%%%%%%%%
%%%%%%%%%%%%%%%%%%%%%%%%%%%%%%%%%%%%%%%%%%%%%%%%%%%%%%%%%%%%%%%%%%%%%%%%%%%%%%
%%%%%%%%%%%%%%%%%%%%%%%%%%%%%%%%%%%%%%%%%%%%%%%%%%%%%%%%%%%%%%%%%%%%%%%%%%%%%%
%%%%%%%%%%%%%%%%%%%%%%%%%%%%%%%%%%%%%%%%%%%%%%%%%%%%%%%%%%%%%%%%%%%%%%%%%%%%%%
%%%%%%%%%%%%%%%%%%%%%%%%%%%%%%%%%%%
%%%%%%%%%%%%%%%%%%%%%%%%%%%%%%%%%%%
%%%%%%%%%%%%%%%%%%%%%%%%%%%%%%%%%%%
%%%%%%%%%%%%%%%%%%%%%%%%%%%%%%%%%%%
\section{Appendix A: The moment polytope of a toric negative line bundle}
\label{Section The moment polytope of a toric negative line bundle}
%%%%%%%%%%%%%%%%%%%%%%%%%%%%%%%%%%%
%%%%%%%%%%%%%%%%%%%%%%%%%%%%%%%%%%%

%%%%%%%%%%%%%%%%%%%%%%%%%%%%%%%%%%%%%%%%%%%%%%%%%%%%%%%%%%%
\subsection{The fan of a line bundle over a toric variety}
\label{Subsection The fan of a line bundle over a toric variety}
%%%%%%%%%%%%%%%%%%%%%%%%%%%%%%%%%%%%%%%%%%%%%%%%%%%%%%%%%%%

For basics on toric geometry, we refer the reader to Fulton \cite{Fulton}, Guillemin \cite{Guillemin}, Cox-Katz \cite[Chp.3]{Cox-Katz}. There are also useful summaries contained in Batyrev \cite{Batyrev} and Ostrover-Tyomkin \cite{Ostrover-Tyomkin}.

A \emph{toric variety} $X$ of complex dimension $n$ is a normal variety which contains a complex torus $T=(\C^*)^{n}$ as a dense open subset, together with an action of $T$ on $X$ extending the natural action of $T$ on itself by multiplication. We always assume that $X$ is non-singular.

A toric variety is described by a \emph{fan}, which is a certain collection of cones inside $\Z^n$. The $n$-dimensional cones correspond to affine open sets in $X$ and their arrangement prescribes the way in which these glue together to yield $X$. More globally, from the fan one can explicitly write down a \emph{homogeneous coordinate ring} for $X$, with coordinates $x_1,\ldots,x_r$ indexed by the edges $e_1,\ldots,e_r\in \Z^n$ of the fan. Namely,
$$
X = (\C^n \setminus Z)/G \; \textrm{ with torus } \; T = (\C^*)^r/G.
$$
Here $Z$ is the union of the vanishing sets of those $x_i$'s  corresponding to subsets of edges which do not span a cone (so $Z$ is the union of the vanishing sets $\cap_{i\in I}(x_i=0)$ for primitive subsets of indices $I=\{ i_1,\ldots,i_a\}$, see Definition
\ref{Definition Primitive}); and $G$ is the kernel of the homomorphism 
$$\mathrm{Exp}(\beta): (\C^*)^r \to (\C^*)^n, \;  (t_1,\ldots,t_r) \mapsto (t_1^{e_{1,1}}t_2^{e_{2,1}}\cdots t_r^{e_{r,1}},\; t_1^{e_{1,2}}t_2^{e_{2,2}}\cdots t_r^{e_{r,2}},\; \ldots),$$
determined by the coordinates of the edges $e_i=(e_{i,1},\ldots,e_{i,n})$. More directly, $G$ will contain $(t^{a_1},\ldots,t^{a_r})$ precisely if $\sum a_i e_i=0$ is a $\Z$-linear dependence relation amongst the edges.

\begin{example}\label{Example O-k over Pm}
 For $X=\mathrm{Tot}(\mathcal{O}(-k)\to \P^m)$, the edges in $\Z^{m+1}$ are $e_1=(1,0,\ldots,0)$, $\ldots$, $e_{m}=(0,\ldots,1,0)$, $e_{m+1}=(-1,\ldots,-1,k)$, $e_{m+2}=(0,\ldots,0,1)$. The cones are the $\R_{\geq 0}$-span of any subset of the edges, except for the two subsets $\{e_1,\ldots,e_{m+1}\}$ and $\{e_1,\ldots,e_{m+2}\}$. Those exceptional subsets determine $Z=(x_1=\cdots=x_{m+1}=0)$. Since $\mathrm{Exp}(\beta): (\C^*)^{m+2} \to (\C^*)^{m+1}$, $t\mapsto (t_1t_{m+1}^{-1},t_2t_{m+1}^{-1},\ldots,t_{m+1}^kt_{m+2})$ it follows that $G=\{(t,\ldots,t,t^{-k}): t\in \C^*\}$ (which corresponds to the linear relation $e_1+\cdots+e_{m+1}-ke_{m+2}=0$). Thus $X=(\C^{m+2}\setminus Z)/G$, with torus $T=(\C^*)^{m+2}/G\cong (\C^*)^{m+1}$, has homogenous coordinates $x_1,\ldots,x_{m+2}$. Then $[x_1:\cdots:x_{m+1}]$ defines the projection $X \to \P^m$ to the base, and the zero section is $(x_{m+2}=0)$. Over the coordinate patches $U_1=(x_1\neq 0)$, $U_2=(x_2\neq 0)$, we can assume $x_1=1,x_2=1$ respectively, and on the overlap $U_1\cap U_2$ we can identify $(1,x_2,\ldots,x_{m+2})$ with $(x_2^{-1},1,x_2^{-1}x_3,\ldots,x_2^{-1}x_{m+1},x_2^kx_{m+2})$ using the $G$-action. So the transition $U_1 \to U_2$ multiplies by $\varphi=1/x_2$ in the base coordinates and by $\varphi^{-k}$ in the fiber coordinate. In general, $g_{ij}=(x_i/x_j)^k$ is the transition in the fibre going from $U_j$ to $U_i$, as expected for $\mathcal{O}_{\P^m}(-k)$.
 \end{example}
 
 We now describe the general construction of $\mathrm{Tot}(\pi: E\to B)$ for a line bundle over a toric variety $B$. We will always assume $B$ is a smooth closed manifold, which imposes constraints on what fans can arise:
\begin{enumerate}
 \item 
 Compactness is equivalent to the condition that the cones of the fan of $B$ cover $\R^n$ (in particular, the $\R_{\geq 0}$-span of the edges $b_1,\ldots,b_r\in \Z^n$ is $\R^n$). 

\item Smoothness is equivalent to the condition that each subset of edges which generates a cone extends to a $\Z$-basis of $\Z^n$ (in particular $b_i$ is \emph{primitive}: it is the first $\Z^n$-point on the ray $\R_{\geq 0} b_i\subset \R^n$).

\end{enumerate}

The edges of $B$ correspond to homogeneous coordinates $x_i$ for $B$. They give rise to divisors 
$$D_i=(x_i=0) \subset B,$$
called \emph{toric divisors}. These are precisely the irreducible $T$-invariant effective divisors of $B$.

\begin{lemma}[see {\cite[Sec.3.4]{Fulton}}]
%  The line bundles $\mathcal{O}(D_i)$ generate $\mathrm{Pic}(B)$ $($the Picard group of holomorphic line bundles modulo isomorphism$)$. So any
Any holomorphic line bundle $E \to B$ is isomorphic to $\mathcal{O}(\sum n_i D_i)$ for some $n_i\in \Z$.
\end{lemma}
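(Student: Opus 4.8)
The statement is the classical fact that on a smooth toric variety $B$, every holomorphic (equivalently, algebraic) line bundle is isomorphic to one of the form $\mathcal{O}(\sum n_i D_i)$ where the $D_i$ are the toric divisors. The plan is to reduce this to two ingredients: first, that the Picard group of $B$ is generated by the classes of $T$-invariant divisors, and second, that every invariant divisor is a $\Z$-linear combination of the $D_i$. For the first ingredient I would use the fact that $B$ contains the dense open torus $T=(\C^*)^n$ whose complement is the union $\bigcup_i D_i$ of the toric divisors. Since $T$ is an open subset of affine space $(\C^*)^n$, it has trivial Picard group (the coordinate ring $\C[z_1^{\pm 1},\dots,z_n^{\pm 1}]$ is a UFD, indeed a localization of a polynomial ring, so every rank-one projective module over it is free). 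Hence the restriction map $\mathrm{Pic}(B)\to \mathrm{Pic}(T)=0$ is zero, and the excision/localization exact sequence for Weil (= Cartier, since $B$ is smooth) divisors,
$$
\bigoplus_i \Z\cdot D_i \longrightarrow \mathrm{Pic}(B) \longrightarrow \mathrm{Pic}(T) = 0,
$$
shows that the classes $[D_i]$ generate $\mathrm{Pic}(B)$.

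From here the argument is quick. Given a holomorphic line bundle $E\to B$, its class $[E]\in\mathrm{Pic}(B)$ can be written as $\sum n_i [D_i]$ for suitable integers $n_i$, since the $[D_i]$ generate. The divisor $D=\sum n_i D_i$ is a Cartier divisor (smoothness of $B$ makes every Weil divisor Cartier), and $\mathcal{O}(D)$ is by construction the line bundle whose class is $\sum n_i[D_i]=[E]$; therefore $E\cong \mathcal{O}(\sum n_i D_i)$. For a self-contained treatment one can make the exact sequence concrete: a global rational section of $E$ (which exists because $E$ is trivial over the affine open chart corresponding to any maximal cone, so one can patch using a nonzero section over one chart and extend meromorphically) has a divisor of zeros and poles which, restricted to $T$, is principal; subtracting a principal divisor to clear it on $T$ leaves a divisor supported on $B\setminus T=\bigcup_i D_i$, i.e. of the form $\sum n_i D_i$, and this divisor computes $E$.

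The main obstacle, and the only place where genuine care is needed, is justifying that $\mathrm{Pic}(T)=0$ and that the localization sequence is exact on the right — i.e. that a divisor class on $B$ restricting to $0$ on $T$ is represented by a divisor supported on the complement. Both are standard (the first because $\C[z_i^{\pm 1}]$ is a localization of a polynomial ring hence a UFD; the second is the excision sequence for Chow groups / Picard groups of a smooth variety, cf. Hartshorne II.6 or Fulton's \emph{Intersection Theory} §1.8), but they should be cited precisely rather than reproved. Everything else — existence of a rational section, equality of Weil and Cartier divisors on a smooth variety, and the identification of $\mathcal{O}(D)$ with the line bundle of a divisor — is routine. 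I would therefore structure the proof as: (1) recall $B\setminus T=\bigcup D_i$ and $\mathrm{Pic}(T)=0$; (2) invoke the excision sequence to conclude the $[D_i]$ span $\mathrm{Pic}(B)$; (3) write $[E]=\sum n_i[D_i]$ and conclude $E\cong\mathcal{O}(\sum n_i D_i)$, with the reference to \cite[Sec.3.4]{Fulton} for the toric-specific packaging.
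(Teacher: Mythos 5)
The paper does not give its own proof of this lemma; it simply cites \cite[Sec.3.4]{Fulton}, and your argument is precisely the one that appears there. The two ingredients you isolate — triviality of $\mathrm{Pic}(T)$ because $\C[z_1^{\pm 1},\ldots,z_n^{\pm 1}]$ is a UFD, and right-exactness of the localization sequence $\bigoplus_i \Z\cdot D_i \to \mathrm{Pic}(B) \to \mathrm{Pic}(T)\to 0$ on the smooth variety $B$, where $B\setminus T=\bigcup D_i$ — are exactly Fulton's. Your proof is correct and matches the cited source; the only implicit step worth making explicit is that since the paper's $B$ is a closed projective toric manifold, GAGA identifies holomorphic line bundles with algebraic ones, which is what licenses passing to $\mathrm{Pic}$ and divisor classes in the first place (you acknowledge this with the parenthetical ``(equivalently, algebraic)'' but it deserves a one-line justification).
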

% \begin{proof}\cite[Sec.3.4]{Fulton}
% This follows from the fact that we can ensure that a given line bundle is trivial over $T_B$, so it has the form $\mathcal{O}(D)$ for a divisor $D$ supported in $\cup D_i$, but then $D$ must be $T$-invariant, and so it is a linear combination of the $D_i$.
% \end{proof}

\noindent{\bf Example.} \emph{The description of $\mathcal{O}(-k) \to \P^m$ in Example \ref{Example O-k over Pm} is $\mathcal{O}(-kD_{m+1})$.}

\begin{lemma}\label{Lemma line bundle from fan of base}
 Let $E=\mathcal{O}(\sum n_i D_i)\stackrel{\pi}{\longrightarrow} B$, for $n_i\in \Z$. A fan for $E$ is given by the edges 
$$e_1=(b_1,-n_1),\; \ldots,\; e_r =(b_r,-n_r),\; e_{r+1}=(0,\ldots,0,1) \in \Z^{n+1}$$
with the following cones. Whenever the $\R_{\geq 0}$-span of a subset $b_{j_1},\ldots,b_{j_k}$ of the $b_i$'s is a cone for $B$, then the $\R_{\geq 0}$-span of $e_{j_1},\ldots,e_{j_k}$ is a cone for $E$ and the $\R_{\geq 0}$-span of $e_{j_1},\ldots,e_{j_k},e_{r+1}$ is a cone for $E$. That subset corresponds to the subvariety $V_J=(x_{j_1}=\cdots=x_{j_k}=0)\subset B$, and it gives rise respectively to two subvarieties in $E$: $\pi^{-1}(V_J)\subset E$ and $V_J\subset B\hookrightarrow E$. 
\end{lemma}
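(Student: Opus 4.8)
\textbf{Proof plan for Lemma \ref{Lemma line bundle from fan of base}.}

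The plan is to verify directly, from the homogeneous-coordinate description of a toric variety recalled just above the statement, that the fan proposed for $E$ reconstructs the total space of $\mathcal{O}(\sum n_i D_i)\to B$ with the correct projection and with the exceptional subvarieties as described. First I would recall the combinatorial data: $B=(\C^r\setminus Z_B)/G_B$ with edges $b_1,\ldots,b_r$, and I would write down the lattice relations $\sum a_i b_i=0$ that generate $G_B$. For the proposed fan of $E$, with edges $e_i=(b_i,-n_i)$ for $i\le r$ and $e_{r+1}=(0,\ldots,0,1)$, I would compute the group $G_E=\ker(\mathrm{Exp}(\beta_E))$: a relation $\sum_{i=1}^{r}a_i e_i + a_{r+1}e_{r+1}=0$ in $\Z^{n+1}$ projects in the first $n$ coordinates to $\sum a_i b_i=0$ (a relation for $B$) and in the last coordinate forces $a_{r+1}=\sum a_i n_i$. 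Hence $G_E\cong G_B$, the isomorphism inserting the determined last entry. This identifies $E$, away from $Z_E$, as a $\C^*$-bundle-like quotient, and the forgetful map $(x_1,\ldots,x_{r+1})\mapsto (x_1,\ldots,x_r)$ descends to a well-defined map $\pi\colon E\to B$ because dropping the last coordinate intertwines the $G_E$- and $G_B$-actions.

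Next I would identify this $\pi$ with the line bundle $\mathcal{O}(\sum n_i D_i)$. The cleanest route is to compute transition functions on the standard affine charts, exactly as in Example \ref{Example O-k over Pm}: on the chart $U_i$ where $x_i$ can be normalized to $1$ using the $G_E$-action (possible precisely on cones containing $b_i$), one reads off how the fibre coordinate $x_{r+1}$ transforms when passing to $U_j$, and the exponent is governed by the last entries $-n_i$ of the edges, giving $g_{ij}=(x_i/x_j)^{\sum n_\ell \cdot(\text{appropriate coefficients})}$ — more precisely the transition associated to the divisor class $\sum n_i D_i$. I would then verify the cone structure itself: a subset $\{b_{j_1},\ldots,b_{j_k}\}$ spanning a cone of $B$ lifts to $\{e_{j_1},\ldots,e_{j_k}\}$, which spans a cone of $E$ because the lift is a graph over the projection (so linear independence and the smoothness/unimodularity condition are preserved), and adjoining $e_{r+1}=(0,\ldots,0,1)$ still gives a unimodular set since $e_{r+1}$ completes the lattice in the new coordinate direction. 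That the fan of $E$ consists of exactly these cones (and no others) follows from the description of $Z_E$: the primitive non-faces of $E$ are those of $B$, since any subset involving $e_{r+1}$ together with $e_{j_1},\ldots,e_{j_k}$ spanning a cone already spans a cone (this is the content of the Corollary preceding the statement about primitive collections, which I may invoke).

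Finally I would match the two distinguished subvarieties. The vanishing locus $(x_{j_1}=\cdots=x_{j_k}=0)$ inside $E$, where the last coordinate $x_{r+1}$ is unconstrained, is visibly $\pi^{-1}(V_J)$; the vanishing locus $(x_{j_1}=\cdots=x_{j_k}=x_{r+1}=0)$ is $V_J$ sitting inside the zero section $B=(x_{r+1}=0)\hookrightarrow E$. These correspond respectively to the cones spanned by $\{e_{j_1},\ldots,e_{j_k}\}$ and by $\{e_{j_1},\ldots,e_{j_k},e_{r+1}\}$, consistent with the orbit–cone correspondence. The main obstacle, I expect, is bookkeeping rather than conceptual: correctly tracking the sign conventions on the $n_i$ (so that $\mathcal{O}(\sum n_i D_i)$ rather than its dual appears) and checking that the smoothness condition on cones is genuinely inherited under the graph-lift — i.e. that $\{e_{j_1},\ldots,e_{j_k},e_{r+1}\}$ extends to a $\Z$-basis of $\Z^{n+1}$ whenever $\{b_{j_1},\ldots,b_{j_k}\}$ extends to a $\Z$-basis of $\Z^n$. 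This last point is a short determinant computation: expanding along the last column, the $(n+1)\times(n+1)$ matrix with rows $e_{j_1},\ldots,e_{j_k},(\text{completion}),e_{r+1}$ has determinant equal (up to sign) to that of the corresponding matrix for $B$, hence $\pm 1$.
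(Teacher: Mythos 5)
Your plan follows the same route as the paper: compute transition functions in the homogeneous-coordinate charts and verify they match those of $\mathcal{O}(\sum n_i D_i)$. The paper does exactly this, working through the formalism of local defining functions $f_\alpha$ for the divisor (so $\mathcal{O}(D)$ has transitions $g_{\beta\alpha}=(f_\beta\circ\varphi_{\beta\alpha})/f_\alpha$), first for $\mathcal{O}(D_1)$ with $f_1=1$ and $f_i=x_1$, then for the general case with $f_i=\prod_{j\neq i}x_j^{n_j}$, and reads off the fibre-coordinate scaling from the kernel condition defining $G_E$. A few cautions: the step you label ``giving $g_{ij}=(x_i/x_j)^{\sum n_\ell\cdot(\text{appropriate coefficients})}$'' is the entire non-trivial content of the lemma, and your plan hand-waves at precisely that point; the paper's choice of $f_i$ and the explicit extraction of $t_{r+1}$ from $\prod t_i^{-n_i}t_{r+1}=1$ is what pins down the sign and the divisor class, so you would need to actually carry out that bookkeeping. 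Second, you may not invoke the Corollary that ``the primitive collections for $E$ are those of $B$'': that Corollary presupposes the cone structure asserted in this very lemma, so appealing to it here would be circular (though your own restatement of its argument is fine as a standalone remark about the cones forming a fan). Finally, note that the lemma \emph{stipulates} the cones of $E$; what needs proving is not that ``these and no others are the cones'' but that the toric variety built from this stipulated fan is $\mathrm{Tot}(\mathcal{O}(\sum n_i D_i))$ with the forgetful map as $\pi$. Your auxiliary checks (identification of $G_E$, descent of the projection, unimodularity under the graph-lift and after adjoining $e_{r+1}$, matching of the two subvarieties) are correct and are implicit in the paper; they are sound additions but not where the difficulty lies.
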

% \begin{remark}
%  If one includes $-e_{r+1}$ and the associated new cones, then the above gives rise to the fibrewise compactification $\P(E\oplus \underline{\C})\to B$.
% \end{remark}
\begin{proof}
In general, an effective divisor $D$ can be described by $f_{\alpha}=0$ on $U_{\alpha}$, where: $U_{\alpha}$ are open affines covering $B$; $f_{\alpha}$ are non-zero rational functions; $f_{\alpha}/f_{\beta}$ are nowhere zero regular functions on $U_{\alpha}\cap U_{\beta}$. Then $\mathcal{O}(D)$ is the $\mathcal{O}_B$-subsheaf of the sheaf of rational functions $\mathcal{M}_B$ on $B$ generated by $1/f_{\alpha}$ on $U_{\alpha}$. Viewed as a line bundle, this has transition function $g_{\beta\alpha}=(f_{\beta}\circ \varphi_{\beta\alpha})/f_{\alpha}:U_{\alpha}\times \C \to U_{\beta}\times \C$, where $x'=\varphi_{\beta\alpha}(x)$ is the change of coordinates $U_{\alpha}\to U_{\beta}$. Indeed comparing inside $\mathcal{M}_B$ we have: $z\cdot \tfrac{1}{f_{\alpha}(x)} = g_{\beta\alpha} z \cdot \tfrac{1}{f_{\beta}(\varphi_{\beta\alpha}(x))}$.

First, consider the simple case $\mathcal{O}(D_1)$. Take $f_1=1$ and the other $f_i=x_1$, where $U_i=(x_i\neq 0)\subset B$. So $\mathcal{O}(D_1)=\mathcal{O}_B\subset \mathcal{M}_B$ on $U_1$, and $\mathcal{O}(D_1)=\tfrac{1}{x_1}\mathcal{O}_B\subset \mathcal{M}_B$ on the other $U_i$. Let us check $g_{21}$, the other cases are similar. Denote by $G$ the group determined by the fan for $E$ described in the claim, and let $G_B$ denote the analogous group for the fan of $B$. There is an element $(t_1,\ldots,t_r)\in G_B$ which via multiplication identifies 
$$\varphi_{21}:U_1\to U_2,\; (1,x_2,\ldots,x_r) \mapsto (x_1',1,x_2',\ldots,x_r')
=(t_1\cdot 1,t_2\cdot x_2,\ldots,t_r\cdot x_r)
$$ 
In particular $t_1=x_1'$. Now $G$ is the kernel of a homomorphism $(\C^*)^{r+1} \to (\C^*)^{n+1}$ with last entry $t_1^{-1}t_{r+1}$ (the powers are the last entries of $e_1,e_{r+1}$). So $t_{r+1}=t_1=x_1'$. So for the fibre coordinate $x_{r+1}\mapsto t_{r+1}x_{r+1}=x_1' x_{r+1}$. So $g_{21}=x_1'=\tfrac{f_2(x')}{f_1(x)}$.

Now consider the general case $\mathcal{O}(\sum n_i D_i)$. Take $f_i(x)=\prod_{j\neq i} x_j^{n_j}$. Now we need 
$$g_{21}=\tfrac{f_2(x')}{f_1(x)}=\tfrac{\prod_{j\neq 2} (x_j')^{n_j}}{\prod_{j\neq 1} x_j^{n_j}} = \tfrac{(x_1')^{n_1}}{x_2^{n_2}}\prod_{j\geq 3} \left(\tfrac{x_j'}{x_j}\right)^{n_j}.
$$
 This time $t_1^{-n_1}\cdots t_r^{-n_r}t_{r+1}=1$, where: $t_1=x_1'$, $t_2=\tfrac{1}{x_2}$, $t_i=\tfrac{x_j'}{x_j}$. Thus $x_{r+1}\mapsto t_{r+1}x_{r+1} = g_{21}x_{r+1}$ as required.
\end{proof}

%%%%%%%%%%%%%%%%%%%%%%%%%%%%%%%%%%%%%%%%%%%%%%%%%%%%%%%%%%%
\subsection{Toric symplectic manifolds and moment polytopes}
\label{Subsection Toric symplectic manifolds and moment polytopes}
%%%%%%%%%%%%%%%%%%%%%%%%%%%%%%%%%%%%%%%%%%%%%%%%%%%%%%%%%%%
Let $(X,\omega_X)$ be a closed real $2n$-dimensional symplectic manifold together with an effective Hamiltonian action of the $n$-torus $U(1)^n$. This action determines a \emph{moment map} $\mu_X: X \to \R^n$, which is determined up to an additive constant (we tacitly identify $\R^n$ with the dual of the Lie algebra of $U(1)^n$). The image $\Delta=\mu_X(X)\subset \R^n$ is a convex polytope, called the \emph{moment polytope}.
By Delzant's theorem $\Delta$ determines, up to isomorphism, $(X,\omega_X)$ together with the action. More precisely, two toric manifolds with moment polytopes $\Delta_1,\Delta_2$ are equivariantly symplectomorphic if and only if $\Delta_2 = A \Delta_1 + \textrm{constant}$ where $A\in SL(n,\Z)$.

The moment polytope has the form
\begin{equation}\label{Equation Moment polytope yei geq lambdai}
\Delta = \{ y\in \R^n: \langle y,e_i \rangle \geq \lambda_i \textrm{ for } i=1,\ldots,r \},
\end{equation}
where $\lambda_i\in \R$ are parameters and $e_i\in \Z^n$ are the primitive inward-pointing normal vectors to the facets of $\Delta$ (the codimension 1 faces). For Delzant's theorem to hold, we always assume that at each vertex $p$ of $\Delta$ there are exactly $n$ edges of $\Delta$ meeting at $p$; that the edges are rational, that is they are of the form $p+\R_{\geq 0}v_i$ for $v_i \in \Z^n$; and that these $v_1,\ldots,v_n$ are a $\Z$-basis for $\Z^n$.

From the polytope, one can construct a fan as follows: a face $F$ is determined by a subset $I_F$ of indices $i$ for which the inequality $\langle y,e_i \rangle \geq \lambda_i$ is an equality. The data $(e_1,\ldots,e_r)$ and $(I_F: F\textrm{ is a face of }\Delta)$ defines the fan, taking cones $\sigma_{I_F}$ to be the $\R_{\geq 0}$-span of the $(e_i: i\in I_F)$.

By construction, the polytope $\Delta$ is combinatorially dual to the fan, in particular the facets are orthogonal to the edges $e_1,\ldots,e_r$ of the fan for $X$. The fan determines the complex structure on $X$, but of course does not encode the $\lambda_i$. In particular, the location of $\Delta$ in $\R^n$ depends on the choice of additive constant in $\mu_X$, which in turn depends on the choice of symplectic form on $X$, and this is not encoded in the fan.

The $\lambda_i$ are related to the symplectic form $\omega_X$ on $X$, by the cohomological condition
$$
[\omega_X] = -\sum \lambda_i\, \mathrm{PD}[D_i] \in H^2(X;\Z)
$$
where $\mathrm{PD}[D_i]$ are the Poincar\'e duals of the divisors $D_i=(x_i=0)\subset X$ corresponding to the vanishing of one of the homogeneous coordinates $x_i$ (which correspond to the edges $e_i$ of the fan). This does not usually determine the $\lambda_i$, since the $[D_i]$ can be linearly dependent, but a refinement \cite[Appendix A.2.1]{Guillemin} of the above formula determines the K\"ahler form $\omega_X$ in terms of the $\lambda_i$ as follows:
$$
\omega_X|_{\mu_X^{-1}(\mathrm{int}(\Delta))} =\tfrac{\sqrt{-1}}{2\pi}\partial\overline{\partial}\,\left({\textstyle\sum} \lambda_i \log[\langle \mu_X(\cdot),e_i \rangle-\lambda_i] + \langle \mu_X(\cdot),{\textstyle\sum}e_i \rangle\right).
$$
\begin{remark}\label{Remark about missing 2pi}  The $2\pi$ ensures that for $\P^m$ one obtains the normalized Fubini-Study form: $\int_{[\P^1]}\omega_{FS}=1$. The $2\pi$ is missing in \cite[Appendix 2.1(1.3)]{Guillemin} and \cite[Appendix 2.3(4.5)]{Guillemin}, but should be there for \cite[Appendix 2.1(1.6)]{Guillemin} to hold, as can be checked for  $\P^1$. Therefore we differ from Cho-Oh \cite[Theorem 3.2]{Cho-Oh} by the rescaling $\omega_P=2\pi \omega_X$.
\end{remark}
% : 
% %
% %
% $$
% \omega_X|_{\mu_X^{-1}(\mathrm{int}(\Delta))} = i \partial\overline{\partial} \mu_X^*(\sum \lambda_i \log(\ell_i) + \ell_{\infty}),
% $$
% %
% %
% where $\ell_i(x)=\langle x,e_i \rangle-\lambda_i$, $\ell_{\infty}(x)=\sum_i \langle x,e_i \rangle$.
%%%%%%%%%%%%%%%%%%%%%%%%%%%%%%%%%%%%%%%%%%%%%%%%%%%%%%%%%%%
\subsection{The moment map}
\label{Subsection The moment map}
%%%%%%%%%%%%%%%%%%%%%%%%%%%%%%%%%%%%%%%%%%%%%%%%%%%%%%%%%%%
The moment map $\mu_X$ is determined by the following diagram
$$
\xymatrix@C=40pt@R=14pt{ 
\C^r - Z \ar@{->}_-{\mathrm{quotient}}[d] \ar@{->}^-{\mathrm{inclusion}}[r] &  \C^r
 \ar@{->}^{\mu_{\C^r}}[r] & \R^r\\
 X = (\C^r - Z)/G \ar@{->}^-{\cong}_-{f_G}[r] & f^{-1}(0)/G_{\R}
\ar@{->}[r]_-{\mu_X} & \R^n
\ar@{->}[u]_-{\beta^t} 
}
$$
where, summarizing \cite[Appendix 1]{Guillemin},
\begin{enumerate} 
 
 \item $r=$ number of edges $e_i$ in the fan for $X$, and $n=\mathrm{dim}_{\C}X$.

 \item $\mu_{\C^r}(x)=\tfrac{1}{2}(|x_1|^2,\ldots,|x_r|^2) + (\lambda_1,\ldots,\lambda_r)$, the moment map for $U(1)^r$ acting by multiplication on $\C^r$ (convention: $z\mapsto e^{i\theta}z$ for $\theta\in \R/2\pi\Z$, $z\in \C$).

 \item $Z\subset \C^r$ is the union of the vanishing sets $(x_i=0: i\in I)$ for those multi-indices $I$ for which $(e_i: i\in I)$ do \emph{not} span a cone of the fan.

 \item $\beta: \R^r\to \R^n$ is the matrix whose columns are the edges $e_i \in \R^n$ of the fan. The matrix $\beta$ has full rank, so the transpose matrix $\beta^t$ is injective.

 \item Let $\mathfrak{g}_{\C} = \ker (\beta: \C^r \to \C^n)=\mathrm{Lie}\,G$ and $\mathfrak{g}_{\R} = \ker (\beta: \R^r \to \R^n)=\mathrm{Lie}\,G_{\R}$.

 \item $G\subset (\C^*)^r$, $G_{\R}\subset U(1)^r$ are the images of $\mathfrak{g}_{\C}$, $\mathfrak{g}_{\R}$ via $\alpha \mapsto (e^{i\alpha_1},\ldots,e^{i\alpha_r})$.

 \item $G,G_{\R}$ act on $\C^r$ by multiplication; and $G=\ker (\mathrm{Exp}(\beta):(\C^*)^r \to (\C^*)^n)$ where 
$\mathrm{Exp}(\beta)$ is $\beta$ conjugated by the maps $\C\to \C/2\pi \Z \to \C^*$, $w \mapsto e^{iw}$.

 \item $f:\C^r \to \R^{r-n}$ is the moment map for the $G_{\R}$-action on $\C^r$. Pick an identification $\ker \beta \cong \R^{r-n}$; $f(x)=\kappa^t\mu_{\C^r}(x)$ where $\kappa:\R^{r-n}\to \R^r$ is the inclusion of $\ker \beta$.

 \item $f_G:G\cdot x \mapsto ((G\cdot x)\cap f^{-1}(0))/G_{\R}$ and $f_G^{-1}: G_{\R}\cdot x \mapsto G\cdot x$.

 \item $\mu_X(x) = (\beta^t)^{-1}_{\mathrm{left}}\cdot \mu_{\C^r}(x)$ on $f^{-1}(0)/G_{\R}$ using the left-inverse, $(\beta^t)^{-1}_{\mathrm{left}}\beta^t=\mathrm{id}$.
\end{enumerate}

The moment polytope $\Delta$ can be recovered from $\mu_X$ by $\mu_X(X)=\Delta$; the $T$-fixed points of $X$ biject with the vertices of $\Delta$ via $\mu_X$; the $T$-orbits biject with the faces of $\Delta$ via $\mu_X$.

Since $(\beta)_{\mathrm{left}}^{-1} e_i$ is the $i$-th standard vector, the formula for $\omega_X$ simplifies when $x\in f^{-1}(0)$:
$$
\begin{array}{rcl}
\langle \mu_X(x),e_i \rangle-\lambda_i &=& 
\langle (\beta^t)^{-1}_{\mathrm{left}}\mu_{\C^r}(x),e_i\rangle - \lambda_i =
\langle \mu_{\C^r}(x),(\beta)_{\mathrm{left}}^{-1} e_i\rangle - \lambda_i = \tfrac{1}{2}|x_i|^2\\
\langle \mu_X(x),\sum e_i\rangle & = & \langle \mu_{\C^r}(x),\sum (\beta)^{-1}_{\mathrm{left}}e_i\rangle = \sum \tfrac{1}{2}|x_i|^2 + \sum \lambda_i\\
\omega_X|_{f^{-1}(0)\cap (x_i\neq 0)} &=&\tfrac{\sqrt{-1}}{2\pi}\partial\overline{\partial}\,\left({\textstyle\sum} \lambda_i \log \tfrac{1}{2}|x_i|^2 + {\textstyle\sum} \tfrac{1}{2}|x_i|^2\right).
\end{array}
$$
%

%%%%%%%%%%%%%%%%%%%%%%%%%%%%%%%%%%%%%%%%%%%%%%%%%%%%%%%%%%%
\subsection{The polytope of a Fano variety}
\label{Subsection The polytope of a Fano variety}
% %%%%%%%%%%%%%%%%%%%%%%%%%%%%%%%%%%%%%%%%%%%%%%%%%%%%%%%%%%%
% 
A polytope $\Delta \subset \R^n$ is called \emph{reflexive} if: its vertices lie in $\Z^n$, the only $\Z^n$-point lying in the interior of $\Delta$ is $0$, and the $\lambda_i=-1$. These were studied by Batyrev (see a discussion in \cite[Sec.3.5]{Cox-Katz}), in particular: a closed toric variety $X$ is Fano if and only if $X$ admits a polytope $\Delta$ which is reflexive. Recall \emph{Fano} means the anticanonical bundle $\Lambda^{\mathrm{top}}_{\C}TB$ is ample.
For a reflexive $\Delta$, the associated K\"ahler form $\omega_{\Delta}$ lies in the class $[\omega_{\Delta}]=\sum -\lambda_i \mathrm{PD}[D_i] = c_1(TX)$ since $\lambda_i=-1$ (as $c_1(TX)=\sum \mathrm{PD}[D_i]$ holds in general).
%
% Suppose $E \to B$ is a negative line bundle over a toric manifold $(B,\omega_B)$. As mentioned in \ref{Subsection Negative line bundles}, if the total space is monotone then $B$ is monotone, $c_1(TB)=\lambda_B \omega_B$, and $c_1(TE)\equiv (\lambda_B - k)\omega_B$ with $0<k<\lambda_B$. Since $[c_1(TB)]$ is an integral class, by rescaling $\omega_B$ we can assume that $[\omega_B]\in H^2(B;\Z)$ is a primitive integral class, and thus $\lambda_B$ is an integer.
% 
% Since $\omega_B$ arises from the toric construction, it is a K\"ahler form. Since it is also integral, Kodaira's embedding theorem implies that $B$ is a projective variety and in fact is Fano ($\omega_B$ vanishes under $H^2(B;\Z) \to H^2(B,\mathcal{O}_{\mathrm{holo}})\cong H^{0,2}(B)$ so it lifts to $H^1(B,\mathcal{O}_{\mathrm{holo}}^*)$ and this class defines a holomorphic line bundle $L$ with $c_1(L)=[\omega_B]$ which is ample).  
%
So monotone negative line bundles $E \to B$ over toric $B$ always arise as follows: 

\begin{enumerate}
 \item $B$ is a Fano variety with an integral K\"ahler form $\omega_{\Delta}$ coming from a reflexive $\Delta$;
 \item by rescaling $\omega_{\Delta}$ we obtain a primitive integral K\"ahler form $\omega_B$;
 \item $c_1(TB)=[\omega_{\Delta}]=\lambda_B [\omega_B]$ where $\lambda_B\in \Z$ is called the \emph{index} of the Fano variety;
 \item up to isomorphism there is only one n.l.b. $E=E_k$
% $= (L^{\vee})^{\otimes k}$ 
with $c_1(TE)=-k[\omega_B]$, $k\in \Z_{>0}$;
 \item $E_k$ is monotone iff $1\leq k \leq \lambda_B -1$ (so we need $\lambda_B=$ index of Fano $\geq 2$).  
\end{enumerate}
%
% Here we used the $L$ mentioned above, and we used that fact that $c_1(E)\in H^2(B;\Z)$ determines $E$ up to isomorphism since toric varieties are simply connected (Fulton \cite[Sec.3.2]{Fulton}), so $H^1(B,\mathcal{O}_{\mathrm{holo}})\cong H^{0,1}(B) =0$ (see Wells \cite[Chap.III Sec.4]{Wells} for a discussion of $c_1$).\\[2mm]
%
\textbf{Example.} A smooth complete intersection $B\subset \P^{n+s}$ defined by equations of degrees $d_1\geq \cdots \geq d_s>1$ is Fano iff $1+n+s - (d_1+\cdots + d_s)\geq 1$, and this difference is the index of the Fano. One can replace $\P^{n+s}$ by weighted projective space $\P(a_0,\ldots,a_m)$, then for $\mathrm{dim}_{\C}\,B\geq 3$ the index is $ \sum a_j - \sum d_i \geq 1$, see Kollar \cite[p.245]{Kollar}. There are only finitely many Fano toric varieties of dimension $n$ up to isomorphism since there are only finitely many reflexive polytopes up to unimodular transformation. The example shows there are many of index $>1$.
%
%%%%%%%%%%%%%%%%%%%%%%%%%%%%%%%%%%%%%%%%%%%%%%%%%%%%%%%%%%%
\subsection{Using non-reflexive polytopes}
\label{Subsection non-reflexive polytopes}
% %%%%%%%%%%%%%%%%%%%%%%%%%%%%%%%%%%%%%%%%%%%%%%%%%%%%%%%%%%%
For $\P^2$ the reflexive polytope has vertices $(-1,-1)$, $(2,-1)$, $(-1,2)$, barycentre $(0,0)$, symplectic form $\omega=(1+2)\omega_{\P^2}$. But often one prefers to use the non-reflexive polytope $\frac{1}{1+2} (\Delta -(-1,-1))$: vertices $(0,0),(1,0),(0,1)$, barycentre $(\tfrac{1}{1+2},\tfrac{1}{1+2})$, symplectic form $\omega_{\P^2}$. Here $1+2$ plays the role of $\lambda_{\P^m}=1+m$.

The next Lemma explains how, from a reflexive polytope $\Delta$ for $B$ one obtains a polytope $\Delta_B$ inducing $[\omega_B]=(1/\lambda_B)[\omega_{\Delta}]$, having $0$ as vertex. Using $\Delta_B$ we can construct the polytope $\Delta_E$ of negative line bundles in Section \ref{Subsection The moment polytope of toric negative line bundles} (whereas for $\Delta$, $[\omega_{\Delta}]=c_1(TB)$, so it would be unclear how to get $E=\mathcal{O}(\sum n_i D_i)$ with $c_1(E)=-(k/\lambda_B)[\omega_{\Delta}]$, since $k/\lambda_B$ is fractional). 

\begin{lemma}\label{Lemma description of translated polytope}
 Let $v$ be a vertex of $\Delta$. Then $\Delta_B = \tfrac{1}{\lambda_B} (\Delta -v)$ is a polytope with: vertices in $\Z^n$, $\lambda_i^B \leq 0$ in $\Z$, associated symplectic form cohomologous to $\omega_B$, so $[\omega_B]=\sum -\lambda_i^B \mathrm{PD}[D_i]$. Moreover, the barycentre $y_{\mathrm{bar}}$ of $\Delta_B$ satisfies $\langle y_{\mathrm{bar}},b_i \rangle - \lambda_i^B = \tfrac{1}{\lambda_B}$.
\end{lemma}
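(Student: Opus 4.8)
## Proof proposal for Lemma \ref{Lemma description of translated polytope}

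The plan is to track how each defining piece of the reflexive polytope $\Delta$ transforms under the affine map $y\mapsto \tfrac{1}{\lambda_B}(y-v)$, and then verify the barycentre claim by a direct symmetry/averaging argument. First I would recall that a reflexive $\Delta$ is given by $\langle y,b_i\rangle\geq -1$ for $i=1,\dots,r$, with the $b_i$ the primitive inward normals (the edges of the fan for $B$), with all vertices in $\Z^n$ and $0$ the unique interior lattice point. Applying the substitution $y = v + \lambda_B y'$ to the inequality $\langle y,b_i\rangle\geq -1$ gives $\langle y',b_i\rangle \geq \tfrac{1}{\lambda_B}(-1-\langle v,b_i\rangle)$, so $\Delta_B=\{y': \langle y',b_i\rangle\geq \lambda_i^B\}$ with
$$
\lambda_i^B = \frac{-1-\langle v,b_i\rangle}{\lambda_B}.
$$
Since $v$ is a vertex of $\Delta$ it lies in $\Delta$, hence $\langle v,b_i\rangle\geq -1$, which immediately gives $\lambda_i^B\leq 0$. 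For integrality of $\lambda_i^B$, I would use that $c_1(TB)=\lambda_B[\omega_B]$ with $[\omega_B]$ primitive integral, equivalently that $-1-\langle v,b_i\rangle$ is the symplectic area (with respect to $\omega_\Delta=c_1(TB)$) of the $T$-invariant sphere through the vertex $v$ in the $i$-th facet direction, and such areas are divisible by $\lambda_B$ because $[\omega_\Delta]=\lambda_B[\omega_B]$ and $\omega_B$-areas of those spheres are integers; alternatively one can argue via the fan being unchanged (the edges $b_i$ are the same, and only the $\lambda_i$ shift) together with the fact that translating $\Delta$ by the lattice vector $v$ keeps vertices in $\Z^n$ while dividing by $\lambda_B$ sends them to $\Z^n$ precisely because the original edge-lengths at $v$ are multiples of $\lambda_B$.

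Next, the vertices of $\Delta_B$ are the images of vertices of $\Delta$ under an affine map, so they lie in $\tfrac{1}{\lambda_B}(\Z^n - v)=\tfrac{1}{\lambda_B}\Z^n$; the divisibility just discussed upgrades this to $\Z^n$. The cohomological statement $[\omega_B]=\sum -\lambda_i^B\,\mathrm{PD}[D_i]$ is then the general formula $[\omega_{\Delta_B}]=\sum -\lambda_i^B\,\mathrm{PD}[D_i]$ from Section \ref{Subsection Toric symplectic manifolds and moment polytopes}, combined with the fact that rescaling the polytope by $1/\lambda_B$ rescales the associated Kähler class by $1/\lambda_B$ (translation does not affect the cohomology class), so $[\omega_{\Delta_B}]=\tfrac{1}{\lambda_B}[\omega_\Delta]=\tfrac{1}{\lambda_B}c_1(TB)=[\omega_B]$; here I use the normalization of Section \ref{Subsection The polytope of a Fano variety} that $\omega_B$ is exactly the rescaled $\omega_\Delta$.

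Finally, for the barycentre: the barycentre of $\Delta$ is the unique $T$-fixed point of the natural symmetry of the reflexive polytope, and a cleaner route is to observe that $0$ is the barycentre of $\Delta$ (this is a standard fact for the anticanonically-polarized polytope, or can be taken from Section \ref{Subsection non-reflexive polytopes} where it is recorded that the reflexive polytope of $\P^2$ has barycentre $0$ — in general the barycentre of the reflexive polytope equals the unique interior lattice point for the relevant normalization, and this is the content of Theorem \ref{Theorem barycentre of E is critical} style computations). Granting that the barycentre of $\Delta$ is $0$, affine-equivariance of the barycentre gives $y_{\mathrm{bar}}(\Delta_B)=\tfrac{1}{\lambda_B}(0-v)=-v/\lambda_B$. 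Then
$$
\langle y_{\mathrm{bar}},b_i\rangle - \lambda_i^B = \frac{-\langle v,b_i\rangle}{\lambda_B} - \frac{-1-\langle v,b_i\rangle}{\lambda_B} = \frac{1}{\lambda_B},
$$
as claimed. The main obstacle I anticipate is the integrality of the $\lambda_i^B$: this is exactly the point where one needs that the edge-lengths of $\Delta$ emanating from any vertex $v$ are all divisible by $\lambda_B$, i.e. that $-1-\langle v,b_i\rangle\in\lambda_B\Z$ for every $i$. I would prove this by noting that $-1-\langle v,b_i\rangle = \langle c_1(TB),[C_{v,i}]\rangle - 1\cdot(\text{something})$... more precisely, that $-\langle v,b_i\rangle - 1$ is $\langle[\omega_\Delta],[C]\rangle$ for the invariant curve class $C$ determined by the facet not through $v$ in the $i$-th direction, hence equals $\lambda_B\langle[\omega_B],[C]\rangle\in\lambda_B\Z$; this uses only $[\omega_\Delta]=\lambda_B[\omega_B]$ and integrality of $[\omega_B]$, both available from Section \ref{Subsection The polytope of a Fano variety}. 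Everything else is bookkeeping with the affine change of variables.
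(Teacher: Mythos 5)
Your overall strategy — track the affine substitution $y = v + \lambda_B y'$, read off $\lambda_i^B = \tfrac{1}{\lambda_B}(-1-\langle v,b_i\rangle)$, deduce $\lambda_i^B\le 0$ from $v\in\Delta$, show $[\omega]$ is unchanged by translation and rescales correctly under $1/\lambda_B$, and then feed the barycentre $0$ of $\Delta$ through the affine map — is exactly the paper's approach. The non-positivity, cohomology and barycentre parts are fine.

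The integrality argument has a genuine gap. Your primary route claims that $-1-\langle v,b_i\rangle$ is $\pm$ the $\omega_\Delta$-area of a single invariant sphere ``determined by $F_i$''. This identification is only valid when $F_i$ is connected to $v$ by a single edge of $\Delta$; for a facet farther away in the polytope, $\langle v,b_i\rangle+1$ is a lattice distance but not the area of one invariant curve, and the stated sign is also off. Your alternative route says translating and then dividing by $\lambda_B$ lands vertices in $\Z^n$ ``because the original edge-lengths at $v$ are multiples of $\lambda_B$''; but this only controls $v'-v$ for vertices $v'$ adjacent to $v$. To conclude that every vertex of $\Delta-v$ is divisible by $\lambda_B$ — which is what you need to get $\lambda_i^B\in\Z$ for every $i$, since a general $F_i$ is witnessed by a possibly distant vertex — you must argue that every edge of $\Delta$ (not just those at $v$) has length a multiple of $\lambda_B$, and then chain these along a path in the (connected) edge graph. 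That is precisely how the paper proceeds: it first shows $v''-v'$ is divisible by $\lambda_B$ for any two adjacent vertices, hence every vertex of $\Delta-v$ lies in $\lambda_B\Z^n$, so $\tfrac{1}{\lambda_B}(\Delta-v)$ is an integral polytope, and integrality of $\lambda_i^B$ then drops out because each facet contains an integral vertex $w$ with $\langle w,b_i\rangle=\lambda_i^B$. Your write-up would be correct if you replaced ``edge-lengths at $v$'' with ``edge-lengths of all edges of $\Delta$'' and invoked connectedness of the edge graph, and dropped or corrected the single-curve interpretation.
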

\begin{proof}
Let $b_i$ be the edges of the fan for $B$ (so the inward primitive normals of $\Delta$). Adding $-v$ to $\Delta$ changes $\lambda_i=-1$ to $\lambda_i + \langle -v,b_i \rangle \in \Z$, so changes $[\omega]$ by $ \sum \langle v,b_i \rangle \mathrm{PD}[D_i]$. But in general $\sum b_i [D_i]=0$ are $n$ relations satisfied by the divisor classes $[D_i]$ (combining the Lemma p.61 and the Corollary p.64 in Fulton \cite{Fulton}). So $[\omega]$ does not change.

The translated polytope $\Delta-v$ still has normals $b_i$ and has $0=v-v\in \partial \Delta$, so from the equations $\langle 0,b_i \rangle \geq \lambda_i^B$ we obtain $\lambda_i^B\leq 0$.

Applying $A \in GL(n,\Z)$ to a polytope changes $b_i$ to $A'b_i$ where $A'=(A^{-1})^T\in GL(n,\Z)$. So the $\lambda_i$ don't change. The fan changes by applying $A'$, but that keeps the toric variety unchanged. The moment map $\mu_B$ becomes $A\mu_B$, so the symplectic form associated to the polytope does not change.

Consider an edge from $v$ to $v'$ in $\Delta$. For some $A$ as above, $A(v'-v)=(a,0,\ldots,0)$, some $a\in \Z$. By Guillemin \cite[Thm.2.10]{Guillemin}, the Euclidean volume of a polytope is the symplectic volume of the toric manifold. Thus the Euclidean length of an edge in $A(\Delta-v)$ is the symplectic area of the $2$-sphere in $B$ corresponding to that edge. Let $u$ denote the sphere for the edge joining $0$ to $A(v'-v)$. Then $a=\int u^*[\omega] = \lambda_B \int u^*[\omega_B]$. But $\int u^*[\omega_B]\in \Z$ since $\omega_B$ is an integral form. So $a$ is divisible by $\lambda_B$. So also $v'-v\in \Z^n$ has entries divisible by $\lambda_B$. Applying this argument to any $v$ shows that differences of vertices lying on edges of $\Delta$ are divisible by $\lambda_B$. Thus the vertices of $\Delta-v$ are divisible by $\lambda_B$ so $\tfrac{1}{\lambda_B}(\Delta-v)$ is an integral polytope. Since each face of this polytope lies on a hyperplane given by equations $\langle y,b_i\rangle = \lambda_i^B$ for a 
certain subset of the indices $i$, we also conclude that $\lambda_i^B=\tfrac{1}{\lambda_B}(\lambda_i + \langle -v,b_i \rangle)$ are integers. In particular, since also $\mu_B$ gets rescaled by $\tfrac{1}{\lambda_B}$, the associated symplectic form gets rescaled by $\tfrac{1}{\lambda_B}$ so it now lies in the cohomology class $[\omega_B]$.

The barycentre: by construction $\langle y,b_i \rangle - \lambda_i$ is invariant under translating $\Delta$ (and $y$), and for $\Delta$ the barycentre $0$ gives value $1$. Rescaling the polytope by $\tfrac{1}{\lambda_B}$ rescales this $1$ by $\tfrac{1}{\lambda_B}$.
\end{proof}

%%%%%%%%%%%%%%%%%%%%%%%%%%%%%%%%%%%%%%%%%%%%%%%%%%%%%%%%%%%
\subsection{The moment polytope of toric negative line bundles}
\label{Subsection The moment polytope of toric negative line bundles}
%%%%%%%%%%%%%%%%%%%%%%%%%%%%%%%%%%%%%%%%%%%%%%%%%%%%%%%%%%%
% We describe the moment polytope $\Delta_E$ for $E=\mathcal{O}(\sum n_i D_i) \to B$ with Chern class $c_1(E)=-k[\omega_B]$. 
%
% So in the notation of Lemma \ref{Lemma line bundle from fan of base}, $E=\mathcal{O}(\sum n_i D_i)$ must satisfy $\sum n_i \mathrm{PD}_B[D_i] = -k [\omega_B]$, or equivalently: $\sum (n_i+\tfrac{k}{\lambda_B}) [D_i]=0$ (since in general $c_1(TB)=\sum \mathrm{PD}_B[D_i]$).
%
\begin{lemma}\label{Lemma moment polytope for neg l bdle}
The moment polytope $\Delta_E$ for $E=\mathcal{O}(\sum n_i D_i) \to B$ inducing the symplectic form $[\omega_E]=[\pi^*\omega_B]$ is
$$
\begin{array}{lll}
\Delta_E  &=& \{ y\in \R^{n+1}: \langle y,e_i \rangle \geq \lambda_i^E\}
\\
% &=& \{ y\in \R^{n+1}:  y_{n+1}\geq 0 \textrm{ and }\langle y, e_i \rangle \geq \lambda_i^B \textrm{ for } i=1,\ldots,r\}\\
% %
&=& \{ (Y,y_{n+1})\in \R^n\times \R: y_{n+1}\geq 0 \textrm{ and } \langle Y, b_i \rangle \geq \lambda_i^B + y_{n+1}n_i  \textrm{ for } i=1,\ldots,r\}.
\end{array}
$$
Namely: $\lambda_{r+1}^E=0$ and $\lambda_i^E = \lambda_i^B$, $e_{r+1}=(0,\ldots,0,1)$ and $e_i=(b_i,-n_i)$ (see Lemma \ref{Lemma line bundle from fan of base}). In particular, $\omega_E|_B=\omega_B$.
Geometrically, $\Delta_E \subset \R^{n+1}$ lies in the upper half-space $y_{n+1}\geq 0$; its facet along $y_{n+1}=0$ is $\Delta_B\hookrightarrow \R^{n+1}$; it has the same vertices as $\Delta_B$; and the other facets lie in the hyperplanes in $\R^{n+1}\cap (y_{n+1}\geq 0)$ normal to $e_i=(b_i,-n_i)$ passing through the facets of $\Delta_B$ $($except $e_{r+1}$ which is  normal to the facet $\Delta_B)$.
\end{lemma}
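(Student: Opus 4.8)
The statement is purely combinatorial/toric-geometric: we must identify the moment polytope of the monotone total space $E=\mathcal{O}(\sum n_iD_i)\to B$ equipped with the pulled-back cohomology class $[\omega_E]=\pi^*[\omega_B]$. The plan is to compute $\Delta_E$ directly from the Delzant recipe, using the fan of $E$ already identified in Lemma \ref{Lemma line bundle from fan of base} and the description of $\Delta_B$ in Lemma \ref{Lemma description of translated polytope}. First I would recall that a moment polytope is determined by its edge data $(e_i)$ together with the parameters $\lambda_i$, via $\Delta=\{y:\langle y,e_i\rangle\geq\lambda_i\}$, and that the cohomological normalization $[\omega_E]=-\sum\lambda_i^E\,\mathrm{PD}[D_i^E]$ together with $[\omega_E]=\pi^*[\omega_B]=-\sum\lambda_i^B\,\mathrm{PD}[\pi^{-1}(D_i^B)]$ pins down the $\lambda_i^E$ up to the linear relations among the divisor classes. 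The edges of the fan for $E$ are $e_i=(b_i,-n_i)$ for $i=1,\dots,r$ and $e_{r+1}=(0,\dots,0,1)$ (Lemma \ref{Lemma line bundle from fan of base}); the divisor $D_i^E=\pi^{-1}(D_i^B)$ for $i\leq r$ and $D_{r+1}^E=[B]$ (the zero section). So I would set $\lambda_i^E=\lambda_i^B$ for $i\leq r$ and $\lambda_{r+1}^E=0$, then verify this is the right normalization.

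The verification splits into two checks. The cohomological check: I need $\sum(-\lambda_i^E)\,\mathrm{PD}[D_i^E]=\pi^*[\omega_B]$. Since $\pi^*$ is a ring map and $\pi^*\mathrm{PD}[D_i^B]=\mathrm{PD}[\pi^{-1}(D_i^B)]=\mathrm{PD}[D_i^E]$, the left side is $\pi^*\big(\sum(-\lambda_i^B)\mathrm{PD}[D_i^B]\big)=\pi^*[\omega_B]$ provided the $\lambda_{r+1}^E=0$ choice contributes nothing — which it does, trivially. But I must also check consistency with the intersection-theoretic meaning of a moment polytope: the parameters must actually describe a convex polytope whose normal fan is the fan of $E$, and the only freedom (the kernel of $\lambda\mapsto\sum\lambda_i\mathrm{PD}[D_i]$) is accounted for. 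The cleaner route, which I would take, is to use the refined Guillemin formula quoted in Section \ref{Subsection Toric symplectic manifolds and moment polytopes}/\ref{Subsection The moment map}: the symplectic form associated to the polytope $\{\langle y,e_i\rangle\geq\lambda_i^E\}$ is genuinely the Kähler form in class $\sum-\lambda_i^E\mathrm{PD}[D_i^E]$, so once the cohomology class matches, the polytope is correct up to $SL(n+1,\Z)$-translation, and the explicit $\lambda_i^E$ I wrote down fix the location.

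Next I would unpack the geometric description. Writing $y=(Y,y_{n+1})\in\R^n\times\R$, the inequality from $e_{r+1}=(0,\dots,0,1)$ with $\lambda_{r+1}^E=0$ reads $y_{n+1}\geq0$, so $\Delta_E$ lies in the closed upper half-space; its facet on $\{y_{n+1}=0\}$ is cut out by the remaining inequalities $\langle Y,b_i\rangle\geq\lambda_i^B$, which is exactly $\Delta_B\hookrightarrow\R^{n+1}$ (consistent with $\omega_E|_B=\omega_B$, i.e. the zero section carries the base form). The inequality from $e_i=(b_i,-n_i)$ is $\langle Y,b_i\rangle - n_i y_{n+1}\geq\lambda_i^B$, i.e. $\langle Y,b_i\rangle\geq\lambda_i^B+y_{n+1}n_i$, which is the stated formula. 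That $\Delta_E$ has the same vertices as $\Delta_B$ follows because each vertex of $\Delta_B$ is the intersection of $n$ facets $\langle Y,b_i\rangle=\lambda_i^B$ with $i$ in some maximal cone-defining index set $J$; adjoining $y_{n+1}=0$ gives a point of $\Delta_E$ lying on $n+1$ facets, hence a vertex, and conversely every vertex of $\Delta_E$ either lies on $\{y_{n+1}=0\}$ (hence is such a point) or is an intersection of $n+1$ facets none of which is $e_{r+1}$ — but the first $n$ coordinates of the $e_i$'s are the $b_i$'s, which span only $n$ dimensions, so $n+1$ of them are affinely dependent in $\R^{n+1}$ unless one is $e_{r+1}$; hence all vertices are on the $y_{n+1}=0$ facet. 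Finally, each non-base facet lies in the hyperplane through the corresponding facet of $\Delta_B$ (at $y_{n+1}=0$) with normal $e_i=(b_i,-n_i)$.

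\textbf{Main obstacle.} The genuinely delicate point is not any single computation but justifying that the naive choice $\lambda_i^E=\lambda_i^B,\ \lambda_{r+1}^E=0$ is forced rather than merely allowed: because the classes $\mathrm{PD}[D_i^E]$ satisfy linear relations (the pull-backs of those for $B$, plus the new relation $\mathrm{PD}[B]=\pi^*c_1(E)=\sum n_i\mathrm{PD}[D_i^E]$), the map $\lambda^E\mapsto\sum-\lambda_i^E\mathrm{PD}[D_i^E]$ is not injective, so matching cohomology classes alone does not uniquely determine the polytope — one really needs that (a) the prescribed $\lambda_i^E$ yield a polytope whose normal fan is the fan of $E$ from Lemma \ref{Lemma line bundle from fan of base}, and (b) its associated Kähler form, via the refined Guillemin formula, is $J$-compatible and represents $\pi^*[\omega_B]$ with the zero section carrying exactly $\omega_B$. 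Checking (a) amounts to verifying the facet/edge incidence combinatorics match the cones of $E$ (which I sketched above via the "$n+1$ of the $e_i$ with $b_i$-parts are dependent" observation), and (b) reduces to the identity $\langle\mu_E,e_i\rangle-\lambda_i^E=\tfrac12|x_i|^2$ on $f_E^{-1}(0)$ together with the block form of $\omega$ from Section \ref{Subsection Negative line bundles} showing the vertical $\C$-direction contributes the $y_{n+1}\geq0$ half-space and the horizontal directions restrict to $\Delta_B$. I would organize the proof so that (a) is dispatched by a short incidence argument and (b) by quoting Lemma \ref{Lemma forms agree for line bundle} and the moment-map diagram of Section \ref{Subsection The moment map}.
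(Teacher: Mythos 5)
Your overall plan — read off the fan of $E$ from Lemma \ref{Lemma line bundle from fan of base}, set $\lambda_i^E=\lambda_i^B$ and $\lambda_{r+1}^E=0$, verify the cohomology class, and check $\omega_E|_B=\omega_B$ via the Guillemin formula from Section \ref{Subsection The moment map} — is essentially the paper's proof. However, there are two concrete problems in your execution.

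First, the vertex argument is wrong. You claim that $n+1$ of the normals $e_i=(b_i,-n_i)$ with $i\le r$ must be (affinely or linearly) dependent in $\R^{n+1}$ because their first $n$ coordinates $b_i$ live in $\R^n$. That inference is false: a relation $\sum c_k b_{j_k}=0$ only yields $\sum c_k e_{j_k}=(0,-\sum c_k n_{j_k})$, which is a nonzero multiple of $e_{r+1}$ unless $\sum c_k n_{j_k}=0$. Concretely, for $\mathcal{O}(-1)\to\P^1$ one has $e_1=(1,0)$ and $e_2=(-1,1)$, which are linearly independent in $\R^2$ even though $b_1=1, b_2=-1$ are dependent; the intersection of the two hyperplanes simply falls outside $\Delta_E$. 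What actually forces every vertex onto $\{y_{n+1}=0\}$ is the fan combinatorics of Lemma \ref{Lemma line bundle from fan of base}: every top-dimensional cone of the fan of $E$ is of the form $\R_{\geq0}\{e_{j_1},\ldots,e_{j_n},e_{r+1}\}$, i.e. always contains $e_{r+1}$ (there is no top-dimensional cone consisting of $n+1$ of the $e_i$ with $i\leq r$, since the corresponding $b_{j}$'s cannot span an $(n+1)$-cone in the $n$-dimensional fan of $B$). Since vertices of $\Delta_E$ correspond under polytope/fan duality to top-dimensional cones, they all lie on the $e_{r+1}$-facet. Replace your linear-algebra argument with this citation.

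Second, you propose to establish (b) by quoting Lemma \ref{Lemma forms agree for line bundle}, but that lemma is stated and proved after the present one and its formulation refers to $\mu_E$ with the $\lambda^E_i$ normalization you are in the middle of establishing; using it here is circular. The paper instead plugs the choice $\lambda_{r+1}^E=0$ directly into the Guillemin expression for $\omega_X$ at the end of Section \ref{Subsection The moment map}: with that choice the $\log\tfrac12|x_{r+1}|^2$ term has coefficient zero and drops out, and then restricting to $dx_{r+1}=0$ and letting $x_{r+1}\to0$ recovers the analogous formula for $\omega_B$. That is the non-circular route and you should use it. Your worry about the map $\lambda\mapsto\sum-\lambda_i\mathrm{PD}[D_i]$ not being injective is legitimate but is entirely absorbed by the observation that its kernel is exactly the translation ambiguity in the moment map, and the normalization $\lambda_i^E=\lambda_i^B$, $\lambda_{r+1}^E=0$ pins the translation uniquely (this is what the $\omega_E|_B=\omega_B$ check certifies).
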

\begin{proof}
 The facets are normal to the edges $e_i$ of the fan for $E$ because moment polytopes are combinatorially dual to fans.
The $T$-invariant divisors in $E$ are $\pi^{-1}D_i=(x_i=0)$ for $i\leq r$, and $B=(x_{r+1}=0)$. As locally finite cycles, these are Poincar\'e dual to $\pi^*\mathrm{PD}_B[D_i]$ and $\pi^*c_1(E)$ respectively. 
Recall $[\omega_B]=-\sum \lambda^B_i \mathrm{PD}_B[D_i]$, therefore
$$
[\omega_E] = -\sum (\lambda^E_i \mathrm{PD}_E[\pi^{-1}D_i]+\lambda^E_{r+1}[B]) = - \sum \lambda_i^B \pi^*\mathrm{PD}_B [D_i] = \pi^*[\omega_B].
$$
That $\omega_E|_B=\omega_B$ can be seen from the explicit formula for $\omega$ at the end of Section \ref{Subsection The moment map}, using $\lambda_{r+1}^E=0$ (which ensures the $\log \tfrac{1}{2}|x_{r+1}|^2$ term does not appear), restricting to the subspace $(dx_{r+1}=0)\subset TE$ and letting $x_{r+1}\to 0$.
% 
% For $i\leq r$, the parameters $\lambda_i$ for $E$ are those of $B$ rescaled by $k$ since we require that the pull-back $\omega_E|_B=k\omega_B$. The new parameter $\lambda_{r+1}=0$ because
% %
% %
% $$
% [\pi^*\omega_B] = [\omega_E] = [\sum_{i=1}^r -\lambda_i \pi^*\mathrm{PD}_B[D_i]  -\lambda_{r+1}\mathrm{PD}_E[B]] = [\pi^*\omega_B] - \lambda_{r+1}[\pi^*c_1(E)]
% $$
% %
% %
% and $[\pi^*c_1(E)]\neq 0$.
\end{proof}
% 
% We now prove $\omega_E$ is the $\omega$ of \ref{Subsection Negative line bundles} for an appropriate unitary frame.
%
\begin{lemma}\label{Lemma forms agree for line bundle}
 For any negative line bundle $E$ over toric $B$, $\omega_E$ agrees with the form $\omega$ of \ref{Subsection Negative line bundles}, with Hermitian norm $\tfrac{1}{\sqrt{2\pi}}|w|$ for the fibre at a point $x$ satisfying $\mu_E(x) = (\ldots,\tfrac{1}{2}|w|^2)$.
\end{lemma}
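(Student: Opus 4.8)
\textbf{Proof proposal for Lemma \ref{Lemma forms agree for line bundle}.}

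The plan is to show directly that the symplectic form $\omega_E$ constructed via the moment polytope $\Delta_E$ (Lemma \ref{Lemma moment polytope for neg l bdle}) coincides with the block-form symplectic form $\omega = \pi^*\omega_B + \boldsymbol{\pi}\Omega$ of Section \ref{Subsection Negative line bundles}, once we identify the fibre coordinate correctly. First I would unwind the description of $E$ as a GIT/symplectic quotient: by Lemma \ref{Lemma line bundle from fan of base} the fan for $E$ has edges $e_i = (b_i,-n_i)$ for $i\le r$ and $e_{r+1}=(0,\ldots,0,1)$, and by the construction of Section \ref{Subsection The moment map} the manifold $E = (\C^{r+1}\setminus Z_E)/G_E$ carries $\omega_E$ determined by the K\"ahler potential formula at the end of Section \ref{Subsection The moment map}, evaluated on $f_E^{-1}(0)$. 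Since $\lambda_{r+1}^E = 0$ (Lemma \ref{Lemma moment polytope for neg l bdle}), the $\log\tfrac12|x_{r+1}|^2$ term drops out of that potential, so the potential is $\tfrac{\sqrt{-1}}{2\pi}\partial\overline\partial\big(\sum_{i\le r}\lambda_i^B\log\tfrac12|x_i|^2 + \sum_{i\le r+1}\tfrac12|x_i|^2\big)$, and the last entry of $\mu_E$ is $\tfrac12|x_{r+1}|^2$ by item (2) of Section \ref{Subsection The moment map}.

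Next I would isolate the fibre direction. The forgetful map on homogeneous coordinates $(x_1,\ldots,x_{r+1})\mapsto(x_1,\ldots,x_r)$ realizes $\pi:E\to B$ (as checked in the proof of Theorem \ref{Theorem Hamiltonians for rotations about divisors}), and the $G_E$-action differs from the $G_B$-action exactly by the extra factor acting on $x_{r+1}$ through the relation coming from $c_1(E)=\sum n_i\mathrm{PD}[D_i^B]$. Fixing a local unitary trivialization of $E$ over an affine chart of $B$, the coordinate $x_{r+1}$ becomes (up to the $G_E$-gauge) a scalar $w$ on the $\C$-fibre, and the moment-map normalization $\mu_E(x)=(\ldots,\tfrac12|x_{r+1}|^2)$ forces the Hermitian norm $r$ on the fibre of Section \ref{Subsection Negative line bundles} to satisfy $r^2 = \tfrac{1}{2\pi}|x_{r+1}|^2$, i.e. $r = \tfrac{1}{\sqrt{2\pi}}|w|$ with $w = x_{r+1}$; this is exactly the identification asserted in the Lemma, and it is also consistent with the formula $R=\tfrac{1+k\pi r^2}{1+k\pi}$ and with the computation $R=\tfrac{1}{1+k\pi}(1+k\tfrac12|x_{r+1}|^2)$ already used in the proof of Theorem \ref{Theorem Hamiltonians for rotations about divisors}.

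Finally I would compare the two forms block by block. Restricting the K\"ahler-potential formula to the vertical subspace $(dx_i=0,\ i\le r)$ and computing $\tfrac{\sqrt{-1}}{2\pi}\partial\overline\partial(\tfrac12|x_{r+1}|^2)$ gives the standard form on the $\C$-fibre, matching the lower-right block $\omega_{\mathrm{standard}}$ of the matrix in Section \ref{Subsection Negative line bundles}. Restricting to the horizontal subspace and letting $x_{r+1}\to0$ reproduces $\omega_B$ on $B$ — this is precisely the computation already invoked at the end of the proof of Lemma \ref{Lemma moment polytope for neg l bdle}, giving $\omega_E|_B=\omega_B$ — and tracking the $r^2$-dependent cross terms in $\partial\overline\partial$ yields the coefficient $(1+k\boldsymbol\pi r^2)$ in front of $\pi^*\omega_B$ together with the mixed term $\boldsymbol\pi\, d(r^2)\wedge\theta$, where $\theta$ is the connection $1$-form of the chosen Hermitian connection (here one uses $\tfrac{1}{2\boldsymbol\pi i}\pi^*\mathcal{F}=k\pi^*\omega_B$ and $d\theta = k\pi^*\omega_B$ from Section \ref{Subsection Negative line bundles} to match the curvature terms coming out of $\partial\overline\partial\log|x_i|^2$). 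Since both forms have the same blocks in the splitting $TE\cong T^{\mathrm{horiz}}E\oplus E$, they agree. The main obstacle I anticipate is purely bookkeeping: keeping the normalization constants ($2\pi$ versus $\boldsymbol\pi$, the rescaling by $1/k$ and choice $\varepsilon=k\boldsymbol\pi$ from the Technical Remark in Section \ref{Subsection Negative line bundles}, and the $GL(n,\Z)$-freedom in the chart) consistent so that the $\partial\overline\partial$-computation produces exactly $(1+k\boldsymbol\pi r^2)\pi^*\omega_B + \boldsymbol\pi\,d(r^2)\wedge\theta$ and not some rescaling thereof; Remark \ref{Remark about missing 2pi} is the relevant sanity check for getting the $2\pi$ right.
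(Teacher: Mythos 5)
Your proposal takes a genuinely different route from the paper. You propose a full block-by-block computation: expand $\tfrac{\sqrt{-1}}{2\pi}\partial\overline\partial$ of the Delzant K\"ahler potential on $f_E^{-1}(0)$, isolate the fibre, horizontal and cross terms, and match them against the block form $(1+k\boldsymbol\pi r^2)\pi^*\omega_B + \boldsymbol\pi\,d(r^2)\wedge\theta$ from Section \ref{Subsection Negative line bundles}. The paper instead invokes the Guillemin fact that the toric K\"ahler metric is determined by its restriction to the real locus $X_r$ (the fixed set of complex conjugation on $\C^{r+1}$), where it is the flat metric $\tfrac{1}{2\pi}\sum(dx_i)^{\otimes 2}$; matching the fibre component $\tfrac{1}{2\pi}(dw)^{\otimes 2}$ against $(d\rho)^{\otimes 2}$ on the real ray then gives $\rho = \tfrac{1}{\sqrt{2\pi}}|w|$ in one line, sidestepping the cross- and horizontal-term bookkeeping that you correctly flag as the main burden. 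Both routes are plausible; the paper's is faster because it leans on a stronger structural theorem, yours is more elementary but would require actually carrying out the $\partial\overline\partial$ computation to be complete.

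There are two logical slips worth repairing. First, in your second paragraph you say the moment-map normalization $\mu_E(x) = (\ldots,\tfrac12|x_{r+1}|^2)$ \emph{forces} $r^2 = \tfrac{1}{2\pi}|x_{r+1}|^2$. It does not: the moment map identifies which coordinate is the fibre direction, but the scale factor between $|x_{r+1}|$ and the Hermitian norm of Section \ref{Subsection Negative line bundles} is exactly the content of the Lemma, and it must come out of comparing the fibre blocks of the two K\"ahler structures (which you sketch in paragraph three) or the fibre metric on the real locus (the paper's route), not from the moment map alone. Second, offering the formula $R=\tfrac{1}{1+k\pi}(1+k\tfrac12|x_{r+1}|^2)$ from the proof of Theorem \ref{Theorem Hamiltonians for rotations about divisors} as ``consistent'' supporting evidence is circular: that computation is obtained \emph{by applying} Lemma \ref{Lemma forms agree for line bundle}, so it cannot serve as an independent check of it.
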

\begin{proof}[Sketch Proof]
 The proof requires investigating the construction in \cite[Appendix A.2.1]{Guillemin} of the K\"ahler form $\omega_X$ on a toric $X$. The $\omega_X$ is determined from its restriction to the fixed point set $X_{r}$ of the involution on $X$ induced by complex conjugation on $\C^r$. On $X_r$, the K\"ahler metric becomes $\tfrac{1}{2\pi}\sum (dx_i)^{\otimes 2}$ (see \cite[Appendix A.2.2(2.5)]{Guillemin} using Remark \ref{Remark about missing 2pi}). So for $X=E$ the fibre component is $\tfrac{1}{2\pi}(dw)^{\otimes 2}$. Since this recovers the standard metric $(d\rho)^{\otimes 2}$ on the real ray $\R_{>0}\subset \C$ in the fibre, we deduce that the Hermitian norm for the fibre is $\rho=\tfrac{1}{\sqrt{2\pi}}|w|$.
\end{proof}
\subsection{The Landau-Ginzburg superpotential}
\label{Subsection Superpotential} 
%%%%%%%%%%%%%%%%%%%%%%%%%%%%%%%%%%%%%%%%%%%%%%%%%%%%%%%%%%%
%
\begin{definition*}[Preliminary Version]
 The superpotential $W:(\C^*)^n \to \C$ for a toric variety $X$, with $\dim_{\C} X=n$, is the Laurent polynomial %
$
W(z_1,\ldots,z_{n}) = \sum_{i=1}^r e^{\lambda_i} z^{e_i}
$
defined on the domain $(\C^*)^n \cap \{|e^{\lambda_i} z^{e_i}|< 1 \, \forall i\} = \Log^{-1}(\mathrm{int}(\Delta))$ described by
$$
\xymatrix@C=30pt@R=22pt{ 
  X\setminus \cup D_i = (\C^*)^r/G \ar@{->}^-{\mathrm{Exp}(\mu)}[r]\ar@{->}_-{\mathrm{inclusion}}[d]  & (\C^*)^n \supset \Log^{-1}(\mathrm{int}(\Delta))  \ar@{->}^{\Log}[drr] \ar@{->}^-{W}[rr] & &  \C
\\
 X= (\C^r-Z)/G 
\ar@{->}[rrr]_-{\mu_X} 
& & & \Delta \subset \R^n
}
$$
where
$
\Log(z) = (-\log |z_1|,\ldots,-\log |z_n|),
$
 $\mathrm{Exp}(\mu)(x) = (e^{-\mu_{X,1}(x)},\ldots,e^{-\mu_{X,n}(x)})$ involving the components $\mu_{X,j}(x)\in \R$ of $\mu_X(x)\in \R^n$.
\end{definition*}
\begin{explanation*}[Auroux {\cite[Prop.4.2]{Auroux}})]
The domain of $W$ is actually the moduli space $M$ of gauge equivalence classes of special Lagrangian submanifolds $L$ inside the torus $T=X\setminus \cup D_i\cong (\C^*)^n$ equipped with a flat unitary connection on the trivial complex line bundle over $L$. Then $W$ is a weighted count of $\mu=2$ holomorphic discs bounding $L$ with a boundary marked point constraint through a generic point of $L$, and the weight in the count is $e^{-\omega[u]}\cdot \mathrm{holonomy}(u|_{\partial \D})$.

A Lagrangian $L\subset X\setminus \cup D_i$ is called \emph{special} if some imaginary part $\mathrm{Im}(e^{-i\,\textrm{constant}}\Omega|_L)=0$, where $\Omega=d\log x_1 \wedge \cdots \wedge d\log x_n$ is a non-vanishing holomorphic $n$-form on $X\setminus \cup D_i$ (indeed it is a section of the canonical bundle $K_X$ with poles along $D_i$).
Such $L$ have the form $S^1(r_1)\times \cdots \times S^1(r_n) \subset (\C^*)^n$ where $r_i$ denote the radii. 

The biholomorphism $M \cong \Log^{-1}(\mathrm{int}(\Delta))$ is: $z_j=e^{-\mu_{X,j}(L)}\cdot \mathrm{holonomy}([S^1(r_j)])$, where $\mu_{X,j}$ is constant on $L$ since $L$ is a $T$-orbit, and where $[S^1(r_j)]\in H_1(L)$ determines the holonomy for the connection. Therefore the equations $|e^{\lambda_i} z^{e_i}|\leq 1$ correspond via $y=\mu_X(x)=\Log(z)$ to the equations $\lambda_i-\sum_{j=1}^n y_je_{i,j}\leq 0$ defining $\Delta$.

After this biholomorphic identification, by Cho-Oh \cite{Cho-Oh} and Auroux \cite[Prop.4.3]{Auroux}, 
$
W = \sum e^{\lambda_i} z^{e_i},
$
where $e_i$ is the primitive inward-pointing normal to the facet of $\Delta$ defined by the equation $\{y\in \R^n: \langle y,e_i\rangle = \lambda_i\}\cap \Delta$.

% 
% summing over the facets of $\Delta$ (the codimension $1$ faces), where $\nu(F)\in \Z^n$ is the primitive inward-normal vector to $F\subset \Delta$ and $\alpha(F)\in \R$ is the parameter defining the facet via $\{ y\in \R^n: \langle \nu(F),y \rangle = - \alpha(F)\} \cap \Delta$.
% 
\emph{Remark: there are no $2\pi$'s arising in our $z_j$, $\Log$, $\mathrm{Exp}$, $W$ due to Remark \ref{Remark about missing 2pi}.}
% 
% Now $\nu(F)=e_i$ is the edge of the fan for $X$ dual to the facet $F$ of $\Delta$ (observe $e_i$ is inward-pointing since if $y\in F$ then $\langle e_i,y\rangle = -\alpha(F)$, and so $\langle e_i,y+te_i\rangle > -\alpha(F)$ for $t>0$). Moreover, $-\alpha(F)=\lambda_i$ since that facet of $\Delta$ satisfies $\langle y,e_i \rangle = \lambda_i$.
% % \textbf{I WANTED $t=e^{-k}$.
% I THINK IT'S BECAUSE OF THE $2\pi$ THAT WE HAVE WHEN NORMALIZING FUBINI-STUDY. I NEED WRITE OUT EXPLICITLY THE SYMPLECTIC FORM DENIS IS USING, AND COMPARE IT TO OURS.}
% (using the fact that $\omega_E|_{\P^m}=k\omega_{\P^m}$, and the $2\pi$ factor is because we normalize the Fubini-Study form).
% We now compute $(y_1,\ldots,y_{m+1})=\Log(z_1,\ldots,z_{m+1})$ as in Lemma \ref{Lemma superpotential}:
% %
% %
% %
% % $-\frac{1}{2\pi}\log|z_j|=-\frac{1}{m+1}\log |c|$ for $j\leq m$ and $-\frac{1}{2\pi}\log|z_{m+1}| = -\frac{1}{2\pi} \frac{1+m}{1+m-k} \log |k|$ respectively.
% %
% %
% $$
% (y_1,\ldots,y_m,y_{m+1}) = (-\log |c|,\ldots,-\log |c|,-\log |kc|).
% $$
% %
% %
% The conditions $y_j\geq 0$ and $\sum y_j - ky_{m+1}\leq k$ defining $\Delta_{E}$ are each equivalent to the condition $|c|\leq 1$.
% 
% \textbf{SOMETHING WRONG HERE: THIS WOULD IMPOSE $k\leq e^{2\pi}$ WHICH IS EXACTLY THE REVERSE OF WHAT WE WANT. SO THERE IS A SIGN ISSUE}
%
\end{explanation*}
\begin{example*}
 For $\P^m$, $W=z_1+ \ldots + z_{m} + e^{-1}z_1^{-1}\cdots z_m^{-1}$.
\end{example*}

\begin{example}\label{Example W for monotone tori}
 If the special Lagrangian $L$ with connection $\nabla$ in the Explanation is monotone, and $\lambda_L,\lambda_X$ are the monotonicity constants for $L,X$ $($recall $2\lambda_L\lambda_X=1)$,
$$W(L,\nabla)=\#(\mathrm{discs})\, e^{-2\lambda_L} = \#(\mathrm{discs})\, e^{-1/\lambda_X}$$
 where $\#(\mathrm{discs})$ is the weighted count of Maslov $2$ discs bounding $L$ as in the Explanation, the weights being the holonomies around the boundary of the discs.
\end{example}

% The above definition works well in the closed Fano case \cite{Auroux}. However, i

One now actually wants to deform the Floer theory for the special Lagrangians. This is explained in Auroux \cite[Sec.4.1]{Auroux2}: it can be done either by 
allowing non-unitary connections on $L$, or by deforming $L$ by a non-Hamiltonian Lagrangian isotopy (and using a unitary connection), or by formally deforming the Fukaya category by a cocycle $b_L\in CF^1(L,L)$ by the machinery of Fukaya-Oh-Ohta-Ono  \cite{FOOO} (this corresponds to the connection $\nabla=d+b_L$).
% In general one wants to allow non-unitary connections, which formally corresponds to non-Hamiltonian displacements of the toric fibre \cite[Sec.4.1]{Auroux2}. 
We will allow non-unitary connections, which is an idea that goes back to Cho \cite{Cho}, and is explained also in Fukaya-Oh-Ohta-Ono \cite[Sec.4, Sec.12]{FOOOtoric} and in Auroux 
\cite[Rmk 3.5]{Auroux}.
So we work over the Novikov ring $\Lambda$ with $\K=\C$; the $e^{-1}$ above is replaced by $t$; and we work with $\mathcal{W}(E)$: the wrapped category with local systems (see Ritter-Smith \cite{RitterSmith}).

% The holonomy is then a homomorphism $\pi_1(L) \to \Lambda_0^{\times}$, and

% where $\Lambda_0\subset \Lambda$ involves only series with non-negative powers of $t$ and the units $\Lambda_0^{\times}\subset \Lambda_0$ are the series with non-zero $t^0$-coefficient.
% Finally, the objects of the wrapped Fukaya category now come additionally with a choice of holonomy $\pi_1(L) \to \Lambda_0^{\times}$.
%
% For an explanation of this passage to non-unitary connections we refer the reader to Cho \cite{Cho}, Fukaya-Oh-Ohta-Ono \cite[Sec.4,Sec.12]{FOOOtoric}, and Auroux 
% \cite[Rmk 3.5]{Auroux} and \cite[Sec.4.1]{Auroux2}.

\begin{definition}[Corrected Version]\label{Definition Superpotential}
 The superpotential $W:(\Lambda\setminus \{0\})^n \to \Lambda$ $($with $\K=\C)$ for a toric variety $X$, with $\dim_{\C}X=n$, is
$$
W(z_1,\ldots,z_{n}) = \sum_{i=1}^r t^{-\lambda_i} z^{e_i}.
$$
\end{definition}
\begin{example*}
 For $\P^m$, $W=z_1+ \ldots + z_{m} + tz_1^{-1}\cdots z_m^{-1}$.
\end{example*}

The condition that a point $z=(z_1,\ldots,z_r)$ lands inside the polytope is now the condition:
$\mathrm{val}_t(t^{-\lambda_i}z^{e_i}) > 0$ for $i=1,\ldots,r$,
where $\mathrm{val}_t$ is the valuation for the $t$-filtration, whose value on a Laurent series is the lowest exponent of $t$ arising in the series. Indeed if $z_i \in t^{y_i}\C^* + \textrm{(higher order)}$, then that condition becomes $\langle y,e_i \rangle > \lambda_i$, the equations defining $\mathrm{interior}(\Delta)$. We recover the point of the polytope over which the toric fibre $L$ lies and the holonomy around each generating circle of $\pi_1(L)$ by
$$
\Lambda^n \ni z \mapsto (\mathrm{val}_t(z),t^{-\mathrm{val}_t(z)}z) \in \mathrm{interior}(\Delta) \times (\Lambda_0^{\times})^n \subset \R^n \times H^1(L,\Lambda_0^{\times}) 
$$
where $\Lambda_0^{\times}$ is the multiplicative group of units in the subring $\Lambda_0 \subset \Lambda$ of series with $\mathrm{val}_t\geq 0$.
We say $z$ \emph{lands at} $y\in \R^n$ if
$
y = \mathrm{val}_t(z) = (\mathrm{val}_t(z_1),\ldots,\mathrm{val}_t(z_n)).
$
\begin{example}\label{Example Superpotential of nlb}
For $E=\mathcal{O}(\sum n_i D_i) \to B$, 
$
W(z_1,\ldots,z_{n+1}) = \sum_{i=1}^r t^{-\lambda_i^B} z^{(b_i,-n_i)} + z_{n+1},
$
by Lemma \ref{Lemma moment polytope for neg l bdle}.
Since $[\omega_B]=\sum -\lambda_i^B \mathrm{PD}[D_i]$, 
$E=\mathcal{O}(k\sum \lambda_i^B D_i)$ has $c_1(E)=-k[\omega_B]$ and
$$
W_E(z_1,\ldots,z_n,z_{n+1}) = \sum_{i=1}^r z^{b_i} (tz_{n+1}^k)^{-\lambda_i^B}
+ z_{n+1} = z_{n+1}+  \left.W_B(z)\right|_{\left(t \textrm{ replaced by }tz_{n+1}^k\right)}.
$$
\end{example}

\begin{theorem}\label{Theorem crit values of W are evalues}
 For monotone negative line bundles $E \to B$ over toric $B$, the critical values $W(p)$ of the superpotential are a subset of the eigenvalues of $c_1(TE):QH^*(E) \to QH^{*+2}(E)$ acting by quantum cup-product. The critical points $p$ of $W$ correspond to Lagrangian toric fibres $L_p$ of the moment map, together with a choice of holonomy data, such that 
 $$HF^*(L_p,L_p) \cong HW^*(L_p,L_p)\neq 0.$$
 Thus $L_p$ are non-displaceable Lagrangians, and the existence of an $L_p$ forces $SH^*(E)\neq 0$.
\end{theorem}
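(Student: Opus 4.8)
The plan is to assemble the statement from three ingredients already in place: the closed-string mirror symmetry $SH^*(E)\cong\mathrm{Jac}(W_E)$, the Cho--Oh/Auroux theory of toric fibres, and the fact that $HW^*$ of an object is a unital module over $SH^*$. I will treat the eigenvalue claim and the Lagrangian claim separately, and then deduce non-displaceability and $SH^*(E)\neq 0$ as corollaries.

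First I would prove the eigenvalue inclusion. By Theorem \ref{Theorem Jacobian ring is SH} there is a ring isomorphism $SH^*(E)\cong\mathrm{Jac}(W_E)$ carrying $c^*(c_1(TE))$ to the class of $W_E$, so the eigenvalues of quantum multiplication by $c_1(TE)$ on $SH^*(E)$ are exactly the eigenvalues of multiplication by $W_E$ on $\mathrm{Jac}(W_E)$; for purely algebraic reasons (Ostrover--Tyomkin \cite{Ostrover-Tyomkin}, invoked exactly as in the proof of Lemma \ref{Lemma Critical points of W come in families}) the latter are precisely the critical values $W_E(p)$. Since by Theorem \ref{Theorem SH of negative line bundle} $SH^*(E)$ is a quotient ring, hence a quotient $QH^*(E)$-module, of $QH^*(E)$, the characteristic polynomial of $c_1(TE)$ on $SH^*(E)$ divides that on $QH^*(E)$, so every critical value of $W_E$ is an eigenvalue of $c_1(TE)$ on $QH^*(E)$. (An alternative route, available once $L_p$ is constructed below, is Lemma \ref{Lemma calculation of OC of point}(2): $\mathcal{OC}([\mathrm{pt}])\neq 0\in QH^*(E)$ lies by Theorem \ref{Theorem OC respects eigensummands} in the $m_0(L_p)=W_E(p)$ eigensummand, so that summand is nonzero.)

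Next I would construct $L_p$. Via the identification of Section \ref{Subsection Superpotential} (the biholomorphism $M\cong\Log^{-1}(\mathrm{int}(\Delta_E))$), a critical point $p=(p_1,\dots,p_{n+1})\in(\Lambda\setminus\{0\})^{n+1}$ of $W_E$ determines the toric fibre $L_p$ over $\mathrm{val}_t(p)\in\mathrm{int}(\Delta_E)$ equipped with holonomy data $t^{-\mathrm{val}_t(p)}p\in H^1(L_p;\Lambda_0^\times)$, with $m_0(L_p)=W_E(p)$. By Cho--Oh \cite{Cho-Oh} and Auroux \cite{Auroux}, $p\in\mathrm{Crit}(W_E)$ is equivalent to $[\mathrm{pt}]\in CF^*(L_p,L_p)$ being a cocycle, and then the Cho--Oh spectral sequence gives the vector space isomorphism \eqref{EqnChoOh}, $HF^*(L_p,L_p)\cong H^*(L_p;\Lambda)\neq 0$. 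Since $L_p$ is compact and the wrapping Hamiltonian is linear at infinity, it creates no extra chords, so the acceleration map $HF^*(L_p,L_p)\to HW^*(L_p,L_p)$ is an isomorphism (Ritter--Smith \cite{RitterSmith}); hence $HW^*(L_p,L_p)\cong HF^*(L_p,L_p)\neq 0$. The step I expect to be the main obstacle is the transfer of this Cho--Oh computation from the closed toric case to the non-compact $E$: one has to observe that its inputs -- the combinatorial description via $\Delta_E$ of the Maslov $2$ holomorphic discs bounding a toric fibre, and monotonicity of $E$ -- are still available (as is also tacitly used in Section \ref{Subsection Galkin's result}), so that the correspondence $\mathrm{Crit}(W_E)\leftrightarrow\{L_p:HF^*(L_p,L_p)\neq 0\}$ and \eqref{EqnChoOh} hold verbatim.

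Finally, the two consequences. For non-displaceability I would run the usual argument: if a Hamiltonian diffeomorphism $\psi$ satisfied $\psi(L_p)\cap L_p=\emptyset$, then, $m_0$ being invariant under Hamiltonian isotopy, $m_0(\psi(L_p))=m_0(L_p)$, so $HF^*(L_p,\psi(L_p))$ is defined and isomorphic to $HF^*(L_p,L_p)\neq 0$ by invariance, while disjointness makes its chain complex, and hence itself, vanish -- a contradiction. For $SH^*(E)\neq 0$: the closed--open map $\mathcal{CO}\colon SH^*(E)\to HW^*(L_p,L_p)$ is a unital ring homomorphism (Ritter--Smith \cite{RitterSmith}), so it sends $1$ to the unit of $HW^*(L_p,L_p)$; if $SH^*(E)$ were zero then $1=0$ there, forcing $HW^*(L_p,L_p)=0$, which contradicts the nonvanishing just established. (Equivalently: $HW^*(L_p,L_p)$ is a nonzero unital $SH^*(E)$-module, so $SH^*(E)\neq 0$.)
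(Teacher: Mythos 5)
Your argument is correct and arrives at all four claims; the difference from the paper lies in how you establish the eigenvalue inclusion. The paper disposes of that first claim in one line by ``mimicking Auroux \cite[Sec.6]{Auroux}'' in the non-compact setting, i.e.\ the geometric route: one shows that $\mathcal{OC}([\mathrm{pt}])$ is a non-zero vector on which $c_1(TE)$ acts by $m_0(L_p)=W_E(p)$ (your own parenthetical ``alternative route'' via Lemma~\ref{Lemma calculation of OC of point}(2) and Theorem~\ref{Theorem OC respects eigensummands} is exactly this). Your primary route is instead algebraic: you feed the already-established ring isomorphism $SH^*(E)\cong\mathrm{Jac}(W_E)$, $c^*(c_1(TE))\mapsto W_E$ (Theorem~\ref{Theorem Jacobian ring is SH}) into the Ostrover--Tyomkin identification of eigenvalues of $W_E$ on $\mathrm{Jac}(W_E)$ with critical values, then observe that since $c^*:QH^*(E)\to SH^*(E)$ is a surjective module map (Theorem~\ref{Theorem SH of negative line bundle}), the characteristic polynomial of $c_1(TE)$ on $SH^*(E)$ divides that on $QH^*(E)$. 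This is a genuinely cleaner path given the machinery the paper has by then built, and it is in fact the approach the paper itself uses in the proof of Lemma~\ref{Lemma Critical points of W come in families}; it is not circular, since the presentation results behind Theorem~\ref{Theorem Jacobian ring is SH} do not rely on Theorem~\ref{Theorem crit values of W are evalues}. The remaining ingredients --- Cho--Oh/Auroux producing $HF^*(L_p,L_p)\neq0$ at critical points, the non-compactness caveat (the paper's ``locally finite cycles'' remark, which you flag as the main obstacle), $HF^*\cong HW^*$ by compactness of the torus, and $SH^*(E)\neq0$ because $HW^*(L_p,L_p)$ is a non-zero unital $SH^*(E)$-module --- match the paper's proof. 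You also spell out the non-displaceability argument, which the paper leaves implicit.
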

\begin{proof}
 This is proved by mimicking Auroux \cite[Sec.6]{Auroux}, except that in the non-compact setup we use locally finite cycles to represent cohomology classes. The monotonicity condition on $E$ ensures that $B$ and $E$ are Fano toric varieties. The fact that independently of whether $L$ is monotone or not,  Floer cohomology can be defined is by Cho-Oh \cite[Sec.7]{Cho-Oh}: the $m_1$-obstruction class vanishes when $(L,\nabla)$ arises as a critical point of $W$ and the toric Fano assumption then ensures that Lagrangian Floer cohomology exists and is non-trivial (see also Auroux \cite[Prop.6.9, Lemma 6.10]{Auroux}). That $HF^*(L_p,L_p)\cong HW^*(L_p,L_p)$ follows because the torus $L_p$ is compact. The final claim follows because if $SH^*(E)=0$, then $HW^*(L,L)=0$ since it is a module over $SH^*(E)$ (see \cite{Ritter3}).
\end{proof}

\begin{example}
For $E=\mathcal{O}_{\P^m}(-k)$, $W = z_1+\cdots+z_m+t^kz_1^{-1}\cdots z_m^{-1}z_{m+1}^{k} + z_{m+1}$. 
The critical points of $W$ are $z=(w,\ldots,w,-kw)$ with critical value $W(z)= (1+m-k)w$, for any
solution $w$ of $w^{1+m-k}=(-k)^k t^{k}$. The $w,W(z)$ are eigenvalues respectively of $\pi^*[\omega_{\P^m}], c_1(TE)=(1+m-k)\pi^*[\omega_{\P^m}]$. Here $1\leq k\leq m$ is required for $E$ to be monotone.
\end{example}

\begin{corollary}\label{Corollary crit values of W are homogeneous}
 The critical values of $W$ are homogeneous in $t$ of order $t^{1/\lambda_E}=T$.
\end{corollary}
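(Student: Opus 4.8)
\textbf{Proof proposal for Corollary \ref{Corollary crit values of W are homogeneous}.}

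The plan is to exploit the grading on $QH^*(E)$ over the graded Novikov ring $\Lambda$, together with Theorem \ref{Theorem crit values of W are evalues}, which identifies the critical values of $W_E$ as (a subset of) the eigenvalues of quantum multiplication by $c_1(TE)$ on $QH^*(E)$. First I would recall from Section \ref{Subsection Novikov ring} that in the monotone setting $QH^*(E)$ is a $\Z$-graded $\Lambda$-algebra with $|T|=2$, and that $c_1(TE)\in QH^2(E)$ sits in degree $2$. Quantum multiplication by a degree-$2$ element is a degree-$2$ endomorphism of the $\Z$-graded $\Lambda$-module $QH^*(E)$; after the substitution $x = T^{-1}c_1(TE)$ (legitimate since $T$ is invertible in $\Lambda$), the operator $x = T^{-1}c_1(TE)\cdot$ has degree $0$. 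Its characteristic polynomial therefore has coefficients in the degree-$0$ part of $\Lambda$, i.e.\ in $\K$ (there are no nonzero elements of $\Lambda$ in negative degree, and the degree-$0$ piece is just $\K$). Hence the eigenvalues of $x$ lie in $\K$ (or its algebraic closure, which is the reason for the standing assumption that $\K$ is algebraically closed in Section \ref{Subsection The eigenvalues of c_1(TX)}), and the eigenvalues $\mu$ of $c_1(TE)$ itself are of the form $\mu = \mu_0 T$ with $\mu_0\in \K$: that is, homogeneous of order $T = t^{1/\lambda_E}$.

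The key steps, in order, are: (1) invoke Theorem \ref{Theorem crit values of W are evalues} to reduce the statement about critical values of $W_E$ to a statement about eigenvalues of $c_1(TE)$ acting by quantum cup product on $QH^*(E)$; (2) observe that $c_1(TE)\in QH^2(E)$ has degree $2$ and $T\in \Lambda$ has degree $2$, so $T^{-1}c_1(TE)$ acts with degree $0$ on the $\Z$-graded free $\Lambda$-module $QH^*(E)$; (3) conclude that the characteristic polynomial of $T^{-1}c_1(TE)$ is homogeneous of degree $0$, hence has coefficients in $\K$, so its roots lie in $\K$; (4) multiply back by $T$ to see that every eigenvalue of $c_1(TE)$, and in particular every critical value $W_E(p)$, is of the form $cT$ with $c\in \K$. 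Alternatively, and perhaps more directly, one can use the explicit description of $W_E$ from Example \ref{Example Superpotential of nlb}: since $W_E = z_{n+1} + W_B|_{t\mapsto t z_{n+1}^k}$ and each monomial $t^{-\lambda_i} z^{e_i}$ of $W_E$ lies in a fixed $T$-degree (namely degree $2$, because $c_1(TE) = \sum \mathrm{PD}[D_i]$ maps to $W_E$ under $\psi$ of Corollary \ref{Corollary QH of noncompact toric} and $\mathrm{PD}[D_i]\in QH^2$), the whole superpotential $W_E$ is homogeneous of $t$-weight $1/\lambda_E$ once one assigns to each $z_j$ the appropriate fractional $t$-weight dictated by the moment polytope; then $W_E(\xi^{1/\lambda_E} z) = \xi^{1/\lambda_E} W_E(z)$-type scaling forces the critical values to scale the same way, giving homogeneity of order $T$.

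The main obstacle, such as it is, is bookkeeping rather than mathematics: one must be careful that the $\Z$-grading really is present, i.e.\ that one is working over the graded Novikov ring $\Lambda$ of Section \ref{Subsection Novikov ring} and not over the ungraded specialization $QH^*(E;\K)$ obtained by setting $t=1$; indeed over $\K$ the statement is vacuous. One should also note the parenthetical ``sanity check'' already referenced in Section \ref{Subsection Galkin's result} (in the proof of Theorem \ref{Theorem barycentre of E is critical}), which is exactly the computation that reinserting powers of $T$ according to the $\Z$-grading converts the complex-valued critical values into Novikov-valued ones homogeneous of order $T$; citing that makes the argument essentially immediate. A minor point to address is that Theorem \ref{Theorem crit values of W are evalues} gives only an inclusion of critical values into eigenvalues, but since the homogeneity conclusion is about each critical value individually, the inclusion suffices and no surjectivity is needed.
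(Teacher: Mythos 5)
Your proof is correct and takes essentially the same approach as the paper: the paper's own proof is the terse observation that since $QH^*(E)$ is $\Z$-graded over the graded Novikov ring with $|t|=2\lambda_E$, quantum product is grading-preserving, $c_1(TE)$ sits in degree $2$, hence its eigenvalues (and so, via the inclusion into critical values, the critical values of $W$) must lie in degree $2 = |T|$. Your version via the characteristic polynomial of the degree-$0$ operator $T^{-1}c_1(TE)$ makes the step ``the eigenvalues must lie in grading $2$'' fully rigorous, but the underlying idea is identical.
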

\begin{proof}
 Since $E$ is monotone, $QH^*(E)$ is $\Z$-graded using grading $|t|=2\lambda_E$ (see \ref{Subsection Novikov ring}). This grading of $t$ ensures that quantum cup product is grading preserving on $QH^*(E)$. Since $c_1(TE)$ lies in grading $2$, the eigenvalues must lie in grading $2$. Finally $|t^{1/\lambda_E}|= 2$.
\end{proof}

\begin{remark}\label{Remark crit W only detects evals on SH}
 $\mathrm{Crit}(W)$ only detects the eigenvalues of the action of $c_1(TE)$ on $SH^*(E)$ by Theorem \ref{Theorem Jacobian ring is SH}, so it forgets the zero eigenvalues of the action on $QH^*(E)$.
\end{remark}

%%%%%%%%%%%%%%%%%%%%%%%%%%%%%%%%%%%%%%%%%%%%%%%%%%%%%%%%%%%
\subsection{Critical points of $W_X$ arise in $\lambda_X$-families}
\label{Subsection Critical values arise in lambdaX-families}
%%%%%%%%%%%%%%%%%%%%%%%%%%%%%%%%%%%%%%%%%%%%%%%%%%%%%%%%%%%%%

\begin{lemma}\label{Lemma W(xi z) is xi W(z)}
 For $(X,\omega_X)$ a monotone toric manifold with integral $\omega_X\in H^2(X,\Z)$, $$W_X(\xi z) = \xi W_X(z) \textrm{ whenever }\xi^{\lambda_X}=1.$$
 It also holds for Fano toric manifolds $(X,\omega_F)$ for
 the $F$-twisted superpotential (Section \ref{Subsection F-twisted superpotential and Jac WF}).
\end{lemma}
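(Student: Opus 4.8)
The plan is to reduce the statement to an elementary scaling property of the Laurent monomials appearing in $W_X$. Recall that $W_X(z) = \sum_{i=1}^r t^{-\lambda_i} z^{e_i}$, where the $e_i \in \Z^n$ are the edges of the fan and $\lambda_i$ are the polytope parameters. The key observation is that, for monotone $X$, the polytope $\Delta$ can be taken reflexive (Section \ref{Subsection The polytope of a Fano variety}), so $\lambda_i = -1$ for all $i$ and $W_X(z) = t\sum_i z^{e_i}$. Hence it suffices to show that $\sum_i (\xi z)^{e_i} = \xi \sum_i z^{e_i}$, i.e. that $(\xi z)^{e_i} = \xi \cdot z^{e_i}$ for each $i$, whenever $\xi^{\lambda_X}=1$. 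Here $\xi z$ should be interpreted as a simultaneous rescaling $(\xi z_1,\dots,\xi z_n)$, so $(\xi z)^{e_i} = \xi^{\langle e_i, (1,\dots,1)\rangle} z^{e_i} = \xi^{|e_i|_1} z^{e_i}$, writing $|e_i|_1 = \sum_j e_{i,j}$ for the sum of the coordinates of $e_i$.

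So the whole statement collapses to the arithmetic claim: $|e_i|_1 \equiv 1 \pmod{\lambda_X}$ for every edge $e_i$ of the fan of a monotone toric manifold with integral primitive $\omega_X$. I would prove this as follows. For a reflexive polytope, $c_1(TX) = \sum_i \mathrm{PD}[D_i] = \lambda_X[\omega_X]$ with $[\omega_X]$ integral. Pair this with the homology class $\beta \in H_2(X,\Z)$ dual to an integral linear relation $\sum_i a_i e_i = 0$ among the edges (so $\beta \cdot D_i = a_i$): one gets $c_1(TX)(\beta) = \sum a_i$ and $[\omega_X](\beta) = -\sum \lambda_i a_i \cdot(\text{normalization}) \in \Z$, hence $\sum a_i \equiv 0 \pmod {\lambda_X}$ for all such relations. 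But the linear relations $\sum_i \langle \xi, e_i\rangle x_i = 0$ (as $\xi$ ranges over the standard basis) say precisely that $(\langle \xi, e_i\rangle)_i$ — equivalently any integer vector in the row space of the edge matrix — is such a relation-vector up to the identification with $H_2$; more cleanly, I would invoke that $\sum_i \langle \xi, e_i \rangle \mathrm{PD}[D_i] = 0$ in $H^*(X)$, which is the content of \eqref{EqnLinearRelations}, together with $\sum_i \mathrm{PD}[D_i] = \lambda_X[\omega_X]$, to write $\sum_i(1 - \langle\xi', e_i\rangle)\mathrm{PD}[D_i] = \lambda_X[\omega_X]$ for any $\xi'$, and then evaluate both sides on a sphere class to deduce the congruence on $|e_i|_1$. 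I would also double-check the purely combinatorial route: the vector $(1,1,\dots,1) - v \in \Z^n$ lies in the lattice spanned by the facet normals for a reflexive polytope in a way that forces $|e_i|_1 - 1$ to be divisible by $\lambda_X$; the cleanest source here is the fact that $[\omega_\Delta] = c_1(TX) = \lambda_X[\omega_X]$ combined with integrality of $[\omega_X]$ on all toric curve classes.

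For the $F$-twisted case over a Fano toric manifold $(X,\omega_F)$, the superpotential is $W_F(z) = \sum_i s^{-F(e_i)} T z^{e_i}$ (Section \ref{Subsection F-twisted superpotential and Jac WF}), and the relevant $\lambda_X$ is still the Fano index, defined via $c_1(TX) = \lambda_X[\omega_X]$ for the underlying monotone form $\omega_X$ — the twisting changes the $s$-weights but not the fan, hence not the edges $e_i$ nor the congruence $|e_i|_1 \equiv 1 \pmod{\lambda_X}$. So $(\xi z)^{e_i} = \xi^{|e_i|_1} z^{e_i} = \xi z^{e_i}$ for $\xi^{\lambda_X}=1$ exactly as before, and $W_F(\xi z) = \xi W_F(z)$ follows term by term. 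The main (and essentially only) obstacle is pinning down the lattice/congruence fact $|e_i|_1 \equiv 1 \pmod{\lambda_X}$ with a clean reference or a short self-contained argument; everything else is a one-line monomial computation. I expect the argument via $c_1(TX) = \lambda_X[\omega_X]$ evaluated on toric curve classes (or, equivalently, the observation already used in Lemma \ref{Lemma description of translated polytope} that differences of vertices of $\Delta$ along edges are divisible by $\lambda_B$) to furnish this cleanly.
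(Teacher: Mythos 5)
Your reduction to the term-by-term congruence $|e_i|_1=\langle (1,\ldots,1),e_i\rangle\equiv 1\pmod{\lambda_X}$ is the right idea and is exactly what the paper's proof establishes, but your argument for it contains a genuine gap, and in fact the congruence you state is not coordinate-independent and is \emph{false} for a general choice of lattice basis for the fan. Concretely, take $\P^2$ and apply $A=\left(\begin{smallmatrix}1&1\\0&1\end{smallmatrix}\right)\in SL(2,\Z)$ to the standard reflexive fan: the edges become $(1,0),(1,1),(-2,-1)$, still a valid (reflexive) fan for $\P^2$, but now $|e_2|_1=2\not\equiv 1\pmod 3$. Correspondingly, in the transformed torus coordinates $W=t(w_1+w_1w_2+w_1^{-2}w_2^{-1})$ and $W(\xi w)\neq\xi W(w)$ for $\xi^3=1$. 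So the Lemma itself is only true after a suitable choice of basis, and any proof must make (or implicitly rely on) such a choice.

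Your proposed route to the congruence has a second, related problem: you conflate the ``linear relations'' $\sum_i\langle\xi,e_i\rangle\,\mathrm{PD}[D_i]=0$ (which are functionals on $\Z^r$, i.e.\ lie in the row space of the edge matrix, the dual side of the exact sequence $0\to H_2\to\Z^r\to\Z^n\to 0$) with actual $\Z$-linear relations $\sum a_ie_i=0$ among the edges (which lie in the kernel, identified with $H_2(X,\Z)$). The vector $(\langle\xi,e_i\rangle)_i$ is \emph{not} a relation-vector in the latter sense, so it does not pair under the $H_2$-identification in the way you intend. The ``cleaner'' version of your argument — combine $\sum_i\langle\mathbf{1},e_i\rangle\,\mathrm{PD}[D_i]=0$ with $\sum_i\mathrm{PD}[D_i]=\lambda_X[\omega_X]$ and evaluate on spheres — only yields the aggregated statement $\sum_i(1-|e_i|_1)(\beta\cdot D_i)\equiv 0\pmod{\lambda_X}$, from which the individual congruences do not follow (and, by the $\P^2$ example above, cannot follow).

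The paper's proof supplies precisely the missing ingredient: it first applies an $SL(n,\Z)$ change of basis so that $e_1,\ldots,e_n$ become the standard basis (using smoothness: any top-dimensional cone is generated by a $\Z^n$-basis). Then $|e_i|_1=1$ is trivial for $i\le n$, and for $j>n$ one writes $e_j=a_1e_1+\cdots+a_ne_n$ and applies exactly the $H_2$-monotonicity argument you invoke, but to \emph{that specific} relation: the class $\gamma$ with $\gamma\cdot D_i=-a_i$, $\gamma\cdot D_j=1$ gives $c_1(TX)(\gamma)=1-\sum a_k=1-|e_j|_1$, which is a multiple of $\lambda_X$ since $c_1(TX)=\lambda_X[\omega_X]$ with $\omega_X$ integral. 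Without the coordinate normalization this last step has no analogue, because there is no distinguished relation among edges assigned to each $e_i$. Everything else in your outline (the passage to the reflexive form $W=t\sum z^{e_i}$, the term-by-term monomial rescaling, the observation that the $F$-twisting only alters $s$-weights and not the lattice data) is fine and matches the paper.
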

\begin{proof}
After relabelling the indices, we can assume $W_X = \sum t^{-\lambda_i} z^{e_i}$ has the form
$$
W_X = z_1 + \cdots + z_n + d_{n+1}(z) + \cdots + d_r(z)
$$
where $d_j(z)$ are monomials in the free variables $z_i^{\pm 1}$. This is proved as follows.
% DON'T NEED COMPACTNESS
%  Since $X$ is compact and smooth, the fan for $X$ is complete and regular. The former means the cones for $X$ cover $\R^m$, and the latter means the cones are generated by primitive integral vectors in $\Z^m$ which can be extended to an integral basis for $\Z^m$. So in particular, 
Consider a top-dimensional cone for $X$, say $\mathrm{span}_{\R_{\geq 0}}\{ e_1,\ldots,e_n \}$ (relabel indices if necessary). This corresponds to a chart for $X$. Making an $SL(n,\Z)$ transformation so that $e_1,\ldots,e_n$ become the standard basis of $\R^n$, and translating the moment polytope so that $\lambda_1,\ldots,\lambda_n$ become zero, will not affect $X$ (up to an equivariant symplectomorphism). It follows that $W_X$ has the above form, and the functions $d_j$ express the linear dependence relations amongst edges. Explicitly: $d_j(z)=t^{-\lambda_j} z_1^{a_1}\ldots z_n^{a_n}$ means 
$
e_j = a_1 e_1 + \cdots + a_n e_n.
$

\emph{Example: For $\P^2$, $W_X(z)=z_1+z_2+tz_1^{-1}z_2^{-1}$ so $d_3(z) = tz_1^{-1}z_2^{-1}$, which expresses the fact that $e_3=(-1,-1)=-(1,0)-(0,1)=-e_1-e_2$.}

Now observe how $W_X$ changes under the action:
$$
\begin{array}{lll}
W_X(\xi z) &=& \xi z_1 + \cdots +  \xi z_n + d_{n+1}( \xi z) + \cdots + d_r( \xi z) \\
 &= & \xi z_1 + \cdots +  \xi z_n + \xi^{\langle e_{n+1},(1,\ldots,1) \rangle} d_{n+1}( z) + \cdots + \xi^{\langle e_{r},(1,\ldots,1) \rangle} d_r( z).
\end{array}
$$
since, in the above notation, $d_j(\xi z)=\xi^{a_1+\cdots +a_n} z^{e_j}$. Thus $W_X(\xi z) = \xi W_X(z)$ follows if we can show that $a_1+\cdots +a_n$ is congruent to $1$ modulo $\lambda_X$.
%
% \emph{Side-remark. The relation between the edges of the fan that $d_j(z)$ expresses has the form
% %
% %
% $$
% (a_1,\ldots,a_m,0,\ldots,0,-1,0,\ldots,0)
% $$
% %
% %
% where $1$ is in position $j$, namely the coefficients of the relation $a_1e_1+\cdots+a_me_m - e_j=0$ above. These coordinate vectors are the basis of the Lie algebra $\mathfrak{g}_{\C}$ (the kernel of the matrix $\beta$, namely linear dependence relations among the column vectors, which are the edges of the fan). Pairing with $(1,\ldots,1)$ gives $a_1+\cdots + a_m -1$. Now $d_j(\xi z)=\xi^{a_1+\ldots + a_m} z$, so we are claiming that $a_1+\cdots+a_m\cong 1$ modulo $\lambda_X \Z$, so we claim
% %
% %
% $$
% \langle \mathfrak{g}_{\C}, (1,\ldots,1)\rangle \cong 1 \textrm{modulo }\lambda_X \Z
% $$
% %
% %
% For example for $\P^2$ we have $(1,1,1)\in \mathfrak{g}_{\C}$ since $e_1+e_2+e_3=0$.
% }
% \\[1mm]

The $\Z$-linear relation amongst edges $e_j - a_1 e_1 - \cdots - a_n e_n=0$ corresponds to a class $\gamma\in H_2(X)$ determined by the intersection products $\gamma\cdot D_i = -a_i$ for $i\leq n$ and $\gamma\cdot e_j=1$ (see Section \ref{Subsection Review of the Batyrev-Givental presentation of QH}). Since $c_1(TX)=\sum \mathrm{PD}[D_i]$, $c_1(TX)(\gamma)=1-a_1-\cdots-a_n$. By monotonicity, $c_1(TX)=\lambda_X\omega_X$, so $1-a_1-\cdots-a_n$ is an integer multiple of $\lambda_X$, since $\omega_X$ is integral.

Since $z^{e_i}$ gets rescaled by $\xi$ via $z\mapsto \xi z$, the proof also holds for the $F$-twisted superpotential (note we use $c_1(TX)=\lambda_X\omega_X$ at the end of the proof, not the non-monotone $\omega_F$). 
\end{proof}

\begin{corollary} \label{Corollary crit pts of W arise in families}
For $(X,\omega_X)$ as above, the critical points of $W_X$ arise in $\lambda_X$-families: if $p\in \mathrm{Crit}(W_X)$ then $\xi p\in \mathrm{Crit}(W_X)$ with $W_X(\xi p)=\xi W_X(p)$, whenever $\xi^{\lambda_X}=1.$
\end{corollary}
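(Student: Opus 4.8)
The plan is to derive this immediately from Lemma \ref{Lemma W(xi z) is xi W(z)} by a simple chain-rule argument showing that the substitution $z \mapsto \xi z$ preserves the critical locus of $W_X$. Since the statement is a short consequence of the homogeneity property $W_X(\xi z) = \xi W_X(z)$ for $\xi^{\lambda_X}=1$, there is no genuine obstacle; the only thing to be careful about is the bookkeeping with the multiplicative action on $(\C^*)^n$ (equivalently on $(\Lambda\setminus\{0\})^n$) and the fact that multiplication by $\xi$ is a biholomorphism of the torus.

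First I would write $m_\xi: (\Lambda\setminus\{0\})^n \to (\Lambda\setminus\{0\})^n$, $z \mapsto \xi z$, for the scaling map, which is an isomorphism of the algebraic torus (its inverse is $m_{\xi^{-1}}$). Lemma \ref{Lemma W(xi z) is xi W(z)} states exactly that $W_X \circ m_\xi = \xi \cdot W_X$. Differentiating both sides and using that $m_\xi$ is linear (so $d m_\xi|_z = m_\xi$ as a linear map, here simply scaling each coordinate differential by $\xi$), I get for each $z$ the identity of linear functionals
$$
\xi \, dW_X|_z \;=\; d(W_X\circ m_\xi)|_z \;=\; (dW_X|_{\xi z})\circ m_\xi,
$$
so $dW_X|_{\xi z} = \xi\, (dW_X|_z)\circ m_\xi^{-1} = \xi\, (dW_X|_z)\circ m_{\xi^{-1}}$. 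Since $m_{\xi^{-1}}$ is an isomorphism and $\xi\ne 0$, the right-hand side vanishes if and only if $dW_X|_z = 0$. Hence $p \in \mathrm{Crit}(W_X)$ if and only if $\xi p \in \mathrm{Crit}(W_X)$, and for such $p$ the value satisfies $W_X(\xi p) = \xi W_X(p)$ directly from the homogeneity identity.

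Finally I would remark that exactly the same argument applies verbatim to the $F$-twisted superpotential $W_F$ of a Fano toric manifold $(X,\omega_F)$, since Lemma \ref{Lemma W(xi z) is xi W(z)} was proved in that generality as well; one only replaces $W_X$ by $W_F$ throughout, the scaling identity $W_F(\xi z) = \xi W_F(z)$ being the input. This also shows, combined with Lemma \ref{Lemma Critical points of W come in families}, that the non-zero critical values of $W_X$ (equivalently the non-zero eigenvalues of $c_1(TX)$) split into orbits of size $\lambda_X$ under multiplication by $\lambda_X$-th roots of unity, which is the form in which the corollary will be used. The entire argument is a one-line differentiation, so there is no hard step; the only subtlety worth stating explicitly is that the multiplicative group of $\lambda_X$-th roots of unity $\xi$ acts freely on $\mathrm{Crit}(W_X) \cap \{W_X \ne 0\}$ (a critical point with $W_X(p)=0$ could in principle be fixed, but away from $0$ the relation $W_X(\xi p)=\xi W_X(p)\ne W_X(p)$ for $\xi\ne 1$ forces freeness), which is what ``$\lambda_X$-families'' means.
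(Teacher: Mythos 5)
Your proof is correct and is essentially the same as the paper's: the paper's entire proof is the one-line chain-rule computation $d(W_X(\xi z)) = d(\xi W_X(z)) = \xi\, d(W_X(z))$, which is exactly your differentiation of the homogeneity identity from Lemma \ref{Lemma W(xi z) is xi W(z)}. Your additional remark on freeness of the action on critical points with non-zero critical value is a harmless elaboration consistent with how the corollary is used, but it does not change the argument.
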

\begin{proof}
 This follows, since $d(W_X(\xi z))=d(\xi W_X(z)) = \xi d(W_X(z))$.
\end{proof}
%
%
%%%%%%%%%%%%%%%%%%%%%%%%%%%%%%%%%%%%%%%%%%%%%%%%%%%%%%%%%%%
\subsection{The barycentre of the moment polytope for monotone toric manifolds}
\label{Subsection The barycentre of the moment polytope for monotone toric manifolds}
%%%%%%%%%%%%%%%%%%%%%%%%%%%%%%%%%%%%%%%%%%%%%%%%%%%%%%%%%%%%%

Fukaya-Oh-Ohta-Ono showed in \cite[Theorem 7.11]{FOOOtoric} that for closed monotone toric manifolds $B$, the $p\in \mathrm{Crit}(W)$ always land at the barycentre of $\Delta$ and the corresponding $L_p$ is the unique monotone Lagrangian torus fibre of the moment map. This can be proved as follows.

\begin{lemma}\label{Lemma barycentre trick}
When $B$ is a closed monotone toric manifold, $p\in \mathrm{Crit}(W)$ always lands at the barycentre of $\Delta$ and $L_p$ is the unique monotone Lagrangian torus fibre of $\mu_B$.
\end{lemma}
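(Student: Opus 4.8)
The plan is to work throughout with the reflexive polytope $\Delta$ for $B$, so that $\lambda_i=-1$ for every facet and $(B,\omega_\Delta)$ is monotone with monotonicity constant $1$ (hence $T=t$), and to reduce the statement to two purely local facts. Recall from Section~\ref{Subsection Superpotential} that a critical point $p$ of $W$ lands at $y:=\mathrm{val}_t(p)\in\mathrm{int}(\Delta)$ and determines the toric fibre $L_p=\mu_B^{-1}(y)$ together with a holonomy $\gamma:=t^{-y}p\in(\Lambda_0^{\times})^n$, and that by Cho--Oh \cite{Cho-Oh} the Maslov $2$ disc classes $\beta_i$ bounding $L_p$ (one per facet) have symplectic area proportional to $\langle y,e_i\rangle-\lambda_i=\langle y,e_i\rangle+1>0$. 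So it is enough to show: (i) the only interior point over which a monotone toric fibre sits is the barycentre $y=0$; and (ii) a critical point of $W$ always lands at $y=0$.

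For (i), note that since $\pi_2(B,L_y)$ is generated by the classes $\beta_i$, all of Maslov index $2$, the fibre $L_y$ is monotone if and only if the areas $\langle y,e_i\rangle+1$ are all equal, i.e.\ $\langle y,e_i\rangle$ is independent of $i$; the barycentre $y=0$ clearly has this property. For uniqueness, suppose $\langle y,e_i\rangle=\mu$ for all $i$. Pairing $y$ with any integral relation $\sum n_ie_i=0$ among the edges gives $\mu\sum n_i=0$; such relations correspond to classes $\beta\in H_2(B)$ with $c_1(TB)(\beta)=\sum n_i$ (Section~\ref{Subsection Review of the Batyrev-Givental presentation of QH}), and since $B$ is Fano there is an effective such $\beta$ --- e.g.\ the one attached to a primitive collection --- with $c_1(TB)(\beta)>0$. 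Hence $\mu=0$, and as the $e_i$ span $\R^n$ this forces $y=0$. This simultaneously identifies $L_0$ with the fibre over the barycentre.

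For (ii), write $W=\sum_i w_i$ with $w_i=t^{-\lambda_i}z^{e_i}$, so $z_j\partial_{z_j}W=\sum_i e_{i,j}w_i$. Evaluating at $p$ gives $w_i(p)=t^{1+\langle y,e_i\rangle}(\gamma^{e_i}+(\text{higher order in }t))$ with $\gamma^{e_i}\in\C^{\times}$. Let $m=\min_i(1+\langle y,e_i\rangle)$ and $S=\{i:1+\langle y,e_i\rangle=m\}$. The equations $z_j\partial_{z_j}W(p)=0$ force the lowest-order $t^m$ coefficients to cancel: $\sum_{i\in S}\gamma^{e_i}e_i=0$. The aim is to conclude that $0$ lies in the relative interior of $\mathrm{conv}\{e_i:i\in S\}$, equivalently that $y$ is a maximiser of the concave piecewise-linear function $\rho(y)=\min_i(\langle y,e_i\rangle+1)$; since $\rho>0$ on $\mathrm{int}(\Delta)$ and $\rho\to 0$ on $\partial\Delta$, its maximisers are exactly its critical points, and by the Fano argument of step~(i) the unique maximiser is the barycentre $0$ (where $S=\{1,\dots,r\}$ and all $\beta_i$ have equal area). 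Hence $y=0$ and $L_p$ is the monotone fibre, proving the lemma together with step~(i).

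The one genuinely delicate point is passing from the relation $\sum_{i\in S}\gamma^{e_i}e_i=0$ with \emph{complex} coefficients $\gamma^{e_i}$ to the convex-geometric conclusion that $0\in\mathrm{conv}\{e_i:i\in S\}$ (relative interior). I expect to handle this via the $\Z$-grading on $QH^*(B)\cong\mathrm{Jac}(W)$: under Batyrev's isomorphism $x_i\mapsto t^{-\lambda_i}z^{e_i}=tz^{e_i}$ with $|x_i|=2=|t|$, every monomial $z^{e_i}$ sits in degree $0$, so the critical locus of $W$ is cut out by homogeneous equations; combined with the fact (as in the proof of Corollary~\ref{Corollary crit values of W are homogeneous}) that the critical value $W(p)=cT=ct$ is $t$-homogeneous of degree $1$, this rules out any $i$ with $\langle y,e_i\rangle<0$ and forces $\langle y,e_i\rangle=0$ on $S$, reducing criticality of $W$ over $\Lambda$ to criticality of the leading-order Laurent polynomial $\sum_i z^{e_i}$ over $\C^{\times}$, whose critical points land at $y=0$. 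This grading bookkeeping --- not the geometry --- is where the real work lies; the rest is the Cho--Oh/Auroux \cite{Cho-Oh,Auroux} dictionary and the elementary convex geometry of reflexive polytopes, and follows the original argument of Fukaya--Oh--Ohta--Ono \cite[Theorem 7.11]{FOOOtoric}.
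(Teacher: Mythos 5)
Your step (i)—that the monotone fibre sits over the unique point where all the disc areas $\langle y,e_i\rangle-\lambda_i$ agree, and that Fano forces that point to be $0$—is correct and supplies the "uniqueness" claim that the paper's own proof leaves implicit. Your step (ii), however, has exactly the gap you flag, and the proposed fix does not close it.

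Concretely: from $\partial_{z_j}W(p)=0$ you extract, by comparing lowest $t$-order, a relation $\sum_{i\in S}\gamma^{e_i}e_i=0$ with coefficients $\gamma^{e_i}\in\C^{\times}$. A $\C$-linear relation among the $e_i$ carries no convex information whatsoever, so it does not place $0$ in the relative interior of $\mathrm{conv}\{e_i:i\in S\}$ and does not identify $y$ with a maximiser of your piecewise-linear $\rho$. The grading remark also does not save this: the $\Z$-grading on $QH^*$ constrains critical \emph{values} (as in Corollary~\ref{Corollary crit values of W are homogeneous}), not the $t$-valuations of critical \emph{points}, and the "leading-order Laurent polynomial" is circular because which terms are leading depends on the unknown $y$ you are trying to pin down.

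The paper sidesteps all of this in one step, by exploiting the reflexive normalisation more fully than you do. With $\lambda_i=-1$ for every $i$, the entire superpotential factors as $W=t\cdot f(z)$ with $f(z)=\sum z^{e_i}$ a Laurent polynomial with \emph{integer} coefficients. The critical equations $\partial_{z_j}W=0$ are then equivalent, over the integral domain $\Lambda$, to $\partial_{z_j}f=0$, a system with coefficients in $\K$. Since $\mathrm{Jac}(f)$ is a finite $\K$-algebra (being $QH^*(B)$), every $\Lambda$-point of $\mathrm{Crit}(f)$ factors through a $\K$-point (the residue fields are $\K$ when $\K$ is algebraically closed), i.e.\ $p\in(\K^{\times})^n\subset(\Lambda_0^{\times})^n$ and $\mathrm{val}_t(p)=0$ immediately. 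There is no minimum set $S$, no lowest-order bookkeeping, and nothing to rule out. If you want to keep your structure, replace all of step (ii) with this observation; your step (i) can then be retained as the proof that $L_0=\mu_B^{-1}(0)$ is the unique monotone fibre.
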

\begin{proof}
Take the reflexive polytope $\Delta$ for $B$ (Section \ref{Subsection The polytope of a Fano variety}), so $W=t(\sum z^{e_i})$. Now simply calculate the critical points $z\in (\C^*)^n$ of $\sum z^{e_i}: (\C^*)^{n} \to \C$. So $y=\mathrm{val}_t(z)=0$, which is the barycentre. By Delzant's theorem, any other choice of polytope for $B$ is $A \Delta + \textrm{constant}$ for $A\in SL(n,\Z)$, but such transformations preserve the barycentre.

To prove monotonicity of $L$, we need $\omega[u]= (1/2\lambda_B)\mu[u]$ for discs $u: (\D,\partial \D) \to (B,L)$ (recall $2\lambda_L\lambda_B=1$). By the long exact sequence $\pi_2(L)=0\to \pi_2(B) \to \pi_2(B,L) \to \pi_1(L)\to 0=\pi_1(B)$ (using that toric $B$ are simply connected, and that a torus $L$ has $\pi_2(L)=0$), a basis for such discs are the standard $\mu=2$ discs $u_i$ in the homogeneous coordinate $x_i$ (keeping the other $x_j$ fixed). A calculation \cite[Theorem 8.1]{Cho-Oh} shows that $\omega[u_i]=\langle y,e_i \rangle -\lambda_i$ (without $2\pi$ due to Remark \ref{Remark about missing 2pi}). For the barycentre $y$ the latter equals $1/\lambda_B$, so $\omega[u_i]=(1/2\lambda_B)\mu[u_i]$ as required.
\end{proof}

\begin{theorem}\label{Theorem barycentre of E is critical}
For any admissible toric manifold $M$ (Definition \ref{Definition admissible toric manifolds}, e.g. a monotone toric negative line bundle $E\to B$), using the toric monotone form $\omega_{\Delta}=\sum \mathrm{PD}[D_i] = c_1(TM)$ so that $\lambda_i=-1$ define the polytope $\Delta$ for $M$, the critical points of the superpotential all lie over $y=0\in \Delta$.
More generally, for any polytope $\Delta_M$ for $M$ inducing a monotone toric symplectic form $\omega_M$ (so $c_1(TM)=\lambda_M \, [\omega_M]$), the critical points of the superpotential lie over the ``barycentre'' $y$ of $M$ defined as the unique point $y\in \Delta_M$ satisfying
\begin{equation}\label{EqnBarycentreEquation}
\qquad\qquad\qquad
\langle y,e_i \rangle - \lambda_i = \tfrac{1}{\lambda_M}.
\end{equation}
Moreover, the corresponding Lagrangians $L_p$ are monotone.
\end{theorem}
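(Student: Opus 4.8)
The plan is to reduce the statement for a general monotone polytope $\Delta_M$ to the case of the reflexive polytope and the form $\omega_\Delta$, and in that case to carry out the direct calculation that appeared already in Lemma \ref{Lemma barycentre trick}, but now paying attention to the fact that the fan of $M$ need not cover $\R^n$. First I would recall that for the monotone form $\omega_\Delta$ we have $\lambda_i = -1$ for all $i$, so by Definition \ref{Definition Superpotential} the superpotential is $W = t\sum_{i=1}^r z^{e_i}$, and a point $z$ lands over $y = \mathrm{val}_t(z)$. The critical point equation $dW = 0$ is $d(\sum z^{e_i}) = 0$, which is a system of Laurent-polynomial equations over $\C$ once we fix the $t$-power; Galkin's Theorem \ref{Theorem Galkin} (and the discussion in Section \ref{Subsection Galkin's result}, noting that admissibility means the relevant growth hypotheses are assumed for $M$ in the cases we care about, or are verified directly) guarantees a solution, and any solution $z\in(\C^*)^n$ has $\mathrm{val}_t(z) = 0$, i.e. lies over $y = 0$. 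Note $y=0$ satisfies $\langle 0, e_i\rangle - \lambda_i = 1 = 1/\lambda_M$ since $\lambda_M = 1$ when $\omega_M = \omega_\Delta$; this is exactly equation \eqref{EqnBarycentreEquation} in that normalization.

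\textbf{Passing to a general monotone polytope.} For a general polytope $\Delta_M$ inducing a monotone form $\omega_M$ with $c_1(TM) = \lambda_M[\omega_M]$, I would use the same mechanism as in Lemma \ref{Lemma description of translated polytope}: $\Delta_M$ is obtained from the reflexive polytope $\Delta$ by an $SL(n,\Z)$-transformation together with a translation and a rescaling by $1/\lambda_M$. An $SL(n,\Z)$-transformation changes the edges $e_i$ but leaves the toric variety, the $\lambda_i$, and (after the matching change of coordinates on the torus $(\C^*)^n$) the critical locus of $W$ unchanged; a translation of $\Delta_M$ by a vector $v$ changes $\lambda_i$ to $\lambda_i + \langle -v, e_i\rangle$ and correspondingly multiplies $z^{e_i}$ by a monomial $t^{\langle v, e_i\rangle}$, which corresponds precisely to a shift of the landing point $y \mapsto y + v$ (one checks this is consistent across all $i$ using the linear relations $\sum\langle \xi, e_i\rangle \mathrm{PD}[D_i] = 0$, exactly as in the proof of Lemma \ref{Lemma W(xi z) is xi W(z)}); and a rescaling by $1/\lambda_M$ rescales the quantity $\langle y, e_i\rangle - \lambda_i$ from $1$ to $1/\lambda_M$, which gives \eqref{EqnBarycentreEquation}. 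So the barycentre $y$ defined by \eqref{EqnBarycentreEquation} is the image of $0\in\Delta$ under the composite affine transformation, and since the critical points of $W$ lie over $0$ in the reflexive normalization, they lie over $y$ in the general one. I should also check that \eqref{EqnBarycentreEquation} has a unique solution $y\in\Delta_M$: the equations $\langle y, e_i\rangle = \lambda_i + 1/\lambda_M$ for $i$ ranging over the facets meeting at any fixed vertex already pin down $y$ uniquely (the corresponding $e_i$ form a $\Z$-basis of $\Z^n$ by smoothness), and consistency across vertices follows from the fact that $0$ works in the reflexive case.

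\textbf{Monotonicity of $L_p$.} For the last sentence, I would argue exactly as at the end of the proof of Lemma \ref{Lemma barycentre trick}. By the long exact sequence $0 = \pi_2(L_p) \to \pi_2(M) \to \pi_2(M, L_p) \to \pi_1(L_p) \to \pi_1(M)$, and using that an admissible toric $M$ is simply connected (so $\pi_1(M) = 1$) while $L_p$ is a torus (so $\pi_2(L_p) = 0$), a generating set for $\pi_2(M, L_p)$ modulo $\pi_2(M)$ is given by the standard Maslov $2$ discs $u_i$ in the homogeneous coordinate $x_i$, and on $\pi_2(M)$ itself monotonicity of $M$ gives the required proportionality. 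By the Cho-Oh computation \cite[Theorem 8.1]{Cho-Oh}, $\omega[u_i] = \langle y, e_i\rangle - \lambda_i$, which by \eqref{EqnBarycentreEquation} equals $1/\lambda_M$; since $\mu[u_i] = 2$ and $2\lambda_{L_p}\lambda_M = 1$, this yields $\omega[u] = (1/2\lambda_M)\,\mu[u]$ for all relevant discs $u$, i.e. $L_p$ is monotone. The main obstacle I anticipate is bookkeeping the interaction between the landing point $y$ and the choice of polytope representative in the non-compact setting — specifically making sure that the combinatorial identity \eqref{EqnBarycentreEquation} transforms correctly under the affine changes and that the ``barycentre'' is well-defined even though the fan of $M$ does not cover $\R^n$ and $\Delta_M$ is unbounded; but this is handled uniformly by the fact that \eqref{EqnBarycentreEquation} only involves the facet equations, which are local at each vertex, so boundedness of $\Delta_M$ is never used.
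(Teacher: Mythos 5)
Your proposal follows the same route the paper takes: reduce to the normalization $\lambda_i=-1$ where $W=t\sum z^{e_i}$, observe the critical point equations are $t$-independent so critical points have $\mathrm{val}_t(z)=0$, carry the barycentre through the affine change $\Delta_M=\tfrac{1}{\lambda_M}(A\Delta+\mathrm{const})$, and prove monotonicity of $L_p$ via the Cho--Oh disc-area formula $\omega[u_i]=\langle y,e_i\rangle-\lambda_i$ and the long exact homotopy sequence. This is exactly what the paper's (very terse) proof does, by referring back to Lemma \ref{Lemma barycentre trick} and the scaling/translation bookkeeping of Lemma \ref{Lemma description of translated polytope}.

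One small misstep: invoking Galkin's Theorem \ref{Theorem Galkin} here is both unnecessary and slightly misleading. The statement is about \emph{where} critical points lie, not whether they exist, so existence plays no role in the argument. Moreover, Galkin's theorem is proved only for \emph{closed} monotone toric manifolds, and Section \ref{Subsection Galkin's result} explicitly points out that its conclusion (a critical point with positive real coordinates) can fail for admissible non-compact $M$ — the paper even gives $\mathcal{O}_{\P^1}(-1)$ as a counterexample. So the parenthetical ``admissibility means the relevant growth hypotheses are assumed'' is not accurate; fortunately, nothing in your argument actually depends on it. What \emph{would} be worth spelling out a bit more — and the paper glosses over this too — is why all $\Lambda$-valued critical points of $\sum z^{e_i}$ lie in $(\C^*)^n$: your sentence ``any solution $z\in(\C^*)^n$ has $\mathrm{val}_t(z)=0$'' is a tautology rather than an argument. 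The cleanest justification is the homogeneity argument the paper gives as a sanity check (Corollary \ref{Corollary crit values of W are homogeneous} and the calculation in \eqref{EqnWmonotone}), or finiteness of $\mathrm{Jac}(W)$ over $\C$ (which forces the $\Lambda$-points to coincide with the $\C$-points). Either fix would tighten the first paragraph without changing the overall structure.
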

\begin{proof}
 Follows by the same proof as in Lemma \ref{Lemma barycentre trick}, using that $\Delta_M = \tfrac{1}{\lambda_M}(A \Delta + \textrm{constant})$.\\
\emph{{\bf Sanity check.} Let's run the calculation of the critical points of $W$, as described above Example \ref{Example Superpotential of nlb}. By Corollary \ref{Corollary crit values of W are homogeneous} we expect critical points $z=t^y\cdot c$ with $c\in (\C^*)^n$ and critical value $W\in T\cdot \C^*$ (recall $T=t^{1/\lambda_M}$). Indeed evaluating: 
\begin{equation}\label{EqnWmonotone}
W=\sum t^{-\lambda_i}z^{e_i} = \sum t^{-\lambda_i + \langle y,e_i \rangle} c^{e_i} = t^{1/\lambda_M} \sum c^{e_i} \in T\cdot \C^*,
\end{equation}
where in the last equality we used the barycentre equation \eqref{EqnBarycentreEquation}.}
\end{proof}

\begin{corollary}\label{Corollary Lagrangian tori in E in SE}
 The critical points of $W_E$ all give rise (with various holonomy data) to the unique monotone Lagrangian torus $L$ in the sphere bundle $SE\subset E$ of radius $1/\sqrt{\pi \lambda_E}$, which projects to the monotone Lagrangian torus in $B$ lying over the barycentre of $\Delta_B$.
\end{corollary}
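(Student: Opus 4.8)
\textbf{Proof plan for Corollary \ref{Corollary Lagrangian tori in E in SE}.}

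The plan is to combine three ingredients already established: Theorem \ref{Theorem barycentre of E is critical} (critical points of $W_E$ lie over the barycentre of $\Delta_E$ when using a monotone toric form, and the associated Lagrangians $L_p$ are monotone), Lemma \ref{Lemma moment polytope for neg l bdle} (the explicit description of $\Delta_E$, in particular that it sits in the half-space $y_{n+1}\geq 0$ and its facet along $y_{n+1}=0$ is $\Delta_B$), and Lemma \ref{Lemma forms agree for line bundle} together with Section \ref{Subsection Negative line bundles} (the identification of $\omega_E$ with the explicit form $\omega$, and the formula $R=\tfrac{1+k\boldsymbol{\pi} r^2}{1+k\boldsymbol{\pi}}$ for the radial coordinate in terms of the fibre norm $r$). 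First I would invoke Theorem \ref{Theorem barycentre of E is critical}: every critical point $p$ of $W_E$ lands over the unique $y=y_{\mathrm{bar}}\in\Delta_E$ satisfying $\langle y,e_i\rangle-\lambda_i^E=\tfrac{1}{\lambda_E}$ for all $i$ (including $i=f=r+1$). Since $\lambda_E>0$, the coordinate $y_{n+1}$ of $y_{\mathrm{bar}}$ is strictly positive (it equals $\tfrac{1}{\lambda_E}$, using $e_{r+1}=(0,\dots,0,1)$ and $\lambda_{r+1}^E=0$), so $L_p$ does not lie in the zero section $B=(x_{r+1}=0)$; it lies in the interior of the fibre directions, hence on a sphere subbundle of $E$.

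Next I would pin down which sphere subbundle. The moment-map coordinate $y_{n+1}=\tfrac12|x_{r+1}|^2$ is constant on $L_p$ (since $L_p$ is a torus fibre of $\mu_E$), and by Lemma \ref{Lemma forms agree for line bundle} the Hermitian fibre norm is $r=|w|=\tfrac{1}{\sqrt{2\boldsymbol\pi}}|x_{r+1}|$, so $\tfrac12|x_{r+1}|^2=\boldsymbol\pi r^2$. From the barycentre equation applied to the index $f$: $y_{n+1}=\langle y_{\mathrm{bar}},e_f\rangle-\lambda_f^E=\tfrac{1}{\lambda_E}$, hence $\boldsymbol\pi r^2=\tfrac{1}{\lambda_E}$, i.e. $r=1/\sqrt{\boldsymbol\pi\lambda_E}$. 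Thus $L_p$ lies in the sphere bundle $SE\subset E$ of radius $1/\sqrt{\boldsymbol\pi\lambda_E}$, matching the radius claimed (the $\pi$ in the statement being the mathematical constant $\boldsymbol\pi$). A quick sanity check against Section \ref{Subsection Negative line bundles}: this gives $R=\tfrac{1+k\boldsymbol\pi r^2}{1+k\boldsymbol\pi}=\tfrac{1+k/\lambda_E}{1+k\boldsymbol\pi}$, a fixed positive value, confirming $L_p$ lies at a fixed radial level as expected.

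Then I would identify $L_p$ with the expected torus in $B$: the projection $\pi:E\to B$ is the forgetful map on homogeneous coordinates (as recalled in the proof of Theorem \ref{Theorem Hamiltonians for rotations about divisors}), and under $\mu_E$ the first $n$ coordinates of $y_{\mathrm{bar}}$ satisfy $\langle Y_{\mathrm{bar}},b_i\rangle - \lambda_i^B = \tfrac{1}{\lambda_E} + y_{n+1}n_i$; but since we are using the monotone form on $E$ induced from the monotone form on $B$, and $\lambda_E=\lambda_B-k$ with $c_1(E)=-k[\omega_B]$, one checks by Lemma \ref{Lemma description of translated polytope} (the barycentre of $\Delta_B$ satisfies $\langle y_{\mathrm{bar}}^B,b_i\rangle-\lambda_i^B=\tfrac{1}{\lambda_B}$) and the explicit form of $\Delta_E$ that $\pi(L_p)$ is exactly the torus fibre over the barycentre of $\Delta_B$, which by Lemma \ref{Lemma barycentre trick} is the unique monotone Lagrangian torus fibre in $B$. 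Finally, uniqueness of $L$ in $SE$: any two monotone toric fibres in $E$ must land at a point of $\Delta_E$ solving the barycentre equation \eqref{EqnBarycentreEquation}, which has a unique solution; the holonomy data attached to different $p\in\mathrm{Crit}(W_E)$ vary but the underlying Lagrangian submanifold is the same. The main obstacle I anticipate is purely bookkeeping: carefully tracking the normalizations — the factor $\boldsymbol\pi$ in $r$ versus $|x_{r+1}|$, the rescaling by $1/k$ of $\omega$ in Section \ref{Subsection Negative line bundles}, and the relation $\lambda_E=\lambda_B-k$ — to make sure the radius comes out as exactly $1/\sqrt{\boldsymbol\pi\lambda_E}$ rather than off by a constant; none of this is conceptually hard, but it is where an error would most plausibly hide.
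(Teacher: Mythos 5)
The first half of your plan — invoking Theorem \ref{Theorem barycentre of E is critical}, extracting $y_{n+1}=1/\lambda_E$ from the barycentre equation at the index $f=r+1$, and using Lemma \ref{Lemma forms agree for line bundle} to convert $\tfrac12|x_{r+1}|^2=\pi r^2$ into the radius $r=1/\sqrt{\pi\lambda_E}$ — is exactly the paper's argument and is correct.

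The last step, though, has a genuine gap. You compute that the first $n$ coordinates $Y_{\mathrm{bar}}$ of the barycentre of $\Delta_E$ satisfy $\langle Y_{\mathrm{bar}},b_i\rangle-\lambda_i^B=\tfrac{1}{\lambda_E}+n_i y_{n+1}=(1+n_i)/\lambda_E$, and then say "one checks" that $\pi(L_p)$ lies over the barycentre of $\Delta_B$, where the barycentre condition is $\langle \cdot,b_i\rangle-\lambda_i^B=\tfrac{1}{\lambda_B}$. But $(1+n_i)/\lambda_E$ is not $1/\lambda_B$ in general (the $n_i$ vary with $i$), so the numbers do not match — and the reason they do not is that you are tacitly identifying the first $n$ coordinates of $\mu_E(L_p)$ with $\mu_B(\pi(L_p))$, which is false. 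The moment map $\mu_E$ is for $\omega_E$ whereas $\mu_B$ is for $\omega_B$, and these differ by the scale factor $(1+k\pi r^2)$ on horizontal tangent vectors; equivalently, Theorem \ref{Theorem Hamiltonians for rotations about divisors} gives $H_i=(1+k\pi)R\cdot\pi^*H_i^B$, so $\langle \mu_E,e_i\rangle-\lambda_i^E=(1+k\pi)R\cdot(\langle \mu_B\circ\pi,b_i\rangle-\lambda_i^B)$. The paper's proof replaces your projected-polytope reasoning by comparing symplectic areas of the standard Maslov-$2$ discs: $\omega_E[u_i^E]=1/\lambda_E$ at the barycentre of $\Delta_E$, and $\omega_E|_{u_i^E}=(1+k\pi r^2)\pi^*\omega_B$ since $x_{r+1}$ is constant on $u_i^E$, hence $\omega_B[u_i^B]=\tfrac{1}{\lambda_E(1+k\pi r^2)}$; this equals $1/\lambda_B$ exactly when $r=1/\sqrt{\pi\lambda_E}$, consistently with the radius already found. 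That rescaling by $(1+k\pi r^2)$ is the missing ingredient in your last step, and Lemma \ref{Lemma description of translated polytope} and the explicit form of $\Delta_E$ alone will not supply it.
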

\begin{proof}
 By Theorem \ref{Theorem barycentre of E is critical}, the last entry of $\mu_E(L)$ is $\mathrm{val}_t(z_{r+1})=\langle y, e_{r+1}\rangle = \tfrac{1}{\lambda_E}$ (using that $\lambda_{r+1}^E=0$ and $e_{r+1}=(0,\ldots,0,1)$). By Lemma \ref{Lemma forms agree for line bundle}, the Hermitian norm of points in $L$ is $1/\sqrt{\pi \lambda_E}$. We now verify that the projection to $B$ is as claimed. %
Since for critical $p$ of $W_E$, the Lagrangian $L_p$ lies over the barycentre of $\Delta_E$, we have $\omega_E[u_i^E]=1/\lambda_E$ for the standard discs $u_i^E$ bounding $L_p$ (see the proof of Lemma \ref{Lemma barycentre trick}). The projection $\pi: E \to B$ forgets the last homogeneous coordinate $x_{r+1}$, so $\pi(u_i^E)=u_i^B$ are the standard discs in $B$ bounding $\pi(L_p)$ for $i=1,\ldots,r$. Since $x_{r+1}$ is constant on $u_i^E$ for those $i$, $\omega_E|_{u_i^E}=(1+k\pi r^2)\pi^*\omega_B$ where $r$ is the radius of the sphere bundle $SE$ where $L_p$ lies. Thus
$$
(1+k\pi r^2)  \omega_B[u_i^B] = \omega_E[u_i^E] = \tfrac{1}{\lambda_E}.
% if use form komega_B + pi Omega instead of omega_B + pi Omega:
%(1+\pi r^2) k \omega_B[u_i^B] = \omega_E[u_i^E] = \tfrac{1}{\lambda_E}.
$$
By the proof of Lemma \ref{Lemma barycentre trick}, $\pi(L_p)$ lies over the barycentre of $\Delta_B$ precisely if $\omega_B[u_i^B]=1/\lambda_B$ for $i=1,\ldots,r$. This holds precisely if $\lambda_B = (1+k\pi r^2)\lambda_E$ which, using $\lambda_E=\lambda_B-k$, is equivalent to $r=1/\sqrt{\pi \lambda_E}$ as claimed.
\end{proof}

%%%%%%%%%%%%%%%%%%%%%%%%%%%%%%%%%%%
%%%%%%%%%%%%%%%%%%%%%%%%%%%%%%%%%%%
\section{Appendix B: Matrix perturbation theory and Grassmannians}
\label{Section Appendix B: Matrix perturbation theory}
%%%%%%%%%%%%%%%%%%%%%%%%%%%%%%%%%%%
%%%%%%%%%%%%%%%%%%%%%%%%%%%%%%%%%%%
\subsection{Set up of the eigenvalue problem}
\label{Subsection Set up of the eigenvalue problem}
 %%%%%%%%%%%%%%%%%%%%%%%%%%%%%%%%%%%
(Needed in Section \ref{Subsection Generation for semisimple critical values}) A family of matrices $$A(x)\in \mathrm{End}(\C^n)$$ is given, depending holomorphically on a complex parameter $x$ near $x=0$, and the generalized eigenspace decomposition of $\C^n$ for $A(x)$ needs to be compared with that of $A(0)$.

When $A(x)$ are normal matrices, this problem is rather well-behaved \cite[Ch.2 Thm 1.10]{Kato}: the eigenvalues and the projections onto the eigenspaces are all holomorphic in $x$. However, in our situation the $A(x)$ are not normal ($A(0)$ will typically not be diagonalizable), and the outcome is significantly more complicated. In this case, the eigenvalues $\lambda_j(x)$ and the \emph{eigenprojections} $P_j(x)$ (the projection onto the generalized eigenspace $\ker (A(x)-\lambda_j(x))^n$) are only holomorphic when $x$ is constrained to lie in a simply connected domain $D$ which does not contain \emph{exceptional points}, that is points $x=x_0$ where the characteristic polynomial $\chi_{A(x_0)}$ has repeated roots. The examples in Kato \cite[Ch.2 Example 1.1 and 1.12]{Kato} show that near $x_0$, the $\lambda_j(x),P_j(x)$ can have branch points, and the $P_j(x)$ typically have poles at $x_0$ (the $\lambda_j(x)$ are always continuous at $x_0$). Even when $x_0$ is not a branch point of $\lambda_j(x)$, then although $\lambda_j(x)$ is holomorphic on a whole disc around $x_0$ and $P_j(x)$ is single-valued on a punctured disc around $x_0$, nevertheless $P_j(x)$ can have a pole at $x_0$! For example, 
\begin{example} \label{Example bad matrix}
$\left(\begin{smallmatrix} x & 1 \\ 0 & 0\end{smallmatrix}\right)$ has eigenvalues $\lambda_1(x)=x$, $\lambda_2(x)=0$, eigenvectors $v_1(x)=(1,0)$, $v_2(x)=(1,-x)$, but eigenprojections $P_1(x)=\left(\begin{smallmatrix} 1 & x^{-1} \\ 0 & 0\end{smallmatrix}\right)$ and $P_2(x)=\left(\begin{smallmatrix} 0 & -x^{-1}\\ 0 & 1\end{smallmatrix}\right)$ which blow up at the exceptional point $x=0$.
\end{example} 
Athough one can \cite[Ch.2 Sec.4.2]{Kato} construct a basis of holomorphic generalized eigenvectors $v_j(x)$ for $P_j(x)\cdot \C^n$ in a domain $D$, as above, they cannot in general be constructed near exceptional $x_0$. We believe that the correct setting, to fix these convergence issues, is to instead consider how the eigenspaces and generalized eigenspaces vary in the relevant Grassmannian -- but it seems this is missing in the literature, so we develop it here.

In our notation, $x=0$ will be a possibly exceptional point of $A(x)$ and we write 
$$\D=\{x\in \C: |x|\leq \varepsilon\}\qquad \D^{\times}=\{x\in \C: 0<|x|\leq \varepsilon\}$$
to mean a small disc around $x=0$ (resp. punctured disc) containing no other exceptional points. The purpose of this appendix is two-fold:
\begin{enumerate}
 \item Suppose $\lambda$ is a \emph{semisimple eigenvalue} of $A(0)$, meaning the generalized eigenspace for $\lambda$ coincides with the eigenspace for $\lambda$ (for example, this holds for diagonalizable $A(0)$). Then the $\lambda_j(x)$ with $\lambda_j(0)=\lambda$ are all holomorphic on $\D$. If moreover their derivatives $\lambda_j'(0)$ are all distinct, then the eigenprojections $P_j(x)$ are also holomorphic on $\D$ and there is a linearly independent collection of holomorphic eigenvectors $v_j(x)$ on $\D$ converging to a basis of the $\lambda$-eigenspace of $A(0)$. In particular, $E_j(x)=\C v_j(x)=P_j(x)\cdot \C^n$ vary holomorphically in projective space $\P(\C^n)$ and their sum converges to a decomposition of the $\lambda$-eigenspace of $A(0)$ into $1$-dimensional summands.

 \item More generally, suppose $A(0)$ has a generalized sub-eigenspace $GE_i(0)\subset \ker (A(0)-\lambda)^n$, corresponding to a $k\times k$ Jordan block of $A(0)$ for $\lambda$. Then we want to show that the eigenvalues $\lambda_j(x)$ arise in families indexed by $j\in J_i=\{j_1,\ldots,j_k\}$ such that $E_{j_1}(x)\oplus \cdots \oplus E_{j_k}(x)$ converges continuously to $GE_i(0)$ in the Grassmannian $\mathrm{Gr}_{k}(\C^n)$ of $k$-dimensional vector subspaces of $\C^n$ as $x\to 0$, where $E_j(x)= P_j(x)\cdot \C^n$. Moreover, each $E_j(x)$, for $j\in J_i$, converges in $\P(\C^n)$ to the $1$-dimensional sub-eigenspace $E_i(0)\subset GE_i(0)$.
\end{enumerate}

In particular, this implies that if a map
$$
\mathcal{OC}_j(x) \in \mathrm{End}(\C^n)
$$
depends continuously on $x$ and is non-zero only in the summand $E_j(x)= P_j(x)\cdot \C^n$ of $\C^n=\oplus E_i(x)$ for $x\neq 0$, then this also holds at $x=0$: $\mathrm{Image}(\mathcal{OC}_j(0))\subset E_j(0)$.

Moreover, if $(A(x)-\lambda_j(x))\cdot \mathcal{OC}_j(x)=0$ for $x\neq 0$, then the same holds for $x=0$. That is, if the image of $\mathcal{OC}_j(x)$ consists only of eigenvectors (rather than \emph{generalized} eigenvectors) for $x\neq 0$ then the same holds for $x=0$.

%%%%%%%%%%%%%%%%%%%%%%%%%%%%%%%%%%%
\subsection{The case when $\lambda$ is a semisimple eigenvalue of $A(0)$}
 %%%%%%%%%%%%%%%%%%%%%%%%%%%%%%%%%%%
We recall from Kato \cite[Ch.2]{Kato} some basic properties of matrices $A(x)$ depending holomorphically on $x\in \C$. We will assume that $x=0$ is an exceptional point (that is, $\chi_{A(0)}$ has a repeated root $\lambda$), since otherwise $\lambda_j(x),P_j(x)$ extend holomorphically over $x=0$ and there is nothing to prove. The eigenvalues $\lambda_1(x),\ldots,\lambda_n(x)$ of $A(x)$ are branches of analytic functions on a punctured disc around $x=0$, but they are continuous even at $x=0$. 

The $\lambda_j(x)$ are holomorphic on simply connected domains avoiding exceptional points, therefore they define multi-valued analytic functions near $x=0$. Analytic continuation, as $x$ travels around $0$, will permute the functions $\lambda_j(x)$ and so we can reindex them so that the permutation is described by the following disjoint cycle decomposition
$$
(\lambda_1(x),\ldots,\lambda_p(x))\cdot (\lambda_{p+1}(x),\ldots,\lambda_{p+p'}(x))\cdot \cdots
$$
where the periods of the cycles are $p,p',\ldots$. The $\lambda_j(x)$ can then be expanded in Puiseaux series around $x=0$ (which is a branch point of the same period as the relevant cycle),
namely for $j=1,\ldots,p$:
$$
\lambda_j(x) = \lambda + c_1 \xi_p^j x^{1/p} + c_2 \xi_p^{2j} x^{2/p} + \cdots
$$
where $\xi_p=e^{2\pi i/p}$ and $c_1,c_2,\ldots\in \C$ are constants. We recall that the $\lambda_j(x)$ are continuous everywhere in $x$, and their value at $0$ is some eigenvalue $\lambda$ of $A(0)$.

The eigenprojections $P_j(x)=\tfrac{1}{2\pi i} \int_{\Gamma_j} R(z,x)\, dz$ are defined by integrating the resultant $R(z,x)=(A(x)-z)^{-1}$ around a circle $\Gamma_j$ surrounding $\lambda_j(x)$ but no other $\lambda_i(x)$. The $P_j(x)$ have branch point $x=0$ of period $p$ just like the $\lambda_j(x)$, so $P_1(x)+\cdots+P_p(x)$ will be single-valued on a punctured disc around $0$. However, $P_1(x)+\cdots+P_p(x)$ may have a pole at $x=0$, as was the case in example \ref{Example bad matrix}.

However, suppose $\lambda_1(x),\ldots,\lambda_r(x)$ are all the eigenvalues which converge to $\lambda$ at $x=0$, then the \emph{total projection}
$$
P^{\mathrm{tot}}_{\lambda}(x) = P_1(x)+ \cdots + P_r(x)
$$
is single-valued and holomorphic even at $x=0$. In particular, $P^{\mathrm{tot}}_{\lambda}(0)$ is the eigenprojection onto the generalized eigenspace of $A(0)$ for $\lambda$. In example \ref{Example bad matrix}, $P^{\mathrm{tot}}_0(x)=P_1(x)+P_2(x)=\mathrm{id}$.

Suppose now that $\lambda$ is a semisimple eigenvalue of $A(0)$, that is the Jordan blocks of $A(0)$ for $\lambda$ all have size $1\times 1$. We work with indices $j$ such that $\lambda_j(0)=\lambda$. Semi-simplicity of $\lambda$ ensures that the holomorphic function $(A(x)-\lambda)P^{\mathrm{tot}}_{\lambda}(x)$ vanishes at $x=0$, so
$$\widetilde{A}(x)=\tfrac{1}{x}(A(x)-\lambda)P^{\mathrm{tot}}_{\lambda}(x)$$
is well-defined at $x=0$ and is holomorphic in $x$. Let 
$\widetilde{\lambda}_j(x)$ denote the eigenvalues of $\widetilde{A}(x)$ restricted to $\mathrm{Image}(P^{\mathrm{tot}}_{\lambda}(x))$. The reduction process \cite[Ch.2 Sec.2.3]{Kato} shows that the $\widetilde{\lambda}_j(x)$ are related to the $\lambda_j(x)$ for $A(x)$ by:
$$
\lambda_j(x) = \lambda + \widetilde{\lambda}_j(0) \cdot x + c_j x^{1+\tfrac{1}{p_j}} + \cdots
$$
where $p_j$ is the period of the cycle for $\widetilde{\lambda}_j(x)$. In particular, it follows that $\lambda_j(x)$ is not just continuous at $x=0$, but also differentiable at $x=0$ with derivative $\lambda_j'(0) = \widetilde{\lambda}_j(0)$. 

This breaks up the collection $\lambda_1(x),\ldots,\lambda_r(x)$ into smaller collections depending on the value of the derivative $\lambda_j'(0)=\widetilde{\lambda}$, which is an eigenvalue of $\widetilde{A}(0)$. The projection $\widetilde{P}_j(x)$ for $\widetilde{A}(x)$ in fact coincides with the projection $P_j(x)$, therefore if $\lambda_1(x),\ldots,\lambda_s(x)$ are the eigenvalues of $A(x)$ with derivative $\widetilde{\lambda}$ at $x=0$, then
$$
P_1(x) + \cdots + P_s(x) = \widetilde{P}_1(x) + \cdots + \widetilde{P}_s(x) = \widetilde{P}^{\mathrm{tot}}_{\widetilde{\lambda}}(x)
$$
is single-valued and holomorphic even at $x=0$ since it is the total projection for $\widetilde{A}(x)$ for $\widetilde{\lambda}$. If all $\lambda_j'(0)$ are distinct (for those $j$ with $\lambda_j(0)=\lambda$), then $s=1$ above, and so all $P_j(x)$ are holomorphic even at $x=0$. So by \cite[Ch.2 Sec.4.2]{Kato}, one can then construct holomorphic eigenvectors $v_j(x)$ for each $P_j(x)$ on a disc around $x=0$. Thus we obtained:

\begin{corollary}\label{Corollary Cgce of espaces for semisimple evalue}
If $A(x)\in \mathrm{End}(\C^n)$ depends holomorphically on $x\in \C$ for small $x$, and $\lambda$ is a semisimple eigenvalue of $A(0)$, then the eigenvalues $\lambda_j(x)$ with $\lambda_j(0)=\lambda$ are differentiable at $x=0$. If $\lambda_j'(0)$ are all distinct, then the eigenprojections $P_j(x)$ are single-valued near $x=0$ and holomorphic even at $x=0$, and there is a linearly independent collection of holomorphic eigenvectors $v_j(x)$ on a disc around $0$ converging to a basis of the $\lambda$-eigenspace of $A(0)$. In particular, $E_j(x)=\C v_j(x)=P_j(x)\cdot \C^n$ vary holomorphically in projective space $\P(\C^n)$ and their sum converges to a decomposition of the $\lambda$-eigenspace of $A(0)$ into $1$-dimensional summands.
\end{corollary}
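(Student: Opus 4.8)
The plan is to organise the Kato perturbation machinery recalled above into three steps. Throughout write $\mathbb{D},\mathbb{D}^{\times}$ for a small disc, resp. punctured disc, around $x=0$ containing no exceptional point other than possibly $0$ itself. If $0$ is not exceptional, every $\lambda_j(x),P_j(x)$ is already holomorphic on $\mathbb{D}$ and there is nothing to do, so assume $0$ is exceptional and restrict attention to the indices $j$ with $\lambda_j(0)=\lambda$, of which there are $r=\dim\ker(A(0)-\lambda)$ by semisimplicity.

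First I would record that the \emph{total projection} $P^{\mathrm{tot}}_{\lambda}(x)=\sum_{j:\lambda_j(0)=\lambda}P_j(x)$ is single-valued and holomorphic on all of $\mathbb{D}$. This is the standard contour-integral argument: by continuity of the spectrum there is a fixed small circle $\Gamma\subset\mathbb{C}$ enclosing exactly the eigenvalues $\lambda_j(x)$ with $\lambda_j(0)=\lambda$ for all $x\in\mathbb{D}$, and $P^{\mathrm{tot}}_\lambda(x)=\frac{1}{2\pi i}\oint_{\Gamma}(z-A(x))^{-1}\,dz$ is holomorphic in $x$ since the resolvent is holomorphic in $(z,x)$ on $\Gamma\times\mathbb{D}$. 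At $x=0$ this is the spectral projection of $A(0)$ onto its generalized $\lambda$-eigenspace, which by \emph{semisimplicity} equals the genuine $\lambda$-eigenspace, so $(A(0)-\lambda)P^{\mathrm{tot}}_\lambda(0)=0$. Hence the holomorphic family $(A(x)-\lambda)P^{\mathrm{tot}}_\lambda(x)$ vanishes at $x=0$, and
$$
\widetilde{A}(x):=\tfrac{1}{x}\,(A(x)-\lambda)\,P^{\mathrm{tot}}_\lambda(x)
$$
extends holomorphically across $0$. Kato's reduction process applied to $\widetilde A(x)$ restricted to the $\widetilde A(x)$-invariant subspace $\mathrm{Image}\,P^{\mathrm{tot}}_\lambda(x)$ then produces the Puiseux expansions $\lambda_j(x)=\lambda+\widetilde\lambda_j(0)\,x+c_jx^{1+1/p_j}+\cdots$, where the $\widetilde\lambda_j(0)$ are the eigenvalues of $\widetilde A(0)$ on that subspace; in particular each $\lambda_j$ is differentiable at $x=0$ with $\lambda_j'(0)=\widetilde\lambda_j(0)$, which is the first assertion.

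Next, assume the $\lambda_j'(0)=\widetilde\lambda_j(0)$ are pairwise distinct. Then on the $r$-dimensional space $\mathrm{Image}\,P^{\mathrm{tot}}_\lambda(0)$ the operator $\widetilde A(0)$ has $r$ distinct eigenvalues, so each is \emph{simple}; hence each branch $\widetilde\lambda_j(x)$ is unramified at $0$ and its rank-one spectral projection $\widetilde P_j(x)$ is holomorphic on $\mathbb{D}$. Since the reduction process identifies $\widetilde P_j(x)$ with $P_j(x)$, every individual eigenprojection $P_j(x)$ is single-valued and holomorphic on $\mathbb{D}$, and consequently $\lambda_j(x)=\mathrm{tr}(A(x)P_j(x))$ is in fact holomorphic there too. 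A holomorphic eigenvector is then obtained by fixing $0\neq v_j(0)\in\mathrm{Image}\,P_j(0)$ and setting $v_j(x):=P_j(x)v_j(0)$, which is nonzero near $0$ because $P_j(0)v_j(0)=v_j(0)$; the $v_j(x)$ remain linearly independent near $0$ since at $x=0$ the $P_j(0)$ are orthogonal idempotents ($P_iP_j=\delta_{ij}P_j$) with $\sum_jP_j(0)=P^{\mathrm{tot}}_\lambda(0)$ of total rank $r$.

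Finally, $E_j(x)=\mathbb{C}v_j(x)=\mathrm{Image}\,P_j(x)$ is the line spanned by a nowhere-vanishing holomorphic section, so it is a holomorphic family of points of $\mathbb{P}(\mathbb{C}^n)$; and $\bigoplus_jE_j(x)=\mathrm{Image}\,P^{\mathrm{tot}}_\lambda(x)$, which converges as $x\to0$ to $\mathrm{Image}\,P^{\mathrm{tot}}_\lambda(0)$, the $\lambda$-eigenspace of $A(0)$, thereby equipping it with the $1$-dimensional decomposition $\oplus_jE_j(0)$. The step I expect to be the real obstacle is the middle one: in general (cf. Example \ref{Example bad matrix}) the individual $P_j(x)$ genuinely blow up at an exceptional point, and what prevents this here is precisely the conjunction of semisimplicity --- which is what makes the first-order reduction $\widetilde A$ available at all --- and the genericity of the first derivatives, which makes the reduced eigenvalue problem non-degenerate at $x=0$; everything else is bookkeeping with contour integrals and Puiseux series already contained in Kato \cite{Kato}.
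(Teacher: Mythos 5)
Your proof is correct and takes essentially the same route as the paper: total projection holomorphic by contour integration, semisimplicity makes $(A(x)-\lambda)P^{\mathrm{tot}}_\lambda(x)$ vanish at $x=0$ so the reduced operator $\widetilde A(x)$ exists, Kato's reduction gives differentiability and identifies $\widetilde P_j$ with $P_j$, and distinct first derivatives make the reduced eigenvalue problem simple at $x=0$. The only differences are cosmetic -- you spell out the contour integral and construct $v_j(x)=P_j(x)v_j(0)$ explicitly where the paper cites Kato Ch.2 Sec.4.2.
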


%%%%%%%%%%%%%%%%%%%%%%%%%%%%%%%%%%%
\subsection{When $\lambda$ is a non-semisimple eigenvalue of $A(0)$}
\label{Subsection non-semisimple eigenvalue}
 %%%%%%%%%%%%%%%%%%%%%%%%%%%%%%%%%%%
(Used in Sections \ref{Subsection The motivation for requiring the superpotential to be Morse} and \ref{Subsection OC00 lands in espace}) 

When $\lambda$ is not semisimple, the classical literature on matrix perturbation theory does not appear to address the issue of convergence of eigenspaces and generalized eigenspaces at exceptional points.

\begin{example}
Let  $A(x)=\left(\begin{smallmatrix} 0 & 0 & 0 \\ 0 & x & 1 \\ 0 & 0 & 0\end{smallmatrix} \right)$. The eigenvalues are $\lambda_1(x)=0$, $\lambda_2(x)=x$, $\lambda_3(x)=0$, and $x=0$ is an exeptional value (the lower-right $2\times 2$ block is the matrix from Example \ref{Example bad matrix}). The eigenvectors $v_1(x)=e_1$, $v_2(x)=e_2$, $v_3(x)=e_2-xe_3$ converge to $v_1=e_1$, $v_2= v_3=e_2$. But in fact the spans of the bases for each Jordan block converge in the relevant Grassmannian: $\C v_1(x) = \C e_1 \to \C e_1$ in $\P(\C^n)$, and $\C v_1(x) + \C v_2(x) = \C e_2+ \C e_3 \to \C e_2+ \C e_3$ in $\mathrm{Gr}_2(\C^n)$. It is a ``singular'' linear combination $\tfrac{1}{x}(v_2(x)-v_3(x))=e_3$ of $v_2(x),v_3(x)$ that will converge to the generalized eigenvector $e_3$ of $A(0)$. 
\end{example}

 Recall that for a linear map $A\in \mathrm{End}(\C^n)$, 
$$\C^n = \oplus \, GE_{\lambda}(A)$$
 is a direct sum of the generalized eigenspaces $GE_{\lambda}(A)=\ker (A-\lambda)^n$ as $\lambda$ varies over the eigenvalues of $A$. The $GE_{\lambda}(A)$ can be further decomposed into vector subspaces
$$
GE_{\lambda}(A) = J_{\lambda,1}(A) \oplus \cdots \oplus J_{\lambda,r}(A)
$$
corresponding to the Jordan blocks of the Jordan canonical form of $A$, namely each $J_{\lambda,j}(A)$ intersects the eigenspace $E_{\lambda}(A)=\ker (A-\lambda)$ in a one-dimensional subspace $E_j(A)=\C\cdot v_j$, and there is a basis $v_j,v_{j,2},\ldots,v_{j,k}$ for the $k$-dimensional subspace $J_{\lambda,j}(A)$ such that 
$$
(A-\lambda)^{m}v_{j,m}=0 \quad \textrm{ and } \quad
(A-\lambda)^{m-1}v_{j,m}\neq 0 \in E_j(A)=\C\cdot v_j.
$$
We call such a generalized eigenvector $v_{j,m}$ an \emph{$m$-gevec over $v_j$}.

Let $A(x)\in \mathrm{End}(\C^n)$ depend holomorphically on $x\in \C$ for small $x$. 
Recall the eigenvalues $\lambda_j(x)$ are continuous everywhere, and are branches of multivalued analytic functions on $\D^{\times}$. The eigenprojections $P_j(x)$ are branches of analytic matrix-valued functions on $\D^{\times}$.
 
 We will henceforth assume that $A(x)$ is \emph{not permanently degenerate} on $\D^{\times}$, meaning the characteristic polynomial $\chi_{A(x)}$ has distinct roots for at least some $x\in \D^{\times}$. This implies that the eigenvalues $\lambda_j(x)$ are not pairwise identical analytic functions on any simply connected subset of $\D^{\times}$. Although this assumption is presumably not essential for our argument, it will always hold in our applications, and it does simplify the discussion. 
 
 Since the analytic functions $\lambda_j(x)$ are pairwise distinct, they are pointwise distinct except at finitely many points $x$ (the exceptional points). So, making $\varepsilon$ smaller if necessary, we can assume that the values $\lambda_j(x)$ are all distinct at each $x\in \D^{\times}$. Hence $A(x)$ is diagonalizable for $x\in \D^{\times}$. In particular, each $P_j(x)$ is then the projection onto a $1$-dimensional eigenspace $E_j(x)=P_j(x)\cdot \C^n$ of $A(x)$, for $x\in \D^{\times}$. By \cite[Ch.2 Sec.4.2]{Kato}, the $P_j(x)$ therefore give rise to eigenvectors $v_j(x)$ which are branches of analytic vector-valued functions on $\D^{\times}$. Therefore 
$$\D^{\times}\setminus \R_{< 0} \to \P(\C^n), \; x\mapsto \C v_j(x)$$
is a holomorphic map into projective space, where we made a cut along the negative real axis to deal with the branching issue (as $x$ travels around $0$, the $P_j(x)$ will cycle according to how the $\lambda_j(x)$ cycle, and so $\C v_j(x)=E_j(x)=P_j(x)\cdot \C^n$ will cycle through different eigenspaces). We claim the above map extends continuously over $x=0$:

\begin{lemma}\label{Lemma Evecs converge}
Let $v_1,v_2,\ldots,v_r$ be a basis of eigenvectors of $A(0)$. Then each $v_j(x)$ converges continuously in $\P(\C^n)$, $P_j(x)\cdot \C^n=\C v_j(x) \to \C v_i$ as $x\to 0$ in $\D^{\times}\setminus \R_{< 0}$, for some unique $i\in \{ 1,\ldots,r \}$. Conversely, each $v_i$ arises as such a limit: $\C  v_j(x) \to \C v_i$ for some $j$.

Thus, for each $v_i$ there is a non-empty collection of $j\in J_i$ for which 
$$E_j(x)=P_j(x)\cdot\C^n=\C v_j(x)\to \C v_i=E_i(0),$$ 
in $\P(\C^n)$ as $x\to 0$. If $j\in J_i$ then also $k\in J_i$ for any $\lambda_k(x)$ arising in the cycle for $\lambda_j(x)$. That is, the various branches of $v_j(x)$ all converge in $\P(\C^n)$ to the same $v_i$.
\end{lemma}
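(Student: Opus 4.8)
\textbf{Proof proposal for Lemma \ref{Lemma Evecs converge}.}
The plan is to compactify the picture and use a standard degree/compactness argument. First I would observe that the continuous map $\D^{\times}\setminus \R_{<0}\to \P(\C^n)$, $x\mapsto \C v_j(x)=E_j(x)$, is bounded (the target is compact), so it suffices to show every sequence $x_m\to 0$ in $\D^{\times}\setminus\R_{<0}$ has a subsequence along which $\C v_j(x_m)$ converges to \emph{some} eigenline of $A(0)$, and then that the limit is independent of the subsequence. For the first point: by compactness of $\P(\C^n)$, pass to a subsequence so that $\C v_j(x_m)\to \ell\in\P(\C^n)$, and choose unit-norm representatives $u_m$ with $u_m\to u$, $\C u=\ell$. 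Then $A(x_m)u_m=\lambda_j(x_m)u_m$, and since $A(x)$ is holomorphic (hence continuous) at $x=0$ and $\lambda_j(x)$ is continuous at $x=0$ with $\lambda_j(0)=:\lambda$ (Kato, already recalled in the text), passing to the limit gives $A(0)u=\lambda u$, so $\ell=\C u$ is an eigenline of $A(0)$ for the eigenvalue $\lambda$. In particular $\ell$ is spanned by one of the $v_i$ (those $v_i$ lying in $E_\lambda(A(0))$), which is the desired form.

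The only real content beyond this is independence of the limit from the subsequence, i.e. genuine convergence. Here I would invoke the single-valuedness and holomorphy of the \emph{total} eigenprojection $P^{\mathrm{tot}}_\lambda(x)=\sum_{k:\lambda_k(0)=\lambda}P_k(x)$ at $x=0$ (recalled in the previous subsection): $P^{\mathrm{tot}}_\lambda(0)$ is the projection onto $GE_\lambda(A(0))$. Since each $E_j(x)\subset\operatorname{Image}P^{\mathrm{tot}}_\lambda(x)$ for $x$ near $0$ and $\lambda_j(0)=\lambda$, all accumulation lines $\ell$ lie in $GE_\lambda(A(0))$. To pin down the limit, restrict attention to a single branch of $v_j(x)$ on a sector, where $v_j(x)$ is a branch of an analytic vector-valued function on $\D^\times$ with a Puiseaux expansion (period $p_j$ equal to that of the cycle of $\lambda_j$). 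Writing $v_j(x)=x^{a}(w_0+w_1 x^{1/p_j}+\cdots)$ for the leading Puiseaux exponent $a\in\tfrac1{p_j}\Z$ with $w_0\neq 0$, the projective limit of $\C v_j(x)$ along that sector is exactly $\C w_0$, which is independent of how $x\to 0$ within the sector; and since different branches are obtained by $x^{1/p_j}\mapsto \xi_{p_j}x^{1/p_j}$, the leading coefficient $w_0$ only gets rescaled by a root of unity, so $\C w_0$ is the same for all branches. This simultaneously proves the last sentence of the Lemma: the various branches of $v_j(x)$, and the eigenvectors $v_k(x)$ for $k$ in the cycle of $\lambda_j$, all converge in $\P(\C^n)$ to the same line, so $k\in J_i$ whenever $j\in J_i$.

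For the converse (each $v_i$ is hit): suppose some basis eigenvector $v_{i_0}$ of $A(0)$ is not a limit of any $\C v_j(x)$. Then for small $\varepsilon$ the (finitely many) lines $\overline{\lim}_{x\to 0}\C v_j(x)$ avoid a projective neighbourhood $U$ of $\C v_{i_0}$, so $P^{\mathrm{tot}}_\lambda(x)\cdot\C^n=\bigoplus_{k:\lambda_k(0)=\lambda}E_k(x)$ stays, for small $x\neq 0$, inside a fixed proper subvariety of the Grassmannian missing $\C v_{i_0}$; but $P^{\mathrm{tot}}_\lambda(x)$ is continuous at $x=0$ with image $GE_\lambda(A(0))\ni v_{i_0}$, a contradiction by a dimension count (the sum of the $E_k(x)$ over $k$ with $\lambda_k(0)=\lambda$ has dimension $\dim GE_\lambda(A(0))=\dim\operatorname{Image}P^{\mathrm{tot}}_\lambda(0)$ for $x\neq0$, forcing the limit to be all of $GE_\lambda(A(0))$, hence to contain $\C v_{i_0}$).

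The main obstacle I anticipate is the convergence (not merely accumulation) statement, because the eigenvectors $v_j(x)$ are only defined up to scale and only as multivalued branches; the argument above handles this by working with the leading Puiseaux term, but one must be careful that the leading exponent $a$ and coefficient $w_0$ are well-defined and that $w_0\in GE_\lambda(A(0))$ actually lies in the \emph{eigenspace} $E_\lambda(A(0))$ — which is exactly what the limiting equation $A(0)w_0=\lambda w_0$ from the first paragraph gives — so that the limit line is one of the $v_i$ and not a genuine generalized eigenvector. Everything else is routine compactness and continuity of the holomorphic family $A(x)$ together with the already-established single-valuedness of $P^{\mathrm{tot}}_\lambda(x)$ at $x=0$.
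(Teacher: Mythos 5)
Your forward direction — compactness forces a subsequential limit $\ell$, the limiting equation $A(0)u=\lambda u$ shows $\ell$ is a genuine eigenline of $A(0)$, and the Puiseaux expansion of $v_j(x)$ shows the limit line is $\C w_0$ independent of the approach direction, with cyclically permuted branches giving the same $\C w_0$ — is correct and is a genuinely different route from the paper. The paper instead "traps" the normalized eigenvector inside a small neighbourhood of one of the circles $S^1\cdot v_i/\|v_i\|$ in the unit sphere, using that $A(0)-\lambda$ is bounded away from zero off those neighbourhoods while $A(x)-\lambda_j(x)$ vanishes on $v_j(x)$. Your Puiseaux argument is arguably cleaner and deals with the branching claim more directly.

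The converse is where you have a real gap. You argue that if no $E_j(x)=\C v_j(x)$ converges to $\C v_{i_0}$, then all such lines avoid a fixed projective neighbourhood of $\C v_{i_0}$, hence the span $V(x)=\bigoplus_{j:\lambda_j(0)=\lambda}E_j(x)=P^{\mathrm{tot}}_\lambda(x)\cdot\C^n$ stays away from containing $v_{i_0}$, contradicting $V(x)\to GE_\lambda(A(0))\ni v_{i_0}$ in the Grassmannian. That implication is false: the span of lines that individually avoid a neighbourhood of $\C v_{i_0}$ can perfectly well contain $v_{i_0}$ in the limit. Already in $\C^2$, the lines $\C(1,0)$ and $\C(1,x)$ each stay bounded away from $\C(0,1)$ in $\P(\C^1)$ as $x\to 0$, yet their span is all of $\C^2\ni(0,1)$ for every $x\neq 0$. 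So convergence of $V(x)$ in $\mathrm{Gr}_m(\C^n)$ gives no control over which eigenlines the individual summands converge to, and the dimension count does not close the argument. The paper closes it by a different mechanism: it perturbs to $A_\varepsilon(x)=A(x)+\sum\varepsilon_i\,\mathrm{id}_i$, where $\mathrm{id}_i$ projects onto the $i$-th Jordan summand $J_{\lambda,i}(A(0))$. For distinct small $\varepsilon_i$, the eigenvalue $\lambda$ splits into distinct eigenvalues $\lambda+\varepsilon_i$ of $A_\varepsilon(0)$, each with a one-dimensional eigenspace $\C v_i$; the trapping argument in the variable $x$ (with $\varepsilon$ fixed) forces the eigenvectors of $A_\varepsilon(x)$ near each specific $v_i$, and the trapping argument in $\varepsilon$ (with $x$ fixed) forces them near some $v_k(x)$, so combining the two exhibits every $v_i$ as a limit of some $\C v_k(x)$. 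Some version of this Jordan-block-separating perturbation, or an equivalent device that tracks individual eigenlines rather than their total span, is what your proposal is missing.
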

\begin{proof}
Suppose $\lambda_j(x)\to \lambda$ as $x\to 0$. 
 Observe that the operator $A(0)-\lambda$ is bounded away from zero on $S\setminus \cup C_i$, where $S$ is the unit sphere and $C_i$ are disjoint neighbourhoods of $S^1\cdot \tfrac{v_i}{\| v_i \|}$ (here $S^1$ is multiplication by phases $e^{i\theta}$). Indeed, $\cup C_i$ is a neighbourhood of the zero set $\cup S^1\cdot \tfrac{v_i}{\| v_i \|}$ of $A(0)-\lambda$ (the eigenvectors). By continuity, also $A(x)-\lambda_j(x)$ on $S\setminus \cup C_i$ is bounded away from zero for small $x$. Hence $v_j(x)$ is trapped inside some $\C C_i$ for all small enough $x$, since $A(x)-\lambda_j(x)$ vanishes on $v_j(x)$. It follows that $\C v_j(x)$ converges to $\C v_i$ in $\P(\C^n)$ as $x\to 0$. 

To show that all $v_i$ arise as limits, we first make a small perturbation of $A(x)$. Call $A_{\varepsilon}(x)=A(x) + \varepsilon_1 \mathrm{id}_1 + \cdots + \varepsilon_r \mathrm{id}_r$, where $\mathrm{id}_i$ is the projection operator for the summand $J_{\lambda,i}(A(0))$ in the above generalized eigenspace decomposition of $A(0)$. Thus $A_{\varepsilon}(0)$ differs from $A(0)$ by having added $\varepsilon_i$ to each diagonal entry of the $i$-th Jordan block of $A(0)$. For small $\varepsilon_i>0$ this has the effect of separating the repeated eigenvalue $\lambda$ according to Jordan blocks. Repeating the argument above, for $A_{\varepsilon}(x)$ in place of $A(x)$, shows that each eigenvector $v_{j,\varepsilon}(x)$ of $A_{\varepsilon}(x)$ is trapped inside some $\C C_i$ for all small $x,\varepsilon$. Running the argument above, but with $A(x),A_{\varepsilon}(x)$ in place of $A(0),A(x)$, and keeping $x$ fixed but letting $\varepsilon\to 0$, shows that $v_{j,\varepsilon}(x)$ is trapped in some $\C C_{k,x}$ for 
small $\varepsilon$, where the $C_{k,x}$ are neighbourhoods of $S^1\cdot \tfrac{v_k(x)}{\| v_k(x) \|}$. Combining these two facts, shows that for those indices $i,k$, we have $\C v_k(x) \to \C v_i$ in $\P(\C^n)$ as $x\to 0$.
\end{proof}

\begin{theorem}\label{Theorem Gevecs converge}
 Let $v_1$ be an eigenvector of $A(0)$, and let $v_{1,m}$ be $m$-gevecs over $v_1$, so $v_1,v_{1,2},\ldots,v_{1,k}$ is a basis of a Jordan summand $J_1(A(0))$ as described above. By Gram-Schmidt, we may assume $v_1,v_{1,2},\ldots,v_{1,k}$ are orthonormal.
 Suppose $v_1(x),v_2(x),\ldots$ are the eigenvectors of $A(x)$ which converge to $v_1$ in $\P(\C^n)$ (see Lemma \ref{Lemma Evecs converge}). Then the Gram-Schmidt process applied to $v_1(x),v_2(x),\ldots$ will produce orthonormal vectors $w_1(x),w_2(x),\ldots$ such that $\C w_j(x) \to \C v_{1,j}$ in $\P(\C^n)$, and $\C w_1(x)+\cdots+\C w_k(x)\to J_1(A(0))$ in $\mathrm{Gr}_k(\C^n)$.
\end{theorem}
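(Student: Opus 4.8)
The plan is to reduce the statement to repeated application of Lemma \ref{Lemma Evecs converge}, combined with continuity of the Gram--Schmidt procedure, and then an induction on the gevec level $m$. First I would set up the notation precisely: for $x\in \D^{\times}\setminus\R_{<0}$ the matrix $A(x)$ is diagonalizable with distinct eigenvalues, and $v_1(x),v_2(x),\dots$ are the eigenvectors whose spans converge in $\P(\C^n)$ to $\C v_1$. By Lemma \ref{Lemma Evecs converge} this collection is indexed by $j\in J_1$, and by the dimension count (the total projection $P^{\mathrm{tot}}_{\lambda}(x)\to P^{\mathrm{tot}}_{\lambda}(0)$ is holomorphic and its rank equals $\dim GE_{\lambda}(A(0))$) the number of such eigenvectors feeding into the single Jordan block $J_1(A(0))$ is exactly $k$ — this book-keeping step (matching the branching cycles of the $\lambda_j(x)$ with the Jordan blocks of $A(0)$) is worth making explicit, using that the sum of the $1$-dimensional $E_j(x)$ over $j\in J_1$ has constant dimension $k$ and limits inside $GE_{\lambda}(A(0))$.

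The core of the argument is the following. Write the $k$ relevant eigenvectors (after normalization) as $v_1(x),\dots,v_k(x)$, all converging projectively to $\C v_1$. The key observation is that while $\C v_j(x)\to \C v_1$ individually, the \emph{span} $V(x)=\C v_1(x)+\cdots+\C v_k(x)$ is $k$-dimensional for $x\ne 0$ and, being the image of the holomorphic total projection, converges in $\mathrm{Gr}_k(\C^n)$ to $GE_{\lambda}(A(0))\supset J_1(A(0))$; in fact since all limits land in $J_1(A(0))$ and dimensions match, $V(x)\to J_1(A(0))$. Now I would run an induction: having produced $w_1(x)=v_1(x)$ (say) with $\C w_1(x)\to\C v_1$, suppose $w_1(x),\dots,w_{m-1}(x)$ are orthonormal with $\C w_j(x)\to \C v_{1,j}$. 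The Gram--Schmidt vector $w_m(x)$ is obtained by projecting the next $v_j(x)$ off $\mathrm{span}\{w_1(x),\dots,w_{m-1}(x)\}$ and normalizing. The subtle point is that $v_j(x)$ itself converges to $\C v_1$, i.e. its component along $v_1$ dominates, so the naive projection is of the form (small)$-$(small); one must show the normalized difference converges to $\C v_{1,m}$. I would handle this by exploiting the relation $(A(x)-\lambda_j(x))v_j(x)=0$: expanding $v_j(x)$ in the basis $v_1,v_{1,2},\dots,v_{1,k}$ (plus a vanishing remainder transverse to $J_1$), and using $(A(0)-\lambda)v_{1,m}=v_{1,m-1}$ together with $\lambda_j(x)=\lambda+O(x^{1/p})$, one gets that the coefficients of $v_j(x)$ along successive gevec levels have a definite hierarchy of vanishing orders in $x$; Gram--Schmidt against the lower $w_i(x)$ precisely strips off the dominant levels and renormalization rescales the surviving coefficient to order $1$, so $\C w_m(x)\to \C v_{1,m}$.

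The main obstacle I anticipate is exactly this last renormalization step: controlling the relative rates at which the gevec-level coefficients of the $v_j(x)$ vanish, across different branches $j\in J_1$ which may have different Puiseux periods $p_j$. One clean way around the combinatorics is to not track individual $v_j(x)$ but to argue more robustly: since $V(x)\to J_1(A(0))$ in the Grassmannian and Gram--Schmidt is continuous on the (open dense) locus of linearly independent frames, it suffices to produce \emph{any} continuous choice of frame for $V(x)$ converging to a frame of $J_1(A(0))$ adapted to the flag $\C v_1\subset \C v_1+\C v_{1,2}\subset\cdots$; the nested flag $\ker(A(x)-\lambda_j(x))\cdot\!\cdots$ intersected with $V(x)$ does exactly this for $x\ne0$, and one checks these intersections vary continuously and limit to the standard Jordan flag because $(A(x)-\lambda)|_{V(x)}\to (A(0)-\lambda)|_{J_1}$, a nilpotent of full nilpotency index $k$. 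Feeding the resulting adapted frame through Gram--Schmidt then yields $w_1(x),\dots,w_k(x)$ with $\C w_j(x)\to\C v_{1,j}$ and $\C w_1(x)+\cdots+\C w_k(x)\to J_1(A(0))$ in $\mathrm{Gr}_k(\C^n)$, which is the assertion. The remaining points — that Gram--Schmidt applied to the eigenvectors $v_1(x),v_2(x),\dots$ (rather than to the adapted flag frame) produces the same limiting spans, and that orthonormality passes to the limit — are routine continuity arguments using compactness of the Stiefel manifold.
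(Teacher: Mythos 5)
Your "robust" fallback route is circular at the two points where the real content lies.  You invoke continuity of Gram--Schmidt on the open locus of linearly independent frames, but the hypotheses put you \emph{at} the boundary of that locus: all of $v_1(x),\dots,v_k(x)$ converge to the \emph{same} line $\C v_1$ as $x\to 0$, so the input frame degenerates precisely in the limit, and Gram--Schmidt does not extend continuously there (the outputs involve ratios of vanishing quantities).  That is exactly the difficulty of the theorem, not a side issue.  Likewise, the claim $V(x)=\C v_1(x)+\cdots+\C v_k(x)\to J_1(A(0))$ in $\mathrm{Gr}_k(\C^n)$ is the final assertion of the theorem and cannot be taken as given.  Your suggested justification, that $V(x)$ is the image of a holomorphic total projection, is also off: $P^{\mathrm{tot}}_{\lambda}(x)$ projects onto the \emph{entire} generalized eigenspace $GE_{\lambda}(A(0))$, not onto a single Jordan block $J_1(A(0))$, and the partial sum $\sum_{j\in J_1}P_j(x)$ over a single block's indices need not stay bounded at $x=0$ (cf.\ Example \ref{Example bad matrix}).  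The ``adapted flag'' $\ker\prod_{i\le m}(A(x)-\lambda_i(x))\cap V(x)$ is just the span of $v_1(x),\dots,v_m(x)$, so asserting its convergence to $\C v_1+\cdots+\C v_{1,m}$ restates, rather than proves, the theorem.

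What the actual proof does differently is sidestep continuity of Gram--Schmidt by a compactness argument on the unit sphere: the orthonormal outputs $w_m(x)$ have convergent subsequences, and the algebraic identity
\[
(A(x)-\lambda_m(x))\cdots(A(x)-\lambda_2(x))\,w_m(x)\in\C w_1(x)\setminus\{0\}
\]
(coming from $A(x)v_j(x)=\lambda_j(x)v_j(x)$ and the triangular structure of Gram--Schmidt) forces any subsequential limit to satisfy $(A(0)-\lambda)^{m-1}w_m\in\C v_1$ while being orthogonal to $v_1,v_{1,2},\dots,v_{1,m-1}$; a small perturbation of the inner product rules out the possibility that $w_m$ is an eigenvector from a \emph{different} Jordan block, leaving $\C w_m=\C v_{1,m}$ as the only option, and uniqueness of the subsequential limit upgrades this to full convergence.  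Your first (computational) route --- expanding $v_j(x)$ in the Jordan basis and tracking coefficient hierarchies across Puiseux branches --- could in principle be made to work, but you yourself abandon it, and it is substantially messier than the compactness argument because the $p_j$ can differ across $j\in J_1$.  If you want to salvage the write-up, make the algebraic identity above the backbone, use sequential compactness of the sphere, and add the inner-product-perturbation trick to exclude cross-block eigenvectors; the Grassmannian convergence $V(x)\to J_1(A(0))$ then falls out as a corollary at the end rather than being an input.
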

\begin{proof}
 Let $w_1(x)=\tfrac{v_1(x)}{\| v_1(x)\|}=v_1(x)$, and recall $w_2(x)=\tfrac{W_2(x)}{\| W_2(x)\|}$ where $$W_2(x)=v_2(x) - \langle v_2(x),w_1(x) \rangle w_1(x).$$ Then $(A(x)-\lambda_2(x))w_2(x)=\tfrac{1}{\| W_2(x)\|} (- \langle v_2(x),w_1(x) \rangle (\lambda_1(x)-\lambda_2(x)))\, w_1(x)$. Observe that $\langle v_2(x),w_1(x) \rangle$ is non-zero for small $x$ since $\C v_2(x) \to \C v_1$, $\C w_1(x)=\C v_1(x)\to \C v_1$ and $\langle v_1,v_1 \rangle \neq 0$.
Since the unit sphere $S\subset \C^n$ is compact, a subsequence $w_2(\delta_n)$ will converge as $\delta_n\to 0$. The limit $w_2$ will be a unit vector orthogonal to $v_1$ (since $w_2(\delta_n)$ is orthogonal to $w_1(\delta_n)$ and $\C w_1(\delta_n)\to \C v_1$), satisfying $(A(0)-\lambda)w_2 \in \C v_1$ (since $(A(x)-\lambda_2(x)) w_2(x)\in \C w_1(x)\to \C v_1$).
So $w_2$ is either an eigenvector of $A(0)$ orthogonal to $v_1$ or a $2$-gevec over $v_1$ for $A(0)$ orthogonal to $v_1$. The first case can be ruled out if the eigenvectors of $A(0)$ are never orthogonal to each other (which is a generic condition). Even if this is not the case for the standard inner product on $\C^n$, we can always perturb the inner product so that this condition holds. It follows a posteriori that also for the standard inner product the limit $w_2$ was not an eigenvector.

Therefore $w_2$ is a $2$-gevec over $v_1$ orthogonal to $v_1$. But there is only a $1$-dimensional space of $2$-gevec over $v_1$ orthogonal to $v_1$, namely $\C v_{1,2}$, so $\C w_2 = \C v_{1,2}$ (using that the $v_{1,j}$ are orthonormal). By uniqueness, it follows that $\C w_2(x) \to \C v_{1,2}$ in $\P(\C^n)$ (indeed, if by contradiction $\C w_2(\varepsilon_n)\to \C v_{1,2}$ fails for a sequence $\varepsilon_n\to 0$, then no subsequence of $w_2(\varepsilon_n)$ would be allowed to converge, contradicting compactness of $S$).

The argument now continues by induction. We show one more step of the induction, since the general inductive step is then obvious. The next step of Gram-Schmidt is: $W_3(x)=v_3(x)-\langle v_3(x) , w_2(x)\rangle w_2(x) -\langle v_3(x) , w_1(x)\rangle w_1(x)$ and $w_3(x)$ is the normalization of $W_3(x)$. Then $(A(x)-\lambda_3(x))(A(x)-\lambda_2(x))w_3(x)$ will again be a non-zero multiple of $w_1(x)$, for small $x$. A subsequence $w_3(\delta_n)$ will converge to some $w_3$ which is orthogonal to both $\C v_1,\C v_{1,2}$, and which satisfies $(A(0)-\lambda)^2w_3\in \C v_1$. Now $w_3$ cannot be an eigenvector since it is orthogonal to $v_1$ (arguing as before). So $w_3$ is either a $2$-gevec or a $3$-gevec over $v_1$. We need to exclude the first case. But in the first case, $\C w_3 = \C v_{1,2}$ (since $w_3$ is orthogonal to $v_1$) contradicts that $w_3$ is orthogonal to $v_{1,2}$. Therefore $w_3$ is a $3$-gevec over $v_1$ orthogonal to $\C v_1 + \C v_{1,2}$. But the space of such $3$-gevecs is $1$-dimensional and equal to $\C v_{1,3}$. Thus $\C w_3=\C v_{1,3}$, and so $\C w_3(x) \to \C v_{1,3}$ in $\P(\C^n)$ (again, arguing by contradiction: if a subsequence $w_3(\varepsilon_n)$ makes $\C w_3(\varepsilon_n)\to \C v_{1,3}$ fail, then $w_3(\varepsilon_n)$ would have no convergent subsequence in $S$).

It follows that in $\mathrm{Gr}_k(\C^n)$,
$$\C v_1(x) + \cdots + \C v_k(x) = \C w_1(x) + \cdots + \C w_k(x) \to \C v_1 + \C v_{1,2}+ \cdots + \C v_{1,k} = J_1(A(0)).
$$
In particular, a posteriori, it follows that $v_1(x),v_2(x),\ldots$ involved exactly $k$ vectors.
\end{proof}

%%%%%%%%%%%%%%%%%%%%%%%%%%%%%%%%%%%
%%%%%%%%%%%%%%%%%%%%%%%%%%%%%%%%%%%
\section{Appendix C: The extended maximum principle}
\label{Section The Maximum principle revisited}
%%%%%%%%%%%%%%%%%%%%%%%%%%%%%%%%%%%
%%%%%%%%%%%%%%%%%%%%%%%%%%%%%%%%%%%

%%%%%%%%%%%%%%%%%%%%%%%%%%%%%%%%%%%
\subsection{Symplectic manifolds conical at infinity}
\label{Subsection Symplectic manifolds conical at infinity}
%%%%%%%%%%%%%%%%%%%%%%%%%%%%%%%%%%%

Let $(M,\omega)$ be a symplectic manifold \emph{conical at infinity}. As in \cite{Ritter4}, this means $\omega$ is allowed to be non-exact, but outside of a bounded domain $M_0\subset M$ there is a symplectomorphism
$$
(M\setminus M_0,\omega|_{M\setminus M_0}) \cong (\Sigma\times [1,\infty),d(R\alpha)),
$$
where $(\Sigma,\alpha)$ is a contact manifold, and $R$ is the coordinate on $[1,\infty)$.

We call $\Sigma\times [1,\infty)$ the \emph{collar} of $M$. On the collar, $\omega = d\theta$ is exact with $\theta=R\alpha$. We denote by $Y$ the Reeb vector field on $\Sigma$ (so $\alpha(Y)=1$ and $d\alpha(Y,\cdot)=0$), and by $Z=R\partial_R$ the Liouville vector field on the collar (so $\omega(Z,\cdot)=\theta$).

Recall that an $\omega$-compatible almost complex structure $J$ is called of \emph{contact type} if, for large $R$, it satisfies $JZ = Y$. This is equivalent to the condition 
$$\theta \circ J = dR.$$
\begin{remark}\label{Remark tweaking contact condition Max}
The contact type condition $J Z = Y$ can be slightly generalized to $JZ = c(R) Y$, equivalently $\theta\circ J = c(R) dR$, where $c: \R \to \R$ satisfies $c> 0$ and $c'\geq 0$.
\end{remark}

%%%%%%%%%%%%%%%%%%%%%%%%%%%%%%%%%%%
\subsection{The class of Hamiltonians}
\label{Subsection Symplectic manifolds conical at infinity and The class of Hamiltonians}
%%%%%%%%%%%%%%%%%%%%%%%%%%%%%%%%%%%

Usually to make Floer theory work in this context one would require the Hamiltonians $H: M \to \R$ to depend only on $R$ for large $R$: $H=h(R)$. Then the coordinate $\rho=R\circ u: S \to \R$ will satisfy a maximum principle for Floer solutions $u: S \to M$. The maximum principle is originally due to Viterbo \cite{Viterbo1} for Floer trajectories (so when $S$ is a cylinder), and it holds also when $S$ is a punctured Riemann surface (due to Seidel \cite{Seidel2}, see the final appendix of \cite{Ritter3} for a detailed description). By \emph{Floer solution} $u(s,t)$ we mean a solution of
$$
(du-X_H\otimes \beta)^{0,1}=0
$$
where $X_H$ is the Hamiltonian vector field (so $\omega(\cdot,X_H)=dH$); $\beta=\beta_s\, ds + \beta_t \, dt$ is a $1$-form on $S$ satisfying $d\beta\leq 0$; and the $(0,1)$ part is taken with respect to $J$ (see \cite[Section D.1]{Ritter3} for details). The Hamiltonian orbits that $u$ converges to at the punctures of $S$ (near which $\beta$ is a constant positive multiple of $dt$) are called the \emph{asymptotic conditions}.

The condition that the Hamiltonian depends only on $R$ on the collar is quite restrictive: it implies that $X_H$ is a multiple of the Reeb vector field, $X_H=h'(R)Y$. This is too restrictive in the case of non-compact toric varieties, as one would like to allow the Hamiltonians arising from the natural $S^1$-actions around the various toric divisors, mentioned in the Introduction.

The aim of the Appendix is to prove the following maximum principle.

\begin{theorem}[Extended Maximum Principle]\label{Theorem maximum principle}
 Let $H: M \to \R$ have the form
$$
 H(y,R) = f(y) R
$$
for large $R$, where $(y,R)\in \Sigma\times (1,\infty)$ are the collar coordinates, and $f:\Sigma\to \R$ satisfies
\begin{itemize}
 \item $f$ is invariant under the Reeb flow (that is $df(Y)=0$ for the Reeb vector field $Y$);
 \item $f \geq 0$.\;\;  \emph{[this condition can be omitted if $d\beta=0$]}
\end{itemize}

Let $J$ be an $\omega$-compatible almost complex structure of contact type at infinity.

Then the $R$-coordinate of any Floer solution $u$ is bounded a priori in terms of the $R$-coordinates of the asymptotic conditions of $u$.
\end{theorem}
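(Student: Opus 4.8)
The plan is to run the classical Viterbo-type argument, but to handle the extra $y$-dependence of $H$ by exploiting that $f$ is Reeb-invariant. First I would set up the standard computation: let $u\colon S\to M$ be a Floer solution and suppose that the maximum of $\rho=R\circ u$ is attained in the interior (if it is attained at a puncture there is nothing to prove). On the region where $u$ maps into the collar, write $u=(a,R\circ u)$ with $a\colon S\to\Sigma$, and compute $\Delta\rho$ (with respect to the metric $u^*(\omega(\cdot,J\cdot))$, or more concretely test against a non-negative bump function and integrate by parts). The key algebraic input is the formula for $X_H$ when $H=f(y)R$ on the collar: since $\omega=d(R\alpha)$ on the collar, one gets $X_H = f(y)\,Y + R\,X^{\Sigma}_{f}$ where $X^{\Sigma}_f$ is the Hamiltonian vector field of $f$ on $(\Sigma,\text{something})$ — but the crucial observation is that $dR(X_H)=0$ because $f$ is Reeb-invariant ($df(Y)=0$ means the ``horizontal'' part of $X_H$ has no $\partial_R$-component, and the $Y$-part certainly has none). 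So $X_H$ is tangent to the level sets of $R$, exactly as in the case $H=h(R)$.

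\textbf{Key steps.} The computation then proceeds as follows. At an interior maximum point $z_0$ of $\rho$, one has $d\rho(z_0)=0$ and $\Delta\rho(z_0)\leq 0$, so it suffices to show $\Delta\rho\geq 0$ (a subharmonicity-type inequality) on the region where $\rho$ is close to its maximum, which by the strong maximum principle forces $\rho$ to be locally constant and hence bounded by its boundary/asymptotic values. To get $\Delta\rho\geq 0$: using the Floer equation $(du - X_H\otimes\beta)^{0,1}=0$ and $J$ of contact type ($\theta\circ J = dR$, with the mild generalization $\theta\circ J = c(R)\,dR$ of Remark \ref{Remark tweaking contact condition Max} if needed), one expresses $d(\rho\circ(\text{something}))$ and $d(u^*\theta - (H\circ u)\beta)$ and compares. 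The standard identity gives, schematically,
$$
d\bigl(u^*\theta - (H\circ u)\,\beta\bigr) \;=\; |du - X_H\otimes\beta|^2\,\mathrm{vol}_S \;+\; (\text{terms involving } d\beta) \;-\; (dH(X_H))\,\beta\wedge\beta + \cdots,
$$
and because $X_H$ is tangent to $\{R=\text{const}\}$ the error terms coming from the $y$-dependence are controlled: $dH(X_H) = R\,df(X_H)$, and one checks $df(X_H)=df(f Y + R X^\Sigma_f)=R\,df(X^\Sigma_f)$; the point is that this is multiplied against $\beta\wedge\beta$-type quantities and against $d\beta\leq 0$, and the sign of $f$ enters precisely here — the hypothesis $f\geq 0$ ensures the bad term has the right sign when $d\beta\neq 0$, whereas if $d\beta=0$ those terms drop out entirely, explaining the bracketed remark in the statement. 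I would mirror the bookkeeping of \cite[Section D.1]{Ritter3} and \cite{Seidel2} line by line, tracking where $f(y)$ and $df$ appear and verifying each contributes with the correct sign or vanishes.

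\textbf{Main obstacle.} The delicate part is the error-term analysis: in the classical case $H=h(R)$ one has $dH(X_H)=h'(R)\,dR(Y)\cdot(\ldots)=0$ identically, whereas here $dH(X_H) = R\,df(X^\Sigma_f)$ need not vanish pointwise, so I cannot simply discard it. I expect to have to argue that, after pairing with $\beta$ (which near punctures is a multiple of $dt$ and whose differential satisfies $d\beta\leq 0$), the total contribution of all the $f$- and $df$-dependent terms to $\Delta\rho$ is $\geq 0$ given $f\geq 0$; this is where the $f\geq 0$ hypothesis is genuinely used, and getting the inequality signs to line up in the presence of both the $|du - X_H\otimes\beta|^2\geq 0$ term and the $d\beta\leq 0$ term is the real content. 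A secondary technical point is patching the collar estimate with the behaviour on the compact part $M_0$, but that is routine: outside a bounded set $u$ lands in the collar, and the maximum of $\rho$ cannot be attained on $M_0$ since $\rho\leq 1$ there. Once $\Delta\rho\geq 0$ is established wherever $\rho>1$, the (strong) maximum principle for the Laplacian on the domain $S$ — valid also for punctured surfaces with the puncture ends treated as in \cite{Seidel2} — gives the a priori bound on $\rho$ in terms of the asymptotic orbits, completing the proof.
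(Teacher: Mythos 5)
Your proposal aims for a pointwise subharmonicity argument ($\Delta\rho\geq 0$ near an interior maximum, then the strong maximum principle), whereas the paper's proof is a global integrated argument: restrict $u$ to the region $V=\{R\geq R_0\}$ and show the restricted energy $E(u|_{u^{-1}(V)})$ is $\leq 0$ by Stokes's theorem (the ``no escape lemma'', Lemma \ref{Lemma No escape lemma}), forcing $u$ to be constant or to lie in $\partial V$. You correctly identify the key consequence of Reeb-invariance, namely $dR(X_H)=0$, and you correctly guess that $f\geq 0$ is paired against $d\beta\leq 0$. However there are two substantive problems.

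First, the ``main obstacle'' you describe is not real. You worry that $dH(X_H)=R\,df(X_f^\Sigma)$ may fail to vanish pointwise, but $dH(X_H)=\omega(X_H,X_H)=0$ identically by antisymmetry of $\omega$ (any Hamiltonian is preserved by its own flow), and likewise $df(X_f)=0$. Also $\beta\wedge\beta=0$ for a $1$-form, so the ``$\beta\wedge\beta$-type quantities'' don't exist. This portion of the argument is looking for a sign problem that isn't there, while missing the actual algebraic identity the paper uses, namely $H=\theta(X_H)$ on the level $\{R=R_0\}$: this is what converts the Stokes boundary term $\int_{\partial S}u^*\theta - u^*H\,\beta$ into $\int_{\partial S}\theta(du-X\otimes\beta)$, which can then be rewritten via the Floer equation and the contact-type condition as $\int_{\partial S}-dR(du)j\leq 0$. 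Without $H=\theta(X_H)$ (equivalently, the $R$-linearity $H=f(y)R$, see Lemma \ref{Lemma Class of Hams satisfy dR(X) is 0 and H is theta(X)}), the boundary term does not close up.

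Second, the pointwise $\Delta\rho\geq 0$ route does not globalize cleanly when $d\beta\neq 0$. The term $\int_S u^*H\,d\beta$ in the energy expansion is a genuinely global contribution (nonpositive only after integrating over $S$, using $H\geq 0$ and $d\beta\leq 0$); it does not localize to a pointwise sign at the interior maximum. This is precisely why the paper (following \cite{Seidel2} and \cite[Sec.~D]{Ritter3}, which you cite) proves an integrated inequality $E(u|_{u^{-1}(V)})\leq 0$ rather than a pointwise subharmonicity. Your plan to ``mirror the bookkeeping of [Ritter3, D.1]'' is in tension with your opening framing: if you actually follow that bookkeeping you end up with the no-escape energy argument, not a Laplacian computation at a maximum point. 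Replacing the subharmonicity framing by the no-escape integral, and inserting the identity $H=\theta(X_H)$ at the boundary step, would make the argument correct and essentially the same as the paper's.
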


%%%%%%%%%%%%%%%%%%%%%%%%%%%%%%%%%%%
\subsection{Observations about functions invariant under the Reeb flow}
\label{Subsection Observations about functions invariant under the Reeb flow}
%%%%%%%%%%%%%%%%%%%%%%%%%%%%%%%%%%%
%
The condition that $f: \Sigma \to \R$ is invariant under the Reeb flow is equivalent to 
$$
\boxed{df(Y)=0}
$$
where $Y$ is the Reeb vector field. This is equivalent to the condition
\begin{equation}\label{EqndRXfVanish}
 \boxed{dR(X_f)=0}
\end{equation}
since $Y=X_R$ and $dR(X_f)=\omega(X_f,X_R) = -df(Y)=0$.

For $h=h(R)$ depending only on the radial coordinate, $X_h = h'(R)Y$, therefore
\begin{equation}\label{EqndfXhVanish}
 \boxed{df(X_{h})=0}
\end{equation}
The condition that $f$ only depends on $y\in \Sigma$ and not on $R$ on the collar $\Sigma \times (1,\infty)$ implies
\begin{equation}\label{EqnThetaXfVanishes}
\boxed{\theta(X_f) = 0}
\end{equation}
since $\theta(X_f)= \omega(Z,X_f)=df(Z)=R\partial_Rf =0$ where $Z=R\partial_R$ is the Liouville vector field.

\begin{lemma}\label{Lemma gt preserves alpha,R,f}
 For $H = f(y)R$ on $(\Sigma \times (1,\infty),d(R\alpha))$, with $f:\Sigma \to \R$ invariant under the Reeb flow,
 the flow $g_t$ of $H$ satisfies:
 \begin{enumerate}
  \item $g_t$ preserves $\alpha$ (the contact form on $\Sigma$);
  \item $g_t$ preserves the $R$-coordinate;
  \item $g_t$ preserves $f$.
 \end{enumerate}
This also holds if $f$ is allowed to be time-dependent.
 \end{lemma}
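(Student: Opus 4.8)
\textbf{Proof plan for Lemma \ref{Lemma gt preserves alpha,R,f}.}
The plan is to compute the Hamiltonian vector field $X_H$ explicitly in the collar coordinates and then read off the conservation laws directly, both for the time-independent and the time-dependent case. First I would write, on $\Sigma\times(1,\infty)$ with $\omega=d(R\alpha)=dR\wedge\alpha+R\,d\alpha$ and $H=f(y)R$, the defining relation $\omega(\cdot,X_H)=dH=R\,df+f\,dR$. Decomposing $X_H$ into its $\partial_R$-component, its Reeb-component, and its component tangent to $\ker\alpha\subset T\Sigma$, a short computation gives
\begin{equation}\label{Eqn XH explicit collar}
X_H = f\, Y + \widetilde{X}_f,
\end{equation}
where $Y$ is the Reeb field and $\widetilde{X}_f$ is the unique vector field tangent to $\ker\alpha$ with $d\alpha(\cdot,\widetilde{X}_f)=df|_{\ker\alpha}$ (here one uses $df(Y)=0$, i.e.\ the Reeb-invariance of $f$, to see that the $\partial_R$-component vanishes and that $df$ restricted to $\ker\alpha$ is the relevant part; equivalently this is \eqref{EqndRXfVanish} rewritten). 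Crucially $X_H$ has no $\partial_R$-component, which is \eqref{EqndRXfVanish}, and this is the source of claim (2).

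Given \eqref{Eqn XH explicit collar}, claim (2) is immediate: $\tfrac{d}{dt}(R\circ g_t) = dR(X_H)=0$ by \eqref{EqndRXfVanish}, so $g_t$ preserves $R$. For claim (3), $\tfrac{d}{dt}(f\circ g_t)=df(X_H)=f\,df(Y)+df(\widetilde{X}_f)$; the first term vanishes by Reeb-invariance and the second vanishes because $df(\widetilde{X}_f)=d\alpha(\widetilde{X}_f,\widetilde{X}_f)=0$ by antisymmetry. Hence $g_t$ preserves $f$. For claim (1), I would compute the Lie derivative $\mathcal{L}_{X_H}\alpha = d(\alpha(X_H)) + \iota_{X_H}d\alpha$. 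Here $\alpha(X_H)=f$ since $\alpha(Y)=1$ and $\alpha(\widetilde{X}_f)=0$, so $d(\alpha(X_H))=df$. And $\iota_{X_H}d\alpha = f\,\iota_Y d\alpha + \iota_{\widetilde{X}_f}d\alpha = 0 - df|_{\ker\alpha}$; but since $df(Y)=0$, $df=df|_{\ker\alpha}$ as one-forms on $\Sigma$, so $\iota_{X_H}d\alpha=-df$. Therefore $\mathcal{L}_{X_H}\alpha = df-df=0$, so $g_t$ preserves $\alpha$. I should double-check the sign conventions in $\omega(\cdot,X_H)=dH$ versus $\omega(X_H,\cdot)=dH$ used elsewhere in the paper (cf.\ Section \ref{Subsection The class of Hamiltonians}, where $\omega(\cdot,X_H)=dH$), as this flips the sign of $\widetilde{X}_f$ but not any of the three conclusions.

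Finally, for the time-dependent case $H=f_t(y)R$ with each $f_t$ Reeb-invariant, nothing essential changes: at each fixed time $t$ the generating vector field is $X_{H_t}=f_t\,Y+\widetilde{X}_{f_t}$, which again has no $\partial_R$-component and satisfies $\alpha(X_{H_t})=f_t$, $df_t(X_{H_t})=0$; the same three computations go through pointwise in $t$, and integrating the flow preserves $R$, $f_t$ (in the sense that $f_t(g_t(x))$—the relevant quantity—behaves as in the autonomous case for each component function), and $\alpha$. I do not expect any genuine obstacle here; the only mild care needed is the bookkeeping in \eqref{Eqn XH explicit collar}, i.e.\ correctly isolating the three components of $X_H$ and confirming that the Reeb-invariance hypothesis $df(Y)=0$ is exactly what forces the $\partial_R$-component to vanish. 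This lemma then feeds directly into the proof of Theorem \ref{Theorem maximum principle}, where preservation of $R$ and $\alpha$ under $g_t$ is what makes the pulled-back data $g^*H$, $g^*J$ compatible with the contact structure at infinity.
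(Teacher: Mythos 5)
Your proof is correct and follows essentially the same route as the paper's. You decompose $X_H = fY + \widetilde{X}_f$ with $\widetilde{X}_f$ tangent to $\ker\alpha$ and determined by $d\alpha(\cdot,\widetilde{X}_f)=df|_{\ker\alpha}$, then read off (2) and (3) directly and apply Cartan's formula for (1); the paper writes the same decomposition as $X = fY + RX_f$ (so your $\widetilde{X}_f$ is precisely $RX_f$, with $X_f$ the Hamiltonian vector field of $f$), derives $d\alpha(\cdot,X)=df$ by matching the two expansions of $dH$, and also concludes via Cartan's formula. The only substantive difference is cosmetic: you build the identity $d\alpha(\cdot,\widetilde{X}_f)=df|_{\ker\alpha}$ into the definition, whereas the paper extracts it by comparison; both correctly use $df(Y)=0$ to kill the $\partial_R$-component of $X_H$. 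Your remark about the sign convention $\omega(\cdot,X_H)=dH$ versus $\omega(X_H,\cdot)=dH$ is apt and matches the paper's stated convention. On the time-dependent extension you are, like the paper, brief: the paper records explicitly that the Lie derivative $\mathcal{L}_X^t\alpha=\partial_s|_{s=t}g_{s,t}^*\alpha$ still satisfies Cartan's formula, which is exactly the content of your ``pointwise in $t$'' remark; neither of you belabors the precise meaning of ``preserves $f$'' when $f$ is time-dependent, but the computation $\alpha(X_{f_tR})=f_t$, $dR(X_{f_tR})=0$ is all that is used downstream (e.g.\ in Corollary \ref{Corollary SH for pairs gH gJ} and Corollary \ref{Corollary contact type at infinity under flow}).
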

\begin{proof}
 By \eqref{EqnThetaXfVanishes}, $\alpha(X_f)=0$. Since $\alpha(Y)=1$ and $X=X_H = fY + RX_f$, we deduce 
 $$\alpha(X)=f.$$
 By comparing $f\, dR + R\, df = dH = d(R\alpha)(\cdot,X)$ with $(dR\wedge \alpha + R\,d\alpha)(\cdot,X) = f\, dR + R\, d\alpha(\cdot,X)$ (using $dR(X_f)=0$ from \eqref{EqndRXfVanish}), we obtain:
 $$
 d\alpha(\cdot,X)=df.
 $$

 The Lie derivative of $\alpha$ along $X$ can be computed by Cartan's formula:
 $$
 \mathcal{L}_{X}\alpha = di_X \alpha + i_X d\alpha = df - df = 0.
 $$
 Since $g_t^*\mathcal{L}_{X}\alpha=\partial_s|_{s=t} g_s^*\alpha$, it follows that $g_t^*\alpha$ is constantly $\alpha$ (using $g_0=\mathrm{id}$). This proves (1). Claim (2) follows from $dR(X)=0$. Claim (3) follows from $df(X)=0$ (using $df(Y)=0$ and $df(X_f)=\omega(X_f,X_f)=0$).
 
 For time-dependent $f$, $\mathcal{L}_X^t\alpha$ depends on the time-parameter $t$: it is defined by $\mathcal{L}_{X}^t\alpha=\partial_s|_{s=t} g_{s,t}^*\alpha$ where $\partial_s g_{s,t}=X_{f_s R}$ with initial point $g_{t,t}=\mathrm{id}$. Then Cartan's formula, and hence the above argument, still holds. 
\end{proof}

\begin{lemma}
If a diffeomorphism $g_t$ preserves $\alpha$ and $R$ for large $R$, then it also preserves $\theta,d\theta,Y, Z=R\partial_R$, and it preserves the splitting $TM=\ker \alpha \oplus \R Y \oplus \R Z$ (for large $R$).
\end{lemma}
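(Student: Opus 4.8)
The plan is to observe that all the claimed invariances are formal consequences of the two hypotheses $g_t^*\alpha = \alpha$ and $g_t^* R = R$ (for large $R$), using only the definitions of the geometric objects on the collar $\Sigma\times(1,\infty)$ with $\omega = d\theta$, $\theta = R\alpha$. First I would note $g_t^*\theta = g_t^*(R\alpha) = (g_t^*R)(g_t^*\alpha) = R\alpha = \theta$, and then $g_t^*d\theta = d(g_t^*\theta) = d\theta = \omega$, so $g_t$ preserves both $\theta$ and $\omega$ on the collar.

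Next I would handle the vector fields. For the Liouville field $Z = R\partial_R$, recall $Z$ is characterized by $\omega(Z,\cdot) = \theta$; since $g_t$ preserves both $\omega$ and $\theta$, it preserves the unique vector field solving this equation, so $(g_t)_*Z = Z$. (Alternatively, $g_t$ preserves $R$ and the level sets $\{R = \text{const}\}$, hence preserves $\partial_R$ up to the radial direction, and the normalization $dR(Z) = R$ pins it down.) For the Reeb field $Y$, recall $Y$ is characterized on $\Sigma$ by $\alpha(Y) = 1$ and $i_Y d\alpha = 0$; since $g_t$ preserves $\alpha$ it preserves $d\alpha$, hence preserves the unique field satisfying these two conditions, so $(g_t)_*Y = Y$.

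Finally, for the splitting $TM = \ker\alpha \oplus \mathbb{R}Y \oplus \mathbb{R}Z$: the distribution $\ker\alpha$ is preserved because $g_t^*\alpha = \alpha$ (so $g_t$ carries $\ker\alpha$ to $\ker\alpha$ fibrewise), and the two line fields $\mathbb{R}Y$, $\mathbb{R}Z$ are preserved by the previous paragraph; since $g_t$ is a diffeomorphism it carries a direct sum decomposition to the corresponding direct sum of the images, which is the same decomposition. I expect no serious obstacle here — the only point requiring a little care is that all these statements are asserted \emph{for large $R$}, i.e. on the region where the conical/collar structure and the hypotheses actually hold, so one should restrict attention to that region throughout and not claim anything about the compact part $M_0$; and one should be mindful that $g_t$ is built from the Hamiltonian $H = f(y)R$, whose flow need not \emph{a priori} preserve the collar region, but Lemma~\ref{Lemma gt preserves alpha,R,f}(2) guarantees it preserves $R$ and hence maps the collar to itself, so the composition is well-defined there.
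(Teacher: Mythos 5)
Your proof is correct and follows essentially the same route as the paper: derive $g_t^*\theta=\theta$ and $g_t^*d\theta=d\theta$ from the hypotheses, then characterize $Z$ and $Y$ by their defining identities in terms of $\theta$, $d\theta$, $\alpha$, $d\alpha$. One point worth making explicit for $Y$: on the collar the two conditions $\alpha(Y)=1$ and $i_Y d\alpha=0$ do \emph{not} by themselves single out $Y$, since $Y+c\,\partial_R$ also satisfies them for any function $c$; one must additionally impose tangency to the slices, $dR(Y)=0$, and check that $dg_t$ preserves it — which it does since $g_t^*R=R$, so $g_t^*dR=dR$. The paper handles this by noting $dg_t(T\Sigma)=T\Sigma$; your argument implicitly uses the same fact (you invoke preservation of the level sets in the $Z$ discussion) but should state it in the $Y$ step as well.
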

\begin{proof}
It preserves $\theta=R\alpha$, since $g_t$ preserves $R,\alpha$. So $g_t^*d\theta=dg_t^*\theta=d\theta$.

Since $\ker \alpha = \ker g_t^*\alpha$, we get $dg_t(\ker \alpha) \subset \ker \alpha$ and this has to be an equality by dimensions ($dg_t$ is invertible). Since $Y\in T\Sigma$ is the unique vector field determined by $\alpha(Y)=1$, $d\alpha(Y,T\Sigma)=0$, to prove that $dg_t\cdot Y = Y$ we just need to check these conditions for $dg_t\cdot Y$. Now $1=\alpha(Y)=(g_t^*\alpha)(Y)=\alpha(dg_t Y)$ gives the first condition. The second follows from
$$
0=d\alpha(Y,T\Sigma) = d(g_t^*\alpha)(Y,T\Sigma)=g_t^*d\alpha(Y,T\Sigma)= d\alpha(dg_t Y,T\Sigma),
$$
using that $dg_t(T\Sigma)=T\Sigma$ since $g_t$ preserves $R$.

Since $d\theta(Z,\cdot)=\theta$, we obtain
$$
d\theta(Z,\cdot)=\theta=g_t^*\theta = \theta(dg_t \cdot) = 
d\theta(Z,dg_t \cdot) = (g_t^*d\theta)(dg_t^{-1}Z,\cdot) = d\theta(dg_t^{-1}Z,\cdot).
$$
Therefore $dg_t Z = Z$. The claim follows.
\end{proof}

\begin{corollary}\label{Corollary Ham flow preserves Reeb vector field}\label{Corollary contact type at infinity under flow}
 The Hamiltonian flow $g_t$ for $H$ as in Theorem \ref{Theorem maximum principle} satisfies $dg_t = \mathrm{id}$ on $\textrm{span}(Z,Y) \subset T(\Sigma\times (1,\infty))$ for large $R$.
 
 In particular, if $J$ is of contact type at infinity, then so is $g^*J = dg_t^{-1} \circ J \circ dg_t$.
\end{corollary}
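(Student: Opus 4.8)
The plan is to deduce the Corollary immediately from the two lemmas just established, with only bookkeeping in between. First I would apply Lemma~\ref{Lemma gt preserves alpha,R,f}: since $H=f(y)R$ on the collar with $f$ invariant under the Reeb flow, the flow $g_t$ preserves the contact form $\alpha$ on $\Sigma$ and the radial coordinate $R$ (and, as noted there, this persists in the time-dependent case). Then the unnamed Lemma that follows it applies verbatim: any diffeomorphism preserving $\alpha$ and $R$ for large $R$ also satisfies $dg_t\cdot Y=Y$ and $dg_t\cdot Z=Z$ on the region $\{R\gg 0\}$, where $Z=R\partial_R$. Consequently $dg_t$ restricts to the identity on $\mathrm{span}(Z,Y)$ there, which is exactly the first assertion of the Corollary.

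For the contact-type claim I would argue as follows. Recall from Remark~\ref{Remark tweaking contact condition Max} that ``$J$ of contact type at infinity'' means $JZ=c(R)Y$ for large $R$, for some $c\colon\R\to\R$ with $c>0$, $c'\ge 0$ (the strict case being $c\equiv 1$). Using $dg_tZ=Z$ just obtained, I would compute, for large $R$,
\[
(g^*J)Z \;=\; \bigl(dg_t^{-1}\circ J\circ dg_t\bigr)(Z)\;=\;dg_t^{-1}(JZ)\;=\;dg_t^{-1}\bigl(c(R)\,Y\bigr).
\]
Since $g_t$ preserves the $R$-coordinate, the function $c(R)$ is $g_t$-invariant, and since $dg_tY=Y$ we have $dg_t^{-1}Y=Y$; hence the right-hand side equals $c(R)\,Y$. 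Thus $g^*J$ satisfies the same relation $(g^*J)Z=c(R)Y$, i.e. $g^*J$ is again of contact type at infinity. Finally, $g^*J$ is automatically $\omega$-compatible: a Hamiltonian flow is a symplectomorphism, $g_t^*\omega=\omega$, and $\omega$-compatibility of an almost complex structure is preserved under pullback by symplectomorphisms. This puts the pair $(g^*H,g^*J)$ into the hypotheses of Theorem~\ref{Theorem maximum principle}, which is precisely what is needed for Theorem~\ref{Theorem Representation for lager class of Hams}.

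I do not expect a genuine obstacle here, since all the content is carried by the two preceding lemmas; the only points deserving care are (i) that $dg_t=\mathrm{id}$ on $\mathrm{span}(Z,Y)$ and the identity $(g^*J)Z=c(R)Y$ hold only on the conical end where $H$ has the prescribed form $f(y)R$, so every statement must be qualified ``for large $R$'' (which is harmless, as the maximum principle only concerns behaviour near infinity); (ii) that the slightly generalized contact condition with an $R$-dependent factor $c(R)$ survives precisely because $g_t$ fixes $R$; and (iii) that the time-dependent situation is legitimate, which is already covered by the final remark in Lemma~\ref{Lemma gt preserves alpha,R,f}.
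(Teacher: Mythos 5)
Your proof is correct and follows the same route as the paper: both parts are read off directly from Lemma~\ref{Lemma gt preserves alpha,R,f} and the unnamed lemma that follows it, with the contact-type claim being an immediate consequence of $dg_t$ fixing $Z$ and $Y$. Your added remarks about the $c(R)$-generalized contact condition and preservation of $\omega$-compatibility are correct elaborations that the paper leaves implicit.
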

\begin{proof}
The first part follows by the previous two Lemmas. For the second part, recall that the contact condition is $JZ = Y$ for large $R$. So $g_t^*J$ is of contact type by the first part.
\end{proof}

%%%%%%%%%%%%%%%%%%%%%%%%%%%%%%%%%%%
\subsection{The no escape Lemma}
\label{Subsection No escape Lemma}
%%%%%%%%%%%%%%%%%%%%%%%%%%%%%%%%%%%

\begin{lemma}\label{Lemma Class of Hams satisfy dR(X) is 0 and H is theta(X)}
The Hamiltonians $H: M \to \R$ of the form $H(y,R) = f(y) R$ for large $R$, where $(y,R)\in \Sigma\times (1,\infty)$, with $f:\Sigma\to \R$ invariant under the Reeb flow,
are precisely the class of Hamiltonians whose Hamiltonian vector field $X$ satisfies:
\begin{enumerate}
 \addtocounter{enumi}{1}
 \item $dR(X)=0$;
 \item $H = \theta(X)$.
\end{enumerate}
\end{lemma}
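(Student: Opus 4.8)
The claim is an ``if and only if'' characterization, so the plan is to prove the two implications separately, using the coordinate computations already recorded in Section \ref{Subsection Observations about functions invariant under the Reeb flow}. For the forward direction, suppose $H(y,R)=f(y)R$ on the collar with $f$ invariant under the Reeb flow. Then $X=X_H=fY+RX_f$ as computed in the proof of Lemma \ref{Lemma gt preserves alpha,R,f}. Condition (2), $dR(X)=0$, is then immediate: $dR(X)=f\,dR(Y)+R\,dR(X_f)=0$, since $dR(Y)=0$ (the Reeb field is tangent to $\Sigma$) and $dR(X_f)=0$ by \eqref{EqndRXfVanish}. For condition (3), compute $\theta(X)=R\alpha(X)=R(f\alpha(Y)+R\alpha(X_f))=Rf=H$, using $\alpha(Y)=1$ and $\alpha(X_f)=\theta(X_f)/R=0$ by \eqref{EqnThetaXfVanishes}. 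So any $H$ of the prescribed form satisfies (2) and (3).

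For the converse, suppose $X=X_H$ satisfies $dR(X)=0$ and $H=\theta(X)$ for large $R$. First I would show $H$ has the form $f(y)R$: write $H=\theta(X)=R\alpha(X)$, and set $f:=\alpha(X)=H/R$ (valid on the collar where $R\geq 1>0$). I need $f$ to be a function of $y\in\Sigma$ only, i.e. $\partial_R f=0$; this should follow from $dR(X)=0$. Indeed, $dR(X)=\omega(X,Z)\cdot(\text{something})$ — more precisely, $\omega(X,Z)=-dH(Z)=-R\partial_R H$, while $dR(X)=\omega(X,X_R)$, and since $X_R=Y$ one has $dR(X)=\omega(X,Y)=-dH(Y)$. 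Hmm, so $dR(X)=0$ gives $dH(Y)=0$, i.e. $H$ is Reeb-invariant, not directly $\partial_R H=0$. Let me reconsider: the correct route is that $H=\theta(X)=\omega(Z,X)=-dH(Z)=-R\,\partial_R H$, so $\partial_R H=-H/R$, which forces $H=c(y)/R$ — that is the wrong sign. So I must be careful about the sign convention $\omega(\cdot,X_H)=dH$ used in this paper: then $\omega(Z,X)=dH(Z)$, giving $H=\theta(X)=dH(Z)=R\,\partial_R H$, hence $\partial_R H=H/R$, which integrates to $H=f(y)R$ for some function $f$ on $\Sigma$. Good — so condition (3) together with the convention pins down the $R$-dependence as linear. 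Then condition (2), $dR(X)=0$, translates via $dR(X)=\omega(X_R,X)$... again with the paper's convention $dR(X_H)=\omega(X_R,X_H)=-dR(X_H)$? No: $dR=\omega(\cdot,X_R)$ so $dR(X_H)=\omega(X_H,X_R)=-\omega(X_R,X_H)=-df(Y)\cdot R$ after substituting $H=fR$ and using bilinearity. So $dR(X)=0\iff df(Y)=0$, i.e. $f$ is invariant under the Reeb flow, as desired.

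The steps in order: (i) fix the sign convention and record $X_H=fY+RX_f$ when $H=fR$; (ii) forward implication — verify (2) via $dR(X_f)=0$ and (3) via $\theta(X_f)=0$, both from Section \ref{Subsection Observations about functions invariant under the Reeb flow}; (iii) converse — from (3) and $H=\theta(X)=R\,\partial_R H$ deduce $H=f(y)R$, then from (2) deduce $df(Y)=0$. The main obstacle, and the only place where real care is needed, is bookkeeping the sign conventions: the paper uses $\omega(\cdot,X_H)=dH$ (stated in Section \ref{Subsection Symplectic manifolds conical at infinity and The class of Hamiltonians}), and getting the algebra $H=R\,\partial_R H$ rather than $H=-R\,\partial_R H$ out of $H=\theta(X)=\omega(Z,X)$ is the crux — an error there would make the ODE give $H=f(y)/R$ and break the whole characterization. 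Everything else is a direct substitution using equations \eqref{EqndRXfVanish}, \eqref{EqndfXhVanish}, \eqref{EqnThetaXfVanishes} and $\alpha(Y)=1$, $X_R=Y$, $Z=R\partial_R$.
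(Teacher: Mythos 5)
Your proposal is correct and follows essentially the same route as the paper's proof: the crux in both is the identity $\theta(X)=\omega(Z,X)=dH(Z)=R\,\partial_R H$, which turns condition $H=\theta(X)$ into the ODE $R\,\partial_R H=H$ and hence forces $H=f(y)R$, after which $dR(X)=0$ reduces to $df(Y)=0$ via \eqref{EqndRXfVanish}. Your detour through the sign convention reaches the correct resolution, and spelling out both implications explicitly is just a more verbose presentation of the same equivalence chain the paper runs in one pass.
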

\begin{proof}
 Recall $\theta(X) = \omega(Z,X)=dH(Z)=R\partial_R H$, where $Z=R\partial_R$ is the Liouville vector field. Thus $\theta(X)=H$ corresponds to $\partial_R \log H = \partial_R \log R$. Integrating yields $H = f(y)R$, for some $f: \Sigma \to \R$. Since 
$$
X = f(y) X_R + R X_f = f(y) Y + R X_f,
$$
the condition $dR(X)=0$ is equivalent to $dR(X_f)=0$, equivalently $df(Y)=0$ by \eqref{EqndRXfVanish}.
\end{proof}

Theorem \ref{Theorem maximum principle} now follows by the following ``no escape lemma'' applied to $u$ restricted to $u^{-1}(V)$, taking $R_0$ greater than or equal to the maximal value that $R$ attains on the asymptotics of $u$ (and we pick $R_0$ generically so that $\partial V$ intersects $u$ transversely).

\begin{lemma}[No escape lemma]\label{Lemma No escape lemma}
Let $H: M \to \R$ be any Hamiltonian. Let $(V,d\theta)$ be the region $R\geq R_0$ in $\Sigma\times (1,\infty)$, so $\partial V= \{ R=R_0 \}$. Assume:
\begin{enumerate}
 \item $J$ is of contact type along $\partial V$;
 \item $dR(X)=0$ on $\partial V$;
 \item $H = \theta(X)$ on $\partial V$;
 \item \label{Item H geq 0 dropped} $H\geq 0$ on $V$ \emph{[this condition can be omitted if $d\beta=0$]}.
\end{enumerate}
where $X=X_H$. Let $S$ be a compact Riemann surface with boundary and let $\beta$ be a $1$-form on $S$ with $d\beta \leq 0$. 
Then any solution $u: S \to V$ of $(du-X\otimes \beta)^{0,1}=0$ with $u(\partial S)\subset \partial V$, is either constant or maps entirely into $\partial V$.
\end{lemma}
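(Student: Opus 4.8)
The strategy is the classical Viterbo-type energy/area argument adapted to the present setting, the key new input being that hypotheses (2) and (3) are exactly what is needed to bound the ``problematic'' term that normally requires $H = h(R)$. First I would compute the geometric energy of $u$ over the region $u^{-1}(V)$, i.e. consider $\int_S u^*\omega - d(u^*H\cdot\beta)$, or equivalently $\int_S u^*(d\theta) - d(u^*\theta)\wedge\cdots$; concretely, using $\omega = d\theta$ on $V$, one writes the energy
\begin{equation*}
E(u) = \int_S \tfrac12 |du - X\otimes\beta|^2 \, \mathrm{vol}_S
\end{equation*}
and, via the standard identity for solutions of $(du - X\otimes\beta)^{0,1}=0$, rewrites this as
\begin{equation*}
E(u) = \int_S u^*\omega - \int_S u^*(dH)\wedge\beta = \int_S u^*\omega - \int_S d(u^*H\cdot \beta) + \int_S u^*H\, d\beta .
\end{equation*}
Since $S$ is compact with boundary and $u(\partial S)\subset \partial V$, Stokes' theorem turns $\int_S u^*\omega = \int_S u^*(d\theta)$ into $\int_{\partial S} u^*\theta$ and $\int_S d(u^*H\cdot\beta)$ into $\int_{\partial S} u^*H\cdot\beta$. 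So
\begin{equation*}
E(u) = \int_{\partial S} u^*(\theta - H\beta) + \int_S u^*H\, d\beta .
\end{equation*}

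The second step is to control the boundary integral. Along $\partial S$ we have $u(\partial S)\subset\partial V = \{R=R_0\}$, so the tangential derivative of $u$ at a boundary point lies in $T(\partial V) = \ker(dR)$. I would decompose $u^*\theta$ and $u^*(H\beta)$ on $\partial S$: writing $\xi = du(\partial_{\partial S})$ for the tangent vector to the image, we have $dR(\xi) = 0$, hence $\xi \in \ker dR$. Using the equation $(du - X\otimes\beta)^{0,1}=0$ and the contact-type condition $\theta\circ J = c(R)\,dR$ (Remark \ref{Remark tweaking contact condition Max}) along $\partial V$, together with hypothesis (3) $H = \theta(X)$ and hypothesis (2) $dR(X)=0$, one shows that the one-form $u^*(\theta - H\beta)$ restricted to $\partial S$ is pointwise $\leq 0$ (this is the usual computation: split $du = \xi\otimes ds_{\partial S} + J\xi\otimes(\text{normal})$ up to the $\beta$-term, apply $\theta$ and $dR$, and use $\theta\circ J = c\,dR$ with $c>0$ together with $dR(\xi)=0$ and $J$-invariance). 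Concretely the boundary term becomes $-\int_{\partial S} c(R_0)\, dR(\text{outward normal derivative of }u)\,(\text{positive measure}) \leq 0$, because the maximum principle forces $u$ to point outward along $\partial V$ where it is non-constant. The precise sign bookkeeping here, making sure the normal derivative term has the right sign, is the part I expect to be the main obstacle: one must be careful with orientations of $\partial S$ and with the contribution of the $X\otimes\beta$ correction to $du$ on the boundary, and this is exactly where hypotheses (2) and (3) get used in tandem.

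The third step handles the interior term $\int_S u^*H\, d\beta$. Since $d\beta \leq 0$ and, by hypothesis (4), $H\geq 0$ on $V$, we get $\int_S u^*H\, d\beta \leq 0$; if instead $d\beta = 0$ this term simply vanishes, which is why (4) can be dropped in that case. Combining, $E(u) \leq 0$. But $E(u)\geq 0$ always, with equality iff $du = X\otimes\beta$ pointwise. Hence $E(u)=0$ and $du = X\otimes\beta$ everywhere. The final step is to conclude: where $\beta \neq 0$, $u$ is tangent to the Hamiltonian vector field $X$, and since $dR(X)=0$ holds on $\partial V$ (and one extends this: actually one only needs the conclusion on the image, and the image is forced into $\partial V$ by the boundary condition combined with $du = X\otimes\beta$), the function $R\circ u$ is locally constant along the flow directions; where $\beta = 0$, $du = 0$ so $u$ is locally constant. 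A short connectedness/open-closed argument on $S$ then shows $u$ is either globally constant or has image entirely contained in the level set $\partial V = \{R = R_0\}$. I would also remark that applying this to the restriction of a general Floer solution to $u^{-1}(\{R\geq R_0\})$, with $R_0$ chosen generically above the $R$-values of the asymptotics so that $\partial V \pitchfork u$, yields Theorem \ref{Theorem maximum principle}: no component of $u$ can escape to large $R$, giving the a priori bound. The only genuinely delicate point throughout is the boundary-term sign analysis in step two; everything else is bookkeeping with Stokes and the energy identity.
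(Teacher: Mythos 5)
Your plan is essentially the paper's proof: identical energy identity, the same expansion $E(u) = \int_{\partial S}(u^*\theta - u^*H\beta) + \int_S u^*H\,d\beta$ via Stokes, hypotheses (3) and (2) used to turn the boundary integrand into $\theta(du - X\otimes\beta)$ and then, via the Cauchy--Riemann equation and the contact-type identity $\theta\circ J = c(R)\,dR$, into $-c(R_0)\,dR(du)\,j$, and finally the sign of $\int_S u^*H\,d\beta$ from (4). One phrasing worth tightening: the sign of the boundary term is \emph{not} a consequence of the maximum principle (that would be circular, since this lemma \emph{is} the maximum principle); it is simply that $R\circ u \geq R_0$ on $S$ with equality on $\partial S$, so $R\circ u$ attains its minimum on $\partial S$ and hence $d(R\circ u)\cdot\hat n \leq 0$ for the outward normal $\hat n$ of $S$ — that elementary boundary-minimum observation is all that is needed, and is exactly what the paper invokes.
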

\begin{proof}
We run the same argument as in \cite[Section D.5]{Ritter3}. The energy of $u$ is
$$
\begin{array}{lllll}

E(u) &=& \tfrac{1}{2} \int_S \| du - X\otimes \beta \|^2 \textrm{vol}_S
\\
 &=& 
\int_S u^*d\theta - u^*(dH)\wedge \beta
\\
 &=& 
\int_S d(u^*\theta - u^*H\beta) + u^*H\, d\beta
& \textrm{expanding }d(u^*H\beta)
\\
& \leq & 
\int_{S} d( u^*\theta - u^*H \beta)
& \textrm{when } d\beta\neq 0, \textrm{we use } H\geq 0,\, d\beta \leq 0 
\\
& = & 
\int_{\partial S} u^*\theta - u^*H \beta
& \textrm{by Stokes's theorem}
\\
& = &
\int_{\partial S} u^*\theta - u^*\theta(X) \beta
& \textrm{using }H=\theta(X)\textrm{ on }\partial V
\\
& = & 
\int_{\partial S} \theta(du - X\otimes \beta)
\\
& = & 
\int_{\partial S} -\theta J(du - X\otimes \beta)j
& \textrm{since }(du-X\otimes \beta)^{0,1}=0
\\
& = & 
\int_{\partial S} -dR(du- X\otimes \beta)j
& \textrm{since }J \textrm{ is of contact type on }\partial V
\\
& = & 
\int_{\partial S} -dR(du)j
& \textrm{since }dR(X)=0\textrm{ on }\partial V
\\
& \leq & 0 & \textrm{since }\partial V\textrm{ minimizes }R \textrm{ on }V.
\end{array}
$$
For the last line: if $\hat{n}$ is the outward normal along $\partial S\subset S$, then $j\hat{n}$ is the oriented tangent direction along $\partial S$, so $-dR(du)j(j\hat{n})=d(R\circ u)\cdot \hat{n}\leq 0$. Indeed, if we assumed that $J$ was of contact type on all of $V$, then the above argument is just Green's formula \cite[Appendix C.2]{Evans} for $S$: $\int_{\partial S} u^* \theta - u^*\theta(X)\beta = \int_S \Delta (R\circ u) \, ds\wedge dt = \int_{\partial S} \tfrac{\partial (R\circ u)}{\partial\hat{n}}\, dS \leq 0$.

 Since $E(u)\geq 0$ by definition, the above inequality $E(u)\leq 0$ forces $E(u)=0$, and so $du=X\otimes \beta.$ But $X\in \ker dR = T\partial V$, so the image of $du$ lies in $T\partial V$, and so $u$ lies in $\partial V$.
\end{proof}

\begin{remark}[When $d\beta=0$]
 Condition \eqref{Item H geq 0 dropped} can be dropped when $d\beta=0$. This applies to cylinders $(\R\times S^1, \beta=dt)$; continuation cylinders (see Theorem \ref{Theorem Continuation maps max}), and to TQFT operations \cite{Ritter3} where the sum of the weights at the inputs equals the sum at the outputs, e.g. for products: $HF^*(H)\otimes HF^*(H)\to HF^*(2H)$.
\end{remark}

\begin{lemma}[Generalizing contact type]
 In Lemma \ref{Lemma No escape lemma} we can replace the contact type condition by $\theta\circ J = c(R)dR$ for any positive function $c: \R \to (0,\infty)$. Indeed, since we only need the condition along $\partial V$, $\theta\circ J = c\cdot dR$ for a positive constant $c=c(R_0)$ suffices.
\end{lemma}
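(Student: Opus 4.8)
The final statement to prove is the variant of the no-escape lemma (Lemma~\ref{Lemma No escape lemma}) in which the contact-type condition $\theta\circ J = dR$ along $\partial V$ is weakened to $\theta\circ J = c(R)\,dR$ for a positive function (indeed a positive constant $c=c(R_0)$ along $\partial V$ suffices). Since this appears at the very end of Appendix~C, the natural strategy is simply to rerun the proof of Lemma~\ref{Lemma No escape lemma} line by line and check that the weakened hypothesis is enough to preserve the sign at every step.

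\medskip

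\noindent\textbf{Plan of proof.} First I would recall that in the energy computation of Lemma~\ref{Lemma No escape lemma}, the contact-type condition is used at exactly one place: the step where $\int_{\partial S}-\theta J(du - X\otimes\beta)j$ is rewritten as $\int_{\partial S}-dR(du-X\otimes\beta)j$. With the weakened hypothesis $\theta\circ J = c(R)\,dR$ along $\partial V$, this integrand becomes $-c(R_0)\,dR(du-X\otimes\beta)j$, i.e. the same expression scaled by the positive constant $c(R_0)$ (recall $R\equiv R_0$ on $\partial S$, so $c$ is constant there). Then the condition $dR(X)=0$ on $\partial V$ still kills the $X\otimes\beta$ term, leaving $-c(R_0)\int_{\partial S} dR(du)\,j$. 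The final line of the estimate — that $-dR(du)j$ is nonpositive along $\partial S$ because $\partial V$ minimizes $R$ on $V$, using $-dR(du)j(j\hat n)=d(R\circ u)\cdot\hat n\le 0$ for the outward normal $\hat n$ — is then multiplied by $c(R_0)>0$ and so remains nonpositive. Hence $E(u)\le 0$, which forces $E(u)=0$ as before, so $du = X\otimes\beta$; and since $X\in\ker dR = T\partial V$ (still true, as $dR(X)=0$ on $\partial V$), the image of $du$ lies in $T\partial V$ and $u$ maps into $\partial V$.

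\medskip

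\noindent I would also note in passing that the function-version (with $c=c(R)$ positive on a neighbourhood of $\partial V$ rather than merely on $\partial V$) follows identically, the only difference being that in the Green's-formula reformulation one replaces the ordinary Laplacian $\Delta(R\circ u)$ by a uniformly elliptic operator with positive coefficient $c$; since we only ever evaluate on $\partial V$, where $c$ is a positive constant, even this subtlety is irrelevant to the sign of the boundary integral. Similarly, the condition $H\ge 0$ on $V$ (or its omission when $d\beta=0$) plays no role in the part of the argument affected by the change, so nothing else in the hypotheses needs adjusting.

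\medskip

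\noindent\textbf{Main obstacle.} There is no genuine analytic obstacle here: the content of the statement is precisely the observation that the contact-type condition enters the energy estimate only through a conformal rescaling factor on the boundary term, and a positive rescaling factor does not change a sign. The only thing to be careful about is bookkeeping — making sure that $c$ is genuinely evaluated on $\partial V$ where $R=R_0$ is constant (so that the rescaling is by a constant and can be pulled out of the boundary integral), and that $dR(X)=0$ is still invoked only on $\partial V$. So the ``hard part'', such as it is, is purely expository: presenting the modification cleanly without reproducing the entire display from Lemma~\ref{Lemma No escape lemma}. In the write-up I would therefore just indicate the one altered line and remark that $c(R_0)>0$ preserves all subsequent inequalities.
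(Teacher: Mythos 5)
Your proof is correct and follows the same route as the paper: identify the single step in the energy estimate of Lemma~\ref{Lemma No escape lemma} where the contact-type condition is used, replace $\theta\circ J = dR$ by $\theta\circ J = c(R_0)\,dR$ along $\partial V$ (where $R\equiv R_0$), and observe that multiplication by the positive constant $c(R_0)$ does not alter the sign of the boundary integral. The paper's own proof is a one-sentence version of exactly this observation.
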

\begin{proof}
This only affects the integrand in $E(u)$ at the end: instead of $ -dR(du- X\otimes \beta)j$ we get $ -c(R_0)\cdot dR(du- X\otimes \beta)j$, but $c(R_0)>0$ so it does not affect the sign.
\end{proof}

We remark that $J=J_z$ can also be domain-dependent, this does not affect the above proofs as long as $J_z$ is of contact type for each $z$.

\begin{remark}[Time-dependent Hamiltonians]\label{Remark time dependent Hams}
 To achieve non-degeneracy of the $1$-periodic Hamiltonian orbits (a requirement in the proof of transversality for Floer moduli spaces), one must in fact always make a time-dependent perturbation of the Hamiltonian $H=H_t$ near the ends of the surface $S$ \cite[Appendix A.5 and A.6]{Ritter3}. Therefore $H=H_{z}$ will also depend on $s$ near the ends, where $z=s+it$. 
When we expand $d(u^*H\beta)$ in the proof of Lemma \ref{Lemma No escape lemma}, we obtain new terms: $u^*(\partial_t H)\, dt\wedge \beta$ and $u^*(\partial_s H)\, ds \wedge \beta$.

In the case of Floer trajectories, one wants $H=H_t$ and $\beta=dt$ on the whole cylinder $\R \times S^1$, so both those new terms vanish.

For more general Floer solutions on a Riemann surface $S$, we only need the $z$-dependence near the ends \cite[Appendix A.5]{Ritter3} where we can ensure that $\beta = C\, dt$ for a constant $C>0$. Thus, to ensure that we can drop the new term $u^*\partial_s H\, ds \wedge \beta$, we require
\begin{enumerate}
 \addtocounter{enumi}{4}
 \item $\partial_s H \leq 0$.
\end{enumerate}
This condition can always be achieved. Indeed, on a negative end, $\R \times (-\infty,0)$, take $H_t$ to be a very small time-dependent perturbation of $H + \varepsilon R$ for small $\varepsilon>0$, then we can find a homotopy $H_{s,t}$ from $H_t$ to $H$ with $\partial_s H_{s,t}\leq 0$. On a positive end, one considers $H - \varepsilon R$.
\end{remark}

\begin{theorem}[Continuation maps]\label{Theorem Continuation maps max}
Suppose the data $(H,J)=(H_z,J_z)$ now depends on the domain coordinates $z=s+it$ of the cylinder $(\R \times S^1,\beta=dt)$, such that $(H_z,J_z)$ becomes independent of $s$ for large $|s|$. We assume that for large $R$:
\begin{enumerate}
 \item $J$ is of contact type;
 \item $dR(X)=0$;
 \item $H = \theta(X)$;
 \addtocounter{enumi}{1}
 \item \label{Item ds H leq 0} $\partial_s H_z \leq 0$.
\end{enumerate}
Then Theorem \ref{Theorem maximum principle} still applies (via Lemma \ref{Lemma No escape lemma}).
\end{theorem}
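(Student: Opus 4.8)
The plan is to reduce Theorem \ref{Theorem Continuation maps max} to the ``no escape lemma'' (Lemma \ref{Lemma No escape lemma}) applied to a continuation solution $u$ on the cylinder $\R\times S^1$ restricted to the preimage of the region $V=\{R\geq R_0\}$, exactly as in the proof of Theorem \ref{Theorem maximum principle}. First I would choose $R_0$ larger than the maximal $R$-value attained by the two $s$-independent asymptotic orbits of $u$ at $s\to\pm\infty$, chosen generically so that $\partial V$ meets $u$ transversely; then $u^{-1}(V)$ is a compact surface-with-boundary $S$ whose boundary maps into $\partial V$, and the $1$-form is $\beta=dt$ with $d\beta=0$. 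Hypotheses (1)--(3) of Lemma \ref{Lemma No escape lemma} hold along $\partial V$ by hypotheses (1)--(3) of the theorem (which are exactly the conditions identified in Lemma \ref{Lemma Class of Hams satisfy dR(X) is 0 and H is theta(X)} as characterizing Hamiltonians of the form $H=f(y)R$ with $f$ Reeb-invariant, for each fixed $z$). Since $d\beta=0$, the remark following Lemma \ref{Lemma No escape lemma} tells us hypothesis \eqref{Item H geq 0 dropped} ($H\geq 0$) may be dropped, so no sign condition on $H$ itself is needed.

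The one genuine new ingredient, compared to the autonomous case, is the domain-dependence of $(H_z,J_z)$. Here I would re-run the energy computation in the proof of Lemma \ref{Lemma No escape lemma} verbatim, but expanding $d(u^*H_z\,\beta)$ now produces the extra terms $u^*(\partial_tH)\,dt\wedge\beta$ and $u^*(\partial_sH)\,ds\wedge\beta$ noted in Remark \ref{Remark time dependent Hams}. On the cylinder with $\beta=dt$ the first term vanishes identically since $dt\wedge dt=0$. For the second term, $ds\wedge\beta=ds\wedge dt$ is the area form, and its contribution to the energy integral is $\int_S u^*(\partial_sH)\,ds\wedge dt$; hypothesis \eqref{Item ds H leq 0}, $\partial_sH_z\leq 0$, makes this contribution $\leq 0$, so it only helps the inequality $E(u)\leq 0$. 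The remaining lines of the chain of inequalities in Lemma \ref{Lemma No escape lemma} --- Stokes, the substitution $H=\theta(X)$ on $\partial V$, the contact-type identity $\theta\circ J=dR$ (or its generalization $\theta\circ J=c(R_0)dR$), $dR(X)=0$ on $\partial V$, and finally $-dR(du)j\leq 0$ because $\partial V$ minimizes $R$ on $V$ --- go through unchanged, since those steps are boundary computations on $\partial V$ and use only conditions (1)--(3), which are assumed for all large $R$ and in particular on $\partial V$. Also $J=J_z$ being domain-dependent is harmless, as observed before Remark \ref{Remark time dependent Hams}, provided $J_z$ is of contact type along $\partial V$ for every $z$, which is hypothesis (1).

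Concluding: $E(u)\geq 0$ by definition and $E(u)\leq 0$ by the above, so $E(u)=0$, hence $du=X\otimes\beta$ on $S=u^{-1}(V)$; since $X\in\ker dR=T\partial V$ there, the image of $du$ is tangent to $\partial V$, so $u|_S$ is contained in $\partial V$ (or $u|_S$ is constant). Either way $u$ does not enter the interior of $V$, i.e. $R\circ u<R_0$ except possibly on the boundary locus, which gives the a priori bound $\max_S R\circ u\leq R_0$; since $R_0$ was any generic value exceeding the $R$-values of the asymptotics, this is precisely the assertion of Theorem \ref{Theorem maximum principle} in this continuation setting. I expect the only point requiring care --- the ``main obstacle'', though a mild one --- is bookkeeping the extra $\partial_sH$ term and verifying its sign is favourable; everything else is a direct transcription of the autonomous no-escape argument, using that $d\beta=0$ on the continuation cylinder removes the need for $H\geq 0$.
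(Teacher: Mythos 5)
Your proposal is correct and follows the paper's approach exactly: the paper's proof is the one-line observation that since $\beta=dt$ forces $d\beta=0$, hypothesis \eqref{Item H geq 0 dropped} of Lemma \ref{Lemma No escape lemma} is unnecessary, while the domain-dependence of $H_z$ contributes the extra terms identified in Remark \ref{Remark time dependent Hams}, of which $u^*(\partial_t H)\,dt\wedge\beta$ vanishes and $u^*(\partial_s H)\,ds\wedge\beta$ has the favourable sign by hypothesis \eqref{Item ds H leq 0}. You have simply written out those two references in full, with the correct signs and bookkeeping.
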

\begin{proof}
Since $\beta=dt$, \eqref{Item H geq 0 dropped} is not necessary but we need \eqref{Item ds H leq 0} by Remark \ref{Remark time dependent Hams}.
\end{proof}
%
%
%
%
%
%%%%%%%%%%%%%%%%%%%%%%%%%%%%%%%%%%%
%%%%%%%%%%%%%%%%%%%%%%%%%%%%%%%%%%%
\subsection{Symplectic cohomology using Hamiltonians $\mathbf{H=f(y)R}$}
\label{Subsection Remark about the definition of symplectic cohomology}
%%%%%%%%%%%%%%%%%%%%%%%%%%%%%%%%%%%
%%%%%%%%%%%%%%%%%%%%%%%%%%%%%%%%%%%

Recall that
$SH^*(M)=\varinjlim HF^*(H,J),$
taking the direct limit over continuation maps $HF^*(H,J)\to HF^*(\widetilde{H},\widetilde{J})$ for Hamiltonians linear in $R$ at infinity, as the slope of $H$ is increased, where $J,\widetilde{J}$ are of contact type at infinity. These continuation maps are defined since for large $R$ there is a homotopy $H_s$ from $\widetilde{H}$ to $H$ with $\partial_s H_s \leq 0$ (condition \eqref{Item ds H leq 0} in Theorem \ref{Theorem Continuation maps max}). We now show we can enlarge the class to $H$ as in Theorem \ref{Theorem maximum principle}.
 
\begin{theorem}\label{Theorem SH is limit of f(y)R Hams}
 $SH^*(M)=\varinjlim HF^*(H_k,J_k)$ for any sequence of almost complex structures $J_k$ of contact type at infinity, and any sequence of Hamiltonians $H_k$ on $M$ with
 \begin{enumerate}
  \item  $H_k = f_k(y)R+\textrm{constant}$ for large $R$;
  \item the $f_k:\Sigma \to \R$ are invariant under the Reeb flow;
  \item $\max f_k \leq \min f_{k+1}$ and $\min f_k \to \infty$.
 \end{enumerate}
\end{theorem}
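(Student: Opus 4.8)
The plan is to compare the direct limit over the enlarged class of Hamiltonians $H_k = f_k(y)R + \mathrm{const}$ with the usual cofinal family of $R$-linear Hamiltonians $h_m(R) = m R + \mathrm{const}$ used to define $SH^*(M)$, by sandwiching the two families against each other. The key point is that the enlarged maximum principle (Theorem \ref{Theorem maximum principle} and Theorem \ref{Theorem Continuation maps max}) makes continuation maps well-defined not only between Hamiltonians within the enlarged class but also between an enlarged Hamiltonian $H_k=f_k(y)R$ and an $R$-linear one $h_m(R)=mR$, provided there is a monotone homotopy, i.e.\ one with $\partial_s H_s \leq 0$ for large $R$. Such a homotopy exists whenever the slope decreases pointwise on $\Sigma$: going from $h_m=mR$ down to $H_k = f_k(y)R$ requires $m \geq \max_\Sigma f_k$, and going from $H_k=f_k(y)R$ down to $h_m = mR$ requires $\min_\Sigma f_k \geq m$. (Throughout one uses that $f_k$ is Reeb-invariant, so Theorem \ref{Theorem Continuation maps max} applies to each Floer continuation cylinder; the additive constants are irrelevant for the existence of continuation maps and only shift gradings/energies in a controlled way.)

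\textbf{Key steps.} First, I would fix the standard cofinal sequence $h_m(R) = mR$ (with small time-dependent perturbations near the orbits as in Remark \ref{Remark time dependent Hams}), so that $SH^*(M) = \varinjlim_m HF^*(h_m, J)$ by definition. Second, using hypothesis (3) that $\max f_k \leq \min f_{k+1}$ and $\min f_k \to \infty$, I would choose, for each $k$, an integer slope $m(k)$ with $\max f_k \leq m(k) \leq \min f_{k+1}$; such $m(k)$ exists once $\min f_{k+1} - \max f_k$ is not too small, and in any case after passing to a subsequence of the $f_k$ we may assume $m(k)$ can be chosen with $m(k)\to\infty$ (replacing $J_k$ by any contact-type $J$ does not change $HF^*$, since Floer cohomology is independent of the contact-type $J$, as recalled under Theorem \ref{Theorem Circle action gives SH}). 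Third, I would build the interleaving diagram of continuation maps
$$
\cdots \to HF^*(H_k, J_k) \xrightarrow{\ \Phi_k\ } HF^*(h_{m(k)}, J) \xrightarrow{\ \Psi_k\ } HF^*(H_{k+1}, J_{k+1}) \to \cdots,
$$
where $\Phi_k$ comes from a monotone homotopy from $h_{m(k)}$ down to $H_k$ (legitimate since $m(k) \geq \max f_k$, so the slope decreases everywhere on $\Sigma$ and the no-escape Lemma \ref{Lemma No escape lemma} applies via Theorem \ref{Theorem Continuation maps max}), and $\Psi_k$ from a monotone homotopy from $H_{k+1}$ down to $h_{m(k)}$ (legitimate since $\min f_{k+1} \geq m(k)$). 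Fourth, I would check that all triangles in this interleaved ladder commute up to chain homotopy: this is the standard fact that a composition of continuation maps equals the continuation map for the concatenated (and then straightened) homotopy, which holds here because every intermediate homotopy stays in a class of Hamiltonians for which the maximum principle holds, so no Floer solution escapes to infinity and the usual gluing/homotopy-of-homotopies argument goes through verbatim. Fifth, the interleaving of two directed systems with mutually cofinal, compatibly-connected terms forces the two colimits to agree, giving $\varinjlim_k HF^*(H_k, J_k) \cong \varinjlim_m HF^*(h_m, J) = SH^*(M)$.

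\textbf{Main obstacle.} The routine parts are the interleaving colimit argument and the constant-shift bookkeeping; the one genuinely substantive input is already supplied, namely that the maximum principle (hence well-definedness of all the continuation maps and the no-escape behaviour of the homotopies of homotopies) extends to Hamiltonians of the form $f(y)R$ with $f$ Reeb-invariant — this is exactly Theorems \ref{Theorem maximum principle} and \ref{Theorem Continuation maps max}. The subtle point I would be most careful about is the choice of the integer (or at least $R$-linear) slopes $m(k)$ interleaving the $f_k$: if the gaps $\min f_{k+1} - \max f_k$ are small one cannot always insert an integer, but the argument only needs \emph{some} $R$-linear Hamiltonian $h$ with $\max f_k \le \mathrm{slope}(h) \le \min f_{k+1}$, and since $\min f_k \to \infty$ one may, after telescoping the sequence $(f_k)$ (which does not change the colimit, the subsequence being cofinal), arrange the gaps to be as large as desired, so a valid choice of $m(k)\to\infty$ always exists. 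A second point requiring a line of care is the time-dependent perturbation near the punctures: one must perturb $H_k$ to $H_k + \varepsilon_k R$ (or $-\varepsilon_k R$) with a small Reeb-invariant correction to achieve non-degeneracy of the $1$-periodic orbits while preserving $\partial_s H \le 0$ along the homotopies, exactly as in Remark \ref{Remark time dependent Hams}; this is compatible with everything above because adding $\varepsilon_k R$ keeps the Hamiltonian of the form $f(y)R$ with $f$ Reeb-invariant and only perturbs the slopes by $\varepsilon_k\to 0$.
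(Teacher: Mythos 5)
Your proof is correct and takes essentially the same route as the paper: establish continuation maps within the enlarged class of Hamiltonians (via the extended maximum principle of Theorems \ref{Theorem maximum principle} and \ref{Theorem Continuation maps max}), then argue cofinality with the standard $R$-linear family $h_m=mR$. The paper's own proof is considerably terser — it checks that a monotone homotopy between $\widetilde{f}(y)R$ and $f(y)R$ exists whenever $\min\widetilde{f}\ge\max f$ and then simply asserts cofinality — whereas you spell out the back-and-forth interleaving diagram explicitly; one small simplification you could make is that no integrality or telescoping of the slopes $m(k)$ is really needed, since $m(k)=\max f_k$ (or any generic real value in $[\max f_k,\min f_{k+1}]$, chosen to avoid Reeb periods) always works and tends to $\infty$ by hypothesis (3).
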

\begin{proof} Adding a constant to $H$ plays no role in Floer theory, so we can ignore the constant in (1).
Condition \eqref{Item ds H leq 0} in Theorem \ref{Theorem Continuation maps max} ensures that continuation maps can be defined for the larger class of Hamiltonians of the form described in Theorem \ref{Theorem maximum principle}. The slope of $f(y)R$ is bounded by $\max_{y\in \Sigma} f(y)$, and a homotopy $H_s$ from $\widetilde{H}=\widetilde{f}(y)R$ to $H=f(y)R$ with $\partial_s H_s\leq 0$ exists provided that
$$\min_{y\in \Sigma} \widetilde{f}(y) \geq \max_{y\in \Sigma} f(y).$$
Any sequence $H_k = f_k(y)R$ with $\max f_k \leq \min f_{k+1}$ and $\min f_k \to \infty$ will be cofinal, and so enlarging the class of Hamiltonians as above does not affect the resulting group $SH^*(M)$.
\end{proof}

\begin{corollary}\label{Corollary SH for pairs gH gJ}
Let $H_j:M\to \R$ be Hamiltonians of the form $H_j=m_j R$ for large $R$, whose slopes $m_j\to \infty$, and let $J$ be of contact type at infinity. Suppose $g_t$ is the flow for a Hamiltonian of the form $K=f(y)R$ for large $R$, with $f:\Sigma \to \R$ invariant under the Reeb flow. Then the pairs $(g^*H_j,g^*J)$ compute symplectic cohomology: $SH^*(M)={\displaystyle \lim_{j\to \infty}} HF^*(g^*H_j,g^*J)$.
\end{corollary}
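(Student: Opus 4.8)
\textbf{Proof plan for Corollary \ref{Corollary SH for pairs gH gJ}.}
The goal is to show that the pairs $(g^*H_j, g^*J)$ form a cofinal family computing $SH^*(M)$. The plan is to reduce everything to Theorem \ref{Theorem SH is limit of f(y)R Hams}, which already establishes that $SH^*(M)$ can be computed using Hamiltonians of the form $f_k(y)R + \textrm{const}$ with $f_k$ Reeb-invariant, provided the $f_k$ form an increasing cofinal sequence, and using almost complex structures of contact type at infinity. So the two things I must check are: (i) each $g^*H_j$ has, for large $R$, the form $f_j^{new}(y)R + \textrm{const}$ with $f_j^{new}$ invariant under the Reeb flow, and the sequence $f_j^{new}$ is cofinal in the sense of Theorem \ref{Theorem SH is limit of f(y)R Hams}; (ii) each $g^*J$ is again of contact type at infinity.

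For (ii), this is precisely Corollary \ref{Corollary contact type at infinity under flow}: since $K = f(y)R$ satisfies the hypotheses of Theorem \ref{Theorem maximum principle}, its flow $g_t$ has $dg_t = \mathrm{id}$ on $\mathrm{span}(Z,Y)$ for large $R$, and hence $g^*J = dg_t^{-1}\circ J\circ dg_t$ is again of contact type at infinity. So this step is immediate.

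For (i), recall from Section \ref{Subsection Invertibles in the symplectic cohomology} that $g^*H_j(m,t) = H_j(g_t\cdot m, t) - K(g_t\cdot m, t)$. By Lemma \ref{Lemma gt preserves alpha,R,f}, the flow $g_t$ preserves the $R$-coordinate and preserves $f$, so on the collar $K(g_t\cdot m) = f(g_t\cdot y)(R\circ g_t) = f(y)R$ is unchanged, and $H_j(g_t\cdot m) = m_j R$ is unchanged since $H_j$ depends only on $R$. Therefore for large $R$ we get simply $g^*H_j = m_j R - f(y)R = (m_j - f(y))R$, so $f_j^{new}(y) = m_j - f(y)$. This is invariant under the Reeb flow because $f$ is (and $m_j$ is constant). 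Finally, cofinality: since $f$ is bounded on the compact $\Sigma$, the slopes $\min_y(m_j - f(y)) = m_j - \max f$ still tend to $\infty$, and for $j$ chosen along a sparse enough subsequence (or after passing to a subsequence of the $m_j$, which does not change the direct limit) one arranges $\max_y(m_j - f) = m_j - \min f \leq m_{j+1} - \max f = \min_y(m_{j+1} - f)$, i.e. the $f_j^{new}$ satisfy the monotonicity hypothesis of Theorem \ref{Theorem SH is limit of f(y)R Hams}. Then that theorem, together with the fact (also from it, via Theorem \ref{Theorem Continuation maps max}) that continuation maps between these Hamiltonians exist and induce the direct limit, gives $SH^*(M) = \lim_j HF^*(g^*H_j, g^*J)$.

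The only genuine subtlety — and the step I expect to require the most care — is the cofinality/subsequence bookkeeping: one must be sure that replacing the given sequence $\{m_j\}$ by a subsequence does not alter the colimit (it does not, since the colimit over a directed system is unchanged by passing to a cofinal subsystem, and any subsequence with $m_j\to\infty$ remains cofinal among slopes), and that the continuation maps $HF^*(g^*H_j,g^*J)\to HF^*(g^*H_{j+1},g^*J)$ are the structure maps of the colimit, which follows from the parametrized maximum principle of Theorem \ref{Theorem Continuation maps max} applied to the homotopy $(m_j - f(y))R \rightsquigarrow (m_{j+1}-f(y))R$ with $\partial_s H_s \leq 0$, exactly as in the proof of Theorem \ref{Theorem SH is limit of f(y)R Hams}. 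Everything else is a direct substitution into results already proved in the excerpt.
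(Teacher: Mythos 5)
Your proof is correct and follows essentially the same route as the paper: compute $g^*H_j = (m_j - f(y))R$ using Lemma \ref{Lemma gt preserves alpha,R,f}, invoke Corollary \ref{Corollary contact type at infinity under flow} for $g^*J$, and apply Theorem \ref{Theorem SH is limit of f(y)R Hams}. The one place you are more explicit than the paper is the cofinality/subsequence bookkeeping needed to fit the hypothesis $\max f_k \leq \min f_{k+1}$ of Theorem \ref{Theorem SH is limit of f(y)R Hams}; the paper simply asserts ``the claim follows by Theorem \ref{Theorem SH is limit of f(y)R Hams}'', and your spelling out that passing to a sparse subsequence of the $m_j$ (which is cofinal and hence does not change the colimit) is a legitimate and welcome clarification.
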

\begin{proof}
 For large $R$,
 $$g^*H = H\circ g_t - K \circ g_t = m_j R - f(g_t y) R = (m_j-f(y))R,$$
using that $g_t$ preserves $R$ and $f$ (Lemma \ref{Lemma gt preserves alpha,R,f}). Moreover, $g^*J$ is of contact type by Corollary \ref{Corollary contact type at infinity under flow}. The claim follows by Theorem \ref{Theorem SH is limit of f(y)R Hams}.
\end{proof}

As explained in Theorem \ref{Theorem Representation into SH for new Hamiltonians}, when there is a loop of Hamiltonian diffeomorphisms whose Hamiltonian has strictly positive slope at infinity, then $SH^*(M)$ is a quotient of $QH^*(M)$.

\begin{theorem}[{By Ritter \cite{Ritter4} and Theorem \ref{Theorem Representation into SH for new Hamiltonians}}]\label{Theorem When SH is quotient of QH} 
Given $g: S^1 \to \mathrm{Ham}(M,\omega)$ generated by a Hamiltonian of the form $K(y,R) = f(y)R$ for large $R$, with $f:\Sigma\to \R$ invariant under the Reeb flow, and satisfying $\min f > 0$, then the canonical map $c^*:QH^*(M)\to SH^*(M)$ induces an isomorphism of $\Lambda$-algebras
$$
SH^*(M) \cong QH^*(M)/(\textrm{generalized }0\textrm{-eigenspace of }r_{\widetilde{g}}) \cong QH^*(M)_{r_{\widetilde{g}}(1)},
$$
where the last expression is localization at $r_{\widetilde{g}}(1)$ (Section \ref{Subsection Some remarks about localizations of rings}), and $c^*$ is the localization map.\\
\emph{(Here $SH^*(M)$ is restricted to contractible loops, see Section \ref{Subsection Invertibles in the symplectic cohomology}. When $M$ is simply connected, such as toric manifolds $M$, this is all of $SH^*(M)$)}
\end{theorem}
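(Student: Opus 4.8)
\textbf{Proof proposal for Theorem \ref{Theorem When SH is quotient of QH}.}
The plan is to combine the foundational input from \cite{Ritter4} (Theorem \ref{Theorem Circle action gives SH}) with the extended maximum principle developed in this Appendix. First I would invoke Theorem \ref{Theorem Representation into SH for new Hamiltonians}, which already asserts that Theorems \ref{Theorem Representation of Ham on SH}, \ref{Theorem Representation of Ham on QH} and \ref{Theorem Circle action gives SH} all remain valid for generating Hamiltonians of the form $K_t(y,R)=f_t(y)R$ at infinity with $f_t$ invariant under the Reeb flow (and $f_t>0$ in the $\ell>0$ case, which is exactly the hypothesis $\min f>0$ here). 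Concretely, the flow $g_t$ of such a $K$ lies in the enlarged group $\mathrm{Ham}_{\ell>0}(M)$, so picking a lift $\widetilde{g}$ we obtain the representation $r:\widetilde{\pi_1}\mathrm{Ham}_{\ell>0}(M,\omega)\to QH^*(M)$ together with the element $r_{\widetilde{g}}(1)$, and the direct-limit construction of $\mathcal{S}_{\widetilde{g}}$ applies since $\min f>0$ guarantees that $g^*H_k=(m_k-f(y))R$ has strictly smaller slope than $H_k$, so the continuation maps $\varphi_{H_k}$ exist (this is the $\ell>0$ point in the proof of Theorem \ref{Theorem Representation into SH for new Hamiltonians}).

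The core of the argument is then the linear-algebra mechanism described beneath Theorem \ref{Theorem Circle action gives SH}: setting $H_k=(g^{-k})^*H_0$ for a small-slope $H_0$, the slope of $H_k$ grows like $\mathrm{slope}(H_0)+k\cdot\min f\to\infty$, so by Corollary \ref{Corollary SH for pairs gH gJ} (which is precisely the statement that the pairs $(g^*H_j,g^*J)$ compute $SH^*(M)$ when $K=f(y)R$ with $f$ Reeb-invariant) the groups $HF^*(H_k)$ form a cofinal system for $SH^*(M)$. The identifications $\mathcal{S}_{\widetilde{g}}^k:HF^*(H_k)\cong HF^{*+2kI(\widetilde{g})}(H_0)\cong QH^*(M)$ turn every continuation map $HF^*(H_k)\to HF^*(H_{k+1})$ into the single map $r_{\widetilde{g}}:QH^*(M)\to QH^{*+2I(\widetilde{g})}(M)$, which by Theorem \ref{Theorem Representation of Ham on QH} is quantum cup product by $r_{\widetilde{g}}(1)$. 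Taking the direct limit of the tower $QH^*(M)\xrightarrow{r_{\widetilde{g}}}QH^*(M)\xrightarrow{r_{\widetilde{g}}}\cdots$ is a purely algebraic computation: the colimit of a finite-rank module under repeated multiplication by a fixed element $f=r_{\widetilde{g}}(1)$ is $QH^*(M)/(\text{generalized }0\text{-eigenspace of }f)$, which by Section \ref{Subsection Some remarks about localizations of rings} (the saturation lemmas) is exactly the localization $QH^*(M)_{f}$; and the natural map from the first term of the tower, which is $c^*$, is the canonical localization map. One should also note that $c^*$ is a ring map and $r_{\widetilde{g}}$ commutes with it, so the colimit carries the induced $\Lambda$-algebra structure and the isomorphism is one of algebras, not merely modules.

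I expect the main obstacle to be purely a matter of bookkeeping rather than genuine difficulty: one must verify that the extended maximum principle (Theorem \ref{Theorem maximum principle}, via the no-escape Lemma \ref{Lemma No escape lemma} and its parametrized version Theorem \ref{Theorem Continuation maps max}) actually covers \emph{all} the Floer solutions appearing in the construction of $\mathcal{S}_{\widetilde{g}}$, the continuation maps $\varphi_{H_k}$, and the naturality squares relating them — i.e.\ that every Hamiltonian involved, including the pulled-back data $g^*H_k=(m_k-f(y))R$ and the domain-dependent interpolations used for continuation, is of the admissible form $f(y)R$ with Reeb-invariant $f$ and non-positive $\partial_s$-derivative near the ends. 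This is handled by Lemma \ref{Lemma gt preserves alpha,R,f} (the flow preserves $\alpha$, $R$, and $f$) together with Corollary \ref{Corollary contact type at infinity under flow} (so $g^*J$ stays of contact type), so the a priori $R$-bounds, compactness, and hence the direct-limit argument all go through verbatim as in \cite{Ritter4}. The last identification with $QH^*(M)_{r_{\widetilde{g}}(1)}$ requires only that the generalized $0$-eigenspace of $r_{\widetilde{g}}(1)$ equals the saturation $(0:f^{\infty})$ of the zero ideal, which is immediate since $QH^*(M)$ has finite rank over $\Lambda$.
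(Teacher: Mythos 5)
Your proposal is correct and takes essentially the same approach as the paper: the theorem in question is stated in the paper as a direct consequence of combining Theorem \ref{Theorem Representation into SH for new Hamiltonians} (which extends Theorem \ref{Theorem Circle action gives SH} from \cite{Ritter4} to Hamiltonians of the form $f(y)R$ with Reeb-invariant $f$, via the extended maximum principle and Corollary \ref{Corollary SH for pairs gH gJ}) with the saturation/localization lemmas of Section \ref{Subsection Some remarks about localizations of rings}, and you have correctly reconstructed exactly that combination, including the Fitting-lemma computation of the colimit of the tower $QH^*(M)\xrightarrow{r_{\widetilde{g}}}QH^*(M)\xrightarrow{r_{\widetilde{g}}}\cdots$ and the bookkeeping checks (Lemma \ref{Lemma gt preserves alpha,R,f}, Corollary \ref{Corollary contact type at infinity under flow}, Theorem \ref{Theorem Continuation maps max}) needed to see that all Floer data arising in the direct-limit construction stays within the admissible class.
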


%%%%%%%%%%%%%%%%%%%%%%%%%%%%%%%%%%%%%%%%%%%%%%%%%%%%%%%%%%%
%%%%%%%%%%%%%%%%%%%%%%%%%%%%%%%%%%%%%%%%%%%%%%%%%%%%%%%%%%%
% ARTIFICIAL NEW PAGE
%\newpage
%%%%%%%%%%%%%%%%%%%%%%%%%%%%%%%%%%%%%%%%%%%%%%%%%%%%%%%%%%%
%%%%%%%%%%%%%%%%%%%%%%%%%%%%%%%%%%%%%%%%%%%%%%%%%%%%%%%%%%%

\end{document}